\newtheorem{thm}{Theorem}[chapter]
\newtheorem{prop}[thm]{Proposition}
\newtheorem{cor}[thm]{Corollary}
\newtheorem{lemma}[thm]{Lemma}
\newtheorem{conj}[thm]{Conjecture}
\theoremstyle{definition}
\newtheorem{defn}[thm]{Definition}
\newtheorem{ex}[thm]{Example}
\theoremstyle{remark}
\newtheorem{remark}[thm]{Remark}
\numberwithin{section}{chapter}
\numberwithin{equation}{chapter}
\newcommand{\M}{\mathbb{M}}
\newcommand{\F}{\mathbb{F}}
\newcommand{\C}{\mathbb{C}}
\newcommand{\R}{\mathbb{R}}
\newcommand{\Z}{\mathbb{Z}}
\newcommand{\D}{\Delta}
\newcommand{\map}{\rightarrow}
\newcommand{\tmf}{\textit{tmf}}
\newcommand{\mmf}{\textit{mmf}}
\newcommand{\ol}{\overline}
\newcommand{\sigmabar}{\ol{\sigma}}
\newcommand{\kappabar}{\ol{\kappa}}
\newcommand{\cl}{\mathrm{cl}}
\DeclareMathOperator{\Ext}{Ext}
\DeclareMathOperator{\Gr}{Gr}
\DeclareMathOperator{\Sq}{Sq}
\newcommand{\mydot}{\mathord{\cdot}}
\newcommand{\rev}[1]{#1} 
\newcommand{\revv}[1]{#1} 
\definecolor{red}{rgb}{1, 0, 0}
\definecolor{darkblue}{rgb}{0.1, 0.1, 0.7}
\definecolor{darkgreen}{rgb}{0.1, 0.8, 0.1}
\definecolor{midgray}{rgb}{0.5, 0.5, 0.5}
\definecolor{lightgray}{rgb}{0.8, 0.8, 0.8}
\definecolor{lightred}{rgb}{1, 0.7, 0.7}
\definecolor{lightblue}{rgb}{0.7, 0.7, 1}
\newcommand{\revrow}{} 
\newcommand{\revvrow}{} 
\newcommand{\revdeg}[1]{{\color{blue}$({#1})$}}
\begin{document}

\frontmatter

\title{\rev{Stable Homotopy Groups of Spheres: From Dimension 0 to 90}}

\author{Daniel C. Isaksen}
\address{
Department of Mathematics, 
Wayne State University, 
Detroit, MI 48202, USA}
\email{isaksen@wayne.edu}

\thanks{The first author was supported by NSF grants
DMS-1606290 and DMS-1904241.
The second author was supported by grant NSFC-11801082. 
The third author was supported by NSF grants DMS-1810638 and DMS-2105462.
Many of the associated machine computations were performed on the
Wayne State University Grid high performance computing cluster.}

\author{Guozhen Wang}
\address{Shanghai Center for Mathematical Sciences, Fudan University, Shanghai, China, 200433}
\email{wangguozhen@fudan.edu.cn}

\author{Zhouli Xu}
\address{Department of Mathematics, UC San Diego, La Jolla, CA 92093, USA}
\email{xuzhouli@ucsd.edu}

\subjclass[2010]{Primary 14F42, 55Q45, 55S10, 55T15; 
Secondary 16T05, 55P42, 55Q10, 55S30 }

\keywords{stable homotopy group, stable motivic homotopy theory,
May spectral sequence, Adams spectral sequence, cohomology of the Steenrod algebra,
Adams-Novikov spectral sequence}

\date{}

\begin{abstract}
\rev{Using techniques in motivic homotopy theory, especially the theorem of Gheorghe, the second and the third author on the isomorphism between motivic Adams spectral sequence for $C\tau$ and the algebraic Novikov spectral sequence for $BP_*$,
we compute the classical and motivic stable homotopy groups of spheres 
from dimension 0 to 90},
except for
some carefully enumerated uncertainties.
\end{abstract}

\maketitle

%

\setcounter{page}{4}

\tableofcontents

\listoftables


\mainmatter

\chapter{Introduction}

The computation of stable homotopy groups of spheres is one of the most fundamental and important
problems in homotopy theory. It \revv{informs on} many topics in topology, such as the cobordism theory of framed manifolds, 
the classification of smooth structures on spheres, obstruction theory, the theory of topological modular forms, algebraic K-theory, motivic homotopy theory, and equivariant homotopy theory.

Despite their simple definition, which was available eighty years ago, these groups are notoriously hard to compute.  All known methods only give a complete answer through a range, \revv{but they eventually stall.
Further progress requires the introduction of a new method.}
The standard approach to computing stable stems is to use an 
\revv{Adams spectral sequence (based on a generalized cohomology theory
$E$) that converges} from algebra to homotopy.  In turn, to identify the algebraic $E_2$-pages, one needs algebraic spectral sequences that converge from simpler algebra to more complicated algebra. For any spectral sequence, difficulties arise in computing differentials and in solving extension problems. Different methods lead to trade-offs.  One method may compute some types of differentials and extension problems efficiently, but leave other types unanswered, perhaps even unsolvable by that technique. To obtain complete computations, one must be eclectic, applying and combining different methodologies. Even so, combining all known methods, there are eventually some problems that \revv{have not been} solved.  Mahowald's uncertainty principle states that no finite collection of methods can completely compute the stable homotopy groups of spheres.

Because stable stems are finite \revv{abelian} groups (except for the $0$-stem),
the computation is most easily
accomplished by working one prime at a time.  At odd primes,
the Adams-Novikov spectral sequence and the chromatic spectral sequence, which are based on complex cobordism and formal groups, have yielded a wealth of
data \cite{Ravenel86}. As the prime grows, so does the range of computation. For example, at the primes 3 and 5, we have complete knowledge up to around 100 and 1000 stems respectively \cite{Ravenel86}.

The prime $2$, being the smallest prime, remains the most difficult part of the computation.
\revv{This entire manuscript considers exclusively the $2$-completed
stable homotopy groups.}
In this case, the Adams spectral sequence is the most effective tool.
The manuscript \cite{Isaksen14c} presents a careful analysis of 
the Adams spectral sequence, in both the classical and $\C$-motivic
contexts, that is essentially complete through the 59-stem.
This includes a verification of the details in the classical literature
 \cite{BJM84} \cite{BMT70} \cite{Bruner84} 
  \cite{MT67}.
Subsequently,
the second and third authors computed the 60-stem
and 61-stem \cite{WangXu17}.

We also mention \cite{Kochman90}  \cite{KM93},
which take an entirely different approach to computing stable
homotopy groups. 
However, the computations in \cite{Kochman90}  \cite{KM93} are now known to contain several errors. 
See \cite{WangXu17}*{Section 2} for a more detailed discussion.

The goal of this manuscript is to continue the analysis of the Adams
spectral sequence into higher stems at the prime 2.  We will present information
up to the 90-stem.
While we have not
been able to resolve all of the possible differentials in this range,
we enumerate the handful of uncertainties explicitly
\rev{within Table \ref{tab:Adams-higher}}.

The charts in \cite{Isaksen14a} and \cite{IWX19}
are an essential companion to this manuscript. 
They present the same information in an easily interpretable 
graphical format.

Our analysis uses various methods and techniques, including 
machine-generated homological algebra computations, 
a deformation of homotopy theories that connects
$\C$-motivic and classical stable homotopy theory,
and the theory of motivic modular forms. Here is a quick summary of our approach:
\begin{enumerate}
\item
\label{item:machine-Adams}
Compute 
the cohomology of the $\C$-motivic Steenrod algebra by machine.  
These groups
serve as the input to the $\C$-motivic Adams spectral sequence.
\item
\label{item:machine-algNov}
Compute by machine
the algebraic Novikov spectral sequence that converges to the
cohomology of the Hopf algebroid $(BP_*, BP_* BP)$.
This includes all
differentials, and the multiplicative structure of the cohomology of
$(BP_*, BP_* BP)$.
\item
Identify
the $\C$-motivic Adams spectral sequence for the cofiber of $\tau$ 
with 
the algebraic Novikov spectral sequence
\cite{GWX18}.  This includes an identification of the
cohomology of $(BP_*, BP_* BP)$ with the
homotopy groups of the cofiber of $\tau$.
\item
Pull back and push forward Adams differentials for the cofiber of $\tau$
to Adams differentials for the $\C$-motivic sphere, along
the inclusion of the bottom cell and the projection to the top cell.
\item
\label{item:ad-hoc}
Deduce additional Adams differentials for the $\C$-motivic sphere with 
a variety of ad hoc arguments.  The most important methods are Toda bracket shuffles and 
comparison to the motivic modular forms spectrum $\mmf$ \cite{GIKR18}.
\item
Deduce hidden $\tau$
extensions in the $\C$-motivic Adams spectral sequence for the sphere, 
using a long exact sequence in homotopy groups.
\item
Obtain the classical Adams spectral sequence and the 
classical stable homotopy groups by inverting $\tau$.
\end{enumerate}

The machine-generated data that we obtain in steps (\ref{item:machine-Adams}) and (\ref{item:machine-algNov}) are available at 
\cite{Isaksen14a} and \cite{IWX19}.
See also \cite{Wang20} for a discussion of the implementation of the
machine computation.

Much of this process is essentially automatic.
The exception occurs in step (\ref{item:ad-hoc}) where ad hoc arguments come
into play.

This document describes the results of this systematic program through the
90-stem.  We anticipate that our approach will allow us to compute into 
even higher stems, especially towards the last unsolved Kervaire invariant problem in dimension 126.  However, we have not yet carried out a careful analysis.

\section{New Ingredients}

We discuss in more detail several new ingredients that allow us to carry out this program.

\subsection{Machine-generated algebraic data}

The Adams-Novikov spectral sequence has been used very successfully
to carry out computations at odd primes.  However, at the prime 2, its usage has not been fully exploited in stemwise computations. This is due to the difficulty of computing its $E_2$-page. The first author predicted in
\cite{Isaksen14c} that ``the next major breakthrough in computing stable stems will involve machine computation of the Adams-Novikov $E_2$-page."

The second author achieved this machine computation; the resulting data is available at \cite{IWX19}.  The process goes roughly like this.
Start with a minimal resolution that computes the cohomology of the  Steenrod algebra.  Lift this resolution to a resolution of $BP_*BP$.
Finally, use the Curtis algorithm to compute the homology of the resulting
complex, and to compute differentials in the associated algebraic
spectral sequences, such as 
the algebraic Novikov spectral sequence 
and the Bockstein spectral sequence.
See \cite{Wang20} for further details.

\subsection{Motivic homotopy theory}

The $\C$-motivic stable homotopy category
gives rise to new methods to compute stable stems.
These ideas are used in a critical way in \cite{Isaksen14c} to compute stable stems up to the $59$-stem.

The key insight of this article that distinguishes it
significantly from \cite{Isaksen14c} is that $\C$-motivic 
cellular stable homotopy theory is a deformation of classical stable
homotopy theory \cite{GWX18}.  
From this perspective, the ``generic fiber" of
$\C$-motivic stable homotopy theory is classical stable homotopy theory,
and the ``special fiber"
has an entirely algebraic description.  The special fiber is
  \revv{Hovey's stable derived} category of $BP_*BP$-comodules \cite{Hovey}, or equivalently,
the \revv{stable derived} 
category of quasicoherent sheaves on the moduli stack of 
1-dimensional formal groups.

\revv{
In more concrete terms, let $C\tau$ be the cofiber of the 
$\C$-motivic stable map $\tau$.
The cofiber sequence
$S^{0,0} \map C\tau \map S^{1,-1}$
induces maps
\[
\xymatrix{
E_2(S^{0,0}) \ar[r] \ar@{=>}[d] & E_2(C\tau) \ar[r] \ar@{=>}[d] & E_2(S^{1,-1}) \ar@{=>}[d] \\
\pi_{*,*}(S^{0,0}) \ar[r] & \pi_{*,*}(C\tau) \ar[r] & \pi_{*,*}(S^{1,-1})
}
\]
of spectral sequences, in which each vertical column represents
a $\C$-motivic Adams spectral sequence.

The homotopy category of $C\tau$-modules has an algebraic structure
\cite{GWX18}.
In particular,  the $\C$-motivic Adams spectral sequence for $C\tau$ is isomorphic to the algebraic Novikov spectral sequence 
that computes the $E_2$-page of the Adams-Novikov spectral
sequence for $BP_*$.  This means that the middle spectral sequence
in the above diagram
can be computed by machine.  Naturality then yields information
about the $\C$-motivic Adams spectral sequence
for the $\C$-motivic sphere spectrum in two different ways, 
since the latter spectral sequence appears on both the
left and right side of the diagram.
Finally,
the Betti realization functor produces differentials in the classical Adams spectral sequence.
}

Our use of $\C$-motivic stable homotopy theory appears to rely on
the fundamental computations, due to Voevodsky \cite{Voevodsky03b} \cite{Voevodsky10},
of the motivic cohomology of a point and of the motivic Steenrod
algebra.  
In fact, recent progress has determined that 
our results do not depend on this deep and difficult work.
There are now purely topological constructions of homotopy
categories that have identical computational properties to
the cellular stable $\C$-motivic homotopy category
\cite{GIKR18} \cite{Pstragowski18}.
In these homotopy categories, one can obtain from first principles
the fundamental computations of the cohomology of a point and
of the Steenrod algebra, using only well-known classical computations.
Therefore,
the material in this manuscript does not logically
depend on Voevodsky's work, even though the methods were very much
inspired by his groundbreaking computations.

\subsection{Motivic modular forms}

In classical chromatic homotopy theory, the theory of topological modular forms, introduced by Hopkins and Mahowald \cite{tmf14}, plays a central role in the computations of the $K(2)$-local sphere.

Using a topological model of the cellular stable $\C$-motivic homotopy category, 
one can construct a ``motivic modular forms" spectrum $\mmf$ \cite{GIKR18},
whose motivic \revv{$\F_2$-}cohomology is the quotient of the $\C$-motivic
Steenrod algebra by its subalgebra generated by
$\Sq^1$, $\Sq^2$, and $\Sq^4$.
Just as $\tmf$ plays an essential role in studies of the classical
Adams spectral sequence \rev{\cite{BMQ21} \cite{BBBCX}}, $\mmf$ is an essential tool for
motivic computations.  The $\C$-motivic Adams spectral sequence
for $\mmf$ can be analyzed completely \cite{Isaksen18}, and naturality
of Adams spectral sequences along the unit map of $\mmf$
provides much information about the behavior of the $\C$-motivic
Adams spectral sequence for the $\C$-motivic sphere spectrum.

\section{Main results}

We summarize our main results in the following theorem and corollaries.

\begin{thm}
\label{thm:main-Adams}
The $\C$-motivic Adams spectral sequence for the $\C$-motivic sphere 
spectrum
is displayed in the charts in \cite{Isaksen14a}, up to the $90$-stem.
\end{thm}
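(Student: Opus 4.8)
The plan is to build the spectral sequence in three layers: first its $E_2$-page, then its differentials, and finally the hidden extensions that refine the associated graded into the displayed charts. For the $E_2$-page I would invoke the machine computation of the cohomology of the $\C$-motivic Steenrod algebra over the range, together with its multiplicative structure, taking the resulting $\Ext$ groups as the input charts of \cite{Isaksen14a}. Since this portion is purely homological algebra, I would regard it as supplied by the data of \cite{Wang19} and spend no effort re-deriving it by hand; the real content of the theorem lies in the differentials and the extensions.

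The engine for the differentials is the identification, from \cite{GWX18}, of the $\C$-motivic Adams spectral sequence for $C\tau$ with the algebraic Novikov spectral sequence computing the Adams--Novikov $E_2$-page. Because the latter is computed by machine, including all of its differentials, I would transport those differentials across this identification to obtain Adams differentials for $C\tau$, and then pull them back along the inclusion of the bottom cell and push them forward along the projection to the top cell to produce Adams differentials for the $\C$-motivic sphere. This mechanism is forced and essentially automatic, and it should account for the overwhelming majority of the differentials in the range.

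The genuinely \emph{hard} part is everything this mechanism misses. Some differentials on the sphere lie in the image of neither cell map, and for these I would argue by hand: Toda bracket shuffles, multiplicative relations, and---most importantly---naturality along the unit map of $\mmf$, whose own Adams spectral sequence is completely understood \cite{Isaksen18} \cite{GIKR18}. I expect that marshalling these ad hoc arguments, and verifying that they are consistent and exhaustive wherever they apply, will be the main obstacle; I would also not expect to resolve every differential. The honest form of the conclusion is that a short, explicit list of possible differentials remains open, which I would collect in Table \ref{tab:Adams-unknown}.

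Finally, once the differentials are in place I would determine the hidden $\tau$-extensions. Here the cofiber sequence relating the sphere, its multiplication by $\tau$, and $C\tau$ yields a long exact sequence in homotopy groups, and comparing the $E_\infty$-pages across this sequence pins down the hidden $\tau$-extensions. Assembling the $E_2$-page, the differentials, the enumerated uncertainties, and these extensions then reproduces exactly the charts of \cite{Isaksen14a}, which is the assertion of the theorem.
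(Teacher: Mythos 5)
Your proposal follows essentially the same strategy as the paper: machine-generated $E_2$-page data, differentials imported from the algebraic Novikov spectral sequence via the identification with the Adams spectral sequence for $C\tau$ and the two cell maps, ad hoc arguments (Toda shuffles, multiplicative relations, comparison to $\mmf$) for the remainder, an explicit list of unresolved differentials, and hidden $\tau$ extensions from the long exact sequence of the cofiber sequence. The paper's own proof of this theorem is likewise just a pointer to the body of the manuscript where these computational facts are verified, so your outline is an accurate description of the argument.
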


The proof of Theorem \ref{thm:main-Adams} consists of a series of
specific computational facts, which are verified throughout this
manuscript.

\begin{cor}
\label{cor:main-Adams}
The classical Adams spectral sequence for the sphere spectrum is 
displayed in the charts in \cite{Isaksen14a}, up to the $90$-stem.
\end{cor}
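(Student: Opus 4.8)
The plan is to deduce the classical statement directly from the $\C$-motivic statement of Theorem \ref{thm:main-Adams} by inverting $\tau$. As explained above, the ``generic fiber'' of $\C$-motivic stable homotopy theory is classical stable homotopy theory, and this passage is realized concretely by inverting the $\C$-motivic element $\tau$. At the level of Adams $E_2$-pages, the cohomology of the $\C$-motivic Steenrod algebra is an algebra over $\F_2[\tau]$, and inverting $\tau$ recovers the cohomology of the classical Steenrod algebra, up to the free variable $\tau^{\pm 1}$ that records the motivic weight. I would therefore make precise the $\tau$-inversion functor on the $\C$-motivic Adams spectral sequence and identify its output with the classical Adams spectral sequence.

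First I would record the effect of $\tau$-inversion on the $E_2$-page. Each $\C$-motivic class either survives $\tau$-inversion, in which case it determines a classical class in the same stem and filtration, or it is $\tau$-torsion, in which case it maps to zero. This yields an explicit degreewise correspondence between the surviving $\C$-motivic classes and the classical $E_2$-page throughout the range. Next I would transport the differentials: since inverting $\tau$ is exact, each $\C$-motivic Adams differential induces a classical Adams differential, except when its target is $\tau$-torsion, in which case the differential disappears and its source survives to a later page. Applying this recipe to the differentials recorded in the $\C$-motivic charts of \cite{Isaksen14a} reproduces the classical pattern of differentials.

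The main obstacle is bookkeeping rather than anything conceptual. One must check, class by class, that no classical differential appears that is not already present motivically, and that the $\tau$-torsion classes are correctly discarded, so that the surviving $E_\infty$-page matches the displayed classical chart. The motivic uncertainties collected in Table \ref{tab:Adams-unknown} must likewise be followed through $\tau$-inversion, to confirm which of them remain as genuine classical uncertainties and which are resolved once the weight grading collapses.
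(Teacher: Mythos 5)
Your proposal is correct and is essentially the paper's own argument: the corollary follows immediately from Theorem \ref{thm:main-Adams} by inverting $\tau$, or equivalently by ignoring $\tau$-torsion. The additional bookkeeping you describe about transporting $E_2$-classes and differentials through $\tau$-inversion is a fuller spelling-out of the same one-line deduction.
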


Corollary \ref{cor:main-Adams} follows immediately from
Theorem \ref{thm:main-Adams}.  One simply inverts $\tau$, or equivalently
ignores $\tau$-torsion.

\rev{
Theorem \ref{thm:main-Adams} could also be used to completely determine
the $E_2$-page and all differentials of the Adams-Novikov spectral
sequence for the sphere spectrum.}
 As described in
\cite{Isaksen14c}*{Chapter 6}, the Adams-Novikov spectral sequence
can be reverse-engineered from information about $\C$-motivic
stable homotopy groups.

\begin{cor}
\label{cor:cl-order}
Table \ref{tab:order} describes the stable homotopy groups $\pi_k$
for all values of $k$ up to $90$.
\end{cor}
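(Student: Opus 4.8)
The plan is to extract each group $\pi_k$ from the $E_\infty$-page of the classical Adams spectral sequence, which is already determined by Corollary \ref{cor:main-Adams}. By convergence of the Adams spectral sequence, the classes in stem $k$ of the $E_\infty$-page form the associated graded of a finite, exhaustive filtration on the $2$-completed group $(\pi_k)^{\wedge}_2$. Since every nonzero entry is a copy of $\F_2$, the order of the $2$-primary part of $\pi_k$ is exactly $2^{n_k}$, where $n_k$ is the total number of $\F_2$-summands appearing in stem $k$ of the chart in \cite{Isaksen14a}. First I would tabulate $n_k$ for each $k \leq 90$; this immediately records the order of the $2$-primary component in Table \ref{tab:order}.

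Next I would assemble the full order. For $k > 0$ each $\pi_k$ is finite, while the $0$-stem is $\Z$. The odd-primary components in this range are already known from the Adams--Novikov and chromatic computations at odd primes \cite{Ravenel86}, so the order listed in Table \ref{tab:order} is the product of the $2$-primary order read off above with these odd-primary contributions. This step is essentially bookkeeping.

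The main obstacle is the final step: the associated graded determines only the order of $(\pi_k)^{\wedge}_2$, not its isomorphism type as a finite abelian $2$-group. To resolve the cyclic decomposition I would determine the hidden $2$-extensions (hidden $h_0$-extensions): whenever multiplication by $2$ raises Adams filtration by more than one, two $\F_2$-summands that appear separate on the $E_\infty$-page actually combine into a single $\Z/4$ (or a larger cyclic group). These extensions are not forced by the formal structure of the spectral sequence and must be settled by the same ad hoc toolkit used throughout the manuscript — naturality along the inclusion of the bottom cell and the projection to the top cell of $C\tau$, Betti realization, Toda bracket shuffles, and comparison with $\mmf$. The genuinely hard part is organizing these inputs so that every hidden $2$-extension through stem $90$ is pinned down; once they are, each $(\pi_k)^{\wedge}_2$ decomposes uniquely into cyclic factors, and combining with the odd-primary data yields precisely the groups listed in Table \ref{tab:order}.
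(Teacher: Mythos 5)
Your proposal matches the paper's (implicit) argument: read the $2$-primary orders off the classical Adams $E_\infty$-page from Corollary \ref{cor:main-Adams}, resolve the group structure via hidden $2$-extensions using comparison with $C\tau$, $\tmf$/$\mmf$, and Toda bracket shuffles (Theorems \ref{thm:2-extn} and \ref{thm:2-extn-possible}), and adjoin the odd-primary and $v_1$-periodic contributions from the literature. The only correction is to your last step: the paper does \emph{not} pin down every differential and hidden $2$-extension through stem $90$ --- the unresolved ones (Tables \ref{tab:Adams-unknown} and \ref{tab:2-extn-possible}) are exactly why Table \ref{tab:order} lists explicit ``or'' alternatives from dimension $82$ onward, so the corollary asserts only that the table, with those enumerated ambiguities, describes the groups.
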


We adopt the following
notation in Table \ref{tab:order}.  
An integer $n$ stands for the cyclic abelian group $\Z/n$; 
the symbol $\mydot$ by itself stands for the trivial group;
the expression $n\mydot m$ stands for the direct sum $\Z/n \oplus \Z/m$;
and $n^j$ stands for the direct sum of $j$ copies of $\Z/n$.
The horizontal line after dimension 61 indicates the range
in which our computations are new information.

Table \ref{tab:order} describes each group $\pi_k$ as the direct sum
of three subgroups: the $2$-primary $v_1$-torsion, the
odd primary $v_1$-torsion, and the $v_1$-periodic subgroups.

The last column of Table \ref{tab:order} describes the groups of homotopy spheres  that classify smooth structures on spheres in dimensions at least 5. See Section~\ref{section homotopy sphere} and Theorem~\ref{thm homotopy sphere} for more details.

Starting in dimension 84, there remain some uncertainties in the $2$-primary $v_1$-torsion.
In most cases, these uncertainties mean that the order of some stable
homotopy groups are
known only up to factors of $2$.  In a few cases, the additive group
structures are also undetermined.

These uncertainties have two causes.  First, there are a handful
of differentials that remain unresolved; they are listed in
\rev{Table \ref{tab:Adams-higher}.}
Second, there are some possible hidden $2$ extensions
that remain unresolved.

\begin{longtable}{l|llll}
\caption{Stable homotopy groups up to dimension $90$.
$n$ stands for $\Z/n$;
$n\mydot m$ stands for $\Z/n \oplus \Z/m$;
and $n^j$ stands for $(\Z/n)^j$.
\label{tab:order} 
} \\
\toprule
$k$ & $v_1$-torsion & $v_1$-torsion & $v_1$-periodic & group of  \\
& at the prime 2 & at odd & & smooth structures\\
& & primes \\
\midrule \endfirsthead
\caption[]{Stable homotopy groups up to dimension $90$.
$n$ stands for $\Z/n$;
$n\mydot m$ stands for $\Z/n \oplus \Z/m$;
and $n^j$ stands for $(\Z/n)^j$.
} \\
\toprule
$k$ & $v_1$-torsion & $v_1$-torsion & $v_1$-periodic & group of  \\
& at the prime 2 & at odd & & smooth structures\\
& & primes \\
\midrule \endhead
\bottomrule \endfoot
$1$ & $\mydot$ & $\mydot$ & $2$ & $\mydot$ \\
$2$ & $\mydot$ & $\mydot$ & $2$ & $\mydot$ \\
$3$ & $\mydot$ & $\mydot$ & $8\mydot3$ & $\mydot$ \\
$4$ & $\mydot$ & $\mydot$ & $\mydot$ & $?$ \\
$5$ & $\mydot$ & $\mydot$ & $\mydot$ & $\mydot$ \\
$6$ & $2$ & $\mydot$ & $\mydot$ & $\mydot$ \\
$7$ & $\mydot$ & $\mydot$ & $16\mydot3\mydot5$ & $\underline{b_2}$ \\
$8$ & $2$ & $\mydot$ & $2$ & $2$\\
$9$ & $2$ & $\mydot$ & $2^2$ & $\underline{2}\mydot 2^2$ \\
$10$ & $\mydot$ & $3$ & $2$ & $2\mydot3$ \\
$11$ & $\mydot$ & $\mydot$ & $8\mydot9\mydot7$ & $\underline{b_3}$ \\
$12$ & $\mydot$ & $\mydot$ & $\mydot$ & $\mydot$ \\
$13$ & $\mydot$ & $3$ & $\mydot$ & $3$ \\
$14$ & $2\mydot2$ & $\mydot$ & $\mydot$ & $2$ \\
$15$ & $2$ & $\mydot$ & $32\mydot3\mydot5$ & $\underline{b_4}\mydot2$ \\
$16$ & $2$ & $\mydot$ & $2$ & $2$ \\
$17$ & $2^2$ & $\mydot$ & $2^2$ & $\underline{2}\mydot2^3$ \\
$18$ & $8$ & $\mydot$ & $2$ & $2\mydot8$ \\
$19$ & $2$ & $\mydot$ & $8\mydot3\mydot11$ & $\underline{b_5}\mydot2$\\
$20$ & $8$ & $3$ & $\mydot$ & $8\mydot3$\\
$21$ & $2^2$ & $\mydot$ & $\mydot$ & $\underline{2}\mydot2^2$ \\
$22$ & $2^2$ & $\mydot$ & $\mydot$ & $2^2$ \\
$23$ & $2\mydot8$ & $3$ & $16\mydot9\mydot5\mydot7\mydot13$ & $\underline{b_6}\mydot2\mydot8\mydot3$ \\
$24$ & $2$ & $\mydot$ & $2$ & $2$ \\
$25$ & $\mydot$ & $\mydot$ & $2^2$ & $\underline{2}\mydot2$ \\
$26$ & $2$ & $3$ & $2$ & $2^2\mydot3$ \\
$27$ & $\mydot$ & $\mydot$ & $8\mydot3$ & $\underline{b_7}$ \\
$28$ & $2$ & $\mydot$ & $\mydot$ & $2$ \\
$29$ & $\mydot$ & $3$ & $\mydot$ & $3$ \\
$30$ & $2$ & $3$ & $\mydot$ & $3$ \\
$31$ & $2^2$ & $\mydot$ & $64\mydot3\mydot5\mydot17$ & $\underline{b_8}\mydot2^2$ \\
$32$ & $2^3$ & $\mydot$ & $2$ & $2^3$ \\
$33$ & $2^3$ & $\mydot$ & $2^2$ & $\underline{2}\mydot2^4$ \\
$34$ & $2^2\mydot4$ & $\mydot$ & $2$ & $2^3\mydot4$ \\
$35$ & $2^2$ & $\mydot$ & $8\mydot27\mydot7\mydot19$ & $\underline{b_9}\mydot2^2$ \\
$36$ & $2$ & $3$ & $\mydot$ & $2\mydot3$ \\
$37$ & $2^2$ & $3$ & $\mydot$ & $\underline{2}\mydot2^2\mydot3$ \\
$38$ & $2\mydot4$ & $3\mydot5$ & $\mydot$ & $2\mydot4\mydot3\mydot5$ \\
$39$ & $2^5$ & $3$ & $16\mydot3\mydot25\mydot11$ & $\underline{b_{10}}\mydot2^5\mydot3$ \\
$40$ & $2^4\mydot4$ & $3$ & $2$ & $2^4\mydot4\mydot3$ \\
$41$ & $2^3$ & $\mydot$ & $2^2$ & $\underline{2}\mydot2^4$ \\
$42$ & $2\mydot8$ & $3$ & $2$  & $2^2\mydot8\mydot3$\\
$43$ & $\mydot$ & $\mydot$ & $8\mydot3\mydot23$ & $\underline{b_{11}}$ \\
$44$ & $8$ & $\mydot$ & $\mydot$ & $8$ \\
$45$ & $2^3\mydot16$ & $9\mydot5$ & $\mydot$ & $\underline{2}\mydot2^3\mydot16\mydot9\mydot5$ \\
$46$ & $2^4$ & $3$ & $\mydot$ & $2^4\mydot3$ \\
$47$ & $2^3\mydot4$ & $3$ &  $32\mydot9\mydot5\mydot7\mydot13$ & $\underline{b_{12}}\mydot2^3\mydot4\mydot3$  \\
$48$ & $2^3\mydot4$ & $\mydot$ & $2$ & $2^3\mydot4$ \\
$49$ & $\mydot$ & $3$ & $2^2$ & $\underline{2}\mydot2\mydot3$ \\
$50$ & $2^2$ & $3$ & $2$ & $2^3\mydot3$ \\
$51$ & $2\mydot8$ & $\mydot$ & $8\mydot3$ & $\underline{b_{13}}\mydot2\mydot8$ \\
$52$ & $2^3$ & $3$ & $\mydot$ & $2^3\mydot3$ \\
$53$ & $2^4$ & $\mydot$ & $\mydot$ & $\underline{2}\mydot2^4$ \\
$54$ & $2\mydot4$ & $\mydot$ & $\mydot$ & $2\mydot4$ \\
$55$ & $\mydot$ & $3$ & $16\mydot3\mydot5\mydot29$ & $\underline{b_{14}}\mydot3$ \\
$56$ & $\mydot$ & $\mydot$ & $2$ & $\mydot$ \\
$57$ & $2$ & $\mydot$ & $2^2$ & $\underline{2}\mydot2^2$ \\
$58$ & $2$ & $\mydot$ & $2$  & $2^2$\\
$59$ & $2^2$ & $\mydot$ & $8\mydot9\mydot7\mydot11\mydot31$ & $\underline{b_{15}}\mydot2^2$ \\
$60$ & $4$ & $\mydot$ & $\mydot$ & $4$ \\
$61$ & $\mydot$ & $\mydot$ & $\mydot$  & $\mydot$\\
\hline
$62$ & $2^4$ & $3$ & $\mydot$ & $2^3\mydot3$ \\
$63$ & $2^2\mydot4$ & $\mydot$ & $128\mydot3\mydot5\mydot17$  & $\underline{b_{16}}\mydot2^2\mydot4$\\
$64$ & $2^5\mydot4$ & $\mydot$ & $2$  & $2^5\mydot4$\\
$65$ & $2^7\mydot4$ & $3$ & $2^2$  & $\underline{2}\mydot2^8\mydot4\mydot3$\\
$66$ & $2^5\mydot8$ & $\mydot$ & $2$  & $2^6\mydot8$\\
$67$ & $2^3\mydot4$ & $\mydot$ & $8\mydot3$  & $\underline{b_{17}}\mydot2^3\mydot4$\\
$68$ & $2^3$ & $3$ & $\mydot$ & $2^3\mydot3$\\
$69$ & $2^4$ & $\mydot$ & $\mydot$ & $\underline{2}\mydot2^4$\\
$70$ & $2^5\mydot4^2$ & $\mydot$ & $\mydot$ & $2^5\mydot4^2$\\
$71$ & $2^6\mydot4\mydot8$  & $\mydot$ & $16\mydot27\mydot5\mydot7\mydot13\mydot19\mydot37$ & $\underline{b_{18}}\mydot2^6\mydot4\mydot8$\\
$72$ & $2^7$ & $3$ & $2$ & $2^7\mydot3$\\
$73$ & $2^5$ & $\mydot$ & $2^2$& $\underline{2}\mydot2^6$ \\
$74$ & $4^3$ & $3$ & $2$ & $2\mydot4^3\mydot3$ \\
$75$ & $2$ & $9$ & $8\mydot3$ & $\underline{b_{19}}\mydot2\mydot9$\\
$76$ & $2^2\mydot4$ & $5$ & $\mydot$ & $2^2\mydot4\mydot5$\\
$77$ & $2^5\mydot4$ & $\mydot$ & $\mydot$ & $\underline{2}\mydot2^5\mydot4$\\
$78$ & $2^3\mydot4^2$ & $3$ & $\mydot$ & $2^3\mydot4^2\mydot3$\\
$79$ & $2^6\mydot4$ & $\mydot$ & $32\mydot3\mydot25\mydot11\mydot41$ & $\underline{b_{20}}\mydot2^6\mydot4$\\
$80$ & $2^8$ & $\mydot$ & $2$ & $2^8$\\
$81$ & $2^3\mydot4\mydot8$ & $3^2$ & $2^2$ & $\underline{2}\mydot2^4\mydot4\mydot8\mydot3^2$\\
\revrow
$82$ & $2^5\mydot8$ & $3\mydot7$ & $2$ & $2^6\mydot8\mydot3\mydot7$ or $2^4\mydot4\mydot8\mydot3\mydot7$ \\
\revrow
$83$ & $2^3\mydot8$ & $5$ & $8\mydot9\mydot49\mydot43$ & $\underline{b_{21}}\mydot2^3\mydot8\mydot5$ \\
$84$ & $2^6$ or $2^5$  & $3^2$ & $\mydot$ & $2^6\mydot3^2$ or $2^5\mydot3^2$  \\
\revrow
$85$ & $2^6\mydot4^2$ or $2^5\mydot4^2$ or & $3^2$ & $\mydot$ & $2^6\mydot4^2\mydot3^2$ or $2^5\mydot4^2\mydot3^2$  \\
	\revrow
	& $2^4\mydot4^3$ or $2^7\mydot4$ & & & or $2^4\mydot4^3\mydot3^2$ or $2^7\mydot4\mydot3^2$ \\
\revrow
$86$ &  $2^4\mydot8^2$ or $2^2\mydot4\mydot8^2$ & $3\mydot5$ & $\mydot$ &  $2^4\mydot8^2\mydot3\mydot5$ or $2^2\mydot4\mydot8^2\mydot3\mydot5$ \\
\revrow
$87$ & $2^5\mydot4$ & $\mydot$ & $16\mydot3\mydot5\mydot23$ & $\underline{b_{22}}\mydot2^5\mydot4$ \\
$88$ & $2^4\mydot4$  & $\mydot$ & $2$ & $2^4\mydot4$ \\
$89$ & $2^3$ & $\mydot$ & $2^2$ & $\underline{2}\mydot2^4$ \\
$90$ & $2^3\mydot8$ or $2^2\mydot8$ & $3$ & $2$ & $2^4\mydot8\mydot3$ or $2^3\mydot8\mydot3$
\end{longtable}

Figure \ref{fig:Hatcher} displays the $2$-primary stable homotopy groups 
in a graphical format that is
a modification by Allen Hatcher of Adams spectral sequence charts
\cite{Hatcher}.
Vertical chains of $n$ dots 
indicate $\Z/2^n$.  The non-vertical
lines indicate multiplications by $\eta$ and $\nu$.  
The blue dots represent the $v_1$-periodic subgroups.
The green dots are associated to the topological modular forms
spectrum $\tmf$.  These elements are detected by
the unit map from the sphere spectrum to $\tmf$, either in homotopy
or in the algebraic $\Ext$ groups that serve as Adams $E_2$-pages.

Finally, the red dots indicate uncertainties.  In addition, in higher
stems, there are possible extensions by $2$, $\eta$, and $\nu$
that are not indicated in Figure \ref{fig:Hatcher}.
See Tables \ref{tab:2-extn-possible}, \ref{tab:eta-extn-possible},
and \ref{tab:nu-extn-possible} for more details about these possible
extensions.

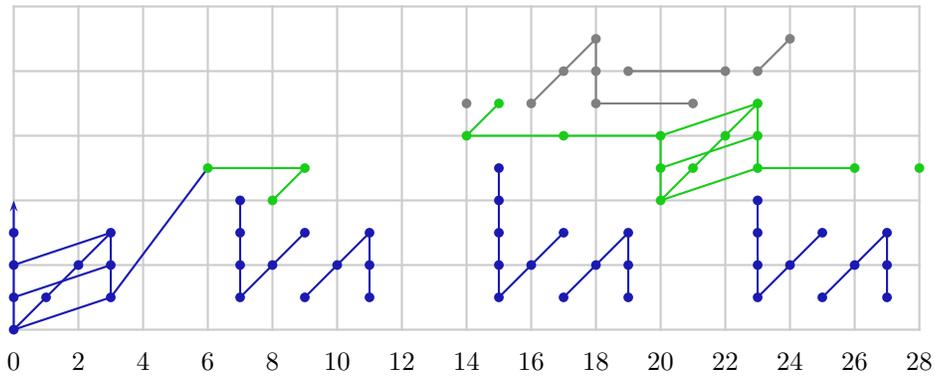
\begin{figure}[htbp!]

\caption{$2$-primary stable homotopy groups
\label{fig:Hatcher}}

		\begin{tikzpicture}
		[scale=0.43,
		>=stealth,
		every path/.style={line width = 0.645pt},
		dot/.style={circle,
			inner sep=0,
			minimum size=0.11cm},
		height0/.style={
			draw={red},
			fill={red}},
		height1/.style={
			draw={darkblue},
			fill={darkblue}},
		height2/.style={
			draw={darkgreen},
			fill={darkgreen}},
		height3/.style={
			draw={midgray},
			fill={midgray}},
		tower/.style={->}]
			\draw[step=2cm, black!20, thin] (0,0) grid (28,10);
			\foreach \x in {0,2,...,28}
				\draw (\x, -1) node{\x};
			\draw[, height1] (0,0) -- (0,1);
			\draw[, height1] (0,0) -- (1,1);
			\draw[, height1] (0,0) -- (3,1);
			\node[dot, height1] at (0,0) {};
			\draw[, height1] (7,1) -- (7,2);
			\draw[, height1] (7,1) -- (8,2);
			\node[dot, height1] at (7,1) {};
			\draw[, height1] (9,1) -- (10,2);
			\node[dot, height1] at (9,1) {};
			\draw[, height2] (14,6) -- (15,7);
			\draw[, height2] (14,6) -- (17,6);
			\node[dot, height2] at (14,6) {};
			\node[dot, height3] at (14,7) {};
			\draw[, height1] (15,1) -- (15,2);
			\draw[, height1] (15,1) -- (16,2);
			\node[dot, height1] at (15,1) {};
			\draw[, height3] (16,7) -- (17,8);
			\node[dot, height3] at (16,7) {};
			\draw[, height1] (17,1) -- (18,2);
			\node[dot, height1] at (17,1) {};
			\draw[, height3] (19,8) -- (22,8);
			\node[dot, height3] at (19,8) {};
			\draw[, height1] (23,1) -- (23,2);
			\draw[, height1] (23,1) -- (24,2);
			\node[dot, height1] at (23,1) {};
			\draw[, height3] (23,8) -- (24,9);
			\node[dot, height3] at (23,8) {};
			\draw[, height1] (25,1) -- (26,2);
			\node[dot, height1] at (25,1) {};
			\node[dot, height2] at (28,5) {};
			\draw[tower, height1] (0,3) -- +(0,1);
			\node[dot, height1] at (0,3) {};
			\draw[, height1] (0,1) -- (0,2);
			\draw[, height1] (0,1) -- (3,2);
			\node[dot, height1] at (0,1) {};
			\draw[, height1] (0,2) -- (0,3);
			\draw[, height1] (0,2) -- (3,3);
			\node[dot, height1] at (0,2) {};
			\draw[, height1] (1,1) -- (2,2);
			\node[dot, height1] at (1,1) {};
			\draw[, height1] (2,2) -- (3,3);
			\node[dot, height1] at (2,2) {};
			\draw[, height1] (3,1) -- (3,2);
			\draw[, height2] (3,1) -- (6,5);
			\node[dot, height1] at (3,1) {};
			\draw[, height1] (3,2) -- (3,3);
			\node[dot, height1] at (3,2) {};
			\node[dot, height1] at (3,3) {};
			\draw[, height2] (6,5) -- (9,5);
			\node[dot, height2] at (6,5) {};
			\draw[, height1] (7,2) -- (7,3);
			\node[dot, height1] at (7,2) {};
			\draw[, height1] (7,3) -- (7,4);
			\node[dot, height1] at (7,3) {};
			\node[dot, height1] at (7,4) {};
			\draw[, height1] (8,2) -- (9,3);
			\node[dot, height1] at (8,2) {};
			\draw[, height2] (8,4) -- (9,5);
			\node[dot, height2] at (8,4) {};
			\node[dot, height1] at (9,3) {};
			\node[dot, height2] at (9,5) {};
			\draw[, height1] (10,2) -- (11,3);
			\node[dot, height1] at (10,2) {};
			\draw[, height1] (11,1) -- (11,2);
			\node[dot, height1] at (11,1) {};
			\draw[, height1] (11,2) -- (11,3);
			\node[dot, height1] at (11,2) {};
			\node[dot, height1] at (11,3) {};
			\draw[, height1] (15,2) -- (15,3);
			\node[dot, height1] at (15,2) {};
			\draw[, height1] (15,3) -- (15,4);
			\node[dot, height1] at (15,3) {};
			\draw[, height1] (15,4) -- (15,5);
			\node[dot, height1] at (15,4) {};
			\node[dot, height1] at (15,5) {};
			\node[dot, height2] at (15,7) {};
			\draw[, height1] (16,2) -- (17,3);
			\node[dot, height1] at (16,2) {};
			\node[dot, height1] at (17,3) {};
			\draw[, height2] (17,6) -- (20,6);
			\node[dot, height2] at (17,6) {};
			\draw[, height3] (17,8) -- (18,9);
			\node[dot, height3] at (17,8) {};
			\draw[, height1] (18,2) -- (19,3);
			\node[dot, height1] at (18,2) {};
			\draw[, height3] (18,7) -- (18,8);
			\draw[, height3] (18,7) -- (21,7);
			\node[dot, height3] at (18,7) {};
			\draw[, height3] (18,8) -- (18,9);
			\node[dot, height3] at (18,8) {};
			\node[dot, height3] at (18,9) {};
			\draw[, height1] (19,1) -- (19,2);
			\node[dot, height1] at (19,1) {};
			\draw[, height1] (19,2) -- (19,3);
			\node[dot, height1] at (19,2) {};
			\node[dot, height1] at (19,3) {};
			\draw[, height2] (20,4) -- (20,5);
			\draw[, height2] (20,4) -- (21,5);
			\draw[, height2] (20,4) -- (23,5);
			\node[dot, height2] at (20,4) {};
			\draw[, height2] (20,5) -- (20,6);
			\draw[, height2] (20,5) -- (23,6);
			\node[dot, height2] at (20,5) {};
			\draw[, height2] (20,6) -- (23,7);
			\node[dot, height2] at (20,6) {};
			\draw[, height2] (21,5) -- (22,6);
			\node[dot, height2] at (21,5) {};
			\node[dot, height3] at (21,7) {};
			\draw[, height2] (22,6) -- (23,7);
			\node[dot, height2] at (22,6) {};
			\node[dot, height3] at (22,8) {};
			\draw[, height1] (23,2) -- (23,3);
			\node[dot, height1] at (23,2) {};
			\draw[, height1] (23,3) -- (23,4);
			\node[dot, height1] at (23,3) {};
			\node[dot, height1] at (23,4) {};
			\draw[, height2] (23,5) -- (23,6);
			\draw[, height2] (23,5) -- (26,5);
			\node[dot, height2] at (23,5) {};
			\draw[, height2] (23,6) -- (23,7);
			\node[dot, height2] at (23,6) {};
			\node[dot, height2] at (23,7) {};
			\draw[, height1] (24,2) -- (25,3);
			\node[dot, height1] at (24,2) {};
			\node[dot, height3] at (24,9) {};
			\node[dot, height1] at (25,3) {};
			\draw[, height1] (26,2) -- (27,3);
			\node[dot, height1] at (26,2) {};
			\node[dot, height2] at (26,5) {};
			\draw[, height1] (27,1) -- (27,2);
			\node[dot, height1] at (27,1) {};
			\draw[, height1] (27,2) -- (27,3);
			\node[dot, height1] at (27,2) {};
			\node[dot, height1] at (27,3) {};
		\end{tikzpicture}

\end{figure}

\begin{center}
		\begin{tikzpicture}
		[scale=0.40,
		>=stealth,
		every path/.style={line width = 0.6000000000000001pt},
		dot/.style={circle,
			inner sep=0,
			minimum size=0.10cm},
		height0/.style={
			draw={red},
			fill={red}},
		height1/.style={
			draw={darkblue},
			fill={darkblue}},
		height2/.style={
			draw={darkgreen},
			fill={darkgreen}},
		height3/.style={
			draw={midgray},
			fill={midgray}},
		tower/.style={->}]
			\draw[step=2cm, black!20, thin] (30,0) grid (60,16);
			\foreach \x in {30,32,...,60}
				\draw (\x, -1) node{\x};
			\draw[, height3] (30,7) -- (31,8);
			\draw[, height3] (30,7) -- (33,7);
			\node[dot, height3] at (30,7) {};
			\draw[, height1] (31,1) -- (31,2);
			\draw[, height1] (31,1) -- (32,2);
			\node[dot, height1] at (31,1) {};
			\draw[, height3] (31,9) -- (34,9);
			\node[dot, height3] at (31,9) {};
			\draw[, height2] (32,5) -- (33,6);
			\draw[, height2] (32,5) -- (35,5);
			\node[dot, height2] at (32,5) {};
			\draw[, height3] (32,8) -- (35,8);
			\node[dot, height3] at (32,8) {};
			\draw[, height3] (32,10) -- (33,11);
			\node[dot, height3] at (32,10) {};
			\draw[, height1] (33,1) -- (34,2);
			\node[dot, height1] at (33,1) {};
			\draw[, height3] (36,6) -- (39,6);
			\node[dot, height3] at (36,6) {};
			\draw[, height3] (37,9) -- (40,9);
			\node[dot, height3] at (37,9) {};
			\draw[, height1] (39,1) -- (39,2);
			\draw[, height1] (39,1) -- (40,2);
			\node[dot, height1] at (39,1) {};
			\draw[, height2] (39,5) -- (40,6);
			\draw[, height2] (39,5) -- (42,7);
			\node[dot, height2] at (39,5) {};
			\draw[, height3] (39,7) -- (40,8);
			\node[dot, height3] at (39,7) {};
			\node[dot, height3] at (39,10) {};
			\draw[, height3] (40,7) -- (41,8);
			\node[dot, height3] at (40,7) {};
			\draw[, height3] (40,11) -- (41,12);
			\node[dot, height3] at (40,11) {};
			\draw[, height1] (41,1) -- (42,2);
			\node[dot, height1] at (41,1) {};
			\draw[, height3] (44,14) -- (44,15);
			\draw[, height3] (44,14) -- (45,15);
			\draw[, height3] (44,14) -- (47,14);
			\node[dot, height3] at (44,14) {};
			\draw[, height2] (45,6) -- (46,7);
			\draw[, height2] (45,6) -- (48,7);
			\node[dot, height2] at (45,6) {};
			\draw[, height3] (45,12) -- (46,13);
			\draw[, height3] (45,12) -- (48,12);
			\node[dot, height3] at (45,12) {};
			\draw[, height1] (47,1) -- (47,2);
			\draw[, height1] (47,1) -- (48,2);
			\node[dot, height1] at (47,1) {};
			\draw[, height3] (47,13) -- (48,14);
			\node[dot, height3] at (47,13) {};
			\draw[, height1] (49,1) -- (50,2);
			\node[dot, height1] at (49,1) {};
			\draw[, height3] (50,7) -- (53,7);
			\node[dot, height3] at (50,7) {};
			\draw[, height2] (52,4) -- (53,5);
			\node[dot, height2] at (52,4) {};
			\node[dot, height3] at (52,8) {};
			\draw[, height2] (54,5) -- (54,6);
			\draw[, height2] (54,5) -- (57,5);
			\node[dot, height2] at (54,5) {};
			\draw[, height1] (55,1) -- (55,2);
			\draw[, height1] (55,1) -- (56,2);
			\node[dot, height1] at (55,1) {};
			\draw[, height1] (57,1) -- (58,2);
			\node[dot, height1] at (57,1) {};
			\node[dot, height3] at (58,6) {};
			\draw[, height2] (59,4) -- (60,5);
			\node[dot, height2] at (59,4) {};
			\node[dot, height3] at (59,6) {};
			\draw[, height1] (31,2) -- (31,3);
			\node[dot, height1] at (31,2) {};
			\draw[, height1] (31,3) -- (31,4);
			\node[dot, height1] at (31,3) {};
			\draw[, height1] (31,4) -- (31,5);
			\node[dot, height1] at (31,4) {};
			\draw[, height1] (31,5) -- (31,6);
			\node[dot, height1] at (31,5) {};
			\node[dot, height1] at (31,6) {};
			\node[dot, height3] at (31,8) {};
			\draw[, height1] (32,2) -- (33,3);
			\node[dot, height1] at (32,2) {};
			\node[dot, height1] at (33,3) {};
			\node[dot, height2] at (33,6) {};
			\draw[, height3] (33,11) -- (34,12);
			\node[dot, height3] at (33,11) {};
			\node[dot, height3] at (33,7) {};
			\draw[, height1] (34,2) -- (35,3);
			\node[dot, height1] at (34,2) {};
			\draw[, height2] (34,4) -- (35,5);
			\node[dot, height2] at (34,4) {};
			\draw[, height3] (34,11) -- (34,12);
			\node[dot, height3] at (34,11) {};
			\node[dot, height3] at (34,12) {};
			\node[dot, height3] at (34,9) {};
			\draw[, height1] (35,1) -- (35,2);
			\node[dot, height1] at (35,1) {};
			\draw[, height1] (35,2) -- (35,3);
			\node[dot, height1] at (35,2) {};
			\node[dot, height1] at (35,3) {};
			\node[dot, height2] at (35,5) {};
			\draw[, height3] (35,8) -- (38,8);
			\node[dot, height3] at (35,8) {};
			\draw[, height3] (37,7) -- (38,8);
			\node[dot, height3] at (37,7) {};
			\node[dot, height3] at (38,8) {};
			\draw[, height3] (38,4) -- (38,5);
			\draw[, height3] (38,4) -- (39,6);
			\node[dot, height3] at (38,4) {};
			\node[dot, height3] at (38,5) {};
			\draw[, height1] (39,2) -- (39,3);
			\node[dot, height1] at (39,2) {};
			\draw[, height1] (39,3) -- (39,4);
			\node[dot, height1] at (39,3) {};
			\node[dot, height1] at (39,4) {};
			\draw[, height3] (39,8) -- (40,9);
			\node[dot, height3] at (39,8) {};
			\node[dot, height3] at (39,6) {};
			\draw[, height1] (40,2) -- (41,3);
			\node[dot, height1] at (40,2) {};
			\draw[, height2] (40,5) -- (40,6);
			\draw[, height2] (40,5) -- (41,6);
			\node[dot, height2] at (40,5) {};
			\node[dot, height2] at (40,6) {};
			\node[dot, height3] at (40,9) {};
			\node[dot, height3] at (40,8) {};
			\node[dot, height1] at (41,3) {};
			\draw[, height2] (41,6) -- (42,7);
			\node[dot, height2] at (41,6) {};
			\node[dot, height3] at (41,8) {};
			\draw[, height3] (41,12) -- (42,13);
			\node[dot, height3] at (41,12) {};
			\draw[, height1] (42,2) -- (43,3);
			\node[dot, height1] at (42,2) {};
			\node[dot, height2] at (42,7) {};
			\draw[, height3] (42,11) -- (42,12);
			\draw[, height3] (42,11) -- (45,11);
			\node[dot, height3] at (42,11) {};
			\draw[, height3] (42,12) -- (42,13);
			\node[dot, height3] at (42,12) {};
			\node[dot, height3] at (42,13) {};
			\draw[, height1] (43,1) -- (43,2);
			\node[dot, height1] at (43,1) {};
			\draw[, height1] (43,2) -- (43,3);
			\node[dot, height1] at (43,2) {};
			\node[dot, height1] at (43,3) {};
			\draw[, height3] (44,15) -- (44,16);
			\node[dot, height3] at (44,15) {};
			\node[dot, height3] at (44,16) {};
			\draw[, height3] (45,15) -- (46,16);
			\node[dot, height3] at (45,15) {};
			\draw[, height3] (45,8) -- (45,9);
			\draw[, height3] (45,8) -- (46,9);
			\draw[, height3] (45,8) -- (48,9);
			\node[dot, height3] at (45,8) {};
			\draw[, height3] (45,9) -- (45,10);
			\draw[, height3] (45,9) -- (48,10);
			\node[dot, height3] at (45,9) {};
			\draw[, height3] (45,10) -- (45,11);
			\node[dot, height3] at (45,10) {};
			\node[dot, height3] at (45,11) {};
			\draw[, height2] (46,7) -- (47,8);
			\node[dot, height2] at (46,7) {};
			\node[dot, height3] at (46,16) {};
			\node[dot, height3] at (46,13) {};
			\draw[, height3] (46,9) -- (47,10);
			\node[dot, height3] at (46,9) {};
			\draw[, height1] (47,2) -- (47,3);
			\node[dot, height1] at (47,2) {};
			\draw[, height1] (47,3) -- (47,4);
			\node[dot, height1] at (47,3) {};
			\draw[, height1] (47,4) -- (47,5);
			\node[dot, height1] at (47,4) {};
			\node[dot, height1] at (47,5) {};
			\draw[, height2] (47,6) -- (47,8);
			\draw[, height2] (47,6) -- (48,7);
			\node[dot, height2] at (47,6) {};
			\node[dot, height2] at (47,8) {};
			\node[dot, height3] at (47,14) {};
			\node[dot, height3] at (47,10) {};
			\draw[, height1] (48,2) -- (49,3);
			\node[dot, height1] at (48,2) {};
			\node[dot, height2] at (48,7) {};
			\draw[, height3] (48,12) -- (51,12);
			\node[dot, height3] at (48,12) {};
			\draw[, height3] (48,9) -- (48,10);
			\draw[, height3] (48,9) -- (51,9);
			\node[dot, height3] at (48,9) {};
			\node[dot, height3] at (48,10) {};
			\node[dot, height3] at (48,14) {};
			\node[dot, height1] at (49,3) {};
			\draw[, height1] (50,2) -- (51,3);
			\node[dot, height1] at (50,2) {};
			\draw[, height3] (50,11) -- (51,12);
			\draw[, height3] (50,11) -- (53,12);
			\node[dot, height3] at (50,11) {};
			\draw[, height1] (51,1) -- (51,2);
			\node[dot, height1] at (51,1) {};
			\draw[, height1] (51,2) -- (51,3);
			\node[dot, height1] at (51,2) {};
			\node[dot, height1] at (51,3) {};
			\draw[, height3] (51,10) -- (51,11);
			\draw[, height3] (51,10) -- (52,11);
			\node[dot, height3] at (51,10) {};
			\draw[, height3] (51,11) -- (51,12);
			\node[dot, height3] at (51,11) {};
			\node[dot, height3] at (51,12) {};
			\draw[, height3] (51,9) -- (54,9);
			\node[dot, height3] at (51,9) {};
			\node[dot, height3] at (52,11) {};
			\node[dot, height2] at (53,5) {};
			\node[dot, height3] at (53,7) {};
			\node[dot, height3] at (53,12) {};
			\draw[, height3] (53,8) -- (54,9);
			\node[dot, height3] at (53,8) {};
			\node[dot, height2] at (54,6) {};
			\node[dot, height3] at (54,9) {};
			\draw[, height1] (55,2) -- (55,3);
			\node[dot, height1] at (55,2) {};
			\draw[, height1] (55,3) -- (55,4);
			\node[dot, height1] at (55,3) {};
			\node[dot, height1] at (55,4) {};
			\draw[, height1] (56,2) -- (57,3);
			\node[dot, height1] at (56,2) {};
			\node[dot, height1] at (57,3) {};
			\draw[, height2] (57,5) -- (60,5);
			\node[dot, height2] at (57,5) {};
			\draw[, height1] (58,2) -- (59,3);
			\node[dot, height1] at (58,2) {};
			\draw[, height1] (59,1) -- (59,2);
			\node[dot, height1] at (59,1) {};
			\draw[, height1] (59,2) -- (59,3);
			\node[dot, height1] at (59,2) {};
			\node[dot, height1] at (59,3) {};
			\draw[, height2] (60,4) -- (60,5);
			\node[dot, height2] at (60,4) {};
			\node[dot, height2] at (60,5) {};
		\end{tikzpicture}
\end{center}

\begin{center}

		\begin{tikzpicture}
		[scale=0.43,
		>=stealth,
		every path/.style={line width = 0.645pt},
		dot/.style={circle,
			inner sep=0,
			minimum size=0.11cm},
		height0/.style={
			draw={red},
			fill={red}},
		height1/.style={
			draw={darkblue},
			fill={darkblue}},
		height2/.style={
			draw={darkgreen},
			fill={darkgreen}},
		height3/.style={
			draw={midgray},
			fill={midgray}},
		tower/.style={->}]
			\draw[step=2cm, black!20, thin] (62,0) grid (90,22);
			\foreach \x in {62,64,...,90}
				\draw (\x, -1) node{\x};
			\node[dot, height3] at (62,8) {};
			\draw[, height3] (62,12) -- (65,12);
			\node[dot, height3] at (62,12) {};
			\draw[, height3] (62,13) -- (63,14);
			\draw[, height3] (62,13) -- (65,13);
			\node[dot, height3] at (62,13) {};
			\draw[, height3] (62,19) -- (63,20);
			\draw[, height3] (62,19) -- (65,20);
			\node[dot, height3] at (62,19) {};
			\draw[, height1] (63,1) -- (63,2);
			\draw[, height1] (63,1) -- (64,2);
			\node[dot, height1] at (63,1) {};
			\draw[, height3] (64,9) -- (65,10);
			\node[dot, height3] at (64,9) {};
			\draw[, height3] (64,8) -- (67,8);
			\node[dot, height3] at (64,8) {};
			\draw[, height1] (65,1) -- (66,2);
			\node[dot, height1] at (65,1) {};
			\draw[, height2] (65,5) -- (66,6);
			\node[dot, height2] at (65,5) {};
			\draw[, height2] (65,7) -- (68,7);
			\node[dot, height2] at (65,7) {};
			\node[dot, height3] at (66,12) {};
			\draw[, height3] (67,15) -- (70,15);
			\node[dot, height3] at (67,15) {};
			\node[dot, height3] at (70,10) {};
			\draw[, height3] (70,14) -- (71,15);
			\draw[, height3] (70,14) -- (73,14);
			\node[dot, height3] at (70,14) {};
			\draw[, height1] (71,1) -- (71,2);
			\draw[, height1] (71,1) -- (72,2);
			\node[dot, height1] at (71,1) {};
			\draw[, height3] (71,8) -- (72,9);
			\node[dot, height3] at (71,8) {};
			\node[dot, height3] at (71,13) {};
			\draw[, height3] (71,16) -- (72,17);
			\node[dot, height3] at (71,16) {};
			\draw[, height3] (72,10) -- (73,11);
			\node[dot, height3] at (72,10) {};
			\draw[, height1] (73,1) -- (74,2);
			\node[dot, height1] at (73,1) {};
			\node[dot, height3] at (73,8) {};
			\draw[, height3] (74,8) -- (74,9);
			\node[dot, height3] at (74,8) {};
			\draw[, height3] (75,8) -- (76,9);
			\node[dot, height3] at (75,8) {};
			\draw[, height3] (76,7) -- (76,8);
			\draw[, height3] (76,7) -- (79,7);
			\node[dot, height3] at (76,7) {};
			\draw[, height3] (76,16) -- (77,17);
			\draw[, height3] (76,16) -- (79,16);
			\node[dot, height3] at (76,16) {};
			\draw[, height3] (77,8) -- (80,8);
			\node[dot, height3] at (77,8) {};
			\node[dot, height3] at (77,9) {};
			\draw[, height3] (77,13) -- (78,14);
			\draw[, height3] (77,13) -- (80,13);
			\node[dot, height3] at (77,13) {};
			\draw[, height3] (77,19) -- (78,20);
			\draw[, height3] (77,19) -- (80,19);
			\node[dot, height3] at (77,19) {};
			\draw[, height3] (78,10) -- (78,11);
			\draw[, height3] (78,10) -- (79,12);
			\draw[, height3] (78,10) -- (81,10);
			\node[dot, height3] at (78,10) {};
			\draw[, height1] (79,1) -- (79,2);
			\draw[, height1] (79,1) -- (80,2);
			\node[dot, height1] at (79,1) {};
			\draw[, height3] (79,17) -- (82,17);
			\node[dot, height3] at (79,17) {};
			\draw[, height3] (79,15) -- (80,16);
			\node[dot, height3] at (79,15) {};
			\draw[, height3] (79,14) -- (80,15);
			\node[dot, height3] at (79,14) {};
			\node[dot, height2] at (80,5) {};
			\draw[, height1] (81,1) -- (82,2);
			\node[dot, height1] at (81,1) {};
			\draw[, height3] (81,20) -- (84,20);
			\node[dot, height3] at (81,20) {};
			\node[dot, height3] at (81,12) {};
			\draw[, height3] (82,15) -- (85,15);
			\node[dot, height3] at (82,15) {};
			\node[dot, height3] at (82,11) {};
			\node[dot, height3] at (82,12) {};
			\draw[, height0] (83,16) -- (84,17);
			\node[dot, height3] at (83,16) {};
			\draw[, height3] (84,18) -- (87,18);
			\node[dot, height3] at (84,18) {};
			\node[dot, height2] at (85,5) {};
			\draw[, height3] (85,19) -- (85,20);
			\draw[, height3] (85,19) -- (86,21);
			\draw[, height3] (85,19) -- (88,19);
			\node[dot, height0] at (85,19) {};
			\node[dot, height3] at (85,11) {};
			\draw[, height3] (85,7) -- (86,8);
			\node[dot, height0] at (85,7) {};
			\draw[, height3] (86,5) -- (87,6);
			\node[dot, height3] at (86,5) {};
			\draw[, height1] (87,1) -- (87,2);
			\draw[, height1] (87,1) -- (88,2);
			\node[dot, height1] at (87,1) {};
			\node[dot, height3] at (87,7) {};
			\node[dot, height3] at (87,8) {};
			\draw[, height3] (87,9) -- (88,10);
			\node[dot, height3] at (87,9) {};
			\draw[, height3] (87,10) -- (87,11);
			\draw[, height3] (87,10) -- (88,11);
			\node[dot, height3] at (87,10) {};
			\draw[, height3] (88,8) -- (89,9);
			\node[dot, height3] at (88,8) {};
			\draw[, height3] (88,6) -- (89,7);
			\node[dot, height3] at (88,6) {};
			\draw[, height1] (89,1) -- (90,2);
			\node[dot, height1] at (89,1) {};
			\node[dot, height2] at (90,5) {};
			\node[dot, height0] at (90,9) {};
			\node[dot, height3] at (90,10) {};
			\draw[, height1] (63,2) -- (63,3);
			\node[dot, height1] at (63,2) {};
			\draw[, height1] (63,3) -- (63,4);
			\node[dot, height1] at (63,3) {};
			\draw[, height1] (63,4) -- (63,5);
			\node[dot, height1] at (63,4) {};
			\draw[, height1] (63,5) -- (63,6);
			\node[dot, height1] at (63,5) {};
			\draw[, height1] (63,6) -- (63,7);
			\node[dot, height1] at (63,6) {};
			\node[dot, height1] at (63,7) {};
			\draw[, height3] (63,17) -- (64,19);
			\draw[, height3] (63,17) -- (66,17);
			\node[dot, height3] at (63,17) {};
			\draw[, height3] (63,19) -- (63,20);
			\draw[, height3] (63,19) -- (64,20);
			\draw[, height3] (63,19) -- (66,20);
			\node[dot, height3] at (63,19) {};
			\node[dot, height3] at (63,20) {};
			\draw[, height3] (63,14) -- (64,15);
			\node[dot, height3] at (63,14) {};
			\draw[, height1] (64,2) -- (65,3);
			\node[dot, height1] at (64,2) {};
			\draw[, height3] (64,19) -- (65,20);
			\node[dot, height3] at (64,19) {};
			\draw[, height3] (64,20) -- (65,21);
			\node[dot, height3] at (64,20) {};
			\draw[, height3] (64,18) -- (67,18);
			\node[dot, height3] at (64,18) {};
			\draw[, height3] (64,14) -- (64,15);
			\draw[, height3] (64,14) -- (65,15);
			\node[dot, height3] at (64,14) {};
			\node[dot, height3] at (64,15) {};
			\node[dot, height1] at (65,3) {};
			\draw[, height3] (65,19) -- (65,20);
			\draw[, height3] (65,19) -- (66,20);
			\node[dot, height3] at (65,19) {};
			\node[dot, height3] at (65,20) {};
			\node[dot, height3] at (65,21) {};
			\draw[, height3] (65,10) -- (66,11);
			\node[dot, height3] at (65,10) {};
			\node[dot, height3] at (65,12) {};
			\draw[, height3] (65,13) -- (68,14);
			\node[dot, height3] at (65,13) {};
			\draw[, height3] (65,15) -- (66,16);
			\node[dot, height3] at (65,15) {};
			\draw[, height1] (66,2) -- (67,3);
			\node[dot, height1] at (66,2) {};
			\node[dot, height2] at (66,6) {};
			\node[dot, height3] at (66,17) {};
			\node[dot, height3] at (66,20) {};
			\draw[, height3] (66,10) -- (66,11);
			\node[dot, height3] at (66,10) {};
			\node[dot, height3] at (66,11) {};
			\draw[, height3] (66,9) -- (66,10);
			\draw[, height3] (66,9) -- (69,9);
			\node[dot, height3] at (66,9) {};
			\node[dot, height3] at (66,16) {};
			\draw[, height1] (67,1) -- (67,2);
			\node[dot, height1] at (67,1) {};
			\draw[, height1] (67,2) -- (67,3);
			\node[dot, height1] at (67,2) {};
			\node[dot, height1] at (67,3) {};
			\draw[, height3] (67,18) -- (70,18);
			\node[dot, height3] at (67,18) {};
			\node[dot, height3] at (67,8) {};
			\draw[, height3] (67,12) -- (67,13);
			\draw[, height3] (67,12) -- (68,14);
			\draw[, height3] (67,12) -- (70,12);
			\node[dot, height3] at (67,12) {};
			\draw[, height3] (67,13) -- (70,13);
			\node[dot, height3] at (67,13) {};
			\draw[, height2] (68,7) -- (71,7);
			\node[dot, height2] at (68,7) {};
			\draw[, height3] (68,16) -- (69,17);
			\node[dot, height3] at (68,16) {};
			\node[dot, height3] at (68,14) {};
			\draw[, height3] (69,17) -- (70,18);
			\node[dot, height3] at (69,17) {};
			\node[dot, height3] at (69,9) {};
			\draw[, height3] (69,8) -- (70,9);
			\node[dot, height3] at (69,8) {};
			\draw[, height3] (69,11) -- (70,13);
			\draw[, height3] (69,11) -- (72,12);
			\node[dot, height3] at (69,11) {};
			\node[dot, height3] at (70,18) {};
			\draw[, height2] (70,6) -- (71,7);
			\draw[, height2] (70,6) -- (73,7);
			\node[dot, height2] at (70,6) {};
			\draw[, height3] (70,17) -- (70,18);
			\draw[, height3] (70,17) -- (71,18);
			\node[dot, height3] at (70,17) {};
			\node[dot, height3] at (70,15) {};
			\draw[, height3] (70,12) -- (70,13);
			\draw[, height3] (70,12) -- (73,13);
			\node[dot, height3] at (70,12) {};
			\node[dot, height3] at (70,13) {};
			\draw[, height3] (70,9) -- (71,10);
			\node[dot, height3] at (70,9) {};
			\draw[, height1] (71,2) -- (71,3);
			\node[dot, height1] at (71,2) {};
			\draw[, height1] (71,3) -- (71,4);
			\node[dot, height1] at (71,3) {};
			\node[dot, height1] at (71,4) {};
			\draw[, height2] (71,5) -- (71,6);
			\draw[, height2] (71,5) -- (72,6);
			\draw[, height2] (71,5) -- (74,6);
			\node[dot, height2] at (71,5) {};
			\draw[, height2] (71,6) -- (71,7);
			\draw[, height2] (71,6) -- (74,7);
			\node[dot, height2] at (71,6) {};
			\node[dot, height2] at (71,7) {};
			\node[dot, height3] at (71,18) {};
			\node[dot, height3] at (71,15) {};
			\draw[, height3] (71,11) -- (72,12);
			\node[dot, height3] at (71,11) {};
			\draw[, height3] (71,9) -- (71,10);
			\draw[, height3] (71,9) -- (72,11);
			\node[dot, height3] at (71,9) {};
			\node[dot, height3] at (71,10) {};
			\draw[, height1] (72,2) -- (73,3);
			\node[dot, height1] at (72,2) {};
			\draw[, height2] (72,6) -- (73,7);
			\node[dot, height2] at (72,6) {};
			\node[dot, height3] at (72,9) {};
			\draw[, height3] (72,13) -- (73,14);
			\node[dot, height3] at (72,13) {};
			\node[dot, height3] at (72,17) {};
			\node[dot, height3] at (72,12) {};
			\draw[, height3] (72,11) -- (73,13);
			\node[dot, height3] at (72,11) {};
			\node[dot, height1] at (73,3) {};
			\node[dot, height2] at (73,7) {};
			\node[dot, height3] at (73,14) {};
			\node[dot, height3] at (73,13) {};
			\draw[, height3] (73,11) -- (74,12);
			\node[dot, height3] at (73,11) {};
			\draw[, height1] (74,2) -- (75,3);
			\node[dot, height1] at (74,2) {};
			\draw[, height2] (74,6) -- (74,7);
			\node[dot, height2] at (74,6) {};
			\node[dot, height2] at (74,7) {};
			\node[dot, height3] at (74,9) {};
			\draw[, height3] (74,11) -- (74,12);
			\node[dot, height3] at (74,11) {};
			\node[dot, height3] at (74,12) {};
			\draw[, height1] (75,1) -- (75,2);
			\node[dot, height1] at (75,1) {};
			\draw[, height1] (75,2) -- (75,3);
			\node[dot, height1] at (75,2) {};
			\node[dot, height1] at (75,3) {};
			\node[dot, height3] at (76,9) {};
			\node[dot, height3] at (76,8) {};
			\draw[, height3] (77,14) -- (77,15);
			\draw[, height3] (77,14) -- (78,15);
			\node[dot, height3] at (77,14) {};
			\node[dot, height3] at (77,15) {};
			\draw[, height3] (77,17) -- (78,18);
			\node[dot, height3] at (77,17) {};
			\draw[, height3] (78,11) -- (81,11);
			\node[dot, height3] at (78,11) {};
			\node[dot, height3] at (78,20) {};
			\node[dot, height3] at (78,14) {};
			\draw[, height3] (78,15) -- (79,16);
			\node[dot, height3] at (78,15) {};
			\draw[, height3] (78,17) -- (78,18);
			\draw[, height3] (78,17) -- (79,18);
			\node[dot, height3] at (78,17) {};
			\node[dot, height3] at (78,18) {};
			\node[dot, height3] at (79,12) {};
			\node[dot, height3] at (79,18) {};
			\draw[, height1] (79,2) -- (79,3);
			\node[dot, height1] at (79,2) {};
			\draw[, height1] (79,3) -- (79,4);
			\node[dot, height1] at (79,3) {};
			\draw[, height1] (79,4) -- (79,5);
			\node[dot, height1] at (79,4) {};
			\node[dot, height1] at (79,5) {};
			\draw[, height3] (79,6) -- (79,7);
			\draw[, height3] (79,6) -- (82,6);
			\node[dot, height3] at (79,6) {};
			\node[dot, height3] at (79,7) {};
			\node[dot, height3] at (79,16) {};
			\draw[, height1] (80,2) -- (81,3);
			\node[dot, height1] at (80,2) {};
			\draw[, height3] (80,7) -- (81,8);
			\node[dot, height3] at (80,7) {};
			\draw[, height3] (80,9) -- (81,11);
			\draw[, height3] (80,9) -- (83,10);
			\node[dot, height3] at (80,9) {};
			\draw[, height3] (80,15) -- (81,16);
			\node[dot, height3] at (80,15) {};
			\draw[, height3] (80,19) -- (83,19);
			\node[dot, height3] at (80,19) {};
			\node[dot, height3] at (80,8) {};
			\draw[, height3] (80,13) -- (83,13);
			\node[dot, height3] at (80,13) {};
			\node[dot, height3] at (80,16) {};
			\node[dot, height1] at (81,3) {};
			\draw[, height3] (81,8) -- (82,9);
			\node[dot, height3] at (81,8) {};
			\draw[, height3] (81,10) -- (81,11);
			\node[dot, height3] at (81,10) {};
			\node[dot, height3] at (81,11) {};
			\draw[, height3] (81,14) -- (81,15);
			\draw[, height3] (81,14) -- (84,14);
			\node[dot, height3] at (81,14) {};
			\draw[, height3] (81,15) -- (81,16);
			\node[dot, height3] at (81,15) {};
			\node[dot, height3] at (81,16) {};
			\node[dot, height3] at (82,17) {};
			\draw[, height1] (82,2) -- (83,3);
			\node[dot, height1] at (82,2) {};
			\draw[, height3] (82,7) -- (82,8);
			\draw[, height3] (82,7) -- (85,9);
			\node[dot, height3] at (82,7) {};
			\draw[, height3] (82,8) -- (82,9);
			\node[dot, height3] at (82,8) {};
			\node[dot, height3] at (82,9) {};
			\draw[, height3] (82,6) -- (85,6);
			\node[dot, height3] at (82,6) {};
			\draw[, height3] (83,11) -- (83,12);
			\draw[, height3] (83,11) -- (84,12);
			\draw[, height3] (83,11) -- (86,12);
			\node[dot, height3] at (83,11) {};
			\draw[, height1] (83,1) -- (83,2);
			\node[dot, height1] at (83,1) {};
			\draw[, height1] (83,2) -- (83,3);
			\node[dot, height1] at (83,2) {};
			\node[dot, height1] at (83,3) {};
			\draw[, height3] (83,10) -- (86,10);
			\node[dot, height3] at (83,10) {};
			\node[dot, height3] at (83,19) {};
			\draw[, height3] (83,12) -- (83,13);
			\draw[, height3] (83,12) -- (86,13);
			\node[dot, height3] at (83,12) {};
			\draw[, height3] (83,13) -- (86,14);
			\node[dot, height3] at (83,13) {};
			\node[dot, height0] at (84,17) {};
			\node[dot, height3] at (84,20) {};
			\node[dot, height3] at (84,14) {};
			\draw[, height3] (84,5) -- (85,6);
			\node[dot, height3] at (84,5) {};
			\draw[, height3] (84,12) -- (85,13);
			\node[dot, height3] at (84,12) {};
			\draw[, height3] (85,8) -- (85,9);
			\draw[, height3] (85,8) -- (86,10);
			\node[dot, height3] at (85,8) {};
			\node[dot, height3] at (85,9) {};
			\draw[, height3] (85,20) -- (88,20);
			\node[dot, height3] at (85,20) {};
			\node[dot, height3] at (85,6) {};
			\node[dot, height3] at (85,15) {};
			\draw[, height3] (85,13) -- (86,14);
			\node[dot, height3] at (85,13) {};
			\draw[, height3] (86,15) -- (86,16);
			\draw[, height3] (86,15) -- (87,18);
			\node[dot, height3] at (86,15) {};
			\draw[, height3] (86,16) -- (86,17);
			\node[dot, height3] at (86,16) {};
			\node[dot, height3] at (86,17) {};
			\node[dot, height3] at (86,10) {};
			\node[dot, height3] at (86,21) {};
			\draw[, height3] (86,12) -- (86,13);
			\draw[, height3] (86,12) -- (89,12);
			\node[dot, height3] at (86,12) {};
			\draw[, height3] (86,13) -- (86,14);
			\node[dot, height3] at (86,13) {};
			\node[dot, height3] at (86,14) {};
			\node[dot, height3] at (86,8) {};
			\draw[, height1] (87,2) -- (87,3);
			\node[dot, height1] at (87,2) {};
			\draw[, height1] (87,3) -- (87,4);
			\node[dot, height1] at (87,3) {};
			\node[dot, height1] at (87,4) {};
			\node[dot, height3] at (87,18) {};
			\node[dot, height3] at (87,6) {};
			\node[dot, height3] at (87,11) {};
			\draw[, height1] (88,2) -- (89,3);
			\node[dot, height1] at (88,2) {};
			\draw[, height3] (88,19) -- (88,20);
			\node[dot, height3] at (88,19) {};
			\node[dot, height3] at (88,20) {};
			\node[dot, height3] at (88,10) {};
			\node[dot, height3] at (88,11) {};
			\node[dot, height1] at (89,3) {};
			\node[dot, height3] at (89,9) {};
			\node[dot, height3] at (89,12) {};
			\draw[, height3] (89,7) -- (90,8);
			\node[dot, height3] at (89,7) {};
			\node[dot, height1] at (90,2) {};
			\draw[, height3] (90,6) -- (90,7);
			\node[dot, height3] at (90,6) {};
			\draw[, height3] (90,7) -- (90,8);
			\node[dot, height3] at (90,7) {};
			\node[dot, height3] at (90,8) {};
		\end{tikzpicture}

\end{center}

The orders of individual $2$-primary stable homotopy groups do not 
follow a clear pattern, with large increases and decreases seemingly
at random.  However, an empirically observed pattern emerges
if we consider the cumulative size of the groups, i.e., the product
of the orders of 
all $2$-primary stable homotopy groups from dimension $1$ to
dimension $k$.  

Our data strongly suggest that asymptotically, there is a linear relationship between $k^2$ 
and the logarithm of this product of orders.
In other words, the number of dots in Figure \ref{fig:Hatcher} in
stems $1$ through $k$ is linearly proportional to $k^2$.
Correspondingly,
the number of dots in the classical Adams $E_\infty$-page in
stems $1$ through $k$ is linearly proportional to $k^2$.
Thus, in extending from dimension 60 to dimension 90, the overall 
size of the computation more than doubles.
Specifically, through dimension 60, the cumulative rank of the Adams $E_\infty$-page is 199, and is 435 through
dimension 90.  Similarly, through dimension 60, the cumulative
rank of the Adams $E_2$-page is 488, and is 1,461 through dimension 90.

\begin{conj}
Let $f(k)$ be the product of the orders of the $2$-primary
stable homotopy groups in dimensions $1$ through $k$.
There exists a non-zero constant $C$ such that
\[
\lim_{k \map \infty} \frac{\log_2 f(k)}{k^2} = C.
\]
\end{conj}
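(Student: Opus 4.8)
The plan is to reformulate the conjecture as a statement about the asymptotic density of the Adams $E_\infty$-page, to bound that density from above and below, and finally to upgrade the two-sided bound to an honest limit. First I would record the elementary identity
\[
\log_2 f(k) = \sum_{n=1}^{k} \dim_{\mathbb{F}_2} \left( E_\infty \text{ in stem } n \right),
\]
which holds because the $2$-adic valuation of the order of a finite abelian $2$-group equals the $\mathbb{F}_2$-rank of the associated graded of its Adams filtration, i.e.\ the number of dots in stem $n$ of Figure \ref{fig:Hatcher}. Thus $\log_2 f(k)$ is the cumulative rank of the classical Adams $E_\infty$-page through stem $k$, and the conjecture asserts that this rank grows like $C k^2$. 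Since $E_\infty$ is a subquotient of $E_2 = \Ext_{\mathcal{A}}(\mathbb{F}_2, \mathbb{F}_2)$, the cumulative $E_\infty$-rank is squeezed between $0$ and the cumulative $E_2$-rank, and the first substantive reduction is to control the latter.

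For the upper bound I would use the Adams vanishing line: in stem $n$ the groups $\Ext^{s, n+s}$ vanish once $s$ exceeds $\tfrac{1}{2} n + O(1)$, so all the dots in stems $\le k$ lie in a triangular region of area $\sim \tfrac14 k^2$. It then suffices to show that the total $\mathbb{F}_2$-dimension of $\Ext$ in a single stem $n$ is $O(n)$, i.e.\ that the chart does not thicken faster than linearly. I would attack this through the May spectral sequence, whose $E_1$-page is a polynomial algebra on classes $h_{i,j}$: only about $\tfrac12(\log_2 n)^2$ of these generators have stem $\le n$, and the monomials respecting the vanishing-line slope are comparatively sparse, so a generating-function estimate for the surviving monomials should yield a linear stemwise bound and hence $\limsup_k \log_2 f(k)/k^2 < \infty$. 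Passing from May $E_\infty$ to Adams $E_2$ and then to Adams $E_\infty$ only decreases ranks, so the same upper bound transfers to $\log_2 f(k)$.

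The lower bound is harder, because a quadratic count demands genuinely two-parameter families of \emph{permanent} cycles, not the one-parameter periodic families (such as the image of $J$, whose contribution to $\log_2 f(k)$ is only $O(k)$ and is recorded separately as the $v_1$-periodic column of Table \ref{tab:order}). My approach would be to produce, for a positive proportion of the lattice points $(n,s)$ inside the vanishing triangle, a nonzero class in $\Ext^{s, n+s}$ supporting no Adams differential and not hit by one. The natural source is multiplicative: combine the polynomial families in $\Ext$ (products built from $h_0, h_1, h_2, c_0, \D$ and the Adams-periodicity operators) with structural survival results coming from naturality along the unit map to $\mmf$, whose Adams spectral sequence is completely understood \cite{Isaksen18}. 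The main obstacle is exactly here: one must certify that an infinite, two-dimensional family survives \emph{all} differentials and hidden extensions, and Mahowald's uncertainty principle tells us that no finite combination of the ad hoc techniques of step (\ref{item:ad-hoc}) can accomplish this. A genuine proof therefore requires a structural survival theorem, most plausibly extracted from the algebraic special-fiber model, where the homotopy of $C\tau$ is the purely algebraic object $\Ext_{BP_* BP}(BP_*, BP_*)$ and the relevant differentials are organized by the algebraic Novikov spectral sequence.

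Finally, even a matching pair of bounds $c_- k^2 \le \log_2 f(k) \le c_+ k^2$ would not by itself produce the limit. To obtain existence of $C$ I would search for an approximate self-similarity or subadditivity: if the cumulative-rank function $g(k) = \log_2 f(k)$ can be shown to satisfy a Fekete-type inequality $g(k+m) \le g(k) + g(m) + o\bigl((k+m)^2\bigr)$, arising from a product or splitting structure on the relevant comodule categories, then $g(k)/k^2$ converges. I expect \emph{this} step, upgrading a density estimate to a true asymptotic constant, to be the deepest obstacle: the data of this manuscript pin down $C$ numerically through dimension $90$, but identifying any exact recursive or renormalization structure that forces the limit to exist, let alone computing $C$ in closed form, appears to lie well beyond the computational methods developed here and would require genuinely new structural input.
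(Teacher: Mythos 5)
This statement is a \emph{conjecture} in the paper, not a theorem: the authors explicitly write that they ``have only data to support the conjecture'' and ``have not formulated a mathematical rationale.'' There is therefore no proof in the paper to compare your proposal against, and your proposal is not a proof either --- you candidly flag that both the lower bound (a two-parameter family of permanent cycles, against which you yourself invoke Mahowald's uncertainty principle) and the existence of the limit (a Fekete-type subadditivity for which no product or splitting structure is known) are open. Your opening identity, that $\log_2 f(k)$ equals the cumulative $\F_2$-rank of the classical Adams $E_\infty$-page through stem $k$, is correct and is exactly the reformulation the paper uses informally when it counts dots in its charts.

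Beyond the gaps you acknowledge, the proposed upper bound has a concrete problem. You plan to bound the cumulative $E_\infty$-rank by the cumulative $E_2$-rank and then show the latter is $O(k^2)$, i.e.\ that $\Ext$ has total dimension $O(n)$ in stem $n$. The paper's own data points the other way: the cumulative $E_2$-rank is $488$ through dimension $60$ and $1{,}461$ through dimension $90$ (a ratio of about $3$, versus $(90/60)^2 = 2.25$), and the authors state that ``the growth rate of the Adams $E_2$-page is qualitatively greater than the growth rate of the Adams $E_\infty$-page.'' A linear-per-stem bound on $\Ext$ is also implausible from your own May spectral sequence setup: the $E_1$-page is a polynomial algebra, so its stemwise dimension is a partition-type count that grows superpolynomially, and there is no known mechanism forcing the May differentials to cut this down to $O(n)$ per stem. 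So the quadratic upper bound, if true, cannot be obtained by passing through the $E_2$-page; it would already require understanding which classes die, which is essentially the same difficulty as the lower bound. In short, every substantive step of the program remains open, which is consistent with the statement being presented in the paper as a conjecture supported only by computations through dimension $90$.
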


One interpretation of this conjecture is that the expected value of
the logarithm of the order of the 2-primary component of $\pi_k$
grows linearly in $k$.  
We have only data to support the conjecture, and we have not formulated
a mathematical rationale.
It is possible that in higher stems, new phenomena occur that alter the
growth rate of the stable homotopy groups. 

By comparison, data indicates that the growth rate of the
Adams $E_2$-page is qualitatively greater than the growth rate of the
Adams $E_\infty$-page.  This apparent mismatch has implications 
for the frequency of Adams differentials.

\section{Remaining uncertainties}

Some uncertainties remain in the analysis of the first 90 stable stems.
All undetermined possible differentials in this range are mentioned within
Table \ref{tab:Adams-higher}.  
All of these uncertainties concern the Adams differentials $d_r$
for $r \geq 9$.
This means that
the orders of some of the stable homotopy groups are known only up
to factors of $2$.

In addition, there are some possible hidden extensions by $2$, $\eta$,
and $\nu$ that remain unresolved.  Tables \ref{tab:2-extn-possible},
\ref{tab:eta-extn-possible}, and \ref{tab:nu-extn-possible} 
summarize these possibilities.  The presence of
unknown hidden extensions by $2$ means that
the group structures of some stable homotopy groups are not known, 
even though their orders are known.

\section{Groups of homotopy spheres}\label{section homotopy sphere}

An important application of stable homotopy group computations 
is to the work of Kervaire and Milnor \cite{KervaireMilnor}
on the classification of smooth structures on spheres in dimensions at least 5. Let $\Theta_n$ be the group of $h$-cobordism classes of homotopy $n$-spheres. This group classifies the differential structures on $S^n$ for $n\geq5$. It has a subgroup $\Theta_n^{bp}$, which consists of homotopy spheres that bound parallelizable manifolds. The relation between $\Theta_n$ and the stable homotopy group $\pi_n$ 
is summarized in Theorem \ref{km}. See also \cite{Milnor} for a survey on this subject.

\begin{thm}\label{km}(Kervaire-Milnor \cite{KervaireMilnor})
Suppose that $n\geq 5$.
\begin{enumerate}
\item
The subgroup $\Theta_n^{bp}$ is cyclic, and has the following order:
\begin{equation*}
|\Theta_n^{bp}|=\left\{
\begin{split}
1 & , ~~\text{if~~}n\text{~~is even,} \\
1 \text{~~or~~} 2 & , ~~\text{if~~}n = 4k+1,\\
b_k & , ~~\text{if~~}n = 4k-1.
\end{split}
\right.
\end{equation*}
Here $b_k$ is $2^{2k-2}(2^{2k-1}-1)$ times the numerator of $8\zeta(1-2k)$,
where $\zeta$ is the Riemann zeta function.\\
\item
For $n \not\equiv 2 ~(mod ~4)$, there is an exact sequence
\begin{displaymath}
    \xymatrix{
   0 \ar[r] & \Theta_n^{bp} \ar[r] & \Theta_n \ar[r] & \pi_n/J \ar[r] & 0.
    }
\end{displaymath}
Here $\pi_n/J$ is the cokernel of the $J$-homomorphism.\\

\item
For $n \equiv 2 ~(mod ~4)$, there is an exact sequence
\begin{displaymath}
    \xymatrix{
   0 \ar[r] & \Theta_n^{bp} \ar[r] & \Theta_n \ar[r] & \pi_n/J \ar[r]^\Phi & \mathbb{Z}/2 \ar[r] & \Theta_{n-1}^{bp} \ar[r] & 0.
    }
\end{displaymath}
Here the map $\Phi$ is the Kervaire invariant.
\end{enumerate}
\end{thm}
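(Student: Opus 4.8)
The plan is to follow the surgery-theoretic strategy of Kervaire and Milnor, built on the Pontryagin--Thom correspondence between framed cobordism and stable homotopy. The key starting point is that every homotopy $n$-sphere $\Sigma$ is stably parallelizable, so any embedding $\Sigma \hookrightarrow S^{n+q}$ with $q$ large admits a normal framing, and the Pontryagin--Thom construction sends it to an element of $\pi_n = \pi_{n+q}(S^q)$. Two framings of the normal bundle differ by an element of $\pi_n(SO_q)$, so the ambiguity in this construction is exactly the image of the $J$-homomorphism $J\colon \pi_n(SO) \map \pi_n$. Hence we obtain a well-defined homomorphism $p\colon \Theta_n \map \pi_n/J$, and the entire theorem reduces to identifying the kernel and the cokernel of $p$.

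First I would show $\ker p = \Theta_n^{bp}$, which gives exactness at $\Theta_n$ in both (2) and (3). If $[\Sigma]$ lies in $\ker p$, then for a suitable framing the associated framed bordism class vanishes, so $\Sigma$ bounds a framed manifold $W$; surgery below the middle dimension makes $W$ highly connected while preserving its stable parallelizability, exhibiting $\Sigma$ as the boundary of a parallelizable manifold. Conversely, a parallelizable coboundary supplies a framing that bounds, so the class dies under $p$. This matches $\ker p$ with the homotopy spheres bounding parallelizable manifolds.

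Next I would analyze the image of $p$; this is where the two congruence classes of $n$ diverge and where most of the work sits. Starting from a framed manifold $M^n$ representing a class of $\pi_n/J$, surgery below the middle dimension makes $M$ $\lfloor (n-1)/2 \rfloor$-connected, after which a single middle-dimensional obstruction controls whether $M$ can be surgered to a homotopy sphere. For $n$ odd this obstruction is absent; for $n \equiv 0 \pmod 4$ it is the signature, which is a multiple of $8$ and can be cancelled by connect-summing $M$ with a boundary from $\Theta_n^{bp}$. In both cases $p$ is surjective, giving the short exact sequence (2). For $n \equiv 2 \pmod 4$ the obstruction is the Arf invariant of a quadratic refinement of the middle intersection form, which defines the Kervaire homomorphism $\Phi\colon \pi_n/J \map \Z/2$ with $\mathrm{im}\, p = \ker \Phi$. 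Tracing the surgery then yields the five-term sequence (3): the generator of $\Z/2$ maps to the boundary of the Kervaire plumbing manifold in $\Theta_{n-1}^{bp}$, and exactness there records whether that Kervaire sphere is exotic.

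Finally I would compute $|\Theta_n^{bp}|$. For $n$ even the parallelizable coboundary has odd dimension $n+1$; there is no middle intersection form, surgery always reaches the standard sphere, and the group is trivial. For $n = 4k+1$ the group $\Theta_n^{bp}$ is $\Z/2$ or $0$ according to whether there is a closed framed $(4k+2)$-manifold of Kervaire invariant one, which is exactly the Kervaire invariant problem. The \emph{main obstacle} is the case $n = 4k-1$: here $\Theta_n^{bp}$ is cyclic, generated by the boundary of the $E_8$-plumbing (of signature $8$), and its order equals one-eighth of the smallest positive signature realized by a closed almost-parallelizable $4k$-manifold. Pinning down that minimal signature is the crux. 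It requires the Hirzebruch signature theorem to write the signature as $s_k\, p_k[M]$ in terms of the top Pontryagin number, the constraint on $p_k[M]$ for almost-parallelizable $M$, and Adams's determination of the image of $J$ (equivalently the denominators of $B_{2k}/4k$ via the $e$-invariant and the von Staudt--Clausen theorem). Assembling these inputs produces the stated order $b_k = 2^{2k-2}(2^{2k-1}-1)$ times the numerator of $4B_{2k}/k$.
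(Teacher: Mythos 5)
This theorem is quoted from Kervaire--Milnor and the paper supplies no proof of its own beyond the citation, so there is nothing internal to compare against; your sketch is a faithful outline of the classical surgery-theoretic argument of the cited source, correctly identifying the Pontryagin--Thom map $p\colon \Theta_n \to \pi_n/J$, the identification $\ker p = \Theta_n^{bp}$, the middle-dimensional signature and Arf obstructions, and the Hirzebruch/Adams input needed to pin down $b_k$ exactly. The only caveat is cosmetic: for $n=4k+1$ the group $\Theta_n^{bp}$ vanishes precisely when a closed framed $(4k+2)$-manifold of Kervaire invariant one exists (your phrasing could be read with the dichotomy reversed), but since the theorem only asserts the order is $1$ or $2$, this does not affect the argument.
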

The first few values, and then estimates, of the numbers $b_k$
(for $k \geq 2$)
are given by the sequence
$$28, \ 992, \ 8128, \ 261632, \ 1.45\times 10^9, \ 6.71\times 10^7, \ 1.94\times 10^{12}, \ 7.54\times 10^{14}, \ldots .$$

\begin{thm}\label{thm homotopy sphere}
The last column of Table~\ref{tab:order}
describes the groups $\Theta_n$ for $n \leq 90$, with the exception of $n=4$. 
The underlined symbols denote the contributions from $\Theta_n^{bp}$.
\end{thm}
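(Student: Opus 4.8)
The plan is to apply the Kervaire--Milnor exact sequences of Theorem \ref{km}, feeding in the stable homotopy groups $\pi_n$ supplied by Corollary \ref{cor:cl-order}. For each $n \leq 90$ with $n \neq 4$, the group $\Theta_n$ is assembled from three pieces of data: the cokernel $\pi_n/J$ of the $J$-homomorphism, the cyclic subgroup $\Theta_n^{bp}$, and, when $n \equiv 2 \pmod 4$, the behavior of the Kervaire invariant $\Phi$. Since every $\pi_n$ in this range is already known, the content of the theorem lies in determining these three pieces and resolving the resulting group extensions. The finitely many dimensions $n < 5$ are not covered by Theorem \ref{km} but are classical (there $\Theta_n$ is trivial), so I restrict attention to $5 \leq n \leq 90$.

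First I would pin down $\coker J = \pi_n/J$ using the classical computation of the image of the $J$-homomorphism. The map $J$ is nonzero only in stems $n \equiv 0, 1 \pmod 8$, where its image is $\Z/2$, and in stems $n \equiv 3 \pmod 4$, where its image is cyclic of order equal to the denominator of $B_{2k}/4k$ for $n = 4k-1$. Comparing with Table \ref{tab:order}, the image of $J$ is exactly the $v_1$-periodic summand in the stems $n \equiv 3 \pmod 4$, so there $\coker J$ is precisely the $v_1$-torsion. In the remaining stems the image of $J$ is at most one copy of $\Z/2$ inside the $v_1$-periodic summand; removing it leaves the $\mu$-family together with all $v_1$-torsion, and this is $\coker J$. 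In particular, in stems $n \equiv 2 \pmod 4$ the image of $J$ vanishes at every prime, so $\coker J = \pi_n$.

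Next I would record $\Theta_n^{bp}$ and the Kervaire invariant. Part (1) of Theorem \ref{km} gives $\Theta_n^{bp} = 0$ for even $n \geq 6$ and $\Theta_n^{bp} = \Z/b_k$ for $n = 4k-1$; these account for the underlined entries of Table \ref{tab:order}. For $n = 4k+1$, the order $|\Theta_n^{bp}| \in \{1,2\}$ is forced by exactness in the sequence of part (3) in dimension $n+1 = 4k+2$: it equals the cokernel of $\Phi$, hence $\Z/2$ exactly when $\Phi$ is zero. By Browder's theorem the Kervaire invariant can be nonzero only in dimensions $2^j - 2$, which in our range are $2, 6, 14, 30, 62$; each of these is realized (the last by Barratt--Jones--Mahowald), and $\Phi$ is zero in every other dimension $\equiv 2 \pmod 4$. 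This determines $\Phi$, and hence $\Theta_{4k+1}^{bp}$, throughout the range.

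Finally I would assemble $\Theta_n$ from the exact sequences: for $n \equiv 0, 1 \pmod 4$ it is an extension of $\coker J$ by $\Theta_n^{bp}$; for $n \equiv 3 \pmod 4$ an extension of the $v_1$-torsion by $\Z/b_k$; and for $n \equiv 2 \pmod 4$ it is $\coker J$ itself, or its index-two subgroup when $\Phi$ is onto. The main obstacle is resolving these extensions at the prime $2$, since the exact sequences determine only the order of $\Theta_n$ and not a priori its isomorphism type. Here I would invoke the known splitting of the Kervaire--Milnor sequence, namely that the subgroup $\Theta_n^{bp}$ splits off in this range, together with direct checks against the Adams $E_\infty$-page, which is what ultimately justifies the group structures recorded in the last column. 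The uncertainties in $\pi_n$ beginning in stem $82$ propagate directly to the listed alternatives for $\Theta_n$, and I would carry each possibility through the same bookkeeping.
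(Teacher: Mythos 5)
Your derivation is correct and follows essentially the same route as the paper, which states this theorem without a separate proof: it relies on Theorem \ref{km}, Adams's computation of the image of $J$, Browder's theorem together with the known realizations of the Kervaire invariant in dimensions $6$, $14$, $30$, and $62$, and the observation (made explicitly in the remark following the theorem) that $\coker J$ differs from the $v_1$-torsion only by the $\mu$-family classes in dimensions $8m+1$ and $8m+2$. The one ingredient you make explicit that the paper leaves entirely implicit is the splitting of the Kervaire--Milnor extension, which is what justifies writing $\Theta_n$ as a direct sum of $\Theta_n^{bp}$ and $\pi_n/J$ rather than merely determining its order.
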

 
The cokernel of the $J$-homomorphism is slightly
different than the $v_1$-torsion part of $\pi_n$ at the prime 2. 
In dimensions $8m+1$ and $8m+2$, there are classes detected by $P^m h_1$ 
and $P^m h_1^2$ in the Adams spectral sequence. 
These classes are $v_1$-periodic, in the sense that
they are detected by the $K(1)$-local sphere. 
However, they are also in the cokernel of the $J$-homomorphism.  

We restate the following conjecture from \cite{WangXu17}, which is based on the current knowledge of stable stems and a problem proposed by Milnor \cite{Milnor}.

\begin{conj}\label{conjecture homotopy sphere}
In dimensions greater than 4, the only spheres with unique smooth structures are $S^5$, $S^6$, $S^{12}$, $S^{56}$, and $S^{61}$.	
\end{conj}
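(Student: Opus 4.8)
The plan is to feed the stable-stem computation of Corollary~\ref{cor:cl-order} into the Kervaire--Milnor exact sequences of Theorem~\ref{km}. For $n \leq 3$ the groups $\Theta_n$ vanish by classical low-dimensional results, and $n = 4$ is excluded because $\Theta_4$ is not accessible by these methods (Theorem~\ref{km} requires $n \geq 5$); this accounts for the ``$?$'' in Table~\ref{tab:order}. For $5 \leq n \leq 90$ each sequence is assembled from three pieces: the stable stem $\pi_n$, the cokernel $\pi_n/J$ of the $J$-homomorphism, and the cyclic group $\Theta_n^{bp}$. I treat these in turn.

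First I would identify $\pi_n/J$ for every $n$ in range. The image of $J$ is classically known: it is cyclic of order equal to the denominator of $B_{2k}/4k$ in dimension $n = 4k-1$, it is $\Z/2$ in each dimension $n \equiv 0$ or $1 \pmod 8$ with $n > 0$, and it vanishes otherwise. Comparing with Table~\ref{tab:order}, the image of $J$ is exactly the ``$J$-part'' of the $v_1$-periodic column: the large cyclic summand in dimensions $4k-1$, the single $\Z/2$ in dimensions $8m$, and one of the two copies of $\Z/2$ in dimensions $8m+1$. The key subtlety, highlighted after Theorem~\ref{km}, is that the remaining $v_1$-periodic classes---the $\mu$-family elements detected by $P^m h_1$ in dimension $8m+1$ and by $P^m h_1^2$ in dimension $8m+2$---are \emph{not} in the image of $J$, so they survive into the cokernel. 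Thus $\pi_n/J$ is the direct sum of both $v_1$-torsion columns together with these $P^m h_1$ and $P^m h_1^2$ contributions.

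Next I would pin down $\Theta_n^{bp}$ using Theorem~\ref{km}(1): it is trivial for $n$ even and cyclic of order $b_k$ for $n = 4k-1$, with the values $28, 992, 8128, \ldots$ read off from the Bernoulli-number formula. The only ambiguity is the value $1$ or $2$ in dimensions $n = 4k+1$, and this is resolved by the tail of the exact sequence of Theorem~\ref{km}(3) in dimension $4k+2$: the map $\Z/2 \to \Theta_{4k+1}^{bp}$ is zero exactly when the Kervaire invariant $\Phi$ is onto in dimension $4k+2$, i.e.\ exactly when a framed manifold of Kervaire invariant one exists there. Invoking the solution of the Kervaire invariant problem (Hill--Hopkins--Ravenel, together with the classical low-dimensional cases), such manifolds occur in our range only in dimensions $2, 6, 14, 30, 62$. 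Hence $\Theta_n^{bp}$ is trivial for $n \in \{1,5,13,29,61\}$ and is $\Z/2$ for every other $n \equiv 1 \pmod 4$ with $5 \leq n \leq 90$.

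Finally I would assemble $\Theta_n$ from the appropriate sequence: for $n \equiv 0 \pmod 4$ it equals $\pi_n/J$; for $n \equiv 2 \pmod 4$ it is $\ker \Phi$, of index one or two in $\pi_n/J$ according to the Kervaire computation above; and for $n$ odd it is an extension of $\pi_n/J$ by $\Theta_n^{bp}$. The main obstacle is resolving these extensions in the odd-dimensional cases---for instance establishing $\Theta_{15} \cong \Z/b_4 \oplus \Z/2$ rather than a single cyclic group. The order of $\Theta_n$ is forced by exactness, but its isomorphism type requires deciding the $2$-divisibility of a generator of $\Theta_n^{bp}$ inside $\Theta_n$, equivalently whether the $v_1$-torsion classes of $\pi_n/J$ lift to elements of the same order. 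I expect to settle each such case using the splitting results of the classical Kervaire--Milnor analysis \cite{KervaireMilnor} together with the explicit additive and multiplicative structure of $\pi_n$ recorded in Table~\ref{tab:order}, after which the entries of the last column follow by direct comparison.
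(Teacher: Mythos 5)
The statement you are trying to prove is a \emph{conjecture}, and the paper offers no proof of it: it is restated from \cite{WangXu17}, and the surrounding discussion explicitly records its status as open. What you have actually outlined is a proof of Theorem~\ref{thm homotopy sphere} --- the identification of $\Theta_n$ for $5 \leq n \leq 90$ by feeding Table~\ref{tab:order} into the Kervaire--Milnor sequences --- and as a sketch of that finite-range computation your outline is reasonable (the description of $\operatorname{im} J$, the role of the $P^m h_1$ and $P^m h_1^2$ classes in the cokernel, the determination of $\Theta_{4k+1}^{bp}$ via the Kervaire invariant in dimension $4k+2$, and the acknowledged extension problems all match the standard argument). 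But verifying that $|\Theta_n| > 1$ for $61 < n \leq 90$ does not prove the conjecture, which quantifies over \emph{all} dimensions greater than $4$.

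The genuine gap is the word ``only'': the conjecture is equivalent to the assertion that $\Theta_n$ is nontrivial for every $n > 61$, and no stable-stem computation in a finite range can deliver that. The paper records the actual state of knowledge: the claim holds in all odd dimensions by \cite{WangXu17} (using the Hill--Hopkins--Ravenel resolution of the Kervaire invariant problem \cite{HHR}), and in even dimensions up to $140$ by \cite{Behrens}; even dimensions beyond $140$ remain open. Your method contributes the range $n \leq 90$ as corroborating evidence, nothing more. If you want to present something provable, recast your argument as a proof of Theorem~\ref{thm homotopy sphere} and state clearly that the conjecture itself is not established by it.
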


Uniqueness in
dimensions 5, 6 and 12 was known to Kervaire and Milnor \cite{KervaireMilnor}. 
Uniqueness in dimension 56 is due to the first author \cite{Isaksen14c}, 
and uniqueness in dimension 61 is due to the second and the third authors \cite{WangXu17}. 

Conjecture~\ref{conjecture homotopy sphere} is equivalent to the claim
that the group $\Theta_n$ is not of order $1$ for dimensions greater than $61$. This conjecture has been confirmed in all odd dimensions by the second and the third authors \cite{WangXu17} based on the work of Hill, Hopkins, and Ravenel \cite{HHR}, and \rev{in more than half of the even dimensions by Behrens, Hill, Hopkins, Mahowald and Quigley \cite{Behrens}\cite{BMQ21}.}

\section{Notation}
\label{sctn:notation}

The cohomology of the Steenrod algebra is highly irregular, so
consistent naming systems for elements presents a challenge.
A list of multiplicative generators appears in Table \ref{tab:Adams-d2}.
To a large extent, we rely on the traditional names for elements,
as used in \cite{Bruner97}, \cite{Isaksen14c}, \cite{Tangora70a},
and elsewhere.  However, we have adopted some new conventions 
in order to partially systematize the names of elements.

First, we use the symbol $\Delta x$ to indicate an element that is
represented by $v_2^4 x$ in the May spectral sequence.  This use of
$\Delta$ is consistent with the role that $v_2^4$ plays in the homotopy
of $\tmf$, where it detects the discriminant element $\Delta$.
For example, instead of the traditional symbol $r$, we use the
name $\D h_2^2$.

Second, the symbol $M$ indicates the Massey product
operator $\langle -, h_0^3, g_2 \rangle$.  For example, instead of the
traditional symbol $B_1$, we use the name $M h_1$.

Similarly, the symbol $g$ indicates the Massey product
operator $\langle -, h_1^4, h_4 \rangle$. For example, we write
$h_2 g$ for the indecomposable element $\langle h_2, h_1^4, h_4 \rangle$.

Eventually, we encounter elements that neither have traditional names,
nor can be named using symbols such as $P$, $\Delta$, $M$, and $g$.
In these cases, we use arbitrary names of the form $x_{s,f}$, where
$s$ and $f$ are the stem and Adams filtration of the element.

The last column of Table \ref{tab:Adams-d2} gives alternative names,
if any, for each multiplicative generator.  These alternative names
appear in at least one of 
\cite{Bruner97} \cite{Isaksen14c} \cite{Tangora70a}.

\begin{remark}
One specific element deserves further discussion.
\revv{In the cohomology of the motivic Steenrod algebra},
we define $\tau Q_3$ to be the unique \revv{non-zero} element 
\revv{in degree $(67, 5, 35)$}
such that
$h_3 \cdot \tau Q_3 = 0$.
This choice is not compatible with the notation of \cite{Isaksen14c}.
The element $\tau Q_3$ from \cite{Isaksen14c} equals
the element $\tau Q_3 + \tau n_1$ in this manuscript.
\end{remark}

We shall also extensively study the Adams spectral sequence for
the cofiber of $\tau$.  See Section \ref{sctn:Ctau-naming} for
more discussion of the names of elements in this spectral sequence,
and how they relate to the Adams spectral sequence for the sphere.

Table \ref{tab:notation} gives some notation for elements in
$\pi_{*,*}$.  Many of these names follow standard usage, but we 
have introduced additional non-standard elements such as
$\kappa_1$ and $\kappabar_2$.  These elements are defined by
the classes in the Adams $E_\infty$-page that detect them.
In some cases, this style of definition leaves indeterminacy because
of the presence of elements in the $E_\infty$-page in higher filtration.
In some of these cases, Table \ref{tab:notation} provides additional
defining information.  Beware that this additional defining information
does not completely specify a unique element in $\pi_{*,*}$ in 
all cases.  For the purposes of our computations, these remaining
indeterminacies are not consequential.

\revv{Here is a list of the key notation that we use extensively:}

\begin{itemize}
\item
\revv{
Because we have completed at $2$, we have a map
$\tau: S^{0,-1} \map S^{0,0}$ \cite{HKO11}*{Lemma 25}.  We write
$C\tau$ for its cofiber.}
We can also write $S/\tau$ for this $\C$-motivic spectrum, but the latter
notation is more cumbersome.
\item
$\Ext = \Ext_\C$ is the cohomology of the $\C$-motivic Steenrod algebra.
It is graded in the form $(s,f,w)$, where $s$ is the stem (i.e.,
the total degree minus the Adams filtration), $f$ is the Adams
filtration (i.e., the homological degree), and $w$ is the motivic weight.
\item
$\Ext_\cl$ is the cohomology of the classical Steenrod algebra.
It is graded in the form $(s,f)$, where $s$ is the stem (i.e.,
the total degree minus the Adams filtration), and $f$ is the Adams
filtration (i.e., the homological degree).
\item
$\pi_{*,*}$ are the $2$-completed $\C$-motivic stable homotopy groups.
\item
$H^*(S; BP)$ is the Adams-Novikov $E_2$-page for the classical
sphere spectrum, i.e., $\Ext_{BP_* BP} (BP_*, BP_*)$.
\item
$H^*(S/2; BP)$ is the Adams-Novikov $E_2$-page for the classical
mod $2$ Moore spectrum, i.e., $\Ext_{BP_* BP} (BP_*, BP_*/2)$.
\end{itemize}

\section{How to use this manuscript}

The manuscript is oriented around a series of tables to be found
in Chapter \ref{ch:table}.  In a sense, the rest of the manuscript
consists of detailed arguments for establishing each of the 
computations listed in the tables.  We have attempted to give 
references and cross-references within these tables, so that the 
reader can more easily find the specific arguments pertaining to
each computation.

We have attempted to make the arguments accessible to users who
do not intend to read the manuscript in its entirety.  To some
extent, with an understanding of how the manuscript is structured,
it is possible to extract information about a particular homotopy
class in isolation.
\revv{
A secondary goal is to offer a 
a guide to the computational techniques in use in 
stable homotopy theory today.
}

We assume that the reader is also referring to the Adams charts in
\cite{Isaksen14a} and \cite{Isaksen18}.  These charts describe the
same information as the tables, except in graphical form.
\revv{Especially when there are multiple elements in a single
degree, the charts can be somewhat ambiguous.  In such cases,
we encourage readers to use the associated spreadsheets
\cite{Isaksen14a}.  These spreadsheets are more cumbersome than charts,
but they are entirely explicit.
}

\rev{The style of this manuscript is very much similar to \cite{Isaksen14c}.}  We will
frequently refer to discussions in \cite{Isaksen14c}, rather than
repeat that same material here in an essentially redundant way.
This is especially true for the first parts of Chapters 2, 3, and 4
of \cite{Isaksen14c}, which discuss respectively the general properties of
$\Ext$, the May spectral sequence, and Massey products; 
the Adams spectral sequence and Toda brackets;
and hidden extensions.

Chapter \ref{ch:background} provides some additional miscellaneous
background material not already covered in \cite{Isaksen14c}.
Chapter \ref{ch:AANSS} discusses the nature of
the machine-generated data that we rely on.  In particular, it
describes our data on the algebraic Novikov spectral sequence,
which is equal to the Adams spectral sequence for the cofiber of $\tau$.
Chapter \ref{ch:Massey} provides some tools for computing
Massey products in $\Ext$, and gives some specific computations.
Chapter \ref{ch:Adams} carries out a detailed analysis of Adams
differentials.
Chapter \ref{ch:Toda} computes some miscellaneous Toda brackets
that are needed for various specific arguments elsewhere.
Chapter \ref{ch:hidden} methodically studies hidden extensions
by $\tau$, $2$, $\eta$, and $\nu$ in the $E_\infty$-page of the
$\C$-motivic Adams spectral sequence.  This chapter also gives some
information about other miscellaneous hidden extensions.
Finally, Chapter \ref{ch:table} includes the tables that summarize
the multitude of specific computations that contribute to our study
of stable homotopy groups.

\section{Acknowledgements}

We thank Agn\`es Beaudry, Joey Beauvais-Feisthauer, Mark Behrens, Robert Bruner, Robert Burklund, Dexter Chua, Paul Goerss, Jesper Grodal, Lars Hesselholt, Mike Hopkins, Peter May, Haynes Miller, Christian Nassau, Doug Ravenel, and John Rognes for advice, suggestions, corrections, support, and encouragement.

\chapter{Background}
\label{ch:background}

\section{Associated graded objects}
\label{sctn:assoc-graded}

\begin{defn}
A filtered object $A$ consists of a finite chain
\[
A = F_0 A \supseteq F_1 A \supseteq F_2 A \supseteq \cdots
\supseteq F_{p-1} A \supseteq F_p A = 0
\]
of inclusions descending from $A$ to $0$.
\end{defn}

We will only consider finite chains because these are the examples
that arise in our Adams spectral sequences.  Thus we do not need
to refer to ``exhaustive" and ``Hausdorff" conditions on filtrations,
and we avoid subtle convergence issues associated with infinite filtrations.

\begin{ex}
\label{ex:14,8-filtered}
The $\C$-motivic stable homotopy group $\pi_{14,8} = \Z/2 \oplus \Z/2$
is a filtered object under the Adams filtration.  
The generators of this group are $\sigma^2$ and $\kappa$.
The subgroup $F_5$ is zero, the subgroup $F_3 = F_4$ is generated
by $\kappa$, and the subgroup $F_0 = F_1 = F_2$ is generated by
$\sigma^2$ and $\kappa$.
\end{ex}

\begin{defn}
Let $A$ be a filtered object.  The associated graded object
$\Gr A$ is
\[
\bigoplus_0^p F_i A / F_{i+1} A.
\]
\end{defn}

If $a$ is an element of $\Gr A$, then
we write $\{a\}$ for the set of elements of $A$ that are detected
by $a$.  In general, $\{a\}$ consists of more than one element of $A$,
unless $a$ happens to have maximal filtration.  
\revv{In other words}, the element $a$ is a coset $\alpha + F_{i+1} A$
for some $\alpha$ in $A$,
and $\{a\}$ is another name for this coset.
In this situation, we say that $a$ detects $\alpha$.

In this manuscript, the main example of a filtered object is
a $\C$-motivic homotopy group $\pi_{p,q}$, equipped with its Adams
filtration.  

\begin{ex}
\label{ex:14,8-assoc-graded}
Consider the $\C$-motivic stable homotopy group $\pi_{14,8}$
with its Adams filtration, as described in Example \ref{ex:14,8-filtered}.
The associated graded object is non-trivial only in degrees
$2$ and $4$, and it is generated by $h_3^2$ and $d_0$ respectively.
\end{ex}

\begin{defn}
Let $A$ and $B$ be filtered objects, perhaps with filtrations
of different lengths.
A map $f:A \map B$ is filtration preserving if
$f(F_i A)$ is contained in $F_i B$ for all $i$.
\end{defn}

Let $f: A \map B$ be a filtration preserving map of filtered objects.
We write $\Gr f: \Gr A \map \Gr B$ for the induced
map on associated graded objects.

\begin{defn}
\label{defn:hidden-value}
Let $a$ and $b$ be elements of $\Gr A$ and $\Gr B$ respectively.
We say that $b$ is the (not hidden) value of $a$ under $f$ if
$\Gr f(a) = b$.

We say that $b$ is the hidden value of $a$ under $f$ if:
\begin{enumerate}
\item
$\Gr f (a) = 0$.
\item
\revv{
there exists an element $\alpha$ of $\{a\}$ in $A$ such that:
\begin{enumerate}
\item
$f(\alpha)$ is contained in $\{b\}$ in $B$, and 
\item
there is no element $\gamma$ in filtration strictly higher than
$\alpha$ such that $f(\gamma)$ is contained in $\{b\}$.
\end{enumerate}
}
\end{enumerate}
\end{defn}

The motivation for condition (2b) may not be obvious.
The point is to avoid situations in which condition (2a) is satisfied
trivially.
Suppose that 
there is an element $\gamma$ 
such that $f(\gamma)$ is contained in $\{b\}$.
Let $a$ be any element of $\Gr A$ 
whose filtration is strictly less than the filtration of $\gamma$.
Now let $\alpha$ be any element of $\{ a\}$ such that $f(\alpha) = 0$.
(It may not be possible to choose such an $\alpha$ in general,
but sometimes it is possible.)
Then $\alpha + \gamma$ is another element
of $\{a\}$ such that $f(\alpha + \gamma)$ is contained in $\{b\}$.
Thus $f$ takes some element of $\{a\}$ into $\{b\}$, but only because
of the presence of $\gamma$.  
Condition (2b) is designed to exclude
this situation.

\begin{ex}
\label{ex:14,8-hidden-defn}
We illustrate the role of condition (2b) 
in Definition \ref{defn:hidden-value} with a specific example.
Consider the map $\eta: \pi_{14,8} \map \pi_{15,9}$.
The associated graded map $\Gr(\eta)$ takes $h_3^2$ to $0$ and 
takes $d_0$ to $h_1 d_0$.

The coset $\{h_3^2\}$ in $\pi_{14,8}$ consists of two elements
$\sigma^2$ and $\sigma^2 + \kappa$.  
One of these elements is non-zero after multiplying by $\eta$.
(In fact, $\eta \sigma^2$ equals zero, and 
$\eta (\sigma^2 + \kappa) = \eta \kappa$ is non-zero, but
that is not relevant here.)
Conditions (1) and (2a) of Definition \ref{defn:hidden-value}
are satisfied, but condition (2b) fails because of the presence
of $\kappa$ in higher filtration.
\end{ex}

Suppose that $b$ is the hidden value of $a$ under $f$.  
It is typically the case that $f(\alpha)$ is contained in $\{b\}$ for every
$\alpha$ in $A$.  However, an even more complicated situation can occur
in which this is not true.

Suppose that $b_0$ is the hidden value of $a_0$ under $f$, and suppose
that $b_1$ is the (hidden or not hidden) value of $a_1$ under $f$.
Moreover, suppose that the filtration of $a_0$ is strictly lower than
the filtration of $a_1$, and the filtration of $b_0$
is strictly greater than the filtration of $b_1$.
In this situation, we say that the value of $a_0$ under $f$ crosses
the value of $a_1$ under $f$.

The terminology arises from the usual graphical calculus, in which 
elements of higher filtration are drawn above elements of lower filtration,
and values of maps are indicated by line segments, as in 
Figure \ref{fig:crossing-values}.

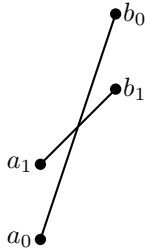
\begin{figure}[h!]
\caption{Crossing values}
\label{fig:crossing-values}

\begin{tikzpicture}[scale=0.5,
	every path/.style={line width=0.8pt},
	dot/.style={circle, inner sep=0, minimum size=0.13cm, draw=black,
				fill=black}]
	\node[dot, label=180:{$a_0$}] at (0,0) {};
	\draw (0,0) -- (2,6);
	\node[dot, label=180:{$a_1$}] at (0,2) {};
	\draw (0,2) -- (2,4);
	\node[dot, label=0:{$b_1$}] at (2,4) {};
	\node[dot, label=0:{$b_0$}] at (2,6) {};
	
\end{tikzpicture}

\end{figure}

\begin{ex}
For any map $X \map Y$ of $\C$-motivic spectra, naturality of the
Adams spectral sequence induces a filtration preserving map
$\pi_{p,q} X \map \pi_{p,q} Y$.
We are often interested in 
inclusion $S^{0,0} \map C\tau$ of the
bottom cell into $C\tau$, and in
projection $C\tau \map S^{1,-1}$ from $C\tau$ to the top cell.
We also consider the unit map
$S^{0,0} \map \mmf$.
\end{ex}

\subsection{Indeterminacy in hidden values}
\label{subsctn:hid-indet}

Definition \ref{defn:hidden-value} allows for the possibility of
some essentially redundant cases.  In order to avoid this redundancy,
we introduce indeterminacy into our definition.

Suppose, as in Definition \ref{defn:hidden-value},
that $b$ is the hidden value of $a$ under $f$, so
there exists some $\alpha$ in $\{a\}$ such that
$f(\alpha)$ is contained in $\{b\}$.
Suppose also that there is another element $a'$ in $\Gr A$ 
in degree strictly greater than the degree of $a$, such that
$f(\alpha')$ is contained in $\{b'\}$, where
$\alpha'$ is in $\{a'\}$ and $b'$ has the same degree as $b$.
Then $b+b'$ is also a hidden value of $a$ under $f$, since
$\alpha + \alpha'$ is contained in $\{a\}$ and
$f(\alpha + \alpha')$ is contained in $\{b+b'\}$.
In this case, we say that $b'$ belongs to the target
indeterminacy of the hidden value.

\begin{ex}
Consider the map $\eta: \pi_{63,33} \map \pi_{64,34}$.
The element $h_3 Q_2$ is a hidden value of $\tau h_1 H_1$
under this map.
This hidden value has target indeterminacy generated by 
$\tau h_1 X_2 = h_1 \cdot (\tau X_2 + \tau C')$.
\end{ex}

\subsection{Hidden extensions}

Let $\alpha$ be an element of $\pi_{a,b}$.  Then
multiplication by $\alpha$ induces a filtration preserving map
$\pi_{p,q} \map \pi_{p+a,q+b}$.
A hidden value of this map is precisely the same as a hidden
extension by $\alpha$ in the sense of \cite{Isaksen14c}*{Definition 4.2}.
For clarity, we repeat the definition here.

\begin{defn}
\label{defn:hidden}
Let $\alpha$ be an element of $\pi_{*,*}$ that is detected
by an element $a$ of the $E_\infty$-page of the $\C$-motivic Adams spectral sequence.
A hidden extension by $\alpha$ is a pair
of elements $b$ and $c$ of $E_\infty$ such that:
\begin{enumerate}
\item
$a b = 0$ in the $E_\infty$-page.
\item
There exists an element $\beta$ of $\{ b \}$ 
such that $\alpha \beta$ is contained in $\{ c \}$.
\item
If there exists an element $\beta'$ of $\{b'\}$ such
that $\alpha \beta'$ is contained in $\{ c\}$, then
the Adams filtration of $b'$ is less than or equal to the
Adams filtration of $b$.
\end{enumerate}
\end{defn}

A crossing value for the map 
$\alpha: \pi_{p,q} \map \pi_{p+a,q+b}$
is precisely the same
as a crossing extension in the sense of \cite{Isaksen14c}*{Examples
4.6 and 4.7}.

The discussion target indeterminacy
applies to the case of hidden extensions.
For example, the hidden $\eta$ extension from
$h_3 Q_2$ to $\tau h_1 H_1$ has target indeterminacy
generated by $\tau h_1 X_2$.

In later chapters, we will thoroughly explore hidden extensions
by $2$, $\eta$, and $\nu$.  We warn the reader that a complete
understanding of such hidden extensions does not necessarily
lead to a complete understanding of multiplication by
$2$, $\eta$, and $\nu$ in the $\C$-motivic stable homotopy groups.

For example, in the 45-stem, 
there exists an element $\theta_{4.5}$ that
is detected by $h_3^2 h_5$ such that $4 \theta_{4.5}$ is detected
by $h_0 h_5 d_0$.  This is an example of a hidden $4$ extension.
However, there is no hidden $2$ extension from
$h_0 h_3^2 h_5$ to $h_0 h_5 d_0$; condition (2b) of
Definition \ref{defn:hidden-value} is not satisfied.

In fact, a complete understanding of \emph{all} hidden
extensions leads to a complete understanding of the multiplicative
structure of the $\C$-motivic stable homotopy groups, but the process is
perhaps more complicated than expected.

For example, we mentioned in Example \ref{ex:14,8-hidden-defn}
that either $\eta (\sigma^2 + \kappa)$ or $\eta \sigma^2$ 
is non-zero, but these cases cannot be distinguished by a study
of hidden $\eta$ extensions.  However, we can express
that $\eta \sigma^2$ is zero by observing that there is
no hidden $\sigma$
extension from $h_1 h_3$ to $h_1 d_0$.

There are even further complications.  For example, 
the equation $h_2^3 + h_1^2 h_3 = 0$ does not prove that
$\nu^3 + \eta^2 \sigma$ equals zero because it could be detected
in higher filtration.  In fact, this does occur.  
Toda's relation \cite{Toda62} says that 
\[
\eta^2 \sigma + \nu^3 = \eta \epsilon,
\]
where $\eta \epsilon$ is detected by $h_1 c_0$.

We can express Toda's relation in terms of a ``matric hidden extension".
We have a map 
$[ \nu \quad \eta]: \pi_{6,4} \oplus \pi_{8,5} \map \pi_{9,6}$.
The associated graded map takes $(h_2^2, h_1 h_3)$ to zero,
but $h_1 c_0$ is the hidden value of $(h_2^2, h_1 h_3)$ under this 
map, in the sense of Definition \ref{defn:hidden-value}.

\section{Motivic modular forms}
\label{sctn:mmf}

Over $\C$, a ``motivic modular forms" spectrum $\mmf$
has recently been constructed \cite{GIKR18}.  
From our computational perspective, $\mmf$ is a ring spectrum
whose cohomology is $A//A(2)$, i.e., the quotient of the
$\C$-motivic Steenrod algebra by the subalgebra generated by
$\Sq^1$, $\Sq^2$, and $\Sq^4$.  By the usual change-of-rings
isomorphism, this implies that the homotopy groups of $\mmf$
are computed by an Adams spectral sequence whose $E_2$-page is
the cohomology of $\C$-motivic $A(2)$ \cite{Isaksen09}.  
The Adams spectral sequence for
$\mmf$ has been completely computed \cite{Isaksen18}.

By naturality, the unit map $S^{0,0} \map \mmf$ 
yields a map of Adams spectral sequences.  This map allows us
to transport information from the thoroughly understood
spectral sequence for $\mmf$ to the less well understood
spectral sequence for $S^{0,0}$.  This comparison technique
is essential at many points throughout our computations.

We rely on notation from \cite{Isaksen09} and \cite{Isaksen18}
for the Adams spectral sequence for $\mmf$, 
except that we use $a$ and $n$ instead of $\alpha$ and $\nu$
respectively.

For the most part, the map $\pi_{*,*} \map \pi_{*,*} \mmf$
is detected on Adams $E_\infty$-pages.  However, this map does
have some hidden values.

\begin{thm}
\label{thm:unit-mmf}
Through dimension 90, Table \ref{tab:unit-mmf} lists
all hidden values of the map $\pi_{*,*} \map \pi_{*,*} \mmf$.
\end{thm}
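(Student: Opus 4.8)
The plan is to determine the map $\pi_{*,*} \map \pi_{*,*} \mmf$ completely and then read off those values that are hidden in the sense of Definition \ref{defn:hidden-value}. Since the Adams spectral sequence for $\mmf$ is completely known \cite{Isaksen18} and the Adams spectral sequence for the sphere is determined elsewhere in this manuscript, the associated graded map $\Gr f$ on $E_\infty$-pages is the purely algebraic map induced by the unit on $E_2$-pages, namely the map from $\Ext$ to the cohomology of $A(2)$. First I would tabulate this algebraic map through dimension $90$. The not-hidden values are then immediate, and the only candidates for hidden values are the classes $a$ in the $E_\infty$-page of the sphere with $\Gr f(a) = 0$ whose degree $(s,w)$ also supports classes of strictly higher filtration in the $E_\infty$-page of $\mmf$.

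Next I would establish each hidden value that genuinely occurs. The principal tool is naturality of products and Toda brackets along the unit map $S^{0,0} \map \mmf$. Whenever the relevant class $\alpha$ in $\pi_{*,*}$ can be written as a product or a Toda bracket of elements whose images in $\pi_{*,*} \mmf$ are already understood, the image $f(\alpha)$ is pinned down by the corresponding product or bracket computed inside the thoroughly understood ring $\pi_{*,*} \mmf$; comparing Adams filtrations then exhibits the filtration jump that makes the value hidden. The $v_2$-periodic structure, organized by the $\Delta$-power multiplications in $\mmf$, resolves most of the positive cases.

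For completeness --- the assertion that Table \ref{tab:unit-mmf} lists \emph{all} hidden values --- I would argue degree by degree. For each candidate $a$ with $\Gr f(a) = 0$, a hidden value must lie among the finitely many classes of $\pi_{s,w} \mmf$ of filtration strictly greater than that of $a$. In most degrees this target is trivial or very small, so the sparsity of the $E_\infty$-page of $\mmf$ forbids a hidden value outright. In the remaining degrees I would exclude a hidden value by showing that $f(\alpha)$ avoids the candidate target $\{b\}$ for every lift $\alpha$ in $\{a\}$; because distinct lifts differ by classes of higher filtration, this requires that the images of those higher-filtration classes already be known, so as to correctly verify or falsify condition (3) of Definition \ref{defn:hidden-value} and to account for the target indeterminacy of Section \ref{subsctn:hid-indet}.

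I expect the main obstacle to be completeness rather than existence. Exhibiting a single hidden value needs only one favorable bracket or product relation, whereas ruling one out requires controlling the image of every element of the coset $\{a\}$, and these lifts differ by higher-filtration classes whose own images must be settled first. Consequently the completeness argument forces a particular order of work: within each stem one must proceed upward in filtration, so that the values on the higher-filtration classes, and hence the indeterminacies, are fully determined before the lower-filtration candidates are examined.
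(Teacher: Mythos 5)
Your overall skeleton --- compute the associated graded map, isolate the candidates $a$ with $\Gr f(a)=0$ that admit higher-filtration targets, then establish or exclude each one --- is the right frame, and you are in fact more explicit about the completeness direction than the paper is. But the engine you propose for the positive cases is not the one that actually does the work, and in two places it would likely stall. The paper's workhorse is the compatibility of \emph{hidden $\tau$ extensions} with the unit map: for instance, $h_1 h_3 g$ supports a hidden $\tau$ extension to $d_0^2$ in the sphere, $c g$ supports a hidden $\tau$ extension to $d^2$ in $\mmf$, and since $d_0^2 \mapsto d^2$ is a not-hidden value, the class detected by $h_1 h_3 g$ must map to the class detected by $c g$. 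This is technically an instance of your ``naturality of products'' (with respect to $\tau$-multiplication), but it is a very specific instance, and it disposes of most entries of Table \ref{tab:unit-mmf} at once; your emphasis on Toda brackets and $\Delta$-power multiplications points in a different and less effective direction.

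The genuine gap is in the entries for $\D h_1 h_3$ and $h_0 h_5 i$ (images $\D c + \tau a g$ and $\D^2 h_2^2$ respectively). Neither class has a usable product or Toda bracket decomposition, and the $\tau$-extension trick does not apply, so your stated toolkit gives no purchase on them. The paper handles these by switching spectral sequences entirely: both classes are detected in Adams--Novikov filtration $2$ for the sphere and for $\mmf$, where the comparison is transparent, and the values are read off there. The remaining entries ($P h_2 h_5 j$, $\D h_1^2 h_3$, $h_0 h_2 h_5 i$, $P h_5 j$) are then propagated from these by $d_0$, $h_1$, and $h_2$ multiplications, which is consistent with your plan. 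So your proposal needs to be supplemented with the Adams--Novikov comparison (or some substitute) for those two base cases before it closes.
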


\begin{proof}
Most of these hidden values follow from hidden $\tau$ extensions
in the Adams spectral sequences for $S^{0,0}$ and for $\mmf$.
For example, for $S^{0,0}$, there is a hidden $\tau$ extension
from $h_1 h_3 g$ to $d_0^2$.  For $\mmf$, there is a hidden
$\tau$ extension from $c g$ to $d^2$.  This implies that
$c g$ is the hidden value of $h_1 h_3 g$.

A few cases are slightly more difficult.  
The hidden values of $\D h_1 h_3$ and $h_0 h_5 i$ follow from
the Adams-Novikov spectral sequences for $S^{0,0}$ and for $\mmf$.
These two values are detected on Adams-Novikov $E_\infty$-pages
in filtration $2$.

Next, the hidden value on $P h_2 h_5 j$ follows from 
multiplying the hidden value on $h_0 h_5 i$ by $d_0$.
Finally, the hidden values on $\D h_1^2 h_3$, $h_0 h_2 h_5 i$,
and $P h_5 j$ follow from already established hidden values,
relying on $h_1$ extensions and $h_2$ extensions.
\end{proof}

\begin{remark}
Through the 90-stem, there are no crossing values for the map
$\pi_{*,*} \map \pi_{*,*} \mmf$.  Moreover, in this range, there is only one hidden value that has target indeterminacy.
Namely, $\D^2 h_2 d$ is the hidden value of $P h_5 j$,
with target indeterminacy generated by $\tau^3 \D h_1 g^2$.
\end{remark}

\section{The cohomology of the $\C$-motivic Steenrod algebra}

We have implemented machine computations of
$\Ext$, i.e., 
the cohomology of the $\C$-motivic Steenrod algebra,
in detail through the 110-stem.  We take this computational data
for granted.  It is depicted graphically in the chart of the 
$E_2$-page shown in \cite{Isaksen14a}; the data is also available
there.
See \cite{Wang20} for a discussion of the implementation.

In addition to the additive structure of $\Ext$, we also have complete
information about multiplications by $h_0$, $h_1$, $h_2$, and $h_3$.
We do not have complete multiplicative information.  Occasionally we
must deduce some multiplicative information on an ad hoc basis.

Similarly, we do not have systematic machine-generated 
Massey product information about $\Ext$.  We deduce some of the 
necessary information about Massey products in Chapter \ref{ch:Massey}.

In the classical situation, Bruner has carried out extensive
machine computations of the cohomology of the classical Steenrod algebra
\cite{Bruner97}.  More recently, Bruner and Rognes have extended
these computations to total degree 184 \cite{BR20}.
This data includes complete primary multiplicative
information, but no higher Massey product structure.
We rely heavily on this information.
Our reliance on this data is so ubiquitous that we will not give
repeated citations.
\rev{Very recent work of Joey Beauvais-Feisthauer, Hood Chatham, Dexter Chua, and Weinan Lin
extends these machine computations of classical $\Ext$
to significantly higher stems.}

The May spectral sequence is the
key tool for a conceptual computation of $\Ext$.  
See \cite{Isaksen14c} for full details.  
In this manuscript, we use the May spectral sequence to
compute some Massey products that we need for various
specific purposes; see Remark \ref{rem:May-convergence}
for more details.

For convenience, we restate the following structural theorem
about a portion of $\Ext_\C$  \cite{Isaksen14c}*{Theorem 2.19}.

\begin{thm}
\label{thm:Chow-zero}
There is a highly structured isomorphism from 
$\Ext_\cl$ to the subalgebra of $\Ext$ consisting of elements in degrees 
$(s, f, w)$ with $s + f - 2w = 0$. 
This isomorphism takes classical elements of degree $(s,f)$ 
to motivic elements of degree $(2s+f,f,s+f)$.
\end{thm}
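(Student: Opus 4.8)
The plan is to realize the claimed isomorphism at the level of cobar complexes, organized by the grading $c = s + f - 2w$, which I will call the \emph{Chow degree}. First I would note that each of $s$, $f$, and $w$ is additive under the product on $\Ext$, so $c$ is additive and the locus $c = 0$ is automatically a subalgebra; moreover the proposed target degree $(2s+f, f, s+f)$ satisfies $(2s+f) + f - 2(s+f) = 0$, so the asserted map does land in this subalgebra. A convenient reformulation is that for a cobar cochain of internal degree $t$, filtration $f$, and weight $w$ one has $s = t - f$, hence $c = t - 2w$; in particular $c$ depends only on $(t,w)$, and it is preserved by the cobar differential because the coproduct of the $\C$-motivic dual Steenrod algebra preserves $(t,w)$.

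Next I would invoke the structure of the $\C$-motivic dual Steenrod algebra over $\C$, namely $A_* = \M_2[\xi_1, \xi_2, \ldots, \tau_0, \tau_1, \ldots]/(\tau_i^2 = \tau \xi_{i+1})$ with $\M_2 = \F_2[\tau]$, and record the Chow degrees of the generators. A direct degree count gives $c(\xi_i) = 0$, $c(\tau_i) = 1$, and, using $\tau \in \pi_{0,-1}$ so that $\tau$ has weight $-1$, $c(\tau) = 2$; for instance $\xi_1$ and $\tau_0$ are exactly the cobar representatives of $h_1$ and $h_0$, of Chow degrees $0$ and $1$. Since all three contributions are nonnegative, a cobar monomial has Chow degree $0$ if and only if it involves no factor of $\tau$ and no factor of any $\tau_i$, that is, if and only if it is a monomial in the $\xi_i$ alone. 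As $c$ is preserved by $d$, these monomials span a subcomplex.

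The heart of the argument is then to identify this Chow-degree-zero subcomplex with the classical cobar complex. Here I would use that the $\C$-motivic coproduct restricts on the $\xi_i$ to $\Delta \xi_i = \sum_{k} \xi_{i-k}^{2^k} \otimes \xi_k$, identical to the classical Milnor coproduct; consequently $\F_2[\xi_1, \xi_2, \ldots] \subseteq A_*$ is a sub-Hopf-algebra whose cobar complex is literally the classical cobar complex computing $\Ext_\cl$. The only discrepancy is an overall doubling of internal degrees: the motivic $\xi_i$ has internal degree $2(2^i-1)$, twice the classical value, while its weight equals the classical internal degree $2^i - 1$. Tracking a general $\xi$-monomial through gives $t_{\mathrm{mot}} = 2 t_\cl$, $f_{\mathrm{mot}} = f_\cl$, and $w_{\mathrm{mot}} = t_\cl$, which unwinds to exactly the reindexing $(s,f) \mapsto (2s+f, f, s+f)$. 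Passing to cohomology then yields the desired multiplicative isomorphism, the multiplicativity being inherited from the isomorphism of differential graded algebras.

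The main obstacle, and the step demanding genuine care rather than formal manipulation, is the verification in the second paragraph that Chow degree zero forces a $\xi$-only monomial. This rests on pinning down the three generator degrees correctly, a convention-sensitive computation in which a sign error in the weight of $\tau$ or $\tau_i$ would destroy the nonnegativity that the argument needs, and on confirming that the relation $\tau_i^2 = \tau \xi_{i+1}$ is homogeneous of Chow degree $2$, so that it forces no hidden Chow-degree-zero identifications among the $\xi$-monomials. Once the strict inequality $c(\xi_i) = 0 < c(\tau_i), c(\tau)$ is secured, the remainder is degree bookkeeping.
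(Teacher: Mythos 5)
Your proposal is correct, and it is essentially the standard argument: the paper does not prove Theorem \ref{thm:Chow-zero} itself but restates it from \cite{Isaksen14c}*{Theorem 2.19}, where the proof proceeds exactly as you describe, by observing that the Chow-degree generators $\xi_i$, $\tau_i$, $\tau$ have Chow degrees $0$, $1$, $2$ respectively, so the Chow-degree-zero part of the motivic cobar complex is the regraded classical cobar complex. Your degree bookkeeping, including the weight $-1$ convention for $\tau$ and the reindexing $(s,f)\mapsto(2s+f,f,s+f)$, is consistent with the paper's conventions.
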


\section{Toda brackets}

Toda brackets are an essential computational tool for understanding
stable homotopy groups 
\revv{
\cite{Kochman90}*{Chapter 2}
\cite{Toda62} 
}.

Brackets appear throughout the various
stages of the computations, including in the analysis of Adams
differentials and in the resolution of hidden extensions.

It is well-known that the stable homotopy groups form a ring under
the composition product.  The higher Toda bracket structure is an
extension of this ring structure that is much deeper and more intricate.
Our philosophy is that the stable homotopy groups are not really
understood until the Toda bracket structure is revealed.

A complete analysis of all Toda brackets (even in a range)
is not a practical goal.  There are simply too many possibilities
to take into account methodically, especially when including
matric Toda brackets
(and possibly other more exotic non-linear types of brackets).
In practice, we compute only the Toda brackets that we need
for our specific computational purposes.

\subsection{The Moss Convergence Theorem}
\label{subsctn:Moss}

We next discuss the Moss Convergence Theorem \cite{Moss70}, which is
the essential tool for computing Toda brackets in stable homotopy
groups.  
\rev{See also \cite{BK21} for a modern proof of the Moss Convergence 
Theorem that applies not only in classical
stable homotopy theory but also to a wide variety of stable homotopy 
theories including $\C$-motivic stable homotopy theory.
}

In order to state the Moss Convergence Theorem precisely, 
we must clarify the
various types of bracket operations that arise.
First, the Adams $E_2$-page has Massey products 
arising from the fact that
it is the homology of the cobar complex, which is a 
differential graded algebra.  We typically refer to these
simply as ``Massey products", although we write
``Massey products in the $E_2$-page" for clarification when necessary.

Next, each higher $E_r$-page
also has Massey products, since it is the homology of the
$E_{r-1}$-page, which is a differential graded algebra.
We always refer to these as ``Massey products in the $E_r$-page"
to avoid confusion with the more familiar Massey products in
the $E_2$-page.  This type of bracket appears only occasionally 
throughout the manuscript.

Beware that the higher $E_r$-pages do not inherit Massey products
from the preceding pages.  For example, $\tau h_1^2$ equals the
Massey product $\langle h_0, h_1, h_0 \rangle$ in the $E_2$-page.
However, in the $E_3$-page, the bracket $\langle h_0, h_1, h_0 \rangle$
equals zero, since the product $h_0 h_1$ is already equal to zero
in the $E_2$-page before taking homology to obtain the $E_3$-page.

On the other hand, the Massey product $\langle h_1, h_0, h_3^2 \rangle$
is not a well-defined Massey product in the $E_2$-page since
$h_0 h_3^2$ is non-zero, while $\langle h_1, h_0, h_3^2 \rangle$
in the $E_3$-page equals $h_1 h_4$ because of the differential
$d_2(h_4) = h_0 h_3^2$.

Finally, we have Toda brackets in the stable homotopy groups
$\pi_{*,*}$.  The point of the Moss Convergence Theorem is to relate
these various kinds of brackets.

\revv{
\begin{defn}
\label{defn:crossing-diff}
Given $r$ and a degree $(s,f,w)$, 
a \textbf{crossing differential} is a nonzero differential
$d_{r+n}(x) = y$
in the $\C$-motivic Adams spectral sequence
 such that the stem of $y$ is $s$; the Adams filtration of $y$
is strictly greater than $f$ but strictly less than $f+n+1$;
and the weight of $y$ is $w$.
\end{defn}

\begin{remark}
In practice, we will never use Definition \ref{defn:crossing-diff}.
Rather, we will consider elements
$a$ and $b$ in the $E_{r}$-page of the $\C$-motivic Adams spectral sequence such that  $ab=0$.
We informally call a nonzero differential
$$d_{r+n}x=y$$
a crossing differential for the product 
$ab$ if it satisfies Definition \ref{defn:crossing-diff} for
the degree of $ab$.
In other words, 
the element $y$ has the same stem and motivic weight as the product $ab=0$; 
and the difference between the Adams filtration of $y$ and of
$ab$ is strictly greater than $0$ but strictly less than $n+1$.
\end{remark}
}

Figure \ref{fig:crossing-differentials} depicts the situation of 
a crossing differential in a chart for the $E_{r}$-page.
Typically, the product $ab$ is zero in the $E_{r}$-page because
it was hit by a $d_{r-1}$ differential, as shown by the dashed
arrow in the figure.  However, 
it may very well be the case that the product $ab$ is already zero in 
the $E_{r-1}$-page (or even in an earlier page),
in which case the dashed $d_{r-1}$ differential is actually 
$d_{r-1} (0) = 0$.

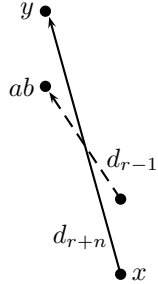
\begin{figure}[!ht]
\caption{Crossing differentials}
\label{fig:crossing-differentials}

\begin{tikzpicture}[scale=0.5,
	>=stealth,
	every path/.style={line width=0.8pt},
	dot/.style={circle, inner sep=0, minimum size=0.13cm, draw=black,
				fill=black}]
	\node[dot, label=0:{$x$}] at (2,0) {};
	\draw[->] (2,0) -- (0.1,6.85);
	\node at (1,1) {$d_{r+n}$};
	\node[dot] at (2,2) {};
	\draw[->, dashed] (2,2) -- (0.1,4.85);
	\node at (2.1,3.2) {$d_{r-1}$};
	\node[dot, label=180:{$ab$}] at (0,5) {};
	\node[dot, label=180:{$y$}] at (0,7) {};
	
\end{tikzpicture}

\end{figure}

\begin{thm}[Moss Convergence Theorem]
\label{thm:Moss}
Suppose that $a$, $b$, and $c$ are permanent cycles in the $E_r$-page of the $\C$-motivic Adams spectral sequence that detect
homotopy classes $\alpha$, $\beta$, and $\gamma$ in $\pi_{*,*}$ respectively. Suppose further that
\begin{enumerate}
\item the Massey product $\langle a, b, c \rangle$ is defined in the 
$E_r$-page, i.e., $ab =0$ and $bc= 0$ in the $E_r$-page.
\item 
the Toda bracket $\langle \alpha, \beta, \gamma \rangle$ is defined in 
$\pi_{*,*}$, i.e., $\alpha \beta = 0$ and $\beta \gamma = 0$.
\item there are no crossing differentials for the products $ab$ and $bc$ in the $E_r$-page.
\end{enumerate}
Then there exists an element $e$ contained in the Massey product $\langle a, b, c \rangle$ in the $E_r$-page, such that
\begin{enumerate}
\item the element $e$ is a permanent cycle.
\item the element $e$ detects a homotopy class  in the Toda bracket	$\langle \alpha, \beta, \gamma \rangle$.	
\end{enumerate}
\end{thm}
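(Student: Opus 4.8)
The plan is to realize both the Massey product $\langle a, b, c\rangle$ in the $E_r$-page and the Toda bracket $\langle \alpha,\beta,\gamma\rangle$ in $\pi_{*,*}$ through a single geometric construction, and then to match them using the compatibility of the Adams filtration with cofiber sequences. Following the strategy of Moss \cite{Moss70}, I would build everything from a representative map $S^{*,*}\map S^{0,0}$ detecting $\beta$ in Adams filtration $f_b$, together with its cofiber $Cb$. Since $\alpha\beta=0$ and $\beta\gamma=0$, the classes $\alpha$ and $\gamma$ extend and lift over the two cells of $Cb$, and the composite through $Cb$ produces an element of $\langle\alpha,\beta,\gamma\rangle$. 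The goal is then to show that this composite is detected by a specific permanent cycle $e$ lying in the algebraic Massey product.

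First I would set up the algebraic side precisely. Since $E_r$ is the homology of $(E_{r-1}, d_{r-1})$, choose $d_{r-1}$-cycle representatives $\tilde a,\tilde b,\tilde c$ in $E_{r-1}$ of $a,b,c$. The hypothesis $ab=0$ in $E_r$ means $\tilde a \tilde b = d_{r-1}(s)$ for some $s$, and similarly $\tilde b\tilde c = d_{r-1}(t)$; then
\[
e = s\tilde c \pm \tilde a\, t
\]
is a $d_{r-1}$-cycle whose class represents $\langle a,b,c\rangle$ in $E_r$. I would then reinterpret the bounding elements $s$ and $t$ geometrically: via the geometric boundary theorem, a relation $d_{r-1}(s)=\tilde a\tilde b$ on the Adams $E_{r-1}$-page corresponds, at the level of the Adams tower, to the data of a null-homotopy of the map representing $\alpha\beta$ realized at the appropriate filtration, and likewise for $t$ and $\beta\gamma$. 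Gluing these two null-homotopies along the cofiber $Cb$ reconstructs the geometric Toda bracket, and the leading term of the resulting homotopy class should be exactly the associated-graded class of $e$.

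The delicate point, and the reason for hypothesis (3), is the filtration bookkeeping in this last matching step. A priori the homotopy product $\alpha\beta$ vanishes, but its null-homotopy might be forced through an intermediate filtration: the element $ab$ could be hit by a differential $d_{r+n}$ whose source lies at an unexpected filtration, so that the geometric bounding of $\alpha\beta$ does not occur at the algebraic filtration of $s$. Such a differential is precisely a crossing differential for $ab$, and it would cause the geometric Toda bracket either to be detected in higher filtration than $e$, or to pick up a contribution invisible to the $E_r$-page Massey product. The absence of crossing differentials for both $ab$ and $bc$ is what guarantees that the null-homotopies can be chosen at the predicted filtrations, so that $e$ is a permanent cycle detecting an element of $\langle\alpha,\beta,\gamma\rangle$ with no filtration jump.

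I expect this last step --- translating ``no crossing differentials'' into the statement that the geometric null-homotopies realize the algebraic boundings $s$ and $t$ without a filtration shift --- to be the main obstacle, since it requires tracking the Adams filtration through the connecting maps of the cofiber sequences defining the bracket and carefully ruling out that a later differential crosses the filtration of $ab$ or $bc$. The $\C$-motivic setting introduces no new difficulty here beyond carrying the extra weight grading $w$ alongside the stem and Adams filtration; since $a$, $b$, $c$ and the relevant null-homotopies all preserve weight, the weight grading is automatically matched, and the classical argument of \cite{Moss70} adapts essentially verbatim.
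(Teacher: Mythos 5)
The paper does not prove this theorem: it is stated as a known result imported from Moss \cite{Moss70} (in its $\C$-motivic incarnation), and the text following the statement consists only of remarks and worked examples of its use. So there is no internal proof to compare your proposal against.

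That said, your outline is a faithful reconstruction of Moss's original strategy: form the algebraic Massey product $e = s\tilde c \pm \tilde a t$ from $d_{r-1}$-bounding elements in $E_{r-1}$, realize the Toda bracket geometrically via the cofiber of a representative of $\beta$, and use the compatibility of Adams filtrations with the cofiber sequence to match the two. You have also correctly identified where hypothesis (3) enters: a crossing differential would force the geometric null-homotopy of $\alpha\beta$ (or $\beta\gamma$) to occur at a filtration different from that of $s$ (or $t$), breaking the identification of $e$ with a detecting class. But as written this is a plan rather than a proof --- the entire content of the theorem lives in the step you defer, namely showing that the absence of crossing differentials lets you choose lifts through the Adams tower so that the composite through $Cb$ is detected by $e$ and not in strictly higher filtration, and that $e$ survives to $E_\infty$. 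Carrying that out requires a careful induction up the Adams tower (or an appeal to a modern account such as the convergence theorems for multiplicative spectral sequences), and the claim that ``the classical argument adapts essentially verbatim'' to the motivic setting, while true, is doing real work: one must know that the $\C$-motivic Adams spectral sequence for the $2$-complete sphere converges and that the smash product pairings of Adams towers exist motivically. If you intend this as a proof rather than a reading guide to \cite{Moss70}, the filtration bookkeeping must actually be done.
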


\begin{remark}
The homotopy classes $\alpha$, $\beta$, and $\gamma$ 
are usually not unique.  The presence of elements in higher 
Adams filtration implies that $a$, $b$, and $c$ detect more
than one homotopy class.  Moreover, it may be the case that 
$\langle \alpha, \beta, \gamma \rangle$ is defined for only
some choices of $\alpha$, $\beta$, and $\gamma$, 
while the Toda bracket is not defined for other choices.
\end{remark}

\begin{remark}
The Moss Convergence Theorem \ref{thm:Moss} says that
a certain Massey product $\langle a, b, c \rangle$ in the $E_r$-page
contains an element with certain properties.
The theorem does not claim that every element of
$\langle a, b, c \rangle$ has these properties.  In the presence
of indeterminacies, there can be elements in $\langle a, b, c\rangle$
that do not satisfy the given properties.
\end{remark}

\begin{remark}
Beware that the Toda bracket $\langle \alpha, \beta, \gamma \rangle$
may have non-zero indeterminacy.  In this case,
we only know that $e$ detects one element of the Toda bracket.
Other elements of the Toda bracket could possibly be detected by
other elements of the Adams $E_\infty$-page; these occurrences must
be determined by inspection.
\end{remark}

\begin{remark}
In practice, one computes a Toda bracket
$\langle \alpha, \beta, \gamma \rangle$ by first studying
its corresponding Massey product $\langle a, b, c \rangle$ 
in a certain page of the Adams spectral sequence. 
In the case that the Massey product
$\langle a, b, c \rangle$ equals zero in the $E_r$-page
in Adams filtration $f$, 
the Moss Convergence Theorem \ref{thm:Moss}
does not imply that the Toda bracket 
$\langle \alpha, \beta, \gamma \rangle$ contains zero.  Rather,
the Toda bracket contains an element (possibly zero) that is detected
in Adams filtration at least $f+1$.
\end{remark}

\begin{ex}
Consider the Toda bracket $\langle \nu, \eta, \nu \rangle$.
The elements $h_1$ and $h_2$ are permanent cycles that detect 
$\eta$ and $\nu$, and the product $\eta \nu$ is zero.
We have that $\langle h_2, h_1, h_2 \rangle$ equals $h_1 h_3$,
with no indeterminacy, in the $E_2$-page.
There are no crossing differentials for the product $h_1 h_2 = 0$ 
in the $E_2$-page, so the Moss Convergence Theorem \ref{thm:Moss}
implies that  $h_1 h_3$ detects a homotopy class
in $\langle \nu, \eta, \nu \rangle$.

Note that $h_1h_3$ detects the homotopy class $\eta\sigma$ because
$h_3$ is a permanent cycle that detects $\sigma$. 
However, we cannot conclude that
$\eta\sigma$ is contained in $\langle \nu, \eta, \nu \rangle$. 
The presence of the permanent cycle $c_0$
in higher filtration means that $h_1 h_3$ detects both
$\eta \sigma$ and $\eta \sigma +\epsilon$, where $\epsilon$
is the unique homotopy class that is detected by $c_0$.
The Moss Convergence Theorem \ref{thm:Moss} implies that
either $\eta \sigma$ or $\eta \sigma + \epsilon$ is contained in
the Toda bracket $\langle \nu, \eta, \nu \rangle$.
In fact, $\eta \sigma + \epsilon$ is contained in the Toda bracket,
but determining this requires further analysis.
\end{ex}

\begin{ex}
Consider the Toda bracket $\langle \sigma^2, 2, \eta \rangle$.
The elements $h_3^2$, $h_0$, and $h_1$ are permanent cycles
that detect $\sigma^2$, $2$, and $\eta$ respectively, and
the products $2 \sigma^2$ and $2 \eta$ are both zero.
Due to the Adams differential $d_2(h_4) = h_0h_3^2$, the Massey product 
$\langle h_3^2, h_0, h_1 \rangle$ equals $h_1 h_4$ in the
$E_3$-page, with zero indeterminacy. 
There are no crossing differentials for the products $h_0 h_3^2 = 0$ 
and $h_0 h_1 = 0$ in the $E_3$-page.
The Moss Convergence Theorem \ref{thm:Moss} implies that
$h_1 h_4$ detects a homotopy class in the Toda bracket 
$\langle \sigma^2, 2, \eta \rangle$.

The element $h_3^2$ also detects $\sigma^2 + \kappa$, 
where $\kappa$ is the unique homotopy class that is detected by $d_0$,
and the product $2 (\sigma^2 + \kappa)$ is zero.
The Moss Convergence Theorem \ref{thm:Moss} also implies that
$h_1 h_4$ detects a homotopy class in the Toda bracket 
$\langle \sigma^2 + \kappa, 2, \eta \rangle$. 
\end{ex}

\begin{ex}
Consider the Toda bracket $\langle \kappa, 2, \eta \rangle$.
The elements $d_0$, $h_0$, and $h_1$ are permanent cycles
that detect $\kappa$, $2$, and $\eta$ respectively, and
the products $2 \kappa$ and $2 \eta$ are both zero.
Due to the Adams differential $d_3(h_0h_4) = h_0d_0$, 
the Massey product $\langle d_0, h_0, h_1 \rangle$ equals
$h_0 h_4 \cdot h_1 = 0$ in Adams filtration 3 in the $E_4$-page,
with zero indeterminacy.
There are no crossing differentials for the products $h_0 d_0 = 0$ 
and $h_0 h_1 = 0$ in the $E_4$-page.
The Moss Convergence Theorem \ref{thm:Moss} implies that
the Toda bracket $\langle \kappa, 2, \eta \rangle$ either contains zero,
or it contains a non-zero element detected in Adams filtration
greater than $3$.

The only possible detecting element is $P c_0$. 
There is a hidden $\eta$ extension from $h_0^3h_4$ to $P c_0$, 
so $P c_0$ detects an element in the indeterminacy of 
$\langle \kappa, 2, \eta \rangle$.
Consequently, the Toda bracket is $\{0, \eta \rho_{15} \}$,
where $\rho_{15}$ is detected by $h_0^3h_4$.
\end{ex}

\begin{ex}
The Massey product $\langle h_2, h_3^2, h_0^2 \rangle$
equals $\{f_0, f_0 + h_0^2 h_2 h_4 \}$ in the $E_2$-page.
The elements $h_2$, $h_3^2$, and $h_0^2$ are permanent cycles
that detect $\nu$, $\sigma^2$, and $4$ respectively, and
the products $\nu \sigma^2$ and $4 \sigma^2$ are both zero.
However, the product $h_0^2 h_3^2$ has a crossing differential
$d_3(h_0 h_4) = h_0 d_0$.
The Moss Convergence Theorem \ref{thm:Moss} does not apply,
and we cannot conclude anything about the Toda bracket
$\langle \nu, \sigma^2, 4 \rangle$.  In particular,
we cannot conclude that $\{f_0, f_0 + h_0^2 h_2 h_4\}$ 
contains a permanent cycle.  In fact, both elements support
Adams $d_2$ differentials.
\end{ex}

\begin{remark}
\label{rem:Moss-4fold}
There is a version of the Moss Convergence Theorem \ref{thm:Moss}
for computing fourfold Toda brackets 
$\langle \alpha, \beta, \gamma, \delta\rangle$ in terms of fourfold
Massey products $\langle a, b, c, d \rangle$ in the $E_r$-page. 
In this case, the crossing differential condition applies not only to
the products $ab$, $bc$, and $cd$, but also to the subbrackets
$\langle a, b, c \rangle$ and $\langle b, c, d \rangle$.
\end{remark}

\begin{remark}
\label{rem:May-convergence}
Just as the Moss Convergence Theorem \ref{thm:Moss} is the key
tool for computing Toda brackets with the Adams spectral sequence,
the May Convergence Theorem is the key tool for computing
Massey products with the May spectral sequence.  The statement of
the May Convergence Theorem is entirely analogous to the
statement of the Moss Convergence Theorem, with 
Adams differentials replaced by  May differentials;
 Adams $E_r$-pages replaced by 
May $E_r$-pages;  $\pi_{*,*}$ replaced by
 $\Ext$; and  Toda brackets replaced
by Massey products.  An analogous crossing differential
condition applies.  See \cite{Isaksen14c}*{Section 2.2} \cite{May69}
for more details.  We will use the May Convergence Theorem to
compute various Massey products that we need for specific purposes.
\end{remark}

\subsection{Moss's higher Leibniz rule}
\label{subsctn:Leibniz}

Occasionally, we will use Moss's higher Leibniz rule \cite{Moss70}, 
which describes how Massey products in the $E_r$-page interact with
the Adams $d_r$ differential.  This theorem is a 
direct generalization of the usual Leibniz rule
$d_r(ab) = d_r(a) b + a d_r(b)$ for twofold products.

\begin{thm}
\label{thm:Leibniz}
\cite{Moss70}
Suppose that $a$, $b$, and $c$ are elements in the $E_r$-page 
of the $\C$-motivic Adams spectral sequence such that $ab=0$, $bc=0$,
$d_r(b) a = 0$, and $d_r(b) c = 0$.
Then
$$d_r \langle a, b, c \rangle \subseteq \langle d_r(a), b, c \rangle + \langle a, d_r(b), c \rangle + \langle a, b, d_r(c) \rangle,$$	
where all brackets are computed in the $E_r$-page.
\end{thm}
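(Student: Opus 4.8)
The plan is to prove this as a purely algebraic statement about the multiplicative spectral sequence, by a defining-system chase in an underlying filtered differential graded algebra. Let $(A, d, F^\bullet)$ be the cobar complex equipped with the Adams filtration, whose associated spectral sequence is the $\C$-motivic Adams spectral sequence; each $E_r$ is a subquotient of $A$ and $d_r$ is induced by $d$. Choose cochain representatives $a, b, c \in A$ (denoted by the same letters) with $da, db, dc$ lying $r$ filtrations higher than $a, b, c$, so that each represents $d_r$ of its class while the lower differentials vanish. First I would verify that the three right-hand brackets are actually defined in the $E_r$-page. Since $E_r$ is a DGA on which $d_r$ is a derivation and $\overline{ab} = 0$, we get $0 = d_r(\overline{a}\,\overline{b}) = d_r(\overline a)\,\overline b \pm \overline a\, d_r(\overline b)$, so the hypothesis $d_r(b)\,a = 0$ forces $d_r(a)\,b = 0$; together with $bc = 0$ this makes $\langle d_r a, b, c\rangle$ defined. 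The symmetric computation from $\overline{bc}=0$ and $d_r(b)\,c = 0$ handles $\langle a, b, d_r c\rangle$, and the two hypotheses $d_r(b)\,a = 0$, $d_r(b)\,c = 0$ are exactly the conditions needed for $\langle a, d_r b, c\rangle$.

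Next I would set up one coherent defining system in $A$. Because $\overline{ab} = 0$ and $\overline{bc} = 0$ in $E_r$, choose cochains $u, v$ and correction terms $e_{ab}, e_{bc}$ of filtration one step higher than $ab, bc$ with
\[
du = ab - e_{ab}, \qquad dv = bc - e_{bc};
\]
here $u, v$ realize the $d_{r-1}$-null-homotopies that define the bracket, while $e_{ab}, e_{bc}$ record the part of the products not killed at the $(E_{r-1}, d_{r-1})$ level. Form the representing cochain $w = a\,v \pm u\,c$ with signs fixed by the usual conventions; it represents an element of $\langle a, b, c\rangle$, and a filtration count shows that each term of $dw$ lands $r$ steps above $w$, so $d_r[w] = [dw]$. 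Expanding by the Leibniz rule and substituting the defining relations,
\[
dw = (da)\,v \pm a\,(dv) \pm (du)\,c \pm u\,(dc) = (da)\,v \mp a\,e_{bc} \mp e_{ab}\,c \pm u\,(dc),
\]
where the two copies of $abc$ arising from $a\,(dv)$ and $(du)\,c$ cancel by associativity.

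Finally I would identify the three surviving groups of terms with representatives of the three right-hand brackets, using defining systems induced from the data already chosen. The term $(da)\,v$, completed by a chain witnessing $d_r(a)\,b = 0$, represents an element of $\langle d_r a, b, c\rangle$, and symmetrically $u\,(dc)$ represents an element of $\langle a, b, d_r c\rangle$. The correction terms do double duty: differentiating $ab = du + e_{ab}$ gives $d e_{ab} = (da)\,b \pm a\,(db)$, and likewise $d e_{bc} = (db)\,c \pm b\,(dc)$, so $e_{ab}$ and $e_{bc}$ are precisely null-homotopies for $a\cdot d_r b = 0$ and $d_r b \cdot c = 0$, whence $\mp a\,e_{bc} \mp e_{ab}\,c$ represents an element of $\langle a, d_r b, c\rangle$. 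Summing shows $[dw]$ lies in $\langle d_r a, b, c\rangle + \langle a, d_r b, c\rangle + \langle a, b, d_r c\rangle$, and ranging over the choices of defining system yields the asserted containment of cosets, the right-hand indeterminacy absorbing the ambiguity in $w$. The main obstacle is the filtration bookkeeping in this last step: one must check that each of the three groups of terms lands in filtration exactly $|w| + r$ and assembles into a legitimate Massey-product cocycle for the correct page, with the defining systems for all three brackets chosen consistently with $u, v, e_{ab}, e_{bc}$. The conceptual source of the delicacy is that $d_r$ is the differential one page beyond the DGA $(E_{r-1}, d_{r-1})$ in which the brackets are formed, so the argument must carefully separate the primary null-homotopy data $du \equiv ab$, $dv \equiv bc$ from the secondary data carried by $da, db, dc$ into the next filtration.
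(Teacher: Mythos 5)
The paper does not actually prove this statement; it is quoted from \cite{Moss70}. So there is no internal proof to compare against, and your cochain-level reconstruction has to stand on its own. Its architecture is the standard one (lift to the filtered cobar complex, form $w = a v \pm u c$, check by a filtration count that $d_r[w] = [dw]$, cancel the two copies of $abc$), and that part of your argument, including the verification that the right-hand brackets are defined, is fine.

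The gap is in the last step, where you decompose $\mp a\,e_{bc} \mp e_{ab}\,c$. You correctly compute $d e_{ab} = (da)\,b \pm a\,(db)$ and $d e_{bc} = (db)\,c \pm b\,(dc)$, but then assert that $e_{ab}$ and $e_{bc}$ are ``precisely null-homotopies for $a \cdot d_r b$ and $d_r b \cdot c$.'' That does not follow: they bound the \emph{sums} $(da)b \pm a(db)$ and $(db)c \pm b(dc)$, not the individual summands, so as written your expression for $dw$ only exhibits membership in a matric bracket, not in the sum of the three brackets claimed. To split it you must use the hypotheses $d_r(b)\,a = 0$ and $d_r(b)\,c = 0$ substantively (in your write-up they only enter to check well-definedness): choose separate witnesses $p$, $u'$ with $dp \equiv a\,(db)$ and $du' \equiv (da)\,b$ modulo higher filtration, and $q$, $u''$ with $dq \equiv (db)\,c$ and $du'' \equiv b\,(dc)$. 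Then $e_{ab} - u' \mp p$ and $e_{bc} - q \mp u''$ are cycles to the relevant order and represent classes $\xi, \zeta$ in $E_r$; the terms $\xi\,c$ and $a\,\zeta$ land in the indeterminacies $E_r \cdot c$ and $a \cdot E_r$, the pieces $(da)v \pm u'c$, $aq \pm pc$, and $a u'' \pm u(dc)$ are genuine representatives of $\langle d_r a, b, c\rangle$, $\langle a, d_r b, c\rangle$, and $\langle a, b, d_r c\rangle$ respectively, and only after this reassembly does the containment follow. The same point affects your treatment of $(da)\,v$: it is not itself a representative of $\langle d_r a, b, c\rangle$ until completed by $u'c$, and that correction must be tracked through the same indeterminacy argument rather than added for free.
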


\begin{remark}
By the Leibniz rule, 
the conditions $d_r (b) a = 0$ and $d_r (b) c =0$ imply that 
$d_r (a) b = 0$ and $d_r (c) b =0$. Therefore, all of the Massey
products in Theorem \ref{thm:Leibniz} are well-defined.
\end{remark}

\begin{remark}
The Massey products in Moss's higher Leibniz rule \ref{thm:Leibniz} may have indeterminacies,
so the statement involves an inclusion of sets, rather than an equality.
\end{remark}

\begin{remark}
Beware that Moss's higher Leibniz rule \ref{thm:Leibniz}
cannot be applied
to Massey products in the $E_r$-page to study differentials in
higher pages.  For example, we cannot use it 
to compute the $d_3$ differential on a Massey product in the $E_2$-page.
In fact, there are versions of the higher Leibniz rule that
apply to higher differentials \cite{Kochman78}*{Theorem 8.2} 
\cite{May69}*{Theorem 4.3}, 
but these results have strong
vanishing conditions that often do not hold in practice.
\end{remark}

\begin{ex}
\label{ex:Leibniz-tD1h1^2}
Consider the element $\tau \Delta_1 h_1^2$, which 
was called $G$ in \cite{Tangora70a}.
Table \ref{tab:Adams-d2} shows that there is an Adams
differential $d_2(\tau \Delta_1 h_1^2) = M h_1 h_3$, which follows
by comparison to $C\tau$.  To illustrate Moss's higher Leibniz
rule \ref{thm:Leibniz}, we shall give an independent derivation
of this differential.

Table \ref{tab:Massey} shows that $\tau \Delta_1 h_1^2$ equals
the Massey product $\langle h_1, h_0, D_1 \rangle$,
with no indeterminacy.
By Moss's higher Leibiz rule \ref{thm:Leibniz}, the element
$d_2(\tau \Delta_1 h_1^2)$ is contained in
\[
\langle 0, h_0, D_1 \rangle + \langle h_1, 0, D_1 \rangle + \langle h_1, h_0, d_2(D_1) \rangle.
\]
By inspection, the first two terms vanish.  Also, Table \ref{tab:Adams-d2}
shows that $d_2(D_1)$ equals $h_0^2 h_3 g_2$.

Therefore,
$d_2(\tau \Delta_1 h_1^2)$ is contained in the bracket
$\langle h_1, h_0, h_0^2 h_3 g_2 \rangle$, which equals
$\langle h_1, h_0, h_0^2 g_2 \rangle h_3$.
Finally, Table \ref{tab:Massey} shows that
$\langle h_1, h_0, h_0^2 g_2 \rangle$ equals $M h_1$.
This shows that 
$d_2(\tau \D_1 h_1^2)$ equals $M h_1 h_3$.	
\end{ex}

\begin{ex}
Consider the element  $\tau e_0 g$ in the Adams $E_3$-page.
Because of the Adams differential $d_2(e_0) = h_1^2 d_0$, we have that
$\tau e_0 g$ equals $\langle d_0, h_1^2, \tau g \rangle$ 
in the Adams $E_3$-page.
The higher Leibniz rule \ref{thm:Leibniz} implies that 
$d_3(\tau e_0 g)$ is contained in
\[
\langle 0, h_1^2, \tau g \rangle + \langle d_0, 0, \tau g \rangle +
\langle d_0, h_2^2, 0 \rangle,
\]
which equals $\{0, c_0 d_0^2 \}$.
In this case, 
the higher Leibniz rule \ref{thm:Leibniz} does not help to determine
the value of $d_3(\tau e_0 g)$ because the indeterminacy is too large.
(In fact, $d_3(\tau e_0 g)$ does equal $c_0 d_0^2$, but we need a 
different argument.)
\end{ex}

\begin{ex}
Lemma \ref{lem:d3-Dh2^2h6} shows that 
$d_3(\D h_2^2 h_6)$ equals $h_1 h_6 d_0^2$. 
This argument uses that 
$\D h_2^2 h_6$ equals $\langle \D h_2^2, h_5^2, h_0 \rangle$
in the $E_3$-page, because of the Adams differential 
$d_2(h_6) = h_0 h_5^2$.
\end{ex}

\subsection{Shuffling formulas for Toda brackets}

Toda brackets satisfy various types of formal relations that we 
will use extensively.  The most important example of such a relation
is the shuffle formula
\[
\alpha \langle \beta, \gamma, \delta \rangle =
\langle \alpha, \beta, \gamma \rangle \delta,
\]
which holds whenever both Toda brackets are defined.
Note the equality of sets here; the indeterminacies of both
expressions are the same.

The following theorem states some formal properties of threefold
Toda brackets that we will use later.  
We apply these results so frequently that
we typically use them without further mention.

\begin{thm}
\label{thm:Toda-3fold}
\revv{\cite{Toda62}*{p.\ 33}}
Let $\alpha$, $\alpha'$, $\beta$, $\gamma$, and $\delta$ 
be homotopy classes in $\pi_{*,*}$. 
Each of the following relations involving threefold Toda brackets
holds up to \rev{signs}, whenever the Toda brackets are defined:
\begin{enumerate}
\item 
$ \langle \alpha + \alpha', \beta, \gamma\rangle \subseteq 
\langle \alpha, \beta, \gamma \rangle + 
\langle \alpha', \beta, \gamma\rangle.$
\item 
$ \langle \alpha, \beta, \gamma\rangle = 
\langle \gamma, \beta, \alpha \rangle.$
\item
 $\alpha \langle \beta, \gamma, \delta \rangle \subseteq 
 \langle \alpha\beta, \gamma, \delta \rangle.$
\item 
$\langle \alpha  \beta, \gamma, \delta \rangle \subseteq 
\langle \alpha, \beta\gamma, \delta \rangle.$
\item $\alpha \langle \beta, \gamma, \delta \rangle = \langle \alpha, \beta, \gamma \rangle \delta.$
\item $0 \in \langle \alpha, \beta, \gamma\rangle + 
\langle \beta, \gamma, \alpha\rangle + 
\langle \gamma, \alpha, \beta\rangle.$
\item 
\label{part:0}
\rev{$0 \in \langle \langle \alpha, \beta, \gamma \rangle, \delta, \epsilon \rangle + \langle \alpha, \langle \beta, \gamma, \delta \rangle, \epsilon \rangle + \langle  \alpha, \beta, \langle \gamma, \delta, \epsilon \rangle \rangle.$}
\end{enumerate}
\end{thm}

\rev{
Part (\ref{part:0}) of
Theorem \ref{thm:Toda-3fold}
requires some further explanation.
In the expression
$\langle \langle \alpha, \beta, \gamma \rangle, \delta, \epsilon \rangle$,
we have a set $\langle \alpha, \beta, \gamma \rangle$ as the 
first input to a threefold Toda bracket.
The expression $\langle \langle \alpha, \beta, \gamma \rangle, \delta, \epsilon \rangle$ is defined to be the union of all sets of the form
$\langle \zeta, \delta, \epsilon \rangle$, where
$\zeta$ ranges over all elements of 
$\langle \alpha, \beta, \gamma \rangle$.
Similar remarks apply to the other terms in Part (\ref{part:0}).
}

We next turn our attention to fourfold Toda brackets.  New complications
arise in this context.  If $\alpha \beta = 0$, $\beta \gamma = 0$,
$\gamma \delta = 0$, $\langle \alpha, \beta, \gamma \rangle$
contains zero, and $\langle \beta, \gamma, \delta \rangle$
contains zero, then the fourfold bracket
$\langle \alpha, \beta, \gamma, \delta \rangle$
is not necessarily defined.  Problems can arise when both
threefold subbrackets have indeterminacy.
See \cite{Isaksen14b} for a careful analysis of this problem in the
analogous context of Massey products.

However, when at least one of the threefold subbrackets is strictly
zero, then these difficulties vanish.  Every fourfold bracket that
we use has at least one threefold subbracket that is strictly zero.

Another complication with fourfold Toda brackets lies in the
description of the indeterminacy.  If at least one
threefold subbracket is strictly zero, then the indeterminacy of
$\langle \alpha, \beta, \gamma, \delta \rangle$
is the linear span of the sets
$\langle \alpha, \beta, \epsilon \rangle$,
$\langle \alpha, \epsilon, \delta \rangle$,
and $\langle \epsilon, \gamma, \delta \rangle$,
where $\epsilon$ ranges over all possible values in the appropriate
degree for which the Toda bracket is defined.

The following theorem states some formal properties of fourfold
Toda brackets that we will use later.  
We apply these results so frequently that
we typically use them without further mention.

\begin{thm}
\label{thm:Toda-4fold}
\revv{\cite{Kochman90}*{Chapter 2}}
Let $\alpha$, $\alpha'$, $\beta$, $\gamma$, $\delta$, and 
$\epsilon$ be homotopy classes in $\pi_{*,*}$. 
Each of the following relations involving fourfold Toda brackets
holds up to \rev{signs}, whenever the Toda brackets are defined:
\begin{enumerate}
\item $ \langle \alpha + \alpha', \beta, \gamma, \delta\rangle \subseteq  \langle \alpha, \beta, \gamma, \delta\rangle + \langle \alpha', \beta, \gamma, \delta\rangle.$
\item $ \langle \alpha, \beta, \gamma, \delta\rangle = \langle \delta, \gamma, \beta, \alpha \rangle.$
\item $\alpha \langle \beta, \gamma, \delta, \epsilon \rangle \subseteq \langle \alpha\beta, \gamma, \delta, \epsilon \rangle.$
\item $\langle \alpha  \beta, \gamma, \delta, \epsilon \rangle \subseteq \langle \alpha, \beta\gamma, \delta, \epsilon \rangle.$
\item $\alpha \langle \beta, \gamma, \delta, \epsilon \rangle = \langle \alpha, \beta, \gamma, \delta \rangle \epsilon.$
\item 
\label{part:a<b,c,d,e>}
$\alpha \langle \beta, \gamma, \delta, \epsilon \rangle \subseteq \langle \langle \alpha, \beta, \gamma \rangle, \delta, \epsilon \rangle.$
\end{enumerate}
\end{thm}

\rev{
As in Part (\ref{part:0}) of Theorem \ref{thm:Toda-3fold},
Part (\ref{part:a<b,c,d,e>}) of
Theorem \ref{thm:Toda-4fold}
requires some further explanation.
The expression $\langle \langle \alpha, \beta, \gamma \rangle, \delta, \epsilon \rangle$ is defined to be the union of all sets of the form
$\langle \zeta, \delta, \epsilon \rangle$, where
$\zeta$ ranges over all elements of 
$\langle \alpha, \beta, \gamma \rangle$.
}

We will make occasional use of matric Toda brackets. 
We will not describe their shuffling properties in detail, except to observe that
they obey analogous matric versions of the properties
in Theorems \ref{thm:Toda-3fold} and \ref{thm:Toda-4fold}. 
\revv{These properties can be proved with the same techniques that apply
to matric Massey product \cite{May69}; see 
\cite{Kochman78} \cite{Kochman96} for examples of this style of argument.}

\chapter{The algebraic Novikov spectral sequence}
\label{ch:AANSS}

Consider the cofiber sequence
\[
\xymatrix@1{
S^{0,-1} \ar[r]^\tau & S^{0,0} \ar[r]^i & C\tau \ar[r]^-p & S^{1,-1},
}
\]
where $C\tau$ is the cofiber of $\tau$.
The inclusion $i$ of the bottom cell and projection $p$ to the top cell
are tools for comparing the $\C$-motivic Adams spectral sequence
for $S^{0,0}$ to the $\C$-motivic Adams spectral sequence for
$C\tau$.
In \cite{Isaksen14c}, the first author analyzed both spectral sequences simultaneously,
playing the structure of each against the other in order to obtain
more detailed information about both.
Then the structure of the homotopy of $C\tau$ was used
to reverse-engineer the structure of the classical Adams-Novikov spectral
sequence.

In this manuscript, we use $C\tau$ in a different, much more powerful
way, because we have a deeper understanding of the connection
between the homotopy of $C\tau$ and the structure of the
classical Adams-Novikov spectral sequence.
Namely, the $\C$-motivic spectrum $C\tau$ is an $E_\infty$-ring spectrum
\cite{Gheorghe18}.
\revv{
Here we are referring to the classical $E_\infty$-operad that 
parametrizes homotopy-coherent commutative multiplications.}

Moreover, 
the homotopy category of $C\tau$-modules is equivalent
to \revv{Hovey's stable derived category} of $BP_*BP$-comodules  \cite{GWX18} \cite{Krause18}.
By considering endomorphisms of unit objects, this comparison
of homotopy categories gives a structured explanation for the
identification of the homotopy of $C\tau$ and the
classical Adams-Novikov $E_2$-page.

From a computational perspective, there is an even better connection.
Namely, the algebraic Novikov spectral sequence for computing
the Adams-Novikov $E_2$-page 
\cite{Novikov67} \cite{Miller75} 
is identical to the $\C$-motivic Adams
spectral sequence for computing the homotopy of $C\tau$ \cite{GWX18}.
This rather shocking, and incredibly powerful, 
identification of spectral sequences allows us to transform
purely algebraic computations directly into information about
Adams differentials for $C\tau$.
Finally, naturality along the inclusion $i$ of the bottom cell
and along the projection $p$ to the top cell allows us to
deduce information about Adams differentials for $S^{0,0}$.

Due to the large quantity of data, we do not explicitly 
describe the structure of the Adams spectral sequence
for $C\tau$ in this manuscript.  We refer the interested
reader to the charts in \cite{IWX19}, which provide details
in a graphical form.

\section{Naming conventions}
\label{sctn:Ctau-naming}

Our naming convention for elements of the algebraic Novikov
spectral sequence (and for elements of the Adams-Novikov
spectral sequence) differs from previous approaches.
Our names are chosen to respect the inclusion $i$ of the
bottom cell and the projection $p$ to the top cell.
Specifically, if $x$ is an element of the
$\C$-motivic Adams $E_2$-page for $S^{0,0}$, then we use the same
letter $x$ to indicate its image 
$i_*(x)$ in the Adams $E_2$-page for $C\tau$.
It is certainly possible that $i_*(x)$ is zero, but
we will only use this convention in cases where 
$i_*(x)$ is non-zero, i.e., when $x$ is not a multiple of $\tau$.

On the other hand, if $x$ is an element of the $\C$-motivic
Adams $E_2$-page for $S^{0,0}$ such that $\tau x$ is zero,
then we use the symbol $\overline{x}$ to indicate an element
of $p_*^{-1} (x)$ in the Adams $E_2$-page for $C\tau$.
There is often more than one possible choice for $\overline{x}$,
and the indeterminacy in this choice equals the image of
$i_*$ in the appropriate degree.  We will not usually be explicit
about these choices.  However, potential confusion can arise
in this context.  For example, it may be the case that
one choice of $\overline{x}$ supports an $h_1$ extension,
while another choice of $\overline{x}$ supports an $h_2$ extension,
but there is no possible choice of $\overline{x}$ that simultaneously
supports both extensions.  (The authors dwell on this point
because this precise issue has generated confusion about specific
computations.)

\section{Machine computations}
\label{sctn:AANNSS-machine}

We have analyzed the algebraic Novikov spectral sequence
by computer in a large range.  Roughly speaking, our algorithm
computes a Curtis table for a minimal resolution. 
Significant effort went into optimizing the linear algebra
algorithms to complete the computation in a reasonable amount of time.
The data is available at \cite{IWX19}.
See \cite{Wang20} for a discussion of the implementation.

Our machine computations give us a full description of the
additive structure of the algebraic Novikov $E_2$-page, together
with all $d_r$ differentials for $r \geq 2$.  
It thus yields a full description of the additive structure
of the algebraic Novikov $E_\infty$-page.

Moreover, the data also gives full information about
multiplication by $2$, $h_1$, and $h_2$ in the 
Adams-Novikov $E_2$-page for the classical sphere spectrum,
which we denote by $H^*(S; BP)$.

We have also conducted machine computations of the
Adams-Novikov $E_2$-page for the classical cofiber of $2$, which
we denote by $H^*(S/2; BP)$.
Note that $H^*(S; BP)$ is the homology of a differential
graded algebra (i.e., the cobar complex) that is free as a $\Z_2$-module.
Therefore, $H^*(S/2; BP)$ 
is the homology of this differential graded algebra modulo $2$.
We have computed this homology by machine, including full information
about multiplication by $h_1$, $h_2$, and $h_3$.
These computations are related by a long exact sequence 
\[
\xymatrix@1{
\cdots \ar[r] & H^*(S; BP) \ar[r]^j & H^*(S/2; BP) \ar[r]^q &
H^*(S; BP) \ar[r] & \cdots 
}
\]

Because $h_2^2$, $h_3^2$, $h_4^2$, and $h_5^2$ are annihilated by
$2$ in $H^*(S; BP)$, there are classes
$\widetilde{h_2^2}$, $\widetilde{h_3^2}$, $\widetilde{h_4^2}$, and 
$\widetilde{h_5^2}$ in $H^*(S/2; BP)$ such that
$q(\widetilde{h_i^2})$ equals $h_i^2$ for $2 \leq i \leq 5$.
We also have full information about multiplication by
$\widetilde{h_2^2}$, $\widetilde{h_3^2}$, $\widetilde{h_4^2}$, and 
$\widetilde{h_5^2}$ in $H^*(S/2; BP)$

This multiplicative information allows use to determine some of the
Massey product structure in the Adams-Novikov $E_2$-page for the
sphere spectrum.  There are several cases to consider.

First, let $x$ and $y$ be elements of $H^*(S;BP)$.
If the product $j(x) j(y)$ is non-zero in $H^*(S/2;BP)$, then 
$x y$ must also be non-zero in $H^*(S; BP)$.

In the second case, let $x$ be an element of $H^*(S; BP)$, and let
$\widetilde{y}$ be an element of $H^*(S/2; BP)$ such that
$q(\widetilde{y}) = y$.
If the product $x \cdot \widetilde{y}$ is non-zero in
$H^*(S/2; BP)$ and equals $j(z)$ for some $z$ in
$H^*(S; BP)$, then $z$ belongs to the Massey product
$\langle 2, y, x \rangle$.  This follows immediately from the
relationship between Massey products and the multiplicative
structure of a cofiber, as discussed in \cite{Isaksen14c}*{Section 3.1.1}.

Third, let $\widetilde{x}$ and $\widetilde{y}$ be elements of
$H^*(S/2; BP)$ such that 
$q(\widetilde{x}) = x$, $q(\widetilde{y}) = y$, and
$q(\widetilde{x} \cdot \widetilde{y}) = z$.
Then $z$ belongs to the Massey product
$\langle x, 2, y \rangle$ in $H^*(S; BP)$.  This follows
immediately from the multiplicative snake lemma \ref{lem:mult-snake}.

\begin{ex}
Computer data shows that 
the product $\widetilde{h_4^2} \cdot \widetilde{h_5^2}$ does not equal
zero in $H^*(S/2;BP)$.  This implies that the
Massey product $\langle h_4^2, 2, h_5^2 \rangle$ does not contain zero in
$H^*(S; BP)$, which in turn implies that
the Toda bracket $\langle \theta_4, 2, \theta_5 \rangle$
does not contain zero in $\pi_{93,48}$.
\end{ex}

\begin{remark}
let $\widetilde{x}$ and $\widetilde{y}$ be elements of
$H^*(S/2; BP)$ such that 
$q(\widetilde{x})$ and $q(\widetilde{y})$ equal $x$ and $y$, and
such that
$\widetilde{x} \cdot \widetilde{y}$
equals $j(z)$ for some $z$ in $H^*(S; BP)$.
It appears that $z$ has some relationship to the fourfold Massey
product $\langle 2, x, 2, y \rangle$, but we have not made this precise.
\end{remark}

\begin{lemma}[Multiplicative Snake Lemma]
\label{lem:mult-snake}
Let $A$ be a differential graded algebra that has no $2$-torsion, 
and let $H(A)$ be its homology.  Also let $H(A/2)$ be the homology
of $A/2$, and let $\delta: H(A/2) \map H(A)$ be the boundary
map associated to the short exact sequence
\[
\xymatrix@1{
0 \ar[r] & A \ar[r]^2 & A \ar[r] & A/2 \ar[r] & 0.
}
\]
Suppose that $a$ and $b$ are elements of $H(A/2)$ such that 
$2 \delta(a) = 0$ and $2 \delta(b) = 0$ in $H(A)$.
Then the Massey product $\langle \delta(a), 2, \delta(b) \rangle$
in $H(A)$
contains $\delta(a b)$.
\end{lemma}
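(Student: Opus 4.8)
The plan is to prove the lemma by an explicit cochain-level computation, exploiting the observation that a single cochain can simultaneously witness the definition of the boundary map $\delta$ and supply the nullhomotopy needed to form the Massey product. No part of the long exact sequence is needed beyond the chain-level description of $\delta$ coming from the short exact sequence $0 \to A \xrightarrow{2} A \to A/2 \to 0$.

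First I would record that description of $\delta$. Choose cocycles $\bar\alpha, \bar\beta \in A/2$ representing $a$ and $b$, and lift them arbitrarily to $\alpha, \beta \in A$. Because $\bar\alpha$ and $\bar\beta$ are cocycles modulo $2$, the elements $d\alpha$ and $d\beta$ are divisible by $2$; since $A$ has no $2$-torsion there are \emph{unique} $\alpha_1, \beta_1 \in A$ with $d\alpha = 2\alpha_1$ and $d\beta = 2\beta_1$, and by definition $\delta(a) = [\alpha_1]$ and $\delta(b) = [\beta_1]$. A short check, again using the absence of $2$-torsion, shows that $\alpha_1$ and $\beta_1$ are themselves cocycles. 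The product class $ab$ is represented by the cocycle $\bar\alpha\bar\beta$, lifted by $\alpha\beta$, and the Leibniz rule gives
\[
d(\alpha\beta) = (d\alpha)\beta + (-1)^{|\alpha|}\alpha(d\beta) = 2\bigl(\alpha_1\beta + (-1)^{|\alpha|}\alpha\beta_1\bigr),
\]
so that $\delta(ab) = [\gamma]$ with $\gamma = \alpha_1\beta + (-1)^{|\alpha|}\alpha\beta_1$.

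Next I would assemble the Massey product $\langle \delta(a), 2, \delta(b)\rangle$ from this same data, its three entries being represented by the cocycles $\alpha_1$, $2\cdot 1_A$, and $\beta_1$. The crucial point is that the hypotheses $2\delta(a)=0$ and $2\delta(b)=0$ are witnessed by the very lifts already chosen: the identities $d\alpha = 2\alpha_1 = \alpha_1\cdot(2\cdot 1_A)$ and $d\beta = 2\beta_1 = (2\cdot 1_A)\cdot\beta_1$ exhibit $\alpha$ and $\beta$ as admissible nullhomotopies for the two adjacent products. Feeding these into the standard cochain formula for a threefold Massey product produces the representative $\alpha\beta_1 - (-1)^{|\alpha_1|}\alpha_1\beta$, which, using $|\alpha_1| = |\alpha|+1$, equals $(-1)^{|\alpha|}\gamma$. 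Hence $\delta(ab)$ lies in $\langle \delta(a), 2, \delta(b)\rangle$.

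The computation is routine Leibniz bookkeeping, so I expect the genuine subtleties to be twofold. First, the no-$2$-torsion hypothesis is used repeatedly and essentially — both to divide by $2$ in defining $\alpha_1,\beta_1,\gamma$ and to verify that these are cocycles — so I would flag each such use explicitly rather than fold it in silently. Second, and this is the point I would be \textbf{most careful} about, the canonical representative I obtain matches $\gamma$ only up to the overall sign $(-1)^{|\alpha|}$; pinning down the stated membership on the nose requires fixing one sign convention for the Massey product and tracking it through. This sign is immaterial for the applications in the excerpt, where Massey products and Toda brackets are used only up to sign, so I would simply note it and not belabor it. I would not address indeterminacy beyond observing that the asserted containment holds for the specific representative constructed above.
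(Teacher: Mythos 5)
Your proposal is correct and follows essentially the same route as the paper's proof: both choose cycle representatives in $A/2$, lift them to $A$, use the no-$2$-torsion hypothesis to divide the boundaries by $2$ to get representatives of $\delta(a)$ and $\delta(b)$, and then observe via the Leibniz rule applied to the product of the lifts that the standard Massey-product representative built from these nullhomotopies also represents $\delta(ab)$. The only difference is that you track signs explicitly where the paper suppresses them, which is a harmless refinement since the relevant classes are $2$-torsion.
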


\begin{proof}
We carry out a diagram chase in the spirit of the snake lemma.
Write $\partial$ for the boundary operators in $A$ and $A/2$.

Let $x$ and $y$ be cycles in $A/2$ that represent $a$ and $b$
respectively.
Let $x'$ and $y'$ be elements in $A$ that reduce to $x$ and $y$.
Then $\partial x'$ and $\partial y'$ reduce to zero in $A/2$
because $x$ and $y$ are cycles.
Therefore, $\partial x' = 2 \widetilde{x}$ and
$\partial y' = 2 \widetilde{y}$ 
for some $\widetilde{x}$ and $\widetilde{y}$ in $A$.

By definition of the boundary map,
$\delta(a)$ and $\delta(b)$ are represented by $\widetilde{x}$
and $\widetilde{y}$.  By the definition of Massey products,
the cycle
$\widetilde{x} y' + x' \widetilde{y}$ is contained in 
$\langle \delta(a), 2, \delta(b) \rangle$.

Now we compute $\delta(a b)$.
Note that $x' y'$ is an element of $A$ that reduces to $a b$.
Then 
\[
\partial (x' y') = \partial(x') y' + x' \partial(y') =
2 (\widetilde{x} y' + x' \widetilde{y}).
\]
This shows that $\delta(a b)$ is represented by
$\widetilde{x} y' + x' \widetilde{y}$.
\end{proof}

\section{$h_1$-Bockstein spectral sequence}
\label{sctn:h1-Bockstein}

The charts in \cite{IWX19} show graphically 
the algebraic Novikov spectral sequence,
i.e., the Adams spectral sequence for $C\tau$.  Essentially all of
the information in the charts can be read off from machine-generated data.
This includes hidden extensions in the $E_\infty$-page.

One aspect of these charts requires further explanation.
The $\C$-motivic Adams $E_2$-page for $C\tau$ contains a large
number of $h_1$-periodic elements, i.e.,
elements that support infinitely many $h_1$ multiplications.
The behavior of these elements is entirely understood
\cite{GI15}, at least up to many multiplications by $h_1$,
i.e., in an $h_1$-periodic sense.

On the other hand, it takes some work to \revv{``delocalize"}
this information.  For example, we can immediately deduce from
\cite{GI15} that $d_2(h_1^k e_0) = h_1^{k+2} d_0$ for large
values of $k$, but that does not necessarily determine
the behavior of Adams differentials for small values of $k$.

The behavior of these elements is a bit subtle in another sense, 
as illustrated
by Example \ref{ex:_c0e0}.

\begin{ex}
\label{ex:_c0e0}
Consider the $h_1$-periodic element $\ol{c_0 e_0}$
\revv{in the algebraic Novikov spectral sequence}.
Machine computations tell us that this element supports a 
$d_2$ differential, but there is more than one possible value
for $d_2(\ol{c_0 e_0})$ because of the presence of
both $h_1^2 \ol{c_0 d_0}$ and $P e_0$.

In fact, $d_2(\ol{c_0 d_0})$ equals $P d_0$, and
$d_2(P e_0)$ equals $P h_1^2 d_0$.  Therefore,
$P e_0 + h_1^2 \ol{c_0 d_0}$ is the only non-zero $d_2$ cycle,
and it follows that 
$d_2(\ol{c_0 e_0})$ must equal $P e_0 + h_1^2 \ol{c_0 d_0}$.

\revv{
The careful reader will note that $d_2(\ol{c_0 e_0})$ is
not shown on the algebraic Novikov chart in \cite{IWX19}.
As discussed in \cite{IWX19}*{Section 4}, the $h_1$-periodic
differentials are not shown for legibility.
Instead, the differential is shown in the $h_1$-Bockstein
spectral sequence chart of \cite{IWX19}, up to higher
powers of $h_1$.
}
\end{ex}

In higher stems, it becomes more and more difficult to determine
the exact values of the Adams $d_2$ differentials
on $h_1$-periodic classes.  Eventually, these complications
become unmanageable because they involve sums of many monomials.

Fortunately, we only need concern ourselves with the Adams
$d_2$ differential in this context.
The $h_1$-periodic $E_3$-page equals the $h_1$-periodic
$E_\infty$-page, and the only non-zero classes are well-understood
$v_1$-periodic families running along the top of the Adams chart.

Our solution to this problem, as usual, is to introduce a filtration
that hides the \revv{higher order terms}.
In this case, we filter by powers of $h_1$.  The effect is that
terms involving higher powers of $h_1$ are ignored, and the
formulas become much more manageable.

This $h_1$-Bockstein spectral sequence starts with an $E_0$-page,
because there are some differentials that do not increase
$h_1$ divisibility.  For example, we have
Bockstein differentials $d_0(\ol{h_1^2 e_0}) = \ol{h_1^4 d_0}$
and $d_0(\ol{c_0 d_0}) = P d_0$, reflecting the Adams
differentials $d_2(\ol{h_1^2 e_0}) = \ol{h_1^4 d_0}$
and $d_2(\ol{c_0 d_0}) = P d_0$.

There are also plenty of higher $h_1$-Bockstein differentials,
such as $d_2(e_0) = h_1^2 d_0$, and
$d_7(e_0^2 g) = M h_1^8$.

\begin{remark}
Beware that filtering by powers of $h_1$ changes the multiplicative
structure in perhaps unexpected ways.
For example, $P h_1$ and $d_0$ are not $h_1$-multiples, so their
$h_1$-Bockstein filtration is zero.  One might expect their
product to be $P h_1 d_0$, but the 
$h_1$-Bockstein filtration of this element is $1$.
Therefore, $P h_1 \cdot d_0$ equals $0$ in the
$h_1$-Bockstein spectral sequence.

But not all $P h_1$ multiplications are trivial in the
$h_1$-Bockstein spectral sequence.  For example, we have
$P h_1 \cdot \ol{c_0 d_0} = \ol{P h_1 c_0 d_0}$ because
the $h_1$-Bockstein filtrations of all three elements are zero.
\end{remark}

In Example \ref{ex:_c0e0}, we explained that there is an
Adams differential $d_2(\ol{c_0 e_0}) = P e_0 + h_1^2 \ol{c_0 d_0}$.
When we throw out higher powers of $h_1$, we obtain
the $h_1$-Bockstein differential $d_0(\ol{c_0 e_0}) = P e_0$.
We also have an $h_1$-Bockstein differential
$d_0(\ol{c_0 d_0}) = P d_0$.

The first four charts in \cite{IWX19} show graphically how this 
$h_1$-Bockstein spectral sequence plays out in practice.
The main point is that the $h_1$-Bockstein
$E_\infty$-page reveals which (formerly) $h_1$-periodic classes
contribute to the Adams $E_3$-page for $C\tau$.

\chapter{Massey products}
\label{ch:Massey}

The purpose of this chapter is to provide some general tools,
and to give some specific computations, of Massey products in
$\Ext$.  This material contributes to Table \ref{tab:Massey},
which lists a number of Massey products in $\Ext$ that we need
for various specific purposes.  Most commonly, these Massey
products yield information about Toda brackets via
the Moss Convergence Theorem \ref{thm:Moss}.

We begin with a $\C$-motivic version of a classical theorem of
Adams about symmetric Massey products.

\begin{thm}
\label{thm:Massey-symmetric}
\mbox{}
\begin{enumerate}
\item
If $h_0 x$ is zero, then $\langle h_0, x, h_0 \rangle$ contains 
$\tau h_1 x$.
\item
If $n \geq 1$ and $h_n x$ is zero, then 
$\langle h_n, x, h_n \rangle$ contains $h_{n+1} x$.
\end{enumerate}
\end{thm}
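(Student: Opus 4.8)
The plan is to work directly in the cobar complex $\Omega$ for the $\C$-motivic dual Steenrod algebra, whose homology is $\Ext$, and to exploit Steenrod's cup-one product $\smile_1$ on this differential graded algebra. Recall that $h_0$ is represented by the primitive cocycle $[\tau_0]$, and that for $n \geq 1$ the class $h_n$ is represented by the primitive cocycle $[\xi_1^{2^{n-1}}]$. In either case the representing cochain $a$ is a one-dimensional cocycle, so the argument can be carried out uniformly: I would first prove a purely formal cochain-level identity, and then evaluate one cup-one square in each of the two cases.

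First I would establish the following general fact. Let $a$ be a one-dimensional cocycle in $\Omega$ and let $x$ be a cocycle with $ax = 0$; then $\langle a, x, a \rangle$ contains the class of $(a \smile_1 a)\cdot x$. To see this, choose $u$ with $d u = ax$. Because $a$ and $x$ are cocycles, the fundamental relation $d(a \smile_1 x) = ax + xa$ shows that $v = u + a \smile_1 x$ satisfies $dv = xa$, so $av + ua$ is a representative of $\langle a, x, a \rangle$. Expanding, $av + ua = au + a(a \smile_1 x) + ua$. The relation $d(a \smile_1 u) = au + ua + a \smile_1 (ax)$ rewrites $au + ua$ modulo the boundary $d(a \smile_1 u)$, and the Hirsch distributivity formula $a \smile_1 (ax) = (a \smile_1 a)\, x + a(a \smile_1 x)$ then collapses the remaining terms, leaving $av + ua \equiv (a \smile_1 a)\cdot x$ modulo boundaries. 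All signs are trivial since we work at the prime $2$. Since $a \smile_1 a$ is itself a cocycle (its differential is $aa + aa = 0$), this exhibits $[a \smile_1 a]\cdot [x]$ as an element of the Massey product.

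It then remains to identify the cup-one square $a \smile_1 a$, which represents the bottom Steenrod operation $\Sq^0(a)$. For a one-dimensional primitive cocycle $[\alpha]$ one has $[\alpha] \smile_1 [\alpha] = [\alpha^2]$ in $\Ext$. Taking $\alpha = \xi_1^{2^{n-1}}$ gives $[\xi_1^{2^n}] = h_{n+1}$, which yields part (2). Taking $\alpha = \tau_0$ gives $[\tau_0^2]$; here the defining motivic relation $\tau_0^2 = \tau \xi_1$ in the dual Steenrod algebra produces $[\tau \xi_1] = \tau h_1$, which yields part (1). Substituting these into the identity from the previous step gives $h_{n+1} x \in \langle h_n, x, h_n\rangle$ and $\tau h_1 x \in \langle h_0, x, h_0 \rangle$.

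The main obstacle is this last step: making precise the cup-one product on the motivic cobar complex and computing the squares $[\alpha]\smile_1[\alpha]$, rather than the formal juggling, which is routine. Concretely, one must pin down that the bottom operation sends $[\tau_0]$ to $\tau h_1$ and not merely to $h_1$; this is exactly the point at which the $\C$-motivic computation diverges from Adams' classical argument, and it is forced by $\tau_0^2 = \tau \xi_1$. I would either cite the existing construction of Steenrod operations on motivic $\Ext$ together with the computations $\Sq^0(h_0) = \tau h_1$ and $\Sq^0(h_n) = h_{n+1}$, or verify the two squares by an explicit $\smile_1$ computation in low cobar degree; the $n \geq 1$ case is formally identical to the classical one, so the only genuinely new input is the single class $[\tau_0]\smile_1[\tau_0] = \tau h_1$.
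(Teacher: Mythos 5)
Your argument is correct and is essentially the proof the paper intends: the paper states this as a $\C$-motivic version of Adams' Lemma 2.5.4 and gives no independent proof, and Adams' own argument is exactly the cup-one computation you reconstruct, namely that $\langle a,x,a\rangle$ contains $(a\smile_1 a)\cdot x = \Sq^0(a)\cdot x$ for a one-dimensional cocycle $a$. The only genuinely new input in the motivic setting is the single evaluation $[\tau_0]\smile_1[\tau_0]=[\tau_0^2]=[\tau\xi_1]=\tau h_1$, forced by the relation $\tau_0^2=\tau\xi_1$ in the $\C$-motivic dual Steenrod algebra, and you identify and justify exactly that point.
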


\revv{
\begin{proof}
The element $\Sq^0(h_n) x$ is contained in $\langle h_n, x, h_n \rangle$
\cite{Hirsch55}, where $\Sq^0$ is an algebraic Steenrod operation \cite{May70}.  We compute that $\Sq^0(h_0)$ equals $\tau h_1$
and $\Sq^0(h_n)$ equals $h_{n+1}$ for $n \geq 1$.  These motivic 
computations follow from the analogous classical computations
\cite{Adams60}*{Lemma 2.5.4}.
\end{proof}
}

\revv{
\begin{remark}
The two parts of Theorem \ref{thm:Massey-symmetric} are less different
than they appear.  Because of the specific values of the motivic weights,
we have a factor of $\tau$ in $\Sq^0(h_0)$, while no $\tau$ appears
in $\Sq^0(h_n)$ for $n \geq 1$.
\end{remark}
}

\section{The operator $g$}

The projection map $p:A_* \map A(2)_*$ induces a map
$p_*: \Ext_\C \map \Ext_{A(2)}$.
Because $\Ext_{A(2)}$ is completely known \cite{Isaksen09},
this map is useful for detecting structure in
$\Ext_\C$.  Proposition \ref{prop:g-Massey} provides a tool
for using $p_*$ to compute certain types of Massey products.

\begin{prop}
\label{prop:g-Massey}
Let $x$ be an element of $\Ext_\C$ such that $h_1^4 x = 0$.
Then $p_* \left( \langle h_4, h_1^4, x \rangle \right)$
equals the element $g p_*(x)$ in $\Ext_{A(2)}$.
\end{prop}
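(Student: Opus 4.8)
The plan is to sidestep the usual obstruction to naturality of Massey products by working at the level of the cobar complex and exploiting that the cobar representative of $h_4$ lies in the kernel of $p$. Recall that $p_* \colon \Ext \map \Ext_{A(2)}$ is induced by a map of cobar complexes, so Massey products are natural up to indeterminacy; but since $p_*(h_4) = 0$, the naive target bracket $\langle 0, h_1^4, p_*(x) \rangle$ carries too much indeterminacy to pin down the answer. The point is that $h_4$ is represented by a cobar cocycle consisting of the single $A_*$-generator dual to $\Sq^{16}$, and this generator maps to zero under $p$ because $A(2)$ does not contain $\Sq^{16}$. Thus $p_*(\bar h_4) = 0$ not merely in cohomology but literally as a cochain.

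With this in hand, I would compute the bracket directly. Choose cobar cocycles $\bar h_4$, $\bar h_1^4$, $\bar x$ representing $h_4$, $h_1^4$, $x$, together with cochains $u$ and $v$ with $d u = \bar h_4\, \bar h_1^4$ and $d v = \bar h_1^4\, \bar x$; these exist because $h_1^4 h_4 = 0$ (the relation that makes the operator $g$ defined) and because $h_1^4 x = 0$ (the hypothesis). Then $\langle h_4, h_1^4, x \rangle$ is represented by the cocycle $u\, \bar x + \bar h_4\, v$. Applying the DGA map $p_*$ and using $p_*(\bar h_4) = 0$ at the cochain level, the second term vanishes identically, leaving $p_*(u)\, p_*(\bar x)$. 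Moreover $d\, p_*(u) = p_*(\bar h_4)\, p_*(\bar h_1^4) = 0$, so $p_*(u)$ is a genuine cocycle, defining a class of $\Ext_{A(2)}$ in the tridegree of $u$, namely $(20,4,12)$, which is the tridegree of $g$.

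The crux is to identify $[p_*(u)]$ with the generator $g$ of $\Ext_{A(2)}$. This is precisely the assertion that $g$ is represented by the image under $p$ of a nullhomotopy of $h_1^4 h_4$, reflecting the fact that $g$ exists in $\Ext_{A(2)}$ exactly because $h_4$ dies there. Since $\Ext_{A(2)}$ is completely known \cite{Isaksen09}, I would verify this by checking that the relevant tridegree of $\Ext_{A(2)}$ is one-dimensional, spanned by $g$, and that $p_*(u)$ is not a coboundary; the latter can be seen by tracking the monomials of $u$ that survive the projection to $A(2)_*$ and matching them against the standard cobar representative of $g$. Granting the identification $[p_*(u)] = g$, the surviving term $p_*(u)\, p_*(\bar x)$ represents $g \cdot p_*(x)$, which is the desired conclusion.

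Finally I would address the meaning of the equality. The indeterminacy of $\langle h_4, h_1^4, x\rangle$ is $h_4 \cdot \Ext + \Ext \cdot x$; its first summand is annihilated by $p_*$ since $p_*(h_4) = 0$, and its second summand lands in $\Ext_{A(2)} \cdot p_*(x)$ inside the one-dimensional group spanned by $g$, so the value $g\, p_*(x)$ is the distinguished representative of the image. I expect the main obstacle to be exactly the cochain-level identification $[p_*(u)] = g$: one must control the surviving monomials of the nullhomotopy $u$ precisely enough to recognize $g$, rather than merely confirming that the bidegree is correct.
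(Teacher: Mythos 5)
Your structural reduction is the same one the paper uses, just phrased at the cochain level: because $h_4$ is represented by $[\xi_1^{16}]$, which maps to zero on the nose in the cobar complex for $A(2)_*$, the term $\bar h_4 v$ dies and the pushforward of the bracket collapses to $[p_*(u)]\cdot p_*(x)$. The class $[p_*(u)]$ is precisely the module Massey product $\langle 1, h_4, h_1^4 \rangle$ formed in $\Ext_{A(2)}$ regarded as an $\Ext_\C$-module, which is how the paper organizes the argument; your final step $p_*\left(\langle h_4, h_1^4, x \rangle\right) = [p_*(u)]\, p_*(x)$ is the cochain form of $\langle 1, h_4, h_1^4 \rangle \cdot x = 1 \cdot \langle h_4, h_1^4, x \rangle$. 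One point you should make explicit: $[p_*(u)]$ is independent of the choice of nullhomotopy $u$, because two choices differ by a cocycle representing a class of $\Ext_\C$ in tridegree $(20,4,12)$, and that group is zero (motivically only $\tau g$ exists, not $g$); this is also the reason the bracket and its image have no indeterminacy, which your closing paragraph gestures at but does not quite pin down (the relevant group is a tridegree of $\Ext_\C$, not of $\Ext_{A(2)}$).

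The one genuine gap is the step you yourself flag: identifying $[p_*(u)]$ with $g$. Knowing that $\Ext_{A(2)}$ in tridegree $(20,4,12)$ is spanned by $g$ reduces this to showing $[p_*(u)] \neq 0$, but ``tracking the monomials of $u$'' is not a proof --- writing down an explicit cobar nullhomotopy of $\bar h_4\, \bar h_1^4$ and verifying that its image in the cobar complex of $A(2)_*$ is not a coboundary is a substantial computation you have not carried out, and there is no a priori reason its surviving terms should visibly match a standard representative of $g$. The paper closes exactly this gap with the May Convergence Theorem: the May differential $d_4(b_{21}^2) = h_1^4 h_4$ exhibits $g = b_{21}^2$ as the element detecting the nullhomotopy, so $g$ lies in $\langle 1, h_4, h_1^4 \rangle$, with the crossing-differential hypothesis checked by inspection. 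If you replace your monomial-tracking step with that May spectral sequence input, your argument becomes complete and is essentially the paper's proof.
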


\begin{proof}
The idea of the proof is essentially the same as in
\cite{Isaksen19}*{Proposition 3.1}.
The $\Ext_\C$-module $\Ext_{A(2)}$ is a ``Toda module", 
in the sense that Massey products
$\langle x, a, b \rangle$ are defined for all
$x$ in $\Ext_{A(2)}$ and all $a$ and $b$ in $\Ext_\C$ such
that $x \cdot a = 0$ and $ab = 0$.
In particular, the bracket $\langle 1, h_4, h_1^4 \rangle$
is defined in $\Ext_{A(2)}$.
We wish to compute this bracket.

We use the May Convergence Theorem in order to compute the
bracket.  The crossing differentials condition on the theorem
is satisfied because there are no possible differentials that 
could interfere.

The key point is the May differential
$d_4(b_{21}^2) = h_1^4 h_4$.  
This shows that $g$ is contained in $\langle 1, h_4, h_1^4 \rangle$.
Also, the bracket has no indeterminacy by inspection.

Now suppose that $x$ 
is an element of $\Ext_\C$ such that $h_1^4 x = 0$.
Then
\[
p_* \left( \langle h_4, h_1^4, x \rangle \right) =
1 \cdot \langle h_4, h_1^4, x \rangle =
\langle 1, h_4, h_1^4 \rangle \cdot x =
g p_*(x).
\]
\end{proof}

\begin{ex}
\label{ex:g-Massey}
We illustrate the practical usefulness of
Proposition \ref{prop:g-Massey} with a specific
example.  Consider the Massey product
$\langle h_1^3 h_4, h_1, h_2 \rangle$.
The proposition says that
\[
p_* \left( \langle h_1^3 h_4, h_1, h_2 \rangle \right) = h_2 g
\]
in $\Ext_{A(2)}$.  This implies that
$\langle h_1^3 h_4, h_1, h_2 \rangle$ equals $h_2 g$ in
$\Ext_\C$.
\end{ex}

\begin{remark}
The Massey product computation in Example \ref{ex:g-Massey}
is in relatively low dimension, and it can be computed using
other more direct methods.
Table \ref{tab:Massey} lists additional examples, including
some that cannot be determined by more elementary methods.
\end{remark}

\section{The Mahowald operator}
\label{subsctn:Mahowald-operator}

We recall some results from \cite{Isaksen19} about the
Mahowald operator.  The Mahowald operator is defined to be
$M x = \langle x, h_0^3, g_2 \rangle$ for all $x$ such that $h_0^3 x$
equals zero.  As always, one must be cautious about indeterminacy
in $M x$.

There exists a subalgebra $B$ of the $\C$-motivic Steenrod algebra whose
cohomology
$\Ext_B(\M_2, \M_2)$ equals $\M_2[v_3] \otimes_{\M_2} \Ext_{A(2)}$.
The inclusion of $B$ into the $\C$-motivic Steenrod algebra induces a map
$p_*: \Ext_\C \map \Ext_B$.

\begin{prop} \cite{Isaksen19}*{Theorem 1.1}
\label{prop:M-Massey}
The map $p_*: \Ext_\C \map \Ext_B$ takes $M x$ to the product
$(e_0 v_3^2 + h_1^3 v_3^3) p_*(x)$, whenever $M x$ is defined.
\end{prop}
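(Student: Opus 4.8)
The plan is to mirror the proof of Proposition \ref{prop:g-Massey}, exploiting the fact that $\Ext_B$ is a Toda module over $\Ext_\C$ and that the defining bracket of $M$ projects to a single \emph{universal} class in $\Ext_B$ that can be evaluated once and for all. Since the comparison map $p_* : \Ext_\C \map \Ext_B$ is a ring homomorphism, the target carries compatible Massey products, naturality $p_*\langle -,-,-\rangle \subseteq \langle p_*-, p_*-, p_*-\rangle$ holds, and brackets of the form $\langle m, a, b\rangle$ with $m \in \Ext_B$ and $a, b \in \Ext_\C$ (subject to $m \cdot a = 0$ in $\Ext_B$ and $ab = 0$ in $\Ext_\C$) are defined and satisfy the module versions of the shuffling identities in Theorem \ref{thm:Toda-3fold}.

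First I would record the two structural inputs. The bracket $M x = \langle x, h_0^3, g_2\rangle$ is defined exactly when $h_0^3 x = 0$, and in particular $h_0^3 g_2 = 0$ in $\Ext_\C$. The crucial vanishing is $p_*(g_2) = 0$ in $\Ext_B$: this is the analogue of the identity $h_4 = 0$ in $\Ext_{A(2)}$ that drove Proposition \ref{prop:g-Massey}, and it is checked directly from the explicit description $\Ext_B = \M_2[v_3] \otimes_{\M_2} \Ext_{A(2)}$ in the bidegree of $g_2$. Granting this, rewriting $\langle x, h_0^3, g_2 \rangle = \langle g_2, h_0^3, x\rangle$ and applying the Toda-module shuffle gives
\[
p_*(M x) = 1 \cdot \langle g_2, h_0^3, x \rangle = \langle 1, g_2, h_0^3 \rangle \cdot p_*(x),
\]
so the entire statement reduces to identifying the single universal class $\langle 1, g_2, h_0^3 \rangle$ in $\Ext_B$.

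The heart of the argument, and the step I expect to be the main obstacle, is the evaluation $\langle 1, g_2, h_0^3 \rangle = e_0 v_3^2 + h_1^3 v_3^3$. I would compute this with the May Convergence Theorem (Remark \ref{rem:May-convergence}), as in the $g$ case, by locating the May differential that exhibits $h_0^3 g_2$ as a boundary and reading off the resulting cobar representative after restriction to $B$. The difficulty is genuinely greater than for $g$: there the answer came from the single clean differential $d_4(b_{21}^2) = h_1^4 h_4$, whereas here the value has \emph{two} monomials, so no single May differential can produce it. Instead one must track representatives through several filtration jumps, keep the bookkeeping of $v_3$-powers straight, and verify that the surviving contributions assemble into exactly $e_0 v_3^2 + h_1^3 v_3^3$.

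Finally I would dispatch the remaining bookkeeping: confirm that the crossing-differential hypothesis of the May Convergence Theorem is satisfied (so that the May-level computation actually detects the bracket rather than merely a coset in higher filtration), and check that $\langle 1, g_2, h_0^3 \rangle$ has no indeterminacy in this bidegree of $\Ext_B$, so that its value is unambiguous. Combining the displayed shuffle identity with this evaluation then yields $p_*(M x) = (e_0 v_3^2 + h_1^3 v_3^3)\, p_*(x)$ for every $x$ with $M x$ defined.
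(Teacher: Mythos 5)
The first thing to say is that this paper does not actually prove Proposition \ref{prop:M-Massey}: it is imported verbatim from \cite{Isaksen19}*{Theorem 1.1}, so the only thing in the manuscript you can calibrate against is the proof of the companion Proposition \ref{prop:g-Massey}, which is itself explicitly modeled on \cite{Isaksen19}*{Proposition 3.1}. Your reduction follows that template faithfully and correctly: the Toda-module structure of $\Ext_B$ over $\Ext_\C$, the observation that $p_*(g_2)=0$ (a degree check in $\Ext_B = \M_2[v_3]\otimes_{\M_2}\Ext_{A(2)}$ at $(s,f,w)=(44,4,24)$, which does come out empty), and the shuffle $p_*(Mx) = 1\cdot\langle g_2, h_0^3, x\rangle = \langle 1, g_2, h_0^3\rangle\cdot p_*(x)$ are all sound and are, as far as the structure of the cited reference indicates, the same reduction used there.

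The genuine gap is that after this reduction everything of substance lives in the single evaluation $\langle 1, g_2, h_0^3\rangle = e_0 v_3^2 + h_1^3 v_3^3$ in $\Ext_B$, and you do not carry it out — you only announce that you \emph{would} compute it with the May Convergence Theorem and concede that, unlike the $g$ case, no single May differential can produce a two-term answer. That concession is exactly where the argument stops being a proof. To execute your plan you would need a module-level version of the May Convergence Theorem for the pair $(\Ext_B, \Ext_\C)$, an identification of the May-filtration representatives of $v_3$ and of the nullhomotopy witnessing $p_*(g_2)=0$, and a verification of the crossing-differential hypothesis in a range where the May spectral sequence for $B$ is not set up anywhere in this manuscript; none of this is routine bookkeeping, and it is precisely the content of \cite{Isaksen19}*{Theorem 1.1}. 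So the proposal is a correct reformulation of the proposition as a single universal Massey product computation, but it does not prove that computation, and since the proposition is used downstream only through that value (e.g., in Lemmas \ref{lem:d2-tB5g} and \ref{lem:d3-h2B5g}), the reformulation by itself adds nothing beyond what the citation already provides.
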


Proposition \ref{prop:M-Massey} is useful in practice for detecting
certain Massey products of the form $\langle x, h_0^3, g_2 \rangle$.
For example, if $x$ is an element of $\Ext_\C$ such that
$h_0^3 x$ equals zero and $e_0 p_*(x)$ is non-zero in $\Ext_{A(2)}$,
then $\langle x, h_0^3, g_2 \rangle$ is non-zero.

\begin{ex}
\label{ex:Mh1}
Proposition \ref{prop:M-Massey} shows that
$\langle h_1, h_0, h_0^2 g_2 \rangle$ is non-zero.  There is
only one non-zero element in the appropriate degree, so we have
identified the Massey product.  We give this element the name
$M h_1$.
\end{ex}

\begin{ex}
\label{ex:M^2h1}
Expanding on Example \ref{ex:Mh1}, 
Proposition \ref{prop:M-Massey} also shows that 
$\langle M h_1, h_0, h_0^2 g_2 \rangle$ is non-zero.
Again, there is only one non-zero element in the appropriate
degree, so we have identified the Massey product.  We give this
element the name $M^2 h_1$.
\end{ex}

\section{Additional computations}

\begin{lemma}
\label{lem:h1^2,h4^2,h1^2,h4^2}
\revdeg{66, 6, 36}
The Massey product $\langle h_1^2, h_4^2, h_1^2, h_4^2 \rangle$
equals $\D_1 h_3^2$.
\end{lemma}

\begin{proof}
Table \ref{tab:Massey} shows that $\D h_2^2$ equals the Massey product
$\langle h_0^2, h_3^2, h_0^2, h_3^2 \rangle$.
Recall the isomorphism between classical $\Ext$ groups
and $\C$-motivic $\Ext$ groups in degrees satisfying $s+f-2w=0$,
as described in Theorem \ref{thm:Chow-zero}.
This shows that 
$\D_1 h_3^2$ equals $\langle h_1^2, h_4^2, h_1^2, h_4^2 \rangle$.
\end{proof}

\rev{
\begin{lemma}
\label{lem:A',h1,h2}
\revdeg{66, 7, 35}
The Massey product $\langle A', h_1, h_2 \rangle$
equals $\tau G_0$.
\end{lemma}

\begin{proof}
Consider the shuffle
\[
A' \langle h_1, h_2, h_1 \rangle =
\langle A', h_1, h_2 \rangle h_1.
\]
Table \ref{tab:Massey} shows that the left side equals
$h_2^2 A'$, which equals $h_1 \cdot \tau G_0$.
This implies that $\langle A', h_1, h_2 \rangle$
contains $\tau G_0$.

The indeterminacy is zero by inspection.
\end{proof}
}

\begin{lemma}
\label{lem:h1^3h4,h1,tgn}
\revdeg{71, 13, 40}
The Massey product $\langle h_1^3 h_4, h_1, \tau g n \rangle$
equals $\tau g^2 n$, with indeterminacy generated by
$M h_0 h_2^2 g$.
\end{lemma}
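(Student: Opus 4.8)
The plan is to follow the same strategy as Example \ref{ex:g-Massey}, detecting the bracket through the projection $p_*\colon \Ext_\C \map \Ext_{A(2)}$ together with Proposition \ref{prop:g-Massey}, but with extra care to pin down both the value and the indeterminacy. First I would check that the bracket is defined: the product $h_1^3 h_4 \cdot h_1 = h_1^4 h_4$ vanishes, and $h_1 \cdot \tau g n = 0$, both read off from the machine-generated $\Ext$ data \cite{Wang19}.

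Next, I would rewrite the first entry as $h_4 \cdot h_1^3$ and apply the Massey product analog of the slide relation in Theorem \ref{thm:Toda-3fold}(4) to obtain
\[
\langle h_1^3 h_4, h_1, \tau g n \rangle \subseteq \langle h_4, h_1^4, \tau g n \rangle.
\]
Since $h_1 \cdot \tau g n = 0$ forces $h_1^4 \cdot \tau g n = 0$, Proposition \ref{prop:g-Massey} applies to the right-hand bracket and gives
\[
p_* \langle h_4, h_1^4, \tau g n \rangle = g \cdot p_*(\tau g n) = p_*(\tau g^2 n),
\]
where the last equality uses that $p_*$ is a ring homomorphism with $p_*(g) = g$. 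Using the complete description of $\Ext_{A(2)}$ in \cite{Isaksen09}, I would verify that $p_*(\tau g^2 n)$ is non-zero. Because Proposition \ref{prop:g-Massey} records $p_*$ of the larger bracket as a single element (that is, $p_*$ kills its indeterminacy), the slide inclusion shows that every element of $\langle h_1^3 h_4, h_1, \tau g n \rangle$ maps under $p_*$ to $p_*(\tau g^2 n)$; hence the bracket is contained in the coset $\tau g^2 n + \ker p_*$.

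To finish, I would compute the indeterminacy directly from the additive and multiplicative structure of $\Ext$, showing that the span of $h_1^3 h_4 \cdot A$ and $B \cdot \tau g n$, as $A$ and $B$ range over elements of the appropriate degrees, is exactly generated by $M h_0 h_2^2 g$. I would then confirm that $M h_0 h_2^2 g$ lies in $\ker p_*$ (as expected, since $M$ is built from $g_2$, which dies in $\Ext_{A(2)}$), consistent with $p_*$ being unable to separate the two elements of the coset. Combining this with the previous paragraph identifies the bracket as $\tau g^2 n$ with indeterminacy generated by $M h_0 h_2^2 g$.

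The main obstacle will be this last step. Detection by $p_*$ only determines the bracket modulo $\ker p_*$, so I must show that the coset ambiguity is already absorbed by the indeterminacy, i.e.\ that $\ker p_*$ in the relevant tridegree coincides with the span of $M h_0 h_2^2 g$ rather than containing some additional class. This requires a careful degree-by-degree inventory of $\ker p_*$ and of the indeterminacy-generating products. A secondary concern is the a priori possibility that $p_*(\tau g^2 n) = 0$, which would render the $p_*$-detection vacuous and force an alternative route (for instance via the May Convergence Theorem, Remark \ref{rem:May-convergence}); I would settle the non-vanishing from the $\Ext_{A(2)}$ data before committing to this approach.
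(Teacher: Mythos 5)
Your approach fails at the point you yourself flagged as a ``secondary concern'': the class $\tau g n$ dies under $p_*$. The paper records this explicitly in Remark \ref{rem:h1^3h4,h1,tgn}: $p_*(\tau g n) = 0$ in $\Ext_{A(2)}$, hence $p_*(\tau g^2 n) = g \cdot p_*(\tau g n) = 0$ as well. Proposition \ref{prop:g-Massey} therefore only tells you that the bracket lands in $\ker p_*$, which is no information at all in this tridegree. The entire detection strategy is vacuous here, and no amount of care with $\ker p_*$ versus the indeterminacy can rescue it; you need a genuinely different argument for the value of the bracket.

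The paper's proof instead pins down the value by shuffling against $h_2$ and $h_1$. Multiplying by $h_2$ and using $\langle h_2, h_1^3 h_4, h_1 \rangle = h_2 g$ (from Table \ref{tab:Massey}) gives $h_2 \langle h_1^3 h_4, h_1, \tau g n \rangle = \tau h_2 g^2 n$, which narrows the bracket to $\tau g^2 n$ or $\tau g^2 n + \D h_3 g^2$; the shuffle $h_1 \langle h_1^3 h_4, h_1, \tau g n \rangle = \langle h_1, h_1^3 h_4, h_1 \rangle \tau g n = 0$ then rules out the second option since $h_1 (\tau g^2 n + \D h_3 g^2) \neq 0$. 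Your treatment of the indeterminacy is also less direct than the paper's: rather than an inventory of all products $h_1^3 h_4 \cdot A$ and $B \cdot \tau g n$, the paper exhibits the single product $M c_0 \cdot h_1^3 h_4 = M h_0 h_2^2 g$ via a chain of bracket manipulations, showing that $M h_0 h_2^2 g$ lies in the indeterminacy. If you want to salvage your write-up, replace the $p_*$-detection step with these shuffles; the rest of your setup (checking the bracket is defined, identifying the generator of the indeterminacy) is compatible with that route.
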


\begin{proof}
We start by analyzing the indeterminacy.
The product $M c_0 \cdot h_1^3 h_4$ equals
\[
\langle g_2, h_0^3, c_0 \rangle h_1^3 h_4 = 
\langle g_2, h_0^3, h_1^3 h_4 c_0 \rangle =
\langle g_2, h_0^3, h_0 h_2 \cdot h_2 g \rangle =
\langle g_2, h_0^3, h_2 g \rangle h_0 h_2,
\]
which equals $M h_0 h_2^2 g$.
The equalities hold because the indeterminacies are zero, and
the first and last brackets in this computation are given by
Table \ref{tab:Massey}.
This shows that $M h_0 h_2^2 g$ belongs to the indeterminacy.

Table \ref{tab:Massey} shows that
\[
\langle h_2, h_1^3 h_4, h_1 \rangle = 
\langle h_2, h_1, h_1^3 h_4 \rangle
\]
equals $h_2 g$.
Then 
\[
h_2 \langle h_1^3 h_4, h_1, \tau g n \rangle =
\langle h_2, h_1^3 h_4, h_1 \rangle \tau g n = \tau h_2 g^2 n.
\]
This implies that
$\langle h_1^3 h_4, h_1, \tau g n \rangle$
contains either $\tau g^2 n$ or \revv{$\D h_3 g^2$}.
However, the shuffle
\[
h_1 \langle h_1^3 h_4, h_1, \tau g n \rangle =
\langle h_1, h_1^3 h_4, h_1 \rangle \tau g n = 0
\]
eliminates \revv{$\D h_3 g^2$}.
\end{proof}

\begin{lemma}
\label{lem:h3,p',h2}
\revdeg{80, 5, 42}
The Massey product $\langle h_3, p', h_2 \rangle$ equals
$h_0 e_2$, with no indeterminacy.
\end{lemma}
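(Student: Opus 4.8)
The plan is to first confirm that the bracket is defined and then to pin down its value by a shuffle against a more accessible bracket, keeping the May Convergence Theorem in reserve. To begin, I would read off from the machine-generated multiplication data that $h_3 \cdot p' = 0$ and $h_2 \cdot p' = 0$, so that $\langle h_3, p', h_2 \rangle$ is defined. Since $h_2 h_3 = 0$ is one of the standard relations among the $h_i$, the auxiliary bracket $\langle h_2, h_3, p' \rangle$ is also defined, and it is this bracket that I would exploit.

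The key step is the shuffle
\[
h_2 \langle h_3, p', h_2 \rangle = \langle h_2, h_3, p' \rangle h_2 .
\]
I would identify $\langle h_2, h_3, p' \rangle$ either by locating it in Table \ref{tab:Massey} or, failing that, by computing it with the May Convergence Theorem (Remark \ref{rem:May-convergence}) from an explicit May differential on the product $h_2 h_3$ against a May representative of $p'$. Multiplying the result by $h_2$ and comparing with $h_0 h_2 e_2$ should show that $\langle h_3, p', h_2 \rangle$ contains $h_0 e_2$ modulo the subgroup annihilated by $h_2$. A second shuffle, by $h_0$ or $h_1$ (whichever is defined in this internal degree), would then eliminate the residual ambiguity in lower filtration, exactly as the alternatives $\tau g^2 n + \D h_3 g^2$ are eliminated in the proof of Lemma \ref{lem:h1^3h4,h1,tgn} and as the seed bracket is parlayed into an equality in Lemma \ref{lem:De1+C0,h1^3,h1h4}.

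Finally, I would compute the indeterminacy, which is $h_3 \cdot \Ext + \Ext \cdot h_2$ in the relevant internal degrees, and verify by inspection of the $\Ext$ chart that both summands vanish; this upgrades the conclusion from ``contains $h_0 e_2$'' to the asserted equality with no indeterminacy.

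I expect the identification of the value, rather than the formal bookkeeping, to be the main obstacle. The chief danger is that $h_2 \cdot h_0 e_2$ might itself vanish, in which case the shuffle above is vacuous and I would be forced to compute $\langle h_3, p', h_2 \rangle$ directly via the May Convergence Theorem, which requires finding the correct May differential and checking the crossing-differential hypothesis in this degree. Even when the shuffle is informative, one must rule out that the detected class is $h_0 e_2$ plus a term of strictly higher filtration; a filtration count together with the (vanishing) indeterminacy is what ultimately closes the argument.
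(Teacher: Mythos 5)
Your overall strategy (pin the bracket down by a shuffle, then kill the remaining ambiguity by a second shuffle) has the right shape, but the specific first shuffle you chose does not make progress. The auxiliary bracket $\langle h_2, h_3, p' \rangle$ produced by your identity $h_2 \langle h_3, p', h_2 \rangle = \langle h_2, h_3, p' \rangle h_2$ lives in exactly the same tridegree $(80,5,42)$ as the bracket you are trying to compute, and the two are related by the cyclic symmetry of Theorem \ref{thm:Toda-3fold}; it does not appear in Table \ref{tab:Massey}, and computing it via the May Convergence Theorem would require a May representative of $p'$ in the $69$-stem together with a crossing-differential check in the $80$-stem, which is far beyond what the paper's toolkit supports. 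So you have traded one unknown stem-$80$ bracket for another equally unknown one. You also correctly flag the secondary danger that $h_2 \cdot h_0 e_2$ could vanish, but the primary obstruction is that your auxiliary bracket is not independently computable.

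The paper's proof instead shuffles against $h_4^2$:
\[
\langle h_3, p', h_2 \rangle h_4^2 = h_3 \langle p', h_2, h_4^2 \rangle = p' \langle h_2, h_4^2, h_3 \rangle,
\]
where the last bracket equals $c_2$ by Table \ref{tab:Massey} --- a stem-$41$ computation --- and the product $p' c_2 = h_0 h_4^2 e_2$ is known. This forces $\langle h_3, p', h_2 \rangle$ to be either $h_0 e_2$ or $h_0 e_2 + h_6 e_0$. Your second step is then essentially what the paper does: the shuffle $\langle h_3, p', h_2 \rangle h_1 = h_3 \langle p', h_2, h_1 \rangle$ must vanish, while $h_1 (h_0 e_2 + h_6 e_0)$ is non-zero, which eliminates the error term; the indeterminacy is zero by inspection. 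The lesson is that the element you shuffle against must be chosen so that the resulting sub-bracket lands in a range where it is already known, and here that element is $h_4^2$, not $h_2$.
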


\begin{proof}
We have
\[
\langle h_3, p', h_2 \rangle h_4^2 = 
h_3 \langle p', h_2, h_4^2 \rangle = 
p' \langle h_2, h_4^2, h_3 \rangle.
\]
Table \ref{tab:Massey} shows that the last Massey product
equals $c_2$.
Observe that $p' c_2$ equals $h_0 h_4^2 e_2$.

\revv{Since $h_4^2 \cdot h_6 e_0$ is zero (as usual, we rely
on complete information about classical products in a large range
\cite{Bruner97} \cite{BR20}),}
this shows that $\langle h_3, p', h_2 \rangle$ equals
either $h_0 e_2$ or $h_0 e_2 + h_6 e_0$.
However, shuffle to obtain
\[
\langle h_3, p', h_2 \rangle h_1 = 
h_3 \langle p', h_2, h_1 \rangle,
\]
which must equal zero
\revv{because multiplication by $h_3$ is zero in the appropriate degree.}
Since $h_1(h_0 e_2 + h_6 e_0)$ is non-zero, it cannot equal
$\langle h_3, p', h_2 \rangle$.

The indeterminacy is zero by inspection.
\end{proof}

\begin{remark}
\label{rem:h1^3h4,h1,tgn}
The Massey product of Lemma \ref{lem:h1^3h4,h1,tgn}
cannot be established with Proposition \ref{prop:g-Massey}
because $p_*(\tau g n) = 0$ in $\Ext_{A(2)}$.
\end{remark}

\begin{lemma}
\label{lem:De1+C0,h1^3,h1h4}
\revdeg{82, 12, 45}
The Massey product $\langle \D e_1 + C_0, h_1^3, h_1 h_4 \rangle$
equals $(\D e_1 + C_0) g$, with no indeterminacy.
\end{lemma}

\begin{proof}
Consider the Massey product
$\langle \tau (\D e_1 + C_0), h_1^4, h_4 \rangle$.
By inspection, this Massey product has no indeterminacy. 
Therefore,
\[
\langle \tau (\D e_1 + C_0), h_1^4, h_4 \rangle =
(\D e_1 + C_0) \langle \tau, h_1^4, h_4 \rangle.
\]
Table \ref{tab:Massey} shows that the latter bracket equals $\tau g$,
so the expression equals $\tau (\D e_1 + C_0) g$.

On the other hand, it also equals
$\tau \langle \D e_1 + C_0, h_1^3, h_1 h_4 \rangle$.
Therefore, the bracket
$\langle \D e_1 + C_0, h_1^3, h_1 h_4 \rangle$ must contain
$(\D e_1 + C_0) g$.  Finally, the indeterminacy can be computed
by inspection.
\end{proof}

\begin{lemma}
\label{lem:t^3gG0,h0h2,h2}
\revdeg{93, 13, 49}
The Massey product $\langle \tau^3 g G_0, h_0 h_2, h_2 \rangle$
has indeterminacy generated by $\tau M^2 h_2$, and it either
contains zero or $\tau e_0 x_{76,9}$.  In particular, it does
not contain any linear combination of $\D^2 h_1 g_2$
with other elements.
\end{lemma}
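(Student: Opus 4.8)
The plan is to separate the two assertions: first pin down the indeterminacy, and then exclude $\D^2 h_1 g_2$ from the bracket, after which the claim ``contains $0$ or $\tau e_0 x_{76,9}$'' follows by counting cosets. Note first that the bracket is defined: $h_0 h_2 \cdot h_2 = h_0 h_2^2 = 0$, and $\tau^3 g G_0 \cdot h_0 h_2 = 0$ can be read off the machine-generated $\Ext$ data \cite{Wang19}. For the indeterminacy, recall that $\langle \tau^3 g G_0, h_0 h_2, h_2 \rangle$ has indeterminacy $\tau^3 g G_0 \cdot \Ext + \Ext \cdot h_2$ in the target degree (stem $93$, filtration $13$). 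I would inspect the two source groups and the target group in the $\Ext$ chart and verify directly that the only nonzero products landing in the target are the multiples of $\tau M^2 h_2$, arising from $\tau M^2 \cdot h_2$ in the right-hand summand. This yields the indeterminacy $\langle \tau M^2 h_2 \rangle$. At the same time I would record that the target group is spanned by $\tau e_0 x_{76,9}$, $\D^2 h_1 g_2$, and $\tau M^2 h_2$, so that modulo the indeterminacy the only representatives are $0$, $\tau e_0 x_{76,9}$, $\D^2 h_1 g_2$, and $\tau e_0 x_{76,9} + \D^2 h_1 g_2$.

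The crux is to show that the coefficient of $\D^2 h_1 g_2$ vanishes, which leaves exactly the two possibilities stated. As in Lemma \ref{lem:h3,p',h2}, the mechanism is to multiply by an element that detects $\D^2 h_1 g_2$ while annihilating the indeterminacy. I would use the shuffle
\[
\langle \tau^3 g G_0, h_0 h_2, h_2 \rangle \cdot h_1 = \tau^3 g G_0 \cdot \langle h_0 h_2, h_2, h_1 \rangle,
\]
valid because both brackets are defined ($h_0 h_2^2 = 0$ and $h_1 h_2 = 0$). Since $h_1 \cdot \tau M^2 h_2 = \tau M^2 h_1 h_2 = 0$, the left-hand set is the single element $v h_1$, where $v$ is any representative of the bracket. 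The sub-bracket $\langle h_0 h_2, h_2, h_1 \rangle$ lies in the $8$-stem and is computed by inspection (or by a short May spectral sequence calculation, cf.\ Remark \ref{rem:May-convergence}); one then checks on the $\Ext$ chart that its product with $\tau^3 g G_0$ is zero. Thus $v h_1 = 0$. Because $h_1 \cdot \D^2 h_1 g_2 = \D^2 h_1^2 g_2$ is nonzero while $h_1 \cdot \tau e_0 x_{76,9}$ does not cancel it, the representative $v$ cannot have a $\D^2 h_1 g_2$ summand, giving the final assertion.

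The main obstacle I anticipate is precisely the step $\tau^3 g G_0 \cdot \langle h_0 h_2, h_2, h_1 \rangle = 0$, together with confirming that $h_1$ cleanly separates the two candidate classes, i.e.\ that $\D^2 h_1^2 g_2 \neq 0$ and that $h_1 \cdot \tau e_0 x_{76,9}$ does not happen to equal $\D^2 h_1^2 g_2$. Both facts are chart verifications, but they are the genuinely load-bearing ones. If $h_1$-multiplication turns out not to distinguish $\D^2 h_1 g_2$ from $\tau e_0 x_{76,9}$, I would instead seek a different detecting element (an $h_2$- or $h_3$-multiplication, or a comparison $p_* \colon \Ext_\C \map \Ext_B$ via Proposition \ref{prop:M-Massey}, exploiting that $\tau M^2 h_2$ and $\D^2 h_1 g_2$ have different images under $p_*$). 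I do not expect to decide between $0$ and $\tau e_0 x_{76,9}$, and the statement does not require doing so.
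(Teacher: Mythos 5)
Your plan is structurally different from the paper's proof, and as written it has a genuine gap: everything rests on three unverified computations in the $94$-stem, namely that $\tau^3 g G_0 \cdot \langle h_0 h_2, h_2, h_1 \rangle = \tau^3 g G_0 \cdot c_0$ vanishes, that $h_1 \cdot \D^2 h_1 g_2 = \D^2 h_1^2 g_2$ is nonzero, and that $h_1 \cdot \tau e_0 x_{76,9}$ does not equal it. You flag these yourself as load-bearing, but you do not establish any of them, and they are not free: the machine data gives complete $h_0, h_1, h_2, h_3$ multiplications but \emph{not} $c_0$ multiplications, so $\tau^3 g G_0 \cdot c_0 = 0$ would need its own ad hoc argument; and if $\D^2 h_1^2 g_2 = 0$ (entirely possible in this range) the separation step collapses and your fallbacks ($h_2$, $h_3$, or $p_*$) are equally speculative. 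So the proposal is a reduction of the lemma to open chart facts, not a proof.

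The paper avoids all of this with a purely motivic weight argument that is worth internalizing. The bracket $\langle \tau^2 g G_0, h_0 h_2, h_2 \rangle$ — one power of $\tau$ removed from the source — lands in degree $(93,13,50)$, where the only elements of $\Ext$ are linear combinations of $e_0 x_{76,9}$ and $M^2 h_2$; the element $\D^2 h_1 g_2$ sits in weight $49$ and simply does not exist in weight $50$. The inclusion
\[
\tau \langle \tau^2 g G_0, h_0 h_2, h_2 \rangle \subseteq
\langle \tau^3 g G_0, h_0 h_2, h_2 \rangle
\]
(an instance of $\alpha \langle \beta, \gamma, \delta \rangle \subseteq \langle \alpha \beta, \gamma, \delta \rangle$) then shows the weight-$49$ bracket contains $\tau$ times such a combination, i.e.\ a combination of $\tau e_0 x_{76,9}$ and the indeterminacy generator $\tau M^2 h_2$ — no $\D^2 h_1 g_2$ component can appear. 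No product computations beyond reading off the additive structure of two tridegrees are needed. If you want to salvage your route, you would have to actually carry out the $c_0$- and $h_1$-multiplication checks; but the weight trick is the intended (and far cheaper) mechanism here.
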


\begin{proof}
The indeterminacy can be computed by inspection.

The only possible elements in
the Massey product $\langle \tau^2 g G_0, h_0 h_2, h_2 \rangle$
are linear combinations of $e_0 x_{76,9}$ and $M^2 h_2$.
The inclusion
\[
\tau \langle \tau^2 g G_0, h_0 h_2, h_2 \rangle \subseteq
\langle \tau^3 g G_0, h_0 h_2, h_2 \rangle
\]
gives the desired result.
\end{proof}

\chapter{Adams differentials}
\label{ch:Adams}

The goal of this chapter is to describe the values of the
Adams differentials in the motivic Adams spectral sequence.
These values are given in Tables \ref{tab:Adams-d2}, \ref{tab:Adams-d3},
\ref{tab:Adams-d4}, \ref{tab:Adams-d5}, and \ref{tab:Adams-higher}.
See also the Adams charts in \cite{Isaksen14a} for a graphical
representation of the computations.
\revv{
For easy reference, the many lemmas in this chapter are labelled with degrees that match the degrees given in the tables.}

\section{The Adams $d_2$ differential}
\label{sctn:Adams-d2}

\revv{
Table \ref{tab:Adams-d2} lists all of the multiplicative generators 
of the Adams $E_2$-page through the 95-stem.  The third column indicates
the value of the $d_2$ differential, if it is non-zero.
A blank entry in the third column indicates that the $d_2$ differential
is zero.
The fourth column indicates the proof. A blank entry in the fourth column
indicates that there are no possible values for the differential.
The fifth column gives alternative names for the element, as used 
in \cite{Tangora70a}, \cite{Bruner97}, and \cite{Isaksen14c}.
}

\begin{thm}
\label{thm:Adams-d2}
Table \ref{tab:Adams-d2} lists the values of the
Adams $d_2$ differential on all multiplicative generators
through the 95-stem.
\end{thm}

\rev{
\begin{remark}
\label{rem:d2}
A previous version of this manuscript left uncertain the value of
$d_2$ on three multiplicative generators.  These three values
have since been determined by Dexter Chua \cite{Chua21}.
We have included those values here, but we defer to \cite{Chua21}
and \cite{BF21} for their proofs.
Also, we are grateful to
Joey Beauvais-Feisthauer \cite{BF21} for discovering an error
in our previous calculation of $d_2(x_{85,6})$.
\end{remark}
}

\begin{proof}
The fourth column of Table \ref{tab:Adams-d2} gives information on the
proof of each differential.
Most follow immediately by comparison to
the Adams spectral sequence for $C\tau$
\revv{\cite{IWX19}}.
A few additional differentials follow by comparison to the classical
Adams spectral sequence for $\tmf$ \revv{\cite{BR21}}.

If an element is listed in the fourth column of Table \ref{tab:Adams-d2}, 
then the corresponding differential can be deduced from a straightforward
argument using a multiplicative relation.  For example, it is possible
that $d_2(\D h_1 h_3)$ equals $\tau d_0 e_0$.  However,
$h_0 \cdot \D h_1 h_3$ is zero, while $h_0 \cdot \tau d_0 e_0$ is non-zero.
Therefore, $d_2(\D h_1 h_3)$ must equal zero.

In some cases, it is necessary to combine these different techniques to
establish the differential.

The remaining more difficult computations are carried out in the
following lemmas.  We refer to \cite{Chua21} and \cite{BF21} in
a few cases.
\end{proof}

\begin{lemma}
\label{lem:d2-Dx}
\revdeg{61, 9, 32}
$d_2(\D x) = h_0^2 B_4 + \tau M h_1 d_0$.
\end{lemma}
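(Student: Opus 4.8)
The plan is to separate $d_2(\D x)$ into its reduction modulo $\tau$ and its $\tau$-divisible part. The first is governed by the Adams spectral sequence for $C\tau$, identified in Chapter \ref{ch:AANSS} with the machine-computed algebraic Novikov spectral sequence, whereas the term $\tau M h_1 d_0$ is invisible to $C\tau$ and must be extracted by a separate, more delicate argument.

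First I would apply naturality along the inclusion $i\colon S^{0,0}\map C\tau$ of the bottom cell. On $E_2$-pages, $i_*$ is surjective with kernel the ideal of $\tau$-multiples, and it commutes with $d_2$. The algebraic Novikov data records $d_2(\D x) = h_0^2 B_4$ in the Adams spectral sequence for $C\tau$, and since $i_*(\tau M h_1 d_0) = 0$, this forces $d_2(\D x) = h_0^2 B_4 + \epsilon\,\tau M h_1 d_0$ in the spectral sequence for $S^{0,0}$, where $\epsilon\in\{0,1\}$ and $\tau M h_1 d_0$ is, by inspection of the target degree, the only $\tau$-multiple that can contribute. It remains to prove $\epsilon = 1$.

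The main obstacle is fixing this $\tau$-coefficient, because the usual comparison tools are blind to it: multiplication by $h_0$ annihilates $\tau M h_1 d_0$ (which follows by shuffling $h_0$ into the defining bracket $\langle h_1, h_0^3, g_2\rangle$ for $M h_1$), and $M h_1 d_0$ maps to a multiple of $v_3$ in $\Ext_B$ by Proposition \ref{prop:M-Massey}, hence to zero in $\Ext_{A(2)}$, so $\mmf$ does not see it either. Instead I would detect $\epsilon$ internally, by expressing $\D x$ as a Massey product in the $E_2$-page (as recorded in Table \ref{tab:Massey}) and applying Moss's higher Leibniz rule, Theorem \ref{thm:Leibniz}, in the spirit of Example \ref{ex:Leibniz-tD1h1^2}. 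Expanding $d_2\langle a,b,c\rangle \subseteq \langle d_2 a, b, c\rangle + \langle a, d_2 b, c\rangle + \langle a, b, d_2 c\rangle$ for the appropriate factorization, one term should reproduce $h_0^2 B_4$ and another should yield $\tau M h_1 d_0$; evaluating the latter sub-bracket, identifying the $M$-operator via Proposition \ref{prop:M-Massey}, and controlling its indeterminacy is where I expect the real calculation to lie.

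As a consistency check, or as an alternative to the bracket computation, I would multiply the candidate differential by $h_1$. Since $h_0 h_1 = 0$ we have $h_1\cdot h_0^2 B_4 = 0$, so $d_2(h_1 \D x) = h_1\, d_2(\D x) = \epsilon\,\tau M h_1^2 d_0$; thus showing independently that $d_2(h_1 \D x)$ is nonzero, or equal to $\tau M h_1^2 d_0$, pins down $\epsilon = 1$. Combining the $C\tau$ comparison for the $h_0^2 B_4$ summand with this $\tau$-sensitive argument for the $\tau M h_1 d_0$ summand then completes the proof.
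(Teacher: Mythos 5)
Your first step matches the paper exactly: comparison to $C\tau$ (equivalently, the algebraic Novikov data) gives $d_2(\D x) = h_0^2 B_4 + \epsilon\,\tau M h_1 d_0$ with $\epsilon \in \{0,1\}$, and you correctly identify that the whole difficulty is pinning down $\epsilon$. You have also found the right mechanism for doing so, namely that multiplying by a power of $h_1$ kills $h_0^2 B_4$ (since $h_0 h_1 = 0$) and isolates the $\tau$-divisible term. But there is a genuine gap: you never actually prove that $d_2$ of an $h_1$-multiple of $\D x$ is nonzero, and this is the entire content of the lemma. Your phrasing --- ``thus showing independently that $d_2(h_1 \D x)$ is nonzero \dots{} pins down $\epsilon = 1$'' --- restates what needs to be done rather than doing it. The paper's essential input is the multiplicative relation $h_1^2 \cdot \D x = P h_1 \cdot \tau \D_1 h_1^2$ in the $E_2$-page, which transfers the already-established differential $d_2(\tau \D_1 h_1^2) = M h_1 h_3$ (itself visible to $C\tau$; see Table \ref{tab:Adams-d2} and Example \ref{ex:Leibniz-tD1h1^2}) to give $h_1^2\, d_2(\D x) = P h_1 h_3 \cdot M h_1 = \tau M h_1^3 d_0 \neq 0$. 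Without that relation, or some substitute for it, your argument does not close.

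Your primary proposed route is also built on a false premise: Table \ref{tab:Massey} does not record a Massey product expression for $\D x$, so there is no bracket to feed into Moss's higher Leibniz rule \ref{thm:Leibniz}. Even granting such an expression, you defer the evaluation of the resulting sub-bracket and its indeterminacy, which is again where the actual work would live. In short, the skeleton of your argument is the paper's, but the load-bearing computation --- the identification of $h_1^2 \cdot \D x$ with a $P h_1$-multiple of $\tau \D_1 h_1^2$ --- is absent, so the proof is incomplete as written.
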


\begin{proof}
We have a differential $d_2(\D x) = h_0^2 B_4$ in the
Adams spectral sequence for $C\tau$.
Therefore, $d_2(\D x)$ equals either
$h_0^2 B_4$ or $h_0^2 B_4 + \tau M h_1 d_0$.

We have the relation
$h_1^2 \cdot \D x = P h_1 \cdot \tau \D_1 h_1^2$
\revv{(as usual, we rely on complete information about classical products in a large range \cite{Bruner97} \cite{BR20})}, so
$h_1^2 d_2(\D x) = P h_1 d_2(\tau \D_1 h_1^2) = 
P h_1 h_3 \cdot M h_1 = \tau M h_1^3 d_0$.
Therefore, $d_2(\D x)$ must equal
$h_0^2 B_4 + \tau M h_1 d_0$.
\end{proof}

\begin{remark}
The proof of \cite{Isaksen14c}*{Lemma 3.50}
is incorrect.  We claimed that
$h_1^2 \cdot \D x$ equals $h_3 \cdot \D^2 h_1 h_3$,
when in fact
$h_1^2 \cdot \D x$ equals $\tau h_3 \cdot \D^2 h_1 h_3$.
\end{remark}

\begin{lemma}
\label{lem:d2-x77,7}
\revdeg{77, 7, 40}
$d_2(x_{77,7}) = \tau M h_1 h_4^2$.
\end{lemma}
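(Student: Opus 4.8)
The claim is that $d_2(x_{77,7}) = \tau M h_1 h_4^2$ in the $\mathbb{C}$-motivic Adams spectral sequence for the sphere. Let me think about what tools are available.

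The paper's main engine for establishing $d_2$ differentials is comparison to the Adams spectral sequence for $C\tau$ (which equals the algebraic Novikov spectral sequence, computed by machine), combined with multiplicative relations to pin down $\tau$-torsion ambiguities. The target $\tau M h_1 h_4^2$ is divisible by $\tau$, so it's the kind of term that would be invisible to the $C\tau$ comparison — indeed, the difference between two candidate values of a $d_2$ differential is always $\tau$-divisible (since $C\tau$ kills $\tau$-multiples). So the structure will likely be: first establish that $d_2(x_{77,7})$ is either $0$ or $\tau M h_1 h_4^2$ (or more generally lies in some coset), then use a multiplicative relation to force the $\tau$-term.

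**The approach I'd take.** Let me think about the preceding lemma as a model. In Lemma~\ref{lem:d2-Dx}, the strategy was: (1) get the "mod $\tau$" part of the differential from $C\tau$; (2) multiply $\Delta x$ by a suitable class ($h_1^2$) using a known relation ($h_1^2 \cdot \Delta x = P h_1 \cdot \tau \Delta_1 h_1^2$) to a product whose $d_2$ is already known, and read off that the $\tau M h_1 d_0$ term must be present. I expect the proof of the present lemma to follow the same two-step template.

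So the plan is: First, consult the $C\tau$ data (the algebraic Novikov spectral sequence) for the image of $x_{77,7}$; since the target $\tau M h_1 h_4^2$ is $\tau$-divisible, its image in $C\tau$ vanishes, so the $C\tau$ computation should show $d_2(x_{77,7}) = 0$ there, leaving only the possibilities $0$ and $\tau M h_1 h_4^2$ for the motivic differential. Next, I would find a multiplicative relation expressing some product $y \cdot x_{77,7}$ (or $x_{77,7}$ times a power of $h_1$, $h_0$, or another generator) in terms of an element whose $d_2$ is already recorded in Table~\ref{tab:Adams-d2}. Applying the Leibniz rule $d_2(y \cdot x_{77,7}) = d_2(y) x_{77,7} + y \, d_2(x_{77,7})$ and comparing with the known value would force $y \cdot \tau M h_1 h_4^2$ to be nonzero, hence $d_2(x_{77,7})$ cannot be zero and must equal $\tau M h_1 h_4^2$. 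The natural candidate multiplier, given that the target involves $M h_1 h_4^2$, is probably $h_1$ or $h_4$: I would look for a relation like $h_1 \cdot x_{77,7} = (\text{something}) \cdot \tau \Delta_1 h_1^2$ or a relation bringing in $h_4^2$, paralleling how $M h_1 h_3$ arose in Example~\ref{ex:Leibniz-tD1h1^2} from $d_2(\tau \Delta_1 h_1^2)$.

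**The main obstacle.** The hard part will be locating the correct multiplicative relation and verifying that the multiplier $y$ does not annihilate $\tau M h_1 h_4^2$. This requires detailed knowledge of the ring structure of $\Ext$ in a high-dimensional and irregular range, exactly where the naming conventions ($M$, $\Delta$, $g$) and the machine data become indispensable. The element $M h_1 h_4^2$ sits in the Mahowald-operator family, so I expect the relation to exploit either the $h_1$-divisibility of $M h_1$ or a product relation feeding into $d_2(\tau \Delta_1 h_1^2) = M h_1 h_3$ from Example~\ref{ex:Leibniz-tD1h1^2}; confirming non-vanishing of the resulting product $y \cdot \tau M h_1 h_4^2$ in $\Ext$ is the genuinely delicate point, since many products in this range vanish for filtration or weight reasons. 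If no single multiplicative relation suffices, I would fall back on a Massey-product argument via the higher Leibniz rule (Theorem~\ref{thm:Leibniz}), expressing $x_{77,7}$ as a bracket and differentiating term by term as in Example~\ref{ex:Leibniz-tD1h1^2}.
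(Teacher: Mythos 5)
There is a genuine gap: your argument is a plan, not a proof, and the one step you defer --- ``locating the correct multiplicative relation'' that forces the $\tau$-divisible term --- is precisely the step that does not exist in the form you hope for. The paper does not settle this differential by a Leibniz-rule argument against a known product, nor by a Massey-product/higher-Leibniz computation. It uses a different tool entirely: Bruner's theorem on the interaction of algebraic squaring operations with classical Adams differentials \cite{Bruner84}*{Theorem 2.2}, applied to the element $x$ in the $37$-stem. Since $x$ lies in $(s,f)=(37,5)$, the class $\Sq^2 x$ lies in $(77,7)$ and $\Sq^3 x$ in $(76,8)$; the theorem gives
\[
d_* \Sq^2 x = \Sq^3 d_2 x \dotplus h_0 \Sq^3 x,
\]
and since $d_2 x = 0$ the differential must hit $h_0 \Sq^3 x$. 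Machine data \cite{Bruner04} identifies $\Sq^3 x = h_0^2 x_{76,6} + \tau^2 d_1 g_2$, whence $h_0 \Sq^3 x = h_0^3 x_{76,6} = \tau M h_1 h_4^2$, and $x_{77,7}$ is the only possible source. So the ``genuinely delicate point'' you flag is resolved not by the ring structure of $\Ext$ but by power-operation input that your proposal never invokes.

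Your framing of the first step is also slightly off. You assume the $C\tau$ comparison narrows $d_2(x_{77,7})$ to $\{0,\ \tau M h_1 h_4^2\}$ and that the remaining task is to exclude $0$. The paper's argument runs in the opposite direction: it first produces a nonzero $d_2$ differential with target $\tau M h_1 h_4^2$ and then identifies the source. If you wanted to salvage your template (the one from Lemma \ref{lem:d2-Dx}), you would need an explicit relation $y \cdot x_{77,7} = z$ with $d_2(z)$ known and $y \cdot \tau M h_1 h_4^2 \neq 0$; absent a concrete candidate for $y$ and a verification of that nonvanishing, the proof is not complete.
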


The following proof was suggested to us by Dexter Chua.

\begin{proof}
This follows from the interaction between algebraic squaring
operations and classical Adams differentials \cite{Bruner84}*{Theorem 2.2},
applied to the element $x$ in the $37$-stem.
The theorem says that
\[
d_* \Sq^2 x = \Sq^3 d_2 x \dotplus h_0 \Sq^3 x.
\]
The notation means that there is an Adams differential on $\Sq^2 x$
hitting either $\Sq^3 d_2 x = 0$ or $h_0 \Sq^3 x$,
depending on which element has lower Adams filtration.
Therefore
$d_2 \Sq^2 x = h_0 \Sq^3 x$.

Next, observe from \cite{Bruner04} that
$\Sq^3 x = h_0^2 x_{76,6} + \tau^2 d_1 g_2$, so
\[
h_0 \Sq^3 x = h_0^3 x_{76,6} = \tau M h_1 h_4^2.
\]
Therefore, there is a $d_2$ differential whose value is
$\tau M h_1 h_4^2$, and the possibility is that
$d_2(x_{77,7})$ equals $\tau M h_1 h_4^2$.
\end{proof}

\begin{lemma}
\label{lem:d2-tB5g}
\revdeg{86, 14, 47}
$d_2(\tau B_5 g) = \tau M h_0^2 g^2$.
\end{lemma}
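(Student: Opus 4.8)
The plan is to argue in the style of Lemma \ref{lem:d2-Dx}. The key observation is that the proposed target $\tau M h_0^2 g^2$ is divisible by $\tau$, and is therefore invisible to the comparison with the Adams spectral sequence for $C\tau$: the class $\tau B_5 g$ maps to zero under the inclusion of the bottom cell, so the machine-generated $C\tau$ data can at best certify that $d_2(\tau B_5 g)$ is $\tau$-divisible. I would first combine this with an inspection of the charts in \cite{Isaksen14a} to reduce the possible values of $d_2(\tau B_5 g)$ to the two candidates $0$ and $\tau M h_0^2 g^2$. Verifying that $\tau M h_0^2 g^2$ is the \emph{unique} nonzero $\tau$-divisible class in the relevant degree $(s-1,f+2,w)$ is a routine bookkeeping step from the $\Ext$ data, and is what makes the dichotomy clean.

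To decide between the two candidates I would detect the $\tau$-divisible term multiplicatively. The most direct route exploits the factorization $\tau M h_0^2 g^2 = (\tau M h_0^2 g)\cdot g$ together with the fact that $g$ is a permanent cycle (detecting $\kappabar$) and in particular a $d_2$-cycle. If $\tau B_5 g$ is genuinely the product $(\tau B_5)\cdot g$ in $\Ext$, then the Leibniz rule gives $d_2(\tau B_5 g) = d_2(\tau B_5)\cdot g$, and the statement reduces to the differential $d_2(\tau B_5)=\tau M h_0^2 g$ on the lower generator, provided the product $(\tau M h_0^2 g)\cdot g$ does not vanish. The degrees are consistent with this reduction: since $d_2$ lowers the stem by $1$, raises the Adams filtration by $2$, and preserves the weight, dividing source and target by $g$ (of degree $(20,4,11)$) matches the degrees of $\tau B_5$ and $\tau M h_0^2 g$ exactly.

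When the factorization is not available directly — the name $\tau B_5 g$ may label a class that is indecomposable or not literally equal to the product of its named factors — I would instead follow the template of Lemma \ref{lem:d2-Dx} verbatim. That is, I would seek an auxiliary class $z$ and a multiplicative relation $z\cdot \tau B_5 g = w\cdot u$, where $u$ is an element whose $d_2$ is already recorded in Table \ref{tab:Adams-d2}, chosen so that $z$ annihilates every class in the target degree that is \emph{not} a $\tau$-multiple, while $z\cdot \tau M h_0^2 g^2$ is nonzero. Computing $z\cdot d_2(\tau B_5 g) = w\cdot d_2(u)$ on one side and observing $z\cdot \tau M h_0^2 g^2\neq 0$ on the other would then force the $\tau$-divisible term to be present, just as multiplication by $h_1^2$ resolved the ambiguity for $\D x$.

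The main obstacle is precisely this last detection step: locating the correct relation, or equivalently verifying that $(\tau M h_0^2 g)\cdot g$ is nonzero with no cancellation. Because the target sits in high Adams filtration and involves the Mahowald operator $M$, many of the nearby products of $M$-classes with $g$ and $h_0$ vanish, so the delicate point is to exhibit a relation whose two sides are both nonzero and whose right-hand factor has an already-established $d_2$. I expect this to rely on the Massey-product identities of Chapter \ref{ch:Massey}, in particular the descriptions of $M h_1$ and $M^2 h_1$ in Examples \ref{ex:Mh1} and \ref{ex:M^2h1}, to evaluate the relevant $M$-products and confirm nonvanishing.
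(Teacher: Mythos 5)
Your proposal correctly identifies the shape of the argument --- reduce to a dichotomy, then detect the $\tau$-divisible target by a multiplicative relation in the style of Lemma \ref{lem:d2-Dx}, using the Mahowald operator machinery to certify non-vanishing --- but it stops exactly where the mathematical content begins. Your primary route, writing $d_2(\tau B_5 g) = d_2(\tau B_5)\cdot g$, does not go through: $\tau B_5 g$ appears in Table \ref{tab:Adams-d2} as a multiplicative \emph{generator} of the $E_2$-page, so it is not the product $\tau B_5 \cdot g$, and indeed if it were, the differential would follow from the table entry for $\tau B_5$ by the routine Leibniz argument described in the proof of Theorem \ref{thm:Adams-d2} and would not need its own lemma. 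You anticipate this failure mode, but your fallback --- ``seek an auxiliary class $z$ and a relation $z\cdot \tau B_5 g = w\cdot u$'' --- is left as an unsolved search problem; you explicitly name ``locating the correct relation'' as the main obstacle rather than resolving it.

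The paper supplies precisely the missing relation: multiplying by $z = d_0$, one has $d_0\cdot \tau B_5 g = \tau M g\cdot h_0 m$. This is established not by chart inspection but by Proposition \ref{prop:M-Massey}: the map $p_*\colon \Ext_\C \map \Ext_B$ sends $d_0\cdot \tau B_5 g$ to the non-zero element $\tau h_0 a g^3 v_3^2$, forcing the product to be non-zero in $\Ext$ and equal to the only candidate. Then $d_2(\tau M g\cdot h_0 m) = \tau M g\cdot h_0^2 e_0^2$ (using $d_2(m)=h_0 e_0^2$ from Table \ref{tab:Adams-d2}), and this is again shown non-zero by mapping to $\tau h_0^2 d e g^2 v_3^2$ in $\Ext_B$; the Leibniz rule then forces $d_2(\tau B_5 g) = \tau M h_0^2 g^2$. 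Note also that the relevant tool is the $\Ext_B$ comparison of Section \ref{subsctn:Mahowald-operator}, not the Massey-product identities of Examples \ref{ex:Mh1} and \ref{ex:M^2h1} that you cite. Without the specific relation and the two non-vanishing verifications, the proposal is a plan rather than a proof.
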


\begin{proof}
We use the Mahowald operator methods of 
Section \ref{subsctn:Mahowald-operator}.
\revv{According to \cite{Isaksen19}*{Table 1}},
the map $p_*: \Ext_\C \map \Ext_B$ takes
$d_0 \cdot \tau B_5 g$ to
$\tau h_0 a g^3 v_3^2$, which is non-zero.
We deduce that the product $d_0 \cdot \tau B_5 g$ is non-zero
in $\Ext$.
\revv{By inspection of motivic weights},
the only possibility is that it equals $\tau M g \cdot h_0 m$.

Now $d_2(\tau M g \cdot h_0 m)$ equals
$\tau M g \cdot h_0^2 e_0^2$, which we also know is non-zero
since it maps to the non-zero element
$\tau h_0^2 d e g^2 v_3^2$ of $\Ext_B$
\revv{by \cite{Isaksen19}*{Theorem 1}}.
It follows that $d_2(\tau B_5 g)$ 
\revv{is non-zero.  By inspection of motivic weights, the only
possibility is $\tau M h_0^2 g^2$.}
\end{proof}

\section{The Adams $d_3$ differential}
\label{sctn:Adams-d3}

\revv{
Table \ref{tab:Adams-d3} lists the multiplicative generators 
of the Adams $E_3$-page through the 95-stem
whose $d_3$ differentials are non-zero, or whose $d_3$ differentials
are zero for non-obvious reasons.
}

\begin{thm}
\label{thm:Adams-d3}
Table \ref{tab:Adams-d3} lists some values of the
Adams $d_3$ differential on multiplicative generators.
Through the 95-stem, 
the Adams $d_3$ differential is zero on all multiplicative
generators not listed in the table.
\end{thm}

\rev{
\begin{remark}
A previous version of this manuscript left uncertain the value of
$d_3$ on several multiplicative generators.  These values
have since been determined by Dexter Chua \cite{Chua21}.
We have included those values here, but we defer to \cite{Chua21}
for their proofs.
We are also grateful to Dexter Chua for correcting a few mistakes
in the values of the $d_3$ differential.
\end{remark}
}

\begin{proof}
The $d_3$ differential on many multiplicative generators is zero.
A few of these multiplicative generators appear
in Table \ref{tab:Adams-d3} because their proofs require further
explanation.
For the remaining majority of such multiplicative generators,
the $d_3$ differential is zero
because there are no possible non-zero
values, because of comparison to the Adams spectral sequence for $C\tau$,
or because the element is already known to be a permanent cycle
as shown in Table \ref{tab:Adams-perm}.
These cases do not appear in Table \ref{tab:Adams-d3}.

The last column of Table \ref{tab:Adams-d3} gives information on the
proof of each differential.
Most follow immediately by comparison to
the Adams spectral sequence for $C\tau$.  
A few additional differentials follow by comparison to the classical
Adams spectral sequence for $\tmf$, or by comparison to the
$\C$-motivic Adams spectral sequence for $\mmf$.

If an element is listed in the last column of Table \ref{tab:Adams-d3}, 
then the corresponding differential can be deduced from a straightforward
argument using a multiplicative relation.  For example, 
\[
d_3( h_1 \cdot \tau P d_0 e_0 ) =
P h_1 \cdot d_3(\tau d_0 e_0) =
P^2 h_1 c_0 d_0,
\]
so $d_3(\tau P d_0 e_0)$ must equal $P^2 c_0 d_0$.

If a $d_4$ differential is listed in the last column of 
Table \ref{tab:Adams-d3}, then
the corresponding differential is forced by consistency with
that later differential.
In each case,
a $d_3$ differential on an element $x$
is forced by the existence of a later
$d_4$ differential on $\tau x$.
For example, Table \ref{tab:Adams-d4} shows that
there is a differential
$d_4(\tau^2 e_0 g) = P d_0^2$.
Therefore, $\tau e_0 g$ cannot survive to the $E_4$-page.
It follows that $d_3(\tau e_0 g) = c_0 d_0^2$.

In some cases, it is necessary to combine these different techniques to
establish the differential.

The remaining more difficult computations are carried out in the
following lemmas.  We refer to \cite{Chua21} in
a few cases.
\end{proof}

\begin{prop}
\label{prop:Adams-perm}
Some permanent cycles in the
$\C$-motivic Adams spectral sequence are shown in 
Table \ref{tab:Adams-perm}.
\end{prop}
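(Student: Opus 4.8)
The plan is to verify, element by element, the entries of Table~\ref{tab:Adams-perm}, in exactly the same spirit as the proofs of Theorems~\ref{thm:Adams-d2} and \ref{thm:Adams-d3}. As in those results, the table carries a column recording the reason that each listed element is a permanent cycle, and the bulk of the entries will be handled by a small number of routine mechanisms. The most common is a degree argument: for many of these elements there is simply no nonzero element in the target bidegree of any potential $d_r$, or every such element is already known to be hit by an earlier differential recorded in Tables~\ref{tab:Adams-d2} and \ref{tab:Adams-d3}, so no nonzero differential can emanate from the generator in question.

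First I would dispose of the multiplicative cases. If an element factors as a product $a \cdot b$ of classes already known to be permanent cycles, then it is automatically a permanent cycle; similarly, the classes in the image of the $J$-homomorphism and the $v_1$-periodic families running along the top of the chart are permanent cycles by classical input. These observations, together with the degree arguments above, account for the large majority of the table.

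Next I would treat the comparison arguments, which mirror the use of $C\tau$, $\mmf$, and $\tmf$ in the differential theorems. For an element $x$ whose image $i_*(x)$ under the inclusion of the bottom cell is a permanent cycle in the Adams spectral sequence for $C\tau$ (equivalently, a permanent cycle in the algebraic Novikov spectral sequence, which is completely known by machine), naturality forces any nonzero $d_r(x)$ to satisfy $i_*(d_r(x)) = 0$, hence to be a multiple of $\tau$; in the cases at hand this residual $\tau$-divisible possibility is then excluded on degree or multiplicative grounds. The complementary tool is the unit map $S^{0,0} \map \mmf$: if $x$ maps to a class surviving to $E_\infty$ in the thoroughly understood spectral sequence for $\mmf$, and the target of each hypothetical differential on $x$ has nonzero $\mmf$-image, then $x$ cannot support a differential. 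Comparison to $\tmf$ is used identically in the relevant bidegrees. A further source of permanent cycles is the Moss Convergence Theorem~\ref{thm:Moss}: an element realized as a Massey product $\langle a, b, c \rangle$ of permanent cycles detects a Toda bracket of the corresponding homotopy classes, and is therefore itself a permanent cycle.

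The main obstacle, as usual, will be the handful of elements for which none of these mechanisms applies directly, typically because the candidate target of a differential is itself uncertain, or because the $C\tau$ and $\mmf$ comparisons both leave a $\tau$-divisible ambiguity that must be resolved by hand. For these I would defer to a sequence of ad hoc lemmas, exactly as in the paragraphs following Theorems~\ref{thm:Adams-d2} and \ref{thm:Adams-d3}, each combining a multiplicative relation with a shuffle or a Toda bracket computation to pin down the remaining cases.
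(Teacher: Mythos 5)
Your plan is in the same general spirit as the paper's proof --- a table-driven, case-by-case verification resting on a few standard mechanisms plus ad hoc lemmas --- and you correctly identify the central tool: the Moss Convergence Theorem~\ref{thm:Moss}, which shows that an element realizing a well-defined Massey product of permanent cycles must survive to detect the corresponding Toda bracket. That mechanism accounts for the majority of the entries of Table~\ref{tab:Adams-perm}. However, two of the mechanisms you foreground do not match what is actually needed for this table. First, the degree arguments and the comparisons to $C\tau$, $\mmf$, and $\tmf$ are precisely the tools the paper uses for the \emph{complementary} set of multiplicative generators, namely those whose $d_r$ vanishes for ``obvious'' reasons and which therefore are deliberately omitted from Table~\ref{tab:Adams-perm}; the elements that do appear in the table are there because possible targets exist and those routine arguments fail, so a positive, all-pages-at-once survival argument is required. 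Second, and more substantively, you have mischaracterized the ``product'' mechanism. The paper's argument is not that the element factors in $\Ext$ as a product of permanent cycles (a Leibniz-rule observation), but that a specific product of homotopy classes in $\pi_{*,*}$ is known to be nonzero --- often via a \emph{hidden} extension recorded in Table~\ref{tab:misc-extn} --- and the element in question is the only candidate in sufficiently low Adams filtration to detect it, so it must survive. Entries such as $p'$ (detecting $\sigma\theta_5$), $x_{74,8}$ (detecting $\theta_4\kappabar_2$), and $h_4 h_6 c_0$ (detecting $\sigma\{h_1 h_4 h_6\}$) are of this kind: none of them is a product of permanent cycles in the $E_2$-page, so your stated mechanism cannot reach them, and they would all have to be swept into your ``ad hoc lemmas'' bucket. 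The argument you would need for them is the detection argument just described, so you should replace the Leibniz-rule version with it; with that correction your outline aligns with the paper's proof.
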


\begin{proof}
The third column of the table gives information on the proof
for each element.  If a Toda bracket is given in the third
column, then
the Moss Convergence Theorem \ref{thm:Moss} implies that the
element must survive to detect that Toda bracket
(see Table \ref{tab:Toda} for more information on how each
Toda bracket is computed).
If a product is given in the third column, then
the element must survive to detect that product
(see Table \ref{tab:misc-extn} for more information
on how each product is computed).
In a few cases, the third column refers to a specific lemma
that gives a more detailed argument.
\end{proof}

\begin{lemma}
\label{lem:d3-h2h5}
\mbox{}
\begin{enumerate}
\item
\revdeg{34, 2, 18}
$d_3(h_2 h_5) = \tau h_1 d_1$.
\item
\revdeg{74, 6, 38}
$d_3(P h_2 h_6) = \tau h_1 h_4 Q_2$.
\end{enumerate}
\end{lemma}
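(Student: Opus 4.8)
The plan is to prove the two differentials in parallel, since they have the same formal shape: in each case the source survives to the $E_3$-page for a trivial reason, the target is $\tau$-divisible, and for precisely that reason the differential is invisible to the Adams spectral sequence for $C\tau$. First I would record survival. From the Hopf-invariant-one differentials $d_2(h_5) = h_0 h_4^2$ and $d_2(h_6) = h_0 h_5^2$ in Table \ref{tab:Adams-d2}, the Leibniz rule gives $d_2(h_2 h_5) = h_2 \cdot h_0 h_4^2 = 0$ and $d_2(h_2 h_6) = h_2 \cdot h_0 h_5^2 = 0$, because $h_0 h_2 = 0$; applying the periodicity operator to the second shows that $P h_2 h_6$ is also a $d_2$-cycle. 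A degree count then leaves $\tau h_1 d_1$ as the only class (modulo anything already hit) that a $d_3$ out of $h_2 h_5$ can hit, and $\tau h_1 h_4 Q_2$ as the only such class for $P h_2 h_6$. The crucial observation is that both candidate targets are $\tau$-divisible, with motivic weight one less than their non-$\tau$-divisible companions $h_1 d_1$ and $h_1 h_4 Q_2$. A weight-preserving differential in $C\tau$ therefore cannot connect $h_2 h_5$ to $h_1 d_1$, so comparison along the bottom cell $S^{0,0} \map C\tau$ detects nothing here, and the entire content of the lemma is to prove nonvanishing.

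For the nonvanishing in part (1) I would first attempt the interaction between algebraic squaring operations and Adams differentials, exactly as in Lemma \ref{lem:d2-x77,7}, using that $h_2 h_5 = \Sq^0(h_1 h_4)$ for the doubling operation $\Sq^0$ with $\Sq^0(h_i) = h_{i+1}$, and tracking both the $h_0$-correction term of Bruner's formula \cite{Bruner84}*{Theorem 2.2} and the resulting power of $\tau$. Because the naive correction term can vanish (the $h_0$-multiples of $\Sq^1(h_1 h_4)$ are killed by $h_0 h_1 = h_0 h_2 = 0$), the robust fallback is the Moss Convergence Theorem \ref{thm:Moss} together with Moss's higher Leibniz rule \ref{thm:Leibniz}: I would rewrite $h_2 h_5$ as a Massey product \emph{in the $E_3$-page}, exploiting that $h_0 h_4^2 = 0$ on $E_3$ since it is hit by $d_2(h_5)$, and then apply the higher Leibniz rule to compute $d_3$ of that bracket and force the value $\tau h_1 d_1$. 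The consistency check $h_0 \cdot \tau h_1 d_1 = 0$, demanded by $h_0 \cdot h_2 h_5 = 0$, is automatic from $h_0 h_1 = 0$ and serves only as a sanity test, not as a proof of nonvanishing.

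Part (2) I would deduce by an argument parallel to (1), applied one doubling higher and then pushed forward by the periodicity operator $P$: note $h_2 h_6 = \Sq^0(h_1 h_5)$, establish the relevant $d_3$ on $h_2 h_6$ by the same squaring or Moss method, multiply by $P$, and identify the resulting target with $\tau h_1 h_4 Q_2$ through the $\Ext$-relations among $P$, $h_4$, and $Q_2$ read off from the machine data, checking along the way that $P$ does not annihilate the source and that no crossing differential interferes. The main obstacle throughout is exactly the nonvanishing: since both targets are $\tau$-divisible they lie outside the reach of the $C\tau$ comparison that settles the routine cases, so one is pushed into the more delicate squaring-operation and Toda-bracket arguments, and the genuine work is the bookkeeping of the $h_0$-correction terms and the precise powers of $\tau$ needed to match weights.
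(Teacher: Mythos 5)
Your setup is right---both sources are trivially $d_2$-cycles, both targets are $\tau$-divisible, and the whole content is nonvanishing---but neither of the tools you reach for can deliver that nonvanishing, and the conclusion you draw from the $\tau$-divisibility (that the $C\tau$ comparison is out of reach) is exactly backwards. The squaring-operation route dies for the reason you half-concede: $h_1 h_4$ is a permanent cycle, so the leading term $\Sq^{*} d(h_1 h_4)$ vanishes, and the $h_0$-correction terms are killed by $h_0 h_1 = h_0 h_2 = 0$, so Bruner's theorem asserts nothing. The Moss fallback is worse. The identification $h_2 h_5 = \langle h_2, h_0, h_4^2 \rangle$ is made with the $d_2$-differential, and the paper explicitly cautions against feeding such a bracket into the higher Leibniz rule \ref{thm:Leibniz} to compute a $d_3$; but even granting the most generous reading, $d_3$ vanishes on $h_2$, $h_0$, and $h_4^2$, so every bracket on the right-hand side contains zero, and you have only constrained $d_3(h_2 h_5)$ to lie in a set containing $0$. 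That is an upper bound on the differential, never a proof that it is nonzero---it is precisely how the paper shows differentials are \emph{zero} in Lemmas \ref{lem:d3-Dh2^2h6} and \ref{lem:d3-P^2h6d0}. The same objections apply to your part (2), with the added problem that $P$ is a Massey-product operator, not a ring element, so ``multiply by $P$'' does not commute with differentials without yet another Leibniz-type argument.

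The missing idea is that $C\tau$ detects these differentials after all---not through the inclusion of the bottom cell, but through the projection to the top cell. The machine data for $C\tau$ exhibits an $\eta$-extension from $h_2 h_5$ to $\ol{h_1^2 d_1}$ in $\pi_{*,*} C\tau$. Since $\ol{h_1^2 d_1}$ projects to the nonzero class $h_1^2 d_1$ and the projection is $\eta$-linear, any class detected by $h_2 h_5$ in $\pi_{34,18} C\tau$ must itself project nontrivially, and the only possible image is $h_1 d_1$. An element in the image of the top-cell projection is annihilated by $\tau$, so $\tau h_1 d_1$ must be hit by some differential, and $d_3(h_2 h_5)$ is the only candidate. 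Part (2) is word-for-word the same argument, using the $\eta$-extension from $P h_2 h_6$ to $\ol{h_1^2 h_4 Q_2}$. So the lemma is settled entirely inside the $C\tau$ framework, with no squaring operations or Toda brackets needed; the lesson is that a $\tau$-divisible target blocks only the bottom-cell direction of the comparison, not the top-cell direction.
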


\begin{proof}
In the Adams spectral sequence for $C\tau$, there is an
$\eta$ extension from $h_2 h_5$ to $\ol{h_1^2 d_1}$.
The element $\ol{h_1^2 d_1}$ maps to $h_1^2 d_1$ under projection
from $C\tau$ to the top cell, so
$h_2 h_5$ must also map non-trivially under projection from
$C\tau$ to the top cell.
The only possibility is that $h_2 h_5$ maps to $h_1 d_1$.
Therefore, $\tau h_1 d_1$ must be hit by a differential.
This establishes the first differential.

The proof for the second differential is identical, using that
there is an $\eta$ extension from $P h_2 h_6$ to $\ol{h_1^2 h_4 Q_2}$
in the Adams spectral sequence for $C\tau$.
\end{proof}

\begin{lemma}
\label{lem:d3-t^2D1h1^2}
\revdeg{54, 6, 28}
$d_3(\tau^2 \D_1 h_1^2) = \tau M c_0$.
\end{lemma}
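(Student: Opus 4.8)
The plan is to leverage the $d_2$ computation already in hand and then to pin down the $d_3$ by comparison with the Adams spectral sequence for $C\tau$. First I would record what $\tau$-linearity of the Adams differentials forces. Since $\tau$ is a permanent cycle, Example \ref{ex:Leibniz-tD1h1^2} gives
$d_2(\tau^2 \D_1 h_1^2) = \tau \cdot d_2(\tau \D_1 h_1^2) = \tau M h_1 h_3$.
Thus the first task is to verify that $\tau M h_1 h_3 = 0$ in $\Ext$, so that $\tau^2 \D_1 h_1^2$ is a $d_2$-cycle and survives to the $E_3$-page; I expect this to follow either by inspection of the $E_2$-page in the relevant tridegree or from a multiplicative relation that rewrites $\tau h_1 h_3$ in terms of classes annihilated by $M$. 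In parallel I would list the candidate targets of $d_3(\tau^2\D_1 h_1^2)$, namely the classes of $E_3$ obtained by lowering the stem by $1$ and raising the Adams filtration by $3$, with the expectation that the $h_0$-, $h_1$-, and $h_2$-module structure singles out $\tau M c_0$.

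The substantive step is to show that this $d_3$ is nonzero with value exactly $\tau M c_0$. I would do this by comparison with the (machine-computed) Adams spectral sequence for $C\tau$, transported via the bottom-cell inclusion $i$ and the top-cell projection $p$, in the same spirit as the proof of Lemma \ref{lem:d3-h2h5}. The delicate point is that both the source $\tau^2 \D_1 h_1^2$ and the putative target $\tau M c_0$ are multiples of $\tau$, so $i_*$ annihilates each of them and a naive pullback of a $C\tau$ differential along $i$ gives no information. Instead I would work through $p$: provided $\tau^2 M c_0 = 0$, the class $\tau M c_0$ is $\tau$-torsion and hence lifts to $\ol{\tau M c_0}$ in the $E_2$-page for $C\tau$ via $p_*$, and I would locate the differential or hidden extension in the $C\tau$ spectral sequence whose image under $p$ forces $\tau M c_0$ to be hit in the spectral sequence for $S^{0,0}$. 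As a consistency check, I would multiply by $h_1$: the relation $h_1 \cdot \tau^2 \D_1 h_1^2 = \tau^2 \D_1 h_1^3$, together with the value of $d_3(\tau^2 \D_1 h_1^3)$ and $h_1$-injectivity in the source tridegree, should recover $d_3(\tau^2 \D_1 h_1^2) = \tau M c_0$.

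The main obstacle will be the bookkeeping imposed by the $\tau$-multiples. Because $i_*$ kills both source and target, one cannot simply transport an algebraic Novikov differential along the bottom cell, and one must argue through the top-cell projection while carefully tracking $\tau$-divisibility, in particular confirming that $\tau M h_1 h_3 = 0$ (so the class survives to $E_3$) and that $\tau^2 M c_0 = 0$ (so the target lifts to $C\tau$). I also emphasize that the higher Leibniz rule of Theorem \ref{thm:Leibniz}, which drove the $d_2$ computation in Example \ref{ex:Leibniz-tD1h1^2}, is \emph{not} available here: it cannot be used to compute a $d_3$ from a Massey product in the $E_2$-page, which is precisely why the comparison argument, rather than a direct bracket manipulation, is the right tool.
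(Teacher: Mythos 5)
Your framework is the right one, and you have correctly diagnosed the central difficulty: both $\tau^2 \D_1 h_1^2$ and $\tau M c_0$ are $\tau$-multiples, so they die under the bottom-cell inclusion $i_*$ and no differential can be pulled back naively from $C\tau$. Your preliminary point is also sound: since $d_2(\tau \D_1 h_1^2) = M h_1 h_3$, one does need $\tau M h_1 h_3 = 0$ in the $E_2$-page for $\tau^2 \D_1 h_1^2$ to survive to $E_3$ at all; this is visible in the machine-generated $\Ext$ data and is implicitly assumed.

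However, the substantive step of your argument is a placeholder rather than a proof. You propose to lift $\tau M c_0$ to $\ol{\tau M c_0}$ in the Adams spectral sequence for $C\tau$ and then ``locate the differential or hidden extension'' that forces $\tau M c_0$ to be hit, but you never identify what that input is, and chasing $\tau M c_0$ itself through the top-cell projection does not obviously produce one: if $\tau M c_0$ survived, it would simply detect a $\tau$-torsion class, with no contradiction in sight. The element that actually forces the differential lives one filtration higher in the same stem: $M P$ at $(53,10,28)$. The paper's argument runs through the long exact sequence (\ref{eq:LES-Ctau}) applied to $M P$ rather than to $\tau M c_0$. One checks that $M P$ maps to zero under inclusion of the bottom cell into $C\tau$ (there is nothing in the $C\tau$ $E_\infty$-page in that degree in sufficiently high filtration), so the class it detects must be divisible by $\tau$; since $M P$ cannot be hit by any differential, it must be the target of a hidden $\tau$ extension, and the only possible source is $M c_0$. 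By the definition of a hidden extension, this requires $\tau M c_0$ to vanish on the $E_\infty$-page, and the only differential that can kill it is $d_3(\tau^2 \D_1 h_1^2) = \tau M c_0$. (This hidden $\tau$ extension from $M c_0$ to $M P$ is recorded in Table \ref{tab:tau-extn}.) Without identifying $M P$ as the forcing element, your argument does not close. Your proposed $h_1$-multiplication consistency check is also circular as stated, since you have no independent access to $d_3(\tau^2 \D_1 h_1^3)$.
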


\begin{proof}
The element $M P$ maps to zero under inclusion of the bottom cell into 
$C \tau$. 
Therefore,
$M P$ is either hit by a differential, or it is the target of a 
hidden $\tau$ extension.  
\revv{If it is the target of a hidden $\tau$ extension},
then the only possibility is that $\tau M c_0$ is zero in the
$E_\infty$-page, and that there is a hidden
$\tau$ extension from $M c_0$ to $M P$.

\revv{It remains to show that $M P$ cannot be hit by a differential.
The only possibility is that $d_4(\tau^2 \D_1 h_1^2)$ might
equal $M P$. 
Note that $P h_1 \cdot \tau^2 \D_1 h_1^2$ 
equals $h_1 ( \tau h_1 \cdot \Delta x)$ in the $E_4$-page.
This means that 
$d_4(P h_1 \cdot \tau^2 \D_1 h_1^2)$ cannot equal $M P^2 h_1$
since $M P^2 h_1$ is not divisible by $h_1$.
In turn, $d_4(\tau^2 \D_1 h_1^2)$ cannot equal $M P$.
}
\end{proof}

\begin{lemma}
\label{lem:d3-tDg2.h0^3}
\revdeg{68, 11, 35}
$d_3(\tau h_0^3 \cdot \D g_2) = \tau^3 \D h_2^2 e_0 g$.
\end{lemma}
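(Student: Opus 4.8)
The plan is to adapt the Mahowald operator method of Lemma~\ref{lem:d2-tB5g}, which detects products in $\Ext_\C$ through the map $p_*\colon \Ext_\C \map \Ext_B$ of Proposition~\ref{prop:M-Massey}, where $\Ext_B = \M_2[v_3] \otimes_{\M_2} \Ext_{A(2)}$. A separate argument is needed here because $\tau h_0^3 \cdot \D g_2$ is divisible by $\tau$, so it maps to zero under the inclusion of the bottom cell into $C\tau$; hence the differential is invisible to the algebraic Novikov spectral sequence and cannot be read off directly from the machine data, nor (since $g_2$ is not detected) by the simplest comparison to $\mmf$.

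First I would determine the candidate values. Inspection of the $(s,f,w)$ degree of $\tau h_0^3 \cdot \D g_2$, together with comparison to $C\tau$ for the non-$\tau$-divisible shadow of the element, should leave $\tau^3 \D h_2^2 e_0 g$ as essentially the only plausible non-zero target, perhaps up to a summand of strictly higher $h_0$-divisibility such as an $M$-multiple. The problem then splits into two parts: showing the differential is non-zero, and ruling out any such extra summand.

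For non-triviality I would bootstrap from a simpler, already-established $d_3$ as in Lemma~\ref{lem:d2-tB5g}: multiply $\tau h_0^3 \cdot \D g_2$ by a convenient permanent cycle $y$ so as to land in a degree where the differential is known, apply the Leibniz rule \ref{thm:Leibniz} in the form $y \cdot d_3(\tau h_0^3 \D g_2) = d_3(y \cdot \tau h_0^3 \D g_2)$, and certify that the resulting product is non-zero by computing its image under $p_*$ in $\Ext_B$, using the image of $\D$, the image of $g_2$, and the formula $p_*(Mx) = (e_0 v_3^2 + h_1^3 v_3^3) p_*(x)$ to track the relevant $v_3$-monomials. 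Provided multiplication by $y$ is injective on the target group, this both forces the differential to be non-zero and pins down its value after dividing by $y$.

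The hard part will be this final injectivity-and-division step, exactly as in Lemma~\ref{lem:d2-tB5g}, where two independent $p_*$-computations were required. I expect the delicate points to be (i) choosing $y$ so that $y \cdot \tau^3 \D h_2^2 e_0 g$ is detectably non-zero in $\Ext_B$ while any competing higher-$h_0$ correction term maps to a linearly independent element or to zero, thereby eliminating it; and (ii) verifying that $y$ does not annihilate the target. The underlying $\Ext$ relations among $\D$, $g_2$, $e_0$, $g$, and the $M$-operator that make the bookkeeping close up are routine but must be recorded carefully; once they are in hand, the monomial computation in $\M_2[v_3] \otimes_{\M_2} \Ext_{A(2)}$ finishes the argument.
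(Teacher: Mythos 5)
Your proposal is a strategy sketch rather than a proof, and the strategy has a gap I do not see how to close. The plan hinges on finding a permanent cycle $y$ such that $d_3(y \cdot \tau h_0^3 \cdot \D g_2)$ is already known to be non-zero, then dividing by $y$; but you never identify $y$ or the input differential you would bootstrap from, and it is not clear either exists --- if some product of $\tau h_0^3 \cdot \D g_2$ with a permanent cycle already carried a known $d_3$, this lemma would not need a separate argument. The analogy with Lemma \ref{lem:d2-tB5g} also does not transfer. There, the $\Ext_B$ computation works because both the product $d_0 \cdot \tau B_5 g = \tau M g \cdot h_0 m$ and its $d_2$-image are $M$-multiples, which is exactly what Proposition \ref{prop:M-Massey} detects through the factor $e_0 v_3^2 + h_1^3 v_3^3$. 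Here neither the source $\tau h_0^3 \cdot \D g_2$ nor the target $\tau^3 \D h_2^2 e_0 g$ is an $M$-multiple or a bracket of the form $\langle -, h_0^3, g_2 \rangle$, so the cited formula gives no handle on the relevant products, and you would need a separate, unstated computation of $p_*(\D g_2)$ in $\Ext_B$ even to begin.

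The paper's argument is of a different kind, going through homotopy groups rather than $\Ext$ algebra: Table \ref{tab:unit-mmf} records that $h_0 h_5 i$ has hidden value $\D^2 h_2^2$ under the unit map to $\tmf$; since $\D^2 h_2^2 d_0$ is non-zero and not divisible by $2$ in $\tmf$, the product $\kappa \{h_0 h_5 i\}$ in $\pi_{68,36}$ is non-zero and not divisible by $2$, hence is detected by $P h_2 h_5 j = d_0 \cdot h_0 h_5 i$ with $P h_2 h_5 j$ not an $h_0$-multiple on the $E_\infty$-page. That forces $\tau h_0^3 \cdot \D g_2$ to support a differential, and $\tau^3 \D h_2^2 e_0 g$ is the only available target. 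To salvage your approach you would at minimum need to exhibit the cycle $y$, the known differential serving as input, and the injectivity of multiplication by $y$ on the target group; none of these is supplied.
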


\begin{proof}
Table \ref{tab:unit-mmf} shows that the element
$h_0 h_5 i$ maps to $\D^2 h_2^2$ in
the Adams spectral sequence for $\tmf$.

Now $\D^2 h_2^2 d_0$ is not zero and not divisible by $2$ in $\tmf$.
Therefore, $\kappa \{ h_0 h_5 i\}$ must be non-zero and 
not divisible by $2$ in $\pi_{68,36}$.
The only possibility is that
$\kappa \{h_0 h_5 i\}$ is detected by 
$P h_2 h_5 j = d_0 \cdot h_0 h_5 i$, and that
$P h_2 h_5 j$ is not an $h_0$ multiple in the $E_\infty$-page.
Therefore, $\tau \D g_2 \cdot h_0^3$ cannot survive to the
$E_\infty$-page.
\end{proof}

\rev{
\begin{lemma}
\label{lem:d3-tD3'}
\revdeg{69, 8, 36}
$d_3(\tau D'_3) = \tau^2 M h_2 g$.
\end{lemma}
}

\begin{proof}
Table \ref{tab:Toda} shows that
the Toda bracket $\langle 2, 8 \sigma, 2, \sigma^2 \rangle$
contains $\tau \nu \kappabar$, which is detected by $\tau^2 h_2 g$.
Table \ref{tab:nu-extn} shows that
$M h_2$ detects $\nu \alpha$ for some
$\alpha$ in $\pi_{45,24}$ detected by $h_3^2 h_5$.
(Beware that there is a crossing extension, $M h_2$ does not
detect $\nu \alpha$ for every $\alpha$ that is detected by
$h_3^2 h_5$.)
It follows that
$\tau^2 M h_2 g$ detects
$\langle 2, 8 \sigma, 2, \sigma^2 \rangle \alpha$.

This expression is contained in
$\langle 2, 8 \sigma, \langle 2, \sigma^2, \alpha \rangle \rangle$.
Lemma \ref{lem:2,sigma^2,theta4.5} shows that
the inner bracket equals
$\{0, 2 \tau \kappabar^3 \}$.

The Toda bracket
$\langle 2, 8 \sigma, 0 \rangle$ 
in $\pi_{68,36}$
consists entirely of multiples of $2$.
The Toda bracket
$\langle 2, 8 \sigma, 2 \tau \kappabar^3 \rangle$ contains
$\langle 2, 8 \sigma, 2 \rangle \tau \kappabar^3$.
This last expression equals zero because 
\[
\langle 2, 8 \sigma, 2 \rangle = \tau \eta \cdot 8 \sigma = 0
\]
by Corollary \ref{cor:2-symmetric}.
Therefore, 
$\langle 2, 8 \sigma, 2 \tau \kappabar^3 \rangle$ equals
its indeterminacy, which
consists entirely of multiples of $2$ in $\pi_{68,36}$.

We conclude that $\tau^2 M h_2 g$ is either hit by a differential, or
is the target of a hidden $2$ extension.  
Lemma \ref{lem:2-h3A'} shows that
there is no hidden $2$ extension from $h_3 A'$ to $\tau^2 M h_2 g$,
and there are no other possible extensions to $\tau^2 M h_2 g$.

Therefore, $\tau^2 M h_2 g$ must be hit by 
a differential, and the only possible source of this differential
is $\tau D'_3$.
\end{proof}

\begin{lemma}
\label{lem:d3-t^2Mh0l}
\revdeg{77, 14, 40}
$d_3(\tau^2 M h_0 l) = \D^2 h_0 d_0^2$.
\end{lemma}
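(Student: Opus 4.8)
The plan is to follow the Mahowald operator method used in the proof of Lemma~\ref{lem:d2-tB5g}, exploiting the map $p_*\colon \Ext_\C \map \Ext_B$ supplied by Proposition~\ref{prop:M-Massey}. Recall that $p_*$ carries $M x$ to $(e_0 v_3^2 + h_1^3 v_3^3)\, p_*(x)$ whenever $M x$ is defined, and that $\Ext_B = \M_2[v_3] \otimes_{\M_2} \Ext_{A(2)}$ is completely known. Since $\Ext_{A(2)}$ is fully understood, this map is an effective detector of both multiplicative relations and non-triviality in the high-filtration degrees near the $77$-stem, where direct manipulation in $\Ext_\C$ is unwieldy. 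The source $\tau^2 M h_0 l$ carries the Mahowald operator $M$, so it does not live in $\tmf$ or $\mmf$; this is precisely why the $\Ext_B$ comparison, rather than a comparison to a modular-forms spectrum, is the natural tool.

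First I would confirm that $\tau^2 M h_0 l$ is a genuine nonzero $M$-multiple by computing $p_*(\tau^2 M h_0 l) = \tau^2 (e_0 v_3^2 + h_1^3 v_3^3)\, p_*(h_0 l)$ and checking that the result is nonzero in $\Ext_B$; this simultaneously pins down the monomial. Next I would force a nonzero $d_3$ on it, either by comparison with the Adams spectral sequence for $C\tau$, where the machine data records the behaviour of the corresponding class, or by showing that the candidate target $\D^2 h_0 d_0^2$ must be hit for degree reasons. To identify the target precisely, I would imitate Lemma~\ref{lem:d2-tB5g} and multiply by the permanent cycle $d_0$: since $d_3(d_0 \cdot \tau^2 M h_0 l) = d_0 \cdot d_3(\tau^2 M h_0 l)$, it suffices to compute the $d_3$ on the product and then divide by $d_0$, using injectivity of multiplication by $d_0$ in the relevant degree together with the relation $d_0 g = e_0^2$ to rewrite the resulting expression. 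Finally, I would verify that $d_0 \cdot \D^2 h_0 d_0^2 = \D^2 h_0 d_0^3$ is nonzero by checking that $p_*$ sends it to a nonzero element of $\Ext_B$, which then confirms that the undivided target $\D^2 h_0 d_0^2$ is itself nonzero.

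The hard part will be confirming that the target is exactly $\D^2 h_0 d_0^2$, rather than zero or a class differing from it by a term of higher $h_0$- or $h_1$-divisibility in the same $(s,f,w)$-degree. Establishing this non-triviality is where the $\Ext_B$ computation does the essential work, since these products are invisible to coarser invariants; the secondary difficulty is ensuring that no competing $d_3$ differential or hidden extension in the neighbouring degrees interferes with the divide-by-$d_0$ argument. Once these non-triviality and injectivity checks are in place, the differential $d_3(\tau^2 M h_0 l) = \D^2 h_0 d_0^2$ follows.
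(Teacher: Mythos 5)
Your proposal has two genuine gaps, and both occur at the load-bearing steps. First, you never actually establish that $\tau^2 M h_0 l$ supports a nonzero $d_3$. You offer ``comparison with the Adams spectral sequence for $C\tau$'' or unspecified ``degree reasons,'' but the source is a $\tau$-multiple, so it maps to zero under inclusion of the bottom cell into $C\tau$; naturality then gives no lower bound on the differential, only the (vacuous in this direction) constraint that the target maps to zero. The paper instead forces the differential indirectly: from $d_4(\tau^2 d_0 e_0 + h_0^7 h_5) = P^2 d_0$ one gets $d_4(\tau^3 M h_1 d_0 e_0) = \tau M P^2 h_1 d_0$, and since $h_0 \cdot \tau^2 M h_0 l = \tau^3 M h_1 d_0 e_0$ while $\tau M P^2 h_1 d_0$ is not divisible by $h_0$, the class $\tau^2 M h_0 l$ cannot survive to the $E_4$-page. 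Some argument of this kind --- exploiting a later differential and a divisibility obstruction --- is needed, and your outline contains no substitute for it.

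Second, your divide-by-$d_0$ scheme for pinning down the target is circular as written. Since $d_0$ is a permanent cycle, the Leibniz rule gives $d_3(d_0 \cdot \tau^2 M h_0 l) = d_0 \cdot d_3(\tau^2 M h_0 l)$, so you cannot compute the left-hand side without already knowing the right-hand side. The analogous step in Lemma \ref{lem:d2-tB5g} works only because $d_0 \cdot \tau B_5 g$ admits an \emph{independent} factorization as $\tau M g \cdot h_0 m$, whose differential is known; you exhibit no such alternative factorization of $d_0 \cdot \tau^2 M h_0 l$. Moreover, the real issue in identifying the target is not non-triviality of $\D^2 h_0 d_0^2$ (which your $\Ext_B$ checks address) but excluding the competing summand $\tau^3 \D h_1 e_0^3$ in the same tridegree. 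That exclusion comes from comparison to $\tmf$: the $M$-multiple source dies in $\tmf$, while $\tau^3 \D h_1 e_0^3$ does not, so no linear combination containing it can be the value of the differential. The Mahowald-operator map $p_*$ to $\Ext_B$ is the wrong detector here, since the candidate targets are not $M$-multiples and the decisive information lives in $\tmf$.
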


\begin{proof}
Table \ref{tab:Adams-d4} shows that
$d_4(\tau^2 d_0 e_0 + h_0^7 h_5)$ equals $P^2 d_0$, so
$d_4(\tau^3 M h_1 d_0 e_0)$ equals $\tau M P^2 h_1 d_0$.
We have the relation $h_0 \cdot \tau^2 M h_0 l = \tau^3 M h_1 d_0 e_0$,
but the element $\tau M P^2 h_1 d_0$ is not divisible by $h_0$.
Therefore, $\tau^2 M h_0 l$ cannot survive to the $E_4$-page.

By comparison to the Adams spectral sequence for $\tmf$,
the value of $d_3(\tau^2 M h_0 l)$ cannot be
$\tau^3 \D h_1 e_0^3 + \D^2 h_0 d_0^2$ or
$\tau^3 \D h_1 e_0^3$.
The only remaining possibility is that
$d_3(\tau^2 M h_0 l)$ equals
$\D^2 h_0 d_0^2$.
\end{proof}

\begin{lemma}
\label{lem:d3-h0^3x78,10}
\revdeg{78, 13, 40}
$d_3(h_0^3 x_{78,10}) = \tau^6 e_0 g^3$.
\end{lemma}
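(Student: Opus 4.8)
The plan is to show that $\tau^6 e_0 g^3$ is forced to be hit by a $d_3$ differential, with $h_0^3 x_{78,10}$ as its only possible source, in the same spirit as Lemmas \ref{lem:d3-tDg2.h0^3} and \ref{lem:d3-tD3'}. The key structural observation is that $e_0 g^3$ is assembled from $e_0$ and $g$, both of which are pulled back from $\Ext_{A(2)}$; consequently $\tau^6 e_0 g^3$ lies in the ``$\mmf$-detected'' part of the chart, so that the unit map $S^{0,0} \map \mmf$ and the Mahowald-operator map $p_* \colon \Ext_\C \map \Ext_B$ of Section \ref{subsctn:Mahowald-operator} are both available as detection tools. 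I would organize the argument as: (i) pin down the exact power of $\tau$; (ii) show the class must die; (iii) force the source.

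First I would pin down the $\tau$-power. The classes $\tau^k e_0 g^3$ all live in the single spot $(s,f)=(77,16)$, so the real content of the statement is that $\tau^6 e_0 g^3$ is the precise nonzero representative surviving to $E_3$. I would read off from the $\mmf$ Adams spectral sequence, completely known by \cite{Isaksen18}, and from the $C\tau$ (algebraic Novikov) data of \cite{Wang19}, which multiple of $e_0 g^3$ survives the earlier pages; this identifies the correct exponent and simultaneously confirms that $\tau^6 e_0 g^3$ is nonzero on $E_3$. I expect the $C\tau$-comparison to control the bottom of this $\tau$-tower, while the accumulation of $\tau$-factors reflects hidden $\tau$-extensions of the sort exploited in Lemma \ref{lem:d3-t^2D1h1^2}.

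Next I would show the class must die and identify its source. Under $S^{0,0} \map \mmf$ the element $\tau^6 e_0 g^3$ maps to $\tau^6 e g^3$ (with $e=e_0$, $g=g$), and I would use the known $g$-periodic $d_3$-structure in $\mmf$ to see that this image is not a permanent cycle detecting a nonzero homotopy class. As an independent check, applying $p_*$ to $\tau^6 e_0 g^3$ and to its products with $d_0$ and $g$ should land on nonzero monomials in $\M_2[v_3]\otimes \Ext_{A(2)}$, exactly as in the proof of Lemma \ref{lem:d2-tB5g}; this rules out spurious $\tau$-divisibility and confirms non-triviality. Since $\tau^6 e_0 g^3$ supports no outgoing differential for degree reasons, it is forced to receive one. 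I would then enumerate the candidate sources in stem $78$ and show, by comparison to $C\tau$ and by the multiplicative constraints recorded in Table \ref{tab:Adams-d3}, that $h_0^3 x_{78,10}$ is the unique class of the correct bidegree and weight that can hit it, and that no earlier differential removes it before $E_3$.

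The hard part will be the $\tau$-bookkeeping: establishing that the exponent is exactly $6$ rather than a neighboring value. The power $\tau^6$ is invisible on the purely algebraic $E_2$-page and instead records the combined effect of the preceding differentials along the $g$-tower. One tempting shortcut --- multiplying the known differential $d_3(\tau e_0 g) = c_0 d_0^2$ by $g^2$ --- does \emph{not} directly produce this differential, since it concerns $\tau e_0 g^3$ as a source rather than $\tau^6 e_0 g^3$ as a target; its real use is to help compute where the $g$-tower supports and receives differentials, and in particular to verify the vanishing of $\tau^{5} c_0 d_0^2 g^2$ on $E_3$ that is needed for $\tau^6 e_0 g^3$ to be a $d_3$-cycle at all. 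If this multiplicative bookkeeping becomes ambiguous, I would fall back on naturality of the unit map $S^{0,0}\map\mmf$, using the completely understood $\mmf$ spectral sequence to settle the exponent and the differential directly.
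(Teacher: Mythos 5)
Your proposal inverts the logic of the paper's argument: you try to show that the \emph{target} $\tau^6 e_0 g^3$ must be hit and then hunt for a source, whereas the paper shows directly that the \emph{source} $h_0^3 x_{78,10}$ cannot survive. The paper's argument is: if $h_0^3 x_{78,10}$ were a permanent cycle, it would map under inclusion of the bottom cell to $h_0^3 x_{78,10}$ in the Adams $E_\infty$-page for $C\tau$; there is a hidden $\nu$ extension there from $h_0^3 x_{78,10}$ to $\D^3 h_1^2 h_3$, so $\D^3 h_1^2 h_3$ would also have to be in the image of the bottom-cell inclusion; but its only possible pre-image is $\D^3 h_1^2 h_3$ in the sphere, which does not survive by Lemma \ref{lem:d4-D^3h1^2h3}. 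Once the source is known to support a differential, $\tau^6 e_0 g^3$ is the only possible value.

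The gap in your route is the step ``the class must die.'' First, naturality along $S^{0,0} \map \mmf$ does not run in the direction you need: if the image of $\tau^6 e_0 g^3$ is hit by a differential in $\mmf$, the source of that $\mmf$-differential need not lie in the image of the unit map, so nothing forces $\tau^6 e_0 g^3$ to be hit in the sphere. (The $\mmf$ comparison is used throughout the paper to show classes \emph{survive} or that certain targets are \emph{excluded}, not to force a class to die.) Second, your assertion that ``since $\tau^6 e_0 g^3$ supports no outgoing differential for degree reasons, it is forced to receive one'' is a non sequitur: a permanent cycle can simply survive to $E_\infty$ and detect a homotopy class. To force it to die you would need, e.g., a Toda-bracket argument showing the homotopy class it would detect is zero, or the $C\tau$ long exact sequence argument the paper actually uses on the source side. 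The Mahowald-operator check via $p_*$ only certifies that the $\Ext$ class is nonzero, which is not in dispute. Finally, the ``hard part'' you identify --- pinning down the exponent $6$ --- is automatic in the motivic setting: the weight of $h_0^3 x_{78,10}$ is $40$, and among the classes $\tau^k e_0 g^3$ in $(77,16)$ only $\tau^6 e_0 g^3$ has weight $40$, so no bookkeeping along the $g$-tower is needed.
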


\begin{proof}
Suppose that $h_0^3 x_{78,10}$ were a permanent cycle.
Then it would map under inclusion of the bottom cell
to the element $h_0^3 x_{78,10}$ in the 
Adams $E_\infty$-page for $C\tau$.

There is a hidden $\nu$ extension from $h_0^3 x_{78,10}$
to $\D^3 h_1^2 h_3$ in the Adams $E_\infty$-page for $C\tau$.
Then $\D^3 h_1^2 h_3$ would also have to be in the image
of inclusion of the bottom cell.  
The only possible pre-image is the element $\D^3 h_1^2 h_3$
in the Adams spectral sequence for the sphere, but this element
does not survive by Lemma \ref{lem:d4-D^3h1^2h3}.

By contradiction, we have shown that $h_0^3 x_{78,10}$
must support a differential.  The only possibility is that
$d_3(h_0^3 x_{78,10})$ equals $\tau^6 e_0 g^3$.
\end{proof}

\begin{lemma}
\label{lem:d3-x1}
\revdeg{79, 5, 42}
$d_3(x_1) = \tau h_1 m_1$.
\end{lemma}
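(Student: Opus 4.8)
The plan is to follow the same strategy used in Lemma \ref{lem:d3-h2h5}, transporting information from the Adams spectral sequence for $C\tau$ to the sphere via the projection to the top cell. The appearance of the factor $\tau h_1$ in the target $\tau h_1 m_1$ strongly suggests that the differential is forced by an $\eta$-extension in the $C\tau$ spectral sequence together with the long exact sequence relating $S^{0,0}$, $C\tau$, and $S^{1,-1}$, rather than by a direct comparison to $\mmf$ or $\tmf$.

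First I would locate, in the machine-generated data for the Adams spectral sequence for $C\tau$, an $h_1$ (i.e. $\eta$) extension on the $E_\infty$-page whose source is $x_1$ and whose target is an element $\overline{y}$ lying over $h_1^2 m_1$, so that $x_1$ is a permanent cycle in the $C\tau$ spectral sequence and $p_*(\overline{y}) = h_1^2 m_1$ on the top cell. By naturality of this extension under the projection $p\colon C\tau \map S^{1,-1}$, one obtains $h_1 \cdot p_*(x_1) = h_1^2 m_1$, and a degree/inspection argument should pin down $p_*(x_1) = h_1 m_1$ as the only possibility. Thus $x_1$ projects nontrivially to $h_1 m_1$ on the top cell.

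Next, exactness of the sequence $\pi_{*,*} \xrightarrow{\tau} \pi_{*,*} \xrightarrow{i_*} \pi_{*,*}(C\tau) \xrightarrow{p_*} \pi_{*-1,*+1} \xrightarrow{\tau} \pi_{*-1,*}$ forces the homotopy class detected by $h_1 m_1$ to lie in the kernel of multiplication by $\tau$. Consequently $\tau h_1 m_1$ cannot be a permanent cycle and must be hit by some Adams differential on the sphere. I would then inspect the chart near this bidegree to check that the only class in the correct degree capable of mapping onto $\tau h_1 m_1$ is $x_1$, and that the differential has length exactly $3$, yielding $d_3(x_1) = \tau h_1 m_1$.

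The main obstacle is twofold. First, one must confirm the precise $h_1$-extension and the identity $p_*(\overline{y}) = h_1^2 m_1$ in the $C\tau$ data, ruling out competing targets in higher filtration --- exactly the kind of $\overline{x}$-ambiguity warned about in Section \ref{sctn:Ctau-naming}, where different choices of the lift may support different extensions. Second, one must verify that no other class can hit $\tau h_1 m_1$, so that the source is unambiguously $x_1$ and the differential is genuinely a $d_3$ rather than a shorter $d_2$. Should the direct $h_1$-extension argument fail, the fallback is to derive the differential from a multiplicative relation via the Leibniz rule, expressing $x_1$ (or a suitable $h_i$-multiple of it) in terms of classes whose differentials are already recorded in Tables \ref{tab:Adams-d2} and \ref{tab:Adams-d3}.
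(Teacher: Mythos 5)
Your proposal takes a genuinely different route from the paper, and unfortunately the central mechanism fails. The pivot of your argument is that some class detected by $h_1 m_1$ lies in the image of projection from $C\tau$ to the top cell and is therefore annihilated by $\tau$, whence $\tau h_1 m_1$ must die. But Table \ref{tab:tau-extn} records a hidden $\tau$ extension from $h_1 m_1$ to $M \D h_1^2 h_3$ (used, for instance, in the proof of Lemma \ref{lem:2-Ph6c0}). By condition (3) of the definition of a hidden extension, this means that \emph{no} element of the coset $\{h_1 m_1\}$ is $\tau$-torsion: every $\beta$ detected by $h_1 m_1$ has $\tau\beta$ nonzero, detected in filtration $13$. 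Hence $h_1 m_1$ cannot detect anything in the image of $p_*$, and the hidden value $p_*(x_1) = h_1 m_1$ you want to extract from an $\eta$-extension in $C\tau$ cannot occur. The fact that $\tau h_1 m_1$ is a boundary is perfectly compatible with the hidden $\tau$ extension --- that is exactly what a hidden extension is --- but it cannot be \emph{deduced} from exactness of the sequence relating $S^{0,0}$, $C\tau$, and $S^{1,-1}$ in the way you propose, because the class detected by $h_1 m_1$ simply is not in $\ker(\tau)$. The same obstruction blocks the variant where you only use that $p_*(\{x_1\}_{C\tau})$ is nonzero and $\tau$-torsion: that class must be detected by something other than $h_1 m_1$, so the argument cannot single out $\tau h_1 m_1$ as the target.

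The paper's proof is of an entirely different character: it applies Bruner's theorem on the interaction of algebraic squaring operations with classical Adams differentials to the element $e_1$ in the $38$-stem, giving
$d_* \Sq^1 e_1 = \Sq^3 d_3 e_1 \dotplus h_1 \Sq^3 e_1$,
and concludes $d_3(\Sq^1 e_1) = h_1 \Sq^3 e_1$; the identifications $\Sq^1 e_1 = x_1$ and $\Sq^3 e_1 = m_1$ from Bruner's tables then yield $d_3(x_1) = \tau h_1 m_1$ (the $\tau$ entering for weight reasons). This is the same technique as Lemma \ref{lem:d2-x77,7}. Your fallback of a Leibniz-rule argument from a multiplicative relation is reasonable in spirit, but as stated it is not a proof; if you want to pursue this differential without the squaring-operation machinery, you would need to find a concrete relation or bracket expressing $x_1$, which you have not exhibited.
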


\begin{proof}
This follows from the interaction between algebraic squaring
operations and classical Adams differentials \cite{Bruner84}*{Theorem 2.2}.
The theorem says that
\[
d_* \Sq^1 e_1 = \Sq^3 d_3 e_1 \dotplus h_1 \Sq^3 e_1.
\]
The notation means that there is an Adams differential on $\Sq^1 e_1$
hitting either $\Sq^3 d_3 e_1$ or $h_1 \Sq^3 e_1$,
depending on which element has lower Adams filtration.
Therefore
$d_3 \Sq^1 e_1 = h_1 \Sq^3 e_1$.

Finally, we observe from \cite{Bruner04} that
$\Sq^1 e_1 = x_1$ and $\Sq^3 e_1 = m_1$.
\end{proof}

\begin{lemma}
\label{lem:perm-D^2d1}
\revdeg{80, 12, 42}
The element $\D^2 d_1$ is a permanent cycle.
\end{lemma}

\begin{proof}
The element $\D^2 d_1$ in the Adams $E_\infty$-page for $C\tau$
must map to zero under the projection from $C\tau$ to the top cell.
The only possible value in sufficiently high filtration is
$\tau^2 \D h_1 e_0^2 g$.  However, comparison to $\mmf$
shows that this element is not annihilated by $\tau$, and therefore
cannot be in the image of projection to the top cell. 

Therefore, $\D^2 d_1$ 
must be in the image of the inclusion of the bottom cell into $C\tau$.
The element $\D^2 d_1$ is the 
only possible pre-image in the Adams $E_\infty$-page for the sphere 
in sufficiently low filtration.
\end{proof}

\rev{
\begin{lemma}
\label{lem:d3-D^3h1h3}
\revdeg{80, 14, 41}
$d_3(\D^3 h_1 h_3)$ equals either
$\tau^4 \D h_1 e_0^2 g$, $\tau \D^2 h_0 d_0 e_0$, or
$\tau^4 \D h_1 e_0^2 g + \tau \D^2 h_0 d_0 e_0$; and it is not equal to
$d_3(\tau^2 d_0 B_5)$.
\end{lemma}

\begin{proof}
There is a relation $P h_1 \cdot \D^3 h_1 h_3 = \tau \D^3 h_1^3 d_0$
in the Adams $E_2$-page.  Because of the differential
$d_2(\D^3 h_1 e_0) = \D^3 h_1^3 d_0 + \tau^5 e_0^2 g m$,
we have the relation
$P h_1 \cdot \D^3 h_1 h_3 = \tau^6 e_0^2 g m$ in
the $E_3$-page.

There is a differential $d_4(\tau^6 e_0^2 g m) = \tau^4 d_0^4 l$.
But $\tau^4 d_0^4 l$ is not divisible by $P h_1$,
so $\tau^6 e_0^2 g m$ cannot be divisible by $P h_1$
in the $E_4$-page.  Therefore,
$d_3(\D^3 h_1 h_3)$ must be non-zero.

The same argument shows that 
$d_3(\D^3 h_1 h_3 + \tau^3 d_0 B_5)$ must also be non-zero.
\end{proof}

\begin{remark}
In fact, Chua has determined that $d_3(\D^3 h_1 h_3)$ equals
$\tau^4 \D h_1 e_0^2 g$ \cite{Chua21}.
\end{remark}
}

\rev{
\begin{lemma}
\label{lem:d3-t^2d0B5}
\revdeg{80, 14, 42}
$d_3(\tau^2 d_0 B_5)$ equals either $\D^2 h_0 d_0 e_0$
or $\D^2 h_0 d_0 e_0 + \tau^3 \D h_1 e_0^2 g$.
\end{lemma}

\begin{proof}
The element $\D^2 h_0 d_0 e_0$ is a permanent cycle because
there are no possible differentials that it could support.
Moreover, 
it must map to zero under the inclusion of the bottom cell into $C\tau$ because there are no elements in the
Adams $E_\infty$-page for $C\tau$ of sufficiently high filtration.
Therefore, $\D^2 h_0 d_0 e_0$ 
is either hit by a differential, or it is the target of a 
hidden $\tau$ extension, or it is the target of a non-hidden
$\tau$ extension.

The only possible hidden $\tau$ extension has source $h_1^3 x_{76,6}$.
However, Table \ref{tab:hid-proj} shows that
$h_1^3 x_{76,6}$ is in the image of projection from $C\tau$ to the
top cell.  Therefore, it cannot support a hidden extension.

We now know that $\D^2 h_0 d_0 e_0$ must be hit by a differential, or
it is $\tau$-divisible in the $E_\infty$-page.
Lemma \ref{lem:perm-D^2d1} rules out one possible source for the
differential.  The only remaining possibilities are that 
$d_3(\tau^2 d_0 B_5)$ equals
$\D^2 h_0 d_0 e_0$ or 
$\D^2 h_0 d_0 e_0 + \tau^3 \D h_1 e_0^2 g$.
\end{proof} 

\begin{remark}
We are grateful to Dexter Chua for pointing out an error in a previous
version of Lemma \ref{lem:d3-t^2d0B5}.  In fact, Chua has determined
that the value of $d_3(\tau^2 d_0 B_5)$ is 
$\D^2 h_0 d_0 e_0 + \tau^3 \D h_1 e_0^2 g$ \cite{Chua21}.
\end{remark}
}

\begin{lemma}
\label{lem:d3-h2h4h6}
\mbox{}
\begin{enumerate}
\item
\revdeg{81, 3, 42}
$d_3(h_2 h_4 h_6) = 0$.
\item
\revdeg{82, 10, 42}
$d_3(P^2 h_2 h_6) = 0$.
\end{enumerate}
\end{lemma}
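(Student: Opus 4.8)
The plan is to establish both vanishing statements by comparison to the Adams spectral sequence for $C\tau$, whose $d_3$ differentials are visible in the machine-computed algebraic Novikov data, and then to eliminate the candidate targets on the sphere, which in each case will be divisible by $\tau$. First I would note that neither $h_2 h_4 h_6$ nor $P^2 h_2 h_6$ is divisible by $\tau$, so the inclusion $i$ of the bottom cell carries each to a nonzero element of the Adams $E_2$-page for $C\tau$; one checks along the way that both survive to the $E_3$-page (for instance $d_2(h_2 h_4 h_6) = 0$ and $d_2(P^2 h_2 h_6) = 0$ since every candidate value carries a factor $h_0 h_2 = 0$). The machine data then shows that $d_3$ vanishes on both images in the $C\tau$ spectral sequence.

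By naturality of Adams differentials along $i$, we have $i_*\bigl(d_3(\cdot)\bigr) = d_3\bigl(i_*(\cdot)\bigr) = 0$ for each element, so $d_3(h_2 h_4 h_6)$ and $d_3(P^2 h_2 h_6)$ both lie in $\ker i_*$. Using the long exact sequence associated to the cofiber sequence $S^{0,-1} \xrightarrow{\tau} S^{0,0} \xrightarrow{i} C\tau$, this kernel is precisely the image of multiplication by $\tau$, i.e.\ the $\tau$-divisible classes. Thus each differential is reduced to a question about the finitely many $\tau$-multiples in the target bidegree: $(80,6)$ for $h_2 h_4 h_6$, and $(81,13)$ for $P^2 h_2 h_6$. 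If no such $\tau$-divisible class exists, we are done immediately; otherwise each candidate must be ruled out.

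For the elimination step I would enumerate the $\tau$-divisible classes in each target bidegree and discard them with the standard tools: a candidate already hit by an earlier differential cannot be a $d_3$ target; a candidate that maps to a surviving class in $\tmf$ or $\mmf$ (whose spectral sequences are completely understood, cf.\ Section~\ref{sctn:mmf} and the comparisons used throughout Theorem~\ref{thm:Adams-d3}) cannot be hit; and a candidate recorded as a permanent cycle detecting a specific homotopy class in Table~\ref{tab:Adams-perm} is excluded. For $P^2 h_2 h_6$ there is an extra lever available, namely the already established differential $d_3(P h_2 h_6) = \tau h_1 h_4 Q_2$ of Lemma~\ref{lem:d3-h2h5}, together with the multiplicative relations among the $P$-power families and $h_0$-divisibility constraints in the relevant region.

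The main obstacle I expect is exactly this last elimination: the $C\tau$ comparison only pins down the answer up to $\tau$-torsion, and in these high stems the target bidegrees can be dense, so the crux is a careful chart-driven case analysis showing that none of the $\tau$-divisible candidates is actually hit. The case of $P^2 h_2 h_6$ is the more delicate one. One is tempted to transport $d_3(P h_2 h_6) = \tau h_1 h_4 Q_2$ directly to a statement about $P^2 h_2 h_6$, but the periodicity operator $P$ is a Massey product rather than honest multiplication, so Moss's higher Leibniz rule \ref{thm:Leibniz} does \emph{not} apply to deduce a $d_3$ from it (as warned in the remark following that theorem). Completing this case therefore requires an auxiliary argument—most plausibly an $h_0$-divisibility obstruction or a further $\tmf$/$\mmf$ comparison—to force the remaining candidate target to vanish.
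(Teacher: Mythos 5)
Your framework is the right one: comparison along the inclusion of the bottom cell into $C\tau$ pins down $d_3(h_2h_4h_6)$ and $d_3(P^2h_2h_6)$ only up to $\tau$-divisible classes, and the whole content of the lemma is the elimination of the surviving $\tau$-divisible candidates, namely $\tau h_1 x_1$ in $(80,6,42)$ and $\tau \D^2 h_1 d_1$ in $(81,13,42)$. But the elimination step is where your proposal has a genuine gap, and none of the tools you list closes it. Neither candidate is seen by $\tmf$ or $\mmf$; neither is killed by an earlier differential; an $h_0$-divisibility argument gets no traction on $\tau h_1 x_1$; and the entry for $\tau h_1 x_1$ in Table \ref{tab:Adams-perm} only certifies it as a permanent cycle via the Moss Convergence Theorem \ref{thm:Moss}, which by itself does not forbid it from being the \emph{target} of a differential (the Toda bracket it detects could a priori be detected in higher filtration). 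So the "careful chart-driven case analysis" you defer to cannot be completed with the methods you name.

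The paper's actual mechanism is different and worth internalizing: it is a contradiction argument through the long exact sequence (\ref{eq:LES-Ctau}). Suppose $d_3(h_2h_4h_6)=\tau h_1 x_1$. Then $\tau\cdot\{h_1 x_1\}$ would have to be detected in higher filtration, and since there is no possible target for a hidden $\tau$ extension on $h_1 x_1$, it would be zero; hence $\{h_1 x_1\}$ would lie in the image of projection from $C\tau$ to the top cell. The (not hidden) $\eta$ extension from $h_1 x_1$ to $h_1^2 x_1$ would then be forced to appear as an $\eta$ extension in $\pi_{*,*}C\tau$ — but no such extension exists there. The second part is identical with $\D^2 h_1 d_1 \to \D^2 h_1^2 d_1$ in place of $h_1 x_1 \to h_1^2 x_1$; your instinct that $d_3(Ph_2h_6)=\tau h_1 h_4 Q_2$ should be leveraged, and your correct observation that Moss's higher Leibniz rule \ref{thm:Leibniz} is unavailable for the $P$-family, both point away from the argument actually needed. (A small side error: your justification that $d_2$ vanishes "since $h_0h_2=0$" is false — $h_0h_2\neq 0$ in $\Ext$; the first Leibniz term dies because $h_2h_3=0$, and survival to $E_3$ is in any case part of the machine data.)
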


\begin{proof}
The value of $d_3(h_2 h_4 h_6)$ is not $h_2 h_6 d_0$ nor
$h_2 h_6 d_0 + \tau h_1 x_1$ by comparison to the Adams
spectral sequence for $C\tau$.

It remains to show that $d_3(h_2 h_4 h_6)$ cannot equal $\tau h_1 x_1$.
Suppose that the differential did occur.
Then there would be no possible targets for a 
hidden $\tau$ extension on $h_1 x_1$,
so the $\eta$ extension from $h_1 x_1$ to $h_1^2 x_1$ would be detected
by projection from $C\tau$ to the top cell.  But there 
is no such $\eta$ extension in the homotopy groups of $C\tau$.
This establishes the first formula.

The proof of the second formula is essentially the same,
using that the $\eta$ extension
from $\D^2 h_1 d_1$ to $\D^2 h_1^2 d_1$ cannot be
detected by projection from $C\tau$ to the top cell.
\end{proof}

\begin{lemma}
\label{lem:d3-D^2p}
\revdeg{81, 12, 42}
$d_3(\D^2 p) = 0$.
\end{lemma}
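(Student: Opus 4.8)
The plan is to bound the possible targets and then eliminate them, leaning primarily on naturality along the inclusion of the bottom cell $i \colon S^{0,0} \map C\tau$, with comparison to $\mmf$ as backup. First I would record the tridegree $(s,f,w)$ of $\D^2 p$ and determine the tridegree $(s-1, f+3, w)$ into which a $d_3$ differential must land, and then enumerate the elements of the $E_3$-page of the $\C$-motivic Adams spectral sequence lying in that tridegree. This finite bookkeeping step is what makes the rest of the argument tractable, and in the most favorable case it already shows that there are no possible nonzero values, in which case the differential vanishes for degree reasons alone.

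If candidate targets do survive the enumeration, the main tool is comparison to $C\tau$. Since $\D^2 p$ is a polynomial generator and in particular not a multiple of $\tau$, its image $i_*(\D^2 p)$ is the nonzero class of the same name in the Adams spectral sequence for $C\tau$, and the machine-generated algebraic Novikov data of Chapter \ref{ch:AANSS} records whether it survives to $E_4$ there. Granting that it is a $d_3$-cycle in the spectral sequence for $C\tau$, naturality gives $i_*(d_3(\D^2 p)) = d_3(i_*(\D^2 p)) = 0$, so $d_3(\D^2 p)$ lies in the kernel of $i_*$. Because that kernel consists of $\tau$-divisible classes, every remaining candidate must be divisible by $\tau$, which typically removes all but a small handful of the classes produced in the first step.

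For the surviving $\tau$-divisible candidates I would appeal to the unit map $S^{0,0} \map \mmf$ of Section \ref{sctn:mmf}, or to comparison with $\tmf$. Such candidates tend to be $v_2$-periodic families that map nontrivially to $\mmf$; since the corresponding element of $\mmf$ is a permanent cycle in the completely computed Adams spectral sequence for $\mmf$ \cite{Isaksen18}, it cannot be hit by a differential from the sphere, and the candidate is ruled out. Any candidate not detected in $\mmf$ would instead be handled by a multiplicative relation, using that some $h_i \cdot \D^2 p$ or $\tau^k \cdot \D^2 p$ is already known to be a permanent cycle and applying the Leibniz rule. The hard part will be this last elimination: naturality to $C\tau$ only confines the value to the $\tau$-torsion, and separating a genuine $d_3$ from a mere filtration shift among the $\tau$-divisible classes requires the more delicate $\mmf$ comparison, whose input is the hidden-$\tau$-extension structure recorded in Theorem \ref{thm:unit-mmf}.
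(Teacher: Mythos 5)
Your general strategy (enumerate targets, compare to $C\tau$, then to $\mmf$, then fall back on multiplicative relations) is the standard toolkit, and it does dispose of most candidates. But the entire content of this lemma is the one candidate that survives every step of that pipeline, and your proposal does not actually handle it. The possible value is $\tau^3 h_1 M e_0^2$. It is $\tau$-divisible, so the $C\tau$ comparison in your second paragraph says nothing about it. It lies in the Mahowald-operator family $M(-) = \langle -, h_0^3, g_2 \rangle$, and such classes are invisible to $\mmf$ and $\tmf$, so the comparison in your third paragraph also fails. Finally, your proposed fallback --- find some $h_i \cdot \D^2 p$ or $\tau^k \cdot \D^2 p$ known to be a permanent cycle and apply the ordinary Leibniz rule --- does not produce a contradiction here; if it did, the paper would have recorded a one-word justification in Table \ref{tab:Adams-d3} rather than a separate lemma.

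The argument the paper actually uses is indirect and of a different character. Assume $d_3(\D^2 p) = \tau^3 h_1 M e_0^2 = \tau^2 M g \cdot \tau h_1 d_0$. Then in the $E_4$-page the Massey product $\langle \tau^2 M g, \tau h_1 d_0, d_0 \rangle$ is defined (the second product vanishes because of $d_3(\D h_2^2) = \tau h_1 d_0^2$) and equals $\tau^2 M \D h_2^2 g$ with no indeterminacy. Moss's higher Leibniz rule (Theorem \ref{thm:Leibniz}) then forces $d_4(\tau^2 M \D h_2^2 g)$ to be a linear combination of multiples of $\tau^2 M g$ and of $d_0$; but Table \ref{tab:Adams-d4} gives $d_4(\tau^2 M \D h_2^2 g) = M P \D h_0^2 e_0$, which is not of that form. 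So the hypothesized $d_3$ cannot occur. The missing idea in your proposal is precisely this use of a Massey product in a \emph{higher} page together with the higher Leibniz rule to convert a hypothetical $d_3$ into a contradiction about a known $d_4$; none of the tools you list will close that gap.
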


\begin{proof}
Suppose that $d_3(\D^2 p)$ were equal to $\tau^3 h_1 M e_0^2$.
In the Adams $E_4$-page,
the Massey product $\langle \tau^2 M g, \tau h_1 d_0, d_0 \rangle$
would equal $\tau^2 M \D h_2^2 g$, with no indeterminacy,
because of the Adams differential $d_3(\D h_2^2) = \tau h_1 d_0^2$
and because $d_0 \cdot \D^2 p = 0$.
By Moss's higher Leibniz rule \ref{thm:Leibniz},
$d_4(\tau^2 M \D h_2^2 g)$ would be a linear
combination of multiples of $\tau^2 M g$ and $d_0$.
But Table \ref{tab:Adams-d4} shows that $d_4(\tau^2 M \D h_2^2 g)$
equals $M P \D h_0^2 e_0$, which is not such a linear combination
in the Adams $E_4$-page.
\end{proof}

\begin{lemma}
\label{lem:d3-th6g}
\revdeg{83, 5, 43}
$d_3(\tau h_6 g + \tau h_2 e_2) = 0$.
\end{lemma}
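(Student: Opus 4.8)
The plan is to rule out every candidate target of $d_3$ on $\tau h_6 g + \tau h_2 e_2$ by combining naturality along the bottom- and top-cell maps of $C\tau$ with multiplicative relations, in the style of Lemmas \ref{lem:d3-t^2d0B5} and \ref{lem:d3-h2h4h6}. First I would record the degree bookkeeping: $h_6 g$ and $h_2 e_2$ both lie in the $83$-stem in Adams filtration $5$, so $\tau h_6 g + \tau h_2 e_2$ does too, and a nonzero $d_3$ would have to land in the $82$-stem in filtration $8$. Moreover the source is a $\tau$-multiple, hence so is any nonzero value $w = d_3(\tau h_6 g + \tau h_2 e_2)$. This observation is the main leverage, because a $\tau$-multiple is, by definition, not $\tau$-torsion, whereas the image of the top-cell projection $p \colon C\tau \map S^{1,-1}$ consists precisely of $\tau$-torsion classes.

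First I would enumerate, from the chart of the $E_3$-page in \cite{Isaksen14a}, the finitely many $\tau$-multiple classes in the $82$-stem and filtration $8$ that are not already known to be permanent cycles or to be hit by earlier differentials (those possibilities are eliminated immediately by Tables \ref{tab:Adams-d2} and \ref{tab:Adams-d3}). For each remaining candidate $w$, the primary elimination is the top-cell argument: I would check in the machine-known Adams spectral sequence for $C\tau$ whether $w$ lifts through $p$. If $w$ lies in the image of $p$, then it is $\tau$-torsion and so cannot equal the $\tau$-multiple $d_3(\tau h_6 g + \tau h_2 e_2)$, exactly as in the elimination step of Lemma \ref{lem:d3-t^2d0B5}; Table \ref{tab:hid-proj} is the reference for which classes are hit by $p$. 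Candidates not eliminated this way I would attack with a multiplicative relation, multiplying the source by a suitable class among $h_0, h_1, h_2$ (on which the product is forced by the known $h_0$-, $h_1$-, and $h_2$-module structure of $\Ext$) and comparing with the corresponding multiple of $w$, whose value is likewise determined; an inconsistency rules $w$ out. The sum $\tau h_6 g + \tau h_2 e_2$ is presumably chosen precisely so that the unwanted parts of the two individual behaviors cancel, and I expect the same cancellation to drive these comparisons.

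If a stubborn candidate survives both tests, I would fall back on comparison to $\mmf$ and on the Moss Convergence Theorem \ref{thm:Moss}. Since $h_6$ maps to zero under the unit map $S^{0,0} \map \mmf$, the image of the element in $\mmf$ reduces to $h_2$ times the image of $e_2$, which is either zero or a low-filtration class fully controlled by the computed $\mmf$ spectral sequence; this constrains the admissible $\tau$-torsion contributions. Alternatively, realizing $\tau h_6 g + \tau h_2 e_2$ as the image of a Massey product of permanent cycles, so that Theorem \ref{thm:Moss} forces it to detect a genuine Toda bracket and hence to be a $d_3$-cycle, would close the argument directly.

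The hard part will be the final candidate in the $82$-stem, filtration $8$: eliminating it is where the real content lies, and neither the crude target count nor the top-cell test is guaranteed to dispatch it. I expect to need one carefully chosen multiplicative relation, verifying that a product such as $h_1 \cdot (\tau h_6 g + \tau h_2 e_2)$ or $h_2 \cdot (\tau h_6 g + \tau h_2 e_2)$ is a known permanent cycle while the corresponding multiple of the putative target is not, and confirming that identity in $\Ext$ (possibly via the May Convergence Theorem, cf.\ Remark \ref{rem:May-convergence}) is the delicate step.
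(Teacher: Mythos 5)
There is a genuine gap here: your outline identifies the right target group (the $82$-stem in filtration $8$) but never eliminates the one candidate that actually matters, and the tools you propose would not do so. The paper's argument is much more specific. Using $d_2(h_6) = h_0 h_5^2$ and $d_2(e_2) = h_0 x_1$, it writes $\tau h_6 g + \tau h_2 e_2$ as a matric Massey product in the $E_3$-page with entries $\tau g$, $\tau h_2$, $h_5^2$, $x_1$, and $h_0$, and then applies Moss's higher Leibniz rule (Theorem \ref{thm:Leibniz}), together with $d_3(x_1) = \tau h_1 m_1$, to pin $d_3(\tau h_6 g + \tau h_2 e_2)$ down to the set $\{0, \tau h_0^2 h_4 Q_3\}$. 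The real content is then to show that $\tau h_0^2 h_4 Q_3$ cannot be hit: Table \ref{tab:nu-extn} gives a hidden $\nu$ extension from $h_0^2 h_4 Q_3$ to $P h_1 x_{76,6}$, and since $\tau P h_1 x_{76,6}$ is nonzero on the $E_\infty$-page, $h_0^2 h_4 Q_3$ must support a (hidden or not hidden) $\tau$ extension landing in filtration at most $10$; the only possibility is that $\tau h_0^2 h_4 Q_3$ survives to $E_\infty$. Your fallback of multiplying by $h_1$ is vacuous for this candidate, since $h_0 h_1 = 0$ makes $h_1 \cdot \tau h_0^2 h_4 Q_3 = 0$ automatically, and neither the top-cell test nor comparison to $\mmf$ engages this class. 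The hidden $\nu$-extension mechanism, which is the entire point of the lemma, is absent from your proposal, and you yourself flag the final candidate as the step you cannot guarantee.

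Separately, your stated "main leverage" is not sound. You argue that because the source is a $\tau$-multiple, any nonzero value of $d_3$ is a $\tau$-multiple and "hence not $\tau$-torsion, hence not in the image of top-cell projection." This conflates divisibility by $\tau$ in the $E_3$-page with $\tau$-torsion in homotopy groups: the image of projection from $C\tau$ to the top cell is the kernel of $\tau$ on homotopy, which is unrelated to whether a given $E_3$-page class happens to be written as $\tau$ times something. Moreover, being hit by a $d_3$ differential is a different question from lying in the image of projection; the dichotomy actually used elsewhere in the paper (e.g., Lemma \ref{lem:d3-t^2d0B5}) is that a permanent cycle killed by bottom-cell inclusion is either hit by a differential or is the target of a hidden $\tau$ extension, which is not the criterion you describe. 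In any case the genuine candidate $\tau h_0^2 h_4 Q_3$ is itself a $\tau$-multiple on the $E_3$-page, so your criterion could not have excluded it even if it were valid.
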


\begin{proof}
In the Adams $E_3$-page, we have the matric Massey product
\[
\tau h_6 g + \tau h_2 e_2 =
\left\langle 
\left[
\begin{array}{cc}
\tau g & \tau h_2
\end{array}
\right],
\left[
\begin{array}{c}
h_5^2 \\ x_1
\end{array}
\right],
h_0
\right\rangle
\]
because of the Adams differentials
$d_2(h_6) = h_0 h_5^2$ and
$d_2(e_2) = h_0 x_1$, as well as the relation
$\tau g \cdot h_5^2 + \tau h_2 x_1$ in the Adams $E_2$-page.
Moss's higher Leibniz rule \ref{thm:Leibniz}
implies that 
$d_3(\tau h_6 g + \tau h_2 e_2)$ 
belongs to
\[
\left\langle
\left[
0 \enskip 0
\right],
\left[
\begin{array}{c}
h_5^2 \\ x_1
\end{array}
\right],
h_0 \right\rangle +
\left\langle
\left[
\tau g \enskip \tau h_2
\right],
\left[
\begin{array}{c}
0 \\ \tau h_1 m_1
\end{array}
\right],
h_0 \right\rangle +
\left\langle
\left[
\tau g \enskip \tau h_2
\right],
\left[
\begin{array}{c}
h_5^2 \\ x_1
\end{array}
\right],
0 \right\rangle
\]
since $d_3(x_1) = \tau h_1 m_1$,
where the Massey products are formed in the Adams $E_3$-page using
the $d_2$ differential.
This expression simplifies to
$\left\langle
\left[
\tau g \enskip \tau h_2
\right],
\left[
\begin{array}{c}
0 \\ \tau h_1 m_1
\end{array}
\right],
h_0 \right\rangle$,
which equals $\{0, \tau h_0^2 h_4 Q_3 \}$.

Table \ref{tab:nu-extn} shows that there is a hidden $\nu$
extension from $h_0^2 h_4 Q_3$ to $P h_1 x_{76,6}$.
The element $\tau P h_1 x_{76,6}$ is non-zero in the Adams
$E_\infty$-page.  Therefore,
$h_0^2 h_4 Q_3$ supports a (hidden or not hidden) 
$\tau$ extension whose target is in Adams filtration at most $10$.
The only possibility is that
$\tau h_0^2 h_4 Q_3$ is non-zero in the Adams 
$E_\infty$-page.
\end{proof}

\rev{
\begin{lemma}
\label{lem:d3-f2}
\revdeg{84, 4, 44}
$d_3(f_2) = \tau h_1 h_4 Q_3$.
\end{lemma}

\begin{proof}
Table \ref{tab:nu-extn} shows that 
$\tau h_1 Q_3$ detects $\nu^2 \theta_5$, and 
Table \ref{tab:Toda} shows that
$\tau h_1 h_4 Q_3$ detects
$\langle \nu^2 \theta_5, 2, \sigma^2 \rangle$,
with indeterminacy in strictly higher Adams filtration.
This bracket contains
$\nu^2 \langle \theta_5, 2, \sigma \rangle$, so
$\tau h_1 h_4 Q_3$ detects a multiple of $\nu^2$.
The only possibility is that $\tau h_1 h_4 Q_3$ is a multiple
of $h_2^2$ in the $E_\infty$-page.
This implies that 
$d_3(f_2)$ equals $\tau h_1 h_4 Q_3$ or $\tau h_1 h_4 Q_3 + h_0^2 h_6 g$.
The latter possibility is ruled out by comparison to
$C\tau$.
\end{proof}
}

\rev{
\begin{lemma}
\label{lem:d3-tx85,6+h0^3c3}
\revdeg{85, 6, 45}
$d_3(\tau x_{85,6} + h_0^3 c_3) = 0$.
\end{lemma}

\begin{proof}
Let $\alpha$ be an element of $\pi_{66,35}$ that is detected by
$\tau h_2 C'$.  Then $\nu \alpha$ is detected by $\tau h_2^2 C'$,
and $\tau \nu \alpha$ is zero.

Let $\ol{\alpha}$
be an element of $\pi_{70,36} C\tau$ that is detected by $h_0^2 h_3 h_6$.
Projection from $C\tau$ to the top cell takes $\ol{\alpha}$ to
$\nu \alpha$.
Moreover, in the homotopy of $C\tau$, the Toda bracket
$\langle 2, \sigma^2, \ol{\alpha} \rangle$
is detected by $h_0^3 c_3$.

Now projection from $C\tau$ to the top cell takes
$\langle 2, \sigma^2, \ol{\alpha} \rangle$ to 
$\langle 2, \sigma^2, \nu \alpha \rangle$, which equals zero
by Lemma \ref{lem:2,sigma^2,th2^2C'}.
Therefore, $h_0^3 c_3$ maps to zero under projection to the top cell
of $C\tau$,
so it must be in the image of inclusion of the bottom cell.

There are two possibilities.
First, $\tau x_{85,6} + h_0^3 c_3$ could survive, and it could
map to $h_0^3 c_3$ under inclusion of the bottom cell of $C\tau$.
Second,
$\tau h_1 f_2$ could map to $h_0^3 c_3$ under inclusion of the 
bottom cell.  This could only occur if $d_{10}(h_1 f_2)$ equaled
$M \D h_1 d_0$
and $d_9(\tau x_{85,6} + h_0^3 c_3)$ equaled $\tau M \D h_1 d_0$.

In either case, $d_3(\tau x_{85,6} + h_0^3 c_3)$ is zero.
\end{proof}
}

\rev{
\begin{remark}
\label{rem:d3-tx85,6+h0^3c3}
In the proof of Lemma \ref{lem:d3-tx85,6+h0^3c3}, we have used that
$d_5(\tau p_1 + h_0^2 h_3 h_6)$ equals $\tau^2 h_2^2 C'$ in order
to conclude that $\tau \nu \alpha$ is zero.  This differential
depends on work in preparation \cite{BIX}.  

However, we can
also prove Lemma \ref{lem:d3-tx85,6+h0^3c3} independently of \cite{BIX}.
Lemma \ref{lem:d5-tp1+h0^2h3h6} shows that the other possible value
of $d_5(\tau p_1 + h_0^2 h_3 h_6)$ is 
$\tau^2 h_2^2 C' + \tau h_3(\D e_1 + C_0)$.  In this case,
let $\beta$ be an element of $\pi_{62,33}$ that is detected by
$\D e_1 + C_0$.  Then $\nu \alpha + \sigma \beta$ is detected by 
$\tau h_2^2 C' + h_3(\D e_1 + C_0)$, 
and $\tau (\nu \alpha + \sigma \beta)$ is zero.

Projection from $C\tau$ to the top cell takes 
$\ol{\alpha}$ to $\nu \alpha + \sigma \beta$, and takes
$\langle 2, \sigma^2, \ol{\alpha} \rangle$ to 
$\langle 2, \sigma^2, \nu \alpha + \sigma \beta \rangle$, 
which equals zero
by Lemmas \ref{lem:2,sigma^2,th2^2C'} and 
\ref{lem:2,sigma^2,h3(De1+C0)}.
As in the proof of Lemma \ref{lem:d3-tx85,6+h0^3c3},
$h_0^3 c_3$ maps to zero under projection to the top cell of $C\tau$,
so it must be in the image of inclusion of the bottom cell.
\end{remark}
}

\rev{
\begin{lemma}
\label{lem:d3-t^2Mh0d0k}
\revdeg{88, 18, 46}
$d_3(\tau^2 M h_0 d_0 k) = P \D^2 h_0 d_0 e_0 + \tau^3 \D h_1 d_0^2 e_0^2$.
\end{lemma}

\begin{proof}
Table \ref{tab:Adams-d4} shows that
$d_4(\tau^2 M h_1 e_0) = M P^2 h_1$.
Multiply by $\tau d_0^2$ to see that 
$d_4(\tau^3 M h_1 d_0^2 e_0) = \tau M P^2 h_1 d_0^2$.
We have the relation $h_2 \cdot \tau^2 M h_0 d_0 k = \tau^3 M h_1 d_0^2 e_0$,
but $\tau M P^2 h_1 d_0^2$ is not divisible by $h_2$.
Therefore,
$\tau^2 M h_0 d_0 k$ cannot survive to the $E_4$-page.
By comparison to $\mmf$, there is only one possible value for
$d_3(\tau^2 M h_0 d_0 k)$.
\end{proof}

\begin{remark}
We are grateful to Dexter Chua for pointing out a small error in a previous
version of Lemma \ref{lem:d3-t^2Mh0d0k}.
\end{remark}
}

\begin{lemma}
\label{lem:d3-h2B5g}
\revdeg{89, 15, 50}
$d_3(h_2 B_5 g) = M h_1 c_0 e_0^2$.
\end{lemma}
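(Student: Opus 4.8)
The plan is to first show that $h_2 B_5 g$ survives to the $E_3$-page, and then to pin down its $d_3$ differential by comparison with the Adams spectral sequence for $C\tau$. For the survival statement, I would invoke Lemma \ref{lem:d2-tB5g}: since $d_2(\tau B_5 g) = \tau M h_0^2 g^2$ and $\tau$ acts injectively in the relevant degree, we get $d_2(B_5 g) = M h_0^2 g^2$, so by the Leibniz rule $d_2(h_2 B_5 g) = h_2 \cdot M h_0^2 g^2 = 0$ because $h_0 h_2 = 0$. (If $B_5 g$ is not itself a class, the same conclusion should follow directly by comparison to $C\tau$.) Thus $h_2 B_5 g$ is a $d_2$-cycle and survives to $E_3$.

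One might next hope to detect the target with the Mahowald operator map $p_*\colon \Ext_\C \map \Ext_B$ of Proposition \ref{prop:M-Massey}, exactly as in the proof of Lemma \ref{lem:d2-tB5g}. This approach fails here, however: because $h_1 c_0 = 0$ already in $\Ext_\C$, its image satisfies $p_*(h_1 c_0 e_0^2) = 0$, and so Proposition \ref{prop:M-Massey} gives $p_*(M h_1 c_0 e_0^2) = (e_0 v_3^2 + h_1^3 v_3^3)\, p_*(h_1 c_0 e_0^2) = 0$ in $\Ext_B$. The map therefore cannot see $M h_1 c_0 e_0^2$, in the same spirit as the obstruction to using Proposition \ref{prop:g-Massey} elsewhere. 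I would accordingly argue through $C\tau$ instead.

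The central step is to run the corresponding differential in the Adams spectral sequence for $C\tau$, which is identified with the machine-computed algebraic Novikov spectral sequence of Chapter \ref{ch:AANSS}, and then transport it back to the sphere by naturality along the inclusion $i$ of the bottom cell. Concretely, I expect $h_2 B_5 g$ to support a $d_3$ in the $C\tau$ spectral sequence whose value is $i_*(M h_1 c_0 e_0^2)$; naturality then forces $d_3(h_2 B_5 g) = M h_1 c_0 e_0^2$ modulo $\tau$-multiples, which should be removable by a weight and degree count. The hard part will be precisely this identification. If $M h_1 c_0 e_0^2$ happens to be $\tau$-divisible, then $i_*$ annihilates it and the $C\tau$ differential will not see it directly; in that case I would instead force the differential by showing that $M h_1 c_0 e_0^2$ is a permanent cycle that must be killed — it maps to zero under $i_*$, hence is either hit or is the target of a hidden $\tau$ extension — and then rule out every competing source and every hidden $\tau$ extension landing in this degree, using comparison to $\mmf$ and the $h_1$- and $h_2$-module structure, until $h_2 B_5 g$ remains as the only possible source. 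Controlling these alternative targets and extensions in the $88$-stem is where essentially all the difficulty lies.
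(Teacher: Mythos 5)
Your proposal does not actually close the argument: the entire content of the lemma is deferred to your final paragraph, where you either hope the differential is visible in the $C\tau$ spectral sequence or propose to show that $M h_1 c_0 e_0^2$ must be killed and then rule out every competing source --- a case analysis you never carry out and yourself flag as containing all the difficulty. The first route is exactly the one that fails here; this differential is singled out for a separate lemma precisely because comparison to $C\tau$ does not determine it. Two subsidiary points are also off. The claim that $h_1 c_0 = 0$ in $\Ext_\C$ is false ($h_1 c_0$ detects $\eta\epsilon$), so your stated reason for abandoning the Mahowald-operator map $p_*$ is incorrect. More importantly, you abandon that tool too quickly: the paper's proof uses exactly this machinery, not to see the target of the differential directly, but to establish the product relation $d_0 \cdot h_2 B_5 g = \tau M h_1 e_0 g^2$ in the $E_2$-page (via \cite{Isaksen19}*{Theorem 1.1}, with a possible error term $P h_1^7 h_6 c_0 e_0$ eliminated by multiplying by $h_1$).

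With that relation in hand the lemma follows from a short multiplicative argument that your proposal is missing entirely. Table \ref{tab:Adams-d3} gives $d_3(\tau e_0 g^2) = c_0 d_0 e_0^2$, hence $d_3(\tau M h_1 e_0 g^2) = M h_1 c_0 d_0 e_0^2$, which one checks is nonzero on the $E_3$-page. Since $\tau M h_1 e_0 g^2 = d_0 \cdot h_2 B_5 g$, the Leibniz rule forces $d_0 \cdot d_3(h_2 B_5 g)$ to equal this nonzero element, and the only possibility is $d_3(h_2 B_5 g) = M h_1 c_0 e_0^2$. Your opening observation that $h_2 B_5 g$ survives to the $E_3$-page is fine (it is recorded in Table \ref{tab:Adams-d2} by comparison to $C\tau$), but it is not where the work is.
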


\begin{proof}
\revv{We will first establish the relation
$d_0 \cdot h_2 B_5 g = \tau M h_1 e_0 g^2$.
We use the map $p_*$ of \cite{Isaksen19}*{Theorem 1.1}.
We have that $p_*(d_0 \cdot h_2 B_5 g) = p_*(\tau M h_1 e_0 g^2)$.
Therefore, 
$d_0 \cdot h_2 B_5 g$ equals $\tau M h_1 e_0 g^2$, modulo a possible
error term $P h_1^7 h_6 c_0 e_0$ in the kernel of $p_*$.
However, multiplication by $h_1$ eliminates the error term.
}

Table \ref{tab:Adams-d3} shows that $d_3(\tau e_0 g^2) = c_0 d_0 e_0^2$.
Therefore, $d_3(\tau M h_1 e_0 g^2)$ equals
$M h_1 c_0 d_0 e_0^2$.  Observing
that $M h_1 c_0 d_0 e_0^2$ is in fact non-zero in the Adams $E_3$-page,
we conclude that $d_3(h_2 B_5 g)$ must equal $M h_1 c_0 e_0^2$.
\end{proof}

\begin{lemma}
\label{lem:perm-M^2}
\revdeg{90, 8, 48}
The element $M^2$ is a permanent cycle.
\end{lemma}

\begin{proof}
Table \ref{tab:Massey} shows that the Massey product 
$\langle M h_1, h_0, h_0^2 g_2 \rangle$ equals $M^2 h_1$.
Therefore,
$M^2 h_1$ detects the Toda bracket
$\langle \eta \theta_{4.5}, 2, \sigma^2 \theta_4 \rangle$.
The indeterminacy consists entirely
of multiples of $\eta \theta_{4.5}$.
The Toda bracket contains
$\theta_4 \langle \eta \theta_{4.5}, 2, \sigma^2 \rangle$.
Now 
$\langle \eta \theta_{4.5}, 2, \sigma^2 \rangle$ is zero because
$\pi_{61,33}$ is zero.

We have now shown that $M^2 h_1$ detects a multiple
of $\eta$.  In fact, it detects a non-zero multiple of $\eta$
because $M^2 h_1$ cannot be hit by a differential by 
comparison to the Adams spectral sequence for $C\tau$.

Therefore, there exists a non-zero element of $\pi_{90,48}$
that is detected in Adams fitration at most $12$.
The only possibility is that $M^2$ survives.
\end{proof}

\begin{lemma}
\label{lem:d3-Dh2^2h6}
\revdeg{93, 7, 48}
$d_3(\D h_2^2 h_6) = \tau h_1 h_6 d_0^2$.
\end{lemma}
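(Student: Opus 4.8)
The plan is to identify $\D h_2^2 h_6$ with a Massey product in the Adams $E_3$-page and then apply Moss's higher Leibniz rule \ref{thm:Leibniz}. First I would record the input data: Table \ref{tab:Adams-d2} gives $d_2(h_6) = h_0 h_5^2$, and $\D h_2^2$ is a $d_2$-cycle with $\D h_2^2 h_5^2 = 0$ in the $E_2$-page. Consequently
\[
d_2(\D h_2^2 h_6) = \D h_2^2 \cdot h_0 h_5^2 = h_0 \cdot \D h_2^2 h_5^2 = 0,
\]
so $\D h_2^2 h_6$ survives to the $E_3$-page. Moreover, this same differential exhibits $\D h_2^2 h_6$ as the Massey product $\langle \D h_2^2, h_5^2, h_0 \rangle$ in the $E_3$-page: the bracket is defined because $\D h_2^2 h_5^2 = 0$ in $E_2$ and $h_0 h_5^2$ is a $d_2$-boundary, and $h_6$ is precisely the nullhomotopy of $h_5^2 h_0$ that realizes the bracket.

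Next I would compute $d_3$ of this bracket via \ref{thm:Leibniz}. Since $h_5^2$ detects $\theta_5$ and $h_0$ detects $2$, both are permanent cycles, so $d_3(h_5^2) = 0 = d_3(h_0)$; in particular the Leibniz hypotheses $d_3(h_5^2) \cdot \D h_2^2 = 0$ and $d_3(h_5^2) \cdot h_0 = 0$ hold trivially. Up to indeterminacy the rule then gives
\[
d_3 \langle \D h_2^2, h_5^2, h_0 \rangle \subseteq \langle d_3(\D h_2^2), h_5^2, h_0 \rangle,
\]
the terms involving $d_3(h_5^2)$ and $d_3(h_0)$ contributing nothing new. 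Table \ref{tab:Adams-d3} supplies $d_3(\D h_2^2) = \tau h_1 d_0^2$, so the target lies in $\langle \tau h_1 d_0^2, h_5^2, h_0 \rangle$. This bracket is again realized by the differential on $h_6$: since $\tau h_1 d_0^2 \cdot h_5^2 = 0$ and $h_0 h_5^2 = d_2(h_6)$, it contains $\tau h_1 d_0^2 \cdot h_6 = \tau h_1 h_6 d_0^2$. I would emphasize that $\tau h_1 h_6 d_0^2$ is a genuine nonzero element of the $E_3$-page even though $h_6$ itself does not survive $d_2$: one checks $d_2(\tau h_1 h_6 d_0^2) = \tau h_1 d_0^2 \cdot h_0 h_5^2 = 0$ using the relation $h_0 h_1 = 0$.

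The main obstacle is upgrading the containment $d_3(\D h_2^2 h_6) \in \langle \tau h_1 d_0^2, h_5^2, h_0 \rangle$ to the equality $d_3(\D h_2^2 h_6) = \tau h_1 h_6 d_0^2$, which requires controlling the indeterminacy of the bracket. I would do this by inspecting the $E_3$-page in the relevant degree (stem $92$, filtration $10$, with the appropriate weight): the indeterminacy is $\tau h_1 d_0^2 \cdot E_3 + E_3 \cdot h_0$ in that degree, and I expect it to vanish, leaving $\tau h_1 h_6 d_0^2$ as the unique value of the bracket. A secondary point requiring verification is the relation $\D h_2^2 h_5^2 = 0$ in the $E_2$-page, which underlies both the survival of $\D h_2^2 h_6$ to $E_3$ and the definedness of the bracket; this should follow directly from the machine-generated $\Ext$ data. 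Assembling these, $d_3(\D h_2^2 h_6) = \tau h_1 h_6 d_0^2$.
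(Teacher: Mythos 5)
Your proposal is correct and follows essentially the same route as the paper: both identify $\D h_2^2 h_6$ with the Massey product $\langle \D h_2^2, h_5^2, h_0 \rangle$ in the $E_3$-page via the differential $d_2(h_6) = h_0 h_5^2$, apply Moss's higher Leibniz rule together with $d_3(\D h_2^2) = \tau h_1 d_0^2$, and evaluate $\langle \tau h_1 d_0^2, h_5^2, h_0 \rangle = \tau h_1 h_6 d_0^2$ using the same $d_2$ differential. The paper likewise disposes of the indeterminacy by noting that all the brackets involved have none, so your remaining verification steps are exactly the ones the paper implicitly performs.
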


\begin{proof}
In the Adams $E_3$-page, 
$\D h_2^2 h_6$ equals $\langle \D h_2^2, h_5^2, h_0 \rangle$,
with no indeterminacy, because of the Adams differential
$d_2(h_6) = h_0 h_5^2$.
Using that $d_3(\D h_2^2) = \tau h_1 d_0^2$, 
Moss's higher Leibniz rule \ref{thm:Leibniz}
implies that $d_3(\D h_2^2 h_6)$ is contained in
\[
\langle \tau h_1 d_0^2, h_5^2, h_0 \rangle +
\langle \D h_2^2, 0, h_0 \rangle +
\langle \D h_2^2, h_5^2, 0 \rangle.
\]
All of these brackets have no indeterminacy, and the last two
equal zero.
The first bracket equals $\tau h_1 h_6 d_0^2$, using the
Adams differential $d_2(h_6) = h_0 h_5^2$.
\end{proof}

\begin{lemma}
\label{lem:d3-P^2h6d0}
\revdeg{93, 13, 48}
$d_3(P^2 h_6 d_0) = 0$.
\end{lemma}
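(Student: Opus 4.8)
The plan is to show that there are simply no possible nonzero values for $d_3(P^2 h_6 d_0)$. First I would record the degree: $P^2 h_6 d_0$ lies in stem $93$ and Adams filtration $13$ (with weight $48$), so a nonzero $d_3$ would have to land in stem $92$ at filtration $16$. Using the machine-generated $\Ext$ data and the charts of \cite{Isaksen14a}, I would list the finitely many classes in this target degree, and the argument then reduces to eliminating each of them. Before doing so, I would first check for a shortcut: if $P^2 h_6 d_0$ factors as $d_0 \cdot P^2 h_6$ with both $d_0$ and $P^2 h_6$ already known to be permanent cycles (the former detects $\kappa$, and the latter may appear in Table \ref{tab:Adams-perm}), then the product is a permanent cycle and the differential vanishes for free.

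Absent such a shortcut, the principal tool is comparison with the Adams spectral sequence for $C\tau$, i.e.\ the algebraic Novikov spectral sequence of Chapter \ref{ch:AANSS}. Since neither $d_0$ nor $h_6$ is a multiple of $\tau$, the image $i_*(P^2 h_6 d_0)$ under inclusion of the bottom cell is the class of the same name in the $E_3$-page for $C\tau$, whose $d_3$ is recorded by our computation. If that $d_3$ for $C\tau$ vanishes, then naturality of the Adams spectral sequence along $i$ forces $i_*(d_3(P^2 h_6 d_0)) = 0$, so any nonzero value must lie in $\ker i_* = \tau \cdot E_3$. This removes every candidate target that is not divisible by $\tau$, leaving at most one or two $\tau$-multiples to dispose of.

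The main obstacle is eliminating the remaining $\tau$-divisible candidate, and I expect to use one of the standard devices. First, if such a class is already known to survive to $E_\infty$ and to detect a specific homotopy element (via the Moss Convergence Theorem \ref{thm:Moss} or Table \ref{tab:Adams-perm}), or to be the target of a hidden $\tau$ extension, then it cannot simultaneously be hit by $d_3(P^2 h_6 d_0)$. Second, comparison with $\mmf$ along the unit map (Section \ref{sctn:mmf}, Theorem \ref{thm:unit-mmf}) frequently shows that the candidate is not $\tau$-torsion, hence cannot be a $d_3$-value. Third, and most likely the delicate case, a projection-to-the-top-cell argument parallel to the proof of Lemma \ref{lem:d3-h2h4h6}: a surviving $d_3$ would force an $\eta$- or $\nu$-extension in the homotopy of $C\tau$ (detected through the projection $p$) that does not actually occur. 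I would organize the write-up as the $C\tau$ reduction followed by whichever of these three eliminations applies, expecting the projection argument to be the point requiring genuine care.
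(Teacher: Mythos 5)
Your proposal is a strategy outline with conditional branches rather than a completed argument, and the branches you lean on do not go through. The ``shortcut'' fails at the start: $h_6$ is not a permanent cycle ($d_2(h_6) = h_0 h_5^2$), so $P^2 h_6$ does not survive to the $E_3$-page and $P^2 h_6 d_0$ is \emph{not} a product $d_0 \cdot P^2 h_6$ of permanent cycles there. (Table \ref{tab:Adams-perm} lists $P^2 h_1 h_6$, not $P^2 h_6$.) The $C\tau$ comparison is likewise not enough: the paper's convention is that differentials resolvable by comparison to $C\tau$ are dispatched in Table \ref{tab:Adams-d3} without a lemma, so the very fact that this case gets its own lemma tells you a $\tau$-divisible candidate target survives that filter. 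At that point your proposal offers three possible eliminations without identifying the actual candidate class in $(92,16,48)$ or verifying that any of the three applies, so the proof is not actually finished.

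The paper's argument is different and self-contained: because of $d_2(h_6) = h_0 h_5^2$, the class $P^2 h_6 d_0$ equals the Massey product $\langle P^2 d_0, h_5^2, h_0 \rangle$ in the $E_3$-page with no indeterminacy, and Moss's higher Leibniz rule (Theorem \ref{thm:Leibniz}) then places $d_3(P^2 h_6 d_0)$ inside
\[
\langle 0, h_5^2, h_0 \rangle + \langle P^2 d_0, 0, h_0 \rangle + \langle P^2 d_0, h_5^2, 0 \rangle,
\]
which consists only of linear combinations of $h_0$-multiples and $P^2 d_0$-multiples; no class in the target degree is of that form, so the differential vanishes. If you want to salvage your route, you would need to actually enumerate the classes in $(92,16,48)$ surviving to $E_3$ and kill the $\tau$-divisible one by hand -- but the Massey-product argument is both shorter and avoids that case analysis entirely, so I would recommend adopting it.
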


\begin{proof}
In the Adams $E_3$-page, the element $P^2 h_6 d_0$ equals
the Massey product
$\langle P^2 d_0, h_5^2, h_0 \rangle$, with no indeterminacy,
because of the Adams differential $d_2(h_6) = h_0 h_5^2$.
Moss's higher Leibniz rule \ref{thm:Leibniz} implies
that $d_3(P^2 h_6 d_0)$ is a linear combination of multiples of
$h_0$ and of $P^2 d_0$.  The only possibility is that
$d_3(P^2 h_6 d_0)$ is zero.
\end{proof}

\rev{
\begin{lemma}
\label{lem:d3-t^2MPh0d0j}
\revdeg{93, 22, 48}
$d_3(\tau^2 M P h_0 d_0 j) = P^2 \D^2 h_0 d_0^2 + 
\tau^3 P \D h_1 d_0^3 e_0$.
\end{lemma}
}
\begin{proof}
Table \ref{tab:Adams-d4} shows that
$d_4(\tau^2 P d_0 e_0) = P^3 d_0$.
Multiplication by $\tau M P h_1$ shows that
$d_4(\tau^3 M P^2 h_1 d_0 e_0)$ equals
$\tau M P^4 h_1 d_0$.
But $\tau^3 M P^2 h_1 d_0 e_0$ equals
$h_0 \cdot \tau^2 M P h_0 d_0 j$, 
while $\tau M P^4 h_1 d_0$ is not divisible by $h_0$.
Therefore,
$\tau^2 M P h_0 d_0 j$ cannot survive to the $E_4$-page.

The possible values for
$d_3(\tau^2 M P h_0 d_0 j)$ are
\revv{the non-zero}
linear combinations of 
$P^2 \D^2 h_0 d_0^2$ and
$\tau^3 P \D h_1 d_0^3 e_0$.
\revv{The map to the Adams spectral sequence for $\tmf$
takes both $\tau^2 M P h_0 d_0 j$ and 
$P^2 \D^2 h_0 d_0^2 + \tau^3 P \D h_1 d_0^3 e_0$ to zero, 
but it takes each of
$P^2 \D^2 h_0 d_0^2$ and
$\tau^3 P \D h_1 d_0^3 e_0$ to the unique non-zero element
in the appropriate degree.
Therefore,
$d_3(\tau^2 M P h_0 d_0 j)$ cannot equal either
$P^2 \D^2 h_0 d_0^2$ or $\tau^3 P \D h_1 d_0^3 e_0$.
}
\end{proof}

\begin{lemma}
\label{lem:perm-P^3h6c0}
\revdeg{95, 16, 49}
The element $P^3 h_6 c_0$ is a permanent cycle.
\end{lemma}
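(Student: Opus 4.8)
The plan is to realize $P^3 h_6 c_0$ as a Massey product of permanent cycles in the Adams $E_3$-page and then invoke the Moss Convergence Theorem \ref{thm:Moss}. The template is Lemma \ref{lem:d3-Dh2^2h6}, where the differential $d_2(h_6) = h_0 h_5^2$ is used to write $\D h_2^2 h_6 = \langle \D h_2^2, h_5^2, h_0 \rangle$ in the $E_3$-page. First I would record that $P^3 c_0$ is a permanent cycle: it belongs to the $v_1$-periodic $P$-family on $c_0$ and is already resolved in the low-dimensional range, detecting a class $\rho$ in $\pi_{32,17}$. Since $h_0 c_0 = 0$, we also have $h_0 \cdot P^3 c_0 = 0$, so $P^3 h_6 c_0$ survives $d_2$. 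The crucial input is that $P^3 c_0 \cdot h_5^2 = 0$ in the $E_3$-page; granting this, the differential $d_2(h_6) = h_0 h_5^2$ exhibits
\[
P^3 h_6 c_0 = \langle P^3 c_0, h_5^2, h_0 \rangle
\]
in the $E_3$-page, exactly as in Lemma \ref{lem:d3-Dh2^2h6}. Note that the bracket lands in filtration $16$, matching $P^3 h_6 c_0$, because the nullhomotopy of $h_5^2 h_0$ is supplied by the filtration-$1$ element $h_6$ rather than by a generic filtration-$3$ chain.

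Next I would verify the hypotheses of the Moss Convergence Theorem \ref{thm:Moss} in the $E_3$-page. The three entries $P^3 c_0$, $h_5^2$, and $h_0$ are permanent cycles detecting $\rho$, $\theta_5$, and $2$ respectively; the products $h_0 h_5^2$ and $P^3 c_0 \cdot h_5^2$ both vanish in $E_3$; and the corresponding Toda bracket $\langle \rho, \theta_5, 2 \rangle$ is defined, since $2 \theta_5 = 0$ and $\rho \theta_5 = 0$, the latter being the homotopy-level incarnation of $P^3 c_0 \cdot h_5^2 = 0$. Provided there are no crossing differentials for the products $P^3 c_0 \cdot h_5^2$ and $h_0 h_5^2$ in the $E_3$-page, the theorem produces a homotopy class in $\langle \rho, \theta_5, 2 \rangle$ detected by an element of $\langle P^3 c_0, h_5^2, h_0 \rangle = P^3 h_6 c_0$. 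This forces $P^3 h_6 c_0$ to be a permanent cycle.

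The main obstacle will be verifying that $P^3 c_0 \cdot h_5^2 = 0$ in the $E_3$-page, equivalently $\rho \theta_5 = 0$ in $\pi_{94,49}$, together with checking the crossing-differential condition; both require inspecting the chart near the $94$-stem, where the high filtration makes the bookkeeping delicate.

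If the direct bracket route stalls, I would fall back on naturality along the inclusion of the bottom cell $i\colon S^{0,0} \map C\tau$. Since $P^3 h_6 c_0$ is not $\tau$-divisible, $i_*(P^3 h_6 c_0)$ is nonzero and is a permanent cycle in the machine-computed Adams spectral sequence for $C\tau$; because $i_*$ commutes with differentials, any $d_r(P^3 h_6 c_0)$ must lie in $\ker i_* = (\tau)$, hence be $\tau$-divisible. One would then eliminate the finitely many $\tau$-divisible candidate targets in the $94$-stem by comparison to $\mmf$ or by direct inspection of the $\Ext$ chart.
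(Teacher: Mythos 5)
Your overall strategy --- realize $P^3 h_6 c_0$ as a threefold Massey product via the differential $d_2(h_6) = h_0 h_5^2$ and apply the Moss Convergence Theorem \ref{thm:Moss} --- is exactly the paper's, but your choice of ordering for the bracket creates a genuine gap that the paper's ordering avoids. You write the bracket as $\langle P^3 c_0, h_5^2, h_0 \rangle$, corresponding to the Toda bracket $\langle \rho, \theta_5, 2 \rangle$. For this to be defined you must prove $P^3 c_0 \cdot h_5^2 = 0$ in the $E_3$-page \emph{and} $\rho\,\theta_5 = 0$ in $\pi_{94,49}$ (the latter requires ruling out detection in higher Adams filtration, not just vanishing of the $\Ext$ product). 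You correctly flag this as ``the main obstacle'' but never discharge it, and it is precisely the kind of high-filtration 94-stem bookkeeping that the lemma is trying to sidestep. As written, the proof is incomplete.

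The fix is to put $2$ in the middle slot, as the paper does: consider $\langle P^3 c_0, h_0, h_5^2 \rangle$, i.e., the Toda bracket $\langle \eta\rho_{31}, 2, \theta_5 \rangle$, where Table \ref{tab:eta-extn} shows that $P^3 c_0$ detects $\eta\rho_{31}$ via the hidden $\eta$ extension on $h_0^{10} h_5$. Now both defining conditions are free: $h_0 \cdot P^3 c_0 = 0$ already in the $E_2$-page since $h_0 c_0 = 0$, and $2 \cdot \eta\rho_{31} = 0$ because $2\eta = 0$; on the other side $2\theta_5 = 0$ is known. Taking the zero nullhomotopy for $h_0 \cdot P^3 c_0$ and $h_6$ for $h_0 h_5^2$, the Massey product in the $E_3$-page is exactly $P^3 c_0 \cdot h_6 = P^3 h_6 c_0$, and the Moss Convergence Theorem then forces $P^3 h_6 c_0$ to survive to detect $\langle \eta\rho_{31}, 2, \theta_5 \rangle$. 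Your fallback via inclusion of the bottom cell into $C\tau$ is a legitimate general strategy but is likewise not carried out: you would still have to enumerate and eliminate the $\tau$-divisible candidate targets in the 94-stem, which is the same work you were trying to avoid.
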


\begin{proof}
Table \ref{tab:eta-extn} shows that
$P^3 c_0$ detects the product $\eta \rho_{31}$.
Using the Moss Convergence Theorem \ref{thm:Moss} 
and the Adams differential $d_2(h_6) = h_0 h_5^2$,
the element $P^3 h_6 c_0$ must survive to detect the
Toda bracket
$\langle \eta \rho_{31}, 2, \theta_5 \rangle$.
\end{proof}

\begin{remark}
\label{rem:perm-P^3h6c0}
We suspect that $P^3 h_6 c_0$ detects the product
$\eta_6 \rho_{31}$.  However, the argument of
Lemma \ref{lem:rho15-h1h6} cannot be completed because
the Toda bracket
$\langle \eta \rho_{31}, 2, \theta_5 \rangle$ might
have indeterminacy in lower Adams filtration.
\end{remark}

\section{The Adams $d_4$ differential}
\label{sctn:Adams-d4}

\revv{
Table \ref{tab:Adams-d4} lists the multiplicative generators 
of the Adams $E_4$-page through the 95-stem
whose $d_4$ differentials are non-zero, or whose $d_4$ differentials
are zero for non-obvious reasons.
}

\begin{thm}
\label{thm:Adams-d4}
Table \ref{tab:Adams-d4} lists some values of the
Adams $d_4$ differential on multiplicative generators.
Through the 95-stem, 
the Adams $d_4$ differential is zero on all multiplicative
generators not listed in the table.
\end{thm}

\begin{proof}
The $d_4$ differential on many multiplicative generators is zero.
A few of these multiplicative generators appear
in Table \ref{tab:Adams-d4} because their proofs require further
explanation.
For the remaining majority of such multiplicative generators,
the $d_4$ differential is zero
because there are no possible non-zero values, 
or because of comparison to the Adams spectral sequences for $C\tau$,
$\tmf$, or $\mmf$.
In a few cases, the multiplicative generator
is already known to be a permanent cycle as shown in
Table \ref{tab:Adams-perm}.
These cases do not appear in Table \ref{tab:Adams-d4}.

The last column of Table \ref{tab:Adams-d4} gives information on the
proof of each differential.
Most follow immediately by comparison to
the Adams spectral sequence for $C\tau$, or
by comparison to the classical Adams spectral sequence for $\tmf$, or
by comparison to the $\C$-motivic Adams spectral sequence for $\mmf$.

If an element is listed in the last column of Table \ref{tab:Adams-d4}, 
then the corresponding differential can be deduced from a straightforward
argument using a multiplicative relation.  For example, 
\[
d_4( d_0 \cdot \tau^2 e_0 g^2 ) =
d_4( e_0^2 \cdot \tau^2 e_0 g) = 
e_0^2 \cdot P d_0^2 = d_0^5,
\]
so $d_4(\tau^2 e_0 g^2)$ must equal $d_0^4$.

The remaining more difficult computations are carried out in the
following lemmas.
\end{proof}

\begin{lemma}
\label{lem:d4-th1.Dx}
\revdeg{62, 10, 32}
$d_4(\tau h_1 \cdot \D x) = \tau^2 \D h_2^2 d_0 e_0$.
\end{lemma}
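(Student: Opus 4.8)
The plan is to first confirm that $\tau h_1 \cdot \D x$ genuinely reaches the $E_4$-page, and then to identify its $d_4$ by combining the known $d_2$-structure around $\D x$ with a comparison argument that forces the target $\tau^2 \D h_2^2 d_0 e_0$ to be hit. For the survival step I would use Lemma~\ref{lem:d2-Dx}, which gives $d_2(\D x) = h_0^2 B_4 + \tau M h_1 d_0$; multiplying by $\tau h_1$ and using $h_0 h_1 = 0$ leaves $d_2(\tau h_1 \cdot \D x) = \tau^2 M h_1^2 d_0$, so I must check that this product vanishes in $\Ext$ and that there is no room for a nonzero $d_3$. Granting this, $\tau h_1 \cdot \D x$ survives to $E_4$. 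I would likewise verify that $\tau^2 \D h_2^2 d_0 e_0$ is a well-defined class on the $E_4$-page: here the key point is that $h_1^2 h_2^2 = 0$, which kills the a priori $d_2$ on this monomial (namely $\tau^2 \D h_2^2 h_1^2 d_0^2$) and similarly constrains the $d_3$.

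The main route I would take is to argue that $\tau^2 \D h_2^2 d_0 e_0$ cannot survive to $E_\infty$. Since it is a $\tau$-multiple, it maps to zero under the inclusion $i : S^{0,0} \map C\tau$ of the bottom cell; a nonzero permanent cycle with this property would have to be the target of a hidden $\tau$ extension, exactly as in the argument of Lemma~\ref{lem:d3-t^2d0B5}. I would enumerate the possible sources of such a hidden $\tau$ extension and eliminate them, most efficiently by comparison to $\mmf$: the image of $\tau^2 \D h_2^2 d_0 e_0$ in the thoroughly understood Adams spectral sequence for $\mmf$ should be nonzero and not $\tau$-divisible, which rules out the hidden extension. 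Once the target is known to be hit, a degree count (the target is in stem $61$, filtration $14$) shows that the only available source of the correct stem and weight is $\tau h_1 \cdot \D x$, in stem $62$, filtration $10$, and the filtration jump of $4$ pins the differential down to $d_4$.

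As a cross-check, and as the mechanism I expect to do real work if the $\mmf$ image is inconclusive, I would exploit the multiplicative relations surrounding $\D x$: the corrected relation $h_1^2 \cdot \D x = \tau h_3 \cdot \D^2 h_1 h_3$ from the remark following Lemma~\ref{lem:d2-Dx}, together with the differential $d_3(\D h_2^2) = \tau h_1 d_0^2$, constrains the $h_0$- and $h_1$-multiples of both source and target and is consistent with the asserted $d_4$; note in particular that $h_1 \cdot \tau^2 \D h_2^2 d_0 e_0 = 0$ because $h_1 h_2 = 0$, so the $h_1$-multiple of the source produces no contradiction. The hard part will be the elimination of the hidden $\tau$ extension into $\tau^2 \D h_2^2 d_0 e_0$: this is the step that genuinely requires external input from $\mmf$ (or from $C\tau$ via the projection $p$ to the top cell), since the purely internal multiplicative relations only furnish consistency rather than the differential itself. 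Confirming that every candidate source of a hidden $\tau$ extension into the target already lies in the image of projection from $C\tau$, and hence cannot support such an extension, is the delicate bookkeeping I expect to occupy most of the proof.
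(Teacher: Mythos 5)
Your preliminary bookkeeping (that $\tau h_1\cdot \D x$ and $\tau^2\D h_2^2 d_0 e_0$ both reach the $E_4$-page) is fine, but the main route you propose has two genuine gaps. First, the dichotomy you invoke for the target is not valid. The statement ``maps to zero under inclusion of the bottom cell, hence is either hit by a differential or the target of a hidden $\tau$ extension'' applies to a surviving class that is \emph{not} a $\tau$-multiple in the $E_\infty$-page and whose image in $\pi_{*,*}C\tau$ vanishes for filtration reasons (as in Lemma \ref{lem:d3-t^2d0B5}). For $\tau^2\D h_2^2 d_0 e_0 = \tau\cdot(\tau\D h_2^2 d_0 e_0)$ the argument cuts both ways against you: if $\tau\D h_2^2 d_0 e_0$ survives, then $\tau^2\D h_2^2 d_0 e_0$ detects an honest (non-hidden) $\tau$-multiple, it maps to zero in $C\tau$ with no contradiction, and it can perfectly well live to $E_\infty$; if $\tau\D h_2^2 d_0 e_0$ does not survive, then being a $\tau$-multiple in $\Ext$ does not even guarantee that the class detected by $\tau^2\D h_2^2 d_0 e_0$ is $\tau$-divisible, so your premise that it maps to zero under $i_*$ is unjustified. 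Either way you get no forced conclusion that the target must die. Second, even granting that the target must be hit, the ``degree count'' does not pin down the source: $\D^2 h_3^2$ sits in the same tridegree $(62,10,32)$ as $\tau h_1\cdot\D x$ and also survives to the $E_4$-page, so the differential could a priori originate on $\D^2 h_3^2$ or on the sum. The paper needs the separate Lemma \ref{lem:d4-D^2h3^2} to show $d_4(\D^2 h_3^2)=0$, and that lemma's proof \emph{uses} the present differential (together with $d_5(\tau h_1^2\cdot\D x)=\tau^3 d_0^2 e_0^2$), so resolving the ambiguity from the target side is circular.

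The paper's proof works entirely on the source. Using $P h_1 = \langle h_1, h_0^3 h_3, h_0\rangle$ one gets
\[
h_1\cdot \tau\D^2 h_1 g \;=\; P h_1\cdot \D x \;=\; \langle h_1\cdot\D x,\, h_0^3 h_3,\, h_0\rangle ,
\]
so if $\tau h_1\cdot\D x$ were a permanent cycle, the Moss Convergence Theorem would force the Toda bracket $\langle \{\tau h_1\cdot\D x\}, 8\sigma, 2\rangle$ to be detected by $\tau\D^2 h_1^2 g$; but $\tau\D^2 h_1^2 g$ supports a $d_4$ (because $\tau^3\D h_2^2 g^2$ does and the sum $\tau\D^2 h_1^2 g + \tau^3\D h_2^2 g^2$ is a permanent cycle), so it cannot detect anything. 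Hence $\tau h_1\cdot\D x$ supports a differential, and $\tau^2\D h_2^2 d_0 e_0$ is the only possible value. If you want to salvage a target-based argument you would need, at minimum, an independent proof that the class detected by $\tau^2\D h_2^2 d_0 e_0$ would map to zero in $\pi_{61,32}C\tau$ \emph{and} that it is not an honest $\tau$-multiple there, plus an independent proof that $d_4(\D^2 h_3^2)=0$; none of these is supplied by the comparison to $\mmf$ alone.
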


\revv{This differential was previously proved in
\cite{WangXu17}*{Remark 11.2}.  We repeat the argument here
for completeness.
}

\begin{proof}
Table \ref{tab:Adams-d4} shows that $\tau^3 \D h_2^2 g^2$ supports a $d_4$ differential, and 
Table \ref{tab:Adams-perm} shows that 
$\tau \D^2 h_1^2 g + \tau^3 \D h_2^2 g^2$ is a permanent cycle.
Therefore,
$\tau \D^2 h_1^2 g$ also supports a $d_4$ differential.

On the other hand, we have
\[
h_1 \cdot \tau \D^2 h_1 g = P h_1 \cdot \D x =
\D x \langle h_1, h_0^3 h_3, h_0 \rangle.
\]
This expression equals $\langle h_1 \cdot \D x, h_0^3 h_3, h_0 \rangle$
by inspection of indeterminacies.
Therefore, the Toda bracket
$\langle \{ \tau h_1 \cdot \D x \}, 8 \sigma, 2 \rangle$
cannot be well-formed, since otherwise it would be detected
by $\tau \D^2 h_1^2 g$.
The only possibility is that 
$\tau h_1 \cdot \D x$ is not a permanent cycle, and the
only possible differential is that
$d_4(\tau h_1 \cdot \D x)$ equals $\tau^2 \D h_2^2 d_0 e_0$.
\end{proof}

\begin{lemma}
\label{lem:d4-D^2h3^2}
\revdeg{62, 10, 32}
$d_4(\D^2 h_3^2) = 0$.
\end{lemma}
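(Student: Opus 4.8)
The plan is to pin down the target degree and then rule out every possible value by comparison with the spectral sequences for $\mmf$ and $\tmf$, whose behavior we already control. First I would record the relevant degrees: $h_3^2$ lies in degree $(14,2,8)$ and is a permanent cycle detecting $\sigma^2$, while $\D^2$ lies in degree $(48,8,26)$, so $\D^2 h_3^2$ lies in degree $(62,10,34)$. A nonzero $d_4$ on it would therefore land in the stem $61$, filtration $14$, weight $34$ part of the $E_4$-page, and this is the degree I must examine. Note that $\pi_{61}$ is trivial at the prime $2$, so the stem-$61$ portion of the $E_\infty$-page is empty; every class there must eventually die, and the content of the lemma is that $\D^2 h_3^2$ is not what kills the relevant class.

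The main tool will be the unit map $S^{0,0} \map \mmf$. The element $\D^2 h_3^2$ maps to the corresponding class in the Adams $E_4$-page for $\mmf$, which is nonzero and, as one reads off from the completely known Adams spectral sequence for $\mmf$ \cite{Isaksen18}, supports no $d_4$ there. By naturality of Adams differentials, $d_4(\D^2 h_3^2)$ must map to $d_4$ of this class in $\mmf$, namely to zero. Consequently any nonzero value of $d_4(\D^2 h_3^2)$ is forced to lie in the kernel of the map from the sphere's $E_4$-page to the $E_4$-page for $\mmf$ in degree $(61,14,34)$. The first concrete step, then, is to enumerate the classes of the sphere's $E_4$-page in this degree and compute their images in $\mmf$; those detected nontrivially by $\mmf$ are immediately excluded as targets.

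It remains to eliminate the candidate targets that are \emph{not} detected by $\mmf$, and this is where I expect the main difficulty. For such classes I would argue case by case: either the class is already hit by an earlier differential (consistent with, and in fact forced by, the vanishing of the stem-$61$ portion of $E_\infty$), or it is excluded by a multiplicative relation via the Leibniz rule — for instance by multiplying $\D^2 h_3^2$ by $h_0$ or $h_1$ and comparing with already-established differentials, or by naturality along the inclusion of the bottom cell and projection to the top cell of $C\tau$. A cleaner alternative worth attempting is to factor $\D^2 h_3^2 = \D^2 \cdot h_3^2$ and apply the Leibniz rule, reducing the claim to $d_4(\D^2) = 0$ since $h_3^2$ is a permanent cycle; the caveat is that $\D^2$, as opposed to some $\tau$-multiple of it, may not itself survive to the $E_4$-page, so this reduction must be handled with care. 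The crux of the argument is thus the bookkeeping of the non-$\mmf$-detected classes in stem $61$: once each is shown to be either a non-target or already killed, the vanishing $d_4(\D^2 h_3^2) = 0$ follows.
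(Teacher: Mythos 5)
Your plan has a genuine gap, and it starts with the comparison you lean on. The unit map to $\mmf$ gives no information here: $h_3$ maps to zero in $\Ext_{A(2)}$ (since $\Sq^8 \notin A(2)$), so $\D^2 h_3^2$ maps to zero on $E_2$-pages, and it does not appear in the paper's table of hidden values of the unit map either. Naturality therefore only tells you $0=0$. (A small but symptomatic error: the degree of $\D^2 h_3^2$ is $(62,10,32)$, not $(62,10,34)$.) The factorization $\D^2\cdot h_3^2$ also cannot be made to work, since $\D^2$ is not an element of $\Ext$ for the sphere.

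The real difficulty, which your case analysis does not engage, is this: the only candidate target is $\tau^2 \D h_2^2 d_0 e_0$, and it is \emph{not} zero on the $E_4$-page --- it is hit by $d_4(\tau h_1 \cdot \D x)$, where $\tau h_1\cdot\D x$ lives in the \emph{same} tridegree $(62,10,32)$ as $\D^2 h_3^2$. Your first disjunct, ``the class is already hit by an earlier differential,'' therefore does not apply, and being hit by another $d_4$ from the same degree does not preclude $\D^2 h_3^2$ from also hitting it: the question is precisely whether $\D^2 h_3^2$ or $\D^2 h_3^2 + \tau h_1\cdot \D x$ is the surviving linear combination, and the target dies either way, so no counting of $E_\infty$ in stem $61$ can decide it. The paper resolves this with a multiplicative obstruction on the next page: if $d_4(\D^2 h_3^2)$ were $\tau^2 \D h_2^2 d_0 e_0$, then $\D^2 h_3^2 + \tau h_1\cdot\D x$ would survive to $E_5$ and $\tau h_1^2\cdot\D x$ would become $h_1$-divisible there; but $d_5(\tau h_1^2\cdot\D x) = \tau^3 d_0^2 e_0^2$ is not $h_1$-divisible on the $E_5$-page, a contradiction. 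Some argument of this kind, distinguishing the two elements in degree $(62,10,32)$, is the essential content of the lemma and is missing from your proposal.
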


\begin{proof}
Table \ref{tab:Adams-d5} shows that 
$d_5(\tau h_1^2 \cdot \D x)$ equals
$\tau^3 d_0^2 e_0^2$.  
The element $\tau^3 d_0^2 e_0^2$ is not divisible by
$h_1$ in the $E_5$-page, so
$\tau h_1^2 \cdot \D x$ 
cannot be divisible by $h_1$ in the $E_4$-page.

If $d_4(\D^2 h_3^2)$ equaled 
$\tau^2 \D h_2^2 d_0 e_0$, then
$\D^2 h_3^2 + \tau h_1 \cdot \D x$ would survive to the $E_5$-page, 
and $\tau^2 h_1^2 \cdot \D x$ would be divisible by $h_1$ in the $E_5$-page.
\end{proof}

\begin{lemma}
\label{lem:d4-tX2}
\revdeg{63, 7, 33}
$d_4(\tau X_2) = \tau M h_2 d_0$.
\end{lemma}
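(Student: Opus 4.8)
The plan is to deduce this differential from the corresponding differential in the Adams spectral sequence for $C\tau$ by naturality, and then to resolve a possible $\tau$-multiple error term, which is exactly the subtlety that forces the statement to be phrased for $\tau X_2$ rather than for $X_2$ itself. First I would invoke the machine-computed algebraic Novikov spectral sequence, i.e.\ the Adams spectral sequence for $C\tau$, which records a $d_4$ differential on $X_2$ with target $M h_2 d_0$. Since $X_2$ is not a multiple of $\tau$, it maps to the class of the same name under the inclusion $i\colon S^{0,0} \map C\tau$ of the bottom cell. Naturality then shows that $X_2$ cannot be a $d_4$-cycle on the sphere: if it were, then $i_*(d_4 X_2) = 0$ would contradict $d_4(i_* X_2) = M h_2 d_0 \neq 0$ in $C\tau$. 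Hence $X_2$ supports a nonzero $d_4$ differential on the sphere whose value maps to $M h_2 d_0$ under $i_*$.

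Next I would pin down the value. The kernel of $i_*$ in the target degree consists of multiples of $\tau$, so $d_4(X_2) = M h_2 d_0 + \tau w$ for some class $w$, and $\tau$-linearity of the differential gives $d_4(\tau X_2) = \tau M h_2 d_0 + \tau^2 w$. The crux is to show that the correction term $\tau^2 w$ vanishes in the $E_4$-page. I expect to do this by inspecting the classes available in the target degree: either there is simply no room for a nonzero $\tau^2 w$, or the candidate error terms are eliminated by comparison with the Adams spectral sequence for $\tmf$ or $\mmf$, where the relevant $\tau$-divided families are completely understood. This is the kind of ambiguity that comparison to $C\tau$ alone cannot settle, and I anticipate it being the main obstacle; as in several neighboring lemmas, an alternative would be to produce a multiplicative relation $y \cdot \tau X_2 = z$ in which $z$ is already known not to survive while $y \cdot \tau M h_2 d_0$ is nonzero, thereby forcing the value directly.

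Finally, I would confirm that $\tau M h_2 d_0$ is genuinely nonzero in $E_4$, so that the differential is not vacuous, using the Mahowald-operator methods of Section~\ref{subsctn:Mahowald-operator}. The bracket $M h_2 d_0 = \langle h_2 d_0, h_0^3, g_2 \rangle$ is defined because $h_0^3 \cdot h_2 d_0 = 0$, and Proposition~\ref{prop:M-Massey} computes its image under $p_*\colon \Ext_\C \map \Ext_B$ as $(e_0 v_3^2 + h_1^3 v_3^3)\, p_*(h_2 d_0)$, whence $p_*(\tau M h_2 d_0) = (e_0 v_3^2 + h_1^3 v_3^3)\, p_*(\tau h_2 d_0)$. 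Checking that this image is nonzero in $\Ext_B = \M_2[v_3] \otimes_{\M_2} \Ext_{A(2)}$ certifies both that $\tau M h_2 d_0$ is a nonzero permanent candidate target and that no lower differential has already removed it, completing the identification $d_4(\tau X_2) = \tau M h_2 d_0$.
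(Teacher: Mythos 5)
There is a genuine gap, and it sits in the very first step. Your premise is that the machine data for $C\tau$ records a $d_4$ differential on $X_2$ hitting $M h_2 d_0$, and that the passage from $X_2$ to $\tau X_2$ is only about a possible $\tau$-multiple error term in the target. Neither is right. Table \ref{tab:Adams-d2} shows $d_2(X_2) = M h_1^2 h_4$ (proved by comparison to $C\tau$), so $X_2$ already dies on the $E_2$-page, both for the sphere and for $C\tau$; there is no class $X_2$ left on any $E_4$-page to support or receive a $d_4$. The lemma is stated for $\tau X_2$ precisely because $\tau \cdot M h_1^2 h_4 = 0$, so $\tau X_2$ is a $d_2$-cycle while $X_2$ is not. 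Worse, $\tau X_2$ is a $\tau$-multiple, so it maps to \emph{zero} under inclusion of the bottom cell at the level of Adams $E_2$-pages; naturality along $i\colon S^{0,0} \map C\tau$ therefore gives no direct information about $d_4(\tau X_2)$, which is exactly why this lemma cannot be dispatched by the routine ``$C\tau$'' argument and needs its own proof. Your fallback suggestion of a multiplicative relation is the right instinct for other lemmas in this chapter, but you do not produce one here, and the Mahowald-operator computation in your last paragraph only certifies that the target is nonzero, which is not the issue.

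The actual $d_4$ visible in $C\tau$ in this range is $d_4(C') = M h_2 d_0$ on the \emph{other} multiplicative generator $C'$ in the adjacent weight $(63,7,34)$, which immediately gives $d_4(\tau C') = \tau M h_2 d_0$. The real content of the lemma is then to decide which of the two classes $\tau X_2$ and $\tau X_2 + \tau C'$ in degree $(63,7,33)$ is the $d_4$-cycle. The paper settles this with a hidden-extension argument in homotopy: the surviving class detects an element whose image under inclusion of the bottom cell is detected by $\ol{h_5 d_0 e_0}$ in $\pi_{*,*}C\tau$, where there is a $\nu$ extension to $\tau B_5 = i_*(\tau h_2 C')$; so the survivor must support a $\nu$ extension with target $\tau h_2 C'$, and only $\tau X_2 + \tau C'$ can. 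Hence $d_4(\tau X_2) = d_4(\tau C') = \tau M h_2 d_0$. Your proposal contains no substitute for this step.
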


\begin{proof}
Table \ref{tab:Adams-d4} shows that $d_4(C')$ equals $M h_2 d_0$.
Therefore, either $\tau X_2$ or $\tau X_2 + \tau C'$ is non-zero on the
$E_\infty$-page.  The inclusion of the bottom cell into $C\tau$
takes this element to $\ol{h_5 d_0 e_0}$.

In the homotopy of $C\tau$, there is a $\nu$ extension from
$\ol{h_5 d_0 e_0}$ to $\tau B_5$, and
inclusion of the bottom cell into $C\tau$ takes
$\tau h_2 C'$ to $\tau B_5$.

It follows that there must be a $\nu$ extension with target
$\tau h_2 C'$.  The only possibility is that
$\tau X_2 + \tau C'$ is non-zero on the $E_\infty$-page,
and therefore $d_4(\tau X_2)$ equals $d_4(\tau C')$.
\end{proof}

\begin{lemma}
\label{lem:d4-h0d2}
\revdeg{68, 5, 36}
$d_4(h_0 d_2) = X_3$.
\end{lemma}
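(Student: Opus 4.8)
The plan is to establish this differential the same way as the other ``difficult'' $d_4$ computations nearby, namely by a killing argument driven by naturality along the inclusion $i:S^{0,0}\map C\tau$ of the bottom cell and the projection $p:C\tau\map S^{1,-1}$ to the top cell, using the fully machine-known Adams spectral sequence for $C\tau$. First I would locate $h_0 d_2$ and $X_3$ in the Adams $E_2$-page and compute their images under $i_*$ and $p_*$. The mechanism I expect is that $X_3$ is $\tau$-torsion, i.e. $X_3$ maps to zero under $i$ into $C\tau$ (there being no class of sufficiently high filtration in the $E_\infty$-page for $C\tau$ to receive it). Granting this, $X_3$ is either hit by an Adams differential for the sphere or is the target of a hidden $\tau$ extension, exactly as in the argument of Lemma~\ref{lem:d3-t^2d0B5}.

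Second, I would rule out the hidden $\tau$ extension. Following the template of Lemma~\ref{lem:d3-t^2d0B5}, I would enumerate the candidate sources in the appropriate $(s,f,w)$-degree and show that each is already in the image of the projection $p$ to the top cell (so, by the entries of Table~\ref{tab:hid-proj}, it cannot support a hidden extension), or that there is simply no element of the correct degree and filtration to serve as a source. This forces $X_3$ to be hit by an honest differential. I would then check that $h_0 d_2$ is the unique candidate source of a differential landing on $X_3$, and that it survives to the $E_4$-page: its potential $d_2$ and $d_3$ are excluded by comparison to $C\tau$ (and, where needed, to $\tmf$ or $\mmf$), so that $d_4$ is the first available differential on $h_0 d_2$ and $X_3$ is its only possible nonzero value. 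An alternative route, should the naturality bookkeeping prove awkward, is to express $h_0 d_2$ as a Massey product forced by a known low differential and apply Moss's higher Leibniz rule~\ref{thm:Leibniz}, in the spirit of Lemma~\ref{lem:d4-tX2}; but I expect the $\tau$-torsion argument to be cleaner here.

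The hard part will be the second step: confirming that $X_3$ receives no hidden $\tau$ extension. The delicacy is that this requires a complete understanding of the top-cell images of \emph{all} candidate sources in the relevant degree, which depends on already-resolved $C\tau$ data in adjacent stems; a single overlooked class of high filtration would leave open the possibility that $X_3$ survives and is the target of a hidden $\tau$ extension rather than being hit. Pinning down the source as precisely $h_0 d_2$ (and not $h_0 d_2$ plus a higher-filtration error term) is the other point demanding care, and I would settle it by a multiplicative consistency check against a differential on a $\tau$- or $h_0$-multiple of $h_0 d_2$, exactly the style of argument used throughout Table~\ref{tab:Adams-d4}.
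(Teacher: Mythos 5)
Your proposal is correct and follows essentially the same route as the paper: show $X_3$ is a permanent cycle that maps to zero under inclusion of the bottom cell into $C\tau$, rule out the hidden $\tau$ extension by showing the unique candidate source ($h_1 \cdot \D_1 h_3^2$) lies in the image of projection to the top cell, and conclude that $d_4(h_0 d_2) = X_3$ is the only remaining possibility. The paper's only additional content is the concrete verification of that middle step, via the $\eta$ extension from $h_0 d_2$ to $\ol{h_1^2 \cdot \D_1 h_3^2}$ in $C\tau$.
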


\begin{proof}
The element $X_3$ is a permanent cycle.  The only possible target for 
a differential is $\tau^2 d_0 e_0 m$, but this is ruled out by 
comparison to $\tmf$.

\revv{
In the Adams $E_\infty$-page for $C\tau$, there are several elements in stem 67 and weight 36.  However, they all have filtration less than 9.
Since $X_3$ has filtration $9$, it must map to zero under inclusion
of the bottom cell into $C\tau$.}
Therefore, $X_3$ is the target of a hidden
$\tau$ extension, or it is hit by a differential.

The only possible hidden $\tau$ extension would have source
$h_1 \cdot \D_1 h_3^2$.  In $C\tau$, there is an $\eta$ extension from
$h_0 d_2$ to $\ol{h_1^2 \cdot \D_1 h_3^2}$.  
Since $\ol{h_1^2 \cdot \D_1 h_3^2}$ maps non-trivially 
(to $h_1^2 \cdot \D_1 h_3^2$) under projection to the top cell of
$C\tau$, it follows that
$h_0 d_2$ also maps non-trivially under projection.
\revv{For degree reasons,}
the only possibility is that $h_0 d_2$ maps to
$h_1 \cdot \D_1 h_3^2$, and therefore
$h_1 \cdot \D_1 h_3^2$ does not support a hidden 
$\tau$ extension.

Therefore, $X_3$ must be hit by a differential, and there is just
one possibility.
\end{proof}

\begin{lemma}
\label{lem:d4-Mh2g}
\revdeg{68, 11, 38}
$d_4(M h_2 g) = 0$.
\end{lemma}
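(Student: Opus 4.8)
The plan is to exhibit $M h_2 g$ as a product of two permanent cycles, so that the vanishing of $d_4(M h_2 g)$ becomes a formal consequence of the Leibniz rule. First I would collect the two inputs already available. The element $g$ is a permanent cycle, since it detects $\kappabar$. The element $M h_2$ is also a permanent cycle: the proof of Lemma \ref{lem:d3-tD3'} shows, via Table \ref{tab:nu-extn}, that $M h_2$ detects $\nu \alpha$ for a suitable $\alpha$ in $\pi_{45,24}$ detected by $h_3^2 h_5$, and only a permanent cycle can detect a homotopy class. In particular $d_4(g) = 0$ and $d_4(M h_2) = 0$.

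The key step is then the multiplicative identity $M h_2 g = M h_2 \cdot g$ in the $E_4$-page. Granting this, the Leibniz rule immediately yields
\[
d_4(M h_2 g) = d_4(M h_2) \cdot g + M h_2 \cdot d_4(g) = 0,
\]
which is the assertion; indeed $M h_2 g$ would then be a permanent cycle detecting $\nu \alpha \kappabar$. Establishing this identity is the main obstacle, since our machine data records multiplications only by $h_0$, $h_1$, $h_2$, and $h_3$, not by $g$ or by $M h_2$. I would verify it by hand from the Massey product description $M x = \langle x, h_0^3, g_2 \rangle$, using the shuffle inclusion $g \cdot M h_2 = g \langle h_2, h_0^3, g_2 \rangle \subseteq \langle h_2 g, h_0^3, g_2 \rangle = M(h_2 g)$ and checking that the indeterminacy vanishes in this tridegree, so that the inclusion is an equality. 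The map $p_* \colon \Ext \map \Ext_B$ of Proposition \ref{prop:M-Massey}, which sends $M h_2 g$ to $(e_0 v_3^2 + h_1^3 v_3^3)\, p_*(h_2 g)$, should provide an independent check that this product is nonzero and hence genuinely equals the named class.

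Should the multiplicative identity prove recalcitrant, I would instead eliminate the possible targets directly. The class $M h_2 g$ lies in the $68$-stem, so a $d_4$ would land four filtrations higher in the $67$-stem, and I would list the finitely many classes in that tridegree from the $\Ext$ chart. Here Lemma \ref{lem:d3-tD3'} supplies a strong constraint: it shows that $\tau^2 M h_2 g$ is hit by a $d_3$, hence is zero on the $E_4$-page, so that $\tau^2 d_4(M h_2 g) = d_4(\tau^2 M h_2 g) = 0$ and any nonzero target must be annihilated by $\tau^2$. Comparison with the Adams spectral sequence for $C\tau$, where the differential is read off from the machine computation, together with comparison to $\tmf$ and $\mmf$, should then rule out the remaining candidates. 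In either route the essential difficulty is the same, namely supplying by hand the small amount of multiplicative structure around $M h_2 g$ that the machine does not directly furnish.
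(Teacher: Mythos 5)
Your main route founders on a fact specific to the $\C$-motivic setting: the element $g$ of weight $12$ does not exist in $\Ext_\C$. The motivic May differential $d_4(b_{21}^2) = h_1^4 h_4$ (quoted in the proof of Proposition \ref{prop:g-Massey}) kills $g$ itself, and only $\tau g$ survives to $\Ext$; this is why $\tau g$, not $g$, appears as the multiplicative generator in degree $(20,4,11)$ and why $\kappabar$ is detected by $\tau g$. A weight count confirms the problem: $M h_2$ lies in weight $26$ and $M h_2 g$ in weight $38$, so the putative second factor would have to have weight $12$, which $\tau g$ does not. The only available product identity is $\tau \cdot M h_2 g = M h_2 \cdot \tau g$, and the Leibniz rule applied to it yields merely $\tau \, d_4(M h_2 g) = 0$, i.e.\ that the target is $\tau$-torsion --- essentially the same partial conclusion you extract in your fallback from Lemma \ref{lem:d3-tD3'}. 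Since the candidate targets in the $67$-stem at the relevant weight can perfectly well be $\tau$-torsion, this does not close the argument, and your fallback route stops at ``comparison \ldots should then rule out the remaining candidates'' without doing so.

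The fix is to push your own Massey-product observation one step further, which is what the paper does. By Table \ref{tab:Massey} (via Proposition \ref{prop:M-Massey}), the Massey product $\langle h_2 g, h_0^3, g_2 \rangle$ equals $M h_2 g$ with zero indeterminacy. The three inputs are permanent cycles detecting $\{h_2 g\}$, $8$, and $\kappabar_2$, and the Toda bracket $\langle \{h_2 g\}, 8, \kappabar_2 \rangle$ is defined. The Moss Convergence Theorem \ref{thm:Moss} then guarantees that \emph{some} element of the Massey product is a permanent cycle detecting this Toda bracket; as the Massey product consists of the single element $M h_2 g$, that element must survive, so in particular $d_4(M h_2 g) = 0$. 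No multiplicative decomposition of $M h_2 g$, and no enumeration of targets, is needed.
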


\begin{proof}
Table \ref{tab:Massey} shows that the Massey product
$\langle h_2 g, h_0^3, g_2 \rangle$ equals $M h_2 g$.
The Moss Convergence Theorem \ref{thm:Moss} shows that
$M h_2 g$ must survive to detect the Toda bracket
$\langle \{h_2 g\}, 8, \kappabar_2 \rangle$.
\end{proof}

\begin{lemma}
\label{lem:d4-h2^2G0}
\revdeg{72, 9, 40}
$d_4(h_2^2 G_0) = \tau g^2 n$.
\end{lemma}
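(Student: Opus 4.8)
The plan is to treat this as a $\tau$-hidden $d_4$ differential. Since the target $\tau g^2 n$ is divisible by $\tau$, it maps to zero under the inclusion $i$ of the bottom cell into $C\tau$, so the differential is invisible in the Adams spectral sequence for $C\tau$ and cannot be read off directly from the machine data. I would therefore not try to pull the differential back from $C\tau$, but instead argue that $\tau g^2 n$ must be killed and that $h_2^2 G_0$ is the only class that can kill it.

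First I would show that $\tau g^2 n$ cannot survive to $E_\infty$. Because $\tau g^2 n$ lies in the kernel of $i_*$, the usual dichotomy used throughout this chapter (as in Lemmas \ref{lem:d3-t^2D1h1^2} and \ref{lem:d3-t^2d0B5}) applies: it is either the target of an Adams differential or the target of a hidden $\tau$ extension. I would exclude the hidden $\tau$ extension possibility by inspecting the short list of classes in the appropriate degree that could support such an extension, ruling each out by comparison to $\mmf$ and $\tmf$ and by the already-established $\tau$-extension data. This leaves only the possibility that $\tau g^2 n$ is hit by a differential.

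The key structural input is Lemma \ref{lem:h1^3h4,h1,tgn}, which identifies $\tau g^2 n$ with the Massey product $\langle h_1^3 h_4, h_1, \tau g n \rangle$, with indeterminacy generated by $M h_0 h_2^2 g$. Combining this with the Moss Convergence Theorem \ref{thm:Moss} — after verifying that $h_1^3 h_4$ and $\tau g n$ are permanent cycles and that there are no crossing differentials for the products $h_1^3 h_4 \cdot h_1$ and $h_1 \cdot \tau g n$ — I would argue that if $\tau g^2 n$ were a permanent cycle, it would have to detect the Toda bracket $\langle \alpha, \eta, \gamma \rangle$, where $\alpha$ and $\gamma$ are detected by $h_1^3 h_4$ and $\tau g n$ respectively. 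Shuffling this bracket by Theorem \ref{thm:Toda-3fold} should show that it is zero or is detected only in strictly higher filtration, contradicting survival and confirming that $\tau g^2 n$ is hit. A degree count then lists the candidate $d_4$-sources, and comparison to $C\tau$, $\tmf$, and $\mmf$ together with the known permanent cycles should eliminate every candidate except $h_2^2 G_0$, forcing $d_4(h_2^2 G_0) = \tau g^2 n$. An alternative, more computational route is to realize $h_2^2 G_0$ itself as a Massey product in the $E_4$-page built from a known $d_3$ differential and apply Moss's higher Leibniz rule \ref{thm:Leibniz}, feeding in the bracket of Lemma \ref{lem:h1^3h4,h1,tgn} much as in Lemmas \ref{lem:d3-Dh2^2h6} and \ref{lem:d3-th6g}.

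The main obstacle I expect is the bookkeeping in a very crowded region of the chart near stems $71$ and $72$: controlling the indeterminacy coming from $M h_0 h_2^2 g$, confirming that the bracket entries are genuine permanent cycles, and ruling out both the competing hidden $\tau$ extensions into $\tau g^2 n$ and the competing $d_4$-sources. Each exclusion is individually routine, resting on degree reasons or on $\tmf$ and $\mmf$ naturality, but assembling them into a complete elimination with no surviving alternative target or source is where the real work lies.
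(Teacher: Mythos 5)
Your plan is essentially the paper's proof: Lemma \ref{lem:h1^3h4,h1,tgn} plus the Moss Convergence Theorem \ref{thm:Moss} show that $\tau g^2 n$ detects a Toda bracket, a shuffle shows that bracket vanishes, so $\tau g^2 n$ must be hit by a differential, and $h_2^2 G_0$ is the only possible source. Two remarks. First, your opening step (the kernel-of-$i_*$ dichotomy and the elimination of hidden $\tau$ extensions into $\tau g^2 n$) is redundant once the Toda bracket argument is in place, since an element of the $E_\infty$-page cannot detect the zero class. Second, and more importantly, the one substantive ingredient you leave unspecified sits exactly where you write ``shuffling \ldots should show that it is zero'': for a general homotopy class $\gamma$ detected by $\tau g n$ there is no shuffle of $\langle \eta^2 \eta_4, \eta, \gamma \rangle$ that produces zero. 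The paper first invokes the hidden $2$ extension from $h_0 h_3 g_2$ to $\tau g n$ (Table \ref{tab:2-extn}) to identify $\gamma$ as $4 \sigma \kappabar_2$; only then can the bracket be rewritten as $\langle \eta^2 \eta_4, \eta, 2 \rangle \cdot 2 \sigma \kappabar_2$, which vanishes. You should also dispose of the indeterminacy term $M h_0 h_2^2 g$ explicitly rather than listing it as an obstacle: $M h_2 g \cdot h_0 h_2$ is the target of an Adams $d_2$ differential, so in the $E_4$-page the Massey product collapses to $\{\tau g^2 n\}$ and $\tau g^2 n$ genuinely detects the bracket. With those two inputs supplied, your argument closes.
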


\begin{proof}
Table \ref{tab:2-extn} shows that
there is a hidden $2$ extension from 
$h_0 h_3 g_2$ to $\tau g n$.  Therefore, 
$\tau g n$ detects $4 \sigma \kappabar_2$.

Table \ref{tab:Massey} shows that
$\langle h_1^3 h_4, h_1, \tau g n \rangle$ consists of the
two elements
$\tau g^2 n$ and $\tau g^2 n + M h_2 g \cdot h_0 h_2$.
Then the Toda bracket
$\langle \eta^2 \eta_4, \eta, 4 \sigma \kappabar_2 \rangle$
is detected by either
$\tau g^2 n$ or $\tau g^2 n + M h_2 g \cdot h_0 h_2$.
But $M h_2 g \cdot h_0 h_2$ is hit by an Adams $d_2$ differential,
so $\tau g^2 n$ detects the Toda bracket.

The Toda bracket has no indeterminacy, so it equals
$\langle \eta^2 \eta_4, \eta, 2 \rangle 2 \sigma \kappabar_2$.
This last expression must be zero.

We have shown that $\tau g^2 n$ must be hit by some differential.
The only possibility is that $d_4(h_2^2 G_0) = \tau g^2 n$.
\end{proof}

\begin{lemma}
\label{lem:d4-Dh0^2h3g2}
\revdeg{75, 11, 40}
$d_4(\D h_0^2 h_3 g_2) = \tau M h_1 d_0^2$.
\end{lemma}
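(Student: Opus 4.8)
The plan is to identify $\tau M h_1 d_0^2$ as the target by first showing that it must be hit by a differential, and then locating $\D h_0^2 h_3 g_2$ as the unique possible source in the $E_4$-page. Since $\tau M h_1 d_0^2$ is divisible by $\tau$, and the inclusion $i$ of the bottom cell sends $\tau$ to zero, this element maps to zero under inclusion of the bottom cell into $C\tau$. As in the proof of Lemma \ref{lem:d3-t^2D1h1^2}, it then follows that $\tau M h_1 d_0^2$ is either hit by an Adams differential or is the target of a hidden $\tau$ extension. First I would rule out the hidden $\tau$ extension by inspecting the candidate sources in the appropriate degree and showing, via Table \ref{tab:hid-proj} or comparison to $C\tau$, that each lies in the image of the projection to the top cell and hence cannot support a hidden extension (the same move used in Lemma \ref{lem:d3-t^2d0B5}). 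This forces $\tau M h_1 d_0^2$ to be hit.

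Next I would confirm that $\D h_0^2 h_3 g_2$ is the only available source and that it genuinely survives to the $E_4$-page. Because $g_2$ and the operator $M$ are invisible to $\Ext_{A(2)}$ but detected in $\Ext_B = \M_2[v_3] \otimes_{\M_2} \Ext_{A(2)}$, the Mahowald operator technique of Section \ref{subsctn:Mahowald-operator} is the natural tool for checking nonvanishing. Proposition \ref{prop:M-Massey} computes $p_*(\tau M h_1 d_0^2) = \tau(e_0 v_3^2 + h_1^3 v_3^3) p_*(h_1 d_0^2)$; using $h_1^4 = 0$ in $\Ext_{A(2)}$ the second summand drops, leaving $\tau h_1 d_0^2 e_0 v_3^2$, which I expect to be nonzero in $\Ext_B$. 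A parallel computation of $p_*(\D h_0^2 h_3 g_2)$ should confirm that the proposed source is nonzero as well, so that both ends of the differential are genuine $E_4$-classes.

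Finally, I would pin down the value. With source and target nonzero and in matching internal degree, I would eliminate the other a priori possible values of $d_4(\D h_0^2 h_3 g_2)$ by comparison to the Adams spectral sequences for $\tmf$ and $\mmf$, which dispose of any candidate error terms lying in the $A(2)$-visible part of $\Ext$. Combined with the fact that $\tau M h_1 d_0^2$ must be hit, this leaves $d_4(\D h_0^2 h_3 g_2) = \tau M h_1 d_0^2$ as the only consistent conclusion.

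The main obstacle I anticipate is the hidden-$\tau$ step: one must be certain that no element in the relevant stem and filtration supports a hidden $\tau$ extension onto $\tau M h_1 d_0^2$, which requires careful accounting of the projection data for $C\tau$ and possibly excluding a crossing extension. A secondary difficulty is verifying that $\D h_0^2 h_3 g_2$ has not already died on an earlier page; I would handle this by tracking it, and its factorization $h_3 \cdot \D h_0^2 g_2$, against the known lower differentials, noting in particular that the $d_3$ of Lemma \ref{lem:d3-tDg2.h0^3} is supported by $\tau h_0^3 \D g_2$ rather than by $\D h_0^2 h_3 g_2$ itself.
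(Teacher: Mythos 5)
Your argument breaks down at the first step, and the gap is not repairable by the tools you list. The dichotomy ``hit by a differential or target of a hidden $\tau$ extension'' applies to elements such as $MP$ in Lemma \ref{lem:d3-t^2D1h1^2}, which are \emph{not} $\tau$-divisible in the $E_\infty$-page, so that $\tau$-divisibility of the homotopy classes they detect can only be accounted for by a hidden extension or by the element dying. But $\tau M h_1 d_0^2$ is literally $\tau$ times the class $M h_1 d_0^2$, and $M h_1 d_0^2$ survives to the $E_\infty$-page (it is the target of hidden $2$, $\nu$, and $\tau$ extensions in Tables \ref{tab:2-extn}, \ref{tab:nu-extn}, and \ref{tab:tau-extn}). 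If $\tau M h_1 d_0^2$ survived, it would simply detect the $\tau$-multiple of a class detected by $M h_1 d_0^2$ --- a \emph{non-hidden} $\tau$ extension --- and the long exact sequence (\ref{eq:LES-Ctau}) would be perfectly consistent with that. So vanishing under inclusion of the bottom cell gives no information here; it does not force $\tau M h_1 d_0^2$ to be hit by a differential, which is the entire content of the lemma. The remaining steps of your proposal (Mahowald-operator nonvanishing checks, comparison to $\tmf$ and $\mmf$ to eliminate alternative targets) only address the secondary issues of identifying the source and target, not the existence of the differential.

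The missing input is the differential $d_5(A') = \tau M h_1 d_0$ from Table \ref{tab:Adams-d5}. Since $d_0 \cdot A'$ is zero in the $E_5$-page, the Leibniz rule gives
\[
0 = d_5(d_0 \cdot A') = d_0 \cdot d_5(A') = \tau M h_1 d_0^2
\]
in the $E_5$-page, so $\tau M h_1 d_0^2$ must already have been killed by a $d_r$ with $r \leq 4$, and $d_4(\D h_0^2 h_3 g_2)$ is the only possibility. Any correct proof must use $d_5(A')$ or some equivalent piece of information; without it there is no reason for $\tau M h_1 d_0^2$ to die.
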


\begin{proof}
Table \ref{tab:Adams-d5} shows that 
$d_5(A') = \tau M h_1 d_0$.
Now $d_0 A'$ is zero in the $E_5$-page, so
$\tau M h_1 d_0^2$ must also be zero in the $E_5$-page.
\end{proof}

\begin{lemma}
\label{lem:d4-D^2h1h3g}
\revdeg{76, 14, 41}
$d_4(\D^2 h_1 h_3 g) = \tau \D h_2^2 d_0^2 e_0$.
\end{lemma}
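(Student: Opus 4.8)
The plan is to run the ``target must be hit'' argument that recurs throughout this chapter (compare Lemmas \ref{lem:d3-t^2d0B5} and \ref{lem:d4-th1.Dx}), rather than to read the differential off the Adams spectral sequence for $C\tau$. The reason is that the proposed target $\tau \D h_2^2 d_0^2 e_0$ is $\tau$-divisible, so it is invisible to the algebraic Novikov machine: the composite $i \circ \tau$ is null-homotopic, hence $i_*(\tau \D h_2^2 d_0^2 e_0) = 0$ in the Adams spectral sequence for $C\tau$. Comparison to $\mmf$ is also of limited use here, since the source $\D^2 h_1 h_3 g$ involves $h_3$, which maps to zero in $\Ext_{A(2)}$. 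So I expect the argument to be genuinely ad hoc, combining the bottom-cell/top-cell comparison with $C\tau$ and a comparison to $\tmf$.

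First I would check that both classes are in the expected place on the $E_4$-page. The class $\D^2 h_1 h_3 g$ lies in stem $76$ and Adams filtration $14$, and a $d_4$ out of it lands in stem $75$, filtration $18$, exactly where $\tau \D h_2^2 d_0^2 e_0$ sits, so the differential is degree-consistent. I would then verify that $\D^2 h_1 h_3 g$ survives to $E_4$ (it supports no $d_2$ or $d_3$ by Tables \ref{tab:Adams-d2} and \ref{tab:Adams-d3}), and that $\tau \D h_2^2 d_0^2 e_0$ is a nonzero permanent cycle. The subtle point here is that $d_3(\D h_2^2) = \tau h_1 d_0^2$ by Lemma \ref{lem:d3-Dh2^2h6}, so one must check that the induced $d_3$ on $\tau \D h_2^2 d_0^2 e_0$, namely $\tau^2 h_1 d_0^4 e_0$, already vanishes on the $E_3$-page, so that the class genuinely reaches $E_4$.

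With these checks in place, the core of the argument proceeds as in Lemma \ref{lem:d3-t^2d0B5}. Since $\tau \D h_2^2 d_0^2 e_0$ is a permanent cycle that maps to zero under inclusion of the bottom cell into $C\tau$, it must either be hit by a differential or be the target of a hidden $\tau$ extension. I would rule out the hidden $\tau$ extension by exhibiting its only candidate source in the image of the projection $p$ from $C\tau$ to the top cell (so that this source cannot support a hidden extension), using the data in Table \ref{tab:hid-proj}, exactly as in the proof of Lemma \ref{lem:d3-t^2d0B5}. Having forced $\tau \D h_2^2 d_0^2 e_0$ to be hit, I would identify the source: comparison to $\tmf$ should eliminate the competing monomials in $d_0$, $e_0$, $g$, and $\D$ in the relevant bidegrees, leaving $\D^2 h_1 h_3 g$ in filtration $14$ as the only possibility, which pins the differential as a $d_4$.

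The hard part will be the bookkeeping in the last two steps. This region of the chart is dense with $\tau$-torsion monomials built from $d_0$, $e_0$, $g$, $M$, and $\D$, so both ruling out the hidden $\tau$ extension and excluding the alternative sources and targets require careful, case-by-case comparison to $\tmf$ and to the $C\tau$ projection data, with no single automatic tool doing the work. If the hidden-extension route proves intractable, a viable fallback is to force the $d_4$ from consistency with a later differential, showing that $\tau \cdot \D^2 h_1 h_3 g$ cannot survive to the $E_5$-page because of a $d_5$ whose target is not $\tau$-divisible, in the manner used for several $d_3$ differentials described with Table \ref{tab:Adams-d3}.
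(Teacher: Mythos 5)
Your setup is sensible and your degree bookkeeping is right, but the core step fails. The dichotomy ``maps to zero under inclusion of the bottom cell, hence is either hit by a differential or the target of a \emph{hidden} $\tau$ extension'' only applies to a class that is not already a visible $\tau$-multiple on the $E_\infty$-page. Here $\D h_2^2 d_0^2 e_0$ is a permanent cycle (it detects a Toda bracket; see below), so if $\tau \D h_2^2 d_0^2 e_0$ survived it would simply be the \emph{non-hidden} value of multiplication by $\tau$ on $\D h_2^2 d_0^2 e_0$. Such a class maps to zero in $\pi_{*,*}C\tau$ for free, and no contradiction can be extracted from the long exact sequence or from Table \ref{tab:hid-proj}: the $C\tau$ comparison only certifies $\tau$-divisibility, which is automatic for this class. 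What is actually needed is a homotopy-theoretic proof that $\tau\alpha=0$ for $\alpha$ detected by $\D h_2^2 d_0^2 e_0$; only then does nonvanishing of $\tau \D h_2^2 d_0^2 e_0$ on $E_\infty$ become contradictory, forcing it to be hit, with $\D^2 h_1 h_3 g$ as the unique possible source.

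The paper supplies exactly this input. Table \ref{tab:Toda} shows that $\D h_2^2 d_0^2 e_0$ detects the Toda bracket $\langle \tau\eta\kappa\kappabar^2, \eta, \eta^2\eta_4\rangle$ (via the Massey product $\langle \D h_0^2 d_0 e_0, h_1, h_1^3 h_4\rangle = \D h_2^2 d_0^2 e_0$ of Proposition \ref{prop:g-Massey}), and then the shuffle
\[
\tau \langle \tau\eta\kappa\kappabar^2, \eta, \eta^2\eta_4\rangle
= \langle \tau, \tau\eta\kappa\kappabar^2, \eta\rangle\, \eta^2\eta_4
\]
kills the $\tau$-multiple, because $\langle \tau, \tau\eta\kappa\kappabar^2, \eta\rangle$ is detected by $h_0 h_2 h_5 i$ and its product with $\eta^2\eta_4$ vanishes. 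Your proposed fallback (forcing the $d_4$ from a later differential on $\tau \cdot \D^2 h_1 h_3 g$) also does not close the gap as stated, since no such later differential is exhibited in Tables \ref{tab:Adams-d5} or \ref{tab:Adams-higher}. So the missing idea is precisely the identification of $\D h_2^2 d_0^2 e_0$ with a Toda bracket into which $\tau$ can be shuffled.
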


\begin{proof}
Table \ref{tab:Toda} shows that the element
$\D h_2^2 d_0^2 e_0$ detects the Toda bracket
$\langle \tau \eta \kappa \kappabar^2, \eta, \eta^2 \eta_4 \rangle$.
Now shuffle to obtain
\[
\tau \langle \tau \eta \kappa \kappabar^2, \eta, \eta^2 \eta_4 \rangle =
\langle \tau, \tau \eta \kappa \kappabar^2, \eta \rangle \eta^2 \eta_4.
\]
Table \ref{tab:Toda} shows that
$\langle \tau, \tau \eta \kappa \kappabar^2, \eta \rangle$ is detected
by $h_0 h_2 h_5 i$.
It follows that the expression
$\langle \tau, \tau \eta \kappa \kappabar^2, \eta \rangle \eta^2 \eta_4$
is zero, so $\tau \D h_2^2 d_0^2 e_0$ must be hit by some differential.
The only possibility is that 
$d_4(\D^2 h_1 h_3 g)$ equals 
$\tau \D h_2^2 d_0^2 e_0$.
\end{proof}

\begin{lemma}
\label{lem:d4-h0e2}
\revdeg{80, 5, 42}
$d_4(h_0 e_2) = \tau h_1^3 x_{76,6}$.
\end{lemma}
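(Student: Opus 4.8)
The plan is to prove this by the ``target must be hit'' strategy used repeatedly in this section, most closely mirroring Lemmas \ref{lem:d3-t^2d0B5} and \ref{lem:d4-h0d2}. The point is that the target $\tau h_1^3 x_{76,6}$ is $\tau$-divisible, so it maps to zero under the inclusion $i$ of the bottom cell into $C\tau$. Consequently, if it survived to the $E_\infty$-page, it would have to be either hit by an Adams differential or the target of a hidden $\tau$ extension. First I would rule out the hidden $\tau$ extension: the only candidate source is $h_1^3 x_{76,6}$ itself, and this is eliminated by the observation---already exploited in the proof of Lemma \ref{lem:d3-t^2d0B5}---that Table \ref{tab:hid-proj} shows $h_1^3 x_{76,6}$ to be in the image of the projection $p$ from $C\tau$ to the top cell. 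Hence $h_1^3 x_{76,6}$ is $\tau$-torsion and cannot support a $\tau$ extension, forcing $\tau h_1^3 x_{76,6}$ to be hit by a differential.

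Next I would identify the source. The class $\tau h_1^3 x_{76,6}$ lies in stem $79$ and Adams filtration $9$, so a $d_r$ differential hitting it must originate in stem $80$, filtration $9-r$. Running through the classes of stem $80$ in low filtration, I expect $h_0 e_2$, in filtration $5$, to be the unique candidate, which pins the differential to $d_4$. To make this rigorous I must also confirm that $h_0 e_2$ actually survives to the $E_4$-page: since $d_2(e_2) = h_0 x_1$ by (the proof of) Lemma \ref{lem:d3-th6g}, I need $h_0^2 x_1 = 0$ so that $h_0 e_2$ is a $d_2$-cycle, together with the vanishing of $d_3(h_0 e_2)$. Once $h_0 e_2$ is confirmed as the only possible source and $\tau h_1^3 x_{76,6}$ as a nonzero class that must be hit, the equality $d_4(h_0 e_2) = \tau h_1^3 x_{76,6}$ follows.

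As a consistency check, I would keep in reserve the squaring-operation technique of Lemmas \ref{lem:d2-x77,7} and \ref{lem:d3-x1}, since $e_2 = \Sq^0 e_1$ and the companion relations $\Sq^1 e_1 = x_1$ and $\Sq^3 e_1 = m_1$ already govern the $d_2$ and $d_3$ behavior of the $e_1$-family; indeed the differential $d_2(e_2) = h_0 x_1$ is itself the lowest Bruner correction term for $\Sq^0$ applied to $e_1$. One could also try to detect $h_0 e_2$ through the relation $\langle h_3, p', h_2 \rangle = h_0 e_2$ of Lemma \ref{lem:h3,p',h2} together with the Moss Convergence Theorem \ref{thm:Moss}; but since the goal is to show that $h_0 e_2$ does \emph{not} survive, such an argument would have to proceed by exhibiting a crossing differential or a failure of the corresponding homotopy Toda bracket to be defined, rather than by detecting a homotopy class, so I regard it only as a backup.

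The hard part will be the bookkeeping in the two enumeration steps: decisively ruling out every competing source of a differential into $(79,9)$ and every competing hidden $\tau$ extension, and confirming that $h_0 e_2$ genuinely reaches $E_4$. These are precisely the places where I would lean on the machine-generated $C\tau$ data and the hidden-extension tables rather than on a closed-form computation.
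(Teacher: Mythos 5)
Your argument has a genuine circularity. The key input you need is that some class detected by $h_1^3 x_{76,6}$ is annihilated by $\tau$ (equivalently, that $h_1^3 x_{76,6}$ lies in the image of projection from $C\tau$ to the top cell). You cite Table \ref{tab:hid-proj} for this, but that table entry --- $h_0 e_2$ in the $E_\infty$-page of $C\tau$ projecting to $h_1^3 x_{76,6}$ --- is itself a downstream consequence of the differential $d_4(h_0 e_2) = \tau h_1^3 x_{76,6}$: the table is compiled by inspecting the $E_\infty$-pages of the sphere and of $C\tau$, and one only knows that $h_0 e_2$ must project nontrivially to the top cell after one knows it dies in the sphere. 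Exactness of the sequence $\pi_{80,*} \map \pi_{80,*}C\tau \map \pi_{79,*+1} \map \pi_{79,*}$ does not by itself force $h_1^3 x_{76,6}$ into the image of projection: if $h_0 e_2$ survived in the sphere and $\tau h_1^3 x_{76,6}$ were nonzero on $E_\infty$, the sequence would still be exact. Relatedly, your opening dichotomy is misstated: since the target is literally $\tau \cdot h_1^3 x_{76,6}$ in the $E_2$-page, the alternative to being hit by a differential is a \emph{non-hidden} $\tau$ extension (both classes sit in filtration $9$), and that alternative is perfectly consistent with the target mapping to zero under inclusion of the bottom cell. So the "target must be hit" strategy needs an independent reason why $\tau h_1^3 x_{76,6}$ detects zero, which your proposal does not supply.

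The paper supplies exactly that missing ingredient by a different route: Table \ref{tab:misc-extn} gives the hidden $\sigma^2$ extension showing that $\sigma^2 \theta_5$ is detected by $h_0 h_4 A$ (via $p'$ detecting $\sigma\theta_5$ and the $\sigma$ extension from $p'$ to $h_0 h_4 A$). Since $\nu\sigma = 0$, the product $\nu \cdot \sigma^2\theta_5$ vanishes, so the element $h_0 h_2 h_4 A = \tau h_1^3 x_{76,6}$ detects zero and must be hit by a differential; $h_0 e_2$ via $d_4$ is then the only possible source, as you correctly observed in your enumeration step. Your reserve ideas (the squaring-operation technique, the Massey product $\langle h_3, p', h_2\rangle = h_0 e_2$) do not obviously close the gap either, so I would replace the first half of your argument with the multiplicative one above.
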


\begin{proof}
Table \ref{tab:misc-extn}
shows that $\sigma^2 \theta_5$ is detected by
$h_0 h_4 A$ \rev{or $h_0 h_4 A + \tau^2 d_1 g_2$.
Note that both $h_2 \cdot h_0 h_4 A$ and 
$h_2 (h_0 h_4 A + \tau^2 d_1 g_2)$ equal 
$\tau h_1^3 x_{76,6}$.
}
Since $\nu \sigma = 0$, the element
$\tau h_1^3 x_{76,6}$ must be hit
by a differential.
The only possibility is that $d_4(h_0 e_2)$ equals
$\tau h_1^3 x_{76,6}$.
\end{proof}

\begin{lemma}
\label{lem:d4-D^3h1^2h3}
\revdeg{81, 15, 42}
$d_4(\D^3 h_1^2 h_3) = \tau^4 d_0 e_0^2 l$.
\end{lemma}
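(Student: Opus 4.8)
The plan is to treat this as a $\tau$-hidden differential and to establish it by a target-killing argument, since the target $\tau^4 d_0 e_0^2 l$ is divisible by $\tau$ and therefore cannot be detected by comparison to the Adams spectral sequence for $C\tau$. Indeed, as noted in the proof of Lemma \ref{lem:d3-h0^3x78,10}, the element $\D^3 h_1^2 h_3$ survives to the $E_\infty$-page of the Adams spectral sequence for $C\tau$, so this differential is genuinely invisible there and must be argued by other means. The two things I would establish are that $\D^3 h_1^2 h_3$ survives to the $E_4$-page, and that $\tau^4 d_0 e_0^2 l$ must be hit by some differential.

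First I would show that $\D^3 h_1^2 h_3$ is a $d_4$-cycle. Writing $\D^3 h_1^2 h_3 = h_1 \cdot \D^3 h_1 h_3$ and using that $h_1$ is a permanent cycle, the Leibniz rule gives $d_3(\D^3 h_1^2 h_3) = h_1 \cdot d_3(\D^3 h_1 h_3)$. By Lemma \ref{lem:d3-D^3h1h3} this equals $h_1 \cdot \tau^4 \D h_1 e_0^2 g = \tau^4 \D h_1^2 e_0^2 g$; the ambiguous term $\tau \D^2 h_0 d_0 e_0$ of that lemma is annihilated by $h_1$ because $h_0 h_1 = 0$, so the uncertainty there is harmless. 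I would then check that $\tau^4 \D h_1^2 e_0^2 g$ vanishes on the $E_3$-page, either because it is already a $d_2$-boundary or by comparison to $\mmf$, where the corresponding product of $\tmf$-type classes is zero. This is precisely what prevents $\D^3 h_1^2 h_3$ from supporting a $d_3$ and makes it available to support a $d_4$.

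Second, I would kill the target by the standard argument used in Lemmas \ref{lem:d3-t^2d0B5} and \ref{lem:d4-h0d2}. Comparison to $\tmf$ shows that $\tau^4 d_0 e_0^2 l$ supports no differential, so it is a permanent cycle. Since it is a multiple of $\tau$, it maps to zero under the inclusion $i$ of the bottom cell into $C\tau$, and hence it is either hit by a differential or is the target of a hidden $\tau$ extension. I would rule out the hidden $\tau$ extension by inspecting the (few) possible sources in the relevant bidegree, as in Lemma \ref{lem:d3-t^2d0B5}. Once $\tau^4 d_0 e_0^2 l$ is known to be hit, the bidegree forces the differential to be a $d_4$, and $\D^3 h_1^2 h_3$ --- which we have just shown survives to $E_4$ --- should be the unique class of the correct stem and filtration that can be the source.

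The hard part will be the elimination step in the target-killing argument: showing both that $\tau^4 d_0 e_0^2 l$ genuinely supports no differential and that it admits no hidden $\tau$ extension from any lower class. Because the differential is hidden by $\tau$, none of this can be read off the machine-computed algebraic Novikov spectral sequence, so every candidate source and target in the surrounding bidegrees must be accounted for by hand, using $\tmf$- and $\mmf$-comparison together with the multiplicative relations recorded in Tables \ref{tab:Adams-d4} and \ref{tab:Adams-perm}. Verifying uniqueness of the source $\D^3 h_1^2 h_3$ is the other delicate point, and is exactly the kind of high-filtration bookkeeping that separates these $\D^3$-differentials from the routine cases handled directly by $C\tau$.
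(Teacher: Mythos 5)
Your overall shape (kill the target, then identify the unique source) is in the same family as the paper's argument, and your observation that the differential is $\tau$-hidden and therefore invisible in the Adams spectral sequence for $C\tau$ is correct. But the mechanism you propose for killing the target does not work. The dichotomy ``maps to zero under inclusion of the bottom cell, hence is either hit by a differential or is the target of a hidden $\tau$ extension'' omits the case that actually occurs here: $\tau^4 d_0 e_0^2 l$ is a \emph{non-hidden} $\tau$-multiple of $\tau^3 d_0 e_0^2 l$, and $\tau^3 d_0 e_0^2 l$ survives to the $E_\infty$-page (this is exactly what the $\mmf$ comparison in the paper's proof establishes). Consequently, if $\tau^4 d_0 e_0^2 l$ survived, every homotopy class it detects would simply be $\tau$ times a class detected by $\tau^3 d_0 e_0^2 l$, which is entirely consistent with the long exact sequence for $C\tau$; no contradiction arises, and no amount of ruling out hidden $\tau$ extensions from classes in lower filtration will produce one. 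The argument of Lemma \ref{lem:d3-t^2d0B5} that you cite works precisely because its target is not already a visible $\tau$-multiple of a surviving class. The paper kills the target by a genuinely different mechanism: the hidden $\eta$ extension from $\tau^2 \D h_1 g^2$ to $\tau^2 d_0 e_0 m$ (Table \ref{tab:eta-extn}) propagates to a hidden $\eta$ extension from $\tau^2 \D h_1 e_0^2 g$ to $\tau^2 d_0 e_0^2 l$, and the class in $\pi_{79,43}$ detected by $\tau^2 \D h_1 e_0^2 g$ is annihilated by $\tau^2$; hence $\tau^2$ times its $\eta$-multiple is zero, which forces $\tau^4 d_0 e_0^2 l$ to be hit by some differential. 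This uses specific torsion information that your generic bookkeeping cannot see.

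A secondary point: even after the target is killed, the paper does not rest on uniqueness of the source. It shows directly that $\D^3 h_1^2 h_3$ cannot be a permanent cycle, by observing that the hidden $\eta$ extension from $\tau^3 \D h_1 e_0^2 g$ to $\tau^3 d_0 e_0^2 l$ is detected by projection from $C\tau$ to the top cell, that its only possible preimage is the $h_1$ extension from $\D^3 h_1 h_3$ to $\D^3 h_1^2 h_3$ in the $E_\infty$-page for $C\tau$, and hence that $\D^3 h_1^2 h_3$ projects nontrivially to the top cell and so cannot be in the image of the bottom-cell inclusion. Your Leibniz computation for survival to the $E_4$-page (reducing to whether $\tau^4 \D h_1^2 e_0^2 g$ vanishes on the $E_3$-page) is plausible in outline but is left unverified, and in any case it is not where the content lies. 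The essential ingredient missing from your proposal is the hidden $\eta$ extension together with the $\tau^2$-torsion of $\{\tau^2 \D h_1 e_0^2 g\}$.
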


\begin{proof}
Table \ref{tab:eta-extn} shows that there is a hidden
$\eta$ extension from $\tau^2 \D h_1 g^2$ to $\tau^2 d_0 e_0 m$.
\revv{Multiply by $d_0$ to see that}
there is also a hidden $\eta$ extension
from $\tau^2 \D h_1 e_0^2 g$ to $\tau^2 d_0 e_0^2 l$.

Also, $\tau^2 \D h_1 e_0^2 g$ 
detects an element in $\pi_{79,43}$ that is annihilated by $\tau^2$.
Therefore, $\tau^4 d_0 e_0^2 l$ must be hit by some
differential.
Moreover, comparison to $\mmf$ shows that 
$\tau^3 d_0 e_0^2 l$ is not hit by a differential.

The hidden $\eta$ extension from $\tau^3 \D h_1 e_0^2 g$
to $\tau^3 d_0 e_0^2 l$ is detected by projection from
$C\tau$ to the top cell.
The only possibility is that this hidden $\eta$ extension is
the image of the $h_1$ extension
from $\D^3 h_1 h_3$ to $\D^3 h_1^2 h_3$
in the Adams $E_\infty$-page for $C\tau$.

Therefore, $\D^3 h_1^2 h_3$ maps non-trivially under projection
from $C\tau$ to the top cell.
Consequently,
$\D^3 h_1^2 h_3$ cannot be a permanent cycle in the
Adams spectral sequence for the sphere.
\end{proof}

\begin{lemma}
\label{lem:d4-Dj1}
\revdeg{83, 11, 45}
$d_4(\D j_1) = \tau M h_0 e_0 g$.
\end{lemma}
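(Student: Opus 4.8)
The plan is to establish this differential by the Mahowald operator methods of Section~\ref{subsctn:Mahowald-operator}, following the template of Lemma~\ref{lem:d2-tB5g}. The relevant tool is the map $p_*\colon \Ext_\C \map \Ext_B$ of Proposition~\ref{prop:M-Massey}, which sends $M x$ to $(e_0 v_3^2 + h_1^3 v_3^3) p_*(x)$. Since $\Ext_B = \M_2[v_3] \otimes_{\M_2} \Ext_{A(2)}$ and $\Ext_{A(2)}$ is completely known, the positive power of $v_3$ in the image of a monomial records precisely its divisibility by $M$, so $p_*$ converts statements about $M$-multiples into concrete computations in a known ring. Because the target $\tau M h_0 e_0 g$ is an $M$-multiple, this is the natural apparatus, just as in Lemma~\ref{lem:d2-tB5g} and Lemma~\ref{lem:d3-h2B5g}.

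First I would choose an auxiliary class $z$ (a $v_1$- or $v_2$-periodicity operator such as $d_0$, $e_0$, or $g$) so that the product $z \cdot \D j_1$ becomes divisible by $M$ and so that $p_*(z \cdot \D j_1)$ is a nonzero $v_3$-multiple in $\Ext_B$. The nonvanishing of this image forces $z \cdot \D j_1$ to be nonzero in $\Ext$, and a check against the short list of classes in its tridegree identifies it as the expected $M$-multiple. Next I would compute the $d_4$ differential on this product: since $M$ commutes with the differential where $M$ is defined, $d_4(z \cdot \D j_1)$ is $M$ times a previously recorded differential (from Table~\ref{tab:Adams-d4} or an earlier lemma), and I would again verify that the resulting target is nonzero by exhibiting its nonzero image under $p_*$. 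This shows that $z \cdot \D j_1$, and hence $\D j_1$ itself, cannot survive to the $E_5$-page.

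Finally, a degree count among the classes in the stem and weight of $d_4(\D j_1)$ would eliminate every candidate target except $\tau M h_0 e_0 g$, completing the argument. The main obstacle is the bookkeeping in the first step: one must pick the multiplier $z$ so that $z \cdot \D j_1$ is genuinely an $M$-divisible class with nonzero $\Ext_B$-image, and one must rule out competing monomials and possible error terms (as in the parenthetical correction in the proof of Lemma~\ref{lem:d3-h2B5g}) in the identification of both the product and its differential. Everything else is routine comparison of degrees, but this identification is delicate, since several distinct monomials can share the relevant tridegree and only the $v_3$-divisibility seen through $p_*$ distinguishes the $M$-multiples from the rest.
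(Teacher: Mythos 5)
Your strategy does not get off the ground, because the Mahowald-operator apparatus has no purchase on the \emph{source} of this differential. The method of Proposition~\ref{prop:M-Massey} and Lemma~\ref{lem:d2-tB5g} detects classes of the form $Mw$ by exhibiting a nonzero image $(e_0 v_3^2 + h_1^3 v_3^3)\,p_*(w)$ in $\Ext_B$; in Lemma~\ref{lem:d2-tB5g} this works because $B_5$ belongs to the Mahowald-operator family, so the relation $d_0 \cdot \tau B_5 g = \tau M g \cdot h_0 m$ is available and its $d_2$ is $M$ times a known differential. No analogous relation exists for $\D j_1$: a degree count shows $\D j_1$ is not an $M$-multiple (there is no generator in the tridegree $(38,5,21)$ that $M$ would have to be applied to), and the products $d_0 \cdot \D j_1$, $e_0 \cdot \D j_1$, $g \cdot \D j_1$ all land beyond the 95-stem range of the available data, so you cannot even begin to identify one of them with an $M$-multiple. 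Worse, $j_1$ is one of the ``doubled'' classes (like $d_1$, $e_1$, $m_1$) that are invisible to $\Ext_{A(2)}$ and to $\Ext_B$ --- compare Remark~\ref{rem:h1^3h4,h1,tgn}, where $p_*(\tau g n)=0$ blocks exactly this kind of argument --- so there is no reason to expect any product $z \cdot \D j_1$ to have nonzero image under $p_*$. Without the identification $z\cdot \D j_1 = Mw$ your second step (``$d_4(z\cdot \D j_1)$ is $M$ times a previously recorded differential'') has nothing to apply to, and the whole argument is circular: you would be using the very differential you are trying to prove.

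The paper's actual proof is of a completely different character and much softer: it assumes the differential fails, observes that then both $\D j_1$ and $\tau g C'$ would survive to the $E_\infty$-page with no possible hidden $\tau$ extensions into them, so both would have to inject under the inclusion of the bottom cell into $\pi_{83,45}C\tau$ --- and there are simply not enough elements in $\pi_{83,45}C\tau$ (known from the machine computation of the algebraic Novikov spectral sequence) to accommodate them. If you want to salvage a computation-style argument you would need to find a genuine multiplicative relation tying $\D j_1$ to a class with a known differential, but the long-exact-sequence counting argument is the intended route here.
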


\begin{proof}
Otherwise, both $\D j_1$ and $\tau g C'$ would survive to the
$E_\infty$-page, and neither could be the target of a
hidden $\tau$ extension.  They would both map non-trivially
under inclusion of the bottom cell into $C\tau$.
But there are not enough elements in $\pi_{83,45} C\tau$
for this to occur.
\end{proof}

\rev{
\begin{lemma}
\label{lem:d4-h1f2}
\revdeg{85, 5, 45}
$d_4(h_1 f_2) = 0$.
\end{lemma}

\begin{proof}
Table \ref{tab:nu-extn} shows that there is a hidden
$\nu$ extension from $h_2 g D_3$ to $B_6 d_1$.
If $d_4(h_1 f_2)$ equaled $\tau h_2 g D_3$,
then this $\nu$ extension would be in the image of
projection from $C\tau$ to the top cell, since $h_2 g D_3$
cannot support a hidden $\tau$ extension.
However, there is no such $\nu$ extension in the homotopy 
of $C\tau$.
\end{proof}
}

\rev{
\begin{lemma}
\label{lem:d4-tx85,6+h0^3c3}
\revdeg{85, 6, 44}
$d_4(\tau x_{85,6} + h_0^3 c_3) = 0$.
\end{lemma}

\begin{proof}
We showed in Lemma \ref{lem:d3-tx85,6+h0^3c3} that
$h_0^3 c_3$ is in the image of inclusion of the bottom cell of $C\tau$.
Therefore, $P x_{76,6}$ cannot be in the image of projection from
$C\tau$ to the top cell.  Since $P x_{76,6}$ cannot support a hidden
$\tau$ extension, there can be no differential whose value is
$\tau P x_{76,6}$.
\end{proof}
}

\begin{lemma}
\label{lem:d4-h1c3}
\revdeg{86, 4, 45}
$d_4(h_1 c_3) = \tau h_0 h_2 h_4 Q_3$.
\end{lemma}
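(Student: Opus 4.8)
The plan is to show that the target $\tau h_0 h_2 h_4 Q_3$ is forced to be hit by a differential, and that $h_1 c_3$ is the only available source. A preliminary observation guides the whole approach: since the lemma asserts that $h_1 c_3$ supports a nonzero $d_4$, the class $c_3$ cannot itself be a permanent cycle, for otherwise $d_4(h_1 c_3) = h_1\, d_4(c_3) = 0$ by the Leibniz rule. Thus $c_3$ must support an earlier Adams differential (a $d_2$ or $d_3$), with the property that the $h_1$-multiple of its value vanishes, so that $h_1 c_3$ survives to the $E_4$-page as a genuine class that is \emph{not} of the form $h_1 \cdot [c_3]$. This is precisely why the statement requires a dedicated lemma rather than being an immediate consequence of a multiplicative relation: the $d_4$ on $h_1 c_3$ is not inherited from any differential on $c_3$. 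First I would pin down this earlier differential on $c_3$ and record that $h_1 c_3$ is a well-defined nonzero element of the $E_4$-page, and that by inspection of the chart in \cite{Isaksen14a} it is the only class in the appropriate stem, weight, and filtration that could possibly map to $\tau h_0 h_2 h_4 Q_3$ under $d_4$.

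The main input is a comparison with the Adams spectral sequence for $C\tau$, in the spirit of Lemmas \ref{lem:perm-th1f2} and \ref{lem:d3-th6g}. I would argue that $\tau h_0 h_2 h_4 Q_3$ maps to zero under the inclusion $i$ of the bottom cell into $C\tau$, because its only candidate preimage sits in too high a filtration in $\pi_{*,*} C\tau$; consequently $\tau h_0 h_2 h_4 Q_3$ is either hit by an Adams differential or is the target of a hidden $\tau$ extension. To exclude the latter possibility, I would invoke the hidden $\nu$ extension on $h_0^2 h_4 Q_3$ recorded in Table \ref{tab:nu-extn}, together with the fact, established inside the proof of Lemma \ref{lem:d3-th6g}, that $\tau h_0^2 h_4 Q_3$ is already nonzero on the $E_\infty$-page. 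This determines the $\tau$-extension behavior in this region and leaves no room for a hidden $\tau$ extension whose target is $\tau h_0 h_2 h_4 Q_3$. Once the hidden extension is ruled out, $\tau h_0 h_2 h_4 Q_3$ must be hit, and the source can only be $h_1 c_3$.

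Should the $C\tau$ comparison prove awkward in this degree, an alternative route is to identify a product or Toda bracket detected by $h_1 c_3$ and to show that it must vanish in $\pi_{*,*}$, thereby forcing $h_1 c_3$ to support a differential; the Moss Convergence Theorem \ref{thm:Moss} would locate the resulting relation, and the target would then be identified by elimination. This is essentially the strategy used for $d_4(h_2^2 G_0)$ in Lemma \ref{lem:d4-h2^2G0}, and it would combine naturally with the multiplicative data near $Q_3$ coming from Table \ref{tab:nu-extn}.

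I expect the main obstacle to be the bookkeeping that simultaneously excludes the hidden $\tau$ extension on $\tau h_0 h_2 h_4 Q_3$ and confirms that $h_1 c_3$ survives to $E_4$ with no competing $d_4$-target. Both tasks require careful tracking of the $\nu$- and $h_1$-multiplicative structure surrounding $Q_3$, where several classes of comparable filtration coexist. In particular, the delicate point is to treat $\tau h_0 h_2 h_4 Q_3$ and $\tau h_0^2 h_4 Q_3$ consistently, since they differ only by replacing a factor of $h_0$ with $h_2$, yet behave oppositely: the former is hit here, while the latter is a nonzero permanent cycle.
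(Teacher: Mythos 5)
Your preliminary observation (that $c_3$ dies earlier, via $d_2(c_3)=h_0 f_2$, so that $h_1 c_3$ is a genuine class on the $E_4$-page) is fine, but the core of your main argument has a gap. The class $\tau h_0 h_2 h_4 Q_3$ is an honest $\tau$-multiple of $h_0 h_2 h_4 Q_3$, which lives at $(85,8,46)$ in the $E_\infty$-page. So after observing that it maps to zero under inclusion of the bottom cell into $C\tau$, the correct trichotomy is: it is hit by a differential, or it is the target of a \emph{hidden} $\tau$ extension, or it is the target of the \emph{non-hidden} $\tau$ extension, i.e.\ some homotopy element detected by $h_0 h_2 h_4 Q_3$ supports a nonzero multiplication by $\tau$. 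Your proposal only addresses the middle case, and the third case is exactly the one that requires real work: the precisely analogous class $\tau h_0 h_2 Q_3$ in the $70$-stem \emph{does} survive and detects the nonzero element $(\eta\sigma+\epsilon)\theta_5$ (Lemma \ref{lem:nubar-h5^2}), so there is no soft reason for such $\tau$-multiples to die. Moreover, the facts you cite to "determine the $\tau$-extension behavior in this region" concern $h_0^2 h_4 Q_3$ and $\tau h_0^2 h_4 Q_3$, which are different elements not obtained from $h_0 h_2 h_4 Q_3$ by any multiplication, and you do not explain how they constrain the element actually at issue.

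The paper closes exactly this gap by a Toda bracket computation on the target. Using Lemma \ref{lem:compound-(epsilon+etasigma)-theta5}, there is $\alpha\in\pi_{67,36}$ detected by $h_0 Q_3 + h_0 n_1$ with $\tau\nu\alpha=(\eta\sigma+\epsilon)\theta_5$; since $\langle\nu,\sigma,2\sigma\rangle$ is detected by $h_2 h_4$, the class $\tau h_0 h_2 h_4 Q_3$ detects $\tau\alpha\langle\nu,\sigma,2\sigma\rangle\subseteq\langle(\epsilon+\eta\sigma)\theta_5,\sigma,2\sigma\rangle$, with zero indeterminacy. This bracket contains $\theta_5\langle\epsilon+\eta\sigma,\sigma,2\sigma\rangle$, which vanishes by Lemma \ref{lem:nubar,sigma,2sigma}. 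Hence $\tau h_0 h_2 h_4 Q_3$ detects zero and must be hit, and $h_1 c_3$ is the only possible source. This is close in spirit to the "alternative route" you sketch in your last paragraph, except that the bracket argument is applied to the target rather than to $h_1 c_3$ itself; if you want to salvage your write-up, that is the direction to develop.
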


\begin{proof}
Lemma \ref{lem:compound-(epsilon+etasigma)-theta5}
shows that there exists an element
$\alpha$ in $\pi_{67,36}$ that is detected by
$h_0 Q_3 + h_0 n_1$ such that
$\tau \nu \alpha$ equals
$(\eta \sigma + \epsilon) \theta_5$.

Table \ref{tab:Toda} shows that the Toda bracket
$\langle \nu, \sigma, 2\sigma \rangle$ is detected by $h_2 h_4$, so
the element
$\tau h_0 h_2 h_4 Q_3$ detects
$\tau \alpha \langle \nu, \sigma, 2 \sigma \rangle$,
which is contained in
$\langle \tau \nu \alpha, \sigma, 2 \sigma \rangle$.
The indeterminacy in these expressions
is zero because
$\tau \nu \alpha \cdot \pi_{15,8}$ and
$2 \sigma \cdot \pi_{78,41}$ are both zero.

We now know that
$\tau h_0 h_2 h_4 Q_3$ detects the Toda bracket
$\langle (\epsilon + \eta \sigma) \theta_5, \sigma, 2 \sigma \rangle$.
This bracket contains
$\theta_5 \langle \epsilon + \eta \sigma, \sigma, 2 \sigma \rangle$.
Lemma \ref{lem:nubar,sigma,2sigma} shows that 
the bracket
$\langle \epsilon + \eta \sigma, \sigma, 2 \sigma \rangle$ contains
$0$, so
$\theta_5 \langle \epsilon + \eta \sigma, \sigma, 2 \sigma \rangle$
equals zero.

Finally, we have shown that $\tau h_0 h_2 h_4 Q_3$
detects zero, so it must be hit by some differential.
\end{proof}

\begin{lemma}
\label{lem:d4-x87,7}
\revdeg{87, 7, 45}
$d_4(x_{87,7}) = 0$.
\end{lemma}

\begin{proof}
Consider the exact sequence
\[
\pi_{87,45} \map \pi_{87,45} C\tau \map \pi_{86,46}.
\]
The middle term $\pi_{87,45} C\tau$ is isomorphic to $(\Z/2)^4$.
The elements of $\pi_{87,45}$ that are not divisible by
$\tau$ are detected by $P^2 h_6 c_0$, and possibly
$x_{87,7}$ and $\tau \D h_1 H_1$.
On the other hand, the elements of $\pi_{86,46}$ that are
annihilated by $\tau$ are detected by
$\tau^3 \D c_0 e_0^2 g$ and possibly $M \D h_0^2 e_0$.

In order for the possibility $M \D h_0^2 e_0$ to occur,
either $x_{87,7}$ or $\tau \D h_1 H_1$ would have to support
a differential hitting $\tau M \D h_0^2 e_0$, in which case
one of those possibilities could not occur.

If $d_4(x_{87,7})$ equaled $\tau^3 g G_0$, then there would not
be enough elements to make the above sequence exact.
\end{proof}

\begin{lemma}
\label{lem:d4-tDh1H1}
\revdeg{87, 10 45}
$d_4(\tau \D h_1 H_1) = 0$.
\end{lemma}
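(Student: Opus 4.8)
The plan is to rule out the unique possible nonzero value of the differential. By inspection of the $86$-stem in the relevant filtration, together with comparison to the Adams spectral sequences for $C\tau$, $\tmf$, and $\mmf$, the only target that is not immediately excluded is $\tau M \D h_0^2 e_0$. So it suffices to show that $d_4(\tau \D h_1 H_1) \neq \tau M \D h_0^2 e_0$.

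I would continue the analysis of the exact sequence
\[
\pi_{87,45} \map \pi_{87,45} C\tau \map \pi_{86,46}
\]
begun in the proof of Lemma \ref{lem:d4-x87,7}, where $\pi_{87,45} C\tau \cong (\Z/2)^4$. By Lemma \ref{lem:d4-x87,7} the element $x_{87,7}$ is a permanent cycle, so $P^2 h_6 c_0$ and $x_{87,7}$ both contribute to the image of the inclusion $i$ of the bottom cell, while $\tau^3 \D c_0 e_0^2 g$ contributes to the image of the projection $p$ to the top cell. A dimension count then leaves exactly two possibilities consistent with exactness: either $\tau \D h_1 H_1$ survives and $M \D h_0^2 e_0$ is not annihilated by $\tau$, or else $d_4(\tau \D h_1 H_1) = \tau M \D h_0^2 e_0$ and $M \D h_0^2 e_0$ becomes $\tau$-torsion. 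The lemma amounts to excluding the second scenario.

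To do so, I would show that $\tau M \D h_0^2 e_0$ is not hit by any differential, equivalently that $M \D h_0^2 e_0$ supports a nontrivial $\tau$ multiplication in the $E_\infty$-page. The natural tool is comparison along the unit map to $\mmf$ (or to $\tmf$): I expect the image of $M \D h_0^2 e_0$ to detect a class whose $\tau$-multiple is nonzero, which forces $\tau M \D h_0^2 e_0$ to survive. Once $\tau M \D h_0^2 e_0$ is known to be a nonzero permanent cycle, it cannot be the value of $d_4(\tau \D h_1 H_1)$, and since it was the only candidate, we conclude that $d_4(\tau \D h_1 H_1) = 0$.

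The main obstacle is precisely this last step: the exact-sequence count by itself is symmetric between the two scenarios, so the whole argument hinges on independently pinning down the fate of $\tau M \D h_0^2 e_0$. Identifying the correct $\mmf$ or $\tmf$ class detecting $M \D h_0^2 e_0$ (or, alternatively, producing an explicit homotopy class detected by $\tau M \D h_0^2 e_0$ through a Toda bracket or a product with a known element) is the delicate part; enumerating candidate targets and assembling the count are routine given Lemma \ref{lem:d4-x87,7}.
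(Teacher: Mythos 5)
There is a genuine gap: you have misidentified the unique candidate target of the differential. A $d_4$ on $\tau \D h_1 H_1$, which sits in degree $(87,10,45)$, lands in degree $(86,14,45)$, i.e.\ Adams filtration $14$. The only surviving class there is $\tau \D^2 h_2^2 d_1$ (since $\D^2 h_2^2 d_1$ has degree $(86,14,46)$), and that is the element one must rule out. The class $\tau M \D h_0^2 e_0$ that your entire argument is organized around lies in filtration $16$; it is the potential target of a $d_6$ on $\tau \D h_1 H_1$, not a $d_4$. That $d_6$ is a different question, and in fact it is one of the differentials the paper explicitly leaves \emph{unresolved} (see Table \ref{tab:Adams-unknown} and Theorem \ref{thm:Adams-higher}). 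So your proof, even if every step were correct, would be proving the wrong statement.

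Moreover, the key step you defer — showing that $M \D h_0^2 e_0$ supports a nontrivial $\tau$-multiplication by comparison to $\tmf$ or $\mmf$ — cannot work as stated: classes in the image of the Mahowald operator $M = \langle -, h_0^3, g_2 \rangle$ die under the unit map to $\tmf$ and $\mmf$ (the detection result of Proposition \ref{prop:M-Massey} lives in $\Ext_B$, not in the homotopy of $\tmf$), so these comparisons see nothing in this degree. If such an argument existed, it would settle the open $d_6$/$d_9$ uncertainties around $\tau M \D h_0^2 e_0$, which the authors were unable to do. The paper's actual proof is short and targets the right class: $\D^2 h_2^2 d_1$ is a permanent cycle that cannot be hit by any differential (because $h_2 \cdot \D^2 h_2^2 d_1$ cannot be hit), it is not in the image of projection from $C\tau$ to the top cell, and it supports no hidden $\tau$ extension; hence $\tau \D^2 h_2^2 d_1$ cannot be hit, and the $d_4$ must vanish. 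You should redo the degree count and aim your argument at $\tau \D^2 h_2^2 d_1$.
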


\begin{proof}
The element $\D^2 h_2^2 d_1$ is a permanent cycle
that cannot be hit by any differential because
$h_2 \cdot \D^2 h_2^2 d_1$ cannot be hit by a differential.
The element $\D^2 h_2^2 d_1$ cannot be in the image
of projection from $C\tau$ to the top cell,
and it cannot support a hidden $\tau$ extension.
Therefore,
$\tau \D^2 h_2^2 d_1$ cannot be hit by a differential.
\end{proof}

\begin{lemma}
\label{lem:d4-th2B5g}
\revdeg{89, 15, 49}
$d_4(\tau h_2 B_5 g) = M h_1 d_0^3$.
\end{lemma}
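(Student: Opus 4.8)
The plan is to reuse the multiplicative relation established in the proof of Lemma \ref{lem:d3-h2B5g}, namely $d_0 \cdot h_2 B_5 g = \tau M h_1 e_0 g^2$, but now paired with a known $d_4$ differential rather than the $d_3$ differential used there. Before running that argument I would first confirm that $\tau h_2 B_5 g$ actually survives to the $E_4$-page. By $\tau$-linearity of the Adams differential and Lemma \ref{lem:d3-h2B5g}, we have $d_3(\tau h_2 B_5 g) = \tau M h_1 c_0 e_0^2$, so I need the relation $\tau M h_1 c_0 e_0^2 = 0$ in $\Ext$, which is visible in the machine-generated data. Granting this, $\tau h_2 B_5 g$ is a genuine class on the $E_4$-page, and for the same reason $\tau^2 M h_1 e_0 g^2 = d_0 \cdot \tau h_2 B_5 g$ survives to $E_4$ (its putative $d_3$ value is $d_0 \cdot \tau M h_1 c_0 e_0^2 = 0$).

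Next I would compute $d_4(\tau^2 M h_1 e_0 g^2)$. Table \ref{tab:Adams-d4} records $d_4(\tau^2 e_0 g^2) = d_0^4$. Since $M h_1$ is a permanent cycle, the Leibniz rule on the $E_4$-page gives
\[
d_4(\tau^2 M h_1 e_0 g^2) = M h_1 \cdot d_4(\tau^2 e_0 g^2) = M h_1 d_0^4,
\]
which is non-zero in the $E_4$-page. Now I multiply the relation $d_0 \cdot h_2 B_5 g = \tau M h_1 e_0 g^2$ by $\tau$ to obtain $d_0 \cdot \tau h_2 B_5 g = \tau^2 M h_1 e_0 g^2$, and apply $d_4$, using that $d_0$ is a permanent cycle:
\[
d_0 \cdot d_4(\tau h_2 B_5 g) = d_4(\tau^2 M h_1 e_0 g^2) = M h_1 d_0^4.
\]
In particular $d_4(\tau h_2 B_5 g)$ is non-zero, and the only class in the relevant degree whose product with $d_0$ equals $M h_1 d_0^4$ is $M h_1 d_0^3$. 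This forces $d_4(\tau h_2 B_5 g) = M h_1 d_0^3$.

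The hard part will be the two input facts, not the bookkeeping. I would need to verify that $M h_1$ is a bona fide permanent cycle, so that the Leibniz rule contributes no correction term $d_4(M h_1) \cdot \tau^2 e_0 g^2$, and to confirm the relation $\tau M h_1 c_0 e_0^2 = 0$, which is what places $\tau h_2 B_5 g$ on the $E_4$-page and simultaneously guarantees that $\tau^2 M h_1 e_0 g^2$ is not killed before $E_4$. Both are degree-bounded checks against the $\Ext$ data and the tabulated permanent cycles, but these are precisely the places where a stray error term in high filtration could derail the argument; a final inspection of the $d_0$-divisibility in the target degree is also needed to pin down uniqueness of the value $M h_1 d_0^3$.
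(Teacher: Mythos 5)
Your strategy is genuinely different from the paper's and is very much in the spirit of the ``multiplicative relation'' arguments used throughout the manuscript (indeed $d_4(\tau^2 e_0 g^2) = d_0^4$ is itself derived that way), but it has a gap that is more serious than the ``degree-bounded checks'' you describe at the end. The entire argument rests on the product $M h_1 \cdot d_0^4$ being non-zero in the $E_4$-page, and likewise on $\tau^2 M h_1 e_0 g^2$ surviving to the $E_4$-page; otherwise the Leibniz computation only yields $d_0 \cdot d_4(\tau h_2 B_5 g) = 0$, which gives no information. These classes live in the 102- and 103-stems, well outside the range (through the 95-stem) in which the $d_2$ and $d_3$ differentials are established in this manuscript, so neither fact is a lookup in the tabulated data: you would need to rule out earlier differentials hitting them, which requires analyzing sources in the 103--104-stems. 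Worse, there is a concrete reason for suspicion rather than optimism: the paper's own proof shows that $M d_0^3$ detects $\kappa^3 \theta_{4.5} = \eta^2 \kappabar^2 \theta_{4.5} = 0$, so $M d_0^3$, $M h_1 d_0^3$, and hence $M h_1 d_0^4 = d_0 \cdot M h_1 d_0^3$ all detect zero and must be killed by \emph{some} differential. Your argument tacitly assumes $M h_1 d_0^4$ dies exactly at the $E_4$-page via the differential you are computing; ruling out that it dies earlier is precisely the out-of-range analysis the published argument avoids. The residual indeterminacy---that your method pins down $d_4(\tau h_2 B_5 g)$ only modulo the kernel of multiplication by $d_0$ in the target degree---is a second, smaller issue that you do correctly flag.

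The paper instead argues entirely within the 88- and 89-stems: since $M h_1 d_0^3$ detects zero, it must be hit by some differential, and the possible sources other than $\tau h_2 B_5 g$ are eliminated one by one ($h_1 x_{88,10}$ because $x_{88,10}$ is a permanent cycle by Table \ref{tab:Adams-perm}; $\tau h_2^2 g C'$ because $h_2^2 g C'$ is a permanent cycle by comparison to $\mmf$; and $\D h_1 g_2 g$ because it equals the decomposable $h_3(\D e_1 + C_0) g$), leaving $d_4(\tau h_2 B_5 g) = M h_1 d_0^3$ as the only possibility. To salvage your approach you would first have to extend the Adams $d_2$ and $d_3$ analysis into the 102--104-stem range, at which point the paper's in-range elimination argument is the cheaper option.
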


\begin{proof}
Table \ref{tab:misc-extn} shows that
$M d_0$ detects $\kappa \theta_{4.5}$.
Therefore, $M d_0^3$ detects $\kappa^3 \theta_{4.5}$,
which equals $\eta^2 \kappabar^2 \theta_{4.5}$ because
Table \ref{tab:eta-extn} shows that there is a hidden
$\eta$ extension from $\tau^2 h_1 g^2$ to $d_0^3$.

Now $\eta^2 \kappabar^2 \theta_{4.5}$ is zero because
$\eta^2 \kappabar \theta_{4.5}$ is zero.
Therefore, $M d_0^3$ and $M h_1 d_0^3$ must both be hit
by differentials.

There are several possible differentials that can hit
$M h_1 d_0^3$.  
The element $h_1 x_{88,10}$ cannot be the source of this differential
because Table \ref{tab:Adams-perm} shows that
$x_{88,10}$ is a permanent cycle.
The element $\tau h_2^2 g C'$ cannot be the source of the differential
because $h_2^2 g C'$ is a permanent cycle by comparison to $\mmf$.
The element $\D h_1 g_2 g$ cannot be the source 
because it equals $h_3 (\D e_1 + C_0) g$.
The only remaining possibility is that
$d_4(\tau h_2 B_5 g)$ equals $M h_1 d_0^3$.
\end{proof}

\begin{lemma}
\label{lem:d4-Dh2^2A'}
\revdeg{91, 12, 48}
$d_4(\D h_2^2 A') = 0$.
\end{lemma}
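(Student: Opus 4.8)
The plan is to rule out every possible nonzero target of $d_4(\D h_2^2 A')$. First I would pin down the degrees. Since $\D h_2^2 = r$ sits in the $30$-stem in filtration $6$, and the differential $d_5(A') = \tau M h_1 d_0$ forces $A'$ into the $61$-stem in filtration $6$, the product $\D h_2^2 A'$ lies in the $91$-stem in Adams filtration $12$; a nonzero $d_4$ on it would therefore land in the $90$-stem in filtration $16$. I would read off the finitely many classes of the Adams $E_4$-page in that degree from the machine-generated $\Ext$ data together with the $d_2$ and $d_3$ computations already recorded in Tables \ref{tab:Adams-d2} and \ref{tab:Adams-d3}; only these can occur as the value of the differential.

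The main engine is naturality along the inclusion $i\colon S^{0,0}\map C\tau$ of the bottom cell. The machine computation of the algebraic Novikov spectral sequence, equivalently the Adams spectral sequence for $C\tau$ \cite{IWX19}, gives the value of $d_4$ on $i_*(\D h_2^2 A')$, which I expect to vanish. Because $\ker i_*$ consists precisely of the classes divisible by $\tau$, the relation $i_* d_4 = d_4 i_*$ then forces $d_4(\D h_2^2 A')$ to be divisible by $\tau$. This eliminates at once every candidate target that is not a $\tau$-multiple, and typically leaves at most one or two surviving possibilities.

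For the remaining $\tau$-divisible candidate I would use a second comparison, to $\mmf$ (or to $\tmf$ through the unit map), exactly as in the proofs of Lemmas \ref{lem:d3-t^2Mh0l} and \ref{lem:d3-t^2MPh0d0j}: a $\tau$-divisible class in this degree that maps nontrivially into the thoroughly understood spectral sequence for $\mmf$ cannot be hit, since its image is not hit there. Where the $\mmf$-comparison is inconclusive, the fallback is a multiplicative relation, multiplying the putative differential by $h_0$, $h_1$, or $h_2$ and comparing with an already-established differential, as in Lemmas \ref{lem:d4-Dh0^2h3g2} and \ref{lem:d4-th2B5g}, so that a divisibility mismatch produces a contradiction.

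The hard part will be this last step. The $C\tau$-comparison cheaply disposes of the non-$\tau$-divisible targets, but the surviving $\tau$-divisible class sits in very high filtration in a crowded region of the $90$-stem, where the $\mmf$- and $\tmf$-images and the $h_i$-multiplication structure are most delicate; isolating the single relation that genuinely rules it out, rather than merely constraining it, is where the real work lies. Should target-elimination stall, the alternative route is to show directly that $\D h_2^2 A'$ is a permanent cycle by exhibiting it as the class detecting some Toda bracket via the Moss Convergence Theorem \ref{thm:Moss}, which would give $d_4 = 0$ for free; but this requires first locating a suitable bracket, which seems less likely to be available here than the elimination argument.
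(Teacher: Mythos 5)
Your plan never actually proves the lemma: it is a generic elimination strategy whose decisive step is explicitly deferred. You correctly observe that the comparison to $C\tau$ can only dispose of targets that are not $\tau$-divisible, and you concede that the surviving $\tau$-divisible candidate in filtration $16$ of the $90$-stem is ``where the real work lies'' — but you do not identify the candidate targets, do not verify that $d_4$ vanishes on $i_*(\D h_2^2 A')$ in the $C\tau$ spectral sequence, and do not produce the $\mmf$-comparison or multiplicative relation that would finish the job. As written, the argument is a statement of intent plus a hope that ``typically'' one or two cases remain; that is a gap, not a proof. (A smaller point: the kernel of $i_*$ on homotopy consists of $\tau$-divisible classes, but on $E_4$-pages the bookkeeping of which classes die under the induced map is more delicate than ``precisely the $\tau$-multiples,'' and in this crowded region that slippage matters.)

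The paper avoids the case analysis entirely with a structural argument you did not consider. Because of the Adams differential $d_3(\D h_2^2) = \tau h_1 d_0^2$, the element $\D h_2^2 A'$ equals the Massey product $\langle A', h_1, \tau d_0^2 \rangle$ in the $E_4$-page, with no indeterminacy. Moss's higher Leibniz rule (Theorem \ref{thm:Leibniz}) then places $d_4(\D h_2^2 A')$ inside
\[
\langle 0, h_1, \tau d_0^2 \rangle + \langle A', 0, \tau d_0^2 \rangle + \langle A', h_1, 0 \rangle,
\]
so the value must be a linear combination of multiples of $A'$ and of $\tau d_0^2$, and by inspection no nonzero element of the target group has that form. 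This is the same mechanism used in Lemmas \ref{lem:d3-D^2p}, \ref{lem:d3-Dh2^2h6}, and \ref{lem:d3-P^2h6d0}; when an element is expressible as a bracket on the relevant page, the Leibniz constraint is far cheaper than enumerating and killing targets one by one. If you want to salvage your approach, you would need to actually list the $E_4$-classes in $(90,16,48)$ and exhibit the specific comparison or relation that kills each one — the outline alone does not substitute for that.
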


\begin{proof}
In the Adams $E_4$-page, the element $\D h_2^2 A'$ equals the 
Massey product $\langle A', h_1, \tau d_0^2 \rangle$,
with no indeterminacy
because of the Adams differential $d_3(\D h_2^2) = \tau h_1 d_0^2$.
Moss's higher Leibniz rule \ref{thm:Leibniz}
implies that $d_4(\D h_2^2 A')$ is contained in
\[
\langle 0, h_1, \tau d_0^2 \rangle + \langle A', 0, \tau d_0^2 \rangle +
\langle A', h_1, 0 \rangle,
\]
so it is
a linear combination
of multiples of $A'$ and $\tau d_0^2$.
The only possibility is that $d_4(\D h_2^2 A')$ is zero.
\end{proof}

\begin{lemma}
\label{lem:d4-h4^2h6}
\revdeg{93, 3, 48}
$d_4(h_4^2 h_6) = h_0^3 g_3$.
\end{lemma}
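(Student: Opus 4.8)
The plan is to reduce the statement to a differential on $h_5^3$ by exploiting the relation $h_4^2 h_6 = h_5^3$ in $\Ext_\C$, so that the assertion becomes $d_4(h_5^3) = h_0^3 g_3$. First I would check that $h_4^2 h_6$ survives to the $E_4$-page. Since $h_4 h_5 = 0$, we have $h_4^2 h_5^2 = 0$, and therefore using $d_2(h_6) = h_0 h_5^2$ we get
\[
d_2(h_4^2 h_6) = h_4^2 \cdot d_2(h_6) = h_0 \cdot h_4^2 h_5^2 = 0 .
\]
The vanishing of $d_3(h_4^2 h_6)$ then follows by inspection of the possible targets together with comparison to the Adams spectral sequence for $C\tau$, so that $h_4^2 h_6$ is indeed a class on the $E_4$-page and a $d_4$ is the first differential it could support. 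A bookkeeping check on tridegrees confirms that $h_4^2 h_6 = h_5^3$ lies in degree $(93,3,48)$ and that $h_0^3 g_3$ lies in degree $(92,7,48)$, so this is the only differential of the claimed shape.

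The main line of argument would then follow the ``target must be hit'' template used repeatedly in this chapter (as in Lemmas \ref{lem:d3-t^2d0B5} and \ref{lem:d4-Dj1}). One shows that $h_0^3 g_3$ is a permanent cycle that supports no differential, and that it maps to zero under the inclusion of the bottom cell into $C\tau$, because the image of $g_3$ carries too short an $h_0$-tower in the Adams spectral sequence for $C\tau$ for $h_0^3 g_3$ to be nonzero there. Consequently $h_0^3 g_3$ is either hit by a differential or is the target of a hidden $\tau$ extension. After ruling out the latter possibility (there is no admissible source for such an extension in the appropriate degree), the only remaining option is that $h_0^3 g_3$ is hit, and the unique candidate source on the $E_4$-page is $h_4^2 h_6 = h_5^3$. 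This forces $d_4(h_4^2 h_6) = h_0^3 g_3$.

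As an independent corroboration I would use the Toda bracket machinery. Because $d_2(h_5) = h_0 h_4^2$ and $h_4^2 h_5^2 = 0$, the class $h_5^3$ is the $E_3$-page Massey product $\langle h_0, h_4^2, h_5^2 \rangle$, represented by $h_5 \cdot h_5^2$, whereas the companion bracket $\langle h_4^2, h_0, h_5^2 \rangle$ equals $h_4^2 h_6 + h_5 \cdot h_5^2 = 0$ in filtration $3$. If $h_5^3$ were a permanent cycle, the Moss Convergence Theorem \ref{thm:Moss} would make it detect an element of $\langle 2, \theta_4, \theta_5 \rangle$ in filtration exactly $3$, while the same theorem forces $\langle \theta_4, 2, \theta_5 \rangle$ to be detected only in filtration at least $4$; the $H^*(S/2;BP)$ computation recalled earlier shows that the latter bracket is nonzero. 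Feeding these facts, together with the bracket $\langle \theta_4, \theta_5, 2 \rangle$, into the Jacobi relation of Theorem \ref{thm:Toda-3fold}(6) yields a contradiction with the survival of $h_5^3$. The hard part will be the two delicate verifications on which the argument rests: confirming that $i_*(h_0^3 g_3)=0$ in $C\tau$ and excluding every competing source and every hidden $\tau$ extension into $h_0^3 g_3$, and, in the corroborating argument, carrying out the filtration bookkeeping carefully enough that the Jacobi relation genuinely obstructs $h_5^3$ from being a permanent cycle.
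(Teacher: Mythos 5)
Your corroborating argument rests on a false premise: the Toda brackets $\langle 2, \theta_4, \theta_5 \rangle$ and $\langle \theta_4, \theta_5, 2 \rangle$ are not defined, because $\theta_4 \theta_5 \neq 0$. Lemma \ref{lem:perm-h0g3} and Table \ref{tab:misc-extn} record that $\theta_4 \theta_5$ is detected by $h_0^2 g_3$ (a hidden $\theta_4$ extension on $h_5^2$); the vanishing of $h_4^2 \cdot h_5^2$ in $\Ext$ does not imply the vanishing of the product in homotopy. Consequently the Moss Convergence Theorem \ref{thm:Moss} cannot be applied to $\langle h_0, h_4^2, h_5^2 \rangle$, the Jacobi relation of Theorem \ref{thm:Toda-3fold}(6) is unavailable, and no contradiction with the survival of $h_5^3$ can be extracted along these lines. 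Your main argument is not wrong in outline, but the two facts you defer --- that $i_*(h_0^3 g_3)$ vanishes in the $E_\infty$-page for $C\tau$ and that no hidden $\tau$ extension has target $h_0^3 g_3$ --- are the entire content of that route, and the ``$h_0$-tower is too short in $C\tau$'' justification is an assertion rather than an argument.

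The irony is that the non-vanishing of $\theta_4 \theta_5$ which breaks your corroboration is exactly what makes the short proof work. Since $h_0^2 g_3$ detects $\theta_4 \theta_5$ and $2 \theta_4 \theta_5 = 0$ (because $2 \theta_5 = 0$), the element $h_0^3 g_3$ cannot survive to the $E_\infty$-page: if it did, it would detect the image of $2 \theta_4 \theta_5$ in the associated graded, which would then be nonzero. Combined with the comparison to $C\tau$, which already pins $d_4(h_4^2 h_6)$ down to either $h_0^3 g_3$ or $h_0^3 g_3 + \tau h_1 h_4^2 D_3$, and with the observation that $h_4^2 h_6$ is the only possible source of a differential killing $h_0^3 g_3$, this forces $d_4(h_4^2 h_6) = h_0^3 g_3$ with no detour through the long exact sequence, the $h_0$-tower in $C\tau$, or hidden $\tau$ extensions.
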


\begin{proof}
By comparison to 
the Adams spectral sequence for $C\tau$, the value of 
$d_4(h_4^2 h_6)$ is either
$h_0^3 g_3$ or $h_0^3 g_3 + \tau h_1 h_4^2 D_3$.

Table \ref{tab:misc-extn} shows that 
$h_0^2 g_3$ detects the product $\theta_4 \theta_5$.
Since $2 \theta_4 \theta_5$ equals zero, 
$h_0^3 g_3$ must be hit by a differential.
\end{proof}

\rev{
\begin{lemma}
\label{lem:d4-MD^2h1^2}
\revdeg{95, 16, 50}
$d_4(M \D^2 h_1^2) = M P \D h_0^2 e_0$.
\end{lemma}

\begin{proof}
Table \ref{tab:Toda} shows that
$M \D^2 h_1^2 + \tau^2 M \D h_2^2 g$ detects the Toda bracket
$\langle \eta, \tau \kappa^2, \tau \theta_{4.5} \kappabar \rangle$.
Therefore,
$d_4(M \D^2 h_1^2)$ equals $d_4(\tau^2 M \D h_2^2 g)$.
\end{proof}
}

\section{The Adams $d_5$ differential}
\label{sctn:Adams-d5}

\revv{
Table \ref{tab:Adams-d5} lists the multiplicative generators 
of the Adams $E_5$-page through the \rev{92}-stem
whose $d_5$ differentials are non-zero, or whose $d_5$ differentials
are zero for non-obvious reasons.
}

\begin{thm}
\label{thm:Adams-d5}
Table \ref{tab:Adams-d5} lists some values of the
Adams $d_5$ differential on multiplicative generators.
Through the \rev{92}-stem, 
the Adams $d_5$ differential is zero on all multiplicative
generators not listed in the table.
\end{thm}

\begin{proof}
The $d_5$ differential on many multiplicative generators is zero.
For the majority of such multiplicative generators,
the $d_5$ differential is zero
because there are no possible non-zero
values, or by comparison to the Adams spectral sequence for $C\tau$,
or by comparison to $\tmf$ or $\mmf$.
In a few cases, the multiplicative generator
is already known to be a permanent cycle; $h_1 h_6$ is one
such example.
A few additional cases appear
in Table \ref{tab:Adams-d5} because their proofs require further
explanation.

The last column of Table \ref{tab:Adams-d5} gives information on the
proof of each differential.
Many computations follow immediately by comparison to
the Adams spectral sequence for $C\tau$.

If an element is listed in the last column of Table \ref{tab:Adams-d5}, 
then the corresponding differential can be deduced from a straightforward
argument using a multiplicative relation.  For example, 
\[
d_5( \tau \cdot g A' ) =
d_5( \tau g \cdot A') =
\tau g \cdot \tau M h_1 d_0 = \tau^2 M h_1 e_0^2,
\]
so $d_5(g A')$ must equal $\tau M h_1 e_0^2$.

A few of the more difficult computations appear in \cite{BIX}.
The remaining more difficult computations are carried out in the
following lemmas.
\end{proof}

\begin{lemma}
\label{lem:d5-th1^2.Dx}
\revdeg{63, 11, 33}
$d_5(\tau h_1^2 \cdot \D x) = \tau^3 d_0^2 e_0^2$.
\end{lemma}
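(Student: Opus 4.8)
The plan is to first verify that $\tau h_1^2 \cdot \D x$ survives to the $E_5$-page, and then to pin down its $d_5$ differential by showing that the only candidate target, $\tau^3 d_0^2 e_0^2$, is forced to be hit.

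For the survival, I would use that $\tau h_1^2 \cdot \D x = h_1 \cdot (\tau h_1 \cdot \D x)$. By Lemma \ref{lem:d4-th1.Dx}, the element $\tau h_1 \cdot \D x$ survives to the $E_4$-page with $d_4(\tau h_1 \cdot \D x) = \tau^2 \D h_2^2 d_0 e_0$; in particular $\tau h_1^2 \cdot \D x$ is a $d_2$- and $d_3$-cycle. Since $h_1 h_2 = 0$ in $\Ext$, we have $h_1 \cdot \tau^2 \D h_2^2 d_0 e_0 = 0$, so by the Leibniz rule $d_4(\tau h_1^2 \cdot \D x) = h_1 \cdot d_4(\tau h_1 \cdot \D x) = 0$, and hence $\tau h_1^2 \cdot \D x$ survives to $E_5$. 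A bidegree check then confirms that $\tau^3 d_0^2 e_0^2$ is the unique possible value of a nonzero $d_5$ on $\tau h_1^2 \cdot \D x$; it is a permanent cycle, and using the relation $d_0 g = e_0^2$ (already used elsewhere in this section) it may be rewritten as $\tau^3 d_0^3 g$.

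The essential remaining task is to show the differential is nonzero. Here I would argue that $\tau^3 d_0^2 e_0^2$ cannot survive to $E_\infty$. The most likely route is comparison to the Adams spectral sequence for $\mmf$ (or to $\tmf$), in which the image of this class is killed by a differential, together with naturality of the unit map $S^{0,0} \to \mmf$; a backup route is to assume $\tau h_1^2 \cdot \D x$ is a permanent cycle and derive a contradiction in homotopy through the $C\tau$ comparison along the inclusion of the bottom cell and the projection to the top cell. Once the target is known to be hit, $\tau h_1^2 \cdot \D x$ is identified as the only available source in this bidegree. A crucial bookkeeping point, to be handled with care, is that $\tau^3 d_0^2 e_0^2$ is \emph{not} divisible by $h_1$ on the $E_5$-page: this is exactly the fact that makes the present differential consistent with $d_4(\D^2 h_3^2) = 0$ in Lemma \ref{lem:d4-D^2h3^2}, and it simultaneously excludes the companion element $\D^2 h_3^2 + \tau h_1 \cdot \D x$ as an alternative source.

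The hard part will be this forcing step: confirming that $\tau^3 d_0^2 e_0^2$ is genuinely hit rather than a surviving permanent cycle or the target of a hidden $\tau$ extension, and that its value carries no additional error term. I expect that to rest on the $\mmf$ or $\tmf$ comparison for the target, combined with the survival and $h_1$-indivisibility facts above; the survival argument via the $h_1$-multiple is routine, whereas matching the precise target and ruling out correction terms is where the real content of the lemma lies.
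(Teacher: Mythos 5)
Your overall skeleton --- check that the source survives to $E_5$, identify the unique possible target, and then force the differential by showing that target must die --- matches the shape of the paper's argument, and the survival step via $d_4(\tau h_1^2\cdot\D x)=h_1\cdot d_4(\tau h_1\cdot\D x)$ is reasonable (though note that $\D h_2^2$ is the name of a single indecomposable class, so the vanishing of $h_1\cdot\tau^2\D h_2^2 d_0 e_0$ is a fact about $\Ext$ to be cited, not a consequence of $h_1h_2=0$). The genuine gap is in your main route for the forcing step. Naturality of the Adams spectral sequence along the unit map $S^{0,0}\to\mmf$ pushes differentials \emph{forward}: if the image of $\tau^3 d_0^2 e_0^2$ in $\mmf$ is killed, that gives no information about whether the class is killed over the sphere, and if its image survived in $\mmf$ that would \emph{contradict} the lemma rather than prove it. So comparison to $\mmf$ or $\tmf$ can only ever show that a class is \emph{not} hit; it cannot show that $\tau^3 d_0^2 e_0^2$ is hit, which is exactly what you need.

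The paper's actual mechanism is different and is the content your proposal is missing. First, a chain of hidden $\tau$ extensions (from $\tau h_1 g^3+h_1^5h_5c_0e_0$ to $\D h_2^2 d_0 e_0$, and from $h_1^6h_5c_0e_0$ to $d_0^2e_0^2$) produces a hidden $\eta$ extension from $\tau\D h_2^2 d_0 e_0$ to $\tau^2 d_0^2 e_0^2$. Since $\tau^2\D h_2^2 d_0 e_0$ is hit by $d_4(\tau h_1\cdot\D x)$, the class detected by $\tau\D h_2^2 d_0 e_0$ is $\tau$-torsion, hence so is its $\eta$-multiple, and this forces $\tau^3 d_0^2 e_0^2$ to be hit by \emph{some} differential. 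Second, your backup route (assume $\tau h_1^2\cdot\D x$ survives and derive a contradiction via $C\tau$) is indeed how the paper pins down the source, but executing it requires the same hidden $\eta$ extension as input: one needs to know that $\ol{Ph_1h_5c_0d_0}$ in $\pi_{*,*}C\tau$ projects nontrivially to the top cell (onto $\tau^2 d_0^2 e_0^2$), hence is not in the image of the bottom-cell inclusion, so a surviving $\tau h_1^2\cdot\D x$ --- which can be neither a $\tau$-multiple nor hit by a differential --- would have nowhere to map in the long exact sequence. Without identifying these hidden extensions, neither half of the forcing argument goes through.
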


\begin{proof}
The element $\tau^2 d_0^2 e_0^2$ cannot be hit by a differential.
There is a hidden $\eta$ extension
from $\tau \D h_2^2 d_0 e_0$ to $\tau^2 d_0^2 e_0^2$
because of the hidden $\tau$ extensions
from $\tau h_1 g^3 + h_1^5 h_5 c_0 e_0$ to
$\D h_2^2 d_0 e_0$ and
from $h_1^6 h_5 c_0 e_0$ to $d_0^2 e_0^2$.
This shows that $\tau^3 d_0^2 e_0^2$ must be hit by some differential.

This hidden $\eta$ extension is detected by
projection from $C\tau$ to the top cell.
Since $\ol{P h_5 c_0 d_0}$ in $C\tau$ maps to
$\tau \D h_2^2 d_0 e_0$ under projection to the top cell,
it follows that $\ol{P h_1 h_5 c_0 d_0}$ in $C\tau$
maps to $\tau^2 d_0^2 e_0^2$ under projection to the top cell.

If $\tau h_1^2 \cdot \D x$ survived, then it could not be the target of
a hidden $\tau$ extension and it could not be hit by a differential.
Also, it could not map non-trivially under inclusion of
the bottom cell into $C\tau$, since the only possible value
$\ol{P h_1 h_5 c_0 d_0}$ has already been accounted for 
in the previous paragraph.
\end{proof}

\begin{lemma}
\label{lem:d5-h5d0i}
\revdeg{68, 12, 36}
$d_5(h_5 d_0 i) = \tau \D h_1 d_0^3$.
\end{lemma}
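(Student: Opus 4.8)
The plan is to prove that the target $\tau \D h_1 d_0^3$ is forced to be hit by a differential, and then to pin down $h_5 d_0 i$ as the unique possible source. Since $\tau \D h_1 d_0^3$ is divisible by $\tau$, its image under the inclusion $i$ of the bottom cell into $C\tau$ vanishes, because $\tau$ acts as zero in $\Ext(C\tau)$. As in the proofs of Lemmas \ref{lem:d3-t^2d0B5} and \ref{lem:d4-h0d2}, this means that $\tau \D h_1 d_0^3$ in the $E_\infty$-page of stem $67$ is either hit by an Adams differential or is the target of a hidden $\tau$ extension. First I would eliminate the hidden $\tau$ extension by inspecting the classes of weight one higher in the relevant degree: I expect to find no permanent cycle in lower filtration of stem $67$ that can support such an extension, which leaves only the possibility that $\tau \D h_1 d_0^3$ is hit.

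To locate the source, I would exploit the multiplicative relation $h_0 \cdot h_5 d_0 i = d_0 \cdot h_0 h_5 i = P h_2 h_5 j$. Lemma \ref{lem:d3-tDg2.h0^3} shows that $P h_2 h_5 j$ detects the nonzero class $\kappa \{ h_0 h_5 i \}$, so it is a nonzero permanent cycle. Consequently $h_5 d_0 i$ cannot be hit by any differential: if $d_r(z) = h_5 d_0 i$, then $d_r(h_0 z) = h_0 \cdot h_5 d_0 i = P h_2 h_5 j \neq 0$, contradicting that $P h_2 h_5 j$ is a permanent cycle. A bidegree count then shows that the only classes in stem $68$ capable of mapping onto $\tau \D h_1 d_0^3$ under a differential sit in filtrations $13$, $14$, and $15$ (sources of a $d_4$, $d_3$, and $d_2$) or in filtration $12$, namely $h_5 d_0 i$ itself (the source of a $d_5$). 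I would discard the higher-filtration candidates, and confirm that $h_5 d_0 i$ supports no earlier $d_2$, $d_3$, or $d_4$, by comparison to the machine-computed Adams spectral sequence for $C\tau$ (where $\tau$-divisible targets are invisible) and to $\tmf$. This leaves $h_5 d_0 i$ as the unique source, forcing $d_5(h_5 d_0 i) = \tau \D h_1 d_0^3$.

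The hardest part will be the bookkeeping that rules out the hidden $\tau$ extension into $\tau \D h_1 d_0^3$ and verifies that $h_5 d_0 i$ genuinely survives to the $E_5$-page rather than supporting an earlier differential into some $h_0$-torsion class; the relation above controls the $h_0$-multiple but not $h_5 d_0 i$ by itself, so the $C\tau$ and $\tmf$ comparisons must carry this weight. As a consistency check, the conclusion is compatible with multiplicativity: multiplying by $h_0$ gives $h_0 \cdot d_5(h_5 d_0 i) = \tau \D h_0 h_1 d_0^3 = 0$, in agreement with $P h_2 h_5 j$ being a permanent cycle, and the differential is exactly the $d_0$-multiple of the corresponding $d_5$ on $h_5 i$, which offers an alternative derivation should that lower differential already be available.
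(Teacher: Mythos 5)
Your target-first strategy has a genuine gap at its very first step. You want to force $\tau \D h_1 d_0^3$ to be hit by showing it maps to zero in $\pi_{*,*}C\tau$ and then ruling out $\tau$ extensions into it, and you "expect to find no permanent cycle in lower filtration of stem $67$ that can support such an extension." But the long exact sequence (\ref{eq:LES-Ctau}) only tells you that a class mapping to zero in $C\tau$ is divisible by $\tau$; the divisor need not sit in lower filtration. The element $\D h_1 d_0^3$ in weight one higher, at $(67,17,37)$, is a surviving permanent cycle (it appears as the target of hidden $2$ and $\tau$ extensions in Tables \ref{tab:2-extn} and \ref{tab:tau-extn}), so a priori $\tau \D h_1 d_0^3$ could survive and detect $\tau\{\D h_1 d_0^3\}$ via a \emph{not hidden} $\tau$ extension. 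Nothing in your argument excludes this scenario, so the conclusion that the target must be hit does not follow.

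The frustrating part is that you already hold the decisive ingredient and use it in the wrong direction. Lemma \ref{lem:d3-tDg2.h0^3} shows that $\kappa\{h_0 h_5 i\}$ is detected by $P h_2 h_5 j$ and is not divisible by $2$, hence that $P h_2 h_5 j$ is \emph{not an $h_0$-multiple in the $E_\infty$-page}. Combined with the relation $h_0 \cdot h_5 d_0 i = P h_2 h_5 j$ that you write down, the immediate consequence is that $h_5 d_0 i$ cannot survive to the $E_\infty$-page: if it did, $P h_2 h_5 j$ would be an $h_0$-multiple there. So $h_5 d_0 i$ must \emph{support} a differential, and $\tau \D h_1 d_0^3$ is the only possible value. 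Instead you extract only the statement that $h_5 d_0 i$ is not \emph{hit} by a differential, which is true but irrelevant to the lemma, and then lean on unexecuted bookkeeping and $C\tau$/$\tmf$ comparisons to carry the actual content. Reversing the direction of that one inference turns your proposal into the paper's two-line proof.
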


\begin{proof}
We showed in Lemma \ref{lem:d3-tDg2.h0^3}
that $P h_2 h_5 j$ cannot be divisible by $h_0$ in the $E_\infty$-page.
Therefore, $h_5 d_0 i$ must support a differential.
\end{proof}

\begin{lemma}
\label{lem:d5-tp1+h0^2h3h6}
\revdeg{70, 4, 36}
$d_5(\tau p_1 + h_0^2 h_3 h_6)$ equals either
$\tau^2 h_2^2 C'$ or $\tau^2 h_2^2 C' + \tau h_3(\D e_1 + C_0)$.
\end{lemma}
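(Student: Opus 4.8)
The plan is to pin down the differential by degree bookkeeping, inspection of the $E_5$-page, and a naturality argument against $C\tau$. First I would record the tridegrees: the class $\tau p_1 + h_0^2 h_3 h_6$ lies in the $70$-stem in Adams filtration $4$ (neither $\tau p_1$ nor $h_0^2 h_3 h_6$ survives separately, but this particular sum survives to the $E_5$-page), so the target of any $d_5$ lies in the $69$-stem in filtration $9$. Scanning the chart in this tridegree, the only classes surviving to $E_5$ are $\tau^2 h_2^2 C'$ and $\tau h_3(\D e_1 + C_0)$. Hence $d_5(\tau p_1 + h_0^2 h_3 h_6)$ is an $\F_2$-linear combination of these two classes, and the content of the lemma is to show that the coefficient of $\tau^2 h_2^2 C'$ equals $1$ while leaving the coefficient of $\tau h_3(\D e_1 + C_0)$ undetermined.

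For the main step I would establish that $\tau^2 h_2^2 C'$ must be a boundary and that the only available source at the $E_5$-page is $\tau p_1 + h_0^2 h_3 h_6$. To see that $\tau^2 h_2^2 C'$ cannot survive, I would run a counting argument in the long exact sequence $\pi_{*,*} \map \pi_{*,*} C\tau \map \pi_{*-1,*}$ relating the sphere to $C\tau$ in stems $69$ and $70$, in the style of Lemmas \ref{lem:d4-x87,7} and \ref{lem:d4-Dj1}. Here I would use the machine-computed homotopy of $C\tau$ together with the structure already extracted in the proof of Lemma \ref{lem:perm-th1f2}: the class $\ol{\alpha}$ in $\pi_{70,36} C\tau$ is detected by $h_0^2 h_3 h_6$, the Toda bracket $\langle 2, \sigma^2, \ol{\alpha} \rangle$ is detected by $h_0^3 c_3$, and projection to the top cell sends $\ol{\alpha}$ to a class whose leading term is detected by $\tau h_2^2 C'$ (the $\nu$-multiple of the class in $\pi_{66,35}$ detected by $\tau h_2 C'$). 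Since $i_*(\tau p_1 + h_0^2 h_3 h_6) = h_0^2 h_3 h_6$ is a permanent cycle in $C\tau$, naturality along the bottom cell $i$ already forces the sphere $d_5$-target to be $\tau$-divisible, which is consistent with both candidates; the rank comparison across the exact sequence then forces $\tau^2 h_2^2 C'$ to be hit. As $\tau^2 h_2^2 C'$ supports no differential and the only class in the correct source tridegree is $\tau p_1 + h_0^2 h_3 h_6$, its coefficient in the differential is $1$. Alternatively, I would isolate this summand by an $E_5$-page higher-Leibniz computation in the spirit of Lemma \ref{lem:d3-th6g}, expressing the source as a (matric) Massey product built from $d_2(h_6) = h_0 h_5^2$ and applying Moss's higher Leibniz rule \ref{thm:Leibniz}.

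The hard part is precisely the residual ambiguity: the techniques available here cannot decide whether $\tau h_3(\D e_1 + C_0)$ is also present. The comparison maps to $C\tau$, to $\tmf$, and to $\mmf$, together with the $h_1$- and $h_2$-multiplicative structure, are all insensitive to adding $\tau h_3(\D e_1 + C_0)$, since this class is $\tau$-divisible and $h_3(\D e_1 + C_0)$ interacts trivially with those maps in this range. As indicated in Remark \ref{rem:perm-th1f2}, the exact value $\tau^2 h_2^2 C'$ (ruling out the error term) is established only in the forthcoming work \cite{BIX}; the two-valued conclusion stated here is the best one obtains from the methods of this manuscript, and it already suffices for the independent proof of Lemma \ref{lem:perm-th1f2} recorded in that remark.
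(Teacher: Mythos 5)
Your setup is right (two candidate targets in $(69,9,36)$, unique potential source, residual ambiguity in the $\tau h_3(\D e_1 + C_0)$ coefficient), but the core step — showing that some class of the form $\tau^2 h_2^2 C' + \epsilon\, \tau h_3(\D e_1 + C_0)$ must be hit — is not actually established. A rank count in the long exact sequence cannot do it: both candidate targets sit in the same tridegree with the same unique potential source, so every choice of $d_5$ (including $d_5 = 0$ and $d_5 = \tau h_3(\D e_1 + C_0)$ alone) changes the relevant dimensions identically, and counting alone cannot tell you that the boundary, if any, contains $\tau^2 h_2^2 C'$ with coefficient $1$. The input you borrow to break this tie — that projection to the top cell sends $\ol{\alpha}$ to a class detected by $\tau h_2^2 C'$ — is circular: in the paper that fact is a \emph{consequence} of the differential you are trying to prove (see Remark \ref{rem:perm-th1f2}, where the proof of Lemma \ref{lem:perm-th1f2} is noted to depend on exactly this $d_5$). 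Your fallback via Moss's higher Leibniz rule is also problematic, since expressing the source as a bracket using $d_2(h_6) = h_0 h_5^2$ would only control a $d_3$, and the paper explicitly warns that the higher Leibniz rule for $E_r$-page Massey products does not propagate to differentials on later pages.

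The paper's proof supplies the missing multiplicative leverage from a different degree: in $C\tau$ there is a $\nu$ extension from $h_0^2 h_3 h_6$ to $h_4 D_2$, and $h_4 D_2$ projects to the top cell with hidden value $\tau^3 d_1 g^2$ (this is independent data, forced because $h_4 D_2$ supports $d_5(h_4 D_2) = \tau^4 d_1 g^2$ in the sphere and hence cannot come from the bottom cell). Therefore the projection $\beta$ of $h_0^2 h_3 h_6$ satisfies $\nu\beta \neq 0$ detected by $\tau^3 d_1 g^2$, and the hidden $\nu$ extension from $h_2^2 C'$ to $\tau^2 d_1 g^2$ in the sphere forces $\beta$ to be detected by $\tau h_2^2 C'$ or $\tau h_2^2 C' + h_3(\D e_1 + C_0)$. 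Since $\beta$ lies in the image of projection it is $\tau$-torsion, so one of the two elements $\tau^2 h_2^2 C'$ or $\tau^2 h_2^2 C' + \tau h_3(\D e_1 + C_0)$ must be a boundary, with $\tau p_1 + h_0^2 h_3 h_6$ the only possible source. Note also that the correct conclusion is the disjunction just stated, not "$\tau^2 h_2^2 C'$ is a boundary": if you could literally prove the latter, you would have pinned down the differential exactly and contradicted the ambiguity you (correctly) acknowledge at the end.
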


\begin{proof}
Projection to the top cell of $C\tau$ takes
$h_4 D_2$ to $\tau^3 d_1 g^2$.
Moreover,
there is a $\nu$ extension in the homotopy of $C\tau$
from $h_0^2 h_3 h_6$ to $h_4 D_2$.
Therefore, this $\nu$ extension must be in the image of
projection to the top cell.

Table \ref{tab:nu-extn} shows that there is a hidden
$\nu$ extension from
$\tau h_2^2 C'$ to $\tau^3 d_1 g^2$.
Therefore, either $\tau h_2^2 C'$ or
$\tau h_2^2 C' + h_3(\D e_1 + C_0)$ is in the image of
projection to the top cell, so
$\tau^2 h_2^2 C'$ or $\tau^2 h_2^2 C' + \tau h_2 (\D e_1 + C_0)$
is hit by a differential.
The element $\tau p_1 + h_0^2 h_3 h_6$ is the 
only possible source for this differential.
\end{proof}

\begin{lemma}
\label{lem:d5-h1x71,6}
\revdeg{72, 7, 39}
$d_5(h_1 x_{71,6}) = 0$.
\end{lemma}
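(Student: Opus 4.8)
The plan is to deduce the vanishing of $d_5(h_1 x_{71,6})$ from the derivation property of the Adams differential together with information already extracted about $x_{71,6}$ on the lower pages. Since $\eta$ is detected by the permanent cycle $h_1$, the element $h_1$ survives to the $E_5$-page with $d_5(h_1) = 0$, so the ordinary Leibniz rule gives $d_5(h_1 x_{71,6}) = h_1 \cdot d_5(x_{71,6})$, provided $x_{71,6}$ itself survives to $E_5$. First I would confirm from the $d_2$, $d_3$, and $d_4$ analyses (Tables \ref{tab:Adams-d2}, \ref{tab:Adams-d3}, \ref{tab:Adams-d4}, and the list of permanent cycles in Table \ref{tab:Adams-perm}) that $x_{71,6}$ survives to the $E_5$-page, and record the value $z = d_5(x_{71,6})$, a class in stem $70$ and Adams filtration $11$. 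It then suffices to check that $h_1 z$ vanishes in the $E_5$-page; if $x_{71,6}$ is already a permanent cycle, the conclusion is immediate. Note that $h_1 z$ lives in stem $71$ and Adams filtration $12$, which is exactly the bidegree of the candidate target for $d_5(h_1 x_{71,6})$.

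To pin down the target independently, I would compare with the Adams spectral sequence for $C\tau$. The $d_5$ pattern for $C\tau$ is completely determined by the machine-computed algebraic Novikov data, so any nonzero $d_5(h_1 x_{71,6})$ for the sphere must hit a class that maps to zero under inclusion $i$ of the bottom cell, i.e.\ a class divisible by $\tau$ or one already hit in $C\tau$. Intersecting the classes in stem $71$ and Adams filtration $12$ on the $E_5$-page with this constraint should leave at most one candidate $w$. I expect this step to do most of the work: combined with the Leibniz computation of the previous paragraph, it will either force $w = h_1 z = 0$ outright, or isolate a single $\tau$-multiple still to be excluded.

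The hard part will be eliminating that last candidate $w$, should one survive both constraints. Here I would use one of the standard obstructions of this chapter. Either I exhibit $w$ as the image of a class under projection $p$ from $C\tau$ to the top cell, so that $w$ cannot itself be hit by a differential and hence cannot be the target (as in the argument of Lemma \ref{lem:d3-t^2d0B5}); or I show, via the Moss Convergence Theorem \ref{thm:Moss}, that $h_1 x_{71,6}$ detects a nonzero product or Toda bracket in the $72$-stem, which forces $h_1 x_{71,6}$ to be a permanent cycle and rules out every $d_5$ simultaneously. If instead $w$ turns out to be $h_1$-divisible, Moss's higher Leibniz rule \ref{thm:Leibniz}, applied to a suitable Massey-product expression for $h_1 x_{71,6}$ in the $E_5$-page, should give a third route to the same conclusion.
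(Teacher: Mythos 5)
Your opening step fails. Table \ref{tab:Adams-d2} records the differential $d_2(x_{71,6}) = \tau d_1 e_1$, so $x_{71,6}$ does not survive past the $E_2$-page; there is no class $d_5(x_{71,6})$ to speak of, and the Leibniz identity $d_5(h_1 x_{71,6}) = h_1\, d_5(x_{71,6})$ is meaningless here. The class called $h_1 x_{71,6}$ on the $E_5$-page is a product only at the level of $E_2$ -- it survives precisely because $h_1 \cdot \tau d_1 e_1$ vanishes -- so nothing about its higher differentials can be read off from a (nonexistent) differential on $x_{71,6}$. Your contingency ``if $x_{71,6}$ is already a permanent cycle, the conclusion is immediate'' does not apply.

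The remainder of your plan correctly anticipates the shape of the problem but does not solve it. The unique possible target of $d_5(h_1 x_{71,6})$ is $\tau M h_2^2 g$ in $(71,12,39)$, and because it is a $\tau$-multiple it maps to zero under inclusion of the bottom cell into $C\tau$, so the comparison you propose in your second paragraph cannot exclude it -- this is exactly the ``single $\tau$-multiple still to be excluded'' you foresee, and it is the entire content of the lemma. None of your three fallback routes is then executed, and none is the one that works. The paper's argument is a $\tau$-extension argument: by the relation $h_2^3 = h_1^2 h_3$ one has $h_2 \cdot M h_2^2 g = M h_1^2 h_3 g$, and Table \ref{tab:tau-extn} gives a hidden $\tau$ extension from $M h_1^2 h_3 g$ to $M h_1 d_0^2$. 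Hence $\tau \nu \{M h_2^2 g\} \neq 0$, so $\tau \{M h_2^2 g\} \neq 0$, so $\tau M h_2^2 g$ must survive to the $E_\infty$-page to detect it and cannot be the target of any differential. Some such positive argument that the candidate target survives is the missing idea; without it your proof does not close.
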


\begin{proof}
Table \ref{tab:tau-extn} shows that 
there is a hidden $\tau$ extension from 
$M h_1^2 h_3 g$ to $M h_1 d_0^2$.
Therefore, $M h_2^2 g$ must also support a $\tau$ extension.
This shows that $\tau M h_2^2 g$ cannot be the target of a differential.
\end{proof}

\begin{lemma}
\label{lem:d5-h4D2}
\revdeg{73, 7, 38}
$d_5(h_4 D_2) = \tau^4 d_1 g^2$.
\end{lemma}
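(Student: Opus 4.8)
The plan is to push forward information from the Adams spectral sequence for $C\tau$, exactly as was done in the proof of Lemma \ref{lem:d5-tp1+h0^2h3h6}. There we observed that $h_4 D_2$ is a permanent cycle in the Adams spectral sequence for $C\tau$, and that projection $p \colon C\tau \map S^{1,-1}$ to the top cell carries the homotopy class detected by $h_4 D_2$ to a class detected by $\tau^3 d_1 g^2$ on the sphere. I would take this projection statement as the starting point; it is the one genuinely substantive input, and it rests on the machine computation of the Adams spectral sequence for $C\tau$.

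First I would record the consequence of the long exact sequence in homotopy associated to the cofiber sequence $S^{0,-1} \xrightarrow{\tau} S^{0,0} \xrightarrow{i} C\tau \xrightarrow{p} S^{1,-1}$: by exactness, every element in the image of $p$ is annihilated by $\tau$. Applying this to the class above shows that the homotopy class $\beta$ detected by $\tau^3 d_1 g^2$ satisfies $\tau \beta = 0$. Since $\tau^3 d_1 g^2$ is a permanent cycle (being $p$ of a permanent cycle), and since $\tau \cdot \tau^3 d_1 g^2 = \tau^4 d_1 g^2$ already on the $E_2$-page, the permanent cycle $\tau^4 d_1 g^2$ would detect the product $\tau \beta$, via a non-hidden $\tau$ multiplication, unless it is a boundary. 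As $\tau \beta = 0$, I conclude that $\tau^4 d_1 g^2$ must be hit by some Adams differential.

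It then remains to identify the source. By inspection of the chart in the relevant degree, together with comparison to $C\tau$, $\tmf$, and $\mmf$ to rule out earlier differentials on the candidate, the only element that can hit $\tau^4 d_1 g^2$ is $h_4 D_2$, and the filtration difference forces the differential to be a $d_5$. In particular, because $i_*(h_4 D_2) = h_4 D_2$ is a permanent cycle in $C\tau$, any differential supported by $h_4 D_2$ on the sphere has $\tau$-divisible target; this is compatible with the value $\tau^4 d_1 g^2$ and simultaneously rules out spurious shorter differentials, so that $h_4 D_2$ indeed survives to $E_5$. This yields $d_5(h_4 D_2) = \tau^4 d_1 g^2$. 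The main obstacle, as usual, is purely one of bookkeeping: confirming that no other element in this stem could absorb $\tau^4 d_1 g^2$, so that the source is genuinely unique; everything after the projection input reduces to the short $\tau$-torsion argument above.
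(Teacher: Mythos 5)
Your argument is circular. The ``one genuinely substantive input'' you isolate --- that projection from $C\tau$ to the top cell carries the class detected by $h_4 D_2$ to a class detected by $\tau^3 d_1 g^2$ --- is a \emph{hidden} value of $p_*$: the associated graded map $\Gr p$ vanishes on $h_4 D_2$, so this statement is not part of the machine-computed data for $C\tau$. In this paper such hidden values (Table \ref{tab:hid-proj}, via Theorem \ref{thm:hid-incl-proj}) are read off from the $E_\infty$-pages only \emph{after} all differentials are known, by exactness and the absence of other combinatorial possibilities. In particular, the entry for $h_4 D_2$ presupposes the very conclusion you want: if $h_4 D_2$ survived in the Adams spectral sequence for the sphere and detected $\alpha$, then $i_*(\alpha)$ would be detected by $h_4 D_2$ in $C\tau$ and would project to zero, and nothing would force the class detected by $\tau^3 d_1 g^2$ to be $\tau$-torsion. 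Every route to establishing the projection value (showing that $\{h_4 D_2\}\subseteq\pi_{73,38}C\tau$ misses the image of $i_*$, or showing that $\tau\cdot\{\tau^3 d_1 g^2\}=0$) requires first knowing that $\tau^4 d_1 g^2$ is a boundary or that $h_4 D_2$ dies --- which is the lemma. The fact that the projection value is also quoted in the proof of Lemma \ref{lem:d5-tp1+h0^2h3h6} does not help you; that lemma is logically downstream of this one, not an independent source.

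The paper's actual proof supplies the missing content by an entirely different mechanism: it assumes for contradiction that $h_4 D_2$ survives and detects $\alpha$, uses the hidden $\tau$ extension from $h_1^2 h_6 c_0$ to $h_0 h_4 D_2$ to see that $h_0 h_4 D_2$ detects both $2\alpha$ and $\tau\eta\epsilon\eta_6$, and then, in each case for the possible difference of these elements, forms a (matric) Toda bracket of the shape $\langle \eta, [2 \;\; \tau\eta\epsilon \;\; \cdots], [\alpha \;\; \eta_6 \;\; \cdots]^T\rangle$ whose image under $i_*$ is $\langle\eta,2,\alpha\rangle$; comparing with the computation of $\langle\eta,2,\alpha\rangle$ in $\pi_{*,*}C\tau$ (detected by $\ol{h_1^3 h_6 c_0}$, which projects nontrivially to the top cell) yields a contradiction. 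Only then does one rule out the other possible targets to pin down $d_5(h_4 D_2)=\tau^4 d_1 g^2$. If you want to salvage your structure (kill $\tau^4 d_1 g^2$ first, then identify the source), you must give an independent proof that the class detected by $\tau^3 d_1 g^2$ is annihilated by $\tau$; as written, you have assumed it.
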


\begin{proof}
Suppose for sake of contradiction that $h_4 D_2$ survived, and
let $\alpha$ be an element of $\pi_{73,38}$ that is detected by it.
Table \ref{tab:tau-extn} shows that there is a hidden $\tau$ extension
from $h_1^2 h_6 c_0$ to $h_0 h_4 D_2$.  Therefore,
$h_0 h_4 D_2$ detects both $2 \alpha$ and $\tau \eta \epsilon \eta_6$.
However, it is possible that the difference between these two elements
is detected by $\tau^2 M d_0^2$ or by $\tau^3 \D h_1 d_0 e_0^2$.
We will handle of each of these cases.

First, suppose that $2 \alpha$ equals $\tau \eta \epsilon \eta_6$.
Then the Toda bracket
\[
\left\langle
\eta,
\left[
\begin{array}{cc}
2 & \tau \eta \epsilon 
\end{array}
\right],
\left[
\begin{array}{c}
\alpha \\ \eta_6 
\end{array}
\right]
\right\rangle
\]
is well-defined.  Inclusion of the bottom cell into $C\tau$ takes this
bracket to
\[
\left\langle
\eta,
\left[
\begin{array}{cc}
2 & 0
\end{array}
\right],
\left[
\begin{array}{c}
\alpha \\ \eta_6 
\end{array}
\right]
\right\rangle =
\langle \eta, 2, \alpha \rangle,
\]
so $\langle \eta, 2, \alpha \rangle$ is in the image of inclusion of 
the bottom cell.

On the other hand,
in the homotopy of $C\tau$, the bracket 
$\langle \eta, 2, \alpha \rangle$ is detected by
$\ol{h_1^3 h_6 c_0}$, with indeterminacy generated by
$\ol{h_1^2 h_4 Q_2}$.  
These elements map non-trivially under projection to the top cell,
which contradicts that they are in the image of inclusion of the 
bottom cell.  

Next, suppose that $2 \alpha + \tau \eta \epsilon \eta_6$
is detected by $\tau^3 \D h_1 d_0 e_0^2$.
Then the Toda bracket
\[
\left\langle
\eta,
\left[
\begin{array}{ccc}
2 & \tau \eta \epsilon & \tau^2 \beta
\end{array}
\right],
\left[
\begin{array}{c}
\alpha \\ \eta_6 \\ \kappabar
\end{array}
\right]
\right\rangle
\]
is well-defined, where $\beta$ is an element of $\pi_{53,29}$
that is detected by $\D h_1 d_0^2$.
The same argument involving inclusion of the bottom cell into
$C\tau$ applies to this Toda bracket.

Finally, assume that $2 \alpha + \tau \eta \epsilon \eta_6$
is detected by $\tau^2 M d_0^2$.
Table \ref{tab:misc-extn} shows that $M d_0$ detects
$\kappa \theta_{4.5}$, so $\tau^2 M d_0^2$ detects 
$\tau^2 \kappa^2 \theta_{4.5}$.
Then $2 \alpha + \tau \eta \epsilon \eta_6$ equals either
$\tau^2 \kappa^2 \theta_{4.5}$ or 
$\tau^2 \kappa^2 \theta_{4.5} + \tau^2 \beta \kappabar$.
We can apply the same argument to the Toda bracket
\[
\left\langle
\eta,
\left[
\begin{array}{ccc}
2 & \tau \eta \epsilon & \tau^2 \kappa \theta_{4.5}
\end{array}
\right],
\left[
\begin{array}{c}
\alpha \\ \eta_6 \\ \kappa
\end{array}
\right]
\right\rangle,
\]
or to the Toda bracket
\[
\left\langle
\eta,
\left[
\begin{array}{cccc}
2 & \tau \eta \epsilon & \tau^2 \kappa \theta_{4.5} & \tau^2 \beta
\end{array}
\right],
\left[
\begin{array}{c}
\alpha \\ \eta_6 \\ \kappa \\ \kappabar
\end{array}
\right]
\right\rangle.
\]

We have now shown by contradiction that $h_4 D_2$ does not survive.
After ruling out other possibilities by comparison to $C\tau$
and to $\mmf$, the only remaining possibility is that 
$d_5(h_4 D_2)$ equals $\tau^4 d_1 g^2$.
\end{proof}

\begin{lemma}
\label{lem:d5-t^3gG0}
\revdeg{86, 11, 45}
$d_5(\tau^3 g G_0) = \tau M \D h_1^2 d_0$.
\end{lemma}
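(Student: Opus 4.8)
The plan is to adapt the Mahowald-operator argument used for the analogous $d_2$ statement in Lemma~\ref{lem:d2-tB5g}, detecting the relevant classes through the map $p_*\colon \Ext_\C \map \Ext_B$ of Proposition~\ref{prop:M-Massey}, where $\Ext_B = \M_2[v_3] \otimes_{\M_2} \Ext_{A(2)}$. The shape of the target, $\tau M \D h_1^2 d_0$, strongly suggests that the $M$-operator machinery is the right tool, exactly as for $\tau B_5 g$ one stem-region lower.

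First I would reduce the statement to the non-vanishing of $d_5(\tau^3 g G_0)$. By comparison with the Adams spectral sequences for $C\tau$, $\tmf$, and $\mmf$, and by inspection of the relevant degree in the $E_5$-page, I expect that $\tau M \D h_1^2 d_0$ is the only possible non-zero value of $d_5(\tau^3 g G_0)$: all competing targets should already be hit by earlier differentials or be excluded by $\tmf$- and $\mmf$-comparison. This step is routine bookkeeping against the surrounding tables.

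Second I would detect a suitable multiple of the source. Multiplying by the permanent cycle $d_0$ (for which $d_r(d_0) = 0$) and applying $p_*$, I would check that $p_*(d_0 \cdot \tau^3 g G_0)$ is non-zero in $\Ext_B$. Since $p_*$ is a ring map and $p_*(M(-)) = (e_0 v_3^2 + h_1^3 v_3^3)\, p_*(-)$, non-vanishing of this image forces $d_0 \cdot \tau^3 g G_0$ to be non-zero in $\Ext_\C$ and, by matching degree and $v_3$-weight, to be an $M$-multiple. The Leibniz rule then gives $d_5(d_0 \cdot \tau^3 g G_0) = d_0 \cdot d_5(\tau^3 g G_0)$, and combining a $d_5$ differential already recorded in Table~\ref{tab:Adams-d5} on the detected $M$-multiple with the multiplicative structure of $\Ext_{A(2)}$ (for instance a relation of the form $e_0^2 = d_0 g$), I would identify this product with $d_0 \cdot \tau M \D h_1^2 d_0$. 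As the latter is non-zero in the $E_5$-page, the differential $d_5(\tau^3 g G_0) = \tau M \D h_1^2 d_0$ follows.

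The main obstacle will be this second step: carrying out the $\Ext_B$-computation correctly, verifying that the detected product is genuinely non-zero and is the $M$-multiple predicted by Proposition~\ref{prop:M-Massey}, and then propagating the differential without the product $d_0 \cdot \tau M \D h_1^2 d_0$ collapsing in the $E_5$-page. If the multiplicative detection by $d_0$ fails for degree reasons, I would fall back on a $C\tau$-comparison argument: $\tau M \D h_1^2 d_0$ is a $\tau$-multiple, hence maps to zero under inclusion of the bottom cell into $C\tau$, so it is either hit by a differential or is the target of a hidden $\tau$ extension; ruling out the latter via the hidden-extension tables would force it to be hit, with $\tau^3 g G_0$ as the only available source.
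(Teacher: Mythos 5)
Your proposal has a genuine gap in both of its branches. The main branch tries to transplant the argument of Lemma~\ref{lem:d2-tB5g}, but that argument works for a $d_2$ precisely because $d_2(\tau M g \cdot h_0 m)$ can be read off from the Leibniz rule applied to the known $d_2$ on a factor; a $d_5$ on the product $d_0 \cdot \tau^3 g G_0$ cannot be obtained this way unless you exhibit a factorization through an element that survives to $E_5$ and supports an independently known $d_5$, and no such differential is available — the product lives in the 100-stem, outside the range of Table~\ref{tab:Adams-d5}. Moreover the detection claim $p_*(d_0 \cdot \tau^3 g G_0) \neq 0$ in $\Ext_B$ is asserted rather than checked, and classes built from $g$, $n$, and $G_0$ frequently die under $p_*$ (compare Remark~\ref{rem:h1^3h4,h1,tgn}, where $p_*(\tau g n) = 0$). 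The fallback branch is circular: you would rule out a hidden $\tau$ extension onto $\tau M \D h_1^2 d_0$ ``via the hidden-extension tables,'' but those tables are downstream of the differential computations, and this region is exactly where the paper records unresolved uncertainties about whether $M \D h_1^2 d_0$ is hit (Table~\ref{tab:Adams-higher}), so the bookkeeping does not close on its own.

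The paper's actual proof runs in the opposite direction: it assumes $\tau^3 g G_0$ survives and derives a contradiction. If it survived, it could not be hit or be the target of a hidden $\tau$ extension, so it would map to $\D^2 e_1 + \ol{\tau \D h_2 e_1 g}$ under inclusion of the bottom cell into $C\tau$. For $\alpha$ detected by $\tau^3 g G_0$, Lemma~\ref{lem:t^3gG0,h0h2,h2} forces the Toda bracket $\langle \alpha, 2\nu, \nu \rangle$ to be detected by $e_0 x_{76,9}$ or in higher Adams filtration; but in $C\tau$ the corresponding bracket is detected by $\D^2 h_1 g_2$, in lower filtration, contradicting the fact that inclusion of the bottom cell can only increase filtration. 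Non-survival plus the $\mmf$ comparison then pins the target as $\tau M \D h_1^2 d_0$. Your uniqueness-of-target step agrees with the paper's last sentence, but the essential input — the Massey/Toda bracket computation of Lemma~\ref{lem:t^3gG0,h0h2,h2} and the filtration comparison across the bottom-cell inclusion — is missing from your outline.
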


\begin{proof}
Suppose for sake of contradiction that the element
$\tau^3 g G_0$ survived.
It cannot be the target of a hidden $\tau$ extension,
and it cannot be hit by a differential.
Therefore, it maps non-trivially under inclusion of the bottom
cell into $C\tau$, and the only possible image is 
$\D^2 e_1 + \ol{\tau \D h_2 e_1 g}$.

Let $\alpha$ be an element of $\pi_{86,45}$ that is detected
by $\tau^3 g G_0$.  
Consider the Toda bracket
$\langle \alpha, 2\nu, \nu \rangle$.
Lemma \ref{lem:t^3gG0,h0h2,h2} implies that this Toda bracket
is detected by $e_0 x_{76,9}$, or is detected in higher Adams filtration.

On the other hand, 
under inclusion of the bottom cell into $C\tau$,
the Toda bracket is detected by $\D^2 h_1 g_2$.
This is inconsistent with the conclusion of the previous paragraph,
since inclusion of the bottom cell can only increase
Adams filtrations.

We now know that $\tau^3 g G_0$ does not survive.
After eliminating other possibilities by comparison to $\mmf$,
the only remaining possibility is that $d_5(\tau^3 g G_0)$ equals
$\tau M \D h_1^2 d_0$.
\end{proof}

\begin{lemma}
\revdeg{92, 4, 48}
\label{lem:d5-g3}
$d_5(g_3) = h_6 d_0^2$.
\end{lemma}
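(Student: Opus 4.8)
The plan is to show that $g_3$ cannot survive to the $E_\infty$-page, so that it is forced to support a differential, and then to check that $h_6 d_0^2$ is the only possible target. I would begin by pinning down the degrees and the identity of $g_3$: it lies in stem $92$ and Adams filtration $4$, while $h_6 d_0^2$ lies in stem $91$ and filtration $9$, so a $d_5$ differential between them is numerically consistent. In the spirit of Proposition \ref{prop:g-Massey}, the class $g_3$ is the third member of the family $g = \langle 1, h_4, h_1^4\rangle$, $g_2 = \langle 1, h_5, h_2^4 \rangle$, $g_3 = \langle 1, h_6, h_3^4 \rangle$, coming from the May differential $d_4(b_{41}^2) = h_3^4 h_6$; this fixes $g_3$ unambiguously and, together with comparison to the Adams spectral sequence for $C\tau$, confirms that $g_3$ survives to the $E_5$-page (it supports no earlier Adams differential).

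The engine of the argument is naturality along the inclusion $i$ of the bottom cell and the projection $p$ to the top cell of $C\tau$, exactly as in Lemmas \ref{lem:d3-h0^3x78,10}, \ref{lem:d5-h4D2}, and \ref{lem:d5-t^3gG0}. I would suppose for contradiction that $g_3$ survives. Then under $i$ it maps to the corresponding permanent cycle $g_3$ in the Adams $E_\infty$-page for $C\tau$. Using the machine-computed structure of $C\tau$, I would locate a hidden extension (by $\nu$ or by $h_2$) out of $g_3$ in the homotopy of $C\tau$ whose target $z$ maps nontrivially under projection $p$ to the top cell. Since $z$ would then have to lie simultaneously in the image of $i$ and map nontrivially under $p$, while the composite of inclusion of the bottom cell with projection to the top cell is zero, this yields a contradiction. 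Hence $g_3$ must support an Adams differential.

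It then remains to identify the target. I would eliminate the shorter possibilities: $d_2(g_3)$, $d_3(g_3)$, and $d_4(g_3)$ are excluded by comparison to $C\tau$, $\tmf$, and $\mmf$ and by the companion computation $d_4(h_4^2 h_6) = h_0^3 g_3$ of Lemma \ref{lem:d4-h4^2h6}, which already constrains $g_3$ on the $E_4$- and $E_5$-pages. Among the remaining $d_5$ targets in stem $91$ and filtration $9$, comparison to $C\tau$ and to $\tmf$ and $\mmf$ should rule out all but $h_6 d_0^2$, giving $d_5(g_3) = h_6 d_0^2$. This is consistent with Lemma \ref{lem:d3-Dh2^2h6}, which shows that $\tau h_1 h_6 d_0^2$ is already hit by $d_3(\D h_2^2 h_6)$ and hence vanishes on the $E_4$-page, so no conflict arises with the $h_1$- and $\tau$-multiples of the present differential.

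I expect the main obstacle to be the $C\tau$ bookkeeping in the second paragraph: one must verify precisely which classes occur in $\pi_{92,*}C\tau$ and $\pi_{91,*}C\tau$, exhibit the specific hidden extension out of $g_3$ in the homotopy of $C\tau$, and confirm that its target is detected under projection to the top cell. A secondary difficulty is the target elimination in the third paragraph, where I would need to be certain that no other class in stem $91$ and filtration $9$ can receive $d_5(g_3)$ and that $g_3$ genuinely reaches the $E_5$-page.
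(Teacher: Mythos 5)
Your overall strategy --- force $g_3$ to support a differential by a $C\tau$ naturality argument, then eliminate targets --- is reasonable in form, but the argument as written has a genuine gap at its center: the contradiction in your second paragraph depends entirely on the existence of a specific hidden extension (by $\nu$, say) out of $g_3$ in the homotopy of $C\tau$ whose target has nonzero image under the associated-graded map of the projection to the top cell. You never exhibit such an extension; you only say you ``would locate'' one. Everything downstream collapses without it, and it is not at all clear from the $C\tau$ data that such an extension exists --- indeed, you acknowledge this yourself as ``the main obstacle.'' A proof that defers its one load-bearing input to future bookkeeping is not yet a proof. The same criticism applies, less severely, to your third paragraph, where the elimination of alternative $d_5$ targets is asserted rather than carried out.

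For comparison, the paper's proof runs in the opposite direction: instead of showing that the \emph{source} cannot survive, it shows that the \emph{target} (more precisely, its $h_1$-multiple $h_1 h_6 d_0^2$) must be hit. Since $h_1 h_6$ detects $\langle \eta, 2, \theta_5 \rangle$, the element $h_1 h_6 d_0^2$ detects $\kappa^2 \langle \eta, 2, \theta_5 \rangle$, and the shuffle $\tau \kappa^2 \langle \eta, 2, \theta_5 \rangle = \langle \tau \kappa^2, \eta, 2 \rangle \theta_5 = 0$ (Lemma \ref{lem:tkappa^2,eta,2}) shows $h_1 h_6 d_0^2$ supports no hidden $\tau$ extension. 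It therefore must either be hit by a differential or lie in the image of projection from $C\tau$ to the top cell; the latter is excluded because the only candidate preimage $h_0^3 h_4^2 h_6$ detects $\langle \theta_4, 2, \theta_5 \rangle$ in $C\tau$ and so must instead be in the image of inclusion of the bottom cell. The only remaining possibility is $d_5(h_1 g_3) = h_1 h_6 d_0^2$, which forces the stated differential. Note that this route needs no enumeration of hidden extensions on $g_3$ in $C\tau$ and no separate target-elimination step: identifying the unique possible source of the differential killing $h_1 h_6 d_0^2$ does both jobs at once. If you want to salvage your approach, you would need to supply the concrete hidden extension in $\pi_{*,*}C\tau$ together with its value under projection; as it stands, the paper's Toda-bracket argument is the one that closes.
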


\begin{proof}
Table \ref{tab:Toda} shows that $h_1 h_6$ detects the Toda bracket
$\langle \eta, 2, \theta_5 \rangle$.  Therefore,
$h_1 h_6 d_0^2$ detects $\kappa^2 \langle \eta, 2, \theta_5 \rangle$.
Now consider the shuffle
\[
\tau \kappa^2 \langle \eta, 2, \theta_5 \rangle =
\langle \tau \kappa^2, \eta, 2 \rangle \theta_5.
\]
Lemma \ref{lem:tkappa^2,eta,2} shows that the last bracket is zero.
Therefore, $h_1 h_6 d_0^2$ does not support a hidden
$\tau$ extension, so it is either hit by a differential or 
in the image of projection from $C\tau$ to the top cell.

In the Adams spectral sequence for $C\tau$, the element 
$h_0^3 h_4^2 h_6$ detects the Toda bracket
$\langle \theta_4, 2, \theta_5 \rangle$.
Therefore, $h_0^3 h_4^2 h_6$ must be in the image of
inclusion of the bottom cell into $C\tau$.
In particular,
$h_0^3 h_4^2 h_6$ cannot map to $h_1 h_6 d_0^2$
under projection from $C\tau$ to the top cell.

Now $h_1 h_6 d_0^2$ cannot be in the image of
projection from $C\tau$ to the top cell, so it
must be hit by some differential.
The only possibility is that $d_5(h_1 g_3)$ equals $h_1 h_6 d_0^2$.
\end{proof}

\rev{
\begin{lemma}
\label{lem:d5-D^2g2}
\revdeg{92, 12, 48}
$d_5(\D^2 g_2) = 0$.
\end{lemma}

\begin{proof}
The only possible values for $d_2(\D^2 g_2)$ are the linear combinations
of $\tau M \D c_0 d_0 + \tau^2 M d_0 l$ 
and $\tau^2 \D^2 h_2 g^2$.
The possibilities
$\tau M \D c_0 d_0 + \tau^2 M d_0 l$ and
$\tau M \D c_0 d_0 + \tau^2 M d_0 l + \tau^2 \D^2 h_2 g^2$ are
ruled out by $d_0$ extensions.
More specifically,
$d_0 (\tau M \D c_0 d_0 + \tau^2 M d_0 l)$
and
$d_0(\tau M \D c_0 d_0 + \tau^2 M d_0 l + \tau^2 \D^2 h_2 g^2)$
equal the non-zero element $\tau^2 M d_0^2 l$
in the $E_5$-page, while
$d_0 \cdot \D^2 g_2$ is zero already in the $E_2$-page.

If $\tau^2 \D^2 h_2 g^2$ were the value of a differential,
then the $2$ extension from $\tau \D^2 h_2 g^2$ to $\tau \D^2 h_0 h_2 g^2$
would be detected by the top cell of $C\tau$.
However, there is no such $2$ extension in the homotopy of $C\tau$.
\end{proof}
}

\begin{lemma}
\label{lem:d5-e0x76,9}
\revdeg{93, 13, 50}
$d_5(e_0 x_{76,9}) = M \D h_1 c_0 d_0$.
\end{lemma}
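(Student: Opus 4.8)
The plan is to combine the two immediately preceding lemmas via Moss's higher Leibniz rule \ref{thm:Leibniz}. By Lemma \ref{lem:t^3gG0,h0h2,h2}, the element $\tau e_0 x_{76,9}$ is a representative of the Massey product $\langle \tau^3 g G_0, h_0 h_2, h_2 \rangle$, which is built from the permanent cycles $h_0 h_2$ and $h_2$ detecting $2\nu$ and $\nu$. By Lemma \ref{lem:d5-t^3gG0}, $d_5(\tau^3 g G_0) = \tau M \D h_1^2 d_0$. Since $h_0 h_2$ and $h_2$ are permanent cycles, their $d_5$ differentials vanish, so the two outer terms in the higher Leibniz rule drop out and
\[
d_5(\tau e_0 x_{76,9}) \in \langle \tau M \D h_1^2 d_0, h_0 h_2, h_2 \rangle,
\]
with the bracket formed in the $E_5$-page. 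Dividing by $\tau$ at the end will produce the stated differential on $e_0 x_{76,9}$.

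First I would evaluate the target bracket. Using $\langle h_1, h_0 h_2, h_2 \rangle = c_0$ (the bracket is defined since $h_1 \cdot h_0 h_2 = 0$ and $h_0 h_2 \cdot h_2 = h_0 h_2^2 = 0$, and the value can be obtained from the May Convergence Theorem), the Massey product shuffle of Theorem \ref{thm:Toda-3fold}(3) gives
\[
\tau M \D h_1 d_0 \cdot \langle h_1, h_0 h_2, h_2 \rangle \subseteq \langle \tau M \D h_1 d_0 \cdot h_1, h_0 h_2, h_2 \rangle = \langle \tau M \D h_1^2 d_0, h_0 h_2, h_2 \rangle,
\]
so $\tau M \D h_1 c_0 d_0$ lies in the target bracket. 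After checking that the indeterminacy cannot produce a different value, this forces $d_5(\tau e_0 x_{76,9}) = \tau M \D h_1 c_0 d_0$. Because Adams differentials are $\tau$-linear, $\tau \cdot d_5(e_0 x_{76,9}) = \tau M \D h_1 c_0 d_0$, and I would finish by verifying that there is no $\tau$-torsion in the relevant degree of the $E_5$-page that could alter the $\tau$-divided value, yielding $d_5(e_0 x_{76,9}) = M \D h_1 c_0 d_0$.

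The hard part will be pinning down the input. Lemma \ref{lem:t^3gG0,h0h2,h2} asserts only that $\langle \tau^3 g G_0, h_0 h_2, h_2 \rangle$ contains \emph{either} zero \emph{or} $\tau e_0 x_{76,9}$; I must rule out that the bracket is merely $\{0, \tau M^2 h_2\}$, since $\tau M^2 h_2$ is a permanent cycle by Lemma \ref{lem:perm-M^2} and so the Leibniz computation would degenerate. I expect to resolve this exactly as in the proof of Lemma \ref{lem:d5-t^3gG0}, by comparison with the Adams spectral sequence for $C\tau$, where the associated Toda bracket $\langle \alpha, 2\nu, \nu \rangle$ is seen to be detected by $e_0 x_{76,9}$ rather than in higher filtration. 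A secondary technical obstacle is that Theorem \ref{thm:Leibniz} requires the bracket and its representative to live in the $E_5$-page rather than the $E_2$-page, so I would confirm that the defining null-homotopies for $\langle \tau^3 g G_0, h_0 h_2, h_2 \rangle$ persist to $E_5$ and that $\tau e_0 x_{76,9}$ is still a representative there.
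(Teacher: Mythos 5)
Your strategy has a fatal flaw at the very last step, the ``divide by $\tau$'' move. The target class $M \D h_1 c_0 d_0$ sits in weight $50$, and its $\tau$-multiple $\tau M \D h_1 c_0 d_0$ is \emph{zero} in the $E_5$-page: this is forced by the paper's own data, since Table \ref{tab:Adams-higher} records a nonzero $d_6$ on $\tau e_0 x_{76,9}$ (with value $M P \D h_1 d_0$), which requires $\tau e_0 x_{76,9}$ to survive to the $E_6$-page and hence $d_5(\tau e_0 x_{76,9}) = \tau \cdot d_5(e_0 x_{76,9}) = \tau M \D h_1 c_0 d_0 = 0$. (Concretely, $\D h_1 c_0 d_0$ is annihilated by $\tau$ and supports only a \emph{hidden} $\tau$ extension to $P \D h_1 d_0$, per Table \ref{tab:tau-extn}; multiplying by $M$ propagates this.) So your Leibniz computation on $\tau e_0 x_{76,9}$ can only ever produce $d_5(\tau e_0 x_{76,9}) = 0$ --- the bracket $\langle \tau M \D h_1^2 d_0, h_0 h_2, h_2 \rangle$ contains $\tau M \D h_1 c_0 d_0$ at best vacuously --- and the equation $\tau \cdot d_5(e_0 x_{76,9}) = 0$ carries no information about $d_5(e_0 x_{76,9})$ itself. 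The ``no $\tau$-torsion in the relevant degree'' check you defer to the end is precisely the check that fails: $M \D h_1 c_0 d_0$ is itself a $\tau$-torsion class, which is why the differential on $e_0 x_{76,9}$ and the differential on $\tau e_0 x_{76,9}$ land on genuinely different targets on different pages. The secondary issues you flag (that Lemma \ref{lem:t^3gG0,h0h2,h2} only says the bracket contains zero \emph{or} $\tau e_0 x_{76,9}$, and that the bracket must be re-formed in the $E_5$-page) are real but moot given this obstruction.

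For comparison, the paper's proof goes in the opposite direction: it shows the \emph{target} $M \D h_1 c_0 d_0$ cannot survive to the $E_\infty$-page. If it did, it could not support a hidden $\tau$ extension (Lemma \ref{lem:hit-MPDh1d0} shows $M P \D h_1 d_0$ is hit by a differential), so it would detect a $\tau$-torsion homotopy class and hence lie in the image of projection from $C\tau$ to the top cell; the only possible preimage is $\D^2 h_1 g_2$, which supports a $\sigma$ extension in $C\tau$, and one then rules out the corresponding hidden $\sigma$ extension from $M \D h_1^2 d_0$ in the sphere using $\sigma \{\D h_1 d_0\} = 0$. Once the target is known to die, $e_0 x_{76,9}$ is the only possible source. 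If you want a Leibniz-style argument, you would have to run it at weight $50$ (on $e_0 x_{76,9}$ itself, not its $\tau$-multiple), which would require inputs such as $d_5$ of a weight-$46$ analogue of $g G_0$ that the paper does not establish.
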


\begin{proof}
If $M \D h_1 c_0 d_0$ were a permanent non-zero cycle, then it
could not support a hidden $\tau$ extension because
Lemma \ref{lem:hit-MPDh1d0} shows that $M P \D h_1 d_0$
is hit by some differential.
Therefore, it would lie in the image of projection from
$C\tau$ to the top cell, and the only possible pre-image is the
element $\D^2 h_1 g_2$ in the $E_\infty$-page of the
Adams spectral sequence for $C\tau$.

There is a $\sigma$ extension from $\D^2 e_1 + \ol{\tau \D h_2 e_1 g}$
to $\D^2 h_1 g_2$ in the Adams spectral sequence for $C\tau$.
Then $M \D h_1 c_0 d_0$ would also have to be the target of a 
$\sigma$ extension.
The only possible source for this extension would be $M \D h_1^2 d_0$.

Table \ref{tab:eta-extn} shows that $M h_1$
detects $\eta \theta_{4.5}$, so
$M \D h_1^2 d_0$ detects $\eta \theta_{4.5} \{\D h_1 d_0\}$.
The product $\eta \sigma \theta_{4.5} \{\D h_1 d_0\}$ equals zero
because $\sigma \{\D h_1 d_0\}$ is zero.
Therefore, $M \D h_1^2 d_0$ cannot support a hidden $\sigma$
extension to $M \D h_1 c_0 d_0$.

We have now shown that $M \D h_1 c_0 d_0$ must be hit by some
differential, and the only possibility is that equals 
$d_5(e_0 x_{76,9})$.
\end{proof}

\section{Higher differentials}
\label{sctn:Adams-higher}

\revv{
Table \ref{tab:Adams-higher} lists the multiplicative generators 
of the Adams $E_r$-page, for $r \geq 6$, through the 90-stem
whose $d_r$ differentials are non-zero, or whose $d_r$ differentials
are zero for non-obvious reasons.
}

\begin{thm}
\label{thm:Adams-higher}
Table \ref{tab:Adams-higher} lists some values of the
Adams $d_r$ differential on multiplicative generators of the
$E_r$-page, for $r \geq 6$.
For $r \geq 6$, 
the Adams $d_r$ differential is zero on all multiplicative
generators of the $E_r$-page not listed in the table.
The list is complete through the 90-stem, except that:
\rev{
\begin{enumerate}
\item
$d_{10}(h_1 f_2)$ might equal $M \D h_1 d_0$.
\item
$d_9(\tau x_{85,6} + h_0^3 c_3)$ might equal $\tau M \D h_1 d_0$.
\item
$d_9(h_4^2 D_3)$ might equal $\tau M \D h_1 g$.
\end{enumerate}
}
\end{thm}

\begin{proof}
The $d_r$ differential on many multiplicative generators is zero.
For the majority of such multiplicative generators,
the $d_r$ differential is zero
because there are no possible non-zero
values, or by comparison to the Adams spectral sequence for $C\tau$,
or by comparison to $\tmf$ or $\mmf$.
In a few cases, the multiplicative generator
is already known to be a permanent cycle, as shown in 
Table \ref{tab:Adams-perm}.
A few additional cases appear
in Table \ref{tab:Adams-higher} because their proofs require further
explanation.

Some of the more difficult computations appear in \cite{BIX}.
The remaining more difficult computations are carried out in the
following lemmas.
\end{proof}

\begin{lemma}
\label{lem:d6-tQ3+tn1}
\mbox{}
\begin{enumerate}
\item
\revdeg{67, 5, 35}
$d_6(\tau Q_3 + \tau n_1) = 0$.
\item
\revdeg{87, 9, 48}
$d_6(g Q_3) = 0$
\end{enumerate}
\end{lemma}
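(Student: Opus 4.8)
The plan is to treat both statements as survival assertions: in each case there is (at most) a single candidate target for a $d_6$ differential, and the goal is to rule it out. I would begin with part (1). The element $\tau Q_3 + \tau n_1$ is exactly the element called $\tau Q_3$ in \cite{Isaksen14c} (see the remark following the definition of $\tau Q_3$), so the first thing I would do is record its defining relation $h_3 \cdot \tau Q_3 = 0$ and locate, from the machine data, the unique element $T$ in the same stem and in Adams filtration six higher that could receive a $d_6$. Everything then reduces to showing that $T$ cannot be hit.

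The most promising route to eliminate $T$ is to show that $\tau Q_3 + \tau n_1$ is in fact a permanent cycle, since then no differential of any length, in particular $d_6$, can originate on it. For this I would try to realize it as the detecting class of a product or Toda bracket of permanent cycles, invoking the Moss Convergence Theorem \ref{thm:Moss}; the natural candidates are brackets built from the classes already shown to interact with $Q_3$ and $n_1$ in Lemmas \ref{lem:compound-(epsilon+etasigma)-theta5} and \ref{lem:d4-h1c3}. Failing a clean bracket, the fallback is naturality along the inclusion $i$ of the bottom cell into $C\tau$: one computes $i_*(\tau Q_3 + \tau n_1)$ in the (algebraically known) homotopy of $C\tau$, checks that this image is a permanent cycle there, and then uses that the image of $d_6(T)$ accounts for the only way $T$ could be hit, forcing $d_6(\tau Q_3 + \tau n_1) = 0$.

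For part (2) I would reduce to part (1) by multiplying by the permanent cycle $g = \langle -, h_1^4, h_4 \rangle$. Since $g$ is a low-dimensional permanent cycle, $d_6(g) = 0$, so the Leibniz rule gives $d_6\big(g \cdot (\tau Q_3 + \tau n_1)\big) = g \cdot d_6(\tau Q_3 + \tau n_1) = 0$. Writing $g \cdot (\tau Q_3 + \tau n_1) = \tau g Q_3 + \tau g n_1$ and disposing of the term $\tau g n_1$ (either because $g n_1 = 0$ in this degree, or because it is separately a permanent cycle), this yields $\tau \cdot d_6(g Q_3) = 0$. Provided multiplication by $\tau$ is injective on the candidate target for $d_6(g Q_3)$ — equivalently, that this target is not $\tau$-torsion — we conclude $d_6(g Q_3) = 0$. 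If instead the candidate target is $\tau$-torsion, I would eliminate it directly by the same Moss-convergence or $C\tau$-comparison argument used in part (1), now applied in the $g$-shifted degree.

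The main obstacle I anticipate is twofold. First, in this dense range one must correctly isolate the unique candidate $d_6$ target and verify it is neither already hit by an earlier differential nor the support of its own differential; this is bookkeeping against the tables, but it is where errors hide. Second, and more genuinely, the $\tau$- and $g$-divisibility comparison in part (2) is delicate: the reduction collapses if the target of $d_6(g Q_3)$ happens to be annihilated by $\tau$, in which case one must argue intrinsically. I expect the decisive input in both parts to be naturality along $i \colon S^{0,0} \to C\tau$, using that the homotopy of $C\tau$ is computed algebraically, to certify that the relevant elements are permanent cycles.
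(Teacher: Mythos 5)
There is a genuine gap, and it sits exactly where you predict the argument will be easiest. Comparison to $C\tau$ and to $\tmf$ does eliminate most targets, but it leaves one candidate in each case: $d_6(\tau Q_3 + \tau n_1)$ might equal $\tau^2 M h_1 g$, and $d_6(g Q_3)$ might equal $\tau M h_1 g^2$. These survivors are precisely the targets that are multiples of $\tau$, so they map to zero under inclusion of the bottom cell into $C\tau$; a differential hitting them is invisible in the Adams spectral sequence for $C\tau$, and your proposed ``decisive input'' of naturality along $i\colon S^{0,0}\to C\tau$ cannot rule it out. Your primary route (exhibiting $\tau Q_3 + \tau n_1$ as the detecting class of a bracket of permanent cycles) is left unspecified, and no such bracket is produced in the paper either. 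What the paper actually does is attack the \emph{target}: it shows that $M \D h_0^2 e_0$ cannot be hit by any differential (using that $h_1^2 c_3$ is a permanent cycle and that $\tau^2 g Q_3 = h_4^2 Q_2$ must survive to detect $\langle \theta_4, \tau\kappabar, \{t\}\rangle$), then uses the Massey product $\langle \D h_0^2 e_0, h_0^2, h_0 g_2\rangle$ and a shuffling argument to show $M \D h_0^2 e_0$ detects a multiple of $\tau\eta\kappabar^2$. The only way this can happen is if $M \D h_0^2 e_0$ detects $\kappabar$ times an element detected by $\tau^2 M h_1 g$, which forces $\tau^2 M h_1 g$ to survive; this gives part (1). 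The same computation shows $M \D h_0^2 e_0$ is the target of a hidden $\tau$ extension whose only possible source is $\tau^2 M h_1 g^2$, so that class also cannot be hit, which gives part (2). None of this machinery appears in your outline.

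Your Leibniz reduction for part (2) is in the right spirit --- it is implicitly how the paper converts ``$\tau^2 M h_1 g^2$ is not hit'' into ``$d_6(g Q_3) \neq \tau M h_1 g^2$'' --- but as you yourself flag, it only closes if one knows the $\tau$-multiple of the candidate target is nonzero on the relevant page, and establishing that is the entire content of the argument, not a side condition. (A smaller point: in the motivic $E_2$-page the class in the $20$-stem is $\tau g$, not $g$, so ``multiply by $g$'' must be replaced by multiplication by $\tau g$, which inserts an extra factor of $\tau$ and makes the divisibility bookkeeping even more delicate.) So the skeleton of your plan is reasonable, but the one step that cannot be delegated to $C\tau$-comparison or to generic bookkeeping is missing, and it is the step that constitutes the proof.
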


\begin{proof}
Several possible differentials on these elements are eliminated
by comparison to the
Adams spectral sequences for $C\tau$ and for $\tmf$.
The only remaining possibility is that $d_6(\tau Q_3 + \tau n_1)$
might equal $\tau^2 M h_1 g$, and
that $d_6(g Q_3)$ might equal $\tau M h_1 g^2$.

The element $M \D h_0^2 e_0$ is not hit by any differential because
Table \ref{tab:Adams-perm} shows that $h_1^2 c_3$ is a permanent cycle,
and Table \ref{tab:Toda} shows that $\tau^2 g Q_3 = h_4^2 Q_2$
must survive to detect the Toda bracket
$\langle \theta_4, \tau \kappabar, \{ t\} \rangle$.

Lemma \ref{lem:tetakappabar^2,2,4kappabar2}
shows that $M \D h_0^2 e_0$ 
detects the Toda bracket
$\langle \tau \eta \kappabar^2, 2, 4 \kappabar_2 \rangle$, which
contains
$\tau \kappabar^2 \langle \eta, 2, 4 \kappabar_2 \rangle$.
Lemma \ref{lem:eta,2,4kappabar2} shows that
this expression contains zero.
We now know that $M \D h_0^2 e_0$ detects an element
in the indeterminacy of the bracket
$\langle \tau \eta \kappabar^2, 4, 2 \kappabar_2 \rangle$.
In fact, it must detect a multiple
of $\tau \eta \kappabar^2$ since
$2 \kappabar_2 \cdot \pi_{42,22}$ is zero.

The only possibility is that $M \D h_0^2 e_0$
detects $\kappabar$ times an element detected by $\tau^2 M h_1 g$.
Therefore, $\tau^2 M h_1 g$ cannot be hit by a differential.
This shows that $\tau Q_3 + \tau n_1$ is a permanent cycle.

We also know that $M \D h_0^2 e_0$ is the target
of a hidden $\tau$ extension, since it detects a multiple of $\tau$.
The element $\tau^2 M h_1 g^2$
is the only possible source of this hidden $\tau$ extension,
so it cannot be hit by a differential.
This shows that $d_6(g Q_3)$ cannot equal $\tau M h_1 g^2$.
\end{proof}

\begin{lemma}
\label{lem:d6-h2^2H1}
\mbox{}
\begin{enumerate}
\item
\revdeg{68, 7, 37}
$d_6(h_2^2 H_1) = M c_0 d_0$.
\item
\revdeg{68, 7, 36}
$d_7(\tau h_2^2 H_1) = M P d_0$.
\end{enumerate}
\end{lemma}

\begin{proof}
Table \ref{tab:Massey} shows that 
$M P d_0$ equals the Massey product $\langle P d_0, h_0^3, g_2 \rangle$.
This implies that
$M P d_0$ detects the
Toda bracket $\langle \tau \eta^2 \kappabar, 8, \kappabar_2 \rangle$.
Lemma \ref{lem:teta^2kappabar,8,kappabar2} shows that this
Toda bracket consists entirely of multiples of
$\tau \eta^2 \kappabar$.

We now know that $M P d_0$ detects a multiple of 
$\tau \eta^2 \kappabar$.
The only possibility is that 
$M P d_0$ detects $\eta$ times an element detected by
$\tau^2 M h_1 g$.

We will show in Lemma \ref{lem:nu-th1H1}
that $\tau^2 M h_1 g$ is the target of a $\nu$ extension,
so $\tau^2 M h_1 g$ cannot support a hidden $\eta$ extension.
Therefore, $M P d_0$ must be hit by some differential.
The only possibility is that 
$d_7(\tau h_2^2 H_1)$ equals $M P d_0$.
Then $h_2^2 H_1$ cannot survive to the $E_7$-page,
so $d_6(h_2^2 H_1)$ equals $M c_0 d_0$.
\end{proof}

\begin{lemma}
\label{lem:perm-th1p1}
\revdeg{71, 5, 37}
The element $\tau h_1 p_1$ is a permanent cycle.
\end{lemma}

\begin{proof}
Lemma \ref{lem:d5-tp1+h0^2h3h6}, together with results of
\cite{BIX}, show that $\tau h_1 p_1$ survives to the $E_6$-page.
We must eliminate possible higher differentials.

Table \ref{tab:tau-extn} shows that there is a hidden
$\tau$ extension from $\tau h_2^2 C''$ to $\D^2 h_1^2 h_4 c_0$.
This means that $\tau h_2 C'' + h_1 h_3(\D e_1 + C_0)$ must 
also support a hidden $\tau$ extension.

The two possible targets for this hidden $\tau$ extension
are $\D^2 h_2 c_1$ and $\tau \D^2 h_1^2 g + \tau^3 \D h_2^2 g^2$.
The second possibility is ruled out by comparison to $\tmf$, so
$\D^2 h_2 c_1$ cannot be hit by a differential.
\end{proof}

\begin{lemma}
\label{lem:perm-Ph0h2h6}
\revdeg{74, 7, 38}
The element $P h_0 h_2 h_6$ is a permanent cycle.
\end{lemma}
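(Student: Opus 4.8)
The plan is to show that $P h_0 h_2 h_6$ detects a Toda bracket, so that it must survive by the Moss Convergence Theorem \ref{thm:Moss}, exactly in the spirit of Lemma \ref{lem:perm-P^3h6c0}. The essential input is the Adams differential $d_2(h_6) = h_0 h_5^2$ recorded in Table \ref{tab:Adams-d2}. Let $\zeta_{11}$ denote the generator of the image of $J$ in $\pi_{11,6}$, which is detected by $P h_2$, so that $P h_0 h_2$ detects $2 \zeta_{11}$; recall also that $h_5^2$ detects $\theta_5$ and that $h_0$ detects $2$. I would first dispose of the low differentials essentially for free. Since $P h_0 h_2$ is a permanent cycle, the Leibniz rule gives $d_2(P h_0 h_2 h_6) = P h_0^2 h_2 \cdot h_5^2$, and I would read off from the machine-computed $\Ext$ data that $P h_0 h_2 h_5^2 = 0$ (hence also $P h_0^2 h_2 h_5^2 = 0$), so that $P h_0 h_2 h_6$ survives to $E_3$. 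On the $E_3$-page, Lemma \ref{lem:d3-h2h5} gives $d_3(P h_2 h_6) = \tau h_1 h_4 Q_2$, whence $d_3(P h_0 h_2 h_6) = h_0 \cdot \tau h_1 h_4 Q_2 = 0$ because $h_0 h_1 = 0$. Thus $P h_0 h_2 h_6$ already survives to $E_4$.

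Next, because $P h_0^2 h_2$ is nonzero, I cannot place $h_0$ in the middle of the defining bracket as was done for $P^3 h_6 c_0$ in Lemma \ref{lem:perm-P^3h6c0}; instead I would exploit the vanishing $P h_0 h_2 h_5^2 = 0$ to form the Massey product $\langle P h_0 h_2, h_5^2, h_0 \rangle$ in the $E_3$-page. Since $d_2(h_6) = h_0 h_5^2$, this bracket contains $P h_0 h_2 \cdot h_6 = P h_0 h_2 h_6$, and I would check by inspection of the relevant tridegree that the bracket has no indeterminacy, so that it equals $P h_0 h_2 h_6$ on the nose. After verifying that there are no crossing differentials for the products $P h_0 h_2 \cdot h_5^2$ and $h_5^2 \cdot h_0$ in the $E_3$-page, the Moss Convergence Theorem \ref{thm:Moss} identifies $P h_0 h_2 h_6$ as a permanent cycle detecting an element of the Toda bracket $\langle 2 \zeta_{11}, \theta_5, 2 \rangle$, which is the desired conclusion.

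The main obstacle is ensuring that the Toda bracket $\langle 2 \zeta_{11}, \theta_5, 2 \rangle$ is genuinely defined, i.e., that $2 \zeta_{11} \theta_5 = 0$ and $2 \theta_5 = 0$ in $\pi_{*,*}$. The second equality is standard. The first is more delicate: although $P h_0 h_2 h_5^2 = 0$ on the $E_2$-page, the product $2 \zeta_{11} \theta_5$ could a priori be carried by a hidden extension into higher Adams filtration, so I would need to rule out the possible hidden targets in the $73$-stem by consulting Tables \ref{tab:2-extn}, \ref{tab:eta-extn}, and \ref{tab:nu-extn}. Should this route prove too delicate, the fallback is to abandon the Toda-bracket formulation and instead verify directly that $d_r(P h_0 h_2 h_6) = 0$ for all $r \geq 4$, by enumerating the elements of $\Ext$ in stem $73$ with Adams filtration at least $11$ and showing that none of them can be the target of such a differential (using comparison to $C\tau$, $\tmf$, and $\mmf$ to eliminate candidates); the $d_2$ and $d_3$ computations above already handle the low differentials.
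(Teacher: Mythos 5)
The fatal step is your claim that the Massey product $\langle P h_0 h_2, h_5^2, h_0 \rangle$ in the $E_3$-page has no indeterminacy. It does not. The indeterminacy of $\langle a, b, c \rangle$ contains $Z \cdot c$, where $Z$ runs over the $E_3$-page in the degree of the cochain $u$ with $d_2(u) = ab$, namely $(74,6,38)$. The element $P h_2 h_6$ lives in exactly that degree, is a $d_2$-cycle (its $d_2$ is $P h_0 h_2 h_5^2 = \tau^2 h_0 h_2^2 Q_3 = 0$), and is non-zero in the $E_3$-page, where it supports the $d_3$ of Lemma \ref{lem:d3-h2h5}. Hence $P h_2 h_6 \cdot h_0 = P h_0 h_2 h_6$ lies in the indeterminacy, so the Massey product is the coset $\{0, P h_0 h_2 h_6\}$. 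The Moss Convergence Theorem \ref{thm:Moss} then only asserts that \emph{some} element of this coset is a permanent cycle detecting the Toda bracket, and $e = 0$ satisfies that conclusion without any consequence for $P h_0 h_2 h_6$. This is a structural obstruction rather than a bookkeeping slip: you are trying to prove that $h_0 y$ survives where $y = P h_2 h_6$ does not, and any bracket of the form $\langle -, -, h_0 \rangle$ aimed at $h_0 y$ will have $y \cdot h_0$ in its indeterminacy for exactly this reason. (A smaller warning on the same step: $\zeta_{11} \theta_5$ is \emph{not} zero --- it is detected by $\tau^2 h_2^2 Q_3 = P h_2 \cdot h_5^2$, as used in Lemma \ref{lem:eta-th1h6c0} --- so your $E_2$-page vanishing $P h_0 h_2 h_5^2 = 0$ holds only because $h_0 h_2^2 = 0$, and the definedness of $\langle 2\zeta_{11}, \theta_5, 2 \rangle$ genuinely requires the hidden-extension analysis you defer.)

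The paper's proof avoids brackets entirely and argues through $C\tau$: since $P h_2 h_6$ does not survive for the sphere, the class $P h_2 h_6$ in the $E_\infty$-page for $C\tau$ must project non-trivially to the top cell; if $P h_0 h_2 h_6$ also failed to survive for the sphere, the same would hold for it, and then the $h_0$-multiplication between the two classes in $\pi_{74,38} C\tau$ would project to a $2$ extension in $\pi_{73,39}$, which does not exist. Your fallback --- directly ruling out all targets of $d_r(P h_0 h_2 h_6)$ for $r \geq 4$ in stem $73$, weight $38$, and filtration at least $11$ --- is viable in principle, since the candidates there (such as $\tau^2 M d_0^2$ and $\tau^3 \D h_1 d_0 e_0^2$) detect non-zero homotopy classes and therefore cannot be hit; but as written it is only a sketch, and it is the part of your proposal that would actually have to carry the proof.
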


\begin{proof}
First note that projection from $C\tau$ to the top cell
takes $P h_2 h_6$ to a non-zero element.
If $P h_0 h_2 h_6$ were not a permanent cycle in the
Adams spectral sequence for the sphere,
then projection from $C\tau$ to the top cell would
also take $P h_0 h_2 h_6$ to a non-zero element.
Then the $2$ extension from $P h_2 h_6$ to $P h_0 h_2 h_6$
in $\pi_{74,38} C\tau$ would project to a $2$ extension
in $\pi_{73,39}$.  However, there are no possible
$2$ extensions in $\pi_{73,39}$.
\end{proof}

\begin{lemma}
\label{lem:d7-m1}
\revdeg{77, 7, 42}
$d_7(m_1) = 0$.
\end{lemma}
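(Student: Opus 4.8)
The plan is to first locate the differential precisely and then eliminate its possible targets. The element $m_1$ equals $\Sq^3 e_1$, and tracking the algebraic squaring operations (consistent with Lemma \ref{lem:d3-x1}, where $d_3(x_1) = \tau h_1 m_1$ with $x_1 = \Sq^1 e_1$, and with the appearance of $\tau h_1 m_1$ in the matric Massey product of Lemma \ref{lem:d3-th6g}) places $m_1$ in stem $77$, filtration $7$. Hence $d_7(m_1)$, if nonzero, lands in stem $76$, filtration $14$. First I would discard the bulk of the candidate targets in this bidegree by the routine comparisons to the Adams spectral sequences for $C\tau$, $\tmf$, and $\mmf$. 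The crucial extra input, which is what lifts this case out of the purely automatic ones, is that $i_*(m_1)$ is a permanent cycle in the Adams spectral sequence for $C\tau$, as read off from the machine-computed algebraic Novikov data. By naturality of Adams differentials along the inclusion of the bottom cell, any nonzero $d_7(m_1)$ in the sphere must then map to zero under $i_*$, so its target must be divisible by $\tau$. This matches the pattern already visible in $d_3(x_1) = \tau h_1 m_1$: the differentials on this $e_1$-family are $\tau$-divisible and therefore invisible in $C\tau$.

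After this reduction I expect a single surviving $\tau$-divisible candidate $y$ in stem $76$, filtration $14$ (I anticipate an $M$-operator class, e.g.\ of the shape $\tau^2 M d_0 e_0$, though the precise name is immaterial to the strategy). The plan is then to show that $y$ cannot be hit, by proving it is nonzero on the $E_\infty$-page. I would attempt this either by exhibiting $y$ as the target of a hidden $\tau$-extension from a class already known to survive (reading the source off Table \ref{tab:Adams-perm} and the extension tables), or by showing that $y$ detects a nonzero homotopy class realized as a product of previously identified permanent cycles. As an alternative route on the source side, I would try to realize $m_1$ itself as the detecting element of a Toda bracket via the Moss Convergence Theorem \ref{thm:Moss}: exhibiting a bracket $\langle \alpha, \beta, \gamma \rangle$ whose governing Massey product contains $m_1$ would force $m_1$ to be a permanent cycle and give $d_7(m_1) = 0$ at once.

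The hard part will be this second step — certifying that the candidate $y$ genuinely survives to $E_\infty$. Precisely because $y$ is $\tau$-divisible, the $C\tau$ comparison is blind to it, so the argument must descend to homotopy-level information: a hidden extension, a Toda-bracket shuffle, or a comparison to $\mmf$, none of which is automatic in this range. I expect the cleanest execution to be a shuffle showing that a bracket detected by $y$ is forced to be nonzero, combined with a chart inspection verifying that no other differential can account for $y$. Once $y$ is established as a nonzero permanent cycle, it cannot be the value of $d_7(m_1)$; as it is the only remaining possibility, we conclude $d_7(m_1) = 0$.
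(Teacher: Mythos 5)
Your reduction is set up correctly: $m_1$ survives in the Adams spectral sequence for $C\tau$, so any nonzero $d_7(m_1)$ must be $\tau$-divisible, and after discarding classes that die earlier there is exactly one candidate target in $(76,14,42)$. But two things go wrong from there. First, the candidate is misidentified: it is $\tau^2 g^2 t$, not anything of the shape $\tau^2 M d_0 e_0$ (that class sits in motivic weight $40$, not $42$, and in any case supports a $d_4$, so it is not even present on the $E_7$-page). Second, and more seriously, the decisive step --- showing the surviving candidate cannot be hit --- is left as a menu of possible strategies rather than an argument, and the routes you list do not obviously close the gap: $\tau^2 g^2 t$ is not the target of a hidden $\tau$ extension (it does not appear in Table~\ref{tab:tau-extn}), and realizing it as a provably nonzero product or bracket of permanent classes is itself a nontrivial problem of exactly the same difficulty you are trying to resolve.

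The paper's actual mechanism is different and worth internalizing, because it recurs throughout this range: one does not show that $\tau^2 g^2 t$ survives by producing a homotopy class it detects, but rather argues by contradiction through the top cell of $C\tau$. If $\tau^2 g^2 t$ were hit by any differential, then elements detected by $\tau g^2 t$ would be annihilated by $\tau$ and hence lie in the image of projection from $C\tau$ to the top cell; the (hidden) $\nu$ extension from $\tau g^2 t$ to $\tau^2 c_1 g^3$ would then have to be visible as a $\nu$ extension in $\pi_{*,*} C\tau$, and the machine-computed data shows there is no such extension. So the "homotopy-level input" you anticipated is still a $C\tau$ comparison --- just applied to hidden values of the projection map rather than to the naive map of $E_\infty$-pages. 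Without this (or some equivalent certification that $\tau^2 g^2 t$ is not hit), your argument does not yet prove the lemma.
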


\begin{proof}
The only other possibility is that $d_7(m_1)$ equals $\tau^2 g^2 t$.
If that were the case, then the $\nu$ extension from
$\tau g^2 t$ to $\tau^2 c_1 g^3$ would be detected by projection
from $C\tau$ to the top cell.  However, the homotopy groups of
$C\tau$ have no such $\nu$ extension.
\end{proof}

\begin{lemma}
\label{lem:d8-h1x1}
\revdeg{80, 6, 43}
$d_8(h_1 x_1) = 0$.
\end{lemma}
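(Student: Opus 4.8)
The plan is to first pin down the context. By Lemma \ref{lem:d3-x1} we have $d_3(x_1) = \tau h_1 m_1$, so by the Leibniz rule $d_3(h_1 x_1) = \tau h_1^2 m_1$. I would begin by checking that $h_1^2 m_1$ vanishes in the $E_2$-page (equivalently that $\tau h_1^2 m_1$ is already zero on the $E_3$-page), so that $h_1 x_1$ is a $d_3$-cycle. Consulting Tables \ref{tab:Adams-d4} and \ref{tab:Adams-d5}, together with the higher pages through $d_7$, then shows that $h_1 x_1$ is neither hit nor a source of any earlier differential, so it genuinely survives to the $E_8$-page and the assertion $d_8(h_1 x_1) = 0$ is what remains to prove. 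The target of a hypothetical nonzero $d_8$ lies in the adjacent stem in Adams filtration eight higher, and the next step is to enumerate the finitely many classes in that degree.

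For the enumeration I would discard most candidates exactly as in the proofs of Theorems \ref{thm:Adams-d4} and \ref{thm:Adams-d5}: comparison to the Adams spectral sequences for $C\tau$, for $\tmf$, and for $\mmf$ eliminates every candidate that is nonzero in one of those target categories, or that is already accounted for as the value of a differential recorded in the tables. A useful supplementary constraint is multiplication by $h_1$: since $h_1 \cdot h_1 x_1 = h_1^2 x_1$ is either zero or already a boundary on the $E_8$-page, any value $y = d_8(h_1 x_1)$ must satisfy $h_1 y = 0$, which removes any candidate that supports a nonzero $h_1$-extension.

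The hard part will be the final surviving candidate, which I expect cannot be excluded by these formal comparisons alone. Here I would argue exactly as in Lemma \ref{lem:d7-m1}: assuming $d_8(h_1 x_1)$ equalled this class $y$, a hidden $\nu$- or $\eta$-extension on $y$ of the sort recorded in Table \ref{tab:nu-extn} would have to be detected by projection from $C\tau$ to the top cell, whereas the charts for $C\tau$ in \cite{IWX19} exhibit no such extension; this contradiction forces $d_8(h_1 x_1) = 0$. The main obstacle is thus identifying the precise extension on the last candidate $y$ and verifying its absence in the homotopy of $C\tau$, since everything else reduces to the systematic comparison arguments already used throughout this chapter.
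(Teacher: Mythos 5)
Your proposal is a plan rather than a proof: the decisive step is left unexecuted. After the routine eliminations, the single remaining candidate for $d_8(h_1 x_1)$ is $\tau M e_0^2$, and you never identify it, nor do you verify that your proposed exclusion mechanism (a hidden $\eta$- or $\nu$-extension on the target that would have to be visible in the homotopy of $C\tau$, as in Lemma \ref{lem:d7-m1}) actually applies to this class. Saying ``I expect'' and ``I would argue exactly as in Lemma \ref{lem:d7-m1}'' at the one point where the comparison arguments fail is precisely where the content of the lemma lives, so as written there is a genuine gap.

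The paper closes this gap by a different and much shorter mechanism: $\tau$-linearity. Table \ref{tab:Adams-perm} records that $\tau h_1 x_1$ is a permanent cycle, because it must survive to detect the Toda bracket $\langle 2, \eta, \tau\eta\{h_1 x_{76,6}\}\rangle$ (Lemma \ref{lem:2,eta,th1^2x76,6}). If $d_8(h_1 x_1)$ were $\tau M e_0^2$, then $d_8(\tau h_1 x_1)$ would be $\tau^2 M e_0^2 \neq 0$, contradicting permanence. Note that this is not an instance of your ``hidden extension detected by projection to the top cell'' template at all; the input is a Toda bracket forcing permanence of a $\tau$-multiple of the source, not structure on the target. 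A secondary caveat: your claim that any value $y$ must satisfy $h_1 y = 0$ presupposes that $h_1^2 x_1$ is zero or a boundary on the $E_8$-page, which you have not checked, so that constraint should not be relied on without verification.
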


\begin{proof}
Table \ref{tab:Adams-perm} shows that $\tau h_1 x_1$ is a permanent
cycle.
Then $d_8(\tau h_1 x_1)$ cannot equal 
$\tau^2 M e_0^2$, and $d_8(h_1 x_1)$ cannot equal
$\tau M e_0^2$.
\end{proof}

\begin{lemma}
\label{lem:d6-h2h4h6}
\revdeg{81, 3, 42}
$d_6(h_2 h_4 h_6) = 0$.
\end{lemma}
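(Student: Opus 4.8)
The plan is to show that $h_2 h_4 h_6$ is a permanent cycle, which in particular gives $d_6(h_2 h_4 h_6) = 0$. The essential difficulty is that $h_6$ is not a permanent cycle; indeed $d_2(h_6) = h_0 h_5^2$, so the differentials on $h_2 h_4 h_6$ are not controlled by the Leibniz rule and a direct argument is required. By Lemma \ref{lem:d3-h2h4h6} the element survives to the $E_4$-page, and I would first record that it survives to the $E_6$-page, the intervening $d_4$ and $d_5$ differentials vanishing either for lack of a target or by comparison to $C\tau$. The relevant tridegree for a potential $d_6$ is $(80,9,42)$, since $h_2 h_4 h_6$ has degree $(81,3,42)$.

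My primary approach is the $C\tau$-comparison that underlies the whole manuscript. First I would read off from the machine-generated data \cite{Wang19} \cite{IWX19} that $i_*(h_2 h_4 h_6)$ survives in the Adams spectral sequence for $C\tau$, where $i\colon S^{0,0} \map C\tau$ is the inclusion of the bottom cell; note $i_*(h_2 h_4 h_6)$ is nonzero because $h_2 h_4 h_6$ is not a multiple of $\tau$. By naturality of the Adams spectral sequence along $i$, a nonzero differential $d_6(h_2 h_4 h_6) = y$ in the sphere would give $d_6(i_* h_2 h_4 h_6) = i_*(y)$ in $C\tau$; since $i_*(h_2 h_4 h_6)$ is a permanent cycle, this forces $i_*(y) = 0$, so that $y$ is a multiple of $\tau$. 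This restricts the possible targets in degree $(80,9,42)$ to the few surviving classes divisible by $\tau$.

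The remaining step, which I expect to be the main obstacle, is to eliminate each such $\tau$-divisible candidate. I would handle these in the style of Lemma \ref{lem:d3-h2h4h6}: comparison to $\tmf$ and $\mmf$ kills any candidate that maps nontrivially there without a matching differential, and the projection $p\colon C\tau \map S^{1,-1}$ to the top cell disposes of the rest. Concretely, a surviving target $y$ would have to be the image under $p$ of some class of $C\tau$, or the target of a hidden $\tau$ extension; examining the homotopy of $C\tau$ from the machine data rules out the $\eta$- or $\nu$-extensions that such a configuration would require, producing a contradiction. The delicate point is that this enumeration and elimination depends on the precise $\Ext$ chart in stem $80$ together with the $\tmf$- and $\mmf$-images, so the bookkeeping, rather than any single conceptual step, is where the work lies.

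As an alternative that avoids the case analysis, one can try to prove directly that $h_2 h_4 h_6$ is a permanent cycle using the description $h_2 h_4 h_6 = \langle h_2 h_4, h_5^2, h_0 \rangle$ in the $E_3$-page, valid because $d_2(h_6) = h_0 h_5^2$ (the same identity exploited in Lemma \ref{lem:d3-Dh2^2h6}). Since $h_2 h_4$, $h_5^2$, and $h_0$ are permanent cycles detecting $\{h_2 h_4\}$, $\theta_5$, and $2$, and $2\theta_5 = 0$, the Moss Convergence Theorem \ref{thm:Moss} would show that $h_2 h_4 h_6$ detects the Toda bracket $\langle \{h_2 h_4\}, \theta_5, 2 \rangle$ and is therefore a permanent cycle, provided that this Toda bracket is defined and that there are no crossing differentials for the products $h_2 h_4 \cdot h_5^2$ and $h_5^2 \cdot h_0$. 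Verifying the vanishing $\{h_2 h_4\}\,\theta_5 = 0$ and the crossing-differential condition is the cost of this route.
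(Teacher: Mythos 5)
Your proposal correctly identifies the shape of the problem --- a $d_6$ on $h_2 h_4 h_6$ would have to land on a $\tau$-divisible class, since comparison to $C\tau$ (where $i_*(h_2h_4h_6)$ survives) kills everything else --- but it stops exactly where the actual content of the lemma begins. The whole reason this differential needs its own lemma is that there is a surviving $\tau$-divisible candidate in degree $(80,9,42)$, namely $\tau c_1 A'$, which comparison to $C\tau$ cannot touch, and your plan for eliminating such candidates (``comparison to $\tmf$ and $\mmf$ \ldots and the projection to the top cell disposes of the rest'') is not an argument: $c_1 A'$ involves $A'$ and is invisible to $\tmf$ and $\mmf$, and you give no reason why the top-cell analysis would rule it out. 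The paper's elimination is a specific multiplicative one: Table \ref{tab:Adams-perm} shows $h_2^2 h_4 h_6$ is a permanent cycle (it must survive to detect $\langle \nu\nu_4, 2, \theta_5\rangle$), so by the Leibniz rule $h_2\cdot d_6(h_2h_4h_6)=d_6(h_2^2h_4h_6)=0$; since $\tau h_2 c_1 A'\neq 0$, the value $\tau c_1 A'$ is excluded and the differential vanishes. Deferring this step as ``bookkeeping'' leaves the proof incomplete.

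Your alternative route via $h_2h_4h_6=\langle h_2h_4, h_5^2, h_0\rangle$ and the Moss Convergence Theorem \ref{thm:Moss} also rests on unverified hypotheses that are not routine. You would need $h_2h_4\cdot h_5^2=0$ for the Massey product to be defined, $\{h_2h_4\}\,\theta_5=0$ for the Toda bracket $\langle \{h_2h_4\}, \theta_5, 2\rangle$ to be defined, the crossing-differential conditions, and control of the indeterminacy so that the permanent cycle produced by the theorem is $h_2h_4h_6$ itself rather than another element of the bracket. Note that the paper's own bracket for the related class $h_2^2h_4h_6$ is $\langle \nu\nu_4, 2, \theta_5\rangle$ with the $2$ in the middle; the analogous bracket $\langle \{h_2h_4\}, 2, \theta_5\rangle$ is not available for $h_2h_4h_6$ because $2\nu\nu_4\neq 0$ in $\pi_{18}$, which is a warning that the bracket manipulations here are delicate and cannot be waved through.
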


\begin{proof}
Table \ref{tab:Adams-perm} shows that $h_2^2 h_4 h_6$ is a permanent
cycle.  Therefore, the Adams differential
$d_6(h_2^2 h_4 h_6)$ does not equal
$\tau h_2 c_1 A'$, and $d_6(h_2 h_4 h_6)$ does not equal
$\tau c_1 A'$.
\end{proof}

\rev{
\begin{lemma}
\label{lem:d10-h2h6g+h1^2f2}
\revdeg{86, 6, 46}
$d_{10}(h_2 h_6 g + h_1^2 f_2) = 0$.
\end{lemma}

\begin{proof}
If $d_{10}(h_2 h_6 g + h_1^2 f_2)$ equaled
$M \D h_1^2 d_0$, then the $\nu$ extension from
$\tau h_6 g + \tau h_2 e_2$ to $\tau h_2 h_6 g + \tau h_1^2 f_2$
would be detected by the bottom cell of $C\tau$.
However, there is no such $\nu$ extension in the homotopy 
of $C\tau$.
\end{proof}
}

\begin{lemma}
\label{lem:d7-x87,7}
\revdeg{87, 7, 45}
$d_7(x_{87,7}) = 0$.
\end{lemma}

\begin{proof}
If $\tau \D^2 h_2^2 d_1$ were hit by a differential, then
the $\nu$ extension from $\D^2 h_2^2 d_1$ to $\D^2 h_1^2 h_3 d_1$
would be detected by projection from $C\tau$ to the top cell.
But the homotopy of $C\tau$ has no such $\nu$ extension.
\end{proof}

\rev{
\begin{lemma}
\label{lem:d6-tDh1H1}
\revdeg{87, 10, 45}
$d_6(\tau \D h_1 H_1) = \tau M \D h_0^2 e_0$.
\end{lemma}

\begin{proof}
Suppose for sake of contradiction that
$\tau \D h_1 H_1$ survives.
This element cannot be hit, nor can it be the target of a hidden
$\tau$ extension.  Therefore, it would have non-zero image 
under inclusion of the bottom cell into $C\tau$, and it would
map to $\ol{\D h_1 B_7}$.

In the homotopy of $C\tau$, the Toda bracket
$\langle \ol{\D h_1 B_7}, h_0, h_2^2 \rangle$
is detected by the element $M \D^2 h_1$.
Beware that this bracket has indeterminacy in lower filtration
since $h_3 \cdot \ol{\D h_1 B_7} = \ol{h_1^2 x_{91,11}}$.

This implies that the Toda bracket
$\langle \{\tau \D h_1 H_1\}, 2, \nu^2 \rangle$
would be non-zero in $\pi_{94,49}$, and all of its elements 
would be detected in Adams filtration at most $15$.
(Beware that this Toda bracket would have indeterminacy
detected by $\tau \D h_1 h_3 H_1$.)

On the other hand, the Moss Convergence Theorem \ref{thm:Moss}
would imply that the Toda bracket is detected in filtration
at least $12$.
However, there are no possible elements in filtrations $12$ through $15$.

We have now shown that $\tau \D h_1 H_1$ cannot survive.
There is only one possible value for a differential on
$\tau \D h_1 H_1$.

The previous argument assumed that $2 \{\tau \D h_1 H_1\}$ is zero
in order to form the Toda bracket
$\langle \{\tau \D h_1 H_1\}, 2, \nu^2 \rangle$.
However, it is possible that $\tau \D h_1 H_1$ supports a hidden
$2$ extension to $\tau \D^2 h_3 d_1$ or to $\tau^2 \D^2 c_1 g$.
Therefore, $2 \{\tau \D h_1 H_1\}$ might equal
$\tau \sigma \{\D^2 d_1\}$, $\tau \nu \{\D^2 t\}$, or their sum.
In those cases, we would need to consider the matric Toda brackets
\[
\left\langle 
\left[
\begin{array}{cc}
\{ \tau \D h_1 H_1\} & \tau \{\D^2 d_1\}
\end{array}
\right],
\left[
\begin{array}{c}
2 \\ \sigma
\end{array}
\right],
\nu^2
\right\rangle,
\]
\[
\left\langle 
\left[
\begin{array}{cc}
\{ \tau \D h_1 H_1\} & \nu
\end{array}
\right],
\left[
\begin{array}{c}
2 \\ \tau \{\D^2 t\}
\end{array}
\right],
\nu^2
\right\rangle,
\]
or
\[
\left\langle 
\left[
\begin{array}{ccc}
\{ \tau \D h_1 H_1\} & \tau \{\D^2 d_1\} & \nu
\end{array}
\right],
\left[
\begin{array}{c}
2 \\ \sigma \\ \tau \{\D^2 t \}
\end{array}
\right],
\nu^2
\right\rangle
\]
respectively.
Under inclusion of the bottom cell into $C\tau$, these three brackets
would map to
\[
\left\langle 
\left[
\begin{array}{cc}
\ol{\D h_1 B_7} & 0
\end{array}
\right],
\left[
\begin{array}{c}
h_0 \\ h_3
\end{array}
\right],
h_2^2
\right\rangle,
\]
\[
\left\langle 
\left[
\begin{array}{cc}
\ol{\D h_1 B_7} & h_2
\end{array}
\right],
\left[
\begin{array}{c}
h_0 \\ 0
\end{array}
\right],
h_2^2
\right\rangle,
\]
or
\[
\left\langle 
\left[
\begin{array}{ccc}
\ol{\D h_1 B_7} & 0 & h_2
\end{array}
\right],
\left[
\begin{array}{c}
h_0 \\ h_3 \\ 0
\end{array}
\right],
h_2^2
\right\rangle
\]
respectively.
All three of these brackets in $C\tau$ equal
$\langle \ol{\D h_1 B_7}, h_0, h_2^2 \rangle$.
Beware that the last two could have larger indeterminacy,
but in fact do not.
\end{proof}
}

\begin{lemma}
\label{lem:perm-x88,10}
\revdeg{88, 10, 48}
The element $x_{88,10}$ is a permanent cycle.
\end{lemma}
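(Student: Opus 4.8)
The plan is to show that $x_{88,10}$ supports no nonzero Adams differential, by combining naturality along the inclusion $i\colon S^{0,0}\to C\tau$ of the bottom cell with the fully computed Adams spectral sequence for $C\tau$, as in the other permanent-cycle arguments of this chapter (compare Lemmas \ref{lem:perm-D^2d1} and \ref{lem:perm-Ph0h2h6}).

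First I would record, from the machine computation of the algebraic Novikov spectral sequence (equivalently the Adams spectral sequence for $C\tau$, see Chapter \ref{ch:AANSS}), that $i_*(x_{88,10})$ is a nonzero permanent cycle in the $E_\infty$-page for $C\tau$. By naturality of Adams differentials, any hypothetical differential $d_r(x_{88,10})$ must satisfy $i_*(d_r(x_{88,10})) = d_r(i_*(x_{88,10})) = 0$; since the kernel of $i_*$ on the $E_2$-page consists precisely of the multiples of $\tau$ (Section \ref{sctn:Ctau-naming}), this forces every candidate target of a differential on $x_{88,10}$ to be a multiple of $\tau$.

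Next I would enumerate, from the charts in \cite{Isaksen14a}, the multiples of $\tau$ in stem $87$ and Adams filtration $10+r$ (for $r\ge 2$) that could receive a differential from $x_{88,10}$. I expect most of these to be eliminated immediately: either by comparison to the Adams spectral sequences for $\tmf$ and $\mmf$, or because the candidate target is already known to be hit by, or to support, another differential recorded in Tables \ref{tab:Adams-d2}, \ref{tab:Adams-d3}, \ref{tab:Adams-d4}, \ref{tab:Adams-d5}, and \ref{tab:Adams-higher}.

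The main obstacle will be the final surviving candidate target. For this one I expect the cleanest route is to exhibit it as the target of a hidden $\tau$ extension whose source lies in the image of the projection $p\colon C\tau \to S^{1,-1}$ to the top cell; an element that is the target of such a hidden extension cannot also be hit by an Adams differential, which obstructs the differential from $x_{88,10}$. If no such hidden $\tau$ extension is available, the fallback is to identify a product or Toda bracket detected by the candidate target via the Moss Convergence Theorem \ref{thm:Moss}, and to show that the corresponding homotopy class is nonzero in the appropriate filtration, again obstructing the differential. Once every candidate target has been eliminated, $x_{88,10}$ supports no differential and is therefore a permanent cycle.
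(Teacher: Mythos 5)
Your overall instinct—compare to $C\tau$—is the right one, but as written the proposal is a strategy rather than a proof, and the decisive step is exactly the one you leave undone. You reduce the problem to enumerating the potential targets of $d_r(x_{88,10})$ in the $87$-stem, weight $48$, filtration $\geq 12$, that die under $i_*$, and then you say you ``expect'' most to be eliminated and describe a ``fallback'' for ``the final surviving candidate target'' without identifying that target or carrying out either elimination. The very fact that this statement warrants a standalone lemma (rather than a table entry reading ``$C\tau$'' or ``no possible targets'') signals that the routine eliminations do not all go through automatically, so the unspecified final case is where the mathematical content lives. A smaller point: your claim that the kernel of $i_*$ consists precisely of the multiples of $\tau$ is correct on the $E_2$-page, but naturality constrains $d_r(x_{88,10})$ to lie in the kernel of $i_*$ on the $E_r$-page, which can be strictly larger for $r>2$; this does not break the method, but it does mean the set of candidate targets is larger than your argument acknowledges.

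The paper's proof runs in the opposite direction and avoids the enumeration entirely. In the Adams spectral sequence for $C\tau$ there is a hidden $\eta$ extension from $h_1^2 x_{85,6}$ to $x_{88,10}$, so the class detected by $x_{88,10}$ in $\pi_{88,\ast} C\tau$ is a multiple of $\eta$; this forces it to lie in the image of the inclusion of the bottom cell. Any preimage in $\pi_{88,\ast}$ of the sphere must be detected by an element of the sphere's Adams $E_\infty$-page mapping to $x_{88,10}$, and the only candidate is $x_{88,10}$ itself; hence $x_{88,10}$ survives. If you want to salvage your approach, you would need to actually list the candidate targets from the charts and dispose of each one; otherwise the cleaner route is the surjectivity argument above, in the style of Lemma \ref{lem:perm-D^2d1}.
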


\begin{proof}
In the Adams spectral sequence for $C\tau$, there is a hidden
$\eta$ extension from $h_1^2 x_{85,6}$ to $x_{88,10}$.
Therefore, $x_{88,10}$ lies in the image of inclusion of the
bottom cell into $C\tau$.
The only possible pre-image is the element $x_{88,10}$ in the
Adams spectral sequence in the sphere, so $x_{88,10}$ must survive.
\end{proof}

\begin{lemma}
\label{lem:d6-h2^2gH1}
\mbox{}
\begin{enumerate}
\item
\revdeg{88, 11, 49}
$d_6(h_2^2 g H_1) = M c_0 e_0^2$.
\item
\revdeg{88, 11, 48}
$d_7(\tau h_2^2 g H_1) = 0$.
\end{enumerate}
\end{lemma}

\begin{proof}
If $M c_0 e_0^2$ is non-zero in the $E_\infty$-page, then it detects
an element that is annihilated by $\tau$ because Lemma \ref{lem:d6-Dg2g}
shows that the only possible target of such an extension is hit
by a differential.
Then $M c_0 e_0^2$ would be in the image of projection 
from $C\tau$ to the top cell.
The only possible pre-image would be the element $\D g_2 g$ of the Adams
spectral sequence for $C\tau$.

In the Adams spectral sequence for $C\tau$, there is a $\sigma$
extension from $g A'$ to $\D g_2 g$.  
Projection from $C\tau$ to the top cell would imply that there
is a hidden $\sigma$ extension in the homotopy groups of the sphere,
from $M h_1 e_0^2$ to $M c_0 e_0^2$, because
$g A'$ maps to $M h_1 e_0^2$ under projection from $C\tau$
to the top cell.

But $M h_1 e_0^2$ detects $\eta \theta_{4.5} \{ e_0^2 \}$,
which cannot support a $\sigma$ extension.  This establishes the
first formula.

For the second formula, 
if $d_7(\tau h_2^2 g H_1)$ were equal to $\tau^2 \D h_2^2 e_0 g^2$,
then the same argument would apply,
with $\tau \D h_2^2 e_0 g^2$ substituted for $M c_0 e_0^2$.
\end{proof}

\begin{lemma}
\label{lem:d6-Dg2g}
\revdeg{88, 12, 48}
$d_6(\D g_2 g) = M d_0^3$.
\end{lemma}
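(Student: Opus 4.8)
The plan is to combine a forcing argument showing that $M d_0^3$ cannot survive the spectral sequence with an identification of the unique admissible source. Note first that $M d_0^3$ lies in stem $87$, while $\D g_2 g = \D g_2 \cdot g$ lies in stem $88$; the filtrations differ by exactly $6$, so a differential $d_6(\D g_2 g) = M d_0^3$ is consistent with the tri-degrees. This is the differential I aim to produce, and the argument splits into showing the target must be hit, and then showing $\D g_2 g$ is the only thing that can hit it.

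First I would invoke the forcing argument already carried out in the proof of Lemma \ref{lem:d4-th2B5g}, which shows that $M d_0^3$ must be hit by \emph{some} Adams differential. Recall the reasoning: Table \ref{tab:misc-extn} shows that $M d_0$ detects $\kappa \theta_{4.5}$, so $M d_0^3$ detects $\kappa^3 \theta_{4.5}$; the hidden $\eta$ extension from $\tau^2 h_1 g^2$ to $d_0^3$ recorded in Table \ref{tab:eta-extn} rewrites this product as $\eta^2 \kappabar^2 \theta_{4.5}$, which vanishes because $\eta^2 \kappabar \theta_{4.5}$ is already zero. Thus $M d_0^3$ detects the zero homotopy class, and since it supports no differential, it must itself be the target of one. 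That lemma identifies the source of $M h_1 d_0^3$ but leaves the source of $M d_0^3$ open, so citing it here is non-circular.

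Next I would pin down the source. With the target degree fixed, I would run through the multiplicative generators in stem $88$ and eliminate all candidates except $\D g_2 g$: some are permanent cycles recorded in Table \ref{tab:Adams-perm}, others already support or receive differentials on the pages $E_2$ through $E_5$ (Tables \ref{tab:Adams-d2}--\ref{tab:Adams-d5}), and the remainder are excluded by comparison to the Adams spectral sequences for $C\tau$, $\tmf$, and $\mmf$. One must simultaneously check that no earlier differential ($d_2$ through $d_5$) already hits $M d_0^3$, so that it genuinely persists to the $E_6$-page, and that $\D g_2 g$ itself survives to $E_6$; both follow from the same battery of comparisons. Matching filtrations then forces the surviving differential to be a $d_6$, giving $d_6(\D g_2 g) = M d_0^3$.

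The main obstacle will be the bookkeeping in this last step: confirming that $\D g_2 g$ is the unique admissible source and that $M d_0^3$ remains unhit through $E_5$. This is delicate precisely because $M$-type classes are invisible under the unit map to $\mmf$ (the defining factor $g_2$ dies in $\Ext_{A(2)}$), so the comparison to $\mmf$ cannot see the target directly and the control must instead be threaded through multiplicative relations and the already-established differentials in the neighboring stems. Once those eliminations are complete, the conclusion is immediate.
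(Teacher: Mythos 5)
Your proposal is correct and follows essentially the same route as the paper: it cites the forcing argument from the proof of Lemma \ref{lem:d4-th2B5g} to conclude that $M d_0^3$ must be hit by some differential, and then identifies $\D g_2 g$ as the only admissible source, yielding $d_6(\D g_2 g) = M d_0^3$. (The paper also records a second, independent argument via $d_7(\tau h_2^2 H_1) = M P d_0$ and multiplicativity by $\tau g$, but your first argument matches the paper's primary one.)
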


\begin{proof}
The proof of Lemma \ref{lem:d4-th2B5g} shows that
$M d_0^3$ must be hit by a differential.
The only possibility is that $d_6(\D g_2 g)$ equals
$M d_0^3$.

Alternatively, Lemma \ref{lem:d6-h2^2H1} shows that
$d_7(\tau h_2^2 H_1) = M P d_0$.
Note that $\tau g \cdot \tau h_2^2 H_1 = 0$ in the $E_7$-page.
Therefore, $\tau M d_0^3 = \tau g \cdot M P d_0$ must already
be zero in the $E_7$-page.  The only possibility is that
$d_6(\tau \D g_2 g) = \tau M d_0^3$, and then
$d_6(\D g_2 g) = M d_0^3$.
\end{proof}

\rev{
\begin{remark}
\label{rem:d6-D^2f1}
Table \ref{tab:Adams-higher} shows that
$d_6(\D^2 f_1)$ equals $\tau^2 M d_0^3$.
The proof relies on $d_6(\tau \D h_1 H_1) = \tau M \D h_0^2 e_0$
and uses techniques similar to the ones in \cite{Chua21}.
\end{remark}
}

\begin{lemma}
\label{lem:perm-h0g3}
\revdeg{92, 5, 48}
The element $h_0 g_3$ is a permanent cycle.
\end{lemma}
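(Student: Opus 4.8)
The plan is to exploit the $h_0$-module structure of the Adams $E_r$-pages together with two facts already in hand: that $d_5(g_3) = h_6 d_0^2$ (Lemma~\ref{lem:d5-g3}), and that $h_0^2 g_3$ is a permanent cycle, since it detects the product $\theta_4 \theta_5$ (see the proof of Lemma~\ref{lem:d4-h4^2h6} and Table~\ref{tab:misc-extn}). Because $d_4(h_4^2 h_6) = h_0^3 g_3$, the class $h_0^3 g_3$ is already zero on the $E_5$-page, so the only part of the $h_0$-tower on $g_3$ that can survive is $\{h_0 g_3, h_0^2 g_3\}$; the goal is to show that the lower of these two is a permanent cycle.

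First I would rule out the $d_5$ differential. By the Leibniz rule and Lemma~\ref{lem:d5-g3},
\[
d_5(h_0 g_3) = h_0 \cdot d_5(g_3) = h_0 h_6 d_0^2 .
\]
Since $h_0 d_0 = 0$ in $\Ext$, we have $h_0 d_0^2 = 0$, and hence $h_0 h_6 d_0^2 = 0$. (Note this same relation is what makes $h_6 d_0^2$ a nonzero class at all, since $d_2(h_6 d_0^2) = h_0 h_5^2 d_0^2 = 0$.) Thus $h_0 g_3$ survives to the $E_6$-page, and this is the one page on which the differential $d_5(g_3)=h_6 d_0^2$ would otherwise threaten $h_0 g_3$; it is neutralized because $h_6 d_0^2$ is $h_0$-torsion.

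It remains to eliminate $d_r(h_0 g_3)$ for $r \ge 6$. Here the key leverage is that $h_0 \cdot (h_0 g_3) = h_0^2 g_3$ is a permanent cycle: if $d_r(h_0 g_3) = z$ were nonzero for some minimal $r \ge 6$, then the Leibniz rule would give $d_r(h_0^2 g_3) = h_0 z = 0$, so any possible target $z$ in the $91$-stem and filtration $5+r$ must be annihilated by $h_0$. I would then inspect the $E_r$-page in the $91$-stem above filtration $10$, list the finitely many $h_0$-torsion classes that could receive such a differential, and eliminate each one---either because it is already hit, or is itself a permanent cycle, or by comparison to the Adams spectral sequences for $C\tau$, $\tmf$, or $\mmf$, exactly as in the analogous permanence arguments earlier in this section.

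The hard part will be this last bookkeeping step: unlike the clean vanishing of $d_5$, the higher differentials are not forced by a single multiplicative relation, and one must check the chart in the $91$-stem explicitly. An alternative that might sidestep the case analysis is to show directly that $h_0 g_3$ detects a homotopy class whose double is $\theta_4\theta_5$---equivalently, that $\theta_4\theta_5$ is divisible by $2$---by producing such a class through a Toda bracket and applying the Moss Convergence Theorem~\ref{thm:Moss}; if a suitable bracket can be identified, permanence of $h_0 g_3$ follows at once, without examining the individual higher differentials.
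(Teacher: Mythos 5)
There is a genuine gap in your primary argument, at the very first step. You assert that $h_0 d_0 = 0$ in $\Ext$, and deduce $h_0 d_0^2 = 0$ and hence $h_0 h_6 d_0^2 = 0$. This is false: $h_0 d_0$ is nonzero in $\Ext$ (it is the target of the differential $d_3(h_0 h_4) = h_0 d_0$ recorded in Table \ref{tab:Adams-d3}), and $h_0 d_0^2$ is likewise nonzero in $\Ext$ (it is the target of $d_2(k) = h_0 d_0^2$ in Table \ref{tab:Adams-d2}). These classes only die on later pages, after the differentials that hit them. So neither the vanishing of $d_5(h_0 g_3) = h_0 \cdot h_6 d_0^2$ on the $E_5$-page nor your parenthetical claim that $d_2(h_6 d_0^2) = h_0 h_5^2 d_0^2 = 0$ follows from a relation in $\Ext$; each would have to be verified from the charts (and the rewriting $h_0\cdot h_6 d_0^2 = h_6\cdot(h_0 d_0^2)$ cannot be performed on the $E_3$-page, where $h_6$ no longer exists). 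On top of this, the elimination of $d_r(h_0 g_3)$ for $r \ge 6$ is left as a plan rather than carried out, so the main route is incomplete even where it is not incorrect.

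The alternative you sketch in your last sentence is exactly the paper's proof, and the missing bracket is available: Table \ref{tab:Toda} gives $\theta_5 \in \langle 2, \theta_4, \theta_4, 2 \rangle$, and the shuffle
\[
\theta_4 \langle 2, \theta_4, \theta_4, 2 \rangle =
\langle \theta_4, 2, \theta_4, \theta_4 \rangle \, 2
\]
(legitimate because the subbracket $\langle \theta_4, \theta_4, 2 \rangle$ is zero, as $\pi_{61,32} = 0$) shows that $\theta_4 \theta_5$ is divisible by $2$. Since $\theta_4\theta_5$ is nonzero and detected in filtration at most $6$ (it is detected by $h_0^2 g_3$ in $C\tau$), the class $\langle \theta_4, 2, \theta_4, \theta_4 \rangle$ must be detected in filtration at most $5$, and the only possibility is that $h_0 g_3$ survives and detects it. No differential-by-differential bookkeeping in the $91$-stem is needed. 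I recommend abandoning the Leibniz route and completing the bracket argument along these lines.
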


\begin{proof}
In the homotopy of $C\tau$, the product $\theta_4 \theta_5$ is detected
by $h_0^2 g_3$.  In the sphere, the product $\theta_4 \theta_5$ 
is therefore non-zero and detected in Adams filtration at most 6.

Table \ref{tab:Toda} shows that 
the Toda bracket $\langle 2, \theta_4, \theta_4, 2 \rangle$
contains $\theta_5$.
Therefore, the product $\theta_4 \theta_5$ is contained in
\[
\theta_4 \langle 2, \theta_4, \theta_4, 2 \rangle =
\langle \theta_4, 2, \theta_4, \theta_4 \rangle 2.
\]
(Note that the sub-bracket $\langle \theta_4, \theta_4, 2 \rangle$
is zero because $\pi_{61,32}$ is zero.)
Therefore, $\theta_4 \theta_5$ is divisible by $2$.
It follows that
$\theta_4 \theta_5$ is detected by $h_0^2 g_3$, and
$h_0 g_3$ is a permanent cycle that detects
$\langle \theta_4, 2, \theta_4, \theta_4 \rangle$.
\end{proof}

\begin{lemma}
\label{lem:d6-D1h1^2e1}
\revdeg{92, 10, 51}
$d_6(\D_1 h_1^2 e_1) = 0$.
\end{lemma}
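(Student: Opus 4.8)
The plan is to determine $d_6(\D_1 h_1^2 e_1)$ by first listing the finitely many classes in the target bidegree and then eliminating each with the comparison maps that drive the rest of this chapter. I take as given, from the statement itself, that $\D_1 h_1^2 e_1$ survives to the $E_6$-page. I would then read off from the $\Ext$ chart \cite{Isaksen14a} the candidate values $y$ in the bidegree $(s-1, f+6)$, where $(s,f)$ is the bidegree of $\D_1 h_1^2 e_1$. The relevant structural input about the factors is recorded in Example \ref{ex:Leibniz-tD1h1^2}, namely $\tau \D_1 h_1^2 = \langle h_1, h_0, D_1 \rangle$ and $d_2(\tau \D_1 h_1^2) = M h_1 h_3$; this bracket description is the hook for the Moss-style argument below.

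The principal tool is naturality along the inclusion $i\colon S^{0,0} \map C\tau$ of the bottom cell, using the machine-generated data of Chapter \ref{ch:AANSS}. The image $i_*(\D_1 h_1^2 e_1)$ is a permanent cycle in the Adams spectral sequence for $C\tau$, so if $d_6(\D_1 h_1^2 e_1) = y$ then $i_*(y) = d_6(i_* \D_1 h_1^2 e_1) = 0$; hence every admissible target $y$ lies in the kernel of $i_*$, i.e.\ is divisible by $\tau$. This at once discards all candidate targets that are not $\tau$-divisible. A second, independent constraint comes from the factorization $\D_1 h_1^2 e_1 = h_1 \cdot \D_1 h_1 e_1$: since $h_1$ is a permanent cycle, the Leibniz rule forces $d_6(\D_1 h_1^2 e_1)$ to be $h_1$-divisible whenever $\D_1 h_1 e_1$ survives to $E_6$, which removes the candidates that are not $h_1$-multiples.

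The step I expect to be the main obstacle is the elimination of the one genuinely $\tau$-divisible (and, where relevant, $h_1$-divisible) candidate, since $i_*$ gives no information on the kernel of inclusion of the bottom cell. Here I would argue by comparison to $\mmf$ through the unit map of Section \ref{sctn:mmf}, whose Adams spectral sequence is completely understood \cite{Isaksen18}: either the image of the putative target in $\mmf$ has a $d_6$ already pinned down to be incompatible, or the target is shown to be the target of a hidden $\tau$ extension from Chapter \ref{ch:hidden} and therefore cannot also be hit by a differential. If the $\mmf$ comparison is not decisive, the fallback is to apply Moss's higher Leibniz rule \ref{thm:Leibniz} to the bracket $\langle h_1, h_0, D_1 \rangle e_1$, showing that any surviving candidate would be inconsistent with the $d_2$- and $d_3$-structure already established for $\D_1 h_1^2$ and its multiples. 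Ruling out every candidate in this way leaves $d_6(\D_1 h_1^2 e_1) = 0$ as the only possibility.
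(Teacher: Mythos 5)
Your overall framework (enumerate targets, then eliminate) is the right shape, and you correctly isolate the hard part: the unique candidate target, $\tau M h_2^2 g^2$ in $(91,16,51)$, is $\tau$-divisible, so inclusion of the bottom cell into $C\tau$ says nothing about it. But none of the tools you propose for that remaining case actually closes it. Comparison to $\mmf$ is vacuous here because the Mahowald-operator classes $M x = \langle x, h_0^3, g_2\rangle$ die under the unit map to $\mmf$, so there is no incompatible $d_6$ to exploit on that side. The hidden-$\tau$-extension route would require exhibiting $\tau M h_2^2 g^2$ as the target of a hidden $\tau$ extension, which is not among the established extensions (the analogous fact one stem family lower, used in Lemma \ref{lem:d5-h1x71,6}, concerns $\tau M h_2^2 g$ and does not propagate by multiplying by $g$, since $g$ itself is not a permanent cycle). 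And the proposed fallback via Moss's higher Leibniz rule applied to $\langle h_1, h_0, D_1\rangle e_1$ is explicitly illegitimate: the paper warns that the higher Leibniz rule cannot be used to compute $d_r$ for $r$ larger than the page on which the bracket is formed, and in any case $\tau \D_1 h_1^2 = \langle h_1, h_0, D_1\rangle$ already supports a $d_2$, so no such bracket description persists to the $E_6$-page. A secondary slip: $\D_1 h_1^2 e_1$ is listed as a multiplicative \emph{generator} of the $E_2$-page, so the factorization $h_1\cdot \D_1 h_1 e_1$ you invoke for $h_1$-divisibility of the target does not exist.

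The missing idea is to use the \emph{other} map in the cofiber sequence. The element $\ol{\tau M h_2^2 g^2}$ survives in the Adams $E_\infty$-page for $C\tau$ and cannot be in the image of inclusion of the bottom cell; by exactness of $\pi_{*,*} \map \pi_{*,*} C\tau \map \pi_{*-1,*+1}$ it must therefore map non-trivially under projection to the top cell, and the only possible image is $\tau M h_2^2 g^2$. This forces $\tau M h_2^2 g^2$ to be a non-zero permanent cycle in the spectral sequence for the sphere, so it cannot be the target of any differential, and $d_6(\D_1 h_1^2 e_1) = 0$ follows. Without some argument of this kind that directly certifies the survival of the $\tau$-divisible candidate, your elimination is incomplete.
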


\begin{proof}
Consider the element $\ol{\tau M h_2^2 g^2}$
in the Adams spectral sequence for $C\tau$.
This element cannot be in the image of inclusion of the bottom
cell into $C\tau$.  Therefore,
it must map non-trivially under projection from $C\tau$
to the top cell.
The only possibility is that
$\tau M h_2^2 g^2$ is the image.
Therefore, $\tau M h_2^2 g^2$ cannot be the target of a differential.
\end{proof}

\begin{lemma}
\label{lem:d7-x92,10}
\revdeg{92, 10, 48}
$d_7(x_{92,10})$ does not equal $\tau^2 \D^2 h_2 g^2$.
\end{lemma}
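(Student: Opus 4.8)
The plan is to argue by contradiction, comparing with the Adams spectral sequence for $C\tau$ in the manner of Lemmas \ref{lem:d7-x87,7} and \ref{lem:d6-D1h1^2e1}. First I would dispose of the easy case coming from the uncertainty recorded in Theorem~\ref{thm:Adams-d5}, item (1): if $d_5(\D^2 g_2)$ equals $\tau^2 \D^2 h_2 g^2$, then $\tau^2 \D^2 h_2 g^2$ is already zero in the $E_6$-page and therefore cannot be the target of any later differential, in particular not of $d_7(x_{92,10})$. (The bidegrees are consistent with both possibilities: $\D^2 g_2$ sits in stem $92$, filtration $12$, and $x_{92,10}$ in stem $92$, filtration $10$, while $\tau^2 \D^2 h_2 g^2$ sits in stem $91$, filtration $17$, so a $d_5$ from the former and a $d_7$ from the latter both land on it.) Thus it remains to treat the case in which $\D^2 g_2$ is a $d_5$-cycle and $\tau^2 \D^2 h_2 g^2$ survives to the $E_7$-page.

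In that case, suppose for contradiction that $d_7(x_{92,10})$ equals $\tau^2 \D^2 h_2 g^2$. Then $\tau^2 \D^2 h_2 g^2$ vanishes in the $E_\infty$-page, so a class detected by $\tau \D^2 h_2 g^2$ is annihilated by $\tau$, up to higher filtration, and hence lies in the image of the projection $p\colon C\tau \map S^{1,-1}$ to the top cell. I would then locate a multiplicative extension on $\tau \D^2 h_2 g^2$ — by analogy with the cited lemmas, most plausibly a $\nu$ extension read off from Table~\ref{tab:nu-extn} — whose target is not itself $\tau$-torsion, so that the extension would have to be detected by projection from $C\tau$. The machine computation of the homotopy of $C\tau$ records no such extension, which is the desired contradiction. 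The complementary bookkeeping — checking that $x_{92,10}$ genuinely reaches the $E_7$-page and that the relevant target class is nonzero and really does lie in the image of $p$ — is handled by the tables of permanent cycles and of projections, Tables~\ref{tab:Adams-perm} and \ref{tab:hid-proj}.

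The delicate point, and the step I expect to be the main obstacle, is the identification of the correct extension on $\tau \D^2 h_2 g^2$ together with the verification that its target fails to be $\tau$-torsion; this is exactly what makes the comparison with $C\tau$ bite, and it is precisely the kind of information that cannot be arranged uniformly for every possible source, which is why the analogous statement for \emph{all} differentials into $\tau^2 \D^2 h_2 g^2$ is false and the uncertainty about $d_5(\D^2 g_2)$ is genuine. If the $\nu$ extension route proves unavailable, an alternative is to exploit the relation of $\tau^2 \D^2 h_2 g^2$ to $\tau M h_2^2 g^2$, which lies one filtration lower in the same stem $91$ and is shown in Lemma~\ref{lem:d6-D1h1^2e1} not to be the target of any differential; one would attempt to promote the survival of $\tau M h_2^2 g^2$ through an $h_0$ extension in order to constrain $\tau^2 \D^2 h_2 g^2$. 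I would pursue the direct $C\tau$ contradiction first, since it most closely mirrors the surrounding arguments.
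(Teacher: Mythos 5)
Your overall strategy is the paper's: assume $\tau^2 \D^2 h_2 g^2$ is hit, conclude that $\tau \D^2 h_2 g^2$ detects a $\tau$-torsion class and hence lies in the image of projection from $C\tau$ to the top cell, and then derive a contradiction from a multiplicative extension on $\tau \D^2 h_2 g^2$ that the homotopy of $C\tau$ does not exhibit. The gap is that you never identify the extension, and your leading guess --- a $\nu$ extension read off from Table \ref{tab:nu-extn} --- would fail: that table records no $\nu$ extension with source $\tau \D^2 h_2 g^2$. The extension the paper uses is the $2$ extension from $\tau \D^2 h_2 g^2$ to $\tau \D^2 h_0 h_2 g^2$, which is not hidden at all; it is the ordinary $h_0$-multiplication visible in the $E_\infty$-page. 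So the ``delicate point'' you flag dissolves once you look at the right product, and your fallback route through $\tau M h_2^2 g^2$ and an $h_0$ extension is not needed.

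One smaller point: your closing rationalization is backwards. The argument (yours and the paper's) depends only on the target being hit, not on which source hits it, so it is uniform in the source. It is therefore not true that ``the analogous statement for all differentials into $\tau^2 \D^2 h_2 g^2$ is false''; the proof as written rules out any differential hitting that class, and the surviving uncertainty about $d_5(\D^2 g_2)$ in Theorem \ref{thm:Adams-d5} cannot be explained by a source-dependence of this lemma. Your observation that the case $d_5(\D^2 g_2) = \tau^2 \D^2 h_2 g^2$ would anyway make the $d_7$ impossible is correct, but for the same reason it does not need to be split off as a separate case.
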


\begin{proof}
If $\tau^2 \D^2 h_2 g^2$ were hit by a differential, then
the $2$ extension from $\tau \D^2 h_2 g^2$ to $\tau \D^2 h_0 h_2 g^2$
would be detected by projection from $C\tau$ to the top cell.
But the homotopy of $C\tau$ has no such $2$ extension.
\end{proof}

\begin{lemma}
\label{lem:d8-D1h2e1}
\revdeg{92, 10, 51}
$d_8(\D_1 h_2 e_1) = 0$.
\end{lemma}

\begin{proof}
Consider the element $e_0 x_{76,9}$ in the
Adams $E_\infty$-page for $C\tau$.
It cannot be in the image of inclusion of the bottom cell
into $C\tau$, so it must project to a non-zero element in the
top cell.  The only possible image is $M \D h_1^3 g$.
Therefore, $M \D h_1^3 g$ cannot be the target of a differential.
\end{proof}

\begin{lemma}
\label{lem:hit-MPDh1d0}
The element $M P \D h_1 d_0$ is hit by some differential.
\end{lemma}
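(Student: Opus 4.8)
The plan is to exhibit an explicit source for the differential by bootstrapping off the known $d_7$ differential on $\tau h_2^2 H_1$. Lemma \ref{lem:d6-h2^2H1} establishes that $d_7(\tau h_2^2 H_1) = M P d_0$. Since $M$ is the Massey product operator $\langle -, h_0^3, g_2 \rangle$, the module shuffle
\[
\D h_1 \cdot \langle P d_0, h_0^3, g_2 \rangle \subseteq \langle \D h_1 \cdot P d_0, h_0^3, g_2 \rangle = M P \D h_1 d_0
\]
identifies $\D h_1 \cdot M P d_0$ with $M P \D h_1 d_0$, up to indeterminacy that I would check vanishes by inspection in this degree. Granting the multiplicative relation, and granting that $\D h_1$ survives to the $E_7$-page as a $d_7$-cycle while the product $\tau \D h_1 h_2^2 H_1 = \D h_1 \cdot \tau h_2^2 H_1$ is non-zero in the $E_7$-page, the Leibniz rule yields
\[
d_7(\tau \D h_1 h_2^2 H_1) = \D h_1 \cdot d_7(\tau h_2^2 H_1) = \D h_1 \cdot M P d_0 = M P \D h_1 d_0,
\]
so that $M P \D h_1 d_0$ is hit, as desired. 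The bidegrees line up: $\tau \D h_1 h_2^2 H_1$ sits in stem $93$ and filtration $12$, and a $d_7$ lands it in stem $92$, filtration $19$, which is exactly where $M P \D h_1 d_0$ lives.

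First I would confirm, using the machine-generated $\Ext$ data of \cite{Wang19}, that $\D h_1 \cdot \tau h_2^2 H_1$ is non-zero and survives to $E_7$, and that the product relation $M P \D h_1 d_0 = \D h_1 \cdot M P d_0$ holds with no interfering higher-filtration terms. Should this multiplicative route fail — for instance, if $\tau \D h_1 h_2^2 H_1$ vanishes prematurely — I would fall back on the standard comparison with $C\tau$. There the argument runs as follows: show first that $M P \D h_1 d_0$ is a permanent cycle, which at this high filtration should follow either because there are no elements in the appropriate degree of $E_\infty$ to receive a differential or by comparison to $\mmf$; then observe that $M P \D h_1 d_0$ must map to zero under the inclusion $i$ of the bottom cell into $C\tau$, since the relevant degree of the Adams $E_\infty$-page for $C\tau$ contains no element of sufficiently high filtration in the image of $i_*$. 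It would then follow that $M P \D h_1 d_0$ is either hit by a differential or is the target of a hidden $\tau$ extension, and one would eliminate the latter by enumerating the candidate sources in stem $92$ of filtration strictly below $19$ that are annihilated by $\tau$ and checking that none supports such an extension.

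The main obstacle in either approach is purely bookkeeping in this very dense region of the chart, rather than any conceptual difficulty. For the preferred multiplicative argument, the delicate point is confirming that the proposed source neither dies nor becomes divisible in an unexpected way before the $E_7$-page, together with the precise control of Massey-product indeterminacy needed to justify $M P \D h_1 d_0 = \D h_1 \cdot M P d_0$. For the $C\tau$ fallback, the delicate point is the exhaustive elimination of possible hidden $\tau$ extensions into filtration $19$. I expect the multiplicative argument to be the shorter of the two and would pursue it first.
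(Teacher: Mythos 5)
Your primary argument has a fatal flaw: it treats $\D h_1$ as an element of the Adams $E_2$-page that can multiply other classes. No such element exists in $\Ext_\C$ for the sphere. The symbols $\D h_1 h_3$, $\D h_1 d_0$, $P \D h_1 d_0$, etc.\ are \emph{names} for indecomposable classes (they appear as multiplicative generators in Table \ref{tab:Adams-d2}, with traditional names $q$, $u$, \dots); they are not products of a class ``$\D h_1$'' with $h_3$ or $d_0$. Consequently the shuffle $\D h_1 \cdot \langle P d_0, h_0^3, g_2 \rangle \subseteq \langle \D h_1 \cdot P d_0, h_0^3, g_2 \rangle$ is not well-formed, the proposed source $\tau \D h_1 h_2^2 H_1 = \D h_1 \cdot \tau h_2^2 H_1$ does not exist, and the Leibniz computation $d_7(\tau \D h_1 h_2^2 H_1) = \D h_1 \cdot M P d_0$ collapses. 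The actual differential killing $M P \D h_1 d_0$ is recorded in Table \ref{tab:Adams-higher} as $d_6(\tau e_0 x_{76,9}) = M P \D h_1 d_0$ (a $d_6$ from $(93,13,49)$, not a $d_7$, with proof deferred to \cite{BIX}); it is not reachable by any Leibniz argument of the kind you propose, which is precisely why the lemma is stated as ``hit by \emph{some} differential'' without naming the source.

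Your fallback ($C\tau$ comparison plus elimination of hidden $\tau$ extensions) is a legitimate template used elsewhere in the manuscript, but you have only sketched it: you neither verify that $M P \D h_1 d_0$ maps to zero in $\pi_{92,49} C\tau$ nor enumerate and rule out the candidate sources of a hidden $\tau$ extension in this dense region, and that enumeration is exactly the hard part. The paper's actual proof takes a third, cleaner route that you did not consider: it shows $M P \D h_1 d_0$ can only detect the product $\tau \epsilon \{\D h_1 d_0\} \theta_{4.5}$ (since $M P$ detects $\tau \epsilon \theta_{4.5}$ by Table \ref{tab:misc-extn}), rewrites this as $\tau \eta \kappa \{\D h_1 h_3\} \theta_{4.5}$ using the coincidence of detecting elements in $P \D h_1 d_0$, and observes that $\tau \eta \kappa \theta_{4.5} = 0$ because all of $\pi_{60,32}$ is detected by $\tmf$. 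An element of the $E_\infty$-page that detects only zero must be hit by a differential. You should either adopt that detection argument or carry out the $C\tau$ bookkeeping in full; the multiplicative shortcut is not available.
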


\begin{proof}
Table \ref{tab:tau-extn} shows that
there is a hidden $\tau$ extension from
$\D h_1 c_0 d_0$ to $P \D h_1 d_0$.  Therefore,
$P \D h_1 d_0$ detects $\tau \epsilon \{\D h_1 d_0\}$.
On the other hand, Tables \ref{tab:eta-extn} and
\ref{tab:misc-extn} show that
$P \D h_1 d_0$ also detects $\tau \eta \kappa \{\D h_1 h_3\}$.
Since there are no elements in higher Adams filtration, we 
have that $\tau \epsilon \{\D h_1 d_0\}$ equals
$\tau \eta \kappa \{\D h_1 h_3\}$.

Table \ref{tab:misc-extn} shows that
$M P$ detects $\tau \epsilon \theta_{4.5}$,
so $M P \D h_1 d_0$ detects 
$\tau \epsilon \{\D h_1 d_0\} \theta_{4.5}$, 
which equals 
$\tau \eta \kappa \{\D h_1 h_3\} \theta_{4.5}$.
But $\tau \eta \kappa \theta_{4.5}$ is zero because
all elements of $\pi_{60,32}$ are detected by $\tmf$.
This shows that $M P \D h_1 d_0$ detects zero, so it
must be hit by a differential.
\end{proof}

\rev{
\begin{remark}
\label{rem:d6-te0x76,9}
Lemma \ref{lem:hit-MPDh1d0} does not specify the differential that
hits the element $M P \D h_1 d_0$.  In fact, $d_6(\tau e_0 x_{76,9})$ equals
$M P \D h_1 d_0$ \cite{BIX}.
\end{remark}
}

\chapter{Toda brackets}
\label{ch:Toda}

The purpose of this chapter is to establish various Toda brackets
that are used elsewhere in this manuscript.
\rev{Tables \ref{tab:Toda} and \ref{tab:Toda-null} collect
all of this information in one place.}
Many Toda brackets can be easily computed from
the Moss Convergence Theorem \ref{thm:Moss}.
These are summarized in the tables without further
discussion.  
However, some brackets require more complicated arguments.
Those arguments are collected in this chapter.
\revv{
For easy reference, the lemmas in this chapter are labelled with degrees that match the degrees given in the tables.}

We will need the
following 
$\C$-motivic version of a 
theorem of Toda \cite{Toda62}*{Theorem 3.6} that applies to
symmetric Toda brackets.

\begin{thm}
\label{thm:Toda-symmetric}
Let $\alpha$ be an element of $\pi_{s,w}$, with $s$ even.
There exists an element $\alpha^*$ in $\pi_{2s+1,2w}$ such that
$\langle \alpha, \beta, \alpha \rangle$ contains
the product $\beta \alpha^*$ for all $\beta$ such that
$\alpha \beta$ \revv{equals zero}.
\end{thm}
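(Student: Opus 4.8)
The plan is to adapt Toda's original argument \cite{Toda62}*{Theorem 3.6}, which is entirely formal and goes through in any symmetric monoidal stable homotopy category; the only genuinely motivic point is a sign, which I would address first. Since the stable homotopy groups $\pi_{*,*}$ are graded commutative, $\alpha\beta$ and $\beta\alpha$ differ by a unit, so $\alpha\beta = 0$ already forces $\beta\alpha = 0$; this is exactly what is needed for $\langle \alpha, \beta, \alpha \rangle$ to be defined from the single hypothesis $\alpha\beta = 0$. The construction of $\alpha^*$ will rely on the self-switch map of $S^{s,w} \wedge S^{s,w}$ acting as the identity. By Morel's graded commutativity this switch is multiplication by $(-1)^{s}\langle -1 \rangle^{w}$ in $\pi_{0,0}$, and since $\langle -1 \rangle = 1$ in $\mathrm{GW}(\C) = \Z$ it reduces to $(-1)^{s}$, which is $+1$ precisely because $s$ is even. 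Thus over $\C$ the parity condition alone suffices and the weight plays no role in the relevant sign.

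Next I would form the cofiber $C_\alpha$ of $\alpha$, with cofiber sequence
$$S^{s,w} \xrightarrow{\alpha} S^{0,0} \xrightarrow{i} C_\alpha \xrightarrow{p} S^{s+1,w}.$$
The element $\alpha^*$ is then to be extracted from the two-cell complex $C_\alpha$ as a distinguished coextension of $\alpha$: concretely, one can take it to be the attaching datum of the top cell of $C_\alpha \wedge C_\alpha$, restricted to the symmetric combination of the two middle cells in degree $(s+1,w)$ and collapsed onto a sphere, yielding a class in $\pi_{2s+1,2w}$. The key role of $s$ even is that the swap involution on $C_\alpha \wedge C_\alpha$ fixes this symmetric combination of the two middle cells, by the sign computation above, so that the symmetric attaching datum descends to a single class $\alpha^*$ independent of any auxiliary choice. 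I would arrange the construction so that $\alpha^*$ is manifestly natural in $C_\alpha$.

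With $\alpha^*$ in hand, I would verify the bracket formula by the standard cofiber diagram chase. Given $\beta$ with $\alpha\beta = 0$, the relation $\beta\alpha = 0$ lets me extend $\beta$ over $C_\alpha$ to a map $\bar\beta \colon C_\alpha \map S^{*,*}$ with $\bar\beta \circ i = \beta$; the usual cofiber description represents an element of $\langle \alpha, \beta, \alpha \rangle$ as $\bar\beta$ composed with a coextension of $\alpha$. The point of the symmetric construction is that this coextension can be chosen to be the universal one defining $\alpha^*$, uniformly in $\beta$, so that feeding $\bar\beta$ through it and using $\bar\beta \circ i = \beta$ produces exactly $\beta\alpha^*$. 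The shuffle identities of Theorem \ref{thm:Toda-3fold} would then be used to reconcile signs and indeterminacies.

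I expect the main obstacle to be establishing the uniformity of $\alpha^*$ in $\beta$, that is, that one fixed coextension serves for every admissible $\beta$; this is precisely where the evenness of $s$ is consumed, through the triviality of the switch map. The secondary technical point is the bookkeeping of suspensions, signs, and motivic weights throughout the diagram chase, but these become routine once the sign computation over $\C$ reduces the situation to the classical case handled by Toda.
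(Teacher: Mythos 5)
The paper does not actually prove Theorem \ref{thm:Toda-symmetric}: it is stated as the $\C$-motivic analogue of Toda's classical result, with the citation to \cite{Toda62}*{Theorem 3.6} standing in for the argument. So the only substantive question is whether your proposed adaptation is sound. Your isolation of the one genuinely motivic point is correct: the switch on $S^{s,w} \wedge S^{s,w}$ acts by $(-1)^{s}\langle -1 \rangle^{w}$, and since $\langle -1 \rangle = 1$ over $\C$ this is $+1$ exactly when $s$ is even, so the weight is irrelevant and the classical argument should transport. That observation is surely what the authors intend the citation to carry.

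The gap is in the classical half, specifically in your construction of $\alpha^*$. The $(s+1,w)$-skeleton of $C_\alpha \wedge C_\alpha$ is $C_\alpha \vee_{S^{0,0}} C_\alpha$, so the attaching map of the top cell is a map $S^{2s+1,2w} \map C_\alpha \vee_{S^{0,0}} C_\alpha$. Collapsing onto any combination of the two middle cells yields a map $S^{2s+1,2w} \map S^{s+1,w}$, that is, an element of $\pi_{s,w}$ (essentially $\alpha$ itself), not of $\pi_{2s+1,2w}$; and there is no retraction of the skeleton onto the bottom cell that would let you land in $\pi_{2s+1,2w}$ unless $\alpha = 0$. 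Moreover, while the swap does interchange the two middle cells, it acts on the top cell $S^{s+1,w}\wedge S^{s+1,w}$ by $(-1)^{s+1} = -1$ when $s$ is even, so the claim that ``the symmetric attaching datum descends'' is not automatic. Toda's construction instead works inside the single two-cell complex: $\alpha \wedge 1_{C_\alpha}$ is null on the bottom cell (because $i\circ\alpha \simeq 0$) and therefore factors through $S^{s,w} \wedge (C_\alpha/S^{0,0}) = S^{2s+1,2w}$, and $\alpha^*$ is extracted from this coextension, with the evenness of $s$ entering when one compares it to the corresponding factorization of $1_{C_\alpha}\wedge\alpha$. Since your write-up also defers the crucial verification---that one fixed coextension serves uniformly for every admissible $\beta$---by labeling it ``the main obstacle'' rather than resolving it, what you have is the correct motivic sign computation attached to a plan for the classical argument, not yet a proof; one still has to carry out Toda's construction in detail or, as the authors do, simply cite it.
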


\begin{cor}
\label{cor:2-symmetric}
If $2 \beta = 0$, then $\langle 2, \beta, 2 \rangle$
contains $\tau \eta \beta$.
\end{cor}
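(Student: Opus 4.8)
The plan is to specialize Theorem~\ref{thm:Toda-symmetric} to the case $\alpha = 2$. The element $2$ lies in $\pi_{0,0}$, whose stem $s = 0$ is even, so the theorem produces an element $2^*$ in $\pi_{2s+1,2w} = \pi_{1,0}$ with the property that $\langle 2, \beta, 2 \rangle$ contains $\beta \cdot 2^*$ for every $\beta$ with $2\beta = 0$. Thus the entire content of the corollary reduces to the single identification $2^* = \tau \eta$.

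First I would pin down the group $\pi_{1,0}$. In the $\C$-motivic Adams spectral sequence the class $h_1$ detecting $\eta$ is not $\tau$-torsion, so $\tau^n \eta$ is nonzero for all $n \geq 0$; one finds $\pi_{1,0} = \Z/2$, generated by $\tau \eta$. Consequently $2^*$ is either $0$ or $\tau \eta$, and it suffices to show that $2^* \neq 0$.

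To rule out $2^* = 0$, I would evaluate the bracket on the convenient class $\beta = \eta$, which satisfies $2\eta = 0$. Inverting $\tau$ carries the motivic bracket $\langle 2, \eta, 2 \rangle$ to the classical Toda bracket $\langle 2, \eta, 2 \rangle = \{\eta^2\}$, which has zero indeterminacy since $2 \pi_2 = 0$. Because $\pi_{2,1} = \Z/2$ is generated by $\tau \eta^2$ and maps isomorphically onto $\pi_2 = \Z/2\{\eta^2\}$ upon inverting $\tau$, the motivic bracket $\langle 2, \eta, 2 \rangle$ must contain $\tau \eta^2$. By the theorem this forces $\eta \cdot 2^* = \tau \eta^2$; since multiplication by $\eta$ carries the two candidates $0$ and $\tau \eta$ in $\pi_{1,0}$ to $0$ and $\tau \eta^2$ respectively, we conclude $2^* = \tau \eta$.

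Finally, substituting $2^* = \tau \eta$ into the conclusion of Theorem~\ref{thm:Toda-symmetric} gives $\beta \cdot 2^* = \tau \eta \beta \in \langle 2, \beta, 2 \rangle$ for all $\beta$ with $2\beta = 0$, which is exactly the assertion. The only real subtlety is the weight bookkeeping: classically one has $2^* = \eta$, but motivically $2^*$ must have weight $0$, and the generator of $\pi_{1,0}$ is $\tau \eta$ rather than $\eta$ (which lives in weight $1$). Tracking this weight shift is what produces the factor of $\tau$, and it is the one place where the motivic statement genuinely differs from its classical counterpart.
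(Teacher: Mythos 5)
Your proof is correct, and its skeleton is the same as the paper's: specialize Theorem~\ref{thm:Toda-symmetric} to $\alpha = 2$, observe that $2^*$ lives in $\pi_{1,0} = \Z/2\{\tau\eta\}$, and pin down $2^*$ by evaluating the bracket on $\beta = \eta$. Where you diverge is in how you compute $\langle 2, \eta, 2 \rangle = \tau\eta^2$. The paper does this internally to the motivic Adams spectral sequence: Table~\ref{tab:Massey} gives $\langle h_0, h_1, h_0 \rangle = \tau h_1^2$ (via the motivic Adams symmetric-bracket result, Theorem~\ref{thm:Massey-symmetric}), and the Moss Convergence Theorem~\ref{thm:Moss} converts this into the Toda bracket. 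You instead import the classical computation $\langle 2, \eta, 2 \rangle = \{\eta^2\}$ and pull it back along Betti realization (equivalently, $\tau$-inversion), using that $\pi_{2,1} = \Z/2\{\tau\eta^2\}$ maps isomorphically to the classical $2$-stem. Both routes are sound; yours trades the Massey product/Moss machinery for a reliance on Toda's classical result and on the compatibility of Toda brackets with the realization functor, both of which are standard and used elsewhere in the paper. Your closing remark correctly isolates the one genuinely motivic phenomenon here, namely the weight bookkeeping that forces the factor of $\tau$. One small point worth making explicit: to conclude $\eta \cdot 2^* = \tau\eta^2$ from the fact that both elements lie in the bracket, you should note that the indeterminacy $2\,\pi_{2,1} + \pi_{2,1}\cdot 2$ vanishes, so the motivic bracket is a singleton; you verified this for the classical bracket but the same one-line observation is needed motivically.
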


\begin{proof}
Apply Theorem \ref{thm:Toda-symmetric} to $\alpha = 2$.
We need to find the value of $\alpha^*$. 
Table \ref{tab:Massey} shows that
the Massey product $\langle h_0, h_1, h_0 \rangle$
equals $\tau h_1^2$.
The Moss Convergence Theorem \ref{thm:Moss} then shows that 
$\langle 2, \eta, 2 \rangle$ equals $\tau \eta^2$.
It follows that $\alpha^*$ equals $\tau \eta$.
\end{proof}

\begin{thm}
\label{thm:Toda}
Tables \ref{tab:Toda} \rev{and \ref{tab:Toda-null}} list 
some Toda brackets in the 
$\C$-motivic stable homotopy groups.
\end{thm}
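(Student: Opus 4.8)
The plan is to verify each entry of Table \ref{tab:Toda} by an application of the Moss Convergence Theorem \ref{thm:Moss}, which is the principal mechanism for converting a Massey product in the Adams $E_r$-page into a detecting element for a Toda bracket in $\pi_{*,*}$. For a typical threefold entry $\langle \alpha, \beta, \gamma \rangle$, I would first select permanent cycles $a$, $b$, $c$ detecting $\alpha$, $\beta$, $\gamma$, and check the hypotheses of Theorem \ref{thm:Moss}: that $ab = 0$ and $bc = 0$ in the appropriate $E_r$-page, and that the bracket is defined in homotopy, i.e., $\alpha\beta = 0$ and $\beta\gamma = 0$. The choice of page $r$ matters, since (as illustrated in the worked examples following Theorem \ref{thm:Moss}) a product such as $h_0 h_3^2$ that is nonzero on $E_2$ may become zero only after an Adams differential, so the relevant Massey product lives on $E_3$ or later.

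Second, I would identify the value of the Massey product $\langle a, b, c\rangle$ in the $E_r$-page. A large fraction of the needed brackets are already recorded in Table \ref{tab:Massey}; the remainder can be obtained either from the May Convergence Theorem (Remark \ref{rem:May-convergence}), by a May differential computation as in Proposition \ref{prop:g-Massey}, or via the Mahowald operator comparison of Proposition \ref{prop:M-Massey}. Once the Massey product is known, the crucial third step is to verify the crossing-differential condition of Theorem \ref{thm:Moss} for both $ab$ and $bc$: absent any crossing differentials, Moss guarantees a permanent cycle inside $\langle a,b,c\rangle$ that detects a genuine element of the Toda bracket. For fourfold entries I would use the extended version (Remark \ref{rem:Moss-4fold}), checking the crossing condition additionally on the two threefold subbrackets, and arranging (as the paper notes is always possible here) that at least one threefold subbracket is strictly zero so that the fourfold bracket is well-defined.

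For entries where Moss does not immediately pin down the answer, I would deploy the formal shuffle relations of Theorems \ref{thm:Toda-3fold} and \ref{thm:Toda-4fold} together with the symmetric-bracket results Theorem \ref{thm:Toda-symmetric} and Corollary \ref{cor:2-symmetric}; several brackets in the table (for instance those of the form $\langle 2, \beta, 2\rangle$) are computed precisely this way. When a bracket is detected only in higher filtration than the naive Massey product would suggest, I would appeal to the refinement noted in the remarks after Theorem \ref{thm:Moss}: a Massey product vanishing in filtration $f$ forces the Toda bracket to contain an element detected in filtration at least $f+1$, which is then identified by inspection of the surviving classes. A final collection of recalcitrant brackets requires naturality comparisons, chiefly along the unit map to $\mmf$ (using Theorem \ref{thm:unit-mmf}) and along the inclusion and projection maps for $C\tau$, exploiting the identification of the $C\tau$ Adams spectral sequence with the algebraic Novikov spectral sequence.

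The main obstacle, as the structure above suggests, is the crossing-differential hypothesis together with the management of indeterminacy. The example of $\langle \nu, \sigma^2, 4\rangle$ following Theorem \ref{thm:Moss} shows that when a crossing differential such as $d_3(h_0 h_4) = h_0 d_0$ is present, Moss yields no information at all, so for each entry I must confirm the absence of crossing differentials in the relevant bidegree — a bookkeeping task that grows delicate in the high-stem range where the Adams chart is dense. Simultaneously, whenever the Massey product or the Toda bracket carries nonzero indeterminacy, I expect to need a supplementary argument (a shuffle, or a comparison to $\mmf$ or $C\tau$) to determine which coset element is actually detected, since Theorem \ref{thm:Moss} only asserts that \emph{some} element of the bracket has the stated detection. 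These two issues — verifying no crossing differentials and resolving indeterminacy — are where essentially all the genuine work lies, with the brackets requiring only a direct Moss application recorded in Table \ref{tab:Toda} without further comment.
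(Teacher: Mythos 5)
Your proposal matches the paper's proof: the paper verifies each table entry via the Moss Convergence Theorem using the Massey product or Adams differential recorded in the table's proof column, cites the literature for a few entries, and defers the harder cases to the subsequent lemmas of Chapter 7, which use exactly the shuffling relations, symmetric-bracket results, and comparisons to $C\tau$ and $\mmf$ that you describe. The approach and the division of labor between routine Moss applications and ad hoc arguments are essentially identical.
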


\begin{proof}
The fourth column of the table gives information about the proof
of each Toda bracket.  

If the fourth column shows a Massey product, then the 
Toda bracket follows from the Moss Convergence Theorem \ref{thm:Moss}.
If the fourth column shows an Adams differential, then
the Toda bracket follows from the Moss Convergence Theorem \ref{thm:Moss},
using the mentioned differential.

A few Toda brackets are established elsewhere in the literature; 
specific citations are given in these cases.

Additional more difficult cases are established in the following lemmas.
\end{proof}

Tables \ref{tab:Toda} lists information about some Toda brackets
\rev{that do not contain zero,
while Table \ref{tab:Toda-null} lists information about some Toda brackets
that do contain zero.}
The third columns of the tables give elements of the
Adams $E_\infty$-page that detect elements of the Toda brackets.
The fourth columns of the tables give partial
information about indeterminacies, again by giving detecting elements
of the Adams $E_\infty$-page.  We have not completely analyzed the
indeterminacies of all brackets when the details are inconsequential
for our purposes.  The fifth columns indicate the proofs of 
the Toda brackets, and the sixth columns shows where each specific
Toda bracket is used in the manuscript.

\begin{lemma}
\label{lem:kappa,2,eta}
\revdeg{16, 9}
The Toda bracket $\langle \kappa, 2, \eta \rangle$ contains
zero, with indeterminacy generated by $\eta \rho_{15}$.
\end{lemma}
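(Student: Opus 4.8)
The plan is to apply the Moss Convergence Theorem \ref{thm:Moss} to the Massey product $\langle d_0, h_0, h_1 \rangle$. First I would record the detecting data: the permanent cycles $d_0$, $h_0$, and $h_1$ detect $\kappa$, $2$, and $\eta$ respectively, and since $2\kappa = 0$ and $2\eta = 0$, the Toda bracket $\langle \kappa, 2, \eta \rangle$ is defined in $\pi_{16,9}$. This is the setup already indicated in the example following Theorem \ref{thm:Moss}, which I would promote to a proof.

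Next I would compute $\langle d_0, h_0, h_1 \rangle$ in the Adams $E_4$-page. The Adams differential $d_3(h_0 h_4) = h_0 d_0$ makes $h_0 d_0$ zero in the $E_4$-page, while $h_0 h_1$ is already zero in the $E_2$-page, so the Massey product is defined in the $E_4$-page. Using this same differential, $\langle d_0, h_0, h_1 \rangle$ equals $h_0 h_4 \cdot h_1 = 0$, with no indeterminacy, in Adams filtration $3$. Since there are no crossing differentials for either $h_0 d_0 = 0$ or $h_0 h_1 = 0$ in the $E_4$-page, the Moss Convergence Theorem \ref{thm:Moss} applies, and because the bracket vanishes in filtration $3$, it yields that $\langle \kappa, 2, \eta \rangle$ contains an element that is either zero or detected in Adams filtration strictly greater than $3$.

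Finally I would identify the indeterminacy and confirm that zero lies in the bracket. The only class in $\pi_{16,9}$ of Adams filtration greater than $3$ that could detect an element of the bracket is $P c_0$. The key input here is the hidden $\eta$ extension from $h_0^3 h_4$ to $P c_0$: since $\rho_{15}$ is detected by $h_0^3 h_4$, the product $\eta \rho_{15}$ is detected by $P c_0$, and $\eta \rho_{15}$ already lies in the indeterminacy $\pi_{15,8} \cdot \eta$ of the bracket. Consequently any element of $\langle \kappa, 2, \eta \rangle$ detected by $P c_0$ differs from $0$ by an element of the indeterminacy, so $0 \in \langle \kappa, 2, \eta \rangle$. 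The total indeterminacy $\kappa \cdot \pi_{2,1} + \pi_{15,8} \cdot \eta$ is then seen by inspection to be generated by $\eta \rho_{15}$.

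The main obstacle is the last step: one must verify that $P c_0$ is the unique possible higher-filtration detecting class, and correctly match it with $\eta \rho_{15}$ via the hidden $\eta$ extension from $h_0^3 h_4$, rather than with some genuinely new homotopy element outside the indeterminacy. It is precisely this identification that forces the Toda bracket to contain zero, so establishing the hidden $\eta$ extension (and ruling out any other detecting class in $\pi_{16,9}$) is where the real work lies.
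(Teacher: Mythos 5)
Your proof is correct and follows essentially the same route as the paper: the paper's proof (and the worked example immediately after the Moss Convergence Theorem) also uses $d_3(h_0h_4)=h_0d_0$ to place the bracket in filtration above $3$, identifies $Pc_0$ as the only possible detecting class in $\pi_{16,9}$, and uses the hidden $\eta$ extension from $h_0^3h_4$ to $Pc_0$ to see that $Pc_0$ only detects $\eta\rho_{15}$, which lies in the indeterminacy. Your write-up just spells out the filtration bookkeeping and the crossing-differential check a bit more explicitly.
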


\begin{proof}
Using the Adams differential $d_3(h_0 h_4) = h_0 d_0$, 
the Moss Convergence Theorem \ref{thm:Moss} shows that the
Toda bracket is detected in filtration at least $3$.  The
only element in sufficiently high filtration is $P c_0$, 
which detects the product $\eta \rho_{15}$.  This product
lies in the indeterminacy, so the bracket must contain
zero.
\end{proof}

\begin{lemma}
\label{lem:kappa,2,eta,nu}
\revdeg{20, 11}
The Toda bracket $\langle \kappa, 2, \eta, \nu \rangle$
is detected by $\tau g$.
\end{lemma}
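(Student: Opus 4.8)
The plan is to recognize this as a fourfold Toda bracket, first checking that it is defined and then pinning down its value by the fourfold Moss Convergence Theorem together with comparison to $\mmf$. Note the degrees: $\kappa$, $2$, $\eta$, $\nu$ live in $\pi_{14,8}$, $\pi_{0,0}$, $\pi_{1,1}$, $\pi_{3,2}$, so a fourfold bracket lands in $\pi_{20,11}$, which is exactly the tridegree of $\tau g$ (namely $(20,4,11)$). First I would verify that the bracket is defined. We have $2\kappa = 0$, $2\eta = 0$, and $\eta \nu = 0$. The subbracket $\langle 2, \eta, \nu \rangle$ lies in $\pi_{5,3}$, which is zero, so it is \emph{strictly} zero; the subbracket $\langle \kappa, 2, \eta \rangle$ contains zero by Lemma \ref{lem:kappa,2,eta}. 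Since at least one threefold subbracket is strictly zero, the discussion preceding Theorem \ref{thm:Toda-4fold} guarantees that $\langle \kappa, 2, \eta, \nu \rangle$ is well-defined, with indeterminacy controlled by the threefold brackets listed there.

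Next I would compute the bracket via the fourfold version of the Moss Convergence Theorem (Remark \ref{rem:Moss-4fold}), applied to the permanent cycles $d_0$, $h_0$, $h_1$, $h_2$ detecting $\kappa$, $2$, $\eta$, $\nu$. The relevant fourfold Massey product $\langle d_0, h_0, h_1, h_2 \rangle$ is formed in the $E_4$-page: the relation $d_0 h_0 = 0$ holds there because of the Adams differential $d_3(h_0 h_4) = h_0 d_0$, while $h_0 h_1 = 0$ and $h_1 h_2 = 0$ already in the $E_2$-page. One must check the crossing-differential hypotheses not only on the products $d_0 h_0$, $h_0 h_1$, $h_1 h_2$ but also on the subbrackets $\langle d_0, h_0, h_1 \rangle$ and $\langle h_0, h_1, h_2 \rangle$, both of which vanish in the appropriate page (the latter because the relevant group in stem $5$ is zero). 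The goal is to show that this Massey product is detected by $\tau g$, after which Moss produces an element of $\langle \kappa, 2, \eta, \nu \rangle$ detected by $\tau g$.

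The delicate point is the identification of the value with $\tau g$, and here I expect the cleanest route is comparison with $\mmf$, whose Adams spectral sequence is completely known \cite{Isaksen18}. In $\pi_{*,*}\mmf$ the classes $2$, $\eta$, $\nu$, and the image of $\kappa$ (detected by the image of $d_0$) are all understood, the analogous fourfold bracket can be evaluated directly, and $\tau g$ maps to a nonzero class. By naturality of Toda brackets along the unit map $S^{0,0} \map \mmf$, the image of the (nonempty) sphere bracket lies in the $\mmf$ bracket; since $g$ maps nontrivially and no class of lower filtration in $\pi_{20,11}$ maps to the same target, an element of the sphere bracket detected by $\tau g$ must exist. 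Corollary \ref{cor:2-symmetric} and the shuffle identities of Theorem \ref{thm:Toda-4fold} would be used to organize the indeterminacy bookkeeping.

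The main obstacle is the Massey-product computation itself: the naive defining system for $\langle d_0, h_0, h_1, h_2 \rangle$ mixes connecting terms of different Adams filtrations (the $d_3$-witness $h_0 h_4$ for $d_0 h_0$ versus the $E_2$-level witnesses for $h_0 h_1$ and $h_1 h_2$), so verifying that the bracket is homogeneous and lands on $\tau g$ in filtration $4$ — rather than in some higher filtration forced by a crude degree count — is exactly where care is required. This is precisely the step for which I would fall back on the $\mmf$ comparison to supply the value unambiguously, rather than attempting a direct defining-system calculation on the sphere.
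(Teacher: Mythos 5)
Your well-definedness check is exactly the paper's: $\langle 2,\eta,\nu\rangle$ is strictly zero since $\pi_{5,3}=0$, and $\langle \kappa,2,\eta\rangle$ contains zero by Lemma \ref{lem:kappa,2,eta}, so the fourfold bracket is defined. The degree count placing the bracket in $\pi_{20,11}$ is also right. The problem is that the heart of the lemma --- identifying the value with $\tau g$ --- is never actually carried out. You name two possible mechanisms (a fourfold Massey product $\langle d_0,h_0,h_1,h_2\rangle$ via Remark \ref{rem:Moss-4fold}, or a comparison with $\mmf$) and then explicitly decline to execute either one, deferring to an unverified claim that "the analogous fourfold bracket can be evaluated directly" in $\mmf$. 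That is a plan, not a proof. Moreover, the $\mmf$ comparison has a logical wrinkle you do not address: naturality gives only a containment of the image of the sphere bracket in the $\mmf$ bracket, so to conclude that the sphere bracket contains a \emph{nonzero} element detected in filtration $4$ you must also show that the $\mmf$ bracket does not contain zero (i.e., control its indeterminacy, which involves brackets such as $\langle \kappa, 2, \nu^2\rangle$ and $\langle -, \eta, \nu\rangle$ on $\pi_{16}\mmf$). None of that is checked, and the fourfold Massey product $\langle d_0,h_0,h_1,h_2\rangle$ --- which you correctly flag as delicate because the null-homotopy of $d_0 h_0$ lives on the $E_4$-page while the others live on $E_2$ --- is not computed either.

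The paper avoids all of this with a short shuffle. From Theorem \ref{thm:Toda-4fold} one gets
\[
\langle \kappa, 2, \eta, \nu \rangle \eta^2 =
\kappa \langle 2, \eta, \nu, \eta^2 \rangle,
\]
and Table \ref{tab:Toda} records that $\epsilon$ lies in $\langle \eta^2,\nu,\eta,2\rangle$ (proved via the Massey product $\langle h_1^2,h_0,h_1,h_2\rangle = c_0$). Hence the right-hand side is $\epsilon\kappa$, which is nonzero and detected by $c_0 d_0$. Since multiplying the bracket by $\eta^2$ produces something detected in filtration $7$ of the $22$-stem, the bracket itself must be detected in filtration at most $4$ of the $20$-stem, and $\tau g$ is the only possibility. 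If you want to salvage your route, the missing work is exactly the evaluation of $\langle \kappa,2,\eta,\nu\rangle$ in $\mmf$ together with a proof that that bracket excludes zero; the shuffle argument is substantially cheaper because it trades the fourfold evaluation for the already-tabulated threefold-in-disguise bracket $\langle \eta^2,\nu,\eta,2\rangle$ and the classical relation $\eta^2\kappabar = \epsilon\kappa$.
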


\begin{proof}
The subbracket $\langle 2, \eta, \nu \rangle$ is strictly zero,
since $\pi_{5,3}$ is zero.  
The subbracket 
$\langle \kappa, 2, \eta \rangle$ contains zero by Lemma 
\ref{lem:kappa,2,eta}.
Therefore, the fourfold bracket
$\langle \kappa, 2, \eta, \nu \rangle$
is well-defined.

Shuffle to obtain
\[
\langle \kappa, 2, \eta, \nu \rangle \eta^2 = 
\kappa \langle 2, \eta, \nu, \eta^2 \rangle.
\]
Table \ref{tab:Toda} shows that $\epsilon$ is contained
in the Toda bracket $\langle \eta^2, \nu, \eta, 2 \rangle$, so
the latter expression equals $\epsilon \kappa$,
which is detected by $c_0 d_0$.
It follows that 
$\langle \kappa, 2, \eta, \nu \rangle$
must be detected by $\tau g$.
\end{proof}

\begin{lemma}
\label{lem:nubar,sigma,2sigma}
\revdeg{23, 13}
The Toda bracket
$\langle \epsilon + \eta \sigma, \sigma, 2 \sigma \rangle$
contains zero, with indeterminacy generated
by $4 \nu \kappabar$ in $\{ P h_1 d_0 \}$.
\end{lemma}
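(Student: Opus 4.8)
The plan is to reduce to a bracket with a single well-understood entry, and then attack the two assertions -- that the bracket contains zero, and the description of its indeterminacy -- separately.

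First I would record that $\epsilon + \eta\sigma$ is exactly the element $\langle \nu, \eta, \nu\rangle$ (noted in the earlier example), and that this Massey/Toda bracket has \emph{no} indeterminacy, since its indeterminacy $\nu\pi_{4,3} + \pi_{4,3}\nu$ vanishes as $\pi_{4,3}=0$. Thus $\langle \nu,\eta,\nu\rangle$ is the single element $\epsilon+\eta\sigma$. To see that $\langle \epsilon+\eta\sigma, \sigma, 2\sigma\rangle$ is defined, I would check $(\epsilon+\eta\sigma)\sigma = \langle\nu,\eta,\nu\rangle\sigma = \nu\langle\eta,\nu,\sigma\rangle \subseteq \nu\,\pi_{12,7} = 0$ (using $\eta\nu=0$, $\nu\sigma=0$, and $\pi_{12,7}=0$), together with $\sigma\cdot 2\sigma = 2\sigma^2 = 0$, which is realized by the differential $d_2(h_4)=h_0 h_3^2$.

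For the claim that the bracket contains zero, the central difficulty is that every naive threefold shuffle is blocked by $\sigma^2\neq 0$: the factor of $2$ in $2\sigma$ is what makes $\sigma\cdot 2\sigma=0$, and it cannot be peeled off into a separate bracket entry. I would therefore pass to a fourfold bracket. Because $\langle\nu,\eta,\nu\rangle = \{\epsilon+\eta\sigma\}$ has no indeterminacy, part (\ref{part:a<b,c,d,e>}) of Theorem \ref{thm:Toda-4fold} gives the inclusion $\nu\,\langle \eta,\nu,\sigma,2\sigma\rangle \subseteq \langle \epsilon+\eta\sigma, \sigma, 2\sigma\rangle$, where the fourfold bracket $\langle\eta,\nu,\sigma,2\sigma\rangle \subseteq \pi_{20,11}$ is defined since its subbracket $\langle\eta,\nu,\sigma\rangle\subseteq\pi_{12,7}=0$ is strictly zero. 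I would then compute $\langle\eta,\nu,\sigma,2\sigma\rangle$ using the fourfold Moss Convergence Theorem (Remark \ref{rem:Moss-4fold}) and show that it contains an element annihilated by $\nu$ -- most cleanly, that it contains zero -- which then places $0$ in the target bracket.

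For the indeterminacy, I would argue by inspection. The target indeterminacy from the right, $\pi_{16,9}\cdot 2\sigma$, vanishes: both generators of $\pi_{16,9}$ (detected by $h_1 h_4$ and by $P c_0$) have order two, and since $2\sigma=\sigma\cdot 2$, each such product is $\sigma\cdot 2\cdot(\text{order-two class})=0$. Hence the indeterminacy equals $(\epsilon+\eta\sigma)\,\pi_{15,8}$, and running through the generators of $\pi_{15,8}=\Z/2\oplus\Z/32$ and their known products with $\epsilon+\eta\sigma$ leaves a single nonzero value $4\nu\kappabar$, which is detected by $P h_1 d_0$ in filtration $9$. The hard part will be pinning down $\langle\eta,\nu,\sigma,2\sigma\rangle$ precisely and verifying that $\nu$ carries a representative of it to zero: the $E_3$-page Massey-product bookkeeping around $d_2(h_4)=h_0 h_3^2$ and the crossing-differential hypotheses of Theorem \ref{thm:Moss} must be checked with care, and comparison to the homotopy of $C\tau$ may be the most reliable way to determine the bracket's exact value.
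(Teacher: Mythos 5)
There is a genuine gap, and it sits at the center of your argument. Your plan for showing that the bracket contains zero rests on forming the fourfold bracket $\langle \eta, \nu, \sigma, 2\sigma \rangle$ and feeding it through the inclusion $\nu \langle \eta, \nu, \sigma, 2\sigma \rangle \subseteq \langle \langle \nu, \eta, \nu \rangle, \sigma, 2\sigma \rangle$. But that fourfold bracket is not defined: while the subbracket $\langle \eta, \nu, \sigma \rangle$ does vanish (it lives in $\pi_{12,7} = 0$), the other subbracket $\langle \nu, \sigma, 2\sigma \rangle$ is detected by $h_2 h_4$ with \emph{zero} indeterminacy (Table \ref{tab:Toda}); it is the single nonzero element $\nu_4$ and therefore does not contain zero. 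A fourfold Toda bracket requires both threefold subbrackets to contain zero, so the object you want to multiply by $\nu$ does not exist. Even setting this aside, you defer the actual computation of the fourfold bracket to "the hard part," so the contains-zero claim is never established on your own terms. Your well-definedness check and your analysis of the indeterminacy (that $\pi_{16,9} \cdot 2\sigma = 0$ and that $(\epsilon + \eta\sigma)\pi_{15,8}$ contributes only $4\nu\kappabar$) are reasonable, though the latter is asserted rather than carried out.

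The paper's proof avoids producing zero directly and instead argues by elimination among the possible detecting elements in the 23-stem. Shuffling $\langle \epsilon + \eta\sigma, \sigma, 2\sigma \rangle \eta = (\epsilon + \eta\sigma)\langle \sigma, 2\sigma, \eta \rangle$ and using that $h_1 h_4 c_0$ detects both $\epsilon \langle \sigma, 2\sigma, \eta \rangle$ and $\eta\sigma \langle \sigma, 2\sigma, \eta \rangle$ shows the sum is detected in filtration greater than $5$, which rules out $h_4 c_0$ as a detecting element for the bracket. The shuffle $2\langle \epsilon + \eta\sigma, \sigma, 2\sigma \rangle = \langle 2, \epsilon + \eta\sigma, \sigma \rangle 2\sigma = 0$ rules out $\tau h_2 g$ and $\tau h_0 h_2 g$. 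What remains in higher filtration is $P h_1 d_0$, which detects $4\nu\kappabar = \tau\eta\kappa(\epsilon + \eta\sigma)$, a multiple of $\epsilon + \eta\sigma$ and hence an element of the indeterminacy; so the bracket must contain zero. If you want to salvage your strategy, you would need a decomposition whose subbrackets genuinely vanish; the factor of $2$ cannot be split off from $2\sigma$ without running into $\sigma^2 \neq 0$ on one side or $\nu_4 \neq 0$ on the other.
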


\begin{proof}
Consider the shuffle
\[
\langle \epsilon + \eta \sigma, \sigma, 2 \sigma \rangle \eta =
(\epsilon + \eta \sigma) \langle \sigma, 2 \sigma, \eta \rangle.
\]
Table \ref{tab:Toda} shows that $h_1 h_4$ detects
$\langle \sigma, 2 \sigma, \eta \rangle$, so
$h_1 h_4 c_0$ detects the product
$\epsilon \langle \sigma, 2 \sigma, \eta \rangle$.
On the other hand, Table \ref{tab:misc-extn} shows that
\revv{there is a hidden $\sigma$ extension from $h_1 h_4$ to $h_4 c_0$.
Therefore,}
$h_1 h_4 c_0$ also detects 
$\eta \sigma \langle \sigma, 2 \sigma, \eta \rangle$.
It follows that
$(\epsilon + \eta \sigma) \langle \sigma, 2 \sigma, \eta \rangle$
is detected in filtration greater than $5$.

\revv{Consider the shuffle
\[
2 \langle \epsilon + \eta \sigma, \sigma, 2 \sigma \rangle =
\langle 2, \epsilon + \eta \sigma, \sigma \rangle 2 \sigma.
\]
The latter expression is zero since $2 \sigma$ annihilates
all elements of $\pi_{16,9}$.
}
This shows that no elements of the Toda bracket
can be detected by $\tau h_2 g$ or $\tau h_0 h_2 g$.

The element $4 \nu \kappabar$ generates the indeterminacy
because it equals $\tau \eta \kappa (\epsilon + \eta \sigma)$.
\end{proof}

\begin{lemma}
\label{lem:tkappa^2,eta,2}
\revdeg{30, 16}
The Toda bracket $\langle \tau \kappa^2, \eta, 2 \rangle$
equals zero, with no indeterminacy.
\end{lemma}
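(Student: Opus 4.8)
The plan is to first pin down the indeterminacy and then prove that the bracket contains zero. The bracket lands in $\pi_{30,16}$, and by the usual formula its indeterminacy is $\tau\kappa^2\cdot\pi_{2,1} + 2\cdot\pi_{30,16}$. The first summand vanishes because $\pi_{2,1}$ is generated by $\eta^2$ and $\tau\kappa^2\eta=0$ (the same vanishing that makes the bracket defined). The second summand vanishes because every element of $\pi_{30,16}$ has order dividing $2$: the only nonzero class is $\theta_4$, detected by $h_4^2$, which has order two (the table shows that the $2$-primary part of $\pi_{30}$ is $\Z/2$). Hence the indeterminacy is zero, and it suffices to show the bracket contains zero.

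For the containment I would first move to the reordered bracket $\langle\tau\kappa^2,2,\eta\rangle$, which is tractable because $2$ can be placed adjacent to $\kappa$. Part (3) of Theorem \ref{thm:Toda-3fold} gives
\[
\tau\kappa\,\langle\kappa,2,\eta\rangle\subseteq\langle\tau\kappa^2,2,\eta\rangle,
\]
and Lemma \ref{lem:kappa,2,eta} shows that $\langle\kappa,2,\eta\rangle$ contains zero. Therefore $\langle\tau\kappa^2,2,\eta\rangle$ contains $\tau\kappa\cdot 0=0$.

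Next I would relate the two orderings through the cyclic relation, part (6) of Theorem \ref{thm:Toda-3fold}:
\[
0\in\langle\tau\kappa^2,\eta,2\rangle + \langle\eta,2,\tau\kappa^2\rangle + \langle 2,\tau\kappa^2,\eta\rangle.
\]
By symmetry (part (2)) the middle term equals $\langle\tau\kappa^2,2,\eta\rangle$, which we just saw contains zero. For the last term, note that $\tau\langle 2,\kappa^2,\eta\rangle\subseteq\langle 2,\tau\kappa^2,\eta\rangle$, so it is enough to show $\langle 2,\kappa^2,\eta\rangle$ contains zero in $\pi_{30,17}$. Here I expect to invoke the Moss Convergence Theorem \ref{thm:Moss}: the associated Massey product is $\langle h_0,d_0^2,h_1\rangle$, which is defined since $h_0d_0=0$ and (from the machine-computed $\Ext$ data) $h_1d_0^2=0$; this product sits in an Adams filtration where $\Ext$ supplies no surviving target, so the bracket is detected too high to be nonzero and must contain zero. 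Once all three indeterminacies are checked to vanish by the same stem-by-stem count, the cyclic relation forces $\langle\tau\kappa^2,\eta,2\rangle$ to contain zero, completing the proof.

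The main obstacle is the asymmetry caused by $\eta\kappa\neq0$: every attempt to factor the middle entry $\tau\kappa^2$ so as to slide a copy of $\kappa$ next to $\eta$ fails, so the term $\langle 2,\tau\kappa^2,\eta\rangle$ cannot be collapsed by a pure shuffle and must be controlled by Moss convergence together with a careful reading of $\Ext$ near stem $30$. The delicate points are the algebraic vanishing $h_1d_0^2=0$ and the confirmation that no $\tau$-torsion class in $\pi_{30,16}$ or $\pi_{30,17}$ lives in high enough filtration to detect a nonzero bracket element. As a cleaner alternative, I would be prepared to bypass the reordering altogether and apply Moss convergence directly to $\langle\tau d_0^2,h_1,h_0\rangle$, arguing that the only candidate detecting class, $h_4^2$ in filtration $2$, lies far below the filtration in which this bracket is necessarily detected, which already forces the bracket to be zero.
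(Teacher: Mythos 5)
Your reduction of the indeterminacy to $\tau\kappa^2\cdot\pi_{2,1}+2\cdot\pi_{30,16}=0$ is fine, but the main route breaks on a computational error: $h_1 d_0^2$ is \emph{not} zero in $\Ext$. It is a nonzero permanent cycle that survives to the $E_\infty$-page and detects $\eta\kappa^2\neq 0$ in $\pi_{29,17}$; only the $\tau$-multiple $\tau h_1 d_0^2$ dies, via the Adams differential $d_3(\D h_2^2)=\tau h_1 d_0^2$. Consequently the bracket $\langle 2,\kappa^2,\eta\rangle$ that you factor out of $\langle 2,\tau\kappa^2,\eta\rangle$ is not defined, and the Massey product $\langle h_0,d_0^2,h_1\rangle$ is not defined in the $E_2$-page (nor is $h_0 d_0$ zero in $\Ext$ --- it is the target of $d_3(h_0h_4)$, so it only vanishes from the $E_4$-page on). This leaves the third term of your cyclic relation uncontrolled, so the argument via Theorem \ref{thm:Toda-3fold}(6) does not close. (There is also a secondary issue: $0\in A+B+C$ with $0\in B$ and $0\in C$ only forces $0\in A$ after you verify $B$ and $C$ are singletons, which is additional work you would have to do.)

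Your ``cleaner alternative'' in the last sentence is essentially the paper's proof, but it needs to be carried out with the correct input. The bracket $\langle\tau\kappa^2,\eta,2\rangle$ is defined precisely because of the differential $d_3(\D h_2^2)=\tau h_1 d_0^2$, and the Moss Convergence Theorem \ref{thm:Moss} applied with this differential identifies the detecting Massey product in the $E_4$-page with $h_0\cdot\D h_2^2$ in filtration $7$; this class is itself zero in the $E_4$-page (it is hit by $d_3(h_0^3h_5)$), so the Toda bracket is detected in Adams filtration at least $7$. Since the only nonzero class in the $E_\infty$-page of the $30$-stem in weight $16$ is $h_4^2$ in filtration $2$, the bracket contains zero, and with your (correct) indeterminacy computation it equals zero. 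The filtration bound is not automatic; it comes exactly from knowing which differential kills $\tau h_1 d_0^2$, so that step cannot be waved through.
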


\begin{proof}
The Adams differential $d_3(\D h_2^2) = \tau h_1 d_0^2$
implies that the bracket is detected by $h_0 \cdot \D h_2^2$,
which equals zero \revv{in the $E_\infty$-page}.  
Therefore, the Toda bracket is detected 
in Adams filtration at least $7$, but there are no 
elements in the Adams $E_\infty$-page in sufficiently high
filtration.

The indeterminacy can be computed by inspection.
\end{proof}

\begin{lemma}
\label{lem:eta^2,theta4,eta^2}
\revdeg{35, 20}
The Toda bracket $\langle \eta^2, \theta_4, \eta^2 \rangle$
contains zero, with indeterminacy generated by
$\eta^3 \eta_5$.
\end{lemma}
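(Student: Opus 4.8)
The plan is to exploit the symmetry of the bracket, exactly as in the proof of Corollary~\ref{cor:2-symmetric}, rather than attempting a direct Moss Convergence Theorem~\ref{thm:Moss} computation with the Massey product $\langle h_1^2, h_4^2, h_1^2 \rangle$ (which would force us to rule out crossing differentials for the product $h_1^2 h_4^2$). First I would record that the bracket is defined precisely because $\eta^2 \theta_4 = 0$. Since $\eta^2$ lies in $\pi_{2,2}$ and the stem $2$ is even, Theorem~\ref{thm:Toda-symmetric} applies with $\alpha = \eta^2$ and $\beta = \theta_4$: there is an element $(\eta^2)^*$ of $\pi_{2 \cdot 2 + 1,\, 2 \cdot 2} = \pi_{5,4}$ such that $\langle \eta^2, \theta_4, \eta^2 \rangle$ contains the product $\theta_4 \cdot (\eta^2)^*$.

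The key step is then to show that $\pi_{5,4} = 0$, so that $(\eta^2)^* = 0$ and the bracket contains $\theta_4 \cdot (\eta^2)^* = 0$. I would verify this on the Adams $E_2$-page, i.e., in $\Ext_\C$. The only class in the $5$-stem of $\Ext_\C$ is $h_1^5$, which has weight $5$; the one other candidate monomial $h_1^2 h_2$ vanishes because $h_1 h_2 = 0$. Moreover $\tau h_1^5 = 0$, since $\tau h_1^4 = h_0^2 h_2 h_1 = h_0^2 (h_1 h_2) = 0$, using the motivic relation $\tau h_1^3 = h_0^2 h_2$. Hence $\Ext_\C$ is trivial in the $5$-stem in weight $4$, so $\pi_{5,4} = 0$ by convergence of the Adams spectral sequence, and $(\eta^2)^* = 0$ as desired. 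This shows that the Toda bracket contains zero.

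It remains to pin down the indeterminacy, which is $\eta^2 \cdot \pi_{33,18} + \pi_{33,18} \cdot \eta^2 = \eta^2 \cdot \pi_{33,18}$. Here $\eta \eta_5$ lies in $\pi_{33,18}$, detected by $h_1^2 h_5$, and $\eta^2 \cdot \eta \eta_5 = \eta^3 \eta_5$ is detected by $h_1^3 h_5$, which is nonzero on the $E_\infty$-page; this exhibits $\eta^3 \eta_5$ as a nonzero element of the indeterminacy. To finish I would check that $\eta^2$ annihilates every other generator of $\pi_{33,18}$, so that the image of the map $\eta^2 \colon \pi_{33,18} \to \pi_{35,20}$ is exactly $\F_2\{ \eta^3 \eta_5 \}$. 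This last verification --- bookkeeping the full $\eta^2$-multiplication out of the $33$-stem, including the possible hidden $\eta$-extensions recorded in Table~\ref{tab:eta-extn} --- is the main obstacle, though it is routine given the complete $h_1$-multiplication data available in this range.
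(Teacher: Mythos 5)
Your proof is correct, but it takes a genuinely different route from the paper's. The paper argues by elimination in the target group $\pi_{35,20}$: a $\nu$-shuffle shows the bracket cannot be detected by $h_2 d_1$ (because $h_2^2 d_1$ would then have to detect a multiple of $\eta^2$, which it does not), comparison to $\tmf$ rules out $\tau h_1 e_0^2$, and no other detecting elements remain. You instead invoke Theorem~\ref{thm:Toda-symmetric} with $\alpha = \eta^2$ and reduce everything to the vanishing of $\pi_{5,4}$, which you verify correctly on the $E_2$-page ($h_1^5$ is the only class in the $5$-stem, and $\tau h_1^4 = h_0^2 h_1 h_2 = 0$). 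This is precisely the technique the paper applies to the sibling bracket $\langle \theta_4, \eta^2, \theta_4 \rangle$ in Lemma~\ref{lem:theta4,eta^2,theta4} (there via $\pi_{61,32}=0$), so your argument is very much in the spirit of the surrounding text; its advantage is that it needs no information about the $35$-stem beyond the indeterminacy computation, whereas the paper's elimination argument requires knowing the candidate detecting elements there. Two small corrections. First, $\eta^3\eta_5$ is detected by $h_1^4 h_5$ in degree $(35,5,20)$, not by $h_1^3 h_5$, which lies in the $34$-stem; note that $h_1^4 h_5$ is nonzero motivically even though $h_1^4 = 0$ classically, which is why the indeterminacy is nontrivial. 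Second, you defer the verification that $\eta^2$ annihilates the remaining generators of $\pi_{33,18}$; this is no worse than the paper's ``by inspection,'' and it is indeed routine: $\eta^2 \cdot \nu\theta_4 = 0$ since $\eta\nu = 0$, and $\eta^2 \{\D h_1^2 h_3\}$ must be detected in Adams filtration at least $9$, above every class of $\pi_{35,20}$.
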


\begin{proof}
If the bracket were detected by $h_2 d_1$, then
\[
\nu \langle \eta^2, \theta_4, \eta^2 \rangle =
\langle \nu, \eta^2, \theta_4 \rangle \eta^2
\]
would be detected by $h_2^2 d_1$.
However, $h_2^2 d_1$ does not detect a multiple of $\eta^2$.

The bracket cannot be detected by $\tau h_1 e_0^2$ by 
comparison to $\tmf$.

By inspection, the only remaining possibility is that
the bracket contains zero.  The indeterminacy can be computed
by inspection.
\end{proof}

\begin{lemma}
\label{lem:t,eta^2kappa1,eta}
\revdeg{36, 20}
The Toda bracket $\langle \tau, \eta^2 \kappa_1, \eta \rangle$
is detected by $t$, with indeterminacy generated by
$\eta^3 \mu_{33}$.
\end{lemma}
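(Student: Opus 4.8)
The plan is to evaluate this threefold bracket with the Moss Convergence Theorem \ref{thm:Moss}, reducing it to a Massey product in the Adams spectral sequence. First I would check that the bracket is defined: the product $\eta \cdot \eta^2 \kappa_1 = \eta^3 \kappa_1$ vanishes by inspection of the $E_\infty$-page in the relevant stem, and $\tau \cdot \eta^2 \kappa_1$ vanishes because $\eta^2 \kappa_1$ is $\tau$-torsion (this is exactly what makes a bracket with $\tau$ in the first slot interesting, and it is recorded in the hidden $\tau$-extension data). Writing $a$ for an element of the appropriate Adams $E_r$-page that detects $\eta^2 \kappa_1$ and $h_1$ for the detecting class of $\eta$, the relevant algebraic bracket is $\langle \tau, a, h_1 \rangle$, and the goal is to show that this Massey product contains $t$ with no crossing differentials, so that Moss \ref{thm:Moss} produces a class in $\langle \tau, \eta^2 \kappa_1, \eta \rangle$ detected by $t$.

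The heart of the argument is therefore the identification $\langle \tau, a, h_1 \rangle = t$ in $\Ext$. I would establish this either by citing the corresponding entry of Table \ref{tab:Massey}, or, if it is not listed there, by a May Convergence Theorem computation (Remark \ref{rem:May-convergence}) exhibiting a May differential that realizes $t$ as a representative of the bracket, in the same style as the other computations feeding that table. In either case one must verify the crossing-differential hypothesis of Moss \ref{thm:Moss} for the products $\tau a$ and $a h_1$ on the page where the bracket is formed; I expect this to hold because there is no room for an interfering differential of the correct stem, weight, and filtration.

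To pin the detecting element down as exactly $t$, rather than $t$ plus a term of strictly higher Adams filtration, and to confirm that the bracket is genuinely nonzero, I would run a consistency shuffle such as $\langle \tau, \eta^2 \kappa_1, \eta \rangle \nu = \tau \langle \eta^2 \kappa_1, \eta, \nu \rangle$, which is valid by Theorem \ref{thm:Toda-3fold} since $\eta \nu = 0$ and $\eta^3 \kappa_1 = 0$, and compare the detecting element of the right-hand side against $\nu t$. Finally, the indeterminacy is $\tau \cdot \pi_{*,*} + \eta \cdot \pi_{*,*}$ in the appropriate degree, which I would compute by inspection of the Adams $E_\infty$-page; the claim is that its only contribution is $\eta^3 \mu_{33}$, which lies in $\eta \cdot \pi_{*,*}$.

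The main obstacle will be the Massey product identification $\langle \tau, a, h_1 \rangle = t$ together with the bookkeeping surrounding $\tau$. Brackets with $\tau$ are delicate because $\tau$ sits in filtration zero but weight $-1$, so care is needed both in choosing the page $E_r$ on which $\tau a = 0$ and in confirming that the May- (or Adams-) level representative of the bracket is genuinely $t$ and not some other class of the same degree. The shuffle above is the safeguard against an error of higher filtration in this identification, and the crossing-differential check is the technical hypothesis most likely to require explicit verification.
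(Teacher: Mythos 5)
Your strategy is genuinely different from the paper's, and as written it has a gap at its center. The paper does not invoke the Moss Convergence Theorem at all. Instead it uses the cofiber of $\tau$: since $\tau \cdot \eta^2\kappa_1 = 0$, the class $\eta^2\kappa_1$ lifts to an element detected by $\ol{h_1^2 d_1}$ in $\pi_{*,*}C\tau$, and the standard correspondence between Toda brackets and products in a cofiber (\cite{Isaksen14c}*{Section 3.1.1}) converts the bracket $\langle \tau, \eta^2\kappa_1, \eta\rangle$ into the product $h_1 \cdot \ol{h_1^2 d_1}$ in the homotopy of $C\tau$, which equals $t$ by the (machine-computable) multiplicative structure there. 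The indeterminacy is then read off by inspection. This is a two-line argument precisely because brackets with $\tau$ as an input are what the $C\tau$-module technology is built to evaluate.

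The gap in your proposal is that the entire proof rests on the identification $\langle \tau, h_1^2 d_1, h_1\rangle = t$ in $\Ext_\C$, and you never establish it: it is not an entry of Table \ref{tab:Massey}, and you do not exhibit a May differential that would produce it via the May Convergence Theorem (Remark \ref{rem:May-convergence}). Saying you would "either cite the table or do a May computation" defers exactly the step that constitutes the lemma. There are also two secondary issues. First, before the Massey product is even defined you must verify that $\tau h_1^2 d_1 = 0$ and $h_1^3 d_1 = 0$ in the $E_2$-page (both are true motivically -- $h_1^2 d_1$ is one of the $\tau$-torsion classes invisible classically -- but this needs to be said, since if $\tau h_1^2 d_1$ only died on a later page you would be forced onto a higher $E_r$-page where the May spectral sequence is no longer the relevant tool). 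Second, your proposed consistency shuffle $\langle \tau, \eta^2\kappa_1, \eta\rangle\nu = \tau\langle\eta^2\kappa_1,\eta,\nu\rangle$ is not shown to detect anything nonzero, so it does not actually pin the answer down to $t$ rather than a class of higher filtration. If you want a self-contained argument, the cleanest repair is to replace the Moss/May machinery with the $C\tau$ argument above.
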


\begin{proof}
There is a relation
$h_1 \cdot \ol{h_1^2 d_1} = t$ in the homotopy of $C\tau$.
Using the connection between Toda brackets and cofibers
as described in \cite{Isaksen14c}*{Section 3.1.1},
this shows that $t$ detects the Toda bracket.

The indeterminacy is computed by inspection.
\end{proof}

\rev{
The third author presents the following Lemma~\ref{lem:theta4,2,sigma^2+kappa} as a correction to \cite{Xu16}*{Theorem 2.1}, where it states that the said Toda bracket contains 0.

\begin{lemma}
\label{lem:theta4,2,sigma^2+kappa}
\revdeg{45}
The classical Toda bracket $\langle \theta_4, 2, \sigma^2 + \kappa \rangle$
contains $0$ or $\eta \kappabar_2$.  Its indeterminacy
is generated by $\rho_{15} \theta_4$, which is detected by $h_0^2 h_5 d_0$.
\end{lemma}

\begin{proof}
The gap originated in \cite{Xu16}*{Remark~3.3},
where it was claimed that $\langle \theta_4, 2, \sigma^2 \rangle$ contains an order 2 element of the form $2 \alpha + \beta$, where $\alpha$ is detected by $h_4^3$ and $\beta$ is detected by $h_5 d_0$. In fact, since $h_5 d_0$ and $h_1 g_2$ are in the same filtration, we can only conclude that $\beta$ is detected by $h_5 d_0$ or $h_5 d_0 + h_1 g_2$, therefore the missed possibility in the statement of the lemma.
\end{proof}
}

\rev{
\begin{remark}
\label{rem:theta4,2,sigma^2+kappa}
In fact, we have evidence that this classical Toda bracket\\
$\langle \theta_4, 2, \sigma^2 + \kappa \rangle$ contains $\eta \kappabar_2$.  However, the argument depends on computations
as far as the 110-stem.
\end{remark}
}

\begin{lemma}
\label{lem:eta,2,4kappabar2}
\revdeg{46, 25}
The Toda bracket
$\langle \eta, 2, 4 \kappabar_2 \rangle$
contains zero.
\end{lemma}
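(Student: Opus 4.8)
The plan is to evaluate this bracket with the Moss Convergence Theorem \ref{thm:Moss} and then show that the resulting detecting element is absorbed by the indeterminacy. First I would record the relevant detecting classes: $\eta$, $2$, and $4\kappabar_2$ are detected by $h_1$, $h_0$, and $h_0^2 g_2$ in the Adams $E_\infty$-page. The products that must vanish for the Massey product to be defined, namely $h_0 h_1$ and $h_0 \cdot h_0^2 g_2 = h_0^3 g_2$, are both zero in $\Ext_\C$ (the vanishing of $h_0^3 g_2$ reflects the fact that $\kappabar_2$ has order $8$, so that $8\kappabar_2 = 0$ and the bracket is defined). Example \ref{ex:Mh1} then identifies $\langle h_1, h_0, h_0^2 g_2 \rangle$ with $M h_1$. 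Assuming there are no crossing differentials for $h_0 h_1 = 0$ and $h_0^3 g_2 = 0$, the Moss Convergence Theorem \ref{thm:Moss} produces a homotopy class $x \in \langle \eta, 2, 4 \kappabar_2 \rangle \subseteq \pi_{46,25}$ that is detected by $M h_1$.

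Next I would analyze the indeterminacy. It equals $\eta \cdot \pi_{45,24} + \pi_{2,1} \cdot 4\kappabar_2$, and the second summand vanishes because $\eta^2 \cdot 4 \kappabar_2 = 0$ (as $2\eta = 0$). Table \ref{tab:eta-extn} shows that $M h_1$ detects $\eta \theta_{4.5}$, and since $\eta \theta_{4.5} = \eta \cdot \theta_{4.5}$ lies in $\eta \cdot \pi_{45,24}$, it belongs to the indeterminacy. Because $x$ and $\eta \theta_{4.5}$ are both detected by $M h_1$, their difference $x - \eta \theta_{4.5}$ lies in the same coset, so it is again contained in the bracket, and it is detected in Adams filtration strictly greater than that of $M h_1$.

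To finish, I would inspect stem $46$ in weight $25$ and argue that every permanent cycle lying strictly above the filtration of $M h_1$ detects an $\eta$-multiple coming from $\pi_{45,24}$, and hence lies in the indeterminacy. This forces $x - \eta\theta_{4.5}$ into the indeterminacy, so the entire bracket equals its indeterminacy and in particular contains zero. I expect the two genuine obstacles to be exactly these ``fine'' points: confirming that the crossing-differential hypothesis of Theorem \ref{thm:Moss} is satisfied at the positions of $h_0 h_1$ and $h_0^3 g_2$, and carrying out the chart inspection of the higher-filtration classes in $\pi_{46,25}$. If a crossing differential were in fact present, the conclusion would only be easier, since then Moss would already place the bracket in filtration above $M h_1$, and the same indeterminacy computation would apply directly.
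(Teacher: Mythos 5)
Your proposal is correct and follows essentially the same route as the paper: apply the Moss Convergence Theorem to the Massey product $\langle h_1, h_0, h_0^2 g_2 \rangle = M h_1$, and then observe that every homotopy class detected by $M h_1$ is a multiple of $\eta$ (the paper does your final ``chart inspection'' by citing Table \ref{tab:eta-extn}, which shows that $M h_1$, $\D h_2 c_1$, and $\tau d_0 l + \D c_0 d_0$ are all targets of hidden $\eta$ extensions), so the bracket meets its own indeterminacy and hence contains zero. The only cosmetic difference is that the paper concludes directly that $M h_1$ detects only $\eta$-multiples rather than first subtracting $\eta\theta_{4.5}$ and pushing into higher filtration.
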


\begin{proof}
The Massey product $M h_1 = \langle h_1, h_0, h_0^2 g_2 \rangle$
shows that $M h_1$ detects the Toda bracket.
Table \ref{tab:eta-extn} shows that
$M h_1$, $\D h_2 c_1$, and $\tau d_0 l + \D c_0 d_0$ are all
targets of hidden $\eta$ extensions.
(Beware that the
hidden $\eta$ extension from $h_3^2 h_5$ to $M h_1$ is a crossing
extension in the sense of Section \ref{sctn:assoc-graded},
but that does not matter.)
Therefore, $M h_1$ detects only multiples of $\eta$,
so the Toda bracket contains a multiple of $\eta$.
This implies that it contains zero, since multiples of $\eta$
belong to the indeterminacy.
\end{proof}

\begin{lemma}
\label{lem:tkappabar2,sigma^2,2}
\revdeg{59, 31}
The Toda bracket $\langle \tau \kappabar_2, \sigma^2, 2 \rangle$
equals zero.
\end{lemma}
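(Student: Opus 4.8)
The plan is to run the Moss Convergence Theorem \ref{thm:Moss} in exactly the style of the neighboring Lemma \ref{lem:tkappa^2,eta,2}, using the differential $d_2(h_4) = h_0 h_3^2$ from Table \ref{tab:Adams-d2} to resolve the vanishing product $2\sigma^2 = 0$. First I would record the detecting data: $2$, $\sigma^2$, and $\tau\kappabar_2$ are detected by the permanent cycles $h_0$, $h_3^2$, and $\tau g_2$. The bracket is defined because $2\sigma^2 = 0$ (as $h_0 h_3^2$ is killed by $d_2(h_4)$) and because $\tau\kappabar_2\,\sigma^2 = 0$ in $\pi_{*,*}$; I would verify or cite this last vanishing, noting that the point of the factor $\tau$ is precisely that $\kappabar_2\sigma^2$ itself need not vanish. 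Using symmetry (Theorem \ref{thm:Toda-3fold}(2)) I would rewrite the bracket as $\langle 2, \sigma^2, \tau\kappabar_2\rangle$, so that the relevant algebraic input is the Massey product $\langle h_0, h_3^2, \tau g_2\rangle$ in the $E_3$-page.

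The core computation is this Massey product. Writing $h_0 h_3^2 = d_2(h_4)$, the standard formula gives that $\langle h_0, h_3^2, \tau g_2\rangle$ in the $E_3$-page is represented by $h_4 \cdot \tau g_2 = \tau g_2 h_4$, which lies in stem $59$ and Adams filtration $5$; the other product $h_3^2 \cdot \tau g_2$ contributes nothing since it is zero in $E_3$. The key claim, paralleling ``$h_0 \cdot \D h_2^2$ equals zero'' in Lemma \ref{lem:tkappa^2,eta,2}, is that $\tau g_2 h_4$ is zero in the $E_3$-page, either because $g_2 h_4$ already vanishes in $\Ext$ or because it is a $d_2$-boundary. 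I would also check, by inspection of the chart, that there are no crossing differentials for the products $h_0 h_3^2$ and $h_3^2 \cdot \tau g_2$ in the $E_3$-page, which is the hypothesis of Theorem \ref{thm:Moss}. Granting the vanishing of $\tau g_2 h_4$, Moss's theorem (in the form of its remark on brackets that vanish in a given filtration) then forces $\langle \tau\kappabar_2, \sigma^2, 2\rangle$ to be detected in Adams filtration at least $6$.

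To finish, I would inspect the $59$-stem of the $\C$-motivic Adams $E_\infty$-page in filtration $\ge 6$ and in the appropriate motivic weight, and argue that every surviving class there either does not occur or lies in the indeterminacy $2\cdot\pi_{58,*} + \tau\kappabar_2\cdot\pi_{15,*}$; this yields that the bracket contains, hence equals, zero. I expect the main obstacle to be the two arithmetic inputs rather than the formal shuffle: establishing that $\tau g_2 h_4$ genuinely vanishes in the $E_3$-page (the analogue of the clean cancellation in Lemma \ref{lem:tkappa^2,eta,2}), and confirming the definedness statement $\tau\kappabar_2\,\sigma^2 = 0$ together with the high-filtration inspection of the $59$-stem. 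If $\tau g_2 h_4$ turned out to survive, I would fall back on a shuffling argument, for instance multiplying by $\nu$ to push the bracket into $\tau\kappabar_2\langle\sigma^2,2,\nu\rangle$, or by $\eta$ and comparing with $2\cdot\pi_{60,*}$, to rule out the surviving candidate as a genuine bracket element; but I anticipate the direct Moss computation will suffice.
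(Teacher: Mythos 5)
There is a genuine gap. Your plan defers the entire substance of the proof to two unverified steps, and the second of these is exactly where the real work lies. The Moss step, even granting your key claim that $h_4 \cdot \tau g_2$ vanishes in the $E_3$-page, only yields that the bracket is detected in Adams filtration at least $6$. But the dangerous candidates in $(59,*,31)$ are $\tau M d_0$ in filtration $10$ and $\tau^2 \D h_1 d_0 g$ in filtration $13$: both are genuine nonzero permanent cycles in the right degree, neither ``does not occur,'' and neither lies in the indeterminacy. Ruling them out is the whole content of the paper's argument, which does not use Moss at all: $\tau^2 \D h_1 d_0 g$ is excluded by comparison to $\tmf$, and $\tau M d_0$ is excluded by the shuffle $\langle \tau \kappabar_2, \sigma^2, 2 \rangle \kappa = \tau \kappabar_2 \langle \sigma^2, 2, \kappa \rangle$, which vanishes because $\pi_{29,16}=0$, while $\{\tau M d_0\}\kappa$ is nonzero (detected by $\tau M d_0^2$). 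Your fallback shuffles by $\eta$ or $\nu$ would likely fail to see $\tau M d_0$: the products $\tau M h_1 d_0$ and $M h_2 d_0$ are both killed by differentials ($d_5(A')$ and $d_4(C')$ in Tables \ref{tab:Adams-d5} and \ref{tab:Adams-d4}), so multiplying an element detected by $\tau M d_0$ by $\eta$ or $\nu$ gives no contradiction. The choice of $\kappa$ as the multiplier is what makes the argument close.

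There is also a definedness problem in your Moss setup that you do not address. Applying Theorem \ref{thm:Moss} to $\langle h_0, h_3^2, \tau g_2 \rangle$ in the $E_3$-page requires $h_3^2 \cdot \tau g_2 = 0$ \emph{in the $E_3$-page}, not merely $\tau \sigma^2 \kappabar_2 = 0$ in homotopy. Since $h_3^2 g_2$ is a nonzero permanent cycle detecting $\sigma^2 \kappabar_2 \neq 0$ in $\pi_{58,32}$, the class $\tau h_3^2 g_2$ need only die by the $E_\infty$-page; if it survives to $E_3$, your Massey product is not defined there and the computation cannot be run on that page. Together with the unverified claim that $h_4 \cdot \tau g_2 = 0$, this means the proposal as written does not constitute a proof; the paper's direct case analysis of the two possible detecting elements is the argument you would need to supply.
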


\begin{proof}
No elements of the bracket can
be detected by $\tau^2 \D h_1 d_0 g$ by comparison to $\tmf$.

Consider the shuffle
\[
\langle \tau \kappabar_2, \sigma^2, 2 \rangle \kappa =
\tau \kappabar_2 \langle \sigma^2, 2, \kappa \rangle.
\]
The bracket $\langle \sigma^2, 2, \kappa \rangle$ is zero
because it is contained in $\pi_{29,16} = 0$.
On the other hand,
$\{\tau M d_0\} \kappa$ is non-zero and detected by
$\tau M d_0^2$.
Therefore, no elements of
$\langle \tau \kappabar_2, \sigma^2, 2 \rangle$
can be can be detected by $\tau M d_0$.
\end{proof}

\rev{
The third author presents the following Lemma~\ref{lem:etakappabar2,2sigma,sigma}, which is needed in the proof of Lemma~\ref{lem:theta4-h4^2}.

\begin{lemma}
\label{lem:etakappabar2,2sigma,sigma}
\revdeg{60}
The classical Toda bracket $\langle \eta \kappabar_2, 2 \sigma, \sigma \rangle$
equals zero.
\end{lemma}

\begin{proof}
Due to $d_3(e_1) = h_2^2 n$ and that $h_1g_2 = h_3e_1$, we have the following Massey product in the $E_4$-page
$$h_1g_2 = \langle h_2, h_2n, h_3 \rangle$$
with zero indeterminacy. Since there are no crossing differentials, we conclude that $h_1g_2$ detects a homotopy class in the Toda bracket $\langle \nu, \nu\{n\}, \sigma \rangle$. We claim that $\eta \kappabar_2$ is contained in this bracket. In fact, they might differ by classes detected in filtration 6 or higher: $h_0h_5d_0, \ h_0^2 h_5d_0, \ w$. The first two detect $\sigma$-multiples so they are in the indeterminacy. The homotopy class $\{w\}$ is detected by tmf, but $\kappabar_2$ and $\{n\}$ are not, so $w$ can be ruled out too. 

Therefore, we have 
$$
\langle \eta \kappabar_2, 2\sigma, \sigma \rangle \subseteq \langle \langle \nu, \nu\{n\}, \sigma \rangle, 2\sigma, \sigma \rangle
\supseteq \nu \langle \nu\{n\}, \sigma, 2\sigma, \sigma \rangle = 0.
$$
Here $\langle \sigma, 2\sigma, \sigma \rangle = 0$, and $ \langle \nu\{n\}, \sigma, 2\sigma \rangle$ contains 0 since coker J in $\pi_{49}$ is 0. So the 4-fold bracket $\langle \nu\{n\}, \sigma, 2\sigma, \sigma \rangle$ in $\pi_{57}$ is well-defined. By comparison with $\pi_{57}tmf$, we know it is 0.

We remain to show the indeterminacy of $\langle \langle \nu, \nu\{n\}, \sigma \rangle, 2\sigma, \sigma \rangle$ is 0. In fact,
\begin{itemize}
\item
$\pi_{15} \cdot \langle \nu, \nu \{n\}, \sigma \rangle = \langle \pi_{15},  \nu, \nu \{n\} \rangle \sigma  \subseteq \sigma \pi_{53} = 0$.  (Lemma~2.3 in \cite{Xu16}.)

\item $\langle \nu \cdot \pi_{42}, 2\sigma, \sigma \rangle = \langle 0, 2\sigma, \sigma \rangle + \langle \rho_{15}\theta_4, 2\sigma, \sigma \rangle = 0$.  (Lemmas~2.3 and 2.4 in \cite{Xu16}.)

\item $\langle \sigma \cdot \pi_{38}, 2\sigma, \sigma \rangle \supseteq \pi_{38} \cdot \langle \sigma, 2\sigma, \sigma \rangle = 0$.
\end{itemize}
This completes the proof.
\end{proof}
}

\begin{lemma}
\label{lem:2,sigma^2,theta4.5}
\revdeg{60, 32}
For every $\alpha$ that is detected by $h_3^2 h_5$,
the Toda bracket $\langle 2, \sigma^2, \alpha \rangle$
contains zero.  The indeterminacy is generated by 
$2 \tau \kappabar^3$, which is detected by $\tau^2 d_0^2 l$.
\end{lemma}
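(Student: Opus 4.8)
The plan is to apply the Moss Convergence Theorem \ref{thm:Moss} to the permanent cycles $h_0$, $h_3^2$, and $h_3^2 h_5$, which detect $2$, $\sigma^2$, and $\alpha$ respectively. First I would record that the bracket is defined: $2 \sigma^2 = 0$ automatically, and $\sigma^2 \alpha = 0$ because its only possible detecting term is $h_3^4 h_5$, which vanishes in $\Ext$ by the relation $h_3^3 = h_2^2 h_4$ together with $h_3 h_4 = 0$, and there is nothing in higher filtration in the relevant degree $\pi_{59,32}$. The key input is the Adams differential $d_2(h_4) = h_0 h_3^2$, which makes $h_0 h_3^2 = 0$ in the $E_3$-page; since also $h_3^2 \cdot h_3^2 h_5 = h_3^4 h_5 = 0$ strictly, the Massey product $\langle h_0, h_3^2, h_3^2 h_5 \rangle$ is defined in the $E_3$-page.

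Next I would compute this Massey product. Because the product $h_3^2 \cdot h_3^2 h_5$ is strictly zero while $h_0 h_3^2 = d_2(h_4)$, the bracket contains $h_4 \cdot h_3^2 h_5 = h_3^2 (h_4 h_5)$, which is zero since $h_4 h_5 = 0$. Thus $\langle h_0, h_3^2, h_3^2 h_5 \rangle$ contains zero in the $E_3$-page, in Adams filtration $5$. After checking (by inspection of the chart) that there are no crossing differentials for the two products, the Moss Convergence Theorem \ref{thm:Moss} shows that $\langle 2, \sigma^2, \alpha \rangle$ contains an element detected in Adams filtration at least $6$.

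Then I would read off the $E_\infty$-page that the only class in $\pi_{60,32}$ of Adams filtration at least $6$ is $\tau^2 d_0^2 l$, which detects $2 \tau \kappabar^3$. Exactly as in the proof of Lemma \ref{lem:kappa,2,eta}, this element lies in the indeterminacy, so the bracket must contain zero. For the indeterminacy itself, which equals $2 \pi_{60,32} + \alpha \cdot \pi_{15,8}$, I would check by inspection that each summand is contained in the order-two group generated by $2 \tau \kappabar^3$: the class $2 \tau \kappabar^3$ is manifestly a multiple of $2$, and the products $\rho_{15} \alpha$ and $\tau \eta \kappa \, \alpha$ spanning $\alpha \cdot \pi_{15,8}$ either vanish (their leading terms involve $h_4 h_5 = 0$) or land in this same group.

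The main obstacle is the inspection step: one must verify from the machine-generated $E_\infty$-page, together with the hidden $\tau$-extension data identifying $\tau^2 d_0^2 l$ with $2 \tau \kappabar^3$, that $\tau^2 d_0^2 l$ is genuinely the unique surviving class of filtration at least $6$ in this motivic degree and that it lies in the indeterminacy. The Massey-product computation itself is routine once the relations $h_3^3 = h_2^2 h_4$, $h_3 h_4 = 0$, and $h_4 h_5 = 0$ are in hand, and the crossing-differential hypothesis of Theorem \ref{thm:Moss} is easily confirmed by inspection.
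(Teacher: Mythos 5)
Your overall strategy (push the bracket into high Adams filtration via the Moss Convergence Theorem, then identify which $E_\infty$-classes remain) is reasonable, but the execution has a genuine gap at the decisive step. You assert that ``the only class in $\pi_{60,32}$ of Adams filtration at least $6$ is $\tau^2 d_0^2 l$.'' This is false: the element $\tau^4 g^3$ also survives to the $E_\infty$-page in this stem and weight, in filtration well above your bound, and it does \emph{not} lie in the indeterminacy of the bracket. (Its presence is exactly why the paper's proof contains the sentence ``By comparison to $\tmf$, the bracket cannot be detected by $\tau^4 g^3$''; the same element also appears as a candidate detector in the proof of Lemma \ref{lem:teta^2kappabar,8,kappabar2}.) Without an argument excluding $\tau^4 g^3$ --- the paper does this by naturality along the unit map to $\tmf$ --- your conclusion that the bracket contains zero does not follow. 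As a smaller bookkeeping point, the Massey product $\langle h_0, h_3^2, h_3^2 h_5\rangle$ in the $E_3$-page sits in filtration $1+2+3-2=4$, so Moss only gives detection in filtration at least $5$, not $6$; this does not change which classes must be considered, but the off-by-one suggests the filtration count was not the mechanism that could have excluded $\tau^4 g^3$ anyway.

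There is a second, more minor gap in your verification that the bracket is defined. You claim $\sigma^2\alpha=0$ because its leading term $h_3^4 h_5$ vanishes and ``there is nothing in higher filtration in the relevant degree $\pi_{59,32}$.'' The group $\pi_{59,32}$ is not trivial in higher filtration; the paper's proof acknowledges this and disposes of the candidate detectors by observing that each one either supports an $\eta$ extension (impossible for $\sigma^2\alpha$ since $\eta\sigma^2=0$) or is seen by $\tmf$. You need some version of that argument rather than an appeal to an empty chart. Your treatment of the indeterminacy, and the use of the hidden $2$ extension landing on $\tau^2 d_0^2 l$ to place $2\tau\kappabar^3$ in the indeterminacy, is fine and matches the paper.
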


\begin{proof}
\revv{
Let $\alpha$ be detected by $h_3^2 h_5$.
For degree reasons,
the only elements that could detect $\sigma^2 \alpha$ either support
$\eta$ extensions or are detected by $\tmf$.
Therefore, $\sigma^2 \alpha$ is zero.
}
Hence the bracket is defined.

By comparison to $\tmf$, the bracket cannot be detected
by $\tau^4 g^3$. 
Table \ref{tab:2-extn} shows that
$\tau^2 d_0^2 l$ is the target of a hidden $2$ extension, so it
detects an element in the indeterminacy.
Since there are no other possibilities, the bracket must
contain zero.
\end{proof}

\begin{remark}
This result is consistent with Table 23 of \cite{Isaksen14c},
which claims that the bracket
$\langle 2, \sigma^2, \theta_{4.5} \rangle$
contains an element that is detected by $B_3$.
The element $B_3$ is now known to be zero in the Adams $E_\infty$-page,
so this just means that the bracket contains an element detected
in Adams filtration strictly greater than the filtration of $B_3$.
\end{remark}

\begin{lemma}
\label{lem:theta4,eta^2,theta4}
deg{63, 34}
The Toda bracket $\langle \theta_4, \eta^2, \theta_4 \rangle$
equals zero.
\end{lemma}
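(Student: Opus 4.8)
The plan is to recognize $\langle \theta_4, \eta^2, \theta_4 \rangle$ as a symmetric Toda bracket and apply Theorem \ref{thm:Toda-symmetric}. The element $\theta_4$ lies in $\pi_{30,16}$, and the stem $30$ is even, so the theorem produces an element $\theta_4^*$ in $\pi_{2\cdot 30+1,\, 2\cdot 16} = \pi_{61,32}$ with the property that $\langle \theta_4, \eta^2, \theta_4 \rangle$ contains the product $\eta^2 \theta_4^*$. (Here I am using that $\theta_4 \eta^2 = 0$, which holds because the bracket is assumed to be defined.)

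First I would observe that $\pi_{61,32} = 0$; this is the same vanishing already invoked in the proof of Lemma \ref{lem:perm-h0g3} for the subbracket $\langle \theta_4, \theta_4, 2 \rangle$. Consequently $\theta_4^* = 0$, and therefore the bracket contains $\eta^2 \theta_4^* = 0$. This shows that $\langle \theta_4, \eta^2, \theta_4 \rangle$ contains zero, which is the content of the symmetric-bracket step.

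It remains to rule out the indeterminacy, so that the bracket equals zero rather than merely containing it. The indeterminacy is $\theta_4 \cdot \pi_{33,18} + \pi_{33,18} \cdot \theta_4$, which by commutativity equals $\theta_4 \cdot \pi_{33,18}$ and lands in $\pi_{63,34}$. The substantive part of the argument is therefore to check that $\theta_4$ annihilates $\pi_{33,18}$. I would carry this out by inspection of the Adams $E_\infty$-page: for each generator of $\pi_{33,18}$, one verifies that the product of $h_4^2$ with its detecting class is zero (or is hit by a differential) in $\Ext$ in the $63$-stem, and then one eliminates the finitely many possible hidden $\theta_4$ extensions using the charts in \cite{Isaksen14a} together with the multiplicative and $C\tau$-comparison data. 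I expect this indeterminacy check to be the only real obstacle; the reduction via Theorem \ref{thm:Toda-symmetric} is immediate once the vanishing of $\pi_{61,32}$ is in hand.
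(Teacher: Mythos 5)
Your reduction via Theorem \ref{thm:Toda-symmetric} and the vanishing of $\pi_{61,32}$ is exactly the paper's first step, and it correctly shows the bracket contains zero. The gap is in the indeterminacy computation, which you rightly identify as the substantive part but then defer to ``inspection of the Adams $E_\infty$-page together with multiplicative and $C\tau$-comparison data.'' That plan does not go through as stated. The group $\pi_{33,18}$ has three relevant generators, detected by $\D h_1^2 h_3$, $p$, and $h_1^2 h_5$, and ruling out hidden $\theta_4$ extensions on these is not a chart-reading exercise: $\theta_4$ extensions are not among the systematically tabulated ones ($2$, $\eta$, $\nu$), and comparison to $C\tau$ only controls what is visible there.

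The paper needs genuinely homotopy-theoretic input for two of the three cases. For the generator $\nu\theta_4$ detected by $p$, the product with $\theta_4$ is $\nu\theta_4^2$, which vanishes because $\theta_4^2 = 0$ --- a theorem of Xu \cite{Xu16}, not something read off a chart. For $\eta\eta_5$ detected by $h_1^2 h_5$, one uses that $\eta_5$ lies in $\langle \eta, 2, \theta_4\rangle$ and shuffles to get $\eta\eta_5\theta_4 = \eta^2\langle 2, \theta_4, \theta_4\rangle$, which vanishes because $\langle 2, \theta_4, \theta_4\rangle$ is contained in $\pi_{61,32} = 0$. Only the remaining case $\{\D h_1^2 h_3\}$, handled by an Adams filtration bound, is close to pure inspection. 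Until you supply arguments of this kind for each generator, you have shown only that the bracket contains zero, not that it equals zero.
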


\begin{proof}
Theorem \ref{thm:Toda-symmetric} says that there exists an element
$\theta_4^*$ in $\pi_{61,32}$ such that
$\langle \theta_4, \eta^2, \theta_4 \rangle$ contains
$\eta^2 \theta_4^*$.
The group $\pi_{61,32}$ is zero, so $\theta_4^*$ must be zero,
and the bracket must contain zero.

In order to compute the indeterminacy of 
$\langle \theta_4, \eta^2, \theta_4 \rangle$, we must consider
the product of $\theta_4$ with elements of $\pi_{33,18}$.
There are several cases to consider.

First consider $\{\D h_1^2 h_3\}$.  The product
$\theta_4 \{\D h_1^2 h_3\}$ is detected in Adams filtration at least
$10$, but there are no elements in sufficiently high filtration.

Next consider $\nu \theta_4$ detected by $p$.
The product $\theta_4^2$ is zero \cite{Xu16},
so $\nu \theta_4^2$ is also zero.

Finally, consider $\eta \eta_5$ detected by $h_1^2 h_5$.
Table \ref{tab:Toda} shows that $\langle \eta, 2, \theta_4 \rangle$
detects $\eta_5$.
Shuffle to obtain
\[
\eta \eta_5 \theta_4 = \eta \langle \eta, 2, \theta_4 \rangle \theta_4 =
\eta^2 \langle 2, \theta_4, \theta_4 \rangle.
\]
The bracket $\langle 2, \theta_4, \theta_4 \rangle$ is zero
because it is contained in $\pi_{61,32} = 0$.
\end{proof}

\begin{lemma}
\label{lem:eta^2,theta4,eta^2,theta4}
\revdeg{66, 36}
The Toda bracket $\langle \eta^2, \theta_4, \eta^2, \theta_4 \rangle$
is detected by $\D_1 h_3^2$.
\end{lemma}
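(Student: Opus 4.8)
The plan is to realize the fourfold Toda bracket as the image of the fourfold Massey product $\langle h_1^2, h_4^2, h_1^2, h_4^2 \rangle$ under the fourfold version of the Moss Convergence Theorem (Remark \ref{rem:Moss-4fold}). Here $h_1^2$ and $h_4^2$ are permanent cycles detecting $\eta^2$ and $\theta_4$ respectively, and Lemma \ref{lem:h1^2,h4^2,h1^2,h4^2} already identifies this Massey product, computed in the $E_2$-page, with $\D_1 h_3^2$. The three consecutive products that must vanish for the Massey product to be defined are all equal to $h_1^2 h_4^2$; this element lies in Chow degree $s+f-2w=0$ and corresponds to $h_0^2 h_3^2 = 0$ under the isomorphism of Theorem \ref{thm:Chow-zero}, so it is indeed zero in the $E_2$-page.

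First I would confirm that the Toda bracket itself is defined. The subbracket $\langle \theta_4, \eta^2, \theta_4 \rangle$ is strictly zero by Lemma \ref{lem:theta4,eta^2,theta4}, and the subbracket $\langle \eta^2, \theta_4, \eta^2 \rangle$ contains zero by Lemma \ref{lem:eta^2,theta4,eta^2}; in particular $\eta^2 \theta_4 = 0$. Since at least one of the two threefold subbrackets is strictly zero, the discussion preceding Theorem \ref{thm:Toda-4fold} guarantees that $\langle \eta^2, \theta_4, \eta^2, \theta_4 \rangle$ is well-defined, avoiding the ambiguity that can afflict fourfold brackets when both subbrackets carry indeterminacy.

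Next I would verify the crossing-differential hypotheses of the fourfold Moss Convergence Theorem. By Remark \ref{rem:Moss-4fold}, these must be checked not only for the product $h_1^2 h_4^2$ but also for the two threefold subbrackets $\langle h_1^2, h_4^2, h_1^2 \rangle$ and $\langle h_4^2, h_1^2, h_4^2 \rangle$, which live in the $32$-stem, the $35$-stem, and the $63$-stem respectively. For each I would inspect the Adams differentials recorded in Tables \ref{tab:Adams-d2}--\ref{tab:Adams-higher} and confirm that no differential has a target in the appropriate stem and motivic weight whose Adams filtration lies strictly between that of the vanishing product and the cut-off dictated by the Massey product. The hard part will be the inspection in the $63$-stem, which is densely populated; here I would rely on the full control of the Adams differentials established in Chapter \ref{ch:Adams} rather than on any structural shortcut.

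With the hypotheses in hand, the fourfold Moss Convergence Theorem produces a permanent cycle contained in $\langle h_1^2, h_4^2, h_1^2, h_4^2 \rangle$ that detects a homotopy class in $\langle \eta^2, \theta_4, \eta^2, \theta_4 \rangle$. Since Lemma \ref{lem:h1^2,h4^2,h1^2,h4^2} shows this Massey product equals $\D_1 h_3^2$ with no indeterminacy, the permanent cycle in question must be $\D_1 h_3^2$ itself, and the claim follows.
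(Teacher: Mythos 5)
Your proposal follows the paper's proof essentially verbatim: identify the fourfold Massey product $\langle h_1^2, h_4^2, h_1^2, h_4^2 \rangle = \D_1 h_3^2$ via Lemma \ref{lem:h1^2,h4^2,h1^2,h4^2}, verify that the Toda bracket is well-defined using Lemmas \ref{lem:eta^2,theta4,eta^2} and \ref{lem:theta4,eta^2,theta4}, and conclude by the fourfold Moss Convergence Theorem. Your extra paragraph on checking crossing differentials makes explicit a hypothesis the paper leaves implicit, but since you only outline that inspection rather than carry it out, it does not change the substance of the argument.
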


\begin{proof}
Table \ref{tab:Massey} shows that 
$\D_1 h_3^2$ equals $\langle h_1^2, h_4^2, h_1^2, h_4^2 \rangle$.
Therefore, $\D_1 h_3^2$ detects 
$\langle \eta^2, \theta_4, \eta^2, \theta_4 \rangle$,
if the Toda bracket is well-defined.

In order to show that the Toda bracket is well-defined, we 
need to know that the 
subbrackets $\langle \eta^2, \theta_4, \eta^2 \rangle$
and $\langle \theta_4, \eta^2, \theta_4 \rangle$ contain zero.
These are handled by
Lemmas \ref{lem:eta^2,theta4,eta^2} and \ref{lem:theta4,eta^2,theta4}.
\end{proof}

\begin{lemma}
\label{lem:teta^2kappabar,8,kappabar2}
\revdeg{67, 36}
The Toda bracket 
$\langle \tau \eta^2 \kappabar, 8, \kappabar_2 \rangle$
contains zero,
and its indeterminacy is generated by multiples of
$\tau \eta^2 \kappabar$.
\end{lemma}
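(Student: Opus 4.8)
The plan is to exhibit a single element of the bracket that is visibly a multiple of $\tau \eta^2 \kappabar$, and then to read off the indeterminacy by inspection. First I would record that $\langle \tau\eta^2\kappabar, 8, \kappabar_2 \rangle$ is defined: the product $8 \cdot \tau\eta^2\kappabar$ vanishes because $2\eta = 0$, and $8\kappabar_2$ vanishes because $\kappabar_2$ has order $8$ (its detecting element $g_2$ supports the tower $g_2, h_0 g_2, h_0^2 g_2$ with $h_0^3 g_2 = 0$). The bracket lies in $\pi_{67,36}$, and its indeterminacy is $\tau\eta^2\kappabar \cdot \pi_{45,24} + \pi_{23,12} \cdot \kappabar_2$.

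The key step routes through the auxiliary bracket $\langle \eta, 8, \kappabar_2 \rangle$ in $\pi_{46,25}$, which is defined for the same reasons. By the definition of the Mahowald operator we have $M h_1 = \langle h_1, h_0^3, g_2 \rangle$, and $h_1$, $h_0^3$, $g_2$ are permanent cycles detecting $\eta$, $8$, $\kappabar_2$. Provided there are no crossing differentials for the products $h_1 h_0^3$ and $h_0^3 g_2$ (which I would confirm by inspection), the Moss Convergence Theorem \ref{thm:Moss} shows that $M h_1$ detects an element of $\langle \eta, 8, \kappabar_2 \rangle$. Now I invoke Lemma \ref{lem:eta,2,4kappabar2}, which already establishes that $M h_1$ detects only multiples of $\eta$; consequently $\langle \eta, 8, \kappabar_2 \rangle$ contains an element of the form $\eta z$ with $z \in \pi_{45,24}$.

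Applying Theorem \ref{thm:Toda-3fold}(3) with the factorization $\tau\eta\kappabar \cdot \eta = \tau\eta^2\kappabar$ gives the inclusion $\tau\eta\kappabar \cdot \langle \eta, 8, \kappabar_2 \rangle \subseteq \langle \tau\eta^2\kappabar, 8, \kappabar_2 \rangle$. In particular $\tau\eta\kappabar \cdot \eta z = \tau\eta^2\kappabar \cdot z$ lies in the bracket. But $\tau\eta^2\kappabar \cdot z$ belongs to the summand $\tau\eta^2\kappabar \cdot \pi_{45,24}$ of the indeterminacy, so the coset representing the bracket meets its own indeterminacy. Hence the bracket equals its indeterminacy, and in particular it contains zero. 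This simultaneously disposes of the ``contains zero'' assertion and reduces the indeterminacy claim to describing the indeterminacy itself.

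It then remains to show that the indeterminacy is generated by multiples of $\tau\eta^2\kappabar$. The summand $\tau\eta^2\kappabar \cdot \pi_{45,24}$ consists of such multiples by construction, so the content is to check that $\pi_{23,12} \cdot \kappabar_2$ is contained in $\tau\eta^2\kappabar \cdot \pi_{45,24}$; I would verify this by enumerating the generators of $\pi_{23,12}$ and computing their products with $\kappabar_2$ directly from the charts. The main obstacle is precisely this last inspection, together with confirming the no-crossing-differential hypothesis needed to apply the Moss Convergence Theorem to $M h_1 = \langle h_1, h_0^3, g_2 \rangle$. Once both are settled, both assertions of the lemma follow at once.
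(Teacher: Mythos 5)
Your argument for the containment of zero is sound and runs closely parallel to the paper's: the paper observes that $\langle \tau\eta^2\kappabar, 8, \kappabar_2\rangle$ contains $\tau\eta\kappabar\langle\eta,2,4\kappabar_2\rangle$ and quotes Lemma \ref{lem:eta,2,4kappabar2}, whereas you route through $\langle\eta,8,\kappabar_2\rangle$ and the Massey product $M h_1 = \langle h_1, h_0^3, g_2\rangle$; both versions ultimately rest on the same fact --- established in the \emph{proof} (not the statement) of Lemma \ref{lem:eta,2,4kappabar2} --- that $M h_1$ detects only multiples of $\eta$. Your packaging, namely that the bracket meets its own indeterminacy and hence, being a coset of that subgroup, equals it, is a clean way to obtain both conclusions at once.

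The genuine gap is in the indeterminacy. You reduce the lemma to showing $\pi_{23,12}\cdot\kappabar_2 \subseteq \tau\eta^2\kappabar\cdot\pi_{45,24}$ and propose to verify this by enumerating generators of $\pi_{23,12}$ and computing their products with $\kappabar_2$ ``directly from the charts.'' This is not a chart-reading exercise: the products land in degrees where they could be detected by nonzero elements of the $E_\infty$-page in positive filtration, so one must rule out hidden multiplicative behavior. The paper in fact proves the stronger statement that $\kappabar_2\cdot\pi_{23,12}$ is zero, and each case requires a real argument: $\tau\sigma\eta_4\kappabar_2$ is killed by comparison to $\tmf$; $\tau\nu\kappabar\kappabar_2$ requires first bounding the Adams filtration of $\kappabar\kappabar_2$ using the relation $\tau g\, g_2 = 0$; and $\rho_{23}\kappabar_2$ is handled by shuffling $\rho_{23}\in\langle\sigma,16,2\rho_{15}\rangle$ to get $\rho_{23}\kappabar_2$ inside $\sigma\langle 16, 2\rho_{15},\kappabar_2\rangle$ and then comparing to $\tmf$. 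Since this computation is the bulk of the lemma's content, deferring it as an ``inspection'' leaves the proof incomplete.
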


\begin{proof}
The bracket
$\langle \tau \eta^2 \kappabar, 8, \kappabar_2 \rangle$
contains
$\tau \eta \kappabar \langle \eta, 2, 4 \kappabar_2 \rangle$.
Lemma \ref{lem:eta,2,4kappabar2} shows that
this expression contains zero.

It remains to show that $\kappabar_2 \cdot \pi_{23, 12}$ equals zero.
There are several cases to consider.

First, the product $\tau \sigma \eta_4 \kappabar_2$ in $\pi_{60,32}$ 
could only be detected by $\tau^4 g^3$ or $\tau^2 d_0^2 l$.
Comparison to $\tmf$ rules out both possibilities.
Therefore,
$\tau \sigma \eta_4 \kappabar_2$ is zero.

Second, the product $\kappabar \kappabar_2$ in $\pi_{64,35}$ must be
detected in filtration at least $9$, since $\tau g g_2$ equals zero,
so it could only be detected by $h_1^2 (\D e_1 + C_0)$.
This implies that $\tau \nu \kappabar \kappabar_2$ is zero.

Third, we must consider the product $\rho_{23} \kappabar_2$.
Table \ref{tab:Toda} shows that
the Toda bracket $\langle \sigma, 16, 2 \rho_{15} \rangle$
detects $\rho_{23}$.
Then
$\rho_{23} \kappabar_2$
is contained in
\[
\langle \sigma, 16, 2 \rho_{15} \rangle \kappabar_2 =
\sigma \langle 16, 2 \rho_{15}, \kappabar_2 \rangle.
\]
The latter bracket is contained in $\pi_{60,32}$.
As above, comparison to $\tmf$ shows that the expression
is zero.
\end{proof}

\begin{lemma}
\label{lem:eta,nu,ttheta4.5kappabar}
\revdeg{70, 37}
The Toda bracket $\langle \eta, \nu, \tau \theta_{4.5} \kappabar \rangle$
is detected by $\tau h_1 D'_3$.
\end{lemma}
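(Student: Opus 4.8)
The plan is to apply the Moss Convergence Theorem \ref{thm:Moss} to a Massey product of the form $\langle h_1, h_2, c \rangle$, where $h_1$ and $h_2$ detect $\eta$ and $\nu$ respectively, and $c$ is the element of the appropriate Adams $E_r$-page that detects $\tau \theta_{4.5} \kappabar$. The first task is to pin down $c$: since $\theta_{4.5}$ is detected by $h_3^2 h_5$ and $\kappabar$ by $g$, the product $\tau \theta_{4.5} \kappabar$ is detected by $\tau h_3^2 h_5 g$ unless there is a filtration jump, so I would consult the charts in \cite{Isaksen14a} for a hidden $\tau$ extension raising the detecting filtration and take $c$ accordingly.

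Before computing the bracket I must verify that it is defined. The condition $\eta \nu = 0$ is immediate. The delicate condition is $\nu \cdot \tau \theta_{4.5} \kappabar = 0$. Table \ref{tab:nu-extn} and the proof of Lemma \ref{lem:d3-tD3'} show that $\nu \theta_{4.5}$ is detected by $M h_2$, so $\nu \cdot \tau \theta_{4.5} \kappabar$ would a priori be detected by $\tau M h_2 g$ in $\pi_{68,36}$. The key input is again Lemma \ref{lem:d3-tD3'}, which produces the differential $d_3(\tau D'_3) = \tau^2 M h_2 g$; combining this with a comparison to $\mmf$ (or $\tmf$) and the enumeration of $\pi_{68,36}$, I would argue that the relevant product vanishes, so that the threefold bracket is well-defined.

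With well-definedness in hand, the Moss Convergence Theorem \ref{thm:Moss} reduces the computation to the Massey product $\langle h_1, h_2, c \rangle$ in the appropriate page. I expect to read this off from the differential structure surrounding $D'_3$: the relation $d_3(\tau D'_3) = \tau^2 M h_2 g = h_2 \cdot \tau^2 M g$, together with Theorem \ref{thm:Adams-d2}, forces $\tau h_1 D'_3$ into the bracket, in the same spirit as the Massey-products-from-a-differential computations of Lemmas \ref{lem:d3-Dh2^2h6} and \ref{lem:d3-P^2h6d0}. I would also check that $\tau h_1 D'_3$ is itself a permanent cycle: note that $d_3(\tau h_1 D'_3) = \tau h_1 \cdot \tau M h_2 g = 0$ since $h_1 h_2 = 0$, while $d_2(\tau h_1 D'_3)$ equals $\tau h_1^2 X_3$ or $\tau h_1^2 X_3 + \tau^2 h_1^2 C''$, which must be shown to vanish. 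Finally I must verify the no-crossing-differentials hypothesis of \ref{thm:Moss} for the products $h_1 h_2$ and $h_2 c$.

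As a consistency check I would shuffle, using $\langle \nu, \eta, \nu \rangle = \eta\sigma + \epsilon$, to get $\nu \langle \eta, \nu, \tau \theta_{4.5} \kappabar \rangle = \langle \nu, \eta, \nu \rangle \tau \theta_{4.5} \kappabar = (\eta\sigma + \epsilon) \tau \theta_{4.5} \kappabar$, confirming that $\nu$ times the bracket is detected in higher filtration, consistent with $h_2 \cdot \tau h_1 D'_3 = 0$. The main obstacle is the well-definedness step together with the uncertainty in $D'_3$: because $d_2(D'_3)$ and the surviving representative of $\tau D'_3$ are only known up to the ambiguities recorded in Theorem \ref{thm:Adams-d2} and Remark \ref{rem:d3-tD3'}, the Massey product argument must be arranged so that its conclusion is insensitive to these choices. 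Should the direct Massey product prove unwieldy, an alternative route is to establish the bracket after comparison along the inclusion of the bottom cell and projection to the top cell of $C\tau$, where $\nu \theta_{4.5} \kappabar$ and its bracket are more transparent.
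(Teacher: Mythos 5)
Your proposal is correct and follows essentially the same route as the paper: the key input is the differential $d_3(\tau D'_3) = \tau^2 M h_2 g$ (or $d_3(\tau D'_3 + \tau^2 h_2 G_0) = \tau^2 M h_2 g$) from Lemma \ref{lem:d3-tD3'}, which via the Moss Convergence Theorem \ref{thm:Moss} forces $\tau h_1 D'_3$ to detect the bracket, and since $h_1 \cdot \tau^2 h_2 G_0 = 0$ the conclusion is insensitive to the ambiguity you worry about. The only refinement to your setup is that $\tau \theta_{4.5} \kappabar$ is detected by $\tau^2 M g$ (via the hidden $\kappabar$ extension from $h_3^2 h_5$ to $\tau M g$ in Table \ref{tab:misc-extn}), so the relevant product is $h_2 \cdot \tau^2 M g = \tau^2 M h_2 g$, exactly the target of the differential.
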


\begin{proof}
Table \ref{tab:Adams-d3}
shows that $d_3(\tau D'_3)$
equals
$\tau^2 M h_2 g$.
The Moss Convergence Theorem \ref{thm:Moss} implies that
$\tau h_1 D'_3$ detects the Toda bracket.
\end{proof}

\begin{lemma}
\label{lem:eta,nu,t^2h2C'}
\revdeg{71, 37}
There exists an element $\alpha$ in $\pi_{66,34}$
detected by $\tau^2 h_2 C'$ such that
the Toda bracket $\langle \eta, \nu, \alpha \rangle$
is defined and detected by $\tau h_1 p_1$.
\end{lemma}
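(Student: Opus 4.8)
The plan is to follow the strategy of Lemma \ref{lem:eta,nu,ttheta4.5kappabar}, extracting the Toda bracket from an Adams differential by means of the Moss Convergence Theorem \ref{thm:Moss}. The key input is Lemma \ref{lem:d5-tp1+h0^2h3h6}, which gives
\[
d_5(\tau p_1 + h_0^2 h_3 h_6) = \tau^2 h_2^2 C',
\]
possibly with the additional term $\tau h_3(\D e_1 + C_0)$. The essential observation is that the primary target factors as $\tau^2 h_2^2 C' = h_2 \cdot \tau^2 h_2 C'$, so this differential realizes a nontrivial nullhomotopy of the product of $\nu$ with the class detected by $\tau^2 h_2 C'$.

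First I would pin down $\alpha$ and check that the bracket is defined. Since $h_1 h_2 = 0$ in the $E_2$-page, we have $\eta \nu = 0$. Because $\tau^2 h_2^2 C'$ is hit by the $d_5$ differential, it is zero in the $E_\infty$-page; I would use this, together with an inspection of Table \ref{tab:nu-extn} to exclude a hidden $\nu$ extension out of $\tau^2 h_2 C'$, in order to choose $\alpha$ in the coset detected by $\tau^2 h_2 C'$ with $\nu \alpha = 0$, so that $\langle \eta, \nu, \alpha \rangle$ is defined.

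The main computation is then immediate. Multiplying the source of the differential by $h_1$ and using $h_0 h_1 = 0$ gives
\[
h_1 \cdot (\tau p_1 + h_0^2 h_3 h_6) = \tau h_1 p_1.
\]
In the $E_6$-page the Massey product $\langle h_1, h_2, \tau^2 h_2 C' \rangle$ is defined, since $h_1 h_2 = 0$ and $h_2 \cdot \tau^2 h_2 C' = 0$ there; because $h_1 h_2$ already vanishes in the $E_2$-page, the source of the $d_5$ differential serves as the defining nullhomotopy of $h_2 \cdot \tau^2 h_2 C'$, and consequently $\tau h_1 p_1$ is contained in this Massey product. By Lemma \ref{lem:perm-th1p1}, $\tau h_1 p_1$ is a nonzero permanent cycle, so the Moss Convergence Theorem \ref{thm:Moss} shows that $\tau h_1 p_1$ detects an element of $\langle \eta, \nu, \alpha \rangle$.

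The hard part will be controlling the correction term $\tau h_3(\D e_1 + C_0)$ and verifying the hypotheses of Moss. Multiplying this term by $h_1$ produces $\tau h_1 h_3(\D e_1 + C_0)$, which must vanish in the $E_5$-page: this is forced by Lemma \ref{lem:perm-th1p1}, as $\tau h_1 p_1 = h_1 \cdot (\tau p_1 + h_0^2 h_3 h_6)$ is a permanent cycle and hence a $d_5$-cycle. Thus the correction term does not alter the detected class $\tau h_1 p_1$ and does not interfere with the vanishing needed to define the Massey product. It then remains to rule out crossing differentials for the products $h_1 h_2$ and $h_2 \cdot \tau^2 h_2 C'$, which I expect to follow by inspection of the Adams chart in the relevant stems, completing the application of the Moss Convergence Theorem.
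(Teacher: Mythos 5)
Your proposal follows the paper's proof essentially verbatim: the bracket is read off from the Adams differential $d_5(\tau p_1 + h_0^2 h_3 h_6) = \tau^2 h_2^2 C'$ via the Moss Convergence Theorem \ref{thm:Moss}, and the only real content is arranging that $\nu \alpha = 0$ so that the bracket is defined.

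Two remarks where your write-up is weaker than the paper's. First, on well-definedness: the potential nonzero value of $\nu \alpha$ is $\tau^2 h_2 B_5 + h_2 D'_2$, which is visibly $h_2$ times $\tau^2 B_5 + D'_2$ in the $E_\infty$-page and is therefore excluded from being a hidden $\nu$ extension by condition (3) of Definition \ref{defn:hidden-value} --- so an inspection of Table \ref{tab:nu-extn} cannot be expected to reveal it. The correct fix, which is what the paper does, is to adjust $\alpha$ by an element detected by $\tau^2 B_5 + D'_2$ to force $\nu \alpha = 0$. Second, your treatment of the possible error term $\tau h_3(\D e_1 + C_0)$ does not work as stated: from the fact that $\tau h_1 p_1$ is a permanent cycle you may conclude $h_1 \cdot d_5(\tau p_1 + h_0^2 h_3 h_6) = 0$, hence that $h_1$ times the error term vanishes in the $E_5$-page, but not that the error term itself vanishes; and if the error term were nonzero, then $\tau p_1 + h_0^2 h_3 h_6$ would not be a nullhomotopy of $\tau^2 h_2^2 C'$ alone, which is what the Massey product $\langle h_1, h_2, \tau^2 h_2 C' \rangle$ requires. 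The paper sidesteps this by taking the clean value $d_5(\tau p_1 + h_0^2 h_3 h_6) = \tau^2 h_2^2 C'$ as established (Table \ref{tab:Adams-d5}, relying on \cite{BIX}), and you should do the same rather than attempt to argue around the error term.
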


\begin{proof}
The differential $d_5(\tau p_1 + h_0^2 h_3 h_6) = \tau^2 h_2^2 C'$
and the Moss Convergence Theorem \ref{thm:Moss} 
establish that the Toda bracket is detected by $\tau h_1 p_1$, provided
that the Toda bracket is well-defined.

Let $\alpha$ be an element of $\pi_{66,34}$ that is detected
by $\tau^2 h_2 C'$.  Then $\nu \alpha$ does not necessarily 
equal zero; it could be detected in higher filtration by
$\tau^2 h_2 B_5 + h_2 D'_2$.
Then we can adjust our choice of $\alpha$ by an element detected
by $\tau^2 B_5 + D'_2$ to ensure that $\nu \alpha$ is zero.
\end{proof}

\begin{lemma}
\label{lem:sigma^2,2,T,tkappabar}
\revdeg{72, 38}
The Toda bracket
$\langle \sigma^2, 2, \{t\}, \tau \kappabar \rangle$
is detected by $h_4 Q_2 + h_3^2 D_2$.
\end{lemma}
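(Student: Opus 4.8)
The plan is to apply the fourfold version of the Moss Convergence Theorem (Remark \ref{rem:Moss-4fold}) to the Massey product $\langle h_3^2, h_0, t, \tau g \rangle$ formed in the Adams $E_3$-page, where $h_3^2$, $h_0$, $t$, and $\tau g$ detect $\sigma^2$, $2$, $\{t\}$, and $\tau \kappabar$ respectively. First I would verify that the Toda bracket $\langle \sigma^2, 2, \{t\}, \tau \kappabar \rangle$ is defined: the products $2 \sigma^2$, $2\{t\}$, and $\{t\} \tau \kappabar$ all vanish, and I expect one of the two threefold subbrackets $\langle \sigma^2, 2, \{t\} \rangle$ and $\langle 2, \{t\}, \tau \kappabar \rangle$ to be strictly zero (the latter, living in $\pi_{57,*}$, is the likely candidate). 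This places us in the situation discussed in the treatment of fourfold brackets, where the subbracket being strictly zero guarantees that the fourfold bracket is well-formed and that its indeterminacy is controlled.

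Next I would compute the relevant Massey products in the $E_3$-page. Because $d_2(h_4) = h_0 h_3^2$, the product $h_0 h_3^2$ vanishes in $E_3$, and by exactly the reasoning used to show $\langle h_1, h_0, h_3^2 \rangle = h_1 h_4$ one obtains $\langle h_3^2, h_0, t \rangle = h_4 t$ in $E_3$. I would then check the remaining hypotheses of Remark \ref{rem:Moss-4fold}: that $h_0 t$ and $t \cdot \tau g$ vanish on the appropriate page, that the second subbracket $\langle h_0, t, \tau g \rangle$ is defined, and that there are no crossing differentials for the products $h_0 h_3^2$, $h_0 t$, $t\, \tau g$ nor for the two subbrackets $\langle h_3^2, h_0, t \rangle$ and $\langle h_0, t, \tau g \rangle$. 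Granting these, the fourfold Moss theorem produces a permanent cycle in $\langle h_3^2, h_0, t, \tau g \rangle$ detecting an element of $\langle \sigma^2, 2, \{t\}, \tau \kappabar \rangle$.

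The crux is then to show that this $E_3$-page Massey product contains $h_4 Q_2 + h_3^2 D_2$. I would read this off the machine-generated $\Ext$ and algebraic Novikov data (equivalently, the data for $C\tau$), taking the value $\langle h_3^2, h_0, t \rangle = h_4 t$ as the starting point of the computation. As an independent consistency check, Table \ref{tab:Toda} records that the threefold bracket $\langle \theta_4, \tau \kappabar, \{t\} \rangle$ is detected by $h_4^2 Q_2 = \tau^2 g Q_3$, with $\theta_4$ detected by $h_4^2$; and at the level of $\Ext$ the relation $h_3 h_4 = 0$ yields $h_4(h_4 Q_2 + h_3^2 D_2) = h_4^2 Q_2$. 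Thus multiplication by $h_4$ carries our candidate detecting class onto the class governing the $\theta_4$-bracket, which is strong evidence that $h_4 Q_2 + h_3^2 D_2$ is the correct value and helps exclude competing classes in the same stem and filtration.

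I expect the main obstacle to be precisely this Massey-product identification: confirming that $\langle h_3^2, h_0, t, \tau g \rangle$ equals $h_4 Q_2 + h_3^2 D_2$ rather than some other class in the same degree, and ruling out correction terms in higher Adams filtration. The careful bookkeeping of the fourfold indeterminacy and the several crossing-differential conditions of Remark \ref{rem:Moss-4fold}—in particular verifying that neither subbracket admits a crossing differential—will be the delicate part, but the $h_4$-multiplication comparison with $\langle \theta_4, \tau \kappabar, \{t\} \rangle$ should resolve the residual ambiguity.
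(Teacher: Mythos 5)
Your strategy is genuinely different from the paper's, and unfortunately it has a gap at exactly the point you identify as the crux. The paper does not attempt to compute the fourfold Massey product $\langle h_3^2, h_0, t, \tau g \rangle$ at all. Instead it multiplies the Toda bracket by $\eta$ and uses the shuffle
\[
\eta \langle \sigma^2, 2, \{t\}, \tau \kappabar \rangle \subseteq
\langle \langle \eta, \sigma^2, 2 \rangle, \{t\}, \tau \kappabar \rangle,
\]
together with the fact that for any $\alpha$ in $\langle \eta, \sigma^2, 2 \rangle$ (detected by $h_1 h_4$), the threefold bracket $\langle \alpha, \{t\}, \tau \kappabar \rangle$ is detected by $h_1 h_4 Q_2$ and, after an analysis of its indeterminacy, does not contain zero. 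Since $\eta$ times the fourfold bracket is therefore nonzero, the only element of the Adams $E_\infty$-page that can detect the fourfold bracket is $h_4 Q_2 + h_3^2 D_2$. This sidesteps every difficulty you flag: no fourfold Massey product, no crossing-differential bookkeeping for subbrackets, and no reliance on higher multiplicative data.

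The concrete problems with your route are the following. First, you propose to "read off" the value of $\langle h_3^2, h_0, t, \tau g \rangle$ from the machine-generated data, but the paper states explicitly that it does not have systematic machine-generated Massey product information about $\Ext$; the available data covers the additive structure and multiplications by $h_0, h_1, h_2, h_3$ only. So the central identification is not actually performed, and the proposed means of performing it is unavailable. Second, your own computation of the subbracket, $\langle h_3^2, h_0, t \rangle = h_4 t$, undercuts the definedness of the fourfold Massey product: for $\langle h_3^2, h_0, t, \tau g \rangle$ to be defined, that subbracket must contain zero, yet you present $h_4 t$ as the nonzero "starting point" of the computation without showing it vanishes (or addressing the indeterminacy needed to make it vanish). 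You would also need one of the two threefold Massey subbrackets to be strictly zero to avoid the well-definedness pathology of fourfold products discussed in the paper, and you have not established that on the $\Ext$ side. Third, the $h_4$-multiplication consistency check is not decisive: since $h_3 h_4 = 0$, multiplication by $h_4$ annihilates the $h_3^2 D_2$ summand and could equally well annihilate or conflate other candidate classes in that degree, so agreement with $\langle \theta_4, \tau\kappabar, \{t\} \rangle$ cannot rule out competing detecting elements or higher-filtration corrections.
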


\begin{proof}
The subbracket $\langle \sigma^2, 2, \{ t \} \rangle$
contains zero by comparison to $C\tau$, and its indeterminacy
is generated by $\sigma^3 \theta_4 = 4 \sigma \kappabar_2$
detected by $\tau g n$.
The subbracket $\langle 2, \{t\}, \tau \kappabar \rangle$
is strictly zero because it cannot be detected by
$h_0 h_2 h_5 i$ by comparison to $\tmf$.
This shows that the desired four-fold Toda bracket is well-defined.

Consider the relation
\[
\eta \langle \sigma^2, 2, \{t\}, \tau \kappabar \rangle \subseteq
\langle \langle \eta, \sigma^2, 2 \rangle, \{t\}, \tau \kappabar \rangle.
\]
Let $\alpha$ be any element of $\langle \eta, \sigma^2, 2 \rangle$.
Table \ref{tab:Toda} shows that $\alpha$ 
is detected by $h_1 h_4$ and 
equals either $\eta_4$ or $\eta_4 + \eta \rho_{15}$.
By inspection, the indeterminacy of
$\langle \alpha, \{t\}, \tau \kappabar \rangle$
equals $\tau \kappabar \cdot \pi_{53,29}$, which 
is detected in Adams filtration at least 14.  (In fact, 
the indeterminacy is non-zero, since it contains both
$\tau \kappabar \cdot \{M c_0\}$ detected by
$\tau M d_0^2$ and also
$\tau \kappabar \cdot \{\D h_1 d_0^2\}$ detected by
$\tau^2 \D h_1 d_0 e_0^2$.)

Table \ref{tab:Toda} shows that
$\langle \alpha, \{t\}, \tau \kappabar \rangle$ is detected
by $h_1 h_4 Q_2$.
Together with the partial analysis of the indeterminacy in the
previous paragraph, this shows that 
$\langle \alpha, \{t\}, \tau \kappabar \rangle$ does
not contain zero.

Then 
$\eta \langle \sigma^2, 2, \{t\}, \tau \kappabar \rangle$
also does not contain zero, and the only possibility is that
$\langle \sigma^2, 2, \{t\}, \tau \kappabar \rangle$ 
is detected by $h_4 Q_2 + h_3^2 D_2$.
\end{proof}

\begin{lemma}
\label{lem:theta4,theta4,kappa}
\revdeg{75, 40}
The Toda bracket $\langle \theta_4, \theta_4, \kappa \rangle$
equals zero.
\end{lemma}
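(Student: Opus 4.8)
The plan is to verify that the bracket is defined, reduce its elements to $2$-torsion by a shuffle, and then eliminate the surviving candidate detecting classes in stem $75$ by comparison with $\tmf$. First I would confirm that $\langle \theta_4, \theta_4, \kappa \rangle$ is well-defined. The product $\theta_4^2$ is zero by \cite{Xu16}, and $\theta_4 \kappa$ vanishes in $\pi_{44,24}$, which I would read off from the structure recorded in Table~\ref{tab:order} (or establish by a direct filtration argument). The bracket then lives in $\pi_{75,40}$, and by the Moss Convergence Theorem~\ref{thm:Moss} it is detected in Adams filtration at least $7$.

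Next, since $\pi_{30,16}$ is $2$-torsion we have $2\theta_4 = 0$, so by Theorem~\ref{thm:Toda-3fold}(5) the shuffle
\[
2 \langle \theta_4, \theta_4, \kappa \rangle =
\langle 2, \theta_4, \theta_4 \rangle \kappa
\]
is valid. The inner bracket $\langle 2, \theta_4, \theta_4 \rangle$ is contained in $\pi_{61,32}$, which is zero (as already used in Lemma~\ref{lem:theta4,eta^2,theta4}). Hence every element of $\langle \theta_4, \theta_4, \kappa \rangle$ is annihilated by $2$. Note that the symmetric Toda bracket theorem~\ref{thm:Toda-symmetric} does \emph{not} apply directly here, since the repeated factor $\theta_4$ sits in the first two slots rather than the outer two; this is what distinguishes the argument from that of Lemma~\ref{lem:theta4,eta^2,theta4}.

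The main step is to show the bracket contains zero. Because $\theta_4$ is detected by $h_4^2$, which maps to zero under $p_* \colon \Ext_\C \to \Ext_{A(2)}$, the class $\theta_4$ maps to zero in $\pi_{*,*}\tmf$. By naturality of Toda brackets, the image of $\langle \theta_4, \theta_4, \kappa \rangle$ in $\pi_{75,40}\tmf$ lies in $\langle 0, 0, \kappa \rangle$, whose indeterminacy $\pi_{61,32}(\tmf)\cdot\kappa$ is zero; thus the bracket maps to zero in $\tmf$. This excludes every candidate detecting class in stem $75$ that is nonzero in $\tmf$-homotopy. Combined with the $2$-torsion constraint and the filtration bound, I would then check that the only $2$-torsion classes of filtration at least $7$ available in stem $75$ are the $\tmf$-detected $v_1$-torsion class and the top of the image-of-$J$ tower; the first is ruled out by the $\tmf$ comparison, and the remaining $v_1$-periodic candidate I would exclude by comparison to $C\tau$ (or to $\mmf$). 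With all detecting classes eliminated, the bracket must contain zero. Finally, the indeterminacy equals $\theta_4 \cdot \pi_{45,24}$, since $\pi_{61,32}\cdot\kappa = 0$, and I would show this is zero by running through the generators of $\pi_{45,24}$: most products land in filtration that is too high, and the delicate case $\theta_4 \theta_{4.5}$ can be reduced via a further shuffle exploiting $2\theta_4 = 0$ and $\pi_{61,32}=0$.

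I expect the hard part to be the elimination of the $v_1$-periodic candidate detecting class in stem $75$ and the resolution of the indeterminacy contribution $\theta_4 \theta_{4.5}$; unlike the $\tmf$-detected class, these are not immediately killed by the $\tmf$ comparison, so pinning them down will require the extra comparison to $C\tau$ together with a careful bookkeeping of Adams filtrations.
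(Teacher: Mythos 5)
Your overall skeleton (well-definedness, a Moss filtration bound, elimination of detecting classes, then the indeterminacy) matches the paper's, but the elimination step --- the heart of the argument --- does not go through as proposed. In the relevant degree $(s,f,w)=(75,\geq 7,40)$ the candidate detecting classes are $x_{75,7}$ in filtration $7$ and $P h_1^4 h_6$ in filtration $9$. Neither is detected by $\tmf$: both $h_4$ and $h_6$ map to zero in $\Ext_{A(2)}$, so the comparison to $\tmf$ that you lean on eliminates nothing here. (Your description of the $75$-stem is also off: the $2$-primary $v_1$-torsion class in $\pi_{75}$ is not $\tmf$-detected, and the image-of-$J$ tower sits in motivic weight $38$, so it is excluded by the weight rather than by any homotopy-theoretic comparison.) Likewise, your filtration bound ``at least $7$'' is too weak to exclude $x_{75,7}$, and the $2$-torsion constraint from the shuffle $2\langle\theta_4,\theta_4,\kappa\rangle=\langle 2,\theta_4,\theta_4\rangle\kappa=0$ excludes neither candidate, since both detect only $2$-torsion classes ($P h_1^4 h_6$ detects an $\eta$-multiple).

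What the paper does instead is two sharper multiplicative arguments. First, it shows that the Massey product $\langle h_4^2,h_4^2,d_0\rangle$ is already zero in the $E_2$-page, because $h_1^2\langle h_4^2,h_4^2,d_0\rangle=\langle h_1^2,h_4^2,h_4^2\rangle d_0=0$ while $h_1^2 x_{75,7}\neq 0$; by the Moss Convergence Theorem \ref{thm:Moss} this pushes the detecting filtration to at least $8$ and disposes of $x_{75,7}$. Second, it shuffles against $\eta^2$ rather than $2$: $\eta^2\langle\theta_4,\theta_4,\kappa\rangle=\langle\eta^2,\theta_4,\theta_4\rangle\kappa=0$, while $h_1^2\cdot P h_1^4 h_6\neq 0$, which disposes of $P h_1^4 h_6$. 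You would need both of these (or genuine substitutes) to close the argument; ``comparison to $C\tau$ together with careful bookkeeping'' is not yet a proof of either. Finally, for the indeterminacy $\theta_4\cdot\pi_{45,24}$, the delicate product is $\theta_4\theta_{4.5}$, which the paper kills by comparison to $C\tau$; I do not see how your proposed shuffle using $2\theta_4=0$ and $\pi_{61,32}=0$ would apply, since $\theta_{4.5}$ is not exhibited as a bracket of the required form.
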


\begin{proof}
The Massey product $\langle h_4^2, h_4^2, d_0 \rangle$ equals zero,
since
\[
h_1^2 \langle h_4^2, h_4^2, d_0 \rangle =
\langle h_1^2, h_4^2, h_4^2 \rangle d_0 = 0,
\]
while $h_1^2 x_{75,7}$ is not zero.
The Moss Convergence Theorem \ref{thm:Moss} then implies
that $\langle \theta_4, \theta_4, \kappa \rangle$
is detected in Adams filtration at least $8$.

The only element in sufficiently high filtration is $P h_1^4 h_6$.
However,
\[
\eta^2 \langle \theta_4, \theta_4, \kappa \rangle =
\langle \eta^2, \theta_4, \theta_4 \rangle \kappa = 0,
\]
while $h_1^2 \cdot P h_1^4 h_6$ is not zero.
Then 
$\langle \theta_4, \theta_4, \kappa \rangle$ must contain zero because
there are no remaining possibilities.

The indeterminacy can be computed by inspection, using that
$\theta_4 \theta_{4.5}$ is zero by comparison to $C\tau$.
\end{proof}

\begin{lemma}
\label{lem:kappa,2,theta5}
\revdeg{77, 40}
The Toda bracket
$\langle \kappa, 2, \theta_5 \rangle$
is detected by $h_6 d_0$.
\end{lemma}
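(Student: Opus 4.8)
The plan is to read off $\langle \kappa, 2, \theta_5 \rangle$ from the Adams differential $d_2(h_6) = h_0 h_5^2$ by way of the Moss Convergence Theorem \ref{thm:Moss}, in direct analogy with the computation of $\langle \sigma^2, 2, \eta \rangle$ from $d_2(h_4) = h_0 h_3^2$. First I would record that the bracket is defined: $\kappa$, $2$, and $\theta_5$ are detected by the permanent cycles $d_0$, $h_0$, and $h_5^2$, and both $2\kappa$ and $2\theta_5$ vanish in $\pi_{*,*}$. The natural candidate for the detecting element is $h_6 d_0$, namely the product of $d_0$ with the element $h_6$ that witnesses $h_0 h_5^2 = 0$ through $d_2(h_6) = h_0 h_5^2$.

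The hard part is that $\langle \kappa, 2, \theta_5 \rangle$ cannot be evaluated by a single-page Massey product. The product $h_0 h_5^2$ is already zero on the $E_3$-page, witnessed in filtration $1$ by $h_6$; but $h_0 d_0$ survives until the $E_4$-page, where it is killed by $d_3(h_0 h_4) = h_0 d_0$, witnessed in filtration $2$ by $h_0 h_4$. Consequently the two halves of the would-be Massey product $\langle d_0, h_0, h_5^2 \rangle$ live in different Adams filtrations: the $h_6$-contribution $h_6 d_0$ has filtration $5$, while the $h_0 h_4$-contribution $h_0 h_4 h_5^2$ has filtration $4$. Forming the bracket on the $E_4$-page, the first page on which both products vanish, sees only the lower filtration-$4$ term $h_0 h_4 h_5^2$, which one checks is zero in the $E_4$-page, so the $E_4$-page Massey product is $\{0\}$ in filtration $4$. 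By the filtration form of the Moss Convergence Theorem \ref{thm:Moss}, this forces any element of $\langle \kappa, 2, \theta_5 \rangle$ to be detected in Adams filtration at least $5$, with $h_6 d_0$ the leading candidate. Here I would also check the crossing-differential hypothesis for $h_0 d_0$ and $h_0 h_5^2$ on the $E_4$-page, and confirm that $d_2(h_6 d_0) = h_0 h_5^2 d_0 = 0$, so that $h_6 d_0$ is a genuine permanent cycle.

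To show the bracket is actually nonzero and detected exactly by $h_6 d_0$, rather than hidden in still higher filtration, I would pass to the cofiber $C\tau$ and exploit the machine data for the algebraic Novikov spectral sequence. Because $\kappa$ and $\theta_5$ are $2$-torsion in $H^*(S; BP)$, they admit lifts $\widetilde{\kappa}$ and $\widetilde{h_5^2}$ to $H^*(S/2; BP)$ with $q(\widetilde{h_5^2}) = h_5^2$; the multiplicative snake lemma \ref{lem:mult-snake} then places $q(\widetilde{\kappa} \cdot \widetilde{h_5^2})$ inside the Massey product $\langle \kappa, 2, h_5^2 \rangle$ computed in $H^*(S; BP) = \pi_{*,*} C\tau$. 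I would verify from the computer data that this product is nonzero and detected by the image of $h_6 d_0$, so that the corresponding Toda bracket in $\pi_{*,*} C\tau$ is nonzero in that filtration. Naturality of Toda brackets along the inclusion $i \colon S^{0,0} \map C\tau$ of the bottom cell then pulls this back, via $i_* \langle \kappa, 2, \theta_5 \rangle \subseteq \langle \kappa, 2, \theta_5 \rangle$ in $\pi_{*,*} C\tau$, and tracking that $h_6 d_0$ maps to the relevant $C\tau$-class shows that the sphere bracket already contains an element detected by $h_6 d_0$.

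Finally I would assemble the two inputs: the Moss argument confines the bracket to filtration at least $5$, and the $C\tau$ computation produces a genuine element there detected by $h_6 d_0$, with no intervening $E_\infty$-classes to spoil the identification; the indeterminacy is then read off by inspection. I expect the genuine obstacle to be the cross-page bookkeeping in the middle step, namely making the comparison between the sphere and $C\tau$ precise enough to conclude that the sphere bracket itself, and not merely its $C\tau$-image, is detected by $h_6 d_0$, since $i_*$ can a priori raise filtration and $\tau$-multiples could intervene.
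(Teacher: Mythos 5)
Your first step---the lower bound on filtration---is exactly the paper's: using $d_3(h_0 h_4) = h_0 d_0$, the relevant Massey product in the $E_4$-page is $h_0 h_4 \cdot h_5^2 = 0$ in filtration $4$, so by the Moss Convergence Theorem \ref{thm:Moss} the Toda bracket is detected in Adams filtration at least $5$. That part is fine.

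The gap is in your second step, which is where the actual content of the lemma lives. You propose to establish the matching upper bound (nonvanishing in filtration exactly $5$) by computing $\widetilde{\kappa}\cdot\widetilde{h_5^2}$ in $H^*(S/2;BP)$, invoking Lemma \ref{lem:mult-snake}, and pulling back along $i\colon S^{0,0} \map C\tau$. But you never actually verify the decisive fact---that this product is nonzero and detected by $h_6 d_0$ in $H^*(S;BP)$---you only say you would check it; the entire lemma hinges on that unverified computation. Even granting it, the transfer back to the sphere is incomplete: the snake lemma produces \emph{one} element of the Massey product $\langle \kappa, 2, h_5^2\rangle$ in $H^*(S;BP)$, while $i_*$ of the sphere bracket lands somewhere in that coset, so you must also rule out that the indeterminacy $\kappa\cdot\pi_{63,*}C\tau + \theta_5\cdot\pi_{15,*}C\tau$ contains an element detected by $h_6 d_0$ (otherwise $i_*$ of the sphere bracket could sit in strictly higher filtration, or vanish). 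You flag this as ``cross-page bookkeeping'' but do not resolve it. The paper closes the argument with a much shorter device that your proposal is missing: multiply by $\eta$. The bracket $\langle \eta\kappa, 2, \theta_5\rangle$ \emph{can} be computed on a single page, since $h_1 d_0 \cdot h_0 = 0$ already in $E_2$ and $d_2(h_6) = h_0 h_5^2$ gives $\langle h_1 d_0, h_0, h_5^2 \rangle = h_1 h_6 d_0$ in the $E_3$-page; Moss then shows $h_1 h_6 d_0$ (filtration $6$) detects $\langle \eta\kappa, 2, \theta_5\rangle \supseteq \eta\langle \kappa, 2, \theta_5\rangle$, forcing $\langle \kappa, 2, \theta_5\rangle$ to be detected in filtration at most $5$. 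Combined with your lower bound, $h_6 d_0$ is the only possibility, with no appeal to machine data or to $C\tau$ needed.
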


\begin{proof}
The differential $d_3(h_0 h_4) = h_0 d_0$ implies that
$\langle \kappa, 2, \theta_5 \rangle$
is detected by $h_0 h_4 \cdot h_5^2 = 0$ in filtration 4.
In other words, the Toda bracket is detected in Adams filtration
at least 5.

The element $h_1 h_6 d_0$ detects
$\langle \eta \kappa, 2, \theta_5 \rangle$, 
using the Adams differential $d_2(h_6) = h_0 h_5^2$.
This expression contains
$\eta \langle \kappa, 2, \theta_5 \rangle$, which shows that
$\langle \kappa, 2, \theta_5 \rangle$ is detected 
in filtration at most 5.

The only possibility is that the Toda bracket is detected by $h_6 d_0$. 
\end{proof}

\rev{
\begin{lemma}
\label{lem:tm1,eta,2}
\revdeg{79, 42}
There exists an element $\mu$ in $\pi_{77,41}$ that is detected by
$\tau m_1$ such that $\eta \mu$ is zero and $\mu$ is not divisible by $\tau$.
Moreover, the Toda bracket $\langle \mu, \eta, 2 \rangle$ 
contains zero or is detected by $\tau^2 M e_0^2$, and
its indeterminacy is detected by $h_0 h_2 x_{76,6}$.
\end{lemma}

\begin{proof}
Let $\mu'$ be an element of $\pi_{77,42}$ that is detected by
$m_1$.  Then $\tau \mu'$ is detected by $\tau m_1$, and 
$\eta \mu'$ is detected by $h_1 m_1$.
Table \ref{tab:tau-extn} shows that there is a hidden
$\tau$ extension from $h_1 m_1$ to $M \D h_1^2 h_3$.
Therefore, $\tau \eta \mu'$ is detected by $M \D h_1^2 h_3$.

Now let $\mu''$ be an element of $\pi_{77,41}$ that is detected by
$M \D h_1 h_3$.  Then $\eta \mu''$ is also detected by $M \D h_1^2 h_3$.
This shows that $\eta (\tau \mu' + \mu'')$ is zero because there 
are no possible detecting elements in higher filtration.

Choose $\mu$ to be $\tau \mu' + \mu''$.
Note that $\mu''$ is not divisible by $\tau$ because
inclusion of the bottom cell of $C\tau$ takes $M \D h_1 h_3$ to
a non-zero element.  Therefore, $\mu$ is also not divisible
by $\tau$.

Now that $\mu$ is defined, it remains to study the Toda bracket.
We begin with an analysis of its indeterminacy, which is generated
by $\tau \eta^2 \cdot \mu$ and the multiples of $2$ in 
$\pi_{79,42}$.  The first expression is zero by the construction
of $\mu$. 
Let $\alpha$ be an element of $\pi_{79,42}$ that is detected by
$h_2 x_{76,6}$, so $2 \alpha$ is detected by $h_0 h_2 x_{76,6}$.
Tables \ref{tab:2-extn} and \ref{tab:2-extn-possible}
show that there are no hidden $2$ extensions in the $79$-stem with
weight $42$.  Therefore, the indeterminacy is generated by $2 \alpha$.

Inclusion of the bottom cell of $C\tau$ takes the bracket to
$\langle M \D h_1 h_3, h_1, h_0 \rangle$.
Machine-generated data \cite{Wang19} shows that this bracket 
equals $\{0, h_0 h_2 x_{76,6} \}$ 
in $C\tau$.  

Let $\beta$ be any element of $\langle \mu, \eta, 2 \rangle$.
It is possible that $\beta$ maps to $h_0 h_2 x_{76,6}$ under
inclusion of the bottom cell of $C\tau$.  In that case,
$\beta + 2 \alpha$ also belongs to $\langle \mu, \eta, 2 \rangle$
and must map to zero under inclusion of the bottom cell of $C\tau$.

In either case, the original Toda bracket
contains an element 
that maps to zero under inclusion of the bottom cell of $C\tau$,
and that element is therefore divisible by $\tau$.
By inspection, the only possible detecting elements are
$\tau^2 M e_0^2$ and $\tau^3 \D h_1 e_0^2 g$.
The latter option is ruled out by comparison to $\mmf$.
\end{proof}
}

\begin{lemma}
\label{lem:2,eta,th1^2x76,6}
\revdeg{80, 42}
The Toda bracket
$\langle 2, \eta, \tau \eta \{h_1 x_{76,6} \} \rangle$ is detected by
$\tau h_1 x_1$.
\end{lemma}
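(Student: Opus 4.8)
The natural tool here is the Moss Convergence Theorem \ref{thm:Moss}. First I would record that the bracket is defined. The product $\tau \eta^2 \{h_1 x_{76,6}\}$ is detected by $\tau h_1^3 x_{76,6}$, and Lemma \ref{lem:d4-h0e2} shows that $d_4(h_0 e_2) = \tau h_1^3 x_{76,6}$; hence $\tau h_1^3 x_{76,6}$ is a boundary and $\tau \eta^2 \{h_1 x_{76,6}\}$ is zero. Together with $2 \eta = 0$, this shows that $\langle 2, \eta, \tau \eta \{h_1 x_{76,6}\} \rangle$ is defined. Throughout, the classes $2$, $\eta$, and $\tau \eta \{h_1 x_{76,6}\}$ are detected by the permanent cycles $h_0$, $h_1$, and $\tau h_1^2 x_{76,6}$ respectively.

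The key step is to evaluate the Massey product $\langle h_0, h_1, \tau h_1^2 x_{76,6} \rangle$ in an Adams page. Since $h_1 \cdot \tau h_1^2 x_{76,6} = \tau h_1^3 x_{76,6}$ is nonzero until the $E_5$-page, where it is killed by $d_4(h_0 e_2)$, the bracket must be formed in the $E_5$-page. Exactly as in the computation of $\langle h_3^2, h_0, h_1 \rangle = h_1 h_4$ coming from the differential $d_2(h_4) = h_0 h_3^2$, the relation $d_4(h_0 e_2) = h_1 \cdot \tau h_1^2 x_{76,6}$ forces $\langle h_0, h_1, \tau h_1^2 x_{76,6} \rangle$ to contain $h_0 \cdot h_0 e_2 = h_0^2 e_2$ in the $E_5$-page. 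The important point is that dividing by the $d_4$ differential lowers the Adams filtration by $4$, so this Massey product lies in filtration $1 + 1 + 8 - 4 = 6$ rather than in the naive filtration $9$; this is what allows the answer to be the low-filtration class $\tau h_1 x_1$. I would then identify $h_0^2 e_2$ with $\tau h_1 x_1$ via the relation $h_0^2 e_2 = \tau h_1 x_1$ in $\Ext$, which is consistent with the differential $d_2(e_2) = h_0 x_1$ (Table \ref{tab:Adams-d2}) because $h_0^2 x_1 = 0$, so that both $h_0^2 e_2$ and $\tau h_1 x_1$ are permanent $d_2$-cycles in this bidegree.

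Finally, I would apply the Moss Convergence Theorem \ref{thm:Moss} in the $E_5$-page. After checking that there are no crossing differentials for the products $h_0 h_1 = 0$ and $h_1 \cdot \tau h_1^2 x_{76,6} = 0$, the theorem provides a permanent cycle in $\langle h_0, h_1, \tau h_1^2 x_{76,6} \rangle$ that detects an element of the Toda bracket; since $\tau h_1 x_1$ is itself a permanent cycle (Table \ref{tab:Adams-perm}, Lemma \ref{lem:d8-h1x1}) and equals this Massey product, it is the detecting element, which is precisely the claim. I expect the main obstacle to be the Massey product evaluation in the $E_5$-page: one must justify the division by the $d_4$ differential carefully, pin down the $\Ext$ relation $h_0^2 e_2 = \tau h_1 x_1$, and then verify the crossing-differential hypotheses of the Moss Convergence Theorem at the $E_5$ level, where differentials in nearby degrees of this dense region must be ruled out.
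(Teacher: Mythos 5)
Your computational core is the same as the paper's: the Adams differential $d_4(h_0 e_2) = \tau h_1^3 x_{76,6}$ together with the relation $h_0 \cdot h_0 e_2 = \tau h_1 x_1$ in $\Ext$, fed into the Moss Convergence Theorem, is exactly how the paper evaluates the bracket, and your filtration bookkeeping for why the answer lands in filtration $6$ is correct.

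The gap is in the well-definedness step. From the fact that $\tau h_1^3 x_{76,6}$ is hit by $d_4(h_0 e_2)$ you conclude that $\tau \eta^2 \{h_1 x_{76,6}\}$ is zero, but a differential killing the detecting class only shows that the product is not detected by $\tau h_1^3 x_{76,6}$; the homotopy class could still be nonzero and detected in strictly higher Adams filtration (a hidden extension), and $\pi_{79,42}$ is not empty above filtration $9$. The paper closes exactly this loophole with an extra argument: first one checks $2\alpha = 0$ for $\alpha \in \{h_1 x_{76,6}\}$, then shuffles
\[
\tau \eta^2 \alpha = \langle 2, \eta, 2 \rangle \alpha = 2 \langle \eta, 2, \alpha \rangle,
\]
identifies $\langle \eta, 2, \alpha \rangle$ as a class detected by $h_0 h_2 x_{76,6}$ (Table \ref{tab:Toda}), and invokes Lemma \ref{lem:2-h0h2x76,6} to see that $h_0 h_2 x_{76,6}$ supports no hidden $2$ extension, so the product is genuinely zero. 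Without some argument of this kind (or an explicit enumeration ruling out all higher-filtration detecting classes in $\pi_{79,42}$), the hypothesis $\eta \cdot \tau\eta\{h_1 x_{76,6}\} = 0$ needed for the Toda bracket to be defined has not been established.
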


\begin{proof}
Let $\alpha$ be an element of $\pi_{77,41}$ that is detected by
$h_1 x_{76,6}$.
First we must show that the Toda bracket is well-defined.

Note that $2 \alpha$ is zero because there are no 
$2$ extensions in $\pi_{77,41}$ in sufficiently high
Adams filtration.
Now consider the shuffle
\[
\tau \eta^2 \alpha = \langle 2, \eta, 2 \rangle \alpha =
2 \langle \eta, 2, \alpha \rangle.
\]
Table \ref{tab:Toda} shows that $\langle \eta, 2, \alpha \rangle$
is detected by $h_0 h_2 x_{76,6}$, but this element does
not support a hidden $2$ extension.  This shows that
$\tau \eta^2 \alpha$ is zero and that the Toda bracket
is well-defined.

Finally, use the Adams differential $d_4(h_0 e_2) = \tau h_1^3 x_{76,6}$
and the relation $h_0 \cdot h_0 e_2 = \tau h_1 x_1$ to compute
the Toda bracket.
\end{proof}

\rev{
\begin{lemma}
\label{lem:2,eta,h2x76,6}
\revdeg{81, 43}
There exists an element $\alpha$ in $\pi_{79,42}$ that is detected by
$h_2 x_{76,6}$ such that
$\eta \alpha$ is zero.  Moreover, the Toda bracket
$\langle 2, \eta, \alpha \rangle$ is zero, with no indeterminacy.
\end{lemma}

\begin{proof}
There is no hidden $\eta$ extension on $h_2 x_{76,6}$ 
because the possible targets $\tau^3 d_0 e_0^2 l$ and
$\tau^5 g^4$ are ruled out by comparison to $\mmf$.
Therefore, $\alpha$ exists.

The Massey product $\langle h_0, h_1, h_2 x_{76,6} \rangle$
has no indeterminacy by inspection.  Consequently,
\[
\langle h_0, h_1, h_2 x_{76,6} \rangle =
\langle h_0, h_1, h_2 \rangle x_{76,6} = 0.
\]
The Moss Convergence Theorem \ref{thm:Moss} implies that
the Toda bracket $\langle 2, \eta, \alpha \rangle$ is detected
in filtration $9$ or greater.
The possible detecting elements are $P h_1^2 h_6 c_0$ and
$\D^2 h_1 d_1$.  In either of these cases, the Toda bracket
would be detected by inclusion of the bottom cell of $C\tau$,
but the corresponding bracket is zero in $C\tau$.

The indeterminacy is generated by $\tau \eta^2 \alpha$
and the multiples of $2$ in $\pi_{81,43}$.  The first expression
is zero by the choice of $\alpha$.  Tables \ref{tab:2-extn}
and \ref{tab:2-extn-possible} show that there are no
multiples of $2$ in $\pi_{81,43}$.
\end{proof}
}

\begin{lemma}
\label{lem:2,sigma^2,th2^2C'}
\revdeg{84, 45}
The Toda bracket $\langle 2, \sigma^2, \{\tau h_2^2 C'\} \rangle$
equals zero, with no indeterminacy.
\end{lemma}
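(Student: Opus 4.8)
The element $\{\tau h_2^2 C'\}$ that occurs in the application (the proof of Lemma \ref{lem:perm-th1f2}) is the $\nu$-multiple $\nu\alpha$, where $\alpha\in\pi_{66,35}$ is detected by $\tau h_2 C'$. The plan is first to record that the bracket is defined: $2\sigma^2=0$ because $\sigma^2$ has order two, and $\sigma^2\cdot\nu\alpha=0$ because $\nu\sigma=0$ forces $\sigma^2\nu=0$. The target group is $\pi_{84,45}$, and I would dispose of the indeterminacy at the outset by inspection: $2\,\pi_{84,45}=0$ since this group has exponent two, while $\pi_{15,8}\cdot\nu\alpha=0$ since $\pi_{15,8}$ is generated by $\rho_{15}$ and $\rho_{15}\nu$ vanishes (its only possible detecting class $h_0^3 h_2 h_4$ is zero because $h_0^2 h_2=h_1^3$ and $h_0 h_1=0$). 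With zero indeterminacy it then suffices to show that the bracket contains zero.

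To locate the bracket I would apply the Moss Convergence Theorem \ref{thm:Moss} to the Massey product $\langle h_0, h_3^2, \tau h_2^2 C'\rangle$ formed in the $E_3$-page. Here $h_3^2\cdot\tau h_2^2 C'=0$ already in $E_2$ because $h_2 h_3=0$, while $h_0 h_3^2=d_2(h_4)$; consequently this Massey product contains the leading term $h_4\cdot\tau h_2^2 C'=\tau h_2^2 h_4 C'$ in the $E_3$-page. After checking that neither of the vanishing products $h_0 h_3^2$ and $h_3^2\cdot\tau h_2^2 C'$ admits a crossing differential, Moss's theorem guarantees that $\langle 2, \sigma^2, \nu\alpha\rangle$ is detected either by $\tau h_2^2 h_4 C'$ or by a class of strictly higher Adams filtration.

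The remaining step is to show that this leading term detects nothing: one reads off from the machine-generated $\Ext$ data that $\tau h_2^2 h_4 C'$ either vanishes or fails to survive to $E_\infty$, so the bracket is detected in higher filtration, and then inspection of stem $84$ in weight $45$ shows there is no surviving class of sufficiently high filtration. Hence the bracket contains zero, and with the indeterminacy already shown to vanish it equals zero. The main obstacle is precisely this last bookkeeping: stem $84$ is dense, so confirming that $\tau h_2^2 h_4 C'$ and every class above it in this tridegree fail to detect a nonzero homotopy element requires careful use of the $\Ext$ computation together with the Adams differentials established earlier in this chapter. A useful cross-check, via Theorem \ref{thm:Toda-3fold}, is the shuffle $\langle 2, \sigma^2, \nu\rangle\,\alpha \subseteq \langle 2, \sigma^2, \nu\alpha\rangle$, which reduces the leading term to the low-dimensional bracket $\langle 2, \sigma^2, \nu\rangle$ (detected by $h_2 h_4$, by the same $d_2(h_4)=h_0 h_3^2$ computation) multiplied by $\alpha$, producing the class $\tau h_2^2 h_4 C'$ again by an independent route.
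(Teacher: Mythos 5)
Your setup (writing the third entry as $\nu\alpha$ with $\alpha$ detected by $\tau h_2 C'$, checking the bracket is defined, and disposing of the indeterminacy) is fine and matches the paper, but your endgame has a genuine gap. You apply the Moss Convergence Theorem \ref{thm:Moss} directly to $\langle h_0, h_3^2, \tau h_2^2 C'\rangle$ and are then left having to show that neither the leading term $\tau h_2^2 h_4 C'$ (Adams filtration $10$) nor any surviving class of higher filtration in stem $84$ and weight $45$ can detect the bracket. You flag this as the main obstacle and call it bookkeeping, but it is not merely tedious: the tridegree $(84,\ast,45)$ contains $P x_{76,6}$ in filtration $10$, which is a permanent cycle, and $M \D h_1 d_0$ in filtration $15$, whose survival is one of the explicitly unresolved uncertainties of this manuscript (Table \ref{tab:Adams-unknown}: $d_9(x_{85,6})$ may or may not hit it, and its status is entangled with the conditional hidden $\tau$ extension on $\D h_1 j_1$). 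So the assertion that ``inspection of stem $84$ in weight $45$ shows there is no surviving class of sufficiently high filtration'' is false as written, and your argument cannot be closed without additional input ruling out these specific classes as values of the bracket --- input which, in the case of $M \D h_1 d_0$, the paper does not currently possess.

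The paper's proof avoids the $84$-stem entirely by shuffling first: since the indeterminacy is zero, $\langle 2,\sigma^2,\nu\alpha\rangle = \langle 2,\sigma^2,\nu\rangle\alpha = \langle \alpha, 2, \sigma^2\rangle\nu$. Applying Moss to $\langle \alpha, 2, \sigma^2\rangle$ with the same differential $d_2(h_4)=h_0h_3^2$ shows that this bracket is detected by $\tau h_2 h_4 C' = 0$ in filtration $9$ of $\pi_{81,43}$, hence by something of filtration at least $10$ there; one then only needs to check that $\nu$ annihilates everything in $\pi_{81,43}$ of filtration at least $10$, a much smaller and decidable inspection. Your closing ``cross-check'' via $\langle 2,\sigma^2,\nu\rangle\alpha$ is in fact the beginning of this correct argument --- but you use it only to reproduce the leading term $\tau h_2^2 h_4 C'$ rather than to convert the problem into one about $\nu$-multiples of high-filtration classes in the $81$-stem, which is precisely what makes the final inspection feasible.
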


\begin{proof}
Let $\alpha$ be an element of $\pi_{66,35}$ that is detected by
$\tau h_2 C'$, so $\nu \alpha$ is the unique element
that is detected by $\tau h_2^2 C'$.
We consider the Toda bracket
$\langle 2, \sigma^2, \nu \alpha \rangle$.
By inspection, the indeterminacy is zero, so
the bracket equals
$\langle 2, \sigma^2, \nu \rangle \alpha$, which equals
$\langle \alpha, 2, \sigma^2 \rangle \nu$.

Apply the Moss Convergence Theorem \ref{thm:Moss}
with the Adams $d_2$ differential to see
that the Toda bracket $\langle \alpha, 2, \sigma^2 \rangle$
is detected by $0$ in Adams filtration $9$, but it could be detected
by a non-zero element in higher filtration.  However, this shows
that $\langle \alpha, 2, \sigma^2 \rangle \nu$ is zero by inspection.
\end{proof}

\begin{lemma}
\label{lem:2,sigma^2,h3(De1+C0)}
\revdeg{84, 45}
The Toda bracket $\langle 2, \sigma^2, \{h_3(\D e_1 + C_0)\} \rangle$
equals zero, with no indeterminacy.
\end{lemma}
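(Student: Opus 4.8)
The plan is to follow the strategy of Lemma \ref{lem:2,sigma^2,th2^2C'}: introduce a homotopy class realizing the third entry, check that the bracket is defined, and then pin it down using the Moss Convergence Theorem \ref{thm:Moss} together with shuffles and inspection of $\pi_{84,45}$. Let $\beta$ be an element of $\pi_{62,33}$ detected by $\D e_1 + C_0$, so that $\sigma \beta$ is detected by $h_3(\D e_1 + C_0)$ and lies in $\pi_{69,37}$; the bracket in question is then $\langle 2, \sigma^2, \sigma\beta \rangle$ in $\pi_{84,45}$. First I would verify that the bracket is defined. The relation $2 \sigma^2 = 0$ is automatic, so the only issue is the product $\sigma^2 \cdot \sigma \beta = \sigma^3 \beta$. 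I would show this vanishes: $\sigma^3$ is detected by $h_3^3 = h_2^2 h_4$, and the product $\sigma^3 \beta$ would have to be detected by an element of $\pi_{83,45}$ in high Adams filtration, which I expect to eliminate by inspection and by comparison to $C\tau$ or $\tmf$.

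The key difference from Lemma \ref{lem:2,sigma^2,th2^2C'} is that the obvious symmetric shuffle is unavailable here, because $\sigma^3 \neq 0$; one cannot form the bracket $\langle 2, \sigma^2, \sigma \rangle$ and factor $\beta$ out. Instead I would apply the Moss Convergence Theorem \ref{thm:Moss} to the Massey product $\langle h_0, h_3^2, h_3(\D e_1 + C_0) \rangle$ in the $E_3$-page, using the Adams differential $d_2(h_4) = h_0 h_3^2$ to see that $h_0 h_3^2 = 0$ there. Since $h_3(\D e_1 + C_0)$ already has high Adams filtration, the resulting representative lands in a filtration in which $\pi_{84,45}$ is sparse, so the Toda bracket is forced to be detected in high filtration. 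I would then enumerate the finitely many candidate detecting elements in that range and rule each one out, either by multiplying the bracket by $\eta$ (using the shuffle $\eta \langle 2, \sigma^2, \sigma\beta \rangle = \langle \eta, 2, \sigma^2 \rangle \sigma\beta$ from Theorem \ref{thm:Toda-3fold}, where $\langle \sigma^2, 2, \eta \rangle = \langle \eta, 2, \sigma^2 \rangle$ is detected by $h_1 h_4$ as computed earlier in the excerpt) and by $2$, or by comparison to $\tmf$, exactly as in Lemmas \ref{lem:nubar,sigma,2sigma} and \ref{lem:tkappa^2,eta,2}. This should leave $0$ as the only surviving possibility.

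Finally, I would confirm the ``no indeterminacy'' claim by inspection: the two indeterminacy summands are $2 \cdot \pi_{84,45}$ and $\pi_{15,8} \cdot \sigma\beta$, and I expect both to vanish for degree and filtration reasons. The main obstacle I anticipate is precisely the breakdown of the symmetric shuffle caused by $\sigma^3 \neq 0$, which shifts the weight of the argument onto the filtration and inspection bound of the second step rather than onto a formal identity. Producing a filtration estimate clean enough to leave no surviving candidate detecting element in $\pi_{84,45}$ is the delicate point, and it is the step most likely to require the auxiliary $\eta$- and $2$-multiplication computations and a comparison to $C\tau$ of the kind already invoked in Remark \ref{rem:perm-th1f2}.
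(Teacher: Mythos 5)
Your setup matches the paper's, but the heart of your plan diverges at exactly the point where it matters, and the divergence rests on a misreading of which shuffle is needed. You correctly observe that the symmetric factorization $\langle 2, \sigma^2, \sigma\rangle \beta$ is unavailable because $\sigma^3 \neq 0$, but you then conclude that no factorization is possible and fall back on a direct Moss computation in the $84$-stem. The paper factors the other way: it uses the containment $\sigma \langle 2, \sigma^2, \beta \rangle \subseteq \langle 2, \sigma^2, \sigma\beta \rangle$ (Theorem \ref{thm:Toda-3fold}, keeping $\beta$ \emph{inside} the bracket and pulling $\sigma$ \emph{outside}), which together with the vanishing indeterminacy identifies the bracket with $\langle 2, \sigma^2, \beta \rangle \sigma$. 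Moss applied to $\langle 2, \sigma^2, \beta\rangle$ in the $77$-stem shows it is detected by $0$ in filtration $9$, so the only possible nonzero value of the product with $\sigma$ is $\{M \D h_1 h_3\}\sigma$; and this dies because Table \ref{tab:misc-extn} shows $M \D h_1 h_3$ detects $\{\D h_1 h_3\}\theta_{4.5}$ while $\sigma \{\D h_1 h_3\} = 0$. That last step is the real content of the proof, and your proposal has no analogue of it.

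The gap in your alternative route is the endgame you leave as ``I would enumerate \dots and rule each one out.'' After your Moss bound (the Massey product contains $h_4 \cdot h_3(\D e_1 + C_0) = 0$ in filtration $10$, so the bracket is detected in filtration at least $11$), you must eliminate every class of $\pi_{84,45}$ in that range, including $M \D h_1 d_0$ at filtration $15$ and the $\tau$-multiples of weight-$46$ classes. The tools you list do not visibly suffice: the shuffle $\eta \langle 2, \sigma^2, \sigma\beta \rangle = \langle \eta, 2, \sigma^2 \rangle \sigma\beta$ produces $\eta_4 \sigma \beta$, which is detected by $h_1 h_4 \cdot h_3(\D e_1 + C_0) = 0$ and hence itself lands in high filtration, so it gives no contradiction against a high-filtration candidate; and ruling out $M \D h_1 d_0$ in particular seems to require precisely the kind of hidden-extension input ($M$-family classes detecting $\theta_{4.5}$-multiples) that the paper's factorization is designed to exploit. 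As written, your argument establishes a filtration bound but does not close.
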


\begin{proof}
Let $\beta$ be an element of $\pi_{62,33}$ that is detected by
$\D e_1 + C_0$, so $\sigma \beta$ is the unique element that is
detected by $h_3(\D e_1 + C_0)$.
We consider the Toda bracket
$\langle 2, \sigma^2, \sigma \beta \rangle$.
By inspection, the indeterminacy is zero, so
the bracket equals
$\langle 2, \sigma^2, \beta \rangle \sigma$.

Apply the Moss Convergence Theorem \ref{thm:Moss} 
with the Adams $d_2$ differential to see that the Toda bracket
$\langle 2, \sigma^2, \beta \rangle$ is detected by $0$ in 
Adams filtration $9$, but it could be detected by a non-zero
element in higher filtration. 
Then the only possible non-zero value for
$\langle 2, \sigma^2, \beta \rangle \sigma$ is
$\{M \D h_1 h_3\} \sigma$.
Table \ref{tab:misc-extn} shows that $M \D h_1 h_3$ detects
$\{\D h_1 h_3\} \theta_{4.5}$, so
$\sigma \{M \D h_1 h_3\}$ equals $\sigma \{\D h_1 h_3\} \theta_{4.5}$,
which equals zero.
\end{proof}

\begin{lemma}
\label{lem:tetakappabar^2,2,4kappabar2}
\revdeg{86, 46}
The Toda bracket $\langle \tau \eta \kappabar^2, 2, 4 \kappabar_2 \rangle$
is detected by $M \D h_0^2 e_0$.
\end{lemma}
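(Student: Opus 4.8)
The plan is to reduce the computation to the three-fold bracket $\langle \eta, 2, 4\kappabar_2 \rangle$ of Lemma \ref{lem:eta,2,4kappabar2} by pulling the factor $\tau\kappabar^2$ out of the first entry. First I would check that the bracket is defined: $\tau\eta\kappabar^2$ is a multiple of $\eta$, so $2 \cdot \tau\eta\kappabar^2 = 0$; and $2 \cdot 4\kappabar_2 = 8\kappabar_2 = 0$ because $\kappabar_2$ generates the $\Z/8$ summand of $\pi_{44,24}$. Then the shuffle of Theorem \ref{thm:Toda-3fold}(3) gives
\[
\tau\kappabar^2 \langle \eta, 2, 4\kappabar_2 \rangle \subseteq \langle \tau\eta\kappabar^2, 2, 4\kappabar_2 \rangle.
\]
By Lemma \ref{lem:eta,2,4kappabar2} the inner bracket contains zero and is detected by $M h_1$; moreover $M h_1$ detects $\eta\theta_{4.5}$, via the hidden $\eta$ extension from $h_3^2 h_5$ to $M h_1$ used in that proof. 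Consequently the outer bracket contains zero, and it also contains the product $\tau\kappabar^2 \cdot \eta\theta_{4.5} = \tau\eta\kappabar^2 \theta_{4.5}$, which lies in its left indeterminacy $\tau\eta\kappabar^2 \cdot \pi_{45,24}$.

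It then remains to show that $\tau\eta\kappabar^2\theta_{4.5}$ is nonzero and is detected by $M\D h_0^2 e_0$. The naive detecting element is the product $\tau g^2 \cdot M h_1 = \tau M h_1 g^2$, which sits in Adams filtration $15$, one lower than $M\D h_0^2 e_0$. The heart of the argument is to rule out detection in filtration $15$: I would show that $\tau M h_1 g^2$ vanishes in the $E_\infty$-page, so that by the Moss Convergence Theorem \ref{thm:Moss} the element is detected in filtration at least $16$. Since the only surviving class of bidegree $(86,16,46)$ is $M\D h_0^2 e_0$ (after discarding candidates by comparison to $\tmf$ and $\mmf$), this simultaneously forces the detecting element to be $M\D h_0^2 e_0$ and shows the product is nonzero. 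Note that the coexistence of $0$ and an $M\D h_0^2 e_0$-detected element in the bracket is consistent, since the latter lies in the indeterminacy.

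The main obstacle is precisely this filtration-jump determination, which is delicate because of the motivic weight bookkeeping: both $\tau M h_1 g^2$ and $M\D h_0^2 e_0$ occupy stem $86$ and weight $46$, differing only in filtration. To settle the vanishing of $\tau M h_1 g^2$ I would combine the Mahowald-operator calculation of Proposition \ref{prop:M-Massey}, which controls $M\D h_0^2 e_0$ and the relevant $g^2$-multiples in $\Ext_B$, with a comparison to the Adams spectral sequence for $\mmf$, where the behavior of $g^2$-multiples is transparent. An alternative route, bypassing the explicit identification of $\eta\theta_{4.5}$, is to apply the Moss Convergence Theorem directly to the Massey product $\langle \tau h_1 g^2, h_0, h_0^2 g_2 \rangle$: the identity $\langle h_1, h_0, h_0^2 g_2 \rangle = M h_1$ of Example \ref{ex:Mh1} shows via a shuffle that this Massey product contains $\tau M h_1 g^2$ in filtration $15$, and the same vanishing input then pushes the Toda bracket into filtration $16$.
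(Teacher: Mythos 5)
There is a genuine gap. Your reduction via the shuffle $\tau\kappabar^2 \langle \eta, 2, 4\kappabar_2 \rangle \subseteq \langle \tau\eta\kappabar^2, 2, 4\kappabar_2 \rangle$ and Lemma \ref{lem:eta,2,4kappabar2} correctly shows that the bracket contains zero, hence contains its entire indeterminacy, including $\tau\eta\kappabar^2\theta_{4.5}$. But the lemma asserts that some element of the bracket is \emph{detected by} $M \D h_0^2 e_0$, and your argument for this rests on the claim that $\tau\eta\kappabar^2\theta_{4.5}$ is nonzero. The filtration-jump step does not deliver this: showing that $\tau^3 M h_1 g^2$ vanishes in the $E_\infty$-page only tells you that the product is detected in filtration at least $16$ \emph{or is zero}; it cannot ``simultaneously'' establish nonvanishing. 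Worse, the natural source for the nonvanishing --- the hidden $\tau$ extension from $\tau^2 M h_1 g^2$ to $M \D h_0^2 e_0$ recorded in Table \ref{tab:tau-extn} --- is itself deduced in this manuscript as a consequence of the present lemma (via Lemma \ref{lem:d6-tQ3+tn1}, and then used again in Lemma \ref{lem:nu-Dj1+tgC'}), so appealing to it here would be circular. Your fallback Massey product $\langle \tau h_1 g^2, h_0, h_0^2 g_2 \rangle$ has the same defect in a different guise: because of the hidden $\tau$ and $\eta$ extensions landing on $\D h_0^2 e_0$, the class $\tau^3 h_1 g^2$ is zero on the $E_\infty$-page and does not detect $\tau\eta\kappabar^2$, so the Moss Convergence Theorem cannot be applied with it as the first input.

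The paper's proof avoids all of this by choosing the correct detecting class for $\tau\eta\kappabar^2$ from the outset: Table \ref{tab:eta-extn} gives the hidden $\eta$ extension from $\tau^3 g^2$ to $\D h_0^2 e_0$, so $\D h_0^2 e_0$ detects $\tau\eta\kappabar^2$; the Mahowald operator computation (Proposition \ref{prop:M-Massey}, recorded in Table \ref{tab:Massey}) identifies the Massey product $\langle \D h_0^2 e_0, h_0^2, h_0 g_2 \rangle$ as $M \D h_0^2 e_0$ with zero indeterminacy; and the Moss Convergence Theorem \ref{thm:Moss} then yields the detection statement directly, with no need to identify which particular homotopy element of the bracket is being detected. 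If you want to salvage your route, you would need an independent proof that $\tau\eta\kappabar^2\theta_{4.5}$ is nonzero, which is essentially as hard as the lemma itself.
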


\begin{proof}
Table \ref{tab:Massey} shows that the Massey product
$\langle \D h_0^2 e_0, h_0^2, h_0 g_2 \rangle$ equals the element
$M \D h_0^2 e_0$.
Now apply the Moss Convergence Theorem \ref{thm:Moss}, using that
Table \ref{tab:eta-extn} shows that
$\D h_0^2 e_0$ detects $\tau \eta \kappabar^2$.
\end{proof}

\begin{lemma}
\label{lem:th0Q3,nu4,eta}
\revdeg{87, 46}
There exists an element $\alpha$ in $\pi_{67,36}$ that is detected
by $h_0 Q_3 + h_0 n_1$ such that 
$h_1^2 c_3$ detects the Toda bracket
$\langle \tau \alpha, \nu_4, \eta \rangle$.
\end{lemma}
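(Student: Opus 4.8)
The plan is to realize this Toda bracket by applying the Moss Convergence Theorem \ref{thm:Moss} to a Massey product in the Adams $E_5$-page that is produced by the differential $d_4(h_1 c_3) = \tau h_0 h_2 h_4 Q_3$ of Lemma \ref{lem:d4-h1c3}. Here $\nu_4$ is detected by $h_2 h_4$, the class $\eta$ is detected by $h_1$, and $\tau \alpha$ is detected by $\tau(h_0 Q_3 + h_0 n_1)$, where $\alpha$ is the element supplied by Lemma \ref{lem:compound-(epsilon+etasigma)-theta5}. The factorization $\tau h_0 h_2 h_4 Q_3 = (\tau h_0 Q_3)(h_2 h_4)$ is exactly what links the differential on $h_1 c_3$ to the bracket $\langle \tau\alpha, \nu_4, \eta\rangle$.

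First I would verify that the Toda bracket $\langle \tau\alpha, \nu_4, \eta \rangle$ is defined. The product $\nu_4 \eta$ is detected by $h_1 h_2 h_4 = 0$, and inspection of higher Adams filtrations in this degree shows that $\nu_4 \eta$ is in fact zero. For the product $\tau\alpha \cdot \nu_4$, the product of detecting elements $\tau(h_0 Q_3 + h_0 n_1) \cdot h_2 h_4$ reduces to $\tau h_0 h_2 h_4 Q_3$ once one checks that the correction term $\tau h_0 h_2 h_4 n_1$ vanishes on the $E_5$-page. Since $\tau h_0 h_2 h_4 Q_3 = d_4(h_1 c_3)$, this product supports no nonzero image on the $E_5$-page, so $\tau\alpha \cdot \nu_4$ is either zero or detected in strictly higher filtration. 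I would then adjust $\alpha$ within its coset $\{ h_0 Q_3 + h_0 n_1 \}$ to arrange that $\tau\alpha \cdot \nu_4$ is exactly zero; this is precisely the freedom asserted by the ``there exists'' in the statement.

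Next, exactly as in the computation $\langle h_3^2, h_0, h_1 \rangle = h_1 h_4$ arising from $d_2(h_4) = h_0 h_3^2$, the differential $d_4(h_1 c_3) = (\tau h_0 Q_3)(h_2 h_4)$ witnesses that $\tau h_0 Q_3 \cdot h_2 h_4 = 0$ in the $E_5$-page. Because $h_2 h_4 \cdot h_1 = 0$ already in the $E_2$-page, the element $h_1 c_3$ enters a defining system for the Massey product $\langle \tau h_0 Q_3, h_2 h_4, h_1 \rangle$ in the $E_5$-page, and this bracket contains $h_1 c_3 \cdot h_1 = h_1^2 c_3$. I would confirm that $h_1^2 c_3$ survives to the $E_5$-page: it is a $d_4$-cycle since $h_0 h_1 = 0$ forces $d_4(h_1^2 c_3) = h_1 \cdot d_4(h_1 c_3) = \tau h_0 h_1 h_2 h_4 Q_3 = 0$, and the lower differentials are handled by comparison to $C\tau$. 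I would also check that there is no competing permanent cycle in this degree that could absorb the bracket value.

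Finally, I would verify that there are no crossing differentials for the products $\tau h_0 Q_3 \cdot h_2 h_4$ and $h_2 h_4 \cdot h_1$ in the $E_5$-page, so that the Moss Convergence Theorem \ref{thm:Moss} applies and concludes that $h_1^2 c_3$ detects an element of $\langle \tau\alpha, \nu_4, \eta \rangle$. The main obstacle is the well-definedness step: controlling the correction term $\tau h_0 h_2 h_4 n_1$ and confirming that the homotopy product $\tau\alpha \cdot \nu_4$ can be forced to vanish by the choice of $\alpha$, together with ruling out the crossing differentials required by Moss. Once those points are settled, the identification of the bracket follows immediately from the Massey product built out of $d_4(h_1 c_3)$.
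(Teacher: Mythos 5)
Your overall strategy is the same as the paper's: apply the Moss Convergence Theorem \ref{thm:Moss} to the Adams differential $d_4(h_1 c_3) = \tau h_0 h_2 h_4 Q_3$ of Lemma \ref{lem:d4-h1c3}, so that $h_1^2 c_3$ detects the bracket $\langle \tau\alpha, \nu_4, \eta\rangle$ once that bracket is known to be defined. The Massey-product bookkeeping in your third and fourth paragraphs is fine.

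The genuine gap is in the step you yourself flag as the main obstacle: arranging that $\tau \nu_4 \alpha = 0$. You argue that since the detecting product $\tau h_0 h_2 h_4 Q_3$ dies on the $E_5$-page, the homotopy product is either zero or detected in strictly higher filtration, and that you can then ``adjust $\alpha$ within its coset'' to kill it. That adjustment is not justified. Replacing $\alpha$ by $\alpha + \gamma$ with $\gamma$ of higher Adams filtration in $\pi_{67,36}$ changes $\tau\nu_4\alpha$ only by $\tau\nu_4\gamma$, and there is no a priori reason that the higher-filtration element detecting $\tau\nu_4\alpha$ lies in the image of multiplication by $\tau\nu_4$ on the higher-filtration part of $\pi_{67,36}$. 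Since the existence of an $\alpha$ with $\tau\nu_4\alpha = 0$ is precisely the existential content of the lemma, asserting it as ``the freedom asserted by the `there exists''' is circular. The paper closes this gap inside the proof of Lemma \ref{lem:d4-h1c3}: with $\alpha$ as in Lemma \ref{lem:compound-(epsilon+etasigma)-theta5}, one has $\tau\nu_4\alpha \in \tau\alpha\langle \nu,\sigma,2\sigma\rangle \subseteq \langle \tau\nu\alpha, \sigma, 2\sigma\rangle = \langle (\eta\sigma+\epsilon)\theta_5, \sigma, 2\sigma\rangle$, which contains $\theta_5\langle \eta\sigma+\epsilon,\sigma,2\sigma\rangle = 0$ by Lemma \ref{lem:nubar,sigma,2sigma} and has zero indeterminacy; hence $\tau\nu_4\alpha$ is exactly zero with no adjustment needed. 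You should replace your coset-adjustment step with this shuffle (or an equivalent argument); everything after that point in your proposal then goes through as in the paper.
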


\begin{proof}
A consequence of the proof of Lemma \ref{lem:d4-h1c3} is that
there exists $\alpha$ in $\pi_{67,36}$ that is detected by
$h_0 Q_3 + h_0 n_1$ such that
the product $\tau \nu_4 \alpha$ is zero.
Therefore,
$h_1^2 c_3$ detects the Toda bracket
$\langle \tau \alpha, \nu_4, \eta \rangle$ because
of the Adams differential $d_4(h_1 c_3) = \tau h_0 h_2 h_4 Q_3$.
\end{proof}

\chapter{Hidden extensions}
\label{ch:hidden}

In this chapter, we will discuss hidden extensions
in the $E_\infty$-page of the Adams spectral sequence.
We methodically explore hidden extensions by
$\tau$, $2$, $\eta$, and $\nu$, and we study other miscellaneous
hidden extensions that are relevant for specific purposes.
\revv{
For easy reference, the lemmas in this chapter are labelled with degrees that match the degrees given in the tables.}

\section{Hidden $\tau$ extensions}
\label{sctn:tau-extn}

In order to study hidden $\tau$ extensions,
we will use the long exact sequence
\begin{equation}
\label{eq:LES-Ctau}
\xymatrix@1{
\cdots \ar[r] & \pi_{p,q+1} \ar[r]^\tau & \pi_{p,q} \ar[r] & 
\pi_{p,q} C\tau \ar[r] & 
\pi_{p-1,q+1} \ar[r]^\tau & \pi_{p-1,q} \ar[r] & \cdots
}
\end{equation}
extensively.
This sequence governs hidden $\tau$ extensions in the following sense.
An element $\alpha$ in $\pi_{p,q}$ is divisible by $\tau$
if and only if it maps to zero in $\pi_{p,q} C\tau$,
and an element $\alpha$ in $\pi_{p-1,q+1}$ supports a $\tau$ extension
if and only if it is not in the image of $\pi_{p,q} C\tau$.
Therefore, we need to study the maps
$\pi_{*,*} \map \pi_{*,*} C\tau$ and
$\pi_{*,*} C\tau \map \pi_{*-1,*+1}$
induced by inclusion of the bottom cell into $C\tau$
and by projection from $C\tau$ to the top cell.

The $E_\infty$-pages of the Adams spectral sequences for $S^{0,0}$ and
$C\tau$ give associated graded objects for the homotopy groups that 
are the sources and targets of these maps.  Naturality of the
Adams spectral sequence induces maps on associated graded objects.

These maps on associated graded objects often detect the values
of the maps on homotopy groups.  For example, the element
$h_0$ in the Adams spectral sequence for the sphere is mapped to the
element $h_0$ in the Adams spectral sequence for $C\tau$.
In homotopy groups, this means that inclusion of the bottom cell into
$C\tau$ takes the element $2$ in $\pi_{0,0}$ to the
element $2$ in $\pi_{0,0} C\tau$.

On the other side, the element $\ol{h_1^4}$ in the 
Adams spectral sequence for $C\tau$ is mapped to the
element $h_1^4$ in Adams spectral sequence for the sphere.
In homotopy groups, this means that projection from $C\tau$ to
the top cell takes the element $\{ \ol{h_1^4} \}$ in $\pi_{5,3}C\tau$
to the element $\eta^4$ in $\pi_{4,4}$.

However, some values of the maps on homotopy groups can be hidden
in the map of associated graded objects.  This situation is rare in
low stems but becomes more and more common in higher stems.
The first such example occurs in the 30-stem.
The element $\D h_2^2 $ is a permanent cycle in the Adams spectral
sequence for $C\tau$, so $\{\D h_2^2 \}$ is an element in $\pi_{30,16} C\tau$.
Now $\D h_2^2$ maps to zero in the $E_\infty$-page of the Adams spectral
sequence for the sphere, but $\{\D h_2^2 \}$ does not map to zero
in $\pi_{29,17}$.  In fact $\{\D h_2^2 \}$ maps to $\eta \kappa^2$,
which is detected by $h_1 d_0^2$.
This demonstrates that projection from $C\tau$ to the top cell
has a hidden value.

We refer the reader to Section \ref{sctn:assoc-graded}
for a precise discussion of these issues.

\begin{thm}
\label{thm:hid-incl-proj}
\mbox{}
\begin{enumerate}
\item
Through the 90-stem,
Table \ref{tab:hid-incl} lists all
hidden values of inclusion of the bottom cell into $C\tau$,
except that:
\rev{
\begin{enumerate}
\item
If $h_1 f_2$ does not survive but $\tau h_1 f_2$ does survive,
then $\tau h_1 f_2$ maps to $h_0^3 c_3$.
\item
If $h_1^2 f_2$ does not survive, then $\tau h_1^2 f_2$
maps to $\ol{\tau h_1^3 h_4 Q_3}$ or
$\D^2 e_1 + \ol{\tau \D h_2 e_1 g}$.
\item
$\tau h_1 x_{85,6}$
maps to $\ol{\tau h_1^3 h_4 Q_3}$ or
$\D^2 e_1 + \ol{\tau \D h_2 e_1 g}$.
\end{enumerate}
}
\item
Through the 90-stem, Table \ref{tab:hid-proj} lists all
hidden values of projection from $C\tau$ to the top cell,
except that:
\rev{
\begin{enumerate}
\item
If $h_1 f_2$ does not survive, then $h_1 f_2$ maps to $h_1^2 h_4 Q_3$ 
or $\D h_1 j_1$.
\item
$x_{85,6}$ maps to $h_1^2 h_4 Q_3$ or $\D h_1 j_1$.
\item
If $h_1^2 f_2$ does not survive, then $h_1^2 f_2$ maps to
$h_1^3 h_4 Q_3$ or $\tau M h_0 g^2$.
\item
$h_1 x_{85,6}$ maps to $h_1^3 h_4 Q_3$ or $\tau M h_0 g^2$.
\item
If $h_1^2 f_2$ survives, then 
$\ol{\tau h_1^3 h_4 Q_3}$ or $\D^2 e_1 + \ol{\tau \D h_2 e_1 g}$
maps to $M \D h_1^2 d_0$.
\end{enumerate}
}
\end{enumerate}
\end{thm}

\begin{proof}
The values of inclusion of the bottom cell and projection
to the top cell are almost entirely determined by inspection
of Adams $E_\infty$-pages.
Taking into account the multiplicative structure,
there are no other combinatorial possibilities.
For example, consider the exact sequence
\[
\pi_{30,16} \map \pi_{30,16} C\tau \map \pi_{29,17}.
\]
In the Adams $E_\infty$-page for $C\tau$, 
$h_4^2$ and $\D h_2^2$
are the only two elements in the 30-stem with weight 16.
In the Adams $E_\infty$-page for the sphere,
$h_4^2$ is the only element in the 30-stem with weight 16,
and $h_1 d_0^2$ is the only element in the 29-stem with weight 17.
The only possibility is that $h_4^2$ maps to $h_4^2$ under
inclusion of the bottom cell, and
$\D h_2^2$ maps to $h_1 d_0^2$ under projection to the top cell.

One case, given below in Lemma \ref{lem:t-d1e1}, 
requires a more complicated argument.
\end{proof}

\begin{remark}
Through the 90-stem, inclusion of the bottom cell into $C\tau$
has only one hidden value with target indeterminacy.
Namely, $h_2 c_1 A'$ is the hidden value of $\ol{h_1 g B_7}$,
with target indeterminacy generated by $\D j_1$.
Through the 90-stem, projection from $C\tau$ to the top cell
has no hidden values with target indeterminacy.
\end{remark}

\begin{remark}
Through the 90-stem,
inclusion of the bottom cell into $C\tau$ has no crossing values.
On the other hand, projection from $C\tau$ to the top cell
does have crossing values in this range.  These occurrences
are described in the fourth column of 
Table \ref{tab:hid-proj}.
Each can be verified by direct inspection.
\end{remark}

\begin{thm}
\label{thm:tau-extn}
Through the 90-stem,
Table \ref{tab:tau-extn}
lists all hidden $\tau$ extensions in $\C$-motivic stable homotopy groups,
except that:
\rev{
\begin{enumerate}
\item
if $M \D h_1 d_0$ is not hit by a differential, then
there is a hidden $\tau$ extension from $\D h_1 j_1$ to
$M \D h_1 d_0$.
\item
if $M \D h_1^2 d_0$ is not hit by a differential, then
there is a hidden $\tau$ extension from $\tau M h_0 g^2$ to
$M \D h_1^2 d_0$.
\end{enumerate}
}
In this range, the only crossing extension is:
\begin{enumerate}
\item
the hidden $\tau$ extension from $h_1^2 h_6 c_0$ to $h_0 h_4 D_2$,
and the not hidden $\tau$ extension on $\tau h_2^2 Q_3$.
\end{enumerate}
\end{thm}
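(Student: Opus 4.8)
The plan is to read off the hidden $\tau$ extensions from the long exact sequence \eqref{eq:LES-Ctau}, using the complete description of inclusion of the bottom cell and projection to the top cell furnished by Theorem \ref{thm:hid-incl-proj}, together with the machine-computed homotopy of $C\tau$. As explained at the start of Section \ref{sctn:tau-extn}, an element $\beta$ of $\pi_{p-1,q+1}$ is annihilated by $\tau$ precisely when it lies in the image of projection from $\pi_{p,q} C\tau$, while a class is $\tau$-divisible precisely when it lies in the kernel of inclusion into $\pi_{p,q} C\tau$. Therefore, if $\beta$ is detected by an element $a$ with $\tau a = 0$ in the $E_\infty$-page, then $\beta$ supports a hidden $\tau$ extension exactly when $\beta$ is not in the image of projection. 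First I would run through the $E_\infty$-page stem by stem, isolating each element $a$ with $\tau a = 0$; for each class $\beta$ detected by such an $a$, the image of projection---now known together with all of its hidden values---decides whether $\tau\beta$ vanishes.

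Once an extension is known to be present, I would identify its target $\tau\beta$ using the other half of the exact sequence: $\tau\beta$ lies in the kernel of inclusion of the bottom cell, so its detecting element is constrained to be a $\tau$-multiple, and since inclusion is completely determined (again with its hidden values) by Theorem \ref{thm:hid-incl-proj}, the target is forced as soon as its Adams filtration is fixed. In the overwhelming majority of degrees this is a bookkeeping matter: there is a unique surviving $C\tau$-class, a unique candidate source on the sphere, and a unique candidate target, so both the extension and its target are immediate, as in the $30$-stem example $\{\D h_2^2\} \mapsto \eta\kappa^2$ recorded before Theorem \ref{thm:hid-incl-proj}. Completeness of Table \ref{tab:tau-extn} is then inherited from the completeness of Tables \ref{tab:hid-incl} and \ref{tab:hid-proj}, since any hidden $\tau$ extension beyond those already visible on the $E_\infty$-page would produce a class outside the image of projection, and every such class has already been enumerated.

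The three conditional exceptions occur exactly where this bookkeeping depends on an unresolved differential from Theorem \ref{thm:Adams-higher}. For example, whether $\tau(\D e_1 + C_0)g$ supports a $\tau$ extension hitting $\D^2 h_2 n$ turns on whether $\D^2 h_2 n$ is hit by the undetermined $d_9$ differential: if it is hit, no extension can land there, whereas if it survives, exactness forces the extension. I would phrase each of the three cases as a clean dichotomy governed by the relevant differential, matching the hypotheses in the statement. The single crossing extension I would treat by direct inspection, checking that the pair consisting of the hidden extension from $h_1^2 h_6 c_0$ to $h_0 h_4 D_2$ and the honest extension on $\tau h_2^2 Q_3$ satisfies the filtration inequalities defining a crossing value in Section \ref{sctn:assoc-graded}, and that no other such configuration arises through the $90$-stem.

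The main obstacle will be target identification in the handful of degrees where more than one element of the same stem and weight survives in the homotopy of $C\tau$, so that neither the image of projection nor the kernel of inclusion is one-dimensional. In such degrees the exact sequence does not by itself isolate $\tau\beta$, and I expect to break the ambiguity with auxiliary arguments---propagating a previously established extension through multiplication by $h_1$, $h_2$, or $\nu$, comparing with the Adams spectral sequence for $\mmf$, or shuffling a Toda bracket---supplemented by the more delicate lemma of the type used in Lemma \ref{lem:t-d1e1}, which already supports the underlying inclusion and projection computations.
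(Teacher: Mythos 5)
Your proposal is correct and follows the same route as the paper: the hidden $\tau$ extensions are read off from the long exact sequence relating $\pi_{*,*}$ and $\pi_{*,*}C\tau$, using the completely determined (including hidden) values of inclusion of the bottom cell and projection to the top cell from Tables \ref{tab:hid-incl} and \ref{tab:hid-proj}, with the conditional exceptions reduced to the corresponding unresolved differentials and the one genuinely ambiguous degree handled by Lemma \ref{lem:t-d1e1}. The paper's proof is exactly this bookkeeping argument, illustrated by the $c_0 d_0 \to P d_0$ example.
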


\begin{proof}
Almost all of these hidden $\tau$ extensions follow immediately
from the values of the maps in the long exact sequence
(\ref{eq:LES-Ctau})
given in Tables \ref{tab:hid-incl} and \ref{tab:hid-proj}.

For example, 
consider the element $P d_0$
in the Adams $E_\infty$-page for the sphere, which 
belongs to the 22-stem with weight 12. 
Now $\pi_{22,12} C\tau$ is zero because there are 
no elements in that degree in the Adams $E_\infty$-page
for $C\tau$, so
inclusion of the bottom cell takes $\{ P d_0 \}$ to zero.
Therefore, $\{ P d_0 \}$ must be in the image of multiplication
by $\tau$.
The only possibility is that there is a hidden $\tau$
extension from $c_0 d_0$ to $P d_0$.
\end{proof}

\begin{remark}
\label{rem:t-Dh1j1}
If $M \D h_1 d_0$ and $M \D h_1^2 d_0$ are not hit by differentials,
then a straightforward analysis of the sequence
(\ref{eq:LES-Ctau})
shows that the possible $\tau$ extensions on $\D h_1 j_1$ and 
$\tau M h_0 g^2$ must occur.  Thus these uncertainties are entirely
determined by corresponding uncertainties in values of the Adams 
differentials.
\end{remark}

\begin{lemma}
\label{lem:t-d1e1}
\mbox{}
\begin{enumerate}
\item
\revdeg{70, 10, 38}
The element 
$h_1 h_3 (\D e_1 + C_0) + \tau h_2 C''$ maps to
$\ol{h_1^4 c_0 Q_2}$ 
under inclusion of the bottom cell into $C\tau$.
\item
\revdeg{70, 8, 39}
There is a hidden $\tau$ extension from $d_1 e_1$ to 
$h_1 h_3 (\D e_1 + C_0)$.
\end{enumerate}
\end{lemma}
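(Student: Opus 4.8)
The plan is to treat part (1) as the substantive computation and to obtain part (2) as a formal consequence of the long exact sequence (\ref{eq:LES-Ctau}). Both statements concern a single Adams filtration quotient of a stable homotopy group in which $h_1 h_3(\D e_1 + C_0)$ and $\tau h_2 C''$ lie in the same filtration, so the real difficulty is not the image of the whole quotient but how $\Gr(i_*)$ splits across these two generators. Direct inspection of the Adams $E_\infty$-pages for $S^{0,0}$ and for $C\tau$, together with the multiplicative structure, pins down only the image of the combination $h_1 h_3(\D e_1 + C_0) + \tau h_2 C''$; it does not by itself decide the image of each summand. This is precisely the ambiguity that defeats the routine by-inspection argument invoked in the proof of Theorem \ref{thm:hid-incl-proj}, and it is why this case is singled out.

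First I would establish part (1) directly. The appearance of $h_1^4$ in the target $\ol{h_1^4 c_0 Q_2}$ suggests exploiting the $h_1$-module structure of $\pi_{*,*}C\tau$, which is completely understood from the machine data of \cite{IWX19} and the $h_1$-periodic analysis of \cite{GI15} recorded in Section \ref{sctn:h1-Bockstein}. Concretely, I would locate a class in $\pi_{*,*}C\tau$ carried by a power of $h_1$ (or by $h_3$) onto $\ol{h_1^4 c_0 Q_2}$, identify its $i_*$-preimage in the sphere using naturality of these multiplications, and thereby compute $i_*$ on the chosen sphere class. To remove the ambiguity between the two summands I would supplement this with an independent naturality argument: either comparison along the unit map to $\mmf$ (Section \ref{sctn:mmf}), or a Massey product computed through the Moss Convergence Theorem \ref{thm:Moss} that detects $\tau h_2 C''$ on its own. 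The upshot should be that the image of $h_1 h_3(\D e_1 + C_0)$ alone is forced to vanish, while $\tau h_2 C''$ carries $\ol{h_1^4 c_0 Q_2}$, so that the stated combination has the asserted image.

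Granting this, part (2) follows from exactness of (\ref{eq:LES-Ctau}): once $i_*$ annihilates the class detected by $h_1 h_3(\D e_1 + C_0)$, that class lies in the image of multiplication by $\tau$, and a count of the available generators in the same stem one weight higher identifies the source as the class detected by $d_1 e_1$. Since $\tau \cdot d_1 e_1$ vanishes on the $E_\infty$-page, the resulting extension is hidden, as asserted. The main obstacle is the disentangling step in part (1): because $h_1 h_3(\D e_1 + C_0)$ and $\tau h_2 C''$ share a filtration, the individual images cannot be read from the charts, and the entire argument hinges on producing a relation — via $\mmf$, an $h_1$-extension in $C\tau$, or a bracket — that isolates one of the two classes. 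A secondary point demanding care is the fate of $h_2 C''$ itself, since whether it survives governs what homotopy class $\tau h_2 C''$ detects and hence whether its image under $i_*$ can be nonzero.
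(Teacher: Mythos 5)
Your overall framework is sound: the long exact sequence (\ref{eq:LES-Ctau}) does reduce the problem to a dichotomy between two scenarios (hidden $\tau$ extension from $d_1 e_1$ to $h_1 h_3(\D e_1 + C_0)$ with the sum carrying the hidden value of $i_*$, versus hidden $\tau$ extension to the sum with $h_1 h_3(\D e_1 + C_0)$ carrying the hidden value), and once one scenario is ruled out, both parts of the lemma follow together. You correctly identify that the whole difficulty is disentangling two classes in the same Adams filtration. However, the step that actually does this work is missing from your proposal. You offer a menu of possible techniques --- the $h_1$-module structure of $\pi_{*,*}C\tau$, comparison to $\mmf$, an unspecified Massey product --- without committing to one, and none of them is likely to succeed. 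Comparison to $\mmf$ is hopeless here because none of $d_1 e_1$, $C''$, $\D e_1 + C_0$ is visible in $\Ext_{A(2)}$ (they do not appear anywhere near Table \ref{tab:unit-mmf}). The $h_1$-multiplication idea merely relocates the ambiguity: identifying which filtration-$10$ class detects an $\eta$-multiple is exactly the same two-element indistinguishability you started with.

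The idea that actually resolves the dichotomy is $\nu$-multiplication, and it works because $h_2$ \emph{algebraically distinguishes} the two candidate targets: $h_2 \cdot h_1 h_3 (\D e_1 + C_0) = 0$ since $h_2 h_3 = 0$, whereas $h_2 \cdot \bigl(h_1 h_3(\D e_1 + C_0) + \tau h_2 C''\bigr) = \tau h_2^2 C''$ is non-zero on the $E_\infty$-page. So if the hidden $\tau$ extension from $d_1 e_1$ landed on the sum, then $\tau \nu \{d_1 e_1\}$ would be non-zero, detected by $\tau h_2^2 C''$. One then needs the independent input that $\tau \nu \{d_1 e_1\}$ vanishes; this comes from the relation $h_1 h_3 k_1 + h_2 d_1 e_1 = 0$ (Lemma \ref{lem:eta-sigma-k1}), which gives $\tau \nu \{d_1 e_1\} = \tau \eta \sigma \{k_1\}$, and the latter is zero because $h_1 k_1$ supports no hidden $\tau$ extension, so $\eta\alpha$ is killed by $\tau$ for a suitable $\alpha \in \{k_1\}$. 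Without some such relation that sees the difference between the two filtration-$10$ classes through the multiplicative structure, your argument cannot close; as written, the proposal records the shape of the problem but not its solution.
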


\begin{proof}
Consider the exact sequence
$\pi_{70, 38} \map \pi_{70, 38} C\tau \map \pi_{69, 39}$.
For combinatorial reasons, one of the following two possibilities
must occur:
\begin{enumerate}[(a)]
\item
the element $h_1 h_3 (\D e_1 + C_0) + \tau h_2 C''$ maps to
$\ol{h_1^4 c_0 Q_2}$ 
under inclusion of the bottom cell into $C\tau$, and
there is a hidden $\tau$ extension from $d_1 e_1$ to 
$h_1 h_3 (\D e_1 + C_0)$.
\item
the element $h_1 h_3 (\D e_1 + C_0)$
maps to $\ol{h_1^4 c_0 Q_2}$ 
under inclusion of the bottom cell into $C\tau$, and
there is a hidden $\tau$ extension from $d_1 e_1$ to 
$h_1 h_3 (\D e_1 + C_0) + \tau h_2 C''$.
\end{enumerate}
We will show that there cannot be a hidden
$\tau$ extension from $d_1 e_1$ to
$h_1 h_3 (\D e_1 + C_0) + \tau h_2 C''$.

Lemma \ref{lem:eta-sigma-k1} shows that
$\tau \nu \{d_1 e_1\}$ equals
$\tau \eta \sigma \{k_1\}$.
Since there is no hidden $\tau$ extension on $h_1 k_1$,
there must exist an element $\alpha$ in $\{k_1\}$ such that
$\tau \eta \alpha = 0$.
Therefore,
$\tau \nu \{d_1 e_1\}$ must be zero.

If there were a $\tau$ extension from
$d_1 e_1$ to 
$h_1 h_3 (\D e_1 + C_0) + \tau h_2 C''$, then
$\tau \nu \{d_1 e_1\}$ would be detected by
\[
h_2 \cdot (h_1 h_3 (\D e_1 + C_0) + \tau h_2 C'') =
\tau h_2^2 C'',
\]
and in particular would be non-zero.
\end{proof}

\section{Hidden $2$ extensions}
\label{sctn:2-extn}

\begin{thm}
\label{thm:2-extn}
Tables \ref{tab:2-extn} \rev{and \ref{tab:2-extn-null}} list some hidden extensions by $2$.
\end{thm}

\begin{proof}
Many of the hidden extensions follow by comparison to 
$C\tau$.  For example, there is a hidden $2$ extension
from $h_0 h_2 g$ to $h_1 c_0 d_0$ in the Adams spectral
sequence for $C\tau$.
Pulling back along inclusion of the bottom cell into $C\tau$,
there must also be a hidden $2$ extension from $h_0 h_2 g$
to $h_1 c_0 d_0$ in the Adams spectral sequence for the sphere.
This type of argument is indicated by the notation
$C\tau$ in the fourth column of Table \ref{tab:2-extn}.

Next, Table \ref{tab:tau-extn} shows a hidden $\tau$
extension from $h_1 c_0 d_0$ to $P h_1 d_0$.
Therefore, there is also a hidden $2$ extension from
$\tau h_0 h_2 g$ to $P h_1 d_0$.
This type of argument is indicated by the notation $\tau$
in the fourth column of Table \ref{tab:2-extn}. 

Many cases require more complicated arguments.  In stems up to
approximately dimension 62, see 
\cite{Isaksen14c}*{Section 4.2.2 and Tables 27--28}
\cite{WangXu18}, and \cite{Xu16}.
The higher-dimensional cases are handled in the following lemmas.
\end{proof}

\begin{remark}
Through the 90-stem, there are no crossing $2$ extensions.
\end{remark}

\begin{remark}
The hidden $2$ extension from $h_0 h_3 g_2$ to $\tau g n$
is proved in \cite{WangXu18}, which uses on the
``$\R P^\infty$-method" to establish a hidden
$\sigma$ extension from $\tau h_3 d_1$ to $\D h_2 c_1$
and a hidden $\eta$ extension from $\tau h_1 g_2$ to $\D h_2 c_1$.
We now have easier proofs for these $\eta$ and $\sigma$ extensions,
using the hidden $\tau$ extension from $h_1^2 g_2$ to $\D h_2 c_1$
given in Table \ref{tab:tau-extn}, as well as the relation
$h_3^2 d_1 = h_1^2 g_2$.
\end{remark}

\begin{remark}
\label{rem:2-h0h5i}
Comparison to synthetic homotopy gives additional
information about some
possible hidden $2$ extensions, including:
\begin{enumerate}
\item
there is a hidden $2$ extension from $h_0 h_5 i$ to
$\tau^4 e_0^2 g$.
\item
there is no hidden $2$ extension from $P x_{76,6}$ 
to $M \D h_1 d_0$.
\end{enumerate}
See \rev{\cite{Burklund21} and} \cite{BIX} for more details.
We are grateful to John Rognes for pointing out a mistake
in \cite{Isaksen14c}*{Lemma 4.56 and Table 27} 
concerning the hidden $2$ extension
on $h_0 h_5 i$.  
Lemma \ref{lem:2-h0h5i} shows that the extension occurs but
does not determine its target precisely.  
\end{remark}

\begin{remark}
The first correct proof of the relation $2 \theta_5 = 0$ 
appeared in \cite{Xu16}.  Earlier claims in \cite{Lin01}
and \cite{Kochman90} were based upon a mistaken understanding of the
Toda bracket $\langle \sigma^2, 2, \theta_4 \rangle$.
See \cite{Isaksen14c}*{Table 23} for the correct value of
this bracket.
\end{remark}

\begin{remark}
\label{rem:2-tMh0g^2}
If $M \D h_1^2 d_0$ is non-zero in the $E_\infty$-page, then
there is a hidden $\tau$ extension from $\tau M h_0 g^2$ to
$M \D h_1^2 d_0$.  This implies that there must be a hidden
$2$ extension from $\tau^2 M g^2$ to $M \D h_1^2 d_0$.
\end{remark}

\rev{
\begin{remark}
\label{rem:2-x87,7}
Table \ref{tab:2-extn} shows that there is a hidden $2$
extension from $x_{87,7}$ to $\tau^3 g Q_3$.
This follows from data recently produced by Dexter Chua on the
$d_2$ differentials in the Adams spectral sequence for the cofiber 
of $2$.
\end{remark}
}

\begin{thm}
\label{thm:2-extn-possible}
Table \ref{tab:2-extn-possible} lists all unknown hidden $2$
extensions, through the $90$-stem.
\end{thm}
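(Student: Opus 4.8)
The plan is to treat this as a completeness claim about an exhaustive enumeration rather than as a single computation. By Theorem \ref{thm:main-Adams} the entire $E_\infty$-page of the $\C$-motivic Adams spectral sequence is known through the 90-stem (up to the handful of unresolved differentials recorded in Table \ref{tab:Adams-unknown}), so the set of \emph{candidate} hidden $2$ extensions is finite and explicitly listable. Recalling Definition \ref{defn:hidden}, a hidden $2$ extension has source $b$ and target $c$ in the same stem and motivic weight, with $c$ in strictly higher Adams filtration, subject to $h_0 b = 0$ and the maximality condition (3). First I would, degree by degree, write down every pair $(b,c)$ in a common bidegree $(s,w)$ with $s \leq 90$ for which $h_0 b = 0$ and $c$ lies above $b$ in filtration; this is purely a matter of reading off the chart.

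For each such candidate I would then assign it to one of three bins. The first bin consists of extensions already shown to occur; these are exactly the entries of Table \ref{tab:2-extn}, justified in Theorem \ref{thm:2-extn} by comparison to $C\tau$, by composing a known $2$ extension with a hidden $\tau$ extension, or by the ad hoc lemmas of Section \ref{sctn:2-extn}. The second bin consists of extensions shown \emph{not} to occur; here the available tools are the same menu used throughout the manuscript---naturality along inclusion of the bottom cell of $C\tau$ and projection to its top cell, comparison to $\tmf$ and to $\mmf$, Toda-bracket shuffles via the Moss Convergence Theorem \ref{thm:Moss}, multiplicative relations, and the exclusion forced by condition (3) of Definition \ref{defn:hidden-value} when the putative target is already the target of an extension from higher filtration. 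The third bin is the residue: candidates that none of these arguments resolves. The content of the theorem is precisely that this residue coincides with Table \ref{tab:2-extn-possible}, so the verification amounts to checking that every candidate not appearing in Table \ref{tab:2-extn-possible} falls into one of the first two bins.

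The hard part will be certifying completeness of the enumeration rather than resolving any individual extension. Because the list of candidate pairs is long and the resolving arguments are heterogeneous, the real risk is overlooking a combinatorially possible target---particularly in the dense high stems past dimension $80$, where many elements share a bidegree. A further complication is that uncertainty propagates from the $E_\infty$-page itself: where an Adams differential in Table \ref{tab:Adams-unknown} is undetermined, the very existence of a candidate source or target may be conditional, so some entries of Table \ref{tab:2-extn-possible} are genuinely forced to remain open until those differentials are settled (compare Remark \ref{rem:2-tMh0g^2}). I would therefore organize the check so that each open differential's effect on the $2$-extension bookkeeping is isolated, and cross-reference against the parallel enumerations of hidden $\eta$ and $\nu$ extensions, exploiting the usual (though not universal) symmetry of hidden extensions to catch omissions.
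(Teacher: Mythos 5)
Your proposal is correct and follows essentially the same route as the paper: the paper's proof likewise enumerates the combinatorially possible extensions from the $E_\infty$-page, eliminates most by comparison to $C\tau$, $\tmf$, and $\mmf$ or by multiplicative constraints (e.g.\ a target supporting an $h_1$ extension cannot receive a hidden $2$ extension since $2\eta=0$), disposes of several more by the shuffle $2\langle\alpha,2,\theta_5\rangle=\langle 2,\alpha,2\rangle\theta_5=\tau\eta\alpha\theta_5$ (Proposition \ref{prop:2-<alpha,2,theta5>}), and relegates the genuinely hard cases to a sequence of ad hoc lemmas, with the unresolved residue constituting Table \ref{tab:2-extn-possible}. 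Your additional observations about uncertainty propagating from the open differentials and about cross-checking against the $\eta$ and $\nu$ tables are consistent with how the paper actually handles these interactions (compare Remarks \ref{rem:2-tMh0g^2} and \ref{rem:2-Px76,6}).
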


\begin{proof}
Many possibilities are eliminated by comparison to $C\tau$, to
$\tmf$, or to $\mmf$.  For example,
there cannot be a hidden $2$ extension from $h_2^2 h_4$ to $\tau h_1 g$
by comparison to $C\tau$.

Many additional possibilities are eliminated by consideration of other
parts of the multiplicative structure.  For example,
there cannot be a hidden $2$ extension from $P h_1 h_5$
to $\tau^3 g^2$ because $\tau^3 g^2$ supports an $h_1$ extension
and $2 \eta$ equals zero.

Several cases are a direct consequence of Proposition
\ref{prop:2-<alpha,2,theta5>}.

Some possibilities are eliminated by more complicated arguments.
These cases are handled in the following lemmas.
\end{proof}

\begin{remark}
\label{rem:2-Px76,6}
If $M \D h_1^2 d_0$ is not zero in the $E_\infty$-page, then
$M \D h_1 d_0$ supports an $h_1$ multiplication, and there cannot
be a hidden $2$ extension from $P x_{76,6}$ to $M \D h_1 d_0$.
\end{remark}

\begin{prop}
\label{prop:2-<alpha,2,theta5>}
Suppose that $2 \alpha$ and $\tau \eta \alpha$ are both zero.
Then $2 \langle \alpha, 2, \theta_5 \rangle$ is zero.
\end{prop}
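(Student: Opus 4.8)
The plan is to reduce the statement to the symmetric bracket $\langle 2, \alpha, 2 \rangle$ by a single Toda bracket shuffle, and then combine the hypothesis $\tau \eta \alpha = 0$ with the relation $2 \theta_5 = 0$ to force the product to vanish. First I would check that the brackets in question are defined. The bracket $\langle \alpha, 2, \theta_5 \rangle$ is defined because $2\alpha = 0$ by hypothesis and $2 \theta_5 = 0$ by \cite{Xu16}. Since $2 \alpha = 0$, the bracket $\langle 2, \alpha, 2 \rangle$ is also defined. The shuffle formula of Theorem \ref{thm:Toda-3fold}(5), applied with first entry $2$, second entry $\alpha$, third entry $2$, and trailing factor $\theta_5$, then gives the equality of sets
\[
2 \langle \alpha, 2, \theta_5 \rangle = \langle 2, \alpha, 2 \rangle \theta_5 .
\]
So it suffices to prove that the right-hand side is zero.

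Next I would apply Corollary \ref{cor:2-symmetric}, which (since $2 \alpha = 0$) shows that $\langle 2, \alpha, 2 \rangle$ contains $\tau \eta \alpha$. By hypothesis $\tau \eta \alpha = 0$, so the bracket $\langle 2, \alpha, 2 \rangle$ contains zero and therefore coincides with its own indeterminacy. Because the element $2$ is central and appears as both the outer entries, the indeterminacy of $\langle 2, \alpha, 2 \rangle$ is exactly $2 \cdot \pi_{*,*}$ in the appropriate degree. Consequently every element of $\langle 2, \alpha, 2 \rangle$ is a multiple of $2$, say of the form $2 \beta$.

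Finally I would multiply by $\theta_5$: each element of $\langle 2, \alpha, 2 \rangle \theta_5$ has the form $2 \beta \cdot \theta_5 = \beta \cdot 2 \theta_5 = 0$, using $2 \theta_5 = 0$. Hence $\langle 2, \alpha, 2 \rangle \theta_5 = 0$, and by the displayed equality $2 \langle \alpha, 2, \theta_5 \rangle = 0$, as desired. I do not expect a serious obstacle here; the one point requiring care is the identification of the indeterminacy of $\langle 2, \alpha, 2 \rangle$ as consisting entirely of multiples of $2$, which is what lets the trailing $\theta_5$ annihilate the whole coset. The remainder is routine bookkeeping with the shuffle relation and the two vanishing facts $\tau \eta \alpha = 0$ and $2 \theta_5 = 0$.
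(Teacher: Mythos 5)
Your argument is correct and is essentially the paper's own proof: the same shuffle $2\langle \alpha, 2, \theta_5\rangle = \langle 2, \alpha, 2\rangle\theta_5$, the same appeal to Corollary \ref{cor:2-symmetric}, and the same use of $\tau\eta\alpha = 0$ and $2\theta_5 = 0$. The only cosmetic difference is that the paper absorbs the indeterminacy by noting that $2\theta_5 = 0$ kills the indeterminacy of the product $\langle 2,\alpha,2\rangle\theta_5$ directly, whereas you first identify the whole bracket $\langle 2,\alpha,2\rangle$ with $2\cdot\pi_{*,*}$ and then multiply; both routes are valid.
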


\begin{proof}
Consider the shuffle
\[
2 \langle \alpha, 2, \theta_5 \rangle =
\langle 2, \alpha, 2 \rangle \theta_5.
\]
Since $2 \theta_5$ is zero, this expression has no indeterminacy.
Corollary \ref{cor:2-symmetric} implies that it equals
$\tau \eta \alpha \theta_5$, which is zero by assumption.
\end{proof}

\begin{remark}
\label{rem:2-<alpha,2,theta5>}
Proposition \ref{prop:2-<alpha,2,theta5>} eliminates possible
hidden $2$ extensions on several elements, including $h_2^2 h_6$,
$h_0^3 h_3 h_6$, $h_3^2 h_6$, $h_6 c_1$, $h_2^2 h_4 h_6$,
$h_0^5 h_6 i$, and $h_2^2 h_6 g$.
\end{remark}

\begin{lemma}
\label{lem:2-h0h5i}
\revdeg{54, 9, 28}
There is a hidden $2$ extension from $h_0 h_5 i$ to either
$\tau M P h_1$ or to $\tau^4 e_0^2 g$.
\end{lemma}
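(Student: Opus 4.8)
The plan is to detect the extension by naturality along the unit map to $\mmf$, using the hidden value of $h_0 h_5 i$ that is already in hand. Let $\beta$ be an element of $\pi_{54,28}$ detected by $h_0 h_5 i$, which sits in degree $(54,9,28)$. By Theorem \ref{thm:unit-mmf} and Table \ref{tab:unit-mmf} (compare the discussion in Lemma \ref{lem:d3-tDg2.h0^3}, phrased there in terms of $\tmf$), the unit map $\pi_{*,*} \map \pi_{*,*} \mmf$ carries $\beta$ to a class whose image is the hidden value of $h_0 h_5 i$, detected by $\D^2 h_2^2$ in the Adams spectral sequence for $\mmf$. This is the essential input, and it lets us transport the extension question to the completely understood spectral sequence for $\mmf$.

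First I would record that any such $2$ extension is genuinely hidden. In the Adams $E_\infty$-page for the sphere one has $h_0 \cdot h_0 h_5 i = h_0^2 h_5 i = 0$, so a nonzero value of $2\beta$ must be detected in Adams filtration strictly greater than $9$. Inspecting the $E_\infty$-page in stem $54$ and weight $28$, I would check that the only survivors in filtration above $9$ are $\tau M P h_1$, in filtration $11$, and $\tau^4 e_0^2 g$, in filtration $12$; these are therefore the only possible detecting elements for $2\beta$. Next, from the complete analysis of the $\mmf$ Adams spectral sequence \cite{Isaksen18}, I would extract that $\D^2 h_2^2$ supports a nonzero $2$ extension in $\pi_{*,*}\mmf$ (this class sits in Adams--Novikov filtration $2$, as noted in the proof of Theorem \ref{thm:unit-mmf}, and one must confirm that it is not annihilated by $2$). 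Granting this, naturality of multiplication by $2$ shows that the image of $2\beta$ in $\pi_{*,*}\mmf$ equals $2$ times the image of $\beta$, which is nonzero. Hence $2\beta \neq 0$, and by the filtration count it is detected by $\tau M P h_1$ or by $\tau^4 e_0^2 g$. This establishes that the hidden $2$ extension occurs.

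The main obstacle is separating the two candidate targets, and I expect the naturality argument to be intrinsically unable to do so. Both $\tau M P h_1$ and $\tau^4 e_0^2 g$ are consistent with the observed nonzero image in $\mmf$: the comparison records only that the image of $2\beta$ is nonzero, not the precise Adams filtration in which $2\beta$ itself is detected on the sphere, so a leading term in filtration $11$ cannot be distinguished from one in filtration $12$. This is exactly the ambiguity expressed by the ``either/or'' in the statement. Pinning down the target as $\tau^4 e_0^2 g$ requires the finer comparison with synthetic homotopy recorded in Remark \ref{rem:2-h0h5i} and carried out in \cite{BIX}, which lies outside the methods available here; the secondary technical point to verify carefully within the present argument is the $\mmf$ input itself, namely that the relevant filtration $2$ class genuinely supports a nonzero $2$ extension.
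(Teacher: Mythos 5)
Your proposal is correct and follows essentially the same route as the paper: the paper's proof also uses the hidden value $h_0 h_5 i \mapsto \D^2 h_2^2$ under the unit map (Table \ref{tab:unit-mmf}) together with the fact that $\D^2 h_2^2$ supports a hidden $2$ extension in $\tmf$, and then reads off the two possible targets by inspection of the $E_\infty$-page. Your additional remarks --- that the extension is necessarily hidden, that the filtration count isolates the two candidates, and that this comparison cannot distinguish them (that requires the synthetic argument of \cite{BIX}, cf.\ Remark \ref{rem:2-h0h5i}) --- are all consistent with the paper.
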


\begin{proof}
Table \ref{tab:unit-mmf} shows that
$h_0 h_5 i$ maps to $\D^2 h_2^2$ in the homotopy of $\tmf$.
The element $\D^2 h_2^2$ supports a hidden $2$ extension,
so $h_0 h_5 i$ must support a hidden $2$ extension as well.
\end{proof}

\begin{lemma}
\label{lem:2-th1H1}
\mbox{}
\begin{enumerate}
\item
\revdeg{63, 6, 33}
There is a hidden $2$ extension from
$\tau h_1 H_1$ to $\tau h_1 (\D e_1 + C_0)$.
\item
\revdeg{63, 7, 33}
There is no hidden $2$ extension on $\tau X_2 + \tau C'$.
\item
\revdeg{70, 7, 37}
There is a hidden $2$ extension from
$\tau h_1 h_3 H_1$ to $\tau h_1 h_3 (\D e_1 + C_0)$.
\end{enumerate}
\end{lemma}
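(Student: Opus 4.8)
The three statements all take place in a small range of tightly packed degrees: parts (1) and (2) occur in $\pi_{63,33}$, where $\tau h_1 H_1$, $\tau X_2 + \tau C'$, and the candidate target $\tau h_1(\D e_1 + C_0)$ all live, while part (3) lies in $\pi_{70,37}$ and should be the image of part (1) under multiplication by $\sigma$. The overarching plan is to isolate a single underlying hidden $2$ extension relating the classes $H_1$ and $\D e_1 + C_0$, to pull it back from the Adams spectral sequence for $C\tau$ along inclusion of the bottom cell, and then to propagate it by multiplication by $\tau$, $h_1$, and $h_3$. Part (2) is a non-existence statement, which I would establish by pushing $\tau X_2 + \tau C'$ forward into $C\tau$ via Lemma \ref{lem:d4-tX2} and eliminating the finitely many candidate targets.

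For part (1), I would first note that, since $h_0 h_1 = 0$ already in $\Ext$, the product $h_0 \cdot \tau h_1 H_1$ vanishes in $E_\infty$, so any $2$ extension on $\tau h_1 H_1$ is automatically hidden. The core input is a hidden $2$ extension relating $H_1$ and $\D e_1 + C_0$, which I would read off from the machine-generated algebraic Novikov data of Chapter \ref{ch:AANSS}, i.e. from the Adams spectral sequence for $C\tau$. Because $\tau h_1 H_1$ itself maps to zero under inclusion of the bottom cell, the extension to pull back is the one on the non-$\tau$-divisible class, not on $\tau h_1 H_1$; after pulling back along $i: S^{0,0} \map C\tau$ I would multiply by $h_1$ and then by $\tau$, checking at each step that $\tau h_1 H_1$ and $\tau h_1(\D e_1 + C_0)$ are nonzero in $E_\infty$ and honestly detect the expected $\tau h_1$-multiples (that is, that these multiplications are not themselves hidden). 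This upgrades the core extension to the desired hidden $2$ extension from $\tau h_1 H_1$ to $\tau h_1(\D e_1 + C_0)$.

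For part (3), I would multiply the extension of part (1) by $\sigma$. Provided $h_3 \cdot \tau h_1 H_1 = \tau h_1 h_3 H_1$ and $h_3 \cdot \tau h_1(\D e_1 + C_0) = \tau h_1 h_3(\D e_1 + C_0)$ are both nonzero in $E_\infty$, so that the $\sigma$-multiplications are not hidden, the relation $\sigma \cdot 2\{\tau h_1 H_1\} = 2 \cdot \sigma\{\tau h_1 H_1\}$ transports the extension to $\pi_{70,37}$. The companion hidden $\tau$ extension from $d_1 e_1$ to $h_1 h_3(\D e_1 + C_0)$ in Lemma \ref{lem:t-d1e1} then serves as a consistency check that $h_1 h_3(\D e_1 + C_0)$, and hence $\tau h_1 h_3(\D e_1 + C_0)$, is a genuine nonzero permanent cycle.

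For part (2), I would use Lemma \ref{lem:d4-tX2}, which shows that $\tau X_2 + \tau C'$ survives and maps to $\ol{h_5 d_0 e_0}$ under inclusion of the bottom cell into $C\tau$. I would verify from the $C\tau$ data that $\ol{h_5 d_0 e_0}$ supports no hidden $2$ extension, so that $2\{\tau X_2 + \tau C'\}$ maps to zero in $C\tau$ and is therefore divisible by $\tau$. It then remains to enumerate the higher-filtration classes in $\pi_{63,33}$ that are divisible by $\tau$ — in particular the candidate $\tau h_1(\D e_1 + C_0)$ produced in part (1) — and to rule each one out, using comparison to $\tmf$ and $\mmf$ together with the multiplicative and group-theoretic constraints already imposed by part (1) and the known structure of $\pi_{63,33}$. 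The main obstacle is precisely this last elimination for part (2), together with correctly identifying the $C\tau$ hidden $2$ extension feeding part (1): the stem is dense, several classes share the same stem and weight, and one must carefully distinguish honest detection from hidden multiplication while propagating through $\tau$, $h_1$, and $h_3$.
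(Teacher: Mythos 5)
Your approach to part (1) has a genuine gap: the extension you want to establish cannot be pulled back from $C\tau$, because it is invisible there. The target $\tau h_1 (\D e_1 + C_0)$ is a $\tau$-multiple, so if $2\alpha$ is detected by it then $2\alpha$ is divisible by $\tau$ and hence $i_*(2\alpha) = 0$ in $\pi_{63,33} C\tau$; no amount of machine data for $C\tau$ can certify a $2$ extension whose target dies under inclusion of the bottom cell. (This is why every entry of Table \ref{tab:2-extn} whose target is a $\tau$-multiple is proved by the ``$\tau$'' method, i.e.\ by lifting along a hidden $\tau$ extension, rather than by comparison to $C\tau$ --- and here $\tau h_1(\D e_1 + C_0)$ is not the target of any hidden $\tau$ extension in Table \ref{tab:tau-extn}, so that method is unavailable too.) Your factual premise is also wrong: $\tau h_1 H_1$ does \emph{not} map to zero under inclusion of the bottom cell; by Table \ref{tab:hid-incl} it has the hidden value $\ol{h_1 B_7}$. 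Moreover the ``non-$\tau$-divisible classes'' $H_1$, $X_2$, $C'$ all support Adams differentials and do not survive in the sphere, so there is no underlying extension ``relating $H_1$ and $\D e_1 + C_0$'' to propagate by $h_1$ and $\tau$. Your part (2) inherits the problem: pushing into $C\tau$ only shows that $2\beta$ is $\tau$-divisible, and the decisive elimination of $\tau h_1(\D e_1 + C_0)$ as a value of $2\beta$ is exactly the hard content, which you defer to ``constraints already imposed by part (1)''.

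The paper's argument is quite different and worth internalizing. By Table \ref{tab:eta-extn} there is a hidden $\eta$ extension from $\tau h_1 H_1$ to $h_3 Q_2$, so for every $\alpha$ detected by $\tau h_1 H_1$ the product $\tau \eta^2 \alpha$ is non-zero, detected by $\tau h_1 h_3 Q_2$, and that element is not the target of any hidden $2$ extension. If $2\alpha$ were zero, the shuffle
\[
\tau \eta^2 \alpha = \langle 2, \eta, 2 \rangle \alpha = 2 \langle \eta, 2, \alpha \rangle
\]
(using Corollary \ref{cor:2-symmetric}) would exhibit $\tau \eta^2 \alpha$ as a multiple of $2$, a contradiction. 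Hence $2\alpha \neq 0$ for \emph{every} choice of $\alpha$, and the only way this can happen combinatorially in $\pi_{63,33}$ is that the hidden $2$ extension originates on the filtration-$6$ class $\tau h_1 H_1$ with target $\tau h_1(\D e_1 + C_0)$, which simultaneously forces the absence of a hidden $2$ extension on the filtration-$7$ class $\tau X_2 + \tau C'$; parts (1) and (2) are proved together, not separately. Your part (3), multiplication by $h_3$, agrees with the paper.
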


\begin{proof}
Table \ref{tab:eta-extn} shows that there is an $\eta$
extension from $\tau h_1 H_1$ to $h_3 Q_2$.
Let $\alpha$ be any element of $\pi_{63,33}$ that is
detected by $\tau h_1 H_1$.
Then
$\tau \eta^2 \alpha$ is non-zero and detected by 
$\tau h_1 h_3 Q_2$.
Note that $\tau h_1 h_3 Q_2$ cannot be the target of a hidden
$2$ extension because there are no possibilities.

If $2 \alpha$ were zero, then we would have the shuffling
relation
\[
\tau \eta^2 \alpha =
\langle 2, \eta, 2 \rangle \alpha = 
2 \langle \eta, 2, \alpha \rangle.
\]
But this would contradict the previous paragraph.

We now know that $2 \alpha$ must be non-zero for every possible
choice of $\alpha$.
The only possibility is that there is a hidden
$2$ extension from $\tau h_1 H_1$ to $\tau h_1 (\D e_1 + C_0)$,
and that there is no hidden $2$ extension on $\tau X_2 + \tau C'$.
This establishes the first two parts.

The third part follows immediately from the first part by
multiplication by $h_3$.
\end{proof}

\begin{lemma}
\label{lem:2-h1h6}
\revdeg{64, 2, 33}
There is a hidden $2$ extension from $h_1 h_6$ to $\tau h_1^2 h_5^2$.
\end{lemma}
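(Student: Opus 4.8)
The plan is to evaluate multiplication by $2$ on the homotopy class detected by $h_1 h_6$ by exploiting the fact that $h_1 h_6$ detects a Toda bracket built from $\theta_5$, and then to shuffle the external $2$ into the symmetric bracket $\langle 2, \eta, 2 \rangle$, whose value is already known. First I would record that, by Table \ref{tab:Toda} (compare the proof of Lemma \ref{lem:d5-g3}), the element $h_1 h_6$ detects the Toda bracket $\langle \eta, 2, \theta_5 \rangle$; this uses the Adams differential $d_2(h_6) = h_0 h_5^2$ together with the Moss Convergence Theorem \ref{thm:Moss}, since the Massey product $\langle h_1, h_0, h_5^2 \rangle$ contains $h_1 h_6$ in the $E_3$-page. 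I would then choose $\alpha$ in $\langle \eta, 2, \theta_5 \rangle$ detected by $h_1 h_6$. Because $h_0 h_1 = 0$ in the $E_\infty$-page, condition (1) of Definition \ref{defn:hidden} is automatic, and the task reduces to computing $2\alpha$ and identifying its detecting element.

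The key step is the shuffle
\[
2 \alpha \in 2 \langle \eta, 2, \theta_5 \rangle = \langle 2, \eta, 2 \rangle \theta_5,
\]
which is legitimate because $2\eta = 0$ and $2\theta_5 = 0$, so both brackets are defined, and the equality is part (5) of Theorem \ref{thm:Toda-3fold}. By Corollary \ref{cor:2-symmetric} (applied with $\beta = \eta$), the bracket $\langle 2, \eta, 2 \rangle$ equals $\tau \eta^2$, and its indeterminacy is zero since it lies in $\pi_{2,1}$ and $2 \cdot \pi_{2,1} = 0$; it is detected by $\tau h_1^2$. Hence $\langle 2, \eta, 2 \rangle \theta_5$ is the single element $\tau \eta^2 \theta_5$, and therefore $2\alpha = \tau \eta^2 \theta_5$.

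Finally I would identify the detecting element of $\tau \eta^2 \theta_5$. Since $\tau h_1^2$ detects $\tau \eta^2$ and $h_5^2$ detects $\theta_5$, the product $\tau \eta^2 \theta_5$ is detected by $\tau h_1^2 \cdot h_5^2 = \tau h_1^2 h_5^2$, provided this product is nonzero in the $E_\infty$-page. This last point is the main obstacle: one must confirm from the $E_\infty$-page that $\tau h_1^2 h_5^2$ is a nonzero permanent cycle and that $\tau \eta^2 \theta_5$ is not instead detected in strictly higher filtration, so that $\tau \eta^2 \theta_5 \neq 0$. Granting this, $2\alpha$ is detected by $\tau h_1^2 h_5^2$, which is exactly the asserted hidden $2$ extension. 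Condition (3) of Definition \ref{defn:hidden} would then be checked by inspection, using that $h_0$-multiplication raises Adams filtration and that no element of $\pi_{64,33}$ in filtration strictly between that of $h_1 h_6$ and that of $\tau h_1^2 h_5^2$ supports a $2$ extension onto $\tau h_1^2 h_5^2$.
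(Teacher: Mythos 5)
Your proposal is correct and follows essentially the same route as the paper: detect $h_1 h_6$ via the Toda bracket $\langle \eta, 2, \theta_5 \rangle$, shuffle to get $2\langle \eta, 2, \theta_5 \rangle = \langle 2, \eta, 2 \rangle \theta_5 = \tau\eta^2\theta_5$, and observe that this is detected by $\tau h_1^2 h_5^2$. The only difference is that you spell out the final detection step and the verification of condition (3), which the paper leaves implicit.
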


\begin{proof}
Table \ref{tab:Toda} shows that 
$h_1 h_6$ detects $\langle \eta, 2, \theta_5 \rangle$.
Now shuffle to obtain
\[
2 \langle \eta, 2, \theta_5 \rangle =
\langle 2, \eta, 2 \rangle \theta_5 = \tau \eta^2 \theta_5.
\]
\end{proof}

\begin{lemma}
\label{lem:2-D1h3^2}
\revdeg{66, 6, 36}
There is no hidden $2$ extension on $\D_1 h_3^2$.
\end{lemma}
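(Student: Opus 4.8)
The plan is to combine the identification of $\D_1 h_3^2$ as a symmetric fourfold Toda bracket with a shuffle that pushes a factor of $2$ through that bracket. By Lemma \ref{lem:eta^2,theta4,eta^2,theta4}, the element $\D_1 h_3^2$ detects the Toda bracket $\langle \eta^2, \theta_4, \eta^2, \theta_4 \rangle$, so let $\alpha$ be an element of this bracket that is detected by $\D_1 h_3^2$. By Definition \ref{defn:hidden}, to show that there is no hidden $2$ extension on $\D_1 h_3^2$ it suffices to show that $2\alpha$ is zero, or at least that $2\alpha$ is not detected in strictly higher Adams filtration, and then to verify that no other element of the coset $\{\D_1 h_3^2\}$ can carry such an extension.

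First I would reverse the bracket via Theorem \ref{thm:Toda-4fold}(2), writing $\langle \eta^2, \theta_4, \eta^2, \theta_4 \rangle = \langle \theta_4, \eta^2, \theta_4, \eta^2 \rangle$, and then apply the shuffle of Theorem \ref{thm:Toda-4fold}(5) to obtain
\[
2\alpha \in 2\langle \theta_4, \eta^2, \theta_4, \eta^2 \rangle = \langle 2, \theta_4, \eta^2, \theta_4 \rangle \eta^2 .
\]
The point of reversing first is that the subbracket $\langle \theta_4, \eta^2, \theta_4 \rangle$ is \emph{strictly} zero by Lemma \ref{lem:theta4,eta^2,theta4}, so the pathologies of fourfold brackets discussed after Theorem \ref{thm:Toda-3fold} do not occur; one still checks, as part of the setup, that the other subbracket $\langle 2, \theta_4, \eta^2 \rangle$ contains zero, which is needed for $\langle 2, \theta_4, \eta^2, \theta_4 \rangle$ to be defined. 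Since $\theta_4 \eta^2 = 0$, absorbing $\eta^2$ into the last entry gives
\[
\langle 2, \theta_4, \eta^2, \theta_4 \rangle \eta^2 \subseteq \langle 2, \theta_4, \eta^2, \theta_4 \eta^2 \rangle = \langle 2, \theta_4, \eta^2, 0 \rangle,
\]
a coset that contains zero. Thus $2\alpha$ lies in the indeterminacy of this degenerate fourfold bracket, which is the key reduction.

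The main obstacle is then controlling that indeterminacy and ruling out the candidate targets for a hidden $2$ extension in the $66$-stem. The indeterminacy is spanned by threefold brackets of the form $\langle 2, \theta_4, \epsilon \rangle$, $\langle 2, \epsilon, 0 \rangle$, and $\langle \epsilon, \eta^2, 0 \rangle$ for $\epsilon$ of the appropriate degree, that is, by products of $\theta_4$ or $\eta^2$ with middle-dimensional classes in $\pi_{35,*}$ and $\pi_{33,*}$ together with $2$-divisible elements. I would evaluate these using the machine-computed $E_\infty$-page in stem $66$, the relation $\theta_4^2 = 0$ \cite{Xu16} to kill products in which $\theta_4$ appears twice, and the symmetric formula of Corollary \ref{cor:2-symmetric} to evaluate the brackets built from $2$, checking that every resulting element is detected in Adams filtration no higher than that of $h_0 \cdot \D_1 h_3^2$ and hence cannot be a hidden-extension target. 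I expect the genuine difficulty to reside entirely in this bookkeeping, since the bracket shuffles above are formal; once the handful of candidate targets in the $66$-stem are eliminated, Definition \ref{defn:hidden} yields the conclusion.
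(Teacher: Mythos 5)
There is a genuine gap, and it sits in the step you call ``the key reduction.'' After the shuffle $2\langle \theta_4, \eta^2, \theta_4, \eta^2 \rangle = \langle 2, \theta_4, \eta^2, \theta_4 \rangle \eta^2$, you absorb $\eta^2$ into the last slot to land in $\langle 2, \theta_4, \eta^2, 0 \rangle$. But a bracket with a zero entry equals its indeterminacy, and by the description following Theorem \ref{thm:Toda-3fold} that indeterminacy is spanned by the sets $\langle 2, \theta_4, \epsilon \rangle$, $\langle 2, \epsilon, 0 \rangle$, and $\langle \epsilon, \eta^2, 0 \rangle$. The middle family alone gives $\langle 2, \epsilon, 0 \rangle = 2\,\pi_{66,36}$, i.e.\ \emph{every} multiple of $2$ in the target group. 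Since $\pi_{66}$ contains elements of order $8$, this set is nonzero, and the conclusion ``$2\alpha$ lies in $\langle 2, \theta_4, \eta^2, 0\rangle$'' is vacuously true for any $\alpha$ whatsoever: you are trying to show that $2\alpha$ is not a nonzero element detected in high filtration, and you have placed $2\alpha$ in a set that contains all such elements. No amount of bookkeeping in the $66$-stem can rescue this; the information is destroyed the moment the last entry becomes $0$. (These are not ``products,'' either --- the family $\langle 2, \theta_4, \epsilon\rangle$ for $\epsilon \in \pi_{35,*}$ consists of genuine threefold brackets.) A secondary gap: the shuffle of Theorem \ref{thm:Toda-4fold}(5) requires $\langle 2, \theta_4, \eta^2, \theta_4 \rangle$ to be defined, hence $\langle 2, \theta_4, \eta^2 \rangle \ni 0$, which you defer; this permutation is \emph{not} the bracket $\langle 2, \eta^2, \theta_4 \rangle = \nu\theta_4$ of Table \ref{tab:Toda}, and the cyclic relation suggests it contains $\eta\eta_5$-type elements, so containing zero is far from automatic.

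The fix is to collapse entries from the \emph{left} rather than the right, which is what the paper does: by Theorem \ref{thm:Toda-4fold}(\ref{part:a<b,c,d,e>}), $2\langle \eta^2, \theta_4, \eta^2, \theta_4 \rangle \subseteq \langle \langle 2, \eta^2, \theta_4 \rangle, \eta^2, \theta_4 \rangle$, and the inner bracket is the single element $\nu\theta_4$ with zero indeterminacy. One is then left with the threefold bracket $\langle \nu\theta_4, \eta^2, \theta_4 \rangle$, which contains $\nu\langle \theta_4, \eta^2, \theta_4\rangle = 0$ by Lemma \ref{lem:theta4,eta^2,theta4}, and whose indeterminacy consists of honest products, the only possibly nonzero one being $\theta_4\{t\}$; that product is killed by writing $\{t\} = \langle \nu, \eta, \eta\theta_4\rangle$ and shuffling into $\pi_{63,34}$. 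Your instinct to exploit Lemmas \ref{lem:eta^2,theta4,eta^2,theta4} and \ref{lem:theta4,eta^2,theta4} is the right one, but the strict vanishing of $\langle \theta_4, \eta^2, \theta_4\rangle$ must enter as the \emph{value} of a surviving subbracket, not as a license to degenerate the fourfold bracket.
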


\begin{proof}
Table \ref{tab:Toda} shows that $\D_1 h_3^2$ detects the
Toda bracket $\langle \eta^2, \theta_4, \eta^2, \theta_4 \rangle$.
We have
\[
2 \langle \eta^2, \theta_4, \eta^2, \theta_4 \rangle \subseteq
\langle \langle 2, \eta^2, \theta_4 \rangle, \eta^2, \theta_4 \rangle.
\]
Table \ref{tab:Toda} shows that
\[
\nu \theta_4 = \langle 2, \eta, \eta \theta_4 \rangle =
\langle 2, \eta^2, \theta_4 \rangle,
\]
so we must compute $\langle \nu \theta_4, \eta^2, \theta_4 \rangle$.

This bracket contains $\nu \langle \theta_4, \eta^2, \theta_4 \rangle$,
which equals zero by Lemma \ref{lem:theta4,eta^2,theta4}.
Therefore, we only need to compute the indeterminacy of 
$\langle \nu \theta_4, \eta^2, \theta_4 \rangle$.

The only possible non-zero element in the indeterminacy is the 
product $\theta_4 \{ t \}$.
Table \ref{tab:Toda} shows that 
$\{t \} = \langle \nu, \eta, \eta \theta_4 \rangle$.  Now
\[
\theta_4 \{ t\} = \langle \nu, \eta, \eta \theta_4 \rangle \theta_4 =
\nu \langle \eta, \eta \theta_4, \theta_4 \rangle.
\]
This last expression is well-defined because
$\theta_4^2$ is zero \cite{Xu16}, and it 
must be zero because 
$\pi_{63,34}$ consists entirely of multiples of $\eta$.
\end{proof}

\begin{lemma}
\label{lem:2-h0Q3}
\revdeg{67, 6, 36}
There is no hidden $2$ extension on $h_0 Q_3 + h_2^2 D_3$.
\end{lemma}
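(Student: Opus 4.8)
The plan is to exhibit a homotopy class $\alpha$ detected by $h_0 Q_3 + h_2^2 D_3$ and to prove that $2\alpha$ is not detected in Adams filtration strictly higher than $h_0^2 Q_3$; by condition (3) of Definition \ref{defn:hidden} this is exactly the assertion that there is no hidden $2$ extension on $h_0 Q_3 + h_2^2 D_3$. First I would record the combinatorial picture in the $67$-stem of weight $36$. Since $h_0 h_2 = 0$, we have $h_0 \cdot (h_0 Q_3 + h_2^2 D_3) = h_0^2 Q_3$, so the only possible targets of a hidden extension are the finitely many $E_\infty$-classes in this degree lying in filtration strictly above $h_0^2 Q_3$, which I would read off from the chart in \cite{Isaksen14a}.

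The key idea is to constrain $2\alpha$ by the $\nu$-multiplication already computed in the proof of Lemma \ref{lem:d4-h1c3}. That proof produces $\alpha \in \pi_{67,36}$, detected by $h_0 Q_3 + h_2^2 D_3$ (the detecting class $h_0 Q_3 + h_0 n_1$ appearing there agrees with $h_0 Q_3 + h_2^2 D_3$ on the $E_\infty$-page), with $\tau\nu\alpha = (\eta\sigma + \epsilon)\theta_5$. Multiplying by $2$ and using $2\eta = 0$ and $2\epsilon = 0$ gives $\tau\nu(2\alpha) = 2(\eta\sigma+\epsilon)\theta_5 = 0$. Thus $2\alpha$ lies in the kernel of $\tau\nu \colon \pi_{67,36}\to\pi_{70,37}$. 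Any candidate target that supports a nonzero (possibly hidden) $\nu$ extension whose value survives multiplication by $\tau$ is therefore excluded, and I would eliminate these candidates directly from Table \ref{tab:nu-extn}.

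For whatever targets remain after this filter, I would appeal to naturality along the inclusion of the bottom cell $S^{0,0}\to C\tau$ together with the long exact sequence (\ref{eq:LES-Ctau}). A hidden $2$ extension on $h_0 Q_3 + h_2^2 D_3$ in the sphere would push forward to a $2$ extension on the image of this class in $\pi_{67,36}C\tau$, whose value is pinned down by Tables \ref{tab:hid-incl} and \ref{tab:hid-proj}; inspecting the Adams spectral sequence for $C\tau$ shows no such $2$ extension exists. The hard part will be the bookkeeping at both ends: algebraically, I must verify that $h_0 Q_3 + h_0 n_1$ and $h_0 Q_3 + h_2^2 D_3$ represent the same $E_\infty$-class, so that the relation $\tau\nu\alpha = (\eta\sigma+\epsilon)\theta_5$ applies to the intended source; and homotopically, I must ensure that the $\ker(\tau\nu)$ filter together with the $C\tau$ comparison excludes every enumerated higher-filtration target, since a single surviving possibility would leave the extension undecided. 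I expect the decisive input to be the vanishing $2\theta_5 = 0$ \cite{Xu16}, which underlies both the collapse of $\tau\nu(2\alpha)$ and the absence of the relevant $2$ extensions in $C\tau$.
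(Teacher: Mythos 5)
Your plan has a genuine gap at exactly the step that matters. In the $67$-stem of weight $36$ there are only two possible targets in filtration above $h_0^2 Q_3$, namely $h_2^2 A'$ and $\tau^2 \D h_2^2 e_0 g$, and neither of your two filters excludes the second one. The class $\tau^2 \D h_2^2 e_0 g$ is divisible by $\tau$, so it maps to zero under inclusion of the bottom cell into $C\tau$; a hidden $2$ extension with this target would therefore push forward to the \emph{zero} extension in $\pi_{67,36} C\tau$, and the $C\tau$ comparison gives no information. Your $\ker(\tau\nu)$ filter also does not apply, since $\tau^2 \D h_2^2 e_0 g$ supports no $\nu$ extension (Table \ref{tab:nu-extn} lists none in this degree), so nothing forces $\tau\nu$ to be nonzero on the coset it detects. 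The paper disposes of this candidate by a different and essentially free observation: Table \ref{tab:eta-extn} shows that $\D h_2^2 e_0 g$ supports a hidden $\eta$ extension (to $\tau e_0^4$), hence so does $\tau^2 \D h_2^2 e_0 g$; since $2\eta = 0$, no class supporting a nonzero $\eta$ multiplication can equal $2\beta$. That is the missing idea. (The first candidate, $h_2^2 A'$, is handled by comparison to $C\tau$ exactly as you propose, so that half of your argument is fine.)

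A secondary issue is the identification of $h_0 Q_3 + h_0 n_1$ with $h_0 Q_3 + h_2^2 D_3$, which you flag but do not resolve; your $\tau\nu(2\alpha)=0$ computation is anchored to the former class, so without that identification the filter is not even aimed at the right coset. But even granting it, the argument does not close for the reason above: you would still need a separate mechanism for the $\tau$-divisible target, and the natural one is the $2\eta=0$ constraint rather than the $\tau\nu$ constraint.
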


\begin{proof}
By comparison to the homotopy of $C\tau$,
there is no hidden extension with value $h_2^2 A'$.
Table \ref{tab:eta-extn} shows that
$\tau^2 \D h_2^2 e_0 g$ supports a hidden $\eta$ extension.
Therefore, it cannot be the target of a $2$ extension.
\end{proof}

\begin{lemma}
\label{lem:2-h3A'}
\revdeg{68, 7, 36}
There is no hidden $2$ extension on $h_3 A'$.
\end{lemma}

\begin{proof}
Table \ref{tab:Toda} shows that $h_3 A'$ detects the Toda bracket
$\langle \sigma, \kappa, \tau \eta \theta_{4.5} \rangle$.
Shuffle to obtain
\[
\langle \sigma, \kappa, \tau \eta \theta_{4.5} \rangle 2 =
\sigma \langle \kappa, \tau \eta \theta_{4.5}, 2 \rangle.
\]
The bracket $\langle \kappa, \tau \eta \theta_{4.5}, 2 \rangle$
is zero because it is contained in $\pi_{61,32} = 0$.
\end{proof}

\begin{lemma}
\label{lem:2-p'}
\revdeg{69, 4, 36}
There is no hidden $2$ extension on $p'$.
\end{lemma}

\begin{proof}
Table \ref{tab:misc-extn} shows that $p'$
detects the product $\sigma \theta_5$, and 
$2 \theta_5$ is already known to be zero
\cite{Xu16}.
\end{proof}

\begin{lemma}
\label{lem:2-th1D3'}
\revdeg{70, 9, 37}
There is no hidden $2$ extension on $\tau h_1 D_3'$.
\end{lemma}

\begin{proof}
Table \ref{tab:Toda} shows that 
$\tau h_1 D'_3$ detects the Toda bracket 
$\langle \eta, \nu, \tau \theta_{4.5} \kappabar \rangle$.
Now shuffle to obtain
\[
2 \langle \eta, \nu, \tau \theta_{4.5} \kappabar \rangle =
\langle 2, \eta, \nu \rangle \tau \theta_{4.5} \kappabar,
\]
which equals zero because $\langle 2, \eta, \nu \rangle$
is contained in in $\pi_{5,3} = 0$.
\end{proof}

\begin{lemma}
\label{lem:2-h1h3h6}
\revdeg{71, 3, 37}
There is no hidden $2$ extension on $h_1 h_3 h_6$.
\end{lemma}

\begin{proof}
Table \ref{tab:Toda} shows that
$h_1 h_6$ detects the Toda bracket $\langle \eta, 2, \theta_5 \rangle$.
Let $\alpha$ be an element of this bracket.
Then $h_1 h_3 h_6$ detects $\sigma \alpha$, and
\[
2 \sigma \alpha = 2 \sigma \langle \eta, 2, \theta_5 \rangle =
\sigma \langle 2, \eta, 2 \rangle \theta_5 = 
\tau \eta^2 \sigma \theta_5.
\]

Table \ref{tab:Toda} also shows that $h_6 c_0$ detects 
the Toda bracket $\langle \epsilon, 2, \theta_5 \rangle$.
Let $\beta$ be an element of this bracket.
As in the proof of Lemma \ref{lem:2-h6c0}, we compute
that $2 \beta$ equals $\tau \eta \epsilon \theta_5$.

Now consider the element $\sigma \alpha + \beta$, which is also
detected by $h_1 h_3 h_6$.  Then
\[
2(\sigma \alpha + \beta) = 
\tau \eta^2 \sigma \theta_5 + \tau \eta \epsilon \theta_5 =
\tau \nu^3 \theta_5,
\]
using Toda's relation $\eta^2 \sigma + \nu^3 = \eta \epsilon$
\cite{Toda62}.

Table \ref{tab:nu-extn} shows that there is a hidden
$\nu$ extension from $h_2 h_5^2$ to $\tau h_1 Q_3$.
Therefore, $\tau h_1 Q_3$ detects $\nu^2 \theta_5$.

This does not yet imply that $\nu^3 \theta_5$ is zero,
because $\nu^2 \theta_5 + \eta \{\tau Q_3 + \tau n_1\}$
might be detected $h_3 A'$ or $P h_2 h_5 j$ in higher filtration.
However, $h_3 A'$ does not support a hidden
$\nu$ extension by Lemma \ref{lem:nu-h3A'}.
Also, Table \ref{tab:unit-mmf} shows that
$P h_2 h_5 j$ maps non-trivially to $\tmf$,
while $\nu^2 \theta_5 + \eta \{\tau Q_3 + \tau n_1\}$
maps to zero.
This is enough to conclude that $\nu^3 \theta_5$ is zero.

We have now shown that $2(\sigma \alpha + \beta)$ is zero
in $\pi_{71,37}$.  Since $h_1 h_3 h_6$
detects $\sigma \alpha + \beta$, it follows that 
$h_1 h_3 h_6$ does not support a hidden $2$ extension.
\end{proof}

\begin{lemma}
\label{lem:2-h6c0}
\revdeg{71, 4, 37}
There is a hidden $2$ extension from $h_6 c_0$ to $\tau h_1^2 p'$.
\end{lemma}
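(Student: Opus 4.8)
The plan is to realize the target $\tau h_1^2 p'$ as $2\beta$ for a suitable $\beta$ detected by $h_6 c_0$, by means of a Toda bracket shuffle. Table~\ref{tab:Toda} shows that $h_6 c_0$ detects the Toda bracket $\langle \epsilon, 2, \theta_5 \rangle$; let $\beta$ be an element of this bracket. Since $2\epsilon = 0$ and $2\theta_5 = 0$, the shuffle
\[
2 \beta = 2 \langle \epsilon, 2, \theta_5 \rangle = \langle 2, \epsilon, 2 \rangle \theta_5
\]
is legitimate and free of interfering indeterminacy, and Corollary~\ref{cor:2-symmetric}, applied to $\alpha = 2$ with $2\epsilon = 0$, identifies $\langle 2, \epsilon, 2 \rangle$ with $\tau \eta \epsilon$. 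Hence $2\beta = \tau \eta \epsilon \theta_5$. This is precisely the equation borrowed in the proof of Lemma~\ref{lem:2-h1h3h6}, so I would isolate it as the first, logically self-contained, step.

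It then remains to identify $\tau \eta \epsilon \theta_5$ as the element detected by $\tau h_1^2 p'$, and in particular to see that it is nonzero. For this I would invoke Toda's relation $\eta \epsilon = \eta^2 \sigma + \nu^3$ \cite{Toda62}, which gives $\eta \epsilon \theta_5 = \eta^2 \sigma \theta_5 + \nu^3 \theta_5$. The product $\sigma \theta_5$ is detected by $p'$ (Lemma~\ref{lem:2-p'} and Table~\ref{tab:misc-extn}), so $\eta^2 \sigma \theta_5$ is detected by $h_1^2 p'$, provided $h_1^2 p'$ is nonzero in the $E_\infty$-page. For the second term, Lemma~\ref{lem:2-h1h3h6} establishes that $\nu^3 \theta_5 = 0$; alternatively, that lemma shows $\nu^2 \theta_5$ is detected by $\tau h_1 Q_3$, and since $h_1 h_2 = 0$ the product $\nu^3 \theta_5$ is either zero or detected strictly above the filtration of $h_1^2 p'$. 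In either case the leading term of $\eta \epsilon \theta_5$ is $h_1^2 p'$, whence $\tau \eta \epsilon \theta_5$ is detected by $\tau h_1^2 p'$, and the hidden $2$ extension follows.

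The main obstacle is this second step. First, I must verify the purely algebraic facts near the $71$-stem: that $h_1^2 p' \neq 0$, that $\tau h_1^2 p'$ survives to $E_\infty$ and is read off correctly, and a precise filtration count confirming that $\tau \nu^3 \theta_5$ cannot contaminate the $\tau h_1^2 p'$ term; these are chart inspections. Second, there is a genuine sequencing concern, since Lemma~\ref{lem:2-h1h3h6} cites the proof of this lemma for the equation $2\beta = \tau \eta \epsilon \theta_5$, while my identification of that element relies on $\nu^3 \theta_5 = 0$ coming out of Lemma~\ref{lem:2-h1h3h6}. I would resolve this by keeping the shuffle of the first paragraph independent of any statement about $\nu^3\theta_5$, so that only the final detection assertion depends on Lemma~\ref{lem:2-h1h3h6}; this exhibits the two lemmas as a valid acyclic chain of dependencies rather than a true circularity.
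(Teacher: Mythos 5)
Your proposal is correct, and its first half (the shuffle $2\langle \epsilon, 2, \theta_5\rangle = \langle 2,\epsilon,2\rangle\theta_5 = \tau\eta\epsilon\theta_5$ via Corollary~\ref{cor:2-symmetric}) is exactly the paper's argument. Where you diverge is in identifying $\tau\eta\epsilon\theta_5$: the paper finishes in one line by observing the $\Ext$ relation $h_1^2 p' = h_1 h_5^2 c_0$, so that $\eta\epsilon\theta_5$ is a \emph{non-hidden} product of classes detected by $h_1$, $c_0$, and $h_5^2$, and is therefore detected by $h_1^2 p'$ as soon as that element is nonzero on the $E_\infty$-page. You instead route through Toda's relation $\eta\epsilon = \eta^2\sigma + \nu^3$, the hidden $\sigma$ extension from $h_5^2$ to $p'$, and the vanishing (or high filtration) of $\nu^3\theta_5$. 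This works --- your filtration count for $\nu^3\theta_5$ is sound, and your resolution of the apparent circularity with Lemma~\ref{lem:2-h1h3h6} is exactly right, since that lemma only imports the shuffle equation $2\beta = \tau\eta\epsilon\theta_5$ and not the detection statement --- but it trades a single multiplicative relation in $\Ext$ for a hidden extension, a Toda relation, and a filtration argument. The paper's finish is the one to prefer: it needs only the chart facts that $h_1 h_5^2 c_0 = h_1^2 p' \neq 0$ survives, and it makes the dependency question moot.
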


\begin{proof}
Table \ref{tab:Toda} shows that $h_6 c_0$ detects the Toda bracket
$\langle \epsilon, 2, \theta_5 \rangle$.
Now shuffle to obtain
\[
2 \langle \epsilon, 2, \theta_5 \rangle = 
\langle 2, \epsilon, 2 \rangle \theta_5 = 
\tau \eta \epsilon \theta_5.
\]
Finally, $\tau \eta \epsilon \theta_5$ is detected
by $\tau h_1^2 p'$ because of the relation
$h_1^2 p' = h_1 h_5^2 c_0$.
\end{proof}

\begin{lemma}
\label{lem:2-th1p1}
\revdeg{71, 5, 37}
There is no hidden $2$ extension on $\tau h_1 p_1$.
\end{lemma}

\begin{proof}
Lemma \ref{lem:eta,nu,t^2h2C'} shows that $\tau h_1 p_1$
detects $\langle \eta, \nu, \alpha \rangle$ for some
$\alpha$ detected by $\tau^2 h_2 C'$.
Now shuffle to obtain
\[
2 \langle \eta, \nu, \alpha \rangle = 
\langle 2, \eta, \nu \rangle \alpha,
\]
which is zero because $\langle 2, \eta, \nu \rangle$ is contained
in $\pi_{5,3} = 0$.
\end{proof}

\begin{lemma}
\label{lem:2-h1^2h3H1}
\revdeg{71, 8, 39}
There is a hidden $2$ extension from $h_2^3 H_1$ to $\tau M h_2^2 g$.
\end{lemma}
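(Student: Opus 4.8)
The starting point is the relation $h_2^3 = h_1^2 h_3$, which holds motivically because $h_2^3$ lies in the Chow‐degree‐zero subalgebra of Theorem \ref{thm:Chow-zero}; thus the source may be rewritten as $h_1^2 h_3 H_1$. Let $\gamma$ be a homotopy class detected by $h_2^3 H_1$. The first point to settle is why $2\gamma$ can be nonzero at all. The classes that would display $\gamma$ as divisible by $\eta$ or $\nu$, namely $h_3 H_1$, $h_1 h_3 H_1$, and $h_2^2 H_1$, do not survive to detect homotopy classes: Lemma \ref{lem:d6-h2^2H1} gives $d_6(h_2^2 H_1) = M c_0 d_0$, and the other two are handled by inspection. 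Consequently $\gamma$ is not forced to be a multiple of $\eta$ or $\nu$, so the vanishing relations $2\eta = 0$ and $2\nu = 0$ do not apply and a genuine hidden $2$ extension is possible. As a consistency check, from $\tau h_2^3 H_1 = h_1 \cdot \tau h_1 h_3 H_1$ the class $\tau\gamma$ agrees modulo higher filtration with $\eta$ times the class detected by $\tau h_1 h_3 H_1$, whence $\tau \cdot 2\gamma = 0$; this matches the fact that the proposed target is $\tau$-torsion, since $\tau^2 M h_2^2 g = h_2 \cdot \tau^2 M h_2 g = 0$ in $E_\infty$ by Lemma \ref{lem:d3-tD3'}.

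The positive argument splits into showing $2\gamma \neq 0$ and then identifying its value. For the identification I would use the Mahowald operator structure: by definition $M h_2^2 g = \langle h_2^2 g, h_0^3, g_2 \rangle$, so by the Moss Convergence Theorem \ref{thm:Moss} the target $\tau M h_2^2 g$ detects an element of $\langle \tau \nu^2 \kappabar, 8, \kappabar_2 \rangle$, using that $h_2^2 g$ detects $\nu^2\kappabar$ and $g_2$ detects $\kappabar_2$. In stem $71$ and weight $39$, above the filtration of $h_2^3 H_1$, the element $\tau M h_2^2 g$ is the only class surviving to $E_\infty$ (to be confirmed by inspection, with comparison to $\mmf$ and to $C\tau$ ruling out competitors), so once $2\gamma \neq 0$ is known the value is forced. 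To establish $2\gamma \neq 0$, the natural route mirrors Lemma \ref{lem:2-th1H1}: realize a suitable multiple of $\gamma$ inside a symmetric bracket and apply Corollary \ref{cor:2-symmetric}, which gives $\langle 2, \beta, 2\rangle \ni \tau\eta\beta$. Concretely, one would combine the $\nu$-extension of Lemma \ref{lem:nu-th1H1} with target $\tau^2 M h_1 g$ and the hidden $2$ extension on $\tau h_1 h_3 H_1$ of Lemma \ref{lem:2-th1H1}(3), together with an $\eta$-extension off $\gamma$, to produce a nonzero class that is not $2$-divisible, contradicting $2\gamma = 0$ via the shuffle
\[
\tau \eta^2 \gamma = \langle 2, \eta, 2 \rangle \gamma = 2 \langle \eta, 2, \gamma \rangle .
\]

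The main obstacle is precisely the nonvanishing of $2\gamma$. Because $\gamma$ is neither $\eta$- nor $\nu$-divisible, the extension cannot be obtained from a lower known extension by a single multiplication (every such product, e.g. $h_2^2 \cdot \tau h_1 h_3 H_1$, vanishes because $h_1 h_2 = 0$), so the argument must pass through Toda brackets. The delicate bookkeeping concerns the $h_0$-tower on $g_2$: multiplication by $2$ must climb this tower from the level of $h_0^2 g_2$ to $h_0^3 g_2$, and the precise order of $\kappabar_2$ together with the exact Adams filtration of $H_1$ govern both which brackets are defined and the resulting indeterminacy. A secondary obstacle is excluding that $2\gamma$ is detected in still higher filtration (or is zero); here comparison to $C\tau$ is decisive, since the $\tau$-multiple target $\tau M h_2^2 g$ is invisible there, confirming that the extension is genuinely hidden and must be resolved at the level of the sphere.
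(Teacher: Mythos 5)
There is a genuine gap, and it sits exactly where you locate the difficulty: proving $2\gamma \neq 0$. Your proposed mechanism is to mirror Lemma \ref{lem:2-th1H1} and run the shuffle $\tau \eta^2 \gamma = \langle 2, \eta, 2 \rangle \gamma = 2 \langle \eta, 2, \gamma \rangle$, which only yields a contradiction with $2\gamma = 0$ if $\tau \eta^2 \gamma$ is nonzero. But $h_2^3 H_1$ supports no hidden $\eta$ extension at all: it detects the Toda bracket $\langle \nu, \epsilon, \kappa \theta_{4.5} \rangle$, and $\eta \langle \nu, \epsilon, \kappa \theta_{4.5} \rangle = \langle \eta, \nu, \epsilon \rangle \kappa \theta_{4.5} = 0$ since $\langle \eta, \nu, \epsilon \rangle \subseteq \pi_{13,8} = 0$ (this is Lemma \ref{lem:eta-h1^2h3H1}). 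So the ``$\eta$-extension off $\gamma$'' that your argument needs does not exist, $\tau\eta^2\gamma = 0$, and the symmetric-bracket route proves nothing. Your consistency check in the first paragraph already hints at the problem: if $\tau\gamma$ is an $\eta$-multiple, then $2\tau\gamma = 0$ automatically, which is compatible with $2\gamma = 0$ as well as with $2\gamma$ being $\tau$-torsion. Your identification step via the Mahowald operator and Moss convergence is also not load-bearing; it does not rule out $2\gamma = 0$ or pin down the filtration of $2\gamma$.

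The multiplication that actually works is by $\nu$, not $\eta$. Table \ref{tab:nu-extn} records a hidden $\nu$ extension from $h_2^3 H_1$ to $h_3 C''$, and Table \ref{tab:2-extn} records a hidden $2$ extension from $h_3 C''$ to $M h_1 d_0^2$; hence $2\nu\gamma = \nu \cdot 2\gamma$ is nonzero, so $2\gamma \neq 0$. Since Table \ref{tab:nu-extn} also shows the hidden $\nu$ extension from $\tau M h_2^2 g$ to $M h_1 d_0^2$, the element detecting $2\gamma$ must be $\tau M h_2^2 g$, which is the paper's three-line argument. Note in passing that your opening claim that $\gamma$ is not $\nu$-divisible is in tension with this: the point is precisely that $h_2^3 H_1$ interacts nontrivially with $\nu$-multiplication on the target side, and that is what transports the known $2$ extension on $h_3 C''$ back to the source.
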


\begin{proof}
Table \ref{tab:nu-extn} shows that there are hidden $\nu$ extensions
from $h_2^3 H_1$ to $h_3 C''$, and from
$\tau M h_2^2 g$ to $M h_1 d_0^2$.
Table \ref{tab:2-extn} shows that there is also a hidden
$2$ extension from $h_3 C''$ to $M h_1 d_0^2$.
The only possibility is that there must also be a hidden $2$
extension on $h_1^2 h_3 H_1$.
\end{proof}

\begin{lemma}
\label{lem:2-Ph1h6}
\revdeg{72, 6, 37}
There is no hidden $2$ extension on $P h_1 h_6$.
\end{lemma}

\begin{proof}
Table \ref{tab:Toda} shows that $P h_1 h_6$ detects the Toda bracket
$\langle \mu_9, 2, \theta_5 \rangle$.
Shuffle to obtain
\[
2 \langle \mu_9, 2, \theta_5 \rangle =
\langle 2, \mu_9, 2 \rangle \theta_5 =
\tau \eta \mu_9 \theta_5.
\]
Table \ref{tab:Toda} also shows that $\mu_9$ is contained in the
Toda bracket $\langle \eta, 2, 8 \sigma \rangle$.
Shuffle again to obtain
\[
\tau \eta \mu_9 \theta_5 =
\langle \eta, 2, 8 \sigma \rangle \tau \eta \theta_5 =
\tau \eta^2 \langle 2, 8 \sigma, \theta_5 \rangle.
\]
Table \ref{tab:Toda} shows that
$h_0^3 h_3 h_6$ detects $\langle 2, 8 \sigma, \theta_5 \rangle$.

By inspection, the product $\eta^2 \{h_0^3 h_3 h_6\}$
can only be detected by $\D^2 h_1 h_4 c_0$.
However, this cannot occur by comparison to $C\tau$.
Therefore,
$\eta^2 \{h_0^3 h_3 h_6\}$, and also
$\tau \eta^2 \{h_0^3 h_3 h_6\}$, must be zero.
\end{proof}

\begin{lemma}
\label{lem:2-h4Q2+h3^2D2}
\revdeg{72, 8, 38}
There is no hidden $2$ extension on $h_4 Q_2 + h_3^2 D_2$.
\end{lemma}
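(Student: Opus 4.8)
The plan is to reduce the statement to a fourfold Toda bracket computation, leveraging the identification already proved in Lemma \ref{lem:sigma^2,2,T,tkappabar}, namely that $h_4 Q_2 + h_3^2 D_2$ detects the bracket $\langle \sigma^2, 2, \{t\}, \tau\kappabar \rangle$. Fix a homotopy class $\xi$ in this bracket that is detected by $h_4 Q_2 + h_3^2 D_2$. Since $h_0 \cdot (h_4 Q_2 + h_3^2 D_2)$ is zero in $E_\infty$, any hidden $2$ extension on this element would have $2\xi$ detected in Adams filtration strictly greater than $7$; so it suffices to show that $2\xi$ either vanishes or is detected in filtration at most $7$, and then to dispatch the ambiguity in the choice of $\xi$.

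First I would shuffle the factor of $2$ into the bracket. Because $2\sigma^2 = 0$ in $\pi_{14,8}$, Theorem \ref{thm:Toda-4fold}(3) gives
\[
2\langle \sigma^2, 2, \{t\}, \tau\kappabar \rangle \subseteq \langle 2\sigma^2, 2, \{t\}, \tau\kappabar \rangle = \langle 0, 2, \{t\}, \tau\kappabar \rangle.
\]
This degenerate bracket is well-defined in the sense of Section \ref{sctn:assoc-graded}, since its subbracket $\langle 2, \{t\}, \tau\kappabar \rangle$ is strictly zero (recorded in the proof of Lemma \ref{lem:sigma^2,2,T,tkappabar}). A bracket with a leading $0$ contains $0$ and equals its own indeterminacy, so $2\xi$ lies in the indeterminacy of $\langle 0, 2, \{t\}, \tau\kappabar \rangle$. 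One could alternatively try the shuffle $2\langle \sigma^2, 2, \{t\}, \tau\kappabar \rangle = \langle 2, \sigma^2, 2, \{t\} \rangle \tau\kappabar$ coming from Theorem \ref{thm:Toda-4fold}(5), using that $\langle 2, \sigma^2, 2 \rangle$ contains $\tau\eta\sigma^2 = 0$ by Corollary \ref{cor:2-symmetric} and $\eta\sigma^2 = 0$ (Example \ref{ex:14,8-hidden-defn}); but neither $\langle 2, \sigma^2, 2 \rangle$ nor $\langle \sigma^2, 2, \{t\} \rangle$ is strictly zero, so the fourfold bracket $\langle 2, \sigma^2, 2, \{t\} \rangle$ may fail to be defined in the strict sense, and I would prefer the degenerate-bracket route above.

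The main obstacle is then to control that indeterminacy. By the description of fourfold indeterminacy in Section \ref{sctn:assoc-graded}, the indeterminacy of $\langle 0, 2, \{t\}, \tau\kappabar \rangle$ is the linear span of brackets of the forms $\langle 0, 2, \epsilon \rangle$, $\langle 0, \epsilon, \tau\kappabar \rangle$, and $\langle \epsilon, \{t\}, \tau\kappabar \rangle$, with $\epsilon$ in the appropriate degree. The first two families are readily seen to contribute only low-filtration classes, so the crux is the threefold brackets $\langle \epsilon, \{t\}, \tau\kappabar \rangle$ with $\epsilon \in \pi_{15,8}$, a group generated by the image of $J$. I would bound each of these by shuffling $\tau\kappabar$ against $\{t\}$ and comparing to $\tmf$, exactly as the companion bracket $\langle \eta_4, \{t\}, \tau\kappabar \rangle$ was controlled in Lemma \ref{lem:sigma^2,2,T,tkappabar}. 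The delicate point is that $\pi_{72,38}$ carries $\tmf$-detected classes in high Adams filtration, so the comparison to $\tmf$ must be applied carefully to certify that none of these indeterminacy contributions survives into filtration greater than $7$; this is where I expect the real work to lie.

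Finally I would resolve the coset ambiguity. The classes detected by $h_4 Q_2 + h_3^2 D_2$ differ from $\xi$ by elements of strictly higher Adams filtration in $\pi_{72,38}$, and by inspection of the $E_\infty$-chart I would check that multiplying any such correction term by $2$ also lands too low in filtration to serve as a hidden-extension target, or vanishes. Combining this with the filtration bound on $2\xi$ forces $2\beta$ to avoid every potential target for all $\beta$ detected by $h_4 Q_2 + h_3^2 D_2$, which is precisely the assertion that there is no hidden $2$ extension on $h_4 Q_2 + h_3^2 D_2$.
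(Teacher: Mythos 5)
Your reduction to the fourfold bracket $\langle \sigma^2, 2, \{t\}, \tau\kappabar \rangle$ is the same starting point as the paper, but your shuffle is genuinely different: you absorb the $2$ into the first slot via Theorem \ref{thm:Toda-4fold}(3) to land in the degenerate bracket $\langle 0, 2, \{t\}, \tau\kappabar \rangle$, whereas the paper uses part (\ref{part:a<b,c,d,e>}) to land in $\langle \langle 2, \sigma^2, 2 \rangle, \{t\}, \tau\kappabar \rangle$, then evaluates $\langle 2, \sigma^2, 2 \rangle = \{2k\rho_{15}\}$ by Corollary \ref{cor:2-symmetric} and $\eta\sigma^2 = 0$. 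The payoff of the paper's route is that everything collapses to a single family of threefold brackets $\langle 2k\rho_{15}, \{t\}, \tau\kappabar \rangle$ whose indeterminacy is just $\tau\kappabar\cdot\pi_{52,28}$, visibly detected in Adams filtration at least $12$; after that, only a couple of candidate targets remain, and they are killed by comparison to $C\tau$ and $\mmf$. (Your decision to avoid $\langle 2, \sigma^2, 2, \{t\}\rangle\tau\kappabar$ is well taken and matches Remark \ref{rem:2-h4Q2+h3^2D2}.)

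The problem is that your route leaves the entire content of the proof undone. First, the assertion that the families $\langle 0, 2, \epsilon \rangle$ and $\langle 0, \epsilon, \tau\kappabar \rangle$ are ``readily seen to contribute only low-filtration classes'' is not justified and is doubtful as stated: $\langle 0, 2, \epsilon \rangle$ with $\epsilon \in \pi_{57,30}$ equals its indeterminacy, which contains $\pi_{15,8}\cdot\epsilon$, and products such as $\rho_{15}\cdot\{h_0 h_2 h_5 i\}$ land (if nonzero) in precisely the filtration range $\geq 10$ where a hidden $2$ extension target would live. Second, the family $\langle \epsilon, \{t\}, \tau\kappabar \rangle$ for $\epsilon \in \pi_{15,8}$, which you correctly identify as the crux, is deferred entirely (``this is where I expect the real work to lie''); nothing in your sketch actually bounds these brackets. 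Third, your strategy aims to conclude $2\xi = 0$ outright, but even the paper's sharper filtration bound ($\geq 12$) does not force vanishing by itself --- it still requires a final elimination of the surviving candidates by comparison to $C\tau$ or $\mmf$, and your outline has no analogue of that step. (A minor point: the source $h_4 Q_2 + h_3^2 D_2$ has Adams filtration $8$, not $7$, so the relevant threshold for a hidden extension target is filtration at least $10$.) As written, the proposal is a plan whose hardest steps are acknowledged but not executed, so it does not yet constitute a proof.
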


\begin{proof}
Table \ref{tab:Toda} shows that the element
$h_4 Q_2 + h_3^2 D_2$ detects the Toda bracket
$\langle \sigma^2, 2, \{t\}, \tau \kappabar \rangle$.
Consider the relation
\[
2 \langle \sigma^2, 2, \{t\}, \tau \kappabar \rangle \subseteq
\langle \langle 2, \sigma^2, 2 \rangle, \{t\}, \tau \kappabar \rangle.
\]
Corollary \ref{cor:2-symmetric} shows 
that the Toda bracket $\langle 2, \sigma^2, 2 \rangle$ contains
zero since $\eta \sigma^2$ is zero.  Therefore, it
consists of even multiples of $\rho_{15}$;
let $2 k \rho_{15}$ be any such element in $\pi_{15,8}$.

The Toda bracket
$\langle 2 k \rho_{15}, \{t\}, \tau \kappabar \rangle$
contains $k \rho_{15} \langle 2, \{t\}, \tau \kappabar \rangle$,
which equals zero as discussed in the proof
of Lemma \ref{lem:sigma^2,2,T,tkappabar}.
Moreover, its indeterminacy
is equal to $\tau \kappabar \cdot \pi_{52,28}$,
which is detected in Adams filtration at least 12.
This implies that
$\langle 2 k \rho_{15}, \{t\}, \tau \kappabar \rangle$
is detected in Adams filtration at least 12,
and that
the target of a hidden $2$ extension on $h_4 Q_2 + h_3^2 D_2$
must have Adams filtration at least 12.

The remaining possible targets with Adams filtration at least 12
are eliminated by comparison to $C\tau$ or to $\mmf$.
\end{proof}

\begin{remark}
\label{rem:2-h4Q2+h3^2D2}
The proof of Lemma \ref{lem:2-h4Q2+h3^2D2} might be simplified by
considering the shuffle
\[
2 \langle \sigma^2, 2, \{t\}, \tau \kappabar \rangle =
\langle 2, \sigma^2, 2, \{t\} \rangle \tau \kappabar.
\]
However, the latter four-fold bracket may not exist, since
both three-fold subbrackets have indeterminacy.
See \cite{Isaksen14b} for a discussion of the analogous 
difficulty with Massey products.
\end{remark}

\begin{lemma}
\label{lem:2-h2^2Q3}
\revdeg{73, 7, 40}
There is no hidden $2$ extension on $h_2^2 Q_3$.
\end{lemma}

\begin{proof}
The element $\tau h_2^2 Q_3$ detects
$\nu^2 \{\tau Q_3 + \tau n_1\}$, so it cannot support a 
hidden $2$ extension.
This rules out all possible $2$ extensions on
$h_2^2 Q_3$.
\end{proof}

\begin{lemma}
\label{lem:2-h0h4D2}
\revdeg{73, 8, 38}
There is no hidden $2$ extension on $h_0 h_4 D_2$.
\end{lemma}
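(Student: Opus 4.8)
The plan is to show that $h_0 h_4 D_2$ detects a homotopy class that is a multiple of $\eta$, and is therefore annihilated by $2$, so that no hidden $2$ extension can originate at this filtration. This is the same strategy used in Lemmas \ref{lem:2-h2^2Q3} and \ref{lem:2-p'}, where an element is shown to detect a $\nu$-multiple or a $\sigma\theta_5$ product that is already known to be $2$-torsion.

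First I would recall the hidden $\tau$ extension from $h_1^2 h_6 c_0$ to $h_0 h_4 D_2$, which is recorded in Table \ref{tab:tau-extn} (and appears as a crossing extension in Theorem \ref{thm:tau-extn}). Since $h_1^2 h_6 c_0$ detects $\eta \epsilon \eta_6$, where $\eta_6$ is detected by $h_1 h_6$ and $\epsilon$ by $c_0$, this hidden $\tau$ extension shows that $h_0 h_4 D_2$ detects $\tau \eta \epsilon \eta_6$; this is exactly the fact already used in the proof of Lemma \ref{lem:d5-h4D2}. The next step is the elementary observation that $\tau \eta \epsilon \eta_6 = \eta \cdot \tau \epsilon \eta_6$ is a multiple of $\eta$, so that $2 \cdot \tau \eta \epsilon \eta_6 = \tau \epsilon \eta_6 \cdot 2\eta = 0$ because $2\eta = 0$.

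The only point requiring care is the bookkeeping of Adams filtrations, which is where the definition of a \emph{hidden} extension really enters. The class $\tau \eta \epsilon \eta_6$ is the representative of $h_0 h_4 D_2$ at its own filtration, and it is killed by $2$; any other element of $\{h_0 h_4 D_2\}$ differs from it by a class of strictly higher filtration. Thus if $2\beta$ were nonzero for some $\beta \in \{h_0 h_4 D_2\}$, that product would be carried entirely by the higher-filtration correction term, and the maximality requirement, condition (3) of Definition \ref{defn:hidden}, would attribute the extension to a source of strictly higher Adams filtration than $h_0 h_4 D_2$. I expect this filtration comparison to be the subtle step, but it is routine once $\tau \eta \epsilon \eta_6$ is identified; it also forces $h_0^2 h_4 D_2$ to vanish in the $E_\infty$-page, consistently with the absence of any $2$ extension. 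I would conclude that $h_0 h_4 D_2$ supports no hidden $2$ extension.
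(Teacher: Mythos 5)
Your overall strategy is the same as the paper's: use the hidden $\tau$ extension from $h_1^2 h_6 c_0$ to $h_0 h_4 D_2$ to identify a homotopy class detected by $h_0 h_4 D_2$ that is visibly annihilated by $2$, and then observe that condition (3) of the definition of hidden extensions does the rest. Your final paragraph on the filtration bookkeeping is fine and matches what the paper leaves implicit.

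The gap is in the sentence ``this hidden $\tau$ extension shows that $h_0 h_4 D_2$ detects $\tau\eta\epsilon\eta_6$.'' That inference is exactly what the crossing destroys. Theorem \ref{thm:tau-extn} records that this hidden $\tau$ extension crosses the not-hidden $\tau$ extension on $\tau h_2^2 Q_3$: the coset $\{h_1^2 h_6 c_0\}$ contains, besides $\eta\epsilon\eta_6$, elements such as $\eta\epsilon\eta_6 + \nu^2\{\tau Q_3 + \tau n_1\}$ whose correction term is detected by $\tau h_2^2 Q_3$ in \emph{higher} filtration in the source degree but whose image under $\tau$ is detected by $\tau^2 h_2^2 Q_3$ in \emph{lower} filtration than $h_0 h_4 D_2$ in the target degree. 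So the hidden $\tau$ extension only tells you that $h_0 h_4 D_2$ detects $\tau\alpha$ for \emph{some} $\alpha$ in $\{h_1^2 h_6 c_0\}$; it does not tell you that $\alpha$ may be taken to be $\eta\epsilon\eta_6$ itself, and indeed $\tau\eta\epsilon\eta_6$ might instead be detected by $\tau^2 h_2^2 Q_3$. You cite the crossing but do not draw this consequence. The repair is short and is exactly what the paper does: $h_0 h_4 D_2$ detects either $\tau\eta\epsilon\eta_6$ or $\tau\eta\epsilon\eta_6 + \nu^2\{\tau Q_3 + \tau n_1\}$, and both are annihilated by $2$ (the first as an $\eta$-multiple, the second because $2\nu^2 = 0$ as well, cf.\ the argument for Lemma \ref{lem:2-h2^2Q3}), so the conclusion survives. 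Without this case analysis, though, the key identification in your proof is unjustified as stated.
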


\begin{proof}
Table \ref{tab:tau-extn} shows that there is a hidden
$\tau$ extension from $h_1^2 h_6 c_0$ to $h_0 h_4 D_2$.
Therefore, $h_0 h_4 D_2$ detects either
$\tau \eta \epsilon \eta_6$ or
$\tau \eta \epsilon \eta_6 + \nu^2 \{\tau Q_3 + \tau n_1\}$,
because of the presence of $\tau h_2^2 Q_3$ in higher
filtration.
In either case, $h_0 h_4 D_2$ cannot support a hidden $2$ extension.
\end{proof}

\begin{lemma}
\label{lem:2-h3(tQ3+tn1)}
\revdeg{74, 6, 39}
There is a hidden $2$ extension from $h_3(\tau Q_3+ \tau n_1)$
to $\tau x_{74,8}$.
\end{lemma}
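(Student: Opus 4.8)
The plan is to reduce the statement to a $\sigma$-multiplication and then compute the relevant product of homotopy classes. First I would invoke the defining property of $\tau Q_3$ from the Remark in Section~\ref{sctn:notation}, namely $h_3 \cdot \tau Q_3 = 0$, to rewrite the source as
\[
h_3(\tau Q_3 + \tau n_1) = \tau h_3 n_1 .
\]
Let $\beta$ be a homotopy class detected by $\tau Q_3 + \tau n_1$; such a class exists because Lemma~\ref{lem:2-h2^2Q3} already works with $\{\tau Q_3 + \tau n_1\}$ and its $\nu^2$-multiple. Then $\sigma \beta$ is detected by $h_3(\tau Q_3 + \tau n_1)$, and since $h_0 \cdot h_3(\tau Q_3 + \tau n_1)$ vanishes in the $E_\infty$-page, any $2$ extension on this element is genuinely hidden. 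Establishing the lemma therefore amounts to proving that $2 \sigma \beta = \sigma (2\beta)$ is detected by $\tau x_{74,8}$.

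Next I would determine $2\beta$ and then multiply by $\sigma$, using the same engine as in Lemma~\ref{lem:2-h1^2h3H1} and in the proof of Theorem~\ref{thm:2-extn}. Concretely, I would first locate a (hidden) $2$ extension on $\tau Q_3 + \tau n_1$ by comparison to $C\tau$: pulling back a $2$ extension along the inclusion of the bottom cell forces the corresponding extension in the Adams spectral sequence for the sphere. Writing $c$ for its target, I would then combine this with the multiplicative structure, and in particular with the hidden $\tau$ extension mechanism of Table~\ref{tab:tau-extn} (used when $\tau c$ vanishes in $E_\infty$), exactly as the extension from $\tau h_0 h_2 g$ to $P h_1 d_0$ is produced in the proof of Theorem~\ref{thm:2-extn}. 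The effect is that $\sigma \cdot 2\beta$ is carried into the class $\tau x_{74,8}$. An alternative route, which I would keep in reserve, assembles a commuting square of $\nu$ (or $\sigma$) extensions from Tables~\ref{tab:nu-extn} and \ref{tab:2-extn}, using that $\nu^2 \beta$ is detected by $\tau h_2^2 Q_3$ (Lemma~\ref{lem:2-h2^2Q3}) to constrain the possibilities.

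The main obstacle will be pinning the target down to exactly $\tau x_{74,8}$ rather than to some other class of the same stem and weight in equal or higher Adams filtration. I would handle this by the usual elimination: comparison to $\tmf$ and to $\mmf$ rules out the $\tmf$-related and $A(2)$-related candidates, and comparison to $C\tau$ rules out targets inconsistent with the homotopy of $C\tau$; what survives should be a single possibility. A secondary difficulty is verifying the auxiliary $2$ extension on $\tau Q_3 + \tau n_1$ (or the commuting $\nu$ extensions) and checking that no element in higher filtration absorbs $\sigma(2\beta)$, so that the detecting class is genuinely $\tau x_{74,8}$ and not merely an equality modulo higher filtration. Once these degree-wise bookkeeping steps are completed, the hidden $2$ extension from $h_3(\tau Q_3 + \tau n_1)$ to $\tau x_{74,8}$ follows.
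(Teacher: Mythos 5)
Your approach runs in the opposite direction from the paper's, and it has a genuine gap. The paper does not compute $2\cdot(\text{source})$ at all: it works backwards from the target. Table~\ref{tab:misc-extn} records a hidden $\theta_4$ extension from $g_2$ to $x_{74,8}$, so $\tau x_{74,8}$ detects $\tau \kappabar_2 \theta_4$; Table~\ref{tab:Toda} gives $\theta_4 = \langle \sigma^2, 2, \sigma^2, 2 \rangle$; and the shuffle
\[
\tau \kappabar_2 \langle \sigma^2, 2, \sigma^2, 2 \rangle =
\langle \tau \kappabar_2, \sigma^2, 2, \sigma^2 \rangle \, 2
\]
(legitimate by Lemma~\ref{lem:tkappabar2,sigma^2,2}) exhibits $\tau\kappabar_2\theta_4$ as a multiple of $2$. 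Degree and filtration considerations then force the source of the resulting hidden $2$ extension to be $h_3(\tau Q_3 + \tau n_1)$. Note that the element whose double lands in $\{\tau x_{74,8}\}$ is the fourfold bracket $\langle \tau \kappabar_2, \sigma^2, 2, \sigma^2 \rangle$, which is not visibly of the form $\sigma\beta$ with $\beta$ detected by $\tau Q_3 + \tau n_1$. Your reduction of the lemma to ``$2\sigma\beta = \sigma(2\beta)$ is detected by $\tau x_{74,8}$'' is therefore a strictly stronger claim than what is needed: Definition~\ref{defn:hidden} only requires \emph{some} element of the coset $\{h_3(\tau Q_3+\tau n_1)\}$ to have its double detected by $\tau x_{74,8}$, and that element may differ from $\sigma\beta$ by higher-filtration terms.

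The concrete failures in your forward computation are these. First, you need a (hidden) $2$ extension on $\tau Q_3 + \tau n_1$ itself, but none is established anywhere in the paper -- Table~\ref{tab:2-extn} lists no such extension in the $67$-stem -- so the quantity $\sigma(2\beta)$ is not under control and may well vanish for your chosen $\beta$. Second, your main engine, comparison to $C\tau$, cannot see this extension: the target $\tau x_{74,8}$ is a $\tau$-multiple in the $E_\infty$-page, so any homotopy class it detects maps to zero under inclusion of the bottom cell into $C\tau$, and nothing can be pulled back. The same objection applies to your fallback via Table~\ref{tab:tau-extn}: $\tau x_{74,8}$ is not the target of a \emph{hidden} $\tau$ extension, so the ``$2$ extension in $C\tau$ plus hidden $\tau$ extension'' mechanism is unavailable. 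This is precisely the situation in which the paper must resort to a Toda bracket shuffle rather than naturality arguments, and some version of that idea is unavoidable here.
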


\begin{proof}
Table \ref{tab:misc-extn} shows that
$\tau x_{74,8}$ detects $\tau \kappabar_2 \theta_4$.
Table \ref{tab:Toda} shows that $\theta_4$ equals
the Toda bracket $\langle \sigma^2, 2, \sigma^2, 2 \rangle$.

Now consider the shuffle
\[
\tau \kappabar_2 \theta_4 = 
\tau \kappabar_2 \langle \sigma^2, 2, \sigma^2, 2 \rangle =
\langle \tau \kappabar_2, \sigma^2, 2, \sigma^2 \rangle 2.
\]
Lemma \ref{lem:tkappabar2,sigma^2,2} shows that the latter
bracket is well-defined.
This implies that $\tau x_{74,8}$ is the target of a hidden $2$
extension, and $h_3(\tau Q_1 + \tau n_1)$ is the only possible
source.
\end{proof}

\begin{lemma}
\label{lem:2-h6d0}
\revdeg{77, 5, 40}
There is no hidden $2$ extension on $h_6 d_0$.
\end{lemma}

\begin{proof}
Table \ref{tab:Toda} shows that $h_6 d_0$ detects the Toda bracket
$\langle \kappa, 2, \theta_5 \rangle$.  Now shuffle to obtain
\[
2 \langle \kappa, 2, \theta_5 \rangle = 
\langle 2, \kappa, 2 \rangle \theta_5 = \tau \eta \kappa \theta_5.
\]
Lemma \ref{lem:sigma^2+kappa-h5^2} shows that this product
equals \rev{either $\tau \eta \sigma^2 \theta_5$ or
$\tau \eta \sigma^2 \theta_5 + \tau^3 \eta \kappa_1 \kappabar_2$.
Both possibilities are zero because
$\eta \sigma^2$ and $\tau \eta \kappa_1$ are zero.}
\end{proof}

\begin{lemma}
\label{lem:2-e0A'}
\revdeg{78, 10, 42}
There is a hidden $2$ extension from $e_0 A'$ to $M \D h_1^2 h_3$.
\end{lemma}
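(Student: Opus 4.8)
The plan is to establish this last hidden $2$ extension by the same \emph{completing-the-square} method used just above in Lemma \ref{lem:2-h1^2h3H1}, playing a hidden $2$ extension in lower Adams filtration off against hidden $\eta$ or $\nu$ extensions already recorded in Tables \ref{tab:eta-extn} and \ref{tab:nu-extn}. First I would pin down the stem and weight of $e_0 A'$ and of the putative target $M \D h_1^2 h_3$, and run the routine first pass that disposes of most entries of Table \ref{tab:2-extn}: check whether the extension is forced by pulling back a hidden $2$ extension along inclusion of the bottom cell into $C\tau$, and check by comparison to $C\tau$, $\tmf$, and $\mmf$ that the competing classes in the target degree are excluded. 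The presence of the factor $h_1^2$ in $M \D h_1^2 h_3$ means several $h_1$-divisible classes will live in that degree, so this elimination step is where most of the bookkeeping lies.

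Assuming $C\tau$ does not settle the extension outright, the main argument would proceed as follows. Let $\alpha$ be an element of the relevant homotopy group detected by $e_0 A'$. I would locate an auxiliary operation $\gamma \in \{\eta, \nu\}$ and two hidden $\gamma$ extensions --- one with source $e_0 A'$ and one with source $M \D h_1^2 h_3$ --- read off from Tables \ref{tab:eta-extn} and \ref{tab:nu-extn}, landing respectively on classes $b$ and $c$ for which Table \ref{tab:2-extn} already records a hidden $2$ extension from $b$ to $c$. Using commutativity of multiplication by $2$ and by $\gamma$, together with the identification of detecting elements, three edges of the resulting square then force the fourth, exactly as in the proof of Lemma \ref{lem:2-h1^2h3H1}: since $\gamma(2\alpha)$ is forced to be detected by $c$ and $c$ is the value of the hidden $\gamma$ extension out of $M \D h_1^2 h_3$, the element $2\alpha$ must be detected by $M \D h_1^2 h_3$. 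I would verify along the way that the relevant $\gamma$ extensions are not crossing extensions in the sense of Section \ref{sctn:assoc-graded}, so that the filtration bookkeeping in the square is unambiguous.

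As a fallback in case the square does not close cleanly, I would keep a Toda-bracket route in reserve. If $e_0 A'$ detects a class expressible as a product $\kappa \cdot \{A'\}$ or more generally as $\beta$ with $2\beta$ computable, then shuffling through Corollary \ref{cor:2-symmetric}, namely $2\langle \alpha', 2, \theta\rangle = \langle 2, \alpha', 2\rangle \theta \ni \tau\eta\alpha'\theta$, together with an identification of the relevant bracket via the Moss Convergence Theorem \ref{thm:Moss} and a Massey product from Table \ref{tab:Massey}, may produce an element detected precisely by $M \D h_1^2 h_3$. This mirrors the bracket computations in Lemmas \ref{lem:2-h6c0} and \ref{lem:2-h1^2h3H1}.

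The hard part will be the exclusion of alternative targets: I expect the degree of $M \D h_1^2 h_3$ to contain $h_1$-periodic or $M$-multiple classes of higher filtration that a priori could also detect $2\{e_0 A'\}$, and ruling these out --- most likely by comparison to $\tmf$ or $\mmf$, or by showing that such a class supports or receives an incompatible $\eta$ or $\nu$ extension --- is where the genuine content of the lemma resides, rather than in the formal square or shuffle manipulation.
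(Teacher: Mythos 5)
Your proposal identifies the right toolbox (shuffles through $\langle 2,\eta,2\rangle$, known hidden $\eta$/$\nu$ extensions, Moss convergence) but neither of your two routes is the paper's argument, and the one you designate as primary is unlikely to close. The ``completing-the-square'' method requires a hidden $\gamma$ extension out of $e_0 A'$ and a matching one out of $M \D h_1^2 h_3$ whose targets are already linked by a known $2$ extension. But Lemma \ref{lem:nu-e0A'} shows $e_0 A'$ supports \emph{no} hidden $\nu$ extension, and its hidden $\eta$ extension lands on $\tau M e_0^2$, for which no suitable hidden $2$ extension is recorded; so the square has no fourth edge to infer.

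The paper's proof runs in the opposite direction and uses an input you never mention: it starts one stem below, with $\alpha \in \pi_{76,40}$ detected by $x_{76,9}$, and uses the hidden $\eta$ extension from $x_{76,9}$ to $M \D h_1 h_3$ to see that $\tau\eta^2\alpha$ is detected by $\tau M \D h_1^2 h_3$. The shuffle $\tau\eta^2\alpha = \langle 2,\eta,2\rangle\alpha = 2\langle \eta,2,\alpha\rangle$ then shows this class is a multiple of $2$, i.e.\ the \emph{target} receives a hidden $2$ extension from something. This is close in spirit to your fallback, but the genuinely hard step is the one you misidentify: it is not the exclusion of alternative \emph{targets} for $2\{e_0 A'\}$, but the identification of the \emph{source} of the extension hitting $M\D h_1^2 h_3$. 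The paper settles this with the Moss Convergence Theorem applied to $\langle\eta,2,\alpha\rangle$ and the Adams differential $d_2(\tau x_{77,8}) = h_0 x_{76,9}$, which forces the bracket to be detected in Adams filtration at least $10$ and leaves $e_0 A'$ as the only candidate. Without that filtration bound your argument cannot distinguish $e_0 A'$ from lower-filtration classes in the $78$-stem, so as written the proposal has a genuine gap.
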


\begin{proof}
Let $\alpha$ be an element of $\pi_{76,40}$ that is detected
by $x_{76,9}$.
Table \ref{tab:eta-extn} shows that there is a hidden
$\eta$ extension from $x_{76,9}$ to $M \D h_1 h_3$,
so $\tau \eta^2 \alpha$ is detected by $\tau M \D h_1^2 h_3$.
Now shuffle to obtain
\[
\tau \eta^2 \alpha = \langle 2, \eta, 2 \rangle \alpha =
2 \langle \eta, 2, \alpha \rangle.
\]
This shows that $\tau M \D h_1^2 h_3$ must be the target
of a hidden $2$ extension.

Moreover, the source of this hidden $2$ extension must be in Adams
filtration at least $10$, since the Adams differential
$d_2(\tau x_{77,8}) = h_0 x_{76,9}$ implies that
$\langle \eta, 2, \alpha \rangle$ is detected by
$h_1 x_{77,8} = 0$ in filtration $9$.
The only possible source is $e_0 A'$.
\end{proof}

\begin{lemma}
\label{lem:2-h1h4h6}
\revdeg{79, 3, 41}
There is no hidden $2$ extension on $h_1 h_4 h_6$.
\end{lemma}

\begin{proof}
Table \ref{tab:Toda} shows that $h_1 h_4 h_6$ detects the Toda bracket
$\langle \eta_4, 2, \theta_5 \rangle$.
Now shuffle to obtain
\[
2 \langle \eta_4, 2, \theta_5 \rangle =
\langle 2, \eta_4, 2 \rangle \theta_5,
\]
which equals $\tau \eta \eta_4 \theta_5$ by Table \ref{tab:Toda}.
We will show that this product is zero.

There are several elements in the Adams $E_\infty$-page
that might detect $\eta_4 \theta_5$.  The possibilities
$h_1 h_6 d_0$ and $x_{78,9}$ are ruled out by comparison to $C\tau$.
The possibility $\tau e_0 A'$ is ruled out because
Table \ref{tab:2-extn} shows that $e_0 A'$ supports a hidden
$2$ extension.

Two possibilities remain.  If $\eta_4 \theta_5$ is detected
by $\tau M \D h_1^2 h_3$, then $\tau \eta \eta_4 \theta_5$
must be zero because there are no elements in sufficiently
high Adams filtration.

Finally, suppose that $\eta_4 \theta_5$ is detected by
$\tau h_1^2 x_{76,6}$.  Let $\alpha$ be an element of $\pi_{77,41}$
that is detected by $h_1 x_{76,6}$.
If $\eta_4 \theta_5 + \tau \eta \alpha$ is not zero,
then it is detected in
higher filtration.  
\rev{It cannot be detected by $x_{78,9}$ by comparison to $C\tau$, and}
it cannot be detected by $\tau e_0 A'$
because of the hidden $2$ extension on $e_0 A'$.
If it is detected by $\tau M \D h_1^2 h_3$, then we may
change the choice of $\alpha$ to ensure that
$\eta_4 \theta_5 + \tau \eta \alpha$ is zero.

We have now shown that $\tau \eta \eta_4 \theta_5$
equals $\tau^2 \eta^2 \alpha$.
Shuffle to obtain
\[
\tau^2 \eta^2 \alpha =
\tau \alpha \langle 2, \eta, 2 \rangle =
\langle \alpha, 2, \eta \rangle 2 \tau.
\]
\rev{Here we are using that $2 \alpha$ is zero; the possible
$2$ extensions on $h_1 x_{76,6}$ are easily eliminated by the presence
of $\eta$ extensions and by comparison to $\mmf$.}

Table \ref{tab:Toda} shows that
$\langle \alpha, 2, \eta \rangle$ is detected
by $h_0 h_2 x_{76,6}$, and Lemma \ref{lem:2-h0h2x76,6}
shows that this element does not support a hidden $2$ extension.
Therefore,
$\langle \alpha, 2, \eta \rangle 2 \tau$ is zero.
\end{proof}

\begin{lemma}
\label{lem:2-h0h2x76,6}
\revdeg{79, 8, 42}
There is no hidden $2$ extension on $h_0 h_2 x_{76,6}$.
\end{lemma}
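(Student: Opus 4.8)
The plan is to realize $h_0 h_2 x_{76,6}$ as detecting a Toda bracket and to compute twice that bracket by a shuffle. By Table~\ref{tab:Toda}, the element $h_0 h_2 x_{76,6}$ detects $\langle \alpha, 2, \eta \rangle$, where $\alpha$ is an element of $\pi_{77,41}$ detected by $h_1 x_{76,6}$. First I would dispose of the not-hidden target. There is the algebraic relation $h_0^2 h_2 x_{76,6} = \tau h_1^3 x_{76,6}$ in $\Ext$ (coming from $h_0^2 h_2 = \tau h_1^3$), and Lemma~\ref{lem:d4-h0e2} gives $d_4(h_0 e_2) = \tau h_1^3 x_{76,6}$. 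Hence $h_0^2 h_2 x_{76,6}$ is zero in the $E_\infty$-page, so condition (1) of Definition~\ref{defn:hidden} is satisfied and the not-hidden value of $2$ on $h_0 h_2 x_{76,6}$ vanishes. Thus to prove the lemma it suffices to show that $2\beta = 0$ for any $\beta$ detected by $h_0 h_2 x_{76,6}$, since any nonzero such $2\beta$ would be detected in strictly higher filtration and would constitute a hidden $2$ extension.

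Next I would compute $2\langle \alpha, 2, \eta \rangle$. Shuffling gives $2\langle \alpha, 2, \eta \rangle = \langle 2, \alpha, 2 \rangle \eta$, and Corollary~\ref{cor:2-symmetric} shows that $\langle 2, \alpha, 2 \rangle$ contains $\tau \eta \alpha$. Because the indeterminacy of $\langle 2, \alpha, 2 \rangle$ consists of even multiples, it is annihilated upon multiplication by $\eta$; therefore $\langle 2, \alpha, 2 \rangle \eta$ reduces to the single element $\tau \eta^2 \alpha$, and it follows that $2\beta = \tau \eta^2 \alpha$ for $\beta$ in the bracket (in particular for the $\beta$ detected by $h_0 h_2 x_{76,6}$).

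It then remains to show that $\tau \eta^2 \alpha = 0$, and this is the crux. The product $\eta^2 \alpha$ is detected by $h_1^3 x_{76,6}$, so $\tau \eta^2 \alpha$ is detected by $\tau h_1^3 x_{76,6}$, which is zero in $E_\infty$ by the observation above. Consequently $\tau \eta^2 \alpha$ is either zero or the value of a hidden $\tau$ extension on $h_1^3 x_{76,6}$. I would exclude the latter by inspecting the complete list of hidden $\tau$ extensions in Theorem~\ref{thm:tau-extn} and Table~\ref{tab:tau-extn}, in which $h_1^3 x_{76,6}$ does not occur as a source; equivalently, one checks that there are no $E_\infty$ classes of the correct weight in the $79$-stem in sufficiently high filtration to receive such an extension. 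Either way $\tau \eta^2 \alpha = 0$, so $2\beta = 0$ and there is no hidden $2$ extension on $h_0 h_2 x_{76,6}$.

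The main obstacle is precisely this last step, for two reasons. One must verify that $\eta^2 \alpha$ is genuinely detected by $h_1^3 x_{76,6}$, i.e. that this class survives to $E_\infty$ and that no intervening hidden $\eta$ extension raises the detecting filtration. More importantly, one must ensure there is no circularity: the companion Lemma~\ref{lem:2,eta,th1^2x76,6} deduces $\tau \eta^2 \alpha = 0$ \emph{from} the present statement, so the vanishing argument here must be carried out purely through the hidden $\tau$-extension data of Section~\ref{sctn:tau-extn}, which is established before and independently of hidden $2$ extensions. The bookkeeping of indeterminacy in the shuffle step is comparatively routine, since the relevant even multiples are killed by $\eta$.
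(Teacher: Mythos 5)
Your argument takes a genuinely different route from the paper's, and it is sound up to the final step, where there is a real gap. The paper's proof writes $h_0 h_2 x_{76,6}$ as detecting $\nu\alpha$ for $\alpha \in \pi_{76,40}$ detected by $h_4 A$; since $h_0 h_4 A$ detects $\sigma^2\theta_5$ (Table \ref{tab:misc-extn}), the element $2\alpha$ agrees with $\sigma^2\theta_5$ modulo filtration at least $9$, so $2\nu\alpha = \nu(2\alpha)$ is either zero (because $\nu\sigma = 0$) or detected by $h_1 x_{78,9}$, and the latter is excluded by comparison to $C\tau$. Your reduction --- identifying $h_0 h_2 x_{76,6}$ as detecting $\langle \{h_1 x_{76,6}\}, 2, \eta \rangle$, shuffling to get $2\langle \alpha, 2, \eta\rangle = \langle 2, \alpha, 2\rangle \eta = \{\tau\eta^2\alpha\}$ via Corollary \ref{cor:2-symmetric}, and noting that $2\beta = 0$ for one $\beta$ in the coset suffices by condition (3) of Definition \ref{defn:hidden} --- is correct, and you are right to flag the circularity danger: both Lemma \ref{lem:2,eta,th1^2x76,6} and Lemma \ref{lem:eta-th1^2x76,6} derive $\tau\eta^2\alpha = 0$ from the present lemma, so neither may be invoked here.

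The gap is in your proof that $\tau\eta^2\alpha = 0$. Your fallback claim that there are no $E_\infty$ classes of the relevant weight in the $79$-stem in sufficiently high filtration is false: $h_1 x_{78,9}$ sits at $(79,10,42)$ and $\D B_6$ at $(79,11,42)$, and $h_1 x_{78,9}$ is precisely the class that the paper's own proof must eliminate. Your primary argument --- that $\tau h_1^3 x_{76,6}$ dies by $d_4(h_0 e_2)$ and that $h_1^3 x_{76,6}$ does not occur as a source in Table \ref{tab:tau-extn} --- does not by itself give $\tau\gamma = 0$ for every $\gamma$ detected by $h_1^3 x_{76,6}$. Because of the maximality condition in the definition of hidden extensions, the absence of $h_1^3 x_{76,6}$ from the table is also consistent with $\tau\gamma \neq 0$, provided the extension is attributable to a class of $\pi_{79,43}$ in filtration at least $10$; ruling that out requires knowing that no such class supports a $\tau$-multiplication, hidden or not, hitting $h_1 x_{78,9}$ or $\D B_6$, and non-hidden $\tau$-multiplications are not recorded in the tables. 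So this step needs an explicit chart check, or a comparison to $C\tau$ of the kind the paper uses, that your proposal does not supply.
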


\begin{proof}
Let $\alpha$ be an element of $\pi_{76,40}$ that is detected by
$h_4 A$.  Then $\nu \alpha$ is detected by $h_0 h_2 x_{76,6}$,
\rev{and $2 \alpha$ is detected by $h_0 h_4 A$.
We will show that $2 \nu \alpha$ is zero.}

Table \ref{tab:misc-extn} shows that
$h_0 h_4 A$ also detects 
\rev{either $\sigma^2 \theta_5$ or $\sigma^2 \theta_5 + \tau^2 \kappa_1 \kappabar_2$.
Then
$2 \alpha + \sigma^2 \theta_5$ or $2 \alpha + \sigma^2 \theta_5 + \tau^2 \kappa_1 \kappabar_2$ could be detected in higher filtration.
However, only $x_{76,9}$ could detect this error term, and
inclusion of the bottom cell into $C\tau$ rules it out.

We now know that
$\sigma^2 \theta_5 + 2 \alpha$ is either zero or
$\tau^2 \kappa_1 \kappabar_2$.
Multiply by $\nu$ to conclude that
$2 \nu \alpha$ is either zero or $\tau^2 \nu \kappa_1 \kappabar_2$.
As in the proof of Lemma \ref{lem:eta-h1h6d0}, this last
expression is also zero.
}
\end{proof}

\begin{lemma}
\label{lem:2-Ph6c0}
\revdeg{79, 8, 41}
There is no hidden $2$ extension on $P h_6 c_0$.
\end{lemma}

\begin{proof}
Table \ref{tab:misc-extn} shows that
$P h_6 c_0$ detects the product $\rho_{15} \eta_6$.
Table \ref{tab:Toda} shows that 
$\eta_6$ is contained in the Toda bracket
$\langle \eta, 2, \theta_5 \rangle$.
Now shuffle to obtain
\[
2 \rho_{15} \eta_6 =
2 \rho_{15} \langle \eta, 2, \theta_5 \rangle =
\rho_{15} \theta_5 \langle 2, \eta, 2 \rangle,
\]
which equals $\tau \eta^2 \rho_{15} \theta_5$ by 
Table \ref{tab:Toda}.

Table \ref{tab:misc-extn} shows that
$\rho_{15} \theta_5$ is detected by either
$h_0 x_{77,7}$ or $\tau^2 m_1$.
First suppose that it is detected by $h_0 x_{77,7}$.
Table \ref{tab:2-extn} shows that $h_0 x_{77,7}$ is the target
of a $2$ extension.
Then $\rho_{15} \theta_5$ equals $2 \alpha$ modulo
higher filtration.
In any case,
$\tau \eta^2 \rho_{15} \theta_5$ is zero.

Next suppose that $\rho_{15} \theta_5$ is detected by
$\tau^2 m_1$.
Then $\rho_{15} \theta_5$ equals $\tau^2 \alpha$ modulo
higher filtration for some element $\alpha$ detected by $m_1$.
Table \ref{tab:tau-extn}
shows that there is a hidden $\tau$ extension from $h_1 m_1$
to $M \D h_1^2 h_3$.  This implies that
$\tau \eta \alpha$ is detected by $M \D h_1^2 h_3$.
Finally,
$\tau^3 \eta^2 \alpha = \tau \eta^2 \rho_{15} \theta_5$
must be zero.
\end{proof}

\begin{lemma}
\label{lem:2-DB6}
\revdeg{79, 11, 42}
There is no hidden $2$ extension on $\D B_6$.
\end{lemma}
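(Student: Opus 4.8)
The plan is to first pin down which homotopy class $\D B_6$ detects, and then to show directly that twice that class is zero, so that $\D B_6$ cannot be the source of a hidden $2$ extension. Following the pattern of the surrounding lemmas, I expect $\D B_6$ to detect either a product or a Toda bracket involving the $2$-torsion class $\theta_5$, so the first step is to consult Tables \ref{tab:misc-extn} and \ref{tab:Toda} for such a description.

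If $\D B_6$ detects a product of the form $\beta \theta_5$, then the argument is immediate: any hidden $2$ extension would have to detect $2\beta\theta_5 = \beta \cdot 2\theta_5$, which vanishes because $2\theta_5 = 0$ by \cite{Xu16}. This is the mechanism used in Lemmas \ref{lem:2-p'} and \ref{lem:2-h6d0}, and it would rule out the extension at once. If instead $\D B_6$ detects a Toda bracket of the form $\langle \alpha, 2, \theta_5 \rangle$, I would verify the hypotheses $2\alpha = 0$ and $\tau \eta \alpha = 0$ and then apply Proposition \ref{prop:2-<alpha,2,theta5>}. The underlying shuffle, using Corollary \ref{cor:2-symmetric}, is
\[
2 \langle \alpha, 2, \theta_5 \rangle = \langle 2, \alpha, 2 \rangle \theta_5 = \tau \eta \alpha \, \theta_5 = 0,
\]
so that twice the detected class lies in higher Adams filtration than any candidate target.

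Should no convenient bracket or product description be available, the fallback is to enumerate the possible targets in the relevant stem and weight and eliminate each one by comparison to the Adams spectral sequences for $C\tau$, $\tmf$, or $\mmf$, as in Lemma \ref{lem:2-h4Q2+h3^2D2}. In practice I would expect a hybrid: the shuffle reduces the problem to showing a product vanishes, after which the residual higher-filtration targets are cleared by comparison.

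The main obstacle will be the bookkeeping around indeterminacy. The detected class is determined only modulo higher Adams filtration, so even after the shuffle shows the ``leading term'' of $2 \cdot \{\D B_6\}$ vanishes, a potential error term detected by a $\tau$-multiple in the same degree could in principle support the extension. Ruling this out may require adjusting the choice of detecting class for $\alpha$, exactly as in the proof of Lemma \ref{lem:2-h1h4h6}, and then confirming by comparison that the remaining candidate targets of high filtration are not hit. Carrying out this elimination cleanly, rather than the formal shuffle itself, is where I expect the real work to lie.
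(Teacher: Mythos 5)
Your plan does not get off the ground for this particular element. The first step of your strategy is to find a description of the class detected by $\D B_6$ as a product $\beta\theta_5$ or a Toda bracket $\langle \alpha, 2, \theta_5\rangle$, but no such description exists in Tables \ref{tab:Toda} or \ref{tab:misc-extn} (nor is one readily available: $\D B_6$ lives in stem $79$ and filtration $11$, and the $\theta_5$-brackets in that range, such as $\langle \mu_{17}, 2, \theta_5\rangle$, are detected by other elements like $P^2 h_1 h_6$). Your fallback also fails: the unique candidate target is $\tau^2 M e_0^2$, which is a $\tau$-multiple of a Mahowald-operator class, so it maps to zero both under inclusion of the bottom cell into $C\tau$ (being $\tau$-divisible) and under the maps to $\tmf$ and $\mmf$ (being $M$-divisible). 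None of the comparison tools you list can see this target, so the elimination step you describe as "the real work" cannot be carried out by those means.

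The idea you are missing is to exploit the multiplicative structure on the \emph{source} after multiplying by $\tau$. Table \ref{tab:eta-extn} records a hidden $\eta$ extension from $h_0^6 h_4 h_6$ to $\tau \D B_6$, so $\tau \D B_6$ detects a multiple of $\eta$. Since $2\eta = 0$, the element $\tau \D B_6$ cannot support a hidden $2$ extension, and therefore $\D B_6$ cannot support a hidden $2$ extension with target $\tau^2 M e_0^2$ (such an extension would propagate under multiplication by $\tau$ to one on $\tau \D B_6$). This is a one-line structural argument, orthogonal to the $\theta_5$-shuffle machinery you set up.
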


\begin{proof}
Table \ref{tab:eta-extn} shows that there is a hidden $\eta$ extension
from $h_0^6 h_4 h_6$ to $\tau \D B_6$.
Therefore, $\tau \D B_6$ cannot be the source of a hidden $2$
extension, so there cannot be a hidden $2$ extension
from $\D B_6$ to $\tau^2 M e_0^2$.
\end{proof}

\rev{
\begin{lemma}
\label{lem:2-h5^2g}
\revdeg{82, 6, 44}
There is no hidden $2$ extension on $h_5^2 g$.
\end{lemma}

\begin{proof}
The element $\tau h_5^2 g$ detects the product
$\kappabar \theta_5$, so it cannot support a hidden $2$
extension since $2 \theta_5$ is zero.

If there were a hidden $2$ extension from
$h_5^2 g$ to $\tau (\D e_1 + C_0) g$, then
the hidden 
$\tau$ extension from $\tau (\D e_1 + C_0) g$ to $\D^2 h_2 n$
would imply that
there is a hidden $2$ extension from 
$\tau h_5^2 g$ to $\D^2 h_2 n$.
\end{proof}
}

\begin{lemma}
\label{lem:2-h2^2x76,6}
\revdeg{82, 8, 44}
There is no hidden $2$ extension on $h_2^2 x_{76,6}$.
\end{lemma}

\begin{proof}
As in the proof of Lemma \ref{lem:2,eta,h2x76,6},
let $\alpha$ be an element in $\pi_{79,42}$ that is detected by
$h_2 x_{76,6}$ such that $\eta \alpha$ is zero.
Then $\nu \alpha$ is detected by $h_2^2 x_{76,6}$, and we wish to show
that $2 \nu \alpha$ is zero.

Table \ref{tab:Toda} shows that $2 \nu$ is contained in
$\langle \eta, 2, \eta \rangle$.
Consider the shuffle
\[
2 \nu \alpha = \langle \eta, 2, \eta \rangle \alpha =
\eta \langle 2, \eta, \alpha \rangle.
\]
Table \ref{tab:Toda} shows that the last Toda bracket is zero.
\end{proof}

\begin{lemma}
\label{lem:2-h0^2h6g}
\revdeg{83, 7, 44}
There is no hidden $2$ extension on $h_0^2 h_6 g$.
\end{lemma}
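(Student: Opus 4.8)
The plan is to first determine the homotopy class detected by $h_0^2 h_6 g$ and then to show that twice this class is detected no higher than its already-vanishing non-hidden value, so that no hidden $2$ extension can occur. Since $g$ detects $\kappabar$, the element $h_0^2 h_6 g$ is a $g$-multiple, detecting $\kappabar \alpha$ for a class $\alpha$ associated to $h_0^2 h_6$ in the $63$-stem. The governing input is that $2\theta_5 = 0$ \cite{Xu16}, together with the Adams differential $d_2(h_6) = h_0 h_5^2$, which ties the $h_0$-tower $h_6, h_0 h_6, h_0^2 h_6$ (and hence its $g$-multiples at the bottom of the tower containing $h_0^2 h_6 g$) to $\theta_5$. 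Before anything else I would confirm that the non-hidden value $h_0^3 h_6 g$ is already zero on the $E_\infty$-page, so that only a genuinely hidden target in stem $83$, weight $43$, and filtration strictly greater than $7$ is in question.

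The main computation I would run is a symmetric Toda-bracket shuffle, exactly parallel to the proof of Lemma \ref{lem:2-h6d0}. The aim is to express the relevant class as $\kappabar$ times an element of a bracket of the form $\langle \gamma, 2, \theta_5 \rangle$, and then to shuffle
\[
2 \langle \gamma, 2, \theta_5 \rangle = \langle 2, \gamma, 2 \rangle \theta_5 = \tau \eta \gamma \theta_5,
\]
using Corollary \ref{cor:2-symmetric}. This reduces a putative hidden $2$ extension to a multiple of $\tau \eta \gamma \theta_5$, which I expect to vanish after rewriting the offending $\eta$-multiple of $\theta_5$ via Lemma \ref{lem:sigma^2+kappa-h5^2} and the relation $\eta \sigma^2 = 0$, just as in the $h_6 d_0$ case. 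Any terms that the bracket cannot control directly (arising from classes in higher Adams filtration) I would then handle by comparison to the Adams spectral sequences for $C\tau$, $\tmf$, and $\mmf$: each remaining candidate target is either not hit from the appropriate source under inclusion of the bottom cell into $C\tau$, or is already recorded as supporting or receiving an incompatible $\eta$-, $\nu$-, or $\tau$-extension in Tables \ref{tab:eta-extn}, \ref{tab:nu-extn}, and \ref{tab:tau-extn}.

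The hard part will be the first step: correctly identifying the homotopy class detected by $h_0^2 h_6 g$ and its Toda-bracket description. The subtlety is that the $83$-stem carries an unresolved ambiguity between $2^3 \cdot 8$ and $2^3 \cdot 4$ in its $2$-primary part, and there are several classes in higher Adams filtration, so the shuffle must be arranged so that its conclusion is insensitive to the undetermined higher-filtration error terms. If a clean bracket identification proves elusive, the fallback is a purely comparative argument, enumerating the finitely many possible targets and eliminating each by naturality along $C\tau$, $\tmf$, or $\mmf$, which I expect to suffice given how sparse the relevant degrees are.
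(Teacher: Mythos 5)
There is a genuine gap at your first step, and it propagates through the rest of the plan. You identify the class detected by $h_0^2 h_6 g$ as $\kappabar\,\alpha$ with $\alpha$ detected by $h_0^2 h_6$, but this is not justified and is not even consistent with the motivic weights: $\kappabar$ is detected by $\tau g$, not $g$, so $\kappabar\{h_0^2 h_6\}$ lies in weight $43$ and would be detected (at best) by $\tau h_0^2 h_6 g$, whereas the element in the lemma is $h_0^2 h_6 g$ in weight $44$. Even setting the weight aside, you would need to know that the product $\{h_0^2 h_6\}\cdot\kappabar$ is nonzero and detected precisely by $h_0^2 h_6 g$ with no filtration jump, and you would then need to exhibit $\{h_0^2 h_6\}$ (an image-of-$J$ element) as a bracket $\langle \gamma, 2, \theta_5 \rangle$ --- you never identify $\gamma$, and there is no reason such a description exists. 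So the shuffle $2\langle \gamma, 2, \theta_5 \rangle = \tau\eta\gamma\theta_5$ has nothing to act on.

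The paper's proof is a one-liner resting on a different factorization: the relation $h_0^2 h_6 g = h_2^2 h_6 d_0$ (the $h_6$-multiple of $h_0^2 g = h_2^2 d_0$) shows that the element detects $\nu^2\{h_6 d_0\}$, and then $2\,\nu^2\{h_6 d_0\} = \nu^2\cdot 2\{h_6 d_0\} = 0$ by Lemma \ref{lem:2-h6d0}. Your symmetric-bracket shuffle is exactly the right tool, but it lives one lemma earlier, applied to $h_6 d_0$ with $\gamma = \kappa$: $2\langle \kappa, 2, \theta_5 \rangle = \tau\eta\kappa\theta_5 = 0$. The ingredient your proposal is missing is the Ext relation converting $h_0^2 h_6 g$ into a $\nu^2$-multiple of an already-settled case. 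Your fallback of eliminating targets one at a time by comparison to $C\tau$, $\tmf$, and $\mmf$ is not obviously sufficient in this degree, so it does not rescue the argument without that factorization.
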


\begin{proof}
The element $h_0^2 h_6 g$ equals
$h_2^2 h_6 d_0$, so it detects $\nu^2 \{h_6 d_0\}$.
\end{proof}

\begin{lemma}
\label{lem:2-th2h4Q3}
\revdeg{85, 7, 45}
There is no hidden $2$ extension on $\tau h_2 h_4 Q_3$.
\end{lemma}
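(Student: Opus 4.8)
The plan is to show that every homotopy class detected by $\tau h_2 h_4 Q_3$ is a multiple of $\nu$; since $2\nu = 0$, no such class can support a nonzero hidden $2$ extension, which by Definition \ref{defn:hidden} is exactly what must be ruled out. This mirrors the strategy of Lemma \ref{lem:2-h2^2Q3}, where $\tau h_2^2 Q_3$ was shown to detect $\nu^2 \{\tau Q_3 + \tau n_1\}$ and was therefore seen to carry no hidden $2$ extension. Here the natural source of the $\nu$-divisibility is the factorization $\tau h_2 h_4 Q_3 = h_2 \cdot \tau h_4 Q_3$ in the Adams $E_2$-page, so I would try to realize $\tau h_2 h_4 Q_3$ as detecting $\nu \delta$ for a class $\delta$ in the $82$-stem detected by $\tau h_4 Q_3$.

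First I would establish that $\tau h_4 Q_3$ is a permanent cycle. The relevant interactions of $h_4 Q_3$ with other classes are already visible in the computation: the differential $d_4(h_1 c_3) = \tau h_0 h_2 h_4 Q_3$ of Lemma \ref{lem:d4-h1c3} and the differential $d_2(h_0^3 c_3) = \tau^2 h_1^2 h_4 Q_3$ used in the proof of Lemma \ref{lem:2-th1f2} both kill products of $h_4 Q_3$ rather than $h_4 Q_3$ itself, which is consistent with $\tau h_4 Q_3$ surviving. I would confirm by inspection of the $E_r$-pages that $\tau h_4 Q_3$ supports no Adams differential, most likely by comparison to the spectral sequences for $C\tau$, $\tmf$, or $\mmf$. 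Once $\tau h_4 Q_3$ detects a class $\delta$, I would check that $\nu \delta$ is detected by $\tau h_2 h_4 Q_3$ --- either as the not-hidden $\nu$ multiplication, using that $h_2 \cdot \tau h_4 Q_3 = \tau h_2 h_4 Q_3$ is nonzero in $E_\infty$, or, if that product is hidden, as a hidden $\nu$ extension recorded in Table \ref{tab:nu-extn}. In either case $\tau h_2 h_4 Q_3$ detects $\nu \delta$, and $2 \nu \delta = 0$.

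The hard part will be twofold. First, confirming that $\tau h_4 Q_3$ genuinely survives to $E_\infty$ requires ruling out differentials off it in a stem where several uncertain differentials already appear (for instance the possible $d_3(h_2 h_6 g) = \tau h_1^3 h_4 Q_3$), so I would need to argue that none of these interferes with $\tau h_4 Q_3$ itself. Second, the definition of a hidden extension quantifies over the whole coset $\{\tau h_2 h_4 Q_3\}$, so I must check that the higher-filtration elements of this coset are also killed by $2$, or that condition (3) of Definition \ref{defn:hidden} fails for any candidate target. If the direct $\nu$-divisibility argument proves awkward, a fallback is to exploit the relation from Lemma \ref{lem:2-th1f2}: there the class $\tau^2 h_2 h_4 Q_3 = \tau \cdot \tau h_2 h_4 Q_3$ is shown to detect the multiple of $2$ given by $\langle 2, \eta_4, \tau \nu^2 \theta_5 \rangle$, and I would combine this with the absence of a hidden $\tau$ extension at this spot to constrain $2 \cdot \{\tau h_2 h_4 Q_3\}$, then eliminate the remaining candidate targets in the $85$-stem by comparison to $C\tau$, $\tmf$, and $\mmf$.
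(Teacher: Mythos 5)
Your overall logic (if $\tau h_2 h_4 Q_3$ detects a multiple of $\nu$, then it cannot support a hidden $2$ extension) is sound, but the key input fails. You need $\tau h_4 Q_3$ to survive to the $E_\infty$-page so that $\tau h_2 h_4 Q_3 = h_2 \cdot \tau h_4 Q_3$ detects a $\nu$-multiple, and you assert that the only visible differentials near $h_4 Q_3$ involve products of it. That overlooks the entry in Table \ref{tab:Adams-d3} recording $d_3(h_4 Q_3) = h_3 x_{74,8}$ (proved by comparison to $C\tau$): the element $h_4 Q_3$ itself supports a differential, so your premise is unsupported and, for the integral class, contradicted by the paper's own tables. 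Consistently with this, $\tau h_2 h_4 Q_3$ does not appear as the target of any hidden $\nu$ extension in Table \ref{tab:nu-extn}, so there is no route in the paper's data to exhibiting it as a $\nu$-multiple. Your fallback also does not close the gap: Lemma \ref{lem:2-th1f2} shows that $\tau^2 h_2 h_4 Q_3$ detects $2\{\tau h_1 f_2\}$, which constrains $\tau\beta$ for $\beta$ detected by $\tau h_2 h_4 Q_3$, not $2\beta$.

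The paper's proof is instead a direct elimination of the two candidate targets in higher filtration. A hidden $2$ extension to $\tau P h_1 x_{76,6}$ is ruled out because it would force a hidden $\tau$ extension from $h_0 h_2 h_4 Q_3$ to $P h_1 x_{76,6}$, which does not occur; and $\tau^3 M g^2$ is ruled out because Table \ref{tab:eta-extn} shows it supports a hidden $\eta$ extension, so (since $2\eta = 0$) it cannot be the target of a $2$ extension. If you want to salvage your approach, you would have to replace the $\nu$-divisibility claim with an argument of this target-by-target kind.
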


\begin{proof}
There cannot be a hidden $2$ extension from $\tau h_2 h_4 Q_3$
to $\tau P h_1 x_{76,6}$ because there is no hidden
$\tau$ extension from $h_0 h_2 h_4 Q_3$ to $P h_1 x_{76,6}$.

Table \ref{tab:eta-extn} shows that 
$\tau^3 M g^2$ supports a hidden $\eta$ extension.  Therefore, it
cannot be the target of a hidden $2$ extension.
\end{proof}

\begin{lemma}
\label{lem:2-h6hc0d0}
\mbox{}
\begin{enumerate}
\item
\revdeg{85, 8, 45}
There is no hidden $2$ extension on $h_6 c_0 d_0$.
\item
\revdeg{85, 9, 44}
There is no hidden $2$ extension on $P h_6 d_0$.
\end{enumerate}
\end{lemma}

\begin{proof}
Table \ref{tab:eta-extn} shows that both elements
are targets of hidden $\eta$ extensions.
\end{proof}

\begin{lemma}
\label{lem:2-h4h6c0}
\revdeg{86, 5, 45}
There is no hidden $2$ extension on $h_4 h_6 c_0$.
\end{lemma}

\begin{proof}
Table \ref{tab:misc-extn} shows that
$h_4 h_6 c_0$ detects the product $\sigma \{h_1 h_4 h_6\}$,
and the element $h_1 h_4 h_6$ does not support a hidden $2$ extension
by Lemma \ref{lem:2-h1h4h6}.
\end{proof}

\begin{lemma}
\label{lem:2-th2gC'}
\revdeg{86, 12, 47}
There is no hidden $2$ extension on $\tau h_2 g C'$.
\end{lemma}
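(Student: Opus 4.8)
The plan is to pin down the homotopy class detected by $\tau h_2 g C'$ and show that twice it vanishes, so that no element can receive a hidden $2$ extension. The element $\tau h_2 g C'$ is the product $g \cdot \tau h_2 C'$ in the Adams $E_\infty$-page, and it lives in stem $86$ and weight $46$; a hidden $2$ extension would land in the same stem and weight but in strictly higher Adams filtration. Since $g$ detects $\kappabar$ and $\tau h_2 C'$ detects a class $\alpha$ in $\pi_{66,35}$ (the class used in Lemma \ref{lem:2,sigma^2,th2^2C'}), I would first argue that $\tau h_2 g C'$ detects the product $\kappabar \alpha$. This requires checking that the $\kappabar$-multiplication is not hidden in this degree, which I expect to follow by inspection of the $E_\infty$-page in stem $86$ together with the fact, recorded in the proof of Lemma \ref{lem:d4-th2B5g}, that $h_2^2 g C'$ is a permanent cycle.

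Granting that $\tau h_2 g C'$ detects $\kappabar \alpha$, I would then compute $2 \kappabar \alpha = \kappabar \cdot 2 \alpha$ and reduce the problem to showing $2 \alpha = 0$. Here the key observation is that $h_0 h_2 = 0$ in $\Ext$, so $h_0 \cdot \tau h_2 C'$ is zero in $E_\infty$ and there is no visible $2$ extension on $\tau h_2 C'$; I would rule out a hidden one either by inspecting $\pi_{66,35}$ directly, or---more robustly---by writing $\alpha$ as $\nu \alpha'$ and applying a shuffle of the form $2 \langle \ldots \rangle = \langle 2, \ldots \rangle$ to move the factor of $2$ past $\nu$, in the spirit of Corollary \ref{cor:2-symmetric} and the neighboring lemmas. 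Once $2\alpha = 0$ is established, $2 \kappabar \alpha = 0$ and the lemma follows.

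As a backup, in case the product description carries indeterminacy or the $\kappabar$-multiplication turns out to be hidden, I would instead enumerate the finitely many candidate targets in stem $86$ and weight $46$ of Adams filtration exceeding that of $\tau h_2 g C'$, and eliminate each: by comparison to $C\tau$, $\tmf$, or $\mmf$; because the candidate supports an $h_1$ extension while $2\eta = 0$; or because the candidate is already the target of a hidden $\eta$ extension recorded in Table \ref{tab:eta-extn} and so cannot also receive a hidden $2$ extension. This is exactly the mix of techniques already used in the preceding lemmas of this section.

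The hard part will be the identification step: confirming that $\tau h_2 g C'$ really detects $\kappabar \alpha$ with no interfering class in higher filtration, and in turn that $2\alpha = 0$. If $\pi_{66,35}$ contains a class in higher filtration that could support a hidden $2$ extension on $\tau h_2 C'$, the clean $\kappabar \cdot 2\alpha$ reduction breaks, and I would have to fall back on the target-elimination argument, where the delicate cases are precisely those candidate targets not immediately dispatched by $\tmf$- or $\mmf$-comparison.
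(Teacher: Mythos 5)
Your backup plan is the paper's actual strategy, but as written neither of your routes closes the argument. The primary route founders on bookkeeping and on a circularity: $\tau h_2 g C'$ sits in degree $(86,12,47)$, so it detects elements of $\pi_{86,47}$, whereas $\kappabar \alpha$ with $\alpha \in \pi_{66,35}$ detected by $\tau h_2 C'$ lives in $\pi_{86,46}$ --- the weights do not match. To make the product description work you would need a class detected by $h_2 C'$ in weight $36$, and then the step ``$2\alpha=0$ because $h_0 h_2 = 0$'' begs the question: ruling out a hidden $2$ extension on $h_2 C'$ (or on $\tau h_2 C'$) is exactly the same kind of problem as the lemma itself, and cannot be dispatched by the vanishing of the visible product. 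Separately, one of your elimination criteria is not a valid implication: a class that is the \emph{target} of a hidden $\eta$ extension can perfectly well also be the target of a hidden $2$ extension (compare $P h_1 d_0$, which receives hidden $\tau$, $2$, and $\nu$ extensions simultaneously); the correct criterion is that a class which \emph{supports} an $\eta$ or $h_1$ extension cannot receive a $2$ extension.

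The missing idea is the one the paper uses for the only genuinely hard target. The candidates in higher filtration are $\tau^3 e_0^3 m$, $P h_1^7 h_6$, and $\tau^2 M h_1 g^2$; the first two fall to comparison with $\mmf$ and $C\tau$ respectively, as you anticipate, but $\tau^2 M h_1 g^2$ does not yield to any of the tools on your list. The paper eliminates it by compatibility with hidden $\tau$ extensions: Table \ref{tab:tau-extn} records hidden $\tau$ extensions from $\tau h_2 g C'$ to $\D^2 h_2^2 d_1$ and from $\tau^2 M h_1 g^2$ to $M \D h_0^2 e_0$, so a hidden $2$ extension between the former pair would force one between the latter pair, and no such $2$ extension exists. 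Without this transport-along-$\tau$ argument (or an equivalent substitute), the case $\tau^2 M h_1 g^2$ remains open and the proof is incomplete.
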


\begin{proof}
The possible target $\tau^3 e_0^3 m$ is ruled out by comparison
to $\mmf$.  The possible target $P h_1^7 h_6$ is ruled out by 
comparison to $C\tau$.

It remains to eliminate the possible target $\tau^2 M h_1 g^2$.
Table \ref{tab:tau-extn} shows that there are hidden $\tau$
extensions from
$\tau h_2 g C'$ and $\tau^2 M h_1 g^2$ to
$\D^2 h_2^2 d_1$ and $M \D h_0^2 e_0$ respectively.
However, there is no hidden $2$ extension from
$\D^2 h_2^2 d_1$ to $M \D h_0^2 e_0$, so there cannot be a
$2$ extension from $\tau h_2 g C'$ to $\tau^2 M h_1 g^2$.
\end{proof}

\begin{lemma}
\label{lem:2-h1^2c3}
\revdeg{87, 5, 46}
There is no hidden $2$ extension on $h_1^2 c_3$.
\end{lemma}

\begin{proof}
Table \ref{tab:Toda} shows that the Toda
bracket $\langle \tau \{h_0 Q_3 + h_0 n_1\}, \nu_4, \eta \rangle$
is detected by $h_1^2 c_3$.
Shuffle to obtain
\[
\langle \tau \{h_0 Q_3 + h_0 n_1\}, \nu_4, \eta \rangle 2 =
\tau \{h_0 Q_3 + h_0 n_1\} \langle \nu_4, \eta, 2 \rangle.
\]
These expressions have no indeterminacy because
$\tau \{h_0 Q_3 + h_0 n_1\}$ does not support a $2$ extension.
Finally, the bracket $\langle \nu_4, \eta, 2 \rangle$
contains zero by comparison to $\tmf$.
\end{proof}

\begin{lemma}
\label{lem:2-P^2h6c0}
\revdeg{87, 12, 45}
There is no hidden $2$ extension on $P^2 h_6 c_0$.
\end{lemma}

\begin{proof}
Table \ref{tab:misc-extn} shows that $P^2 h_6 c_0$ detects
the product $\rho_{23} \eta_6$.
Table \ref{tab:Toda} shows that $\eta_6$ is contained in the
Toda bracket $\langle \eta, 2, \theta_5 \rangle$.
Shuffle to obtain
\[
2 \rho_{23} \eta_6 =
2 \rho_{23} \langle \eta, 2, \theta_5 \rangle =
\langle 2, \eta, 2 \rangle \rho_{23} \theta_5 =
\tau \eta^2 \rho_{23} \theta_5.
\]
The product $\rho_{23} \theta_5$ is detected in Adams
filtration at least 13, and then $\tau \eta^2 \rho_{23} \theta_5$
is detected in filtration at least 16.  This rules out all possible
targets for a hidden $2$ extension on $P^2 h_6 c_0$.
\end{proof}

\begin{lemma}
\label{lem:2-M^2}
\revdeg{90, 12, 48}
There is no hidden $2$ extension on $M^2$.
\end{lemma}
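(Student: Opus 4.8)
The plan is to rule out a hidden $2$ extension on $M^2$ by controlling $2\mu$ for the element $\mu \in \pi_{90,48}$ detected by $M^2$. Recall from Lemma \ref{lem:perm-M^2} that $M^2$ is a permanent cycle in Adams filtration $12$, and that $M^2 h_1$, which by Example \ref{ex:M^2h1} equals the Massey product $\langle M h_1, h_0, h_0^2 g_2 \rangle$, detects the Toda bracket $\langle \eta \theta_{4.5}, 2, \sigma^2 \theta_4 \rangle$, a nonzero multiple of $\eta$. So $\eta\mu$ is detected by $M^2 h_1$ and lies in this bracket.

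First I would enumerate the candidate targets: the surviving classes of the Adams $E_\infty$-page in the $90$-stem of weight $48$ in Adams filtration at least $13$. The key mechanism is that any target $c$ of a hidden $2$ extension detects a multiple of $2$, and every multiple of $2$ is annihilated by $\eta$; hence no class supporting a hidden $\eta$ extension can serve as a target. The bulk of the candidates are therefore eliminated directly from Table \ref{tab:eta-extn}, while the rest are ruled out by comparison to the Adams spectral sequences for $C\tau$, $\tmf$, and $\mmf$, exactly as in the proofs of Lemmas \ref{lem:2-h0Q3} and \ref{lem:2-DB6}.

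For any candidate that survives this first pass I would pin down $2\mu$ using the bracket description of $\eta\mu$. Multiplying by $\eta$ and shuffling via Theorem \ref{thm:Toda-3fold},
\[
\eta \cdot 2\mu = 2 \langle \eta \theta_{4.5}, 2, \sigma^2 \theta_4 \rangle =
\langle 2, \eta \theta_{4.5}, 2 \rangle \sigma^2 \theta_4 =
\tau \eta^2 \theta_{4.5}\, \sigma^2 \theta_4,
\]
where the last equality uses Corollary \ref{cor:2-symmetric} together with $2 \cdot \eta \theta_{4.5} = 0$. Showing that this product vanishes forces $\eta \cdot 2\mu = 0$ and, after a check of indeterminacy, confines $2\mu$ to Adams filtration higher than every remaining candidate, so that $2\mu$ must in fact be zero and no hidden extension occurs in the sense of Definition \ref{defn:hidden}.

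The main obstacle will be the highest-filtration candidate targets, where the comparisons to $\tmf$ and $\mmf$ give out and one must rely on the shuffle above; the crux is verifying that $\tau \eta^2 \theta_{4.5}\, \sigma^2 \theta_4$ is zero (equivalently, that the putative $2$-multiple lands strictly above every available detecting class) and tracking the indeterminacy of $\langle \eta \theta_{4.5}, 2, \sigma^2 \theta_4 \rangle$, which consists of multiples of $\eta\theta_{4.5}$, so that it does not reintroduce a spurious target.
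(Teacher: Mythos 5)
Your plan has a genuine gap, and the gap sits exactly at the step you identify as the crux. The quantity you propose to compute, $\eta \cdot 2\mu$, is equal to $(2\eta)\mu$ and is therefore zero for \emph{every} element $\mu$, with no bracket manipulation required; the shuffle
$2\langle \eta\theta_{4.5}, 2, \sigma^2\theta_4\rangle = \langle 2, \eta\theta_{4.5}, 2\rangle\sigma^2\theta_4 = \tau\eta^2\theta_{4.5}\sigma^2\theta_4$
can only ever confirm this tautology. Consequently your ``second pass'' supplies no information beyond what your first pass already uses, namely that $2\mu$ is annihilated by $\eta$ and so cannot be detected by a class supporting an $h_1$-multiplication or a hidden $\eta$ extension. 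Any candidate target in the $90$-stem of weight $48$ and high filtration that is killed by $h_1$ and supports no hidden $\eta$ extension survives both passes, and you have no mechanism left to exclude it (the appeals to $C\tau$, $\tmf$, and $\mmf$ are asserted, not carried out). To make this style of argument work you would need a bracket description of $\mu$ itself, not of $\eta\mu$, so that $2\mu = \langle 2, \alpha, 2\rangle\beta = \tau\eta\alpha\beta$ could be evaluated; no such description of $M^2$ is available here.

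The paper's proof avoids all of this. By Lemma \ref{lem:theta4.5-h3^2h5} (recorded in Table \ref{tab:misc-extn}), $M^2$ detects $\theta_{4.5}^2$, the square of a class in the odd stem $45$. Graded commutativity then gives $\theta_{4.5}^2 = -\theta_{4.5}^2$, so $2\theta_{4.5}^2 = 0$, and there is no hidden $2$ extension on $M^2$. You should replace the $\eta$-multiplication scheme with this identification of a specific element detected by $M^2$ whose double vanishes for structural reasons.
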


\begin{proof}
Table \ref{tab:misc-extn} shows that 
$M^2$ detects $\theta_{4.5}^2$.
Graded commutativity implies that $2 \theta_{4.5}^2$ is zero.
\end{proof}

\section{Hidden $\eta$ extensions}
\label{sctn:eta-extn}

\begin{thm}
\label{thm:eta-extn}
Tables \ref{tab:eta-extn} \rev{and \ref{tab:eta-extn-null}} list some hidden extensions by $\eta$.
\end{thm}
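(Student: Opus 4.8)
The plan is to follow the same template used for the hidden $2$ extensions in Theorem \ref{thm:2-extn}, organizing the argument according to the technique recorded in the fourth column of Table \ref{tab:eta-extn}. For each listed extension, that column will indicate which of several standard methods establishes it, and the genuinely difficult cases will be deferred to the lemmas that follow the theorem.

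First I would dispatch the bulk of the extensions by naturality along the inclusion $i \colon S^{0,0} \map C\tau$ of the bottom cell. Because $\eta$ is not a multiple of $\tau$, whenever the Adams spectral sequence for $C\tau$ exhibits a hidden or not hidden $\eta$ extension from $a$ to $b$ with both $a$ and $b$ in the image of $i_*$, that extension pulls back to a hidden $\eta$ extension for the sphere. The machine-generated data for $C\tau$ recorded in \cite{IWX19} supplies all of these, including the hidden ones, so this step amounts to identifying the relevant classes on both sides.

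Next I would generate further extensions by combining a known hidden $\eta$ extension with a hidden $\tau$ extension from Table \ref{tab:tau-extn}, exactly as in the $2$-extension argument: if $\eta$ carries $a$ to $b$ and $\tau$ carries $b$ to $c$, then $\eta$ carries $\tau a$ to $c$. A parallel source is comparison to $\mmf$ along the unit map $S^{0,0} \map \mmf$, using Theorem \ref{thm:unit-mmf} to transport $\eta$ extensions from the completely understood spectral sequence for $\mmf$ (see \cite{Isaksen18}) back to the sphere; this is especially effective for the $\tmf$-related families. I would also invoke the Moss Convergence Theorem \ref{thm:Moss} to realize certain products $\eta \beta$ as elements of Toda brackets, together with the shuffling identities of Theorem \ref{thm:Toda-3fold}, forcing an $\eta$ extension whenever the associated bracket is detected in the expected Adams filtration.

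The main obstacle will be the remaining high-stem cases where none of these comparisons applies directly and one must instead run a Toda-bracket shuffle, then eliminate every competing target by filtration and multiplicative considerations. The principal subtlety is condition (3) of Definition \ref{defn:hidden-value}: one must verify that an apparent $\eta$ value is genuinely carried by the class of lowest filtration rather than produced trivially by a class in higher filtration, and one must track the target indeterminacy. I would isolate these delicate cases, along with any depending on the results in preparation \cite{BIX}, into the individual lemmas following the theorem, each proved by a tailored shuffle or naturality argument in the spirit of Lemmas \ref{lem:2-h1h6} and \ref{lem:2-e0A'}.
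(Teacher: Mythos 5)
Your proposal takes essentially the same route as the paper: the extensions in Table \ref{tab:eta-extn} are established by pulling back $\eta$ extensions from the Adams spectral sequence for $C\tau$ along inclusion of the bottom cell, by composing known $\eta$ extensions with hidden $\tau$ extensions from Table \ref{tab:tau-extn}, by comparison with $\mmf$, and by deferring the harder high-stem cases (and the low-stem cases already in the literature) to individual lemmas and citations. The one small caveat is that the extensions in $\mmf$ have not been rigorously analyzed, so the paper deduces the specific ones it needs from $\tmf$ together with the multiplicative structure rather than reading them off a completely understood $\mmf$ computation.
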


\begin{proof}
Many of the hidden extensions follow by comparison to 
$C\tau$.  For example, there is a hidden $\eta$ extension
from $\tau h_1 g$ to $c_0 d_0$ in the Adams spectral
sequence for $C\tau$.
Pulling back along inclusion of the bottom cell into $C\tau$,
there must also be a hidden $\eta$ extension from $\tau h_1 g$
to $c_0 d_0$ in the Adams spectral sequence for the sphere.
This type of argument is indicated by the notation
$C\tau$ in the fourth column of Table \ref{tab:eta-extn}.

Next, Table \ref{tab:tau-extn} shows a hidden $\tau$
extension from $c_0 d_0$ to $P d_0$.
Therefore, there is also a hidden $\eta$ extension from
$\tau^2 h_1 g$ to $P d_0$.
This type of argument is indicated by the notation $\tau$
in the fourth column of Table \ref{tab:eta-extn}.

The proofs of several of the extensions in Table \ref{tab:eta-extn}
rely on analogous extensions in $\mmf$.  
Extensions in $\mmf$ have not been rigorously analyzed \cite{Isaksen18}.
However, the specific extensions from $\mmf$ that we need
are easily deduced from extensions in $\tmf$, together
with the multiplicative structure.
For example, there is a hidden $\eta$ extension in $\tmf$
from $a n$ to $\tau d_0^2$.
Therefore, there is a hidden $\eta$ extension in $\mmf$
from $a n g$ to $\tau d_0^2 g$, and also a hidden
$\eta$ extension from $\D h_2^2 e_0$ to $\tau d_0 e_0^2$
in the homotopy groups of the sphere spectrum.
Note that $\mmf$ really is required here, since 
$a n g$ and $d_0^2 g$ equal
zero in the homotopy of $\tmf$.

Many cases require more complicated arguments.  In stems up to
approximately dimension 62, see 
\cite{Isaksen14c}*{Section 4.2.3 and Tables 29--30} and
\cite{WangXu18}.
The higher-dimensional cases are handled in the following lemmas.
\end{proof}

\begin{remark}
\label{rem:eta-tC}
The hidden $\eta$ extension from $\tau C$ to $\tau^2 g n$
is proved in \cite{WangXu18}, which uses on the
``$\R P^\infty$-method" to establish a hidden
$\sigma$ extension from $\tau h_3 d_1$ to $\D h_2 c_1$
and a hidden $\eta$ extension from $\tau h_1 g_2$ to $\D h_2 c_1$.
We now have easier proofs for these $\eta$ and $\sigma$ extensions,
using the hidden $\tau$ extension from $h_1^2 g_2$ to $\D h_2 c_1$
given in Table \ref{tab:tau-extn}, as well as the relation
$h_3^2 d_1 = h_1^2 g_2$.
\end{remark}

\rev{
\begin{remark}
\label{rem:eta-tDj1+t^2gC'}
If $h_1 f_2$ survives, then there is a hidden
$\tau$ extension from $\D h_1 j_1$ to $M \D h_1 d_0$.
It follows that there must be a hidden
$\eta$ extension from $\tau \D j_1 + \tau^2 g C'$ to
$M \D h_1 d_0$.
\end{remark}
}

\begin{remark}
The last column of Table \ref{tab:eta-extn}
indicates the crossing $\eta$ extensions.
\end{remark}

\begin{thm}
\label{thm:eta-extn-possible}
Table \ref{tab:eta-extn-possible}
lists all unknown hidden $\eta$ extensions, through
the $90$-stem. 
\end{thm}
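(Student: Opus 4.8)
The plan is to proceed exactly as in the proof of Theorem \ref{thm:2-extn-possible}, replacing multiplication by $2$ with multiplication by $\eta$ throughout. First I would enumerate, purely combinatorially from the Adams $E_\infty$-page through the $90$-stem, every \emph{a priori} possible hidden $\eta$ extension: this is a pair $(b,c)$ of elements of $E_\infty$ such that $h_1 b = 0$ in $E_\infty$, with $c$ lying one stem higher, one weight higher, and in strictly larger Adams filtration than the slot $h_1 b$ would occupy. From this list I would subtract the extensions already established in Table \ref{tab:eta-extn} (Theorem \ref{thm:eta-extn}), which are the \emph{known} hidden $\eta$ extensions. Every remaining candidate must then be either ruled out or recorded in Table \ref{tab:eta-extn-possible}; the content of the theorem is precisely that the recorded candidates are all that survive.

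Next I would apply the standard elimination toolkit. Many candidates disappear by naturality along the inclusion $i$ of the bottom cell and the projection $p$ to the top cell of $C\tau$: since the homotopy of $C\tau$ is algebraic and fully known, a hidden $\eta$ extension on the sphere forces a corresponding $\eta$ extension in $C\tau$, and the absence of the latter kills the candidate. The same logic applies to the unit maps $S^{0,0} \map \tmf$ and $S^{0,0} \map \mmf$, using the completely computed module structures there. A further batch is eliminated by the multiplicative structure: if $c$ supports an extension (by $h_1$, $h_2$, or $\tau$) that $\eta b$ cannot support, the candidate is impossible. Where needed I would read divisibility and hidden $\tau$ information off Tables \ref{tab:hid-incl}, \ref{tab:hid-proj}, and \ref{tab:tau-extn} via the long exact sequence (\ref{eq:LES-Ctau}).

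The residual hard cases I would handle by Toda bracket shuffles, collected as lemmas following the theorem in the style of Section \ref{sctn:eta-extn}. The recurring device is to realize $b$ as detecting a bracket $\langle \alpha, \beta, \gamma \rangle$ (via the Moss Convergence Theorem \ref{thm:Moss} and Table \ref{tab:Toda}), and then to compute $\eta \langle \alpha, \beta, \gamma \rangle = \langle \eta, \alpha, \beta \rangle \gamma$ or otherwise reassociate $\eta$ using Theorem \ref{thm:Toda-3fold}, frequently invoking the relation $\langle 2, \eta, 2 \rangle = \tau \eta^2$ from Corollary \ref{cor:2-symmetric}. If the resulting product provably lands in a trivial group or is detected strictly below the candidate target, the $\eta$ extension is excluded; if it is forced to be nonzero, the extension is confirmed and instead belongs in Table \ref{tab:eta-extn}. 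I would also exploit the near-symmetry between $\eta$ and $2$ extensions, transporting conclusions already obtained in Theorem \ref{thm:2-extn-possible} whenever the relevant Toda brackets coincide.

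The main obstacle will be completeness rather than any single computation: I must certify that the combinatorial enumeration misses nothing and that each elimination is airtight. The genuinely delicate candidates cluster in stems $82$ through $90$, where several of them depend on differentials left unresolved in Theorems \ref{thm:Adams-d2}--\ref{thm:Adams-higher}; for these, the value of the $\eta$ extension is contingent on an unknown differential, and the honest outcome is to list the candidate in Table \ref{tab:eta-extn-possible} together with the conditional statements already flagged in Remarks such as \ref{rem:eta-h0^6h4h6} and \ref{rem:eta-tDj1+t^2gC'}. Disentangling these interlocking contingencies, so that each table entry records exactly the cases that remain logically open, is the crux of the argument.
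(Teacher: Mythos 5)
Your proposal matches the paper's proof in both structure and substance: the paper eliminates candidates first by comparison to $C\tau$, $\tmf$, and $\mmf$, then by multiplicative structure (e.g.\ a target supporting a $2$ extension cannot receive an $\eta$ extension since $2\eta = 0$), and finally disposes of the hard residual cases in a sequence of lemmas built on Toda bracket shuffles, exactly as you describe. Your observation that the stem-$82$ through stem-$90$ candidates remain contingent on unresolved differentials is also precisely how the paper handles them, via the conditional remarks accompanying the table.
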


\begin{proof}
Many possible extensions can be eliminated by
comparison to $C\tau$, to $\tmf$, or to $\mmf$.
For example,
there cannot be a hidden $\eta$ extension
from $\tau M d_0$ to $\tau^4 g^3$ because
$\tau^4 g^3$ maps to a non-zero element
in $\pi_{60} \tmf$ that is not divisible by $\eta$.

Other possibilities are eliminated by consideration of other
parts of the multiplicative structure.  For example,
there cannot be a hidden $\eta$ extension whose target supports
a multiplication by $2$, since $2 \eta$ equals zero.

Many cases are eliminated by more complicated arguments.
These are handled in the following lemmas.
\end{proof}

\rev{
\begin{remark}
\label{rem:eta-h2D3}
There is no hidden $\eta$ extension on $h_2 D_3$.
The possible target $\tau k_1$ is eliminated by computer
data recently produced by Dexter Chua on $d_2$ differentials
the Adams spectral sequence for the cofiber of $\eta$.
\end{remark}
}

\begin{remark}
\label{rem:eta-tgD3}
There is a hidden $\tau$ extension from
$\tau (\D e_1 + C_0) g$ to $\D^2 h_2 n$.
The possible extension from $\tau g D_3$ to 
$\tau (\D e_1 + C_0) g$ occurs if and only if
the possible extension from $\tau^2 g D_3$ to
$\D^2 h_2 n$ occurs.
\end{remark}

\rev{
\begin{remark}
\label{rem:eta-to-gQ3}
Computer
data recently produced by Dexter Chua on $d_2$ differentials in
the Adams spectral sequence for the cofiber of $\eta$
shows that the classical element $g Q_3$ must be the target
of a hidden $\eta$ extension.  Therefore,
there is either a hidden $\eta$ extension from
$h_1^2 f_2$ to $\tau g Q_3$,
or from $\tau h_1 x_{85,6}$ to $\tau^2 g Q_3$.
\end{remark}
}

\begin{lemma}
\label{lem:eta-th1Q2}
\revdeg{58, 8, 30}
There is no hidden $\eta$ extension on $\tau h_1 Q_2$.
\end{lemma}

\begin{proof}
There cannot be a hidden $\eta$ extension from
$\tau h_1 Q_2$ to $\tau^2 \D h_1 d_0 g$ by comparison to $\tmf$.
It remains to show that there cannot be a hidden
$\eta$ extension from $\tau h_1 Q_2$ to $\tau M d_0$.

Note that $h_1 d_0 Q_2 = \tau^3 d_1 g^2$,
so $\kappa \{h_1 Q_2\}$ is detected by
$\tau^3 d_1 g^2$.  Therefore,
$\kappa \{h_1 Q_2\} + \tau \kappa_1 \kappabar^2$
is detected in higher filtration.
The only possibility is $\tau^3 e_0 g m$, but that cannot occur
by comparison to $\mmf$.
Therefore,
$\kappa \{h_1 Q_2\} + \tau \kappa_1 \kappabar^2$ is zero.

Now $\tau \eta \kappa_1 \kappabar^2$ is zero because
$\tau \eta \kappa_1 \kappabar$ cannot be detected by
$\D h_1 d_0^2$ by comparison to $\tmf$.
Therefore,
$\eta \kappa \{h_1 Q_2\}$ is zero,
so $\tau \eta \kappa \{h_1 Q_2\}$ is also zero.

On the other hand, $\tau \kappa \{M d_0\}$ is non-zero
because it is detected by $\tau M d_0^2$.
Therefore $\tau \eta \{h_1 Q_2\}$ cannot be detected
by $\tau M d_0$.
\end{proof}

\begin{lemma}
\label{lem:eta-th1^2h5^2}
\revdeg{64, 4, 33}
There is no hidden $\eta$ extension on $\tau h_1^2 h_5^2$.
\end{lemma}

\begin{proof}
Table \ref{tab:2-extn} shows that $\tau h_1^2 h_5^2$ is the
target of a hidden $2$ extension.
\end{proof}

\begin{lemma}
\label{lem:eta-t^2h1X2}
\revdeg{64, 8, 33}
There is a hidden $\eta$ extension from $\tau^2 h_1 X_2$
to $\tau^2 M h_0 g$.
\end{lemma}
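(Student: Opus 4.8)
The plan is to reduce this hidden $\eta$ extension to a more tractable hidden $2$ extension by a Toda bracket shuffle, exactly in the style of Lemmas \ref{lem:2-th1H1} and the other $2$-extension arguments, and to use the long exact sequence (\ref{eq:LES-Ctau}) as a cross-check. First I would pin down the degrees: $\tau^2 h_1 X_2$ detects a class in $\pi_{63,33}$ (recall from Lemma \ref{lem:d4-tX2} that $X_2$ lies in filtration $7$), while $\tau^2 M h_0 g$ lies in $\pi_{64,34}$ in Adams filtration $11$, so the asserted extension raises filtration by $3$ and is genuinely hidden. Writing $\tau^2 h_1 X_2 = h_1 \cdot \tau^2 X_2$, I would fix a class $\beta$ detected by $\tau^2 X_2$, so that the source class is $\eta\beta$ and the goal becomes showing that $\eta^2\beta$ is detected by $\tau^2 M h_0 g$.

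The primary route is the symmetric bracket $\langle 2, \eta, 2\rangle = \tau\eta^2$ of Corollary \ref{cor:2-symmetric}, which gives
\[
\tau\eta^2\beta = \langle 2,\eta,2\rangle\beta = 2\langle\eta,2,\beta\rangle .
\]
Assuming $\tau$ acts injectively on $\tau^2 M h_0 g$ in the $E_\infty$-page (to be checked by inspection of the chart), it then suffices to show that $2\langle\eta,2,\beta\rangle$ is detected by $\tau^3 M h_0 g$. Thus the $\eta$ extension is converted into a hidden $2$ extension on whatever element detects $\langle\eta,2,\beta\rangle$: I would identify that bracket by the Moss Convergence Theorem \ref{thm:Moss} using the appropriate Adams $d_2$ (the differential $d_4(\tau X_2) = \tau M h_2 d_0$ and the surrounding multiplicative structure govern this bidegree), and then read off the resulting $2$ extension, comparing it against the entries of Table \ref{tab:2-extn}.

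As an independent confirmation, and to fix the target precisely, I would run the $C\tau$ bookkeeping: since $\tau^2 M h_0 g$ is $\tau$-divisible it maps to zero under inclusion of the bottom cell, so by (\ref{eq:LES-Ctau}) it is hit by $\tau$, and the corresponding hidden $\eta$ value is governed by projection to the top cell in the sense of Section \ref{sctn:assoc-graded} and Table \ref{tab:hid-proj}; this should match the shuffle computation and rule out an accidental coincidence. The main obstacle, in either route, is target disambiguation: several classes of higher Adams filtration sit in the $64$-stem at weight $34$, and I must exclude every competing target and confirm the extension is not swallowed by the indeterminacy. I expect to dispatch the competitors by comparison to $\tmf$ and $\mmf$ together with the constraint that a genuine $\eta$ target cannot simultaneously support multiplication by $2$ (as $2\eta = 0$), which is precisely the kind of elimination used throughout Tables \ref{tab:eta-extn} and \ref{tab:tau-extn}.
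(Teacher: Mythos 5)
Your proposal takes a genuinely different route from the paper, and as written it has a gap that I do not think can be closed along the lines you describe. The shuffle $\tau\eta^2\beta = \langle 2,\eta,2\rangle\beta = 2\langle \eta,2,\beta\rangle$ is legitimate (once you arrange $2\beta=0$, and once you replace $\tau^2 X_2$ by the actual permanent cycle $\tau^2 X_2 + \tau^2 C'$ --- $X_2$ and $\tau X_2$ both support differentials), but it only converts the problem into two new ones: identifying which element of the $E_\infty$-page detects $\langle \eta,2,\beta\rangle$, and establishing a hidden $2$ extension from that element to $\tau^3 M h_0 g$. Neither is supplied. The needed $2$ extension does not appear in Table \ref{tab:2-extn}, so it cannot simply be ``read off''; the only nearby entry, the hidden $2$ extension from $h_1^2 X_2$ to $M h_0 g$, is itself justified in that table by $\tau$-linearity from the hidden $\tau$- and $\eta$-extension data in this region, so leaning on it risks circularity, and in any case you would still have to show that $\langle\eta,2,\beta\rangle$ is detected by the correct $\tau$-power of $h_1^2 X_2$ and control the indeterminacy of the bracket before applying the extension to it. The ``independent confirmation'' via the sequence (\ref{eq:LES-Ctau}) is also not a proof: the observation that $\tau^2 M h_0 g$ is a non-hidden $\tau$-multiple says nothing about whether it is an $\eta$-multiple, and hidden $\eta$ extensions are constrained by $h_1$-multiplications in $\pi_{*,*}C\tau$ pulled back along the bottom cell, not by $\tau$-divisibility of the target. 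Finally, the degree bookkeeping is off by one throughout: $\tau^2 h_1 X_2$ detects a class in $\pi_{64,33}$ (not $\pi_{63,33}$) and $\tau^2 M h_0 g$ lies in the $65$-stem.

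For contrast, the paper's argument stays one $\tau$-power up and uses $\nu$ rather than $2$ to separate filtrations. It starts from the hidden $\eta$ extension from $\tau h_1 X_2$ to $c_0 Q_2$ (established by comparison to $C\tau$), chooses a $\tau$-torsion class $\beta$ detected by $c_0 Q_2$, and uses projection from $C\tau$ to the top cell together with the relation $h_2\cdot \ol{c_0 Q_2} = P(A+A')$ to see that $\nu\beta$ is nonzero and detected by $\tau M h_0 h_2 g$. Since $\eta^2\alpha$ and $\beta$ are both detected by $c_0 Q_2$, their sum lies in higher filtration, and it is nonzero because $\nu(\eta^2\alpha+\beta)=\nu\beta\neq 0$; hence $\eta^2\alpha+\beta$ is detected by $\tau M h_0 g$, and multiplying by $\tau$ kills $\beta$ and yields the claimed extension. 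If you want to salvage your approach, the missing ingredient is an independent identification of $\langle\eta,2,\beta\rangle$ and of the hidden $2$ extension on it; without that, the shuffle is a restatement of the problem rather than a solution.
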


\begin{proof}
Table \ref{tab:eta-extn} shows that there is a hidden
$\eta$ extension from $\tau h_1 X_2$ to $c_0 Q_2$.
Since $c_0 Q_2$ does not support a hidden $\tau$ extension,
there exists an element $\beta$ in $\pi_{65,35}$ that is 
detected by $c_0 Q_2$ such that $\tau \beta = 0$.

Projection from $C\tau$ to the top cell takes
$\ol{c_0 Q_2}$ and $P(A+A')$ to $c_0 Q_2$ and $\tau M h_0 h_2 g$
respectively.  Since $h_2 \cdot \ol{c_0 Q_2} = P(A+A')$
in the Adams spectral sequence for $C\tau$, it follows that
$\nu \beta$ is non-zero and detected by $\tau M h_0 h_2 g$.

Let $\alpha$ be an element of $\pi_{63,33}$ that is detected
by $\tau X_2 + \tau C'$, and
consider the sum $\eta^2 \alpha + \beta$.
Both terms are detected by $c_0 Q_2$, but the sum could be detected
in higher filtration.  In fact, the sum is non-zero because
$\nu (\eta^2 \alpha + \beta)$ is non-zero.

It follows that $\eta^2 \alpha + \beta$ is detected by $\tau M h_0 g$,
and that $\tau \eta^2 \alpha$ is detected by $\tau^2 M h_0 g$.
\end{proof}

\begin{lemma}
\label{lem:eta-th1^3h6}
\revdeg{66, 4, 34}
There is no hidden $\eta$ extension on $\tau h_1^3 h_6$.
\end{lemma}

\begin{proof}
The element $\tau \eta^2 \eta_6$ is detected by
$\tau h_1^3 h_6$.  Table \ref{tab:Toda} shows that
$\eta_6$ is contained in the Toda bracket
$\langle \eta, 2, \theta_5 \rangle$
Now shuffle to obtain
\[
\eta \cdot \tau \eta^2 \eta_6 = 4 \nu \eta_6 = 
4 \nu \langle \eta, 2, \theta_5 \rangle =
4 \langle \nu, \eta, 2 \rangle \theta_5,
\]
which equals zero because $2 \theta_5$ is zero.
\end{proof}

\begin{lemma}
\label{lem:eta-tD1h3^2}
\revdeg{66, 6, 35}
There is no hidden $\eta$ extension from
$\tau \D_1 h_3^2$ to $h_2^2 A'$.
\end{lemma}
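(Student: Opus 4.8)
The plan is to show that the element $h_2^2 A'$ is not the target of a hidden $\eta$ extension from $\tau \D_1 h_3^2$ by producing a contradiction via shuffling. The natural strategy is to exploit the Toda bracket description of $\tau \D_1 h_3^2$. Lemma \ref{lem:eta^2,theta4,eta^2,theta4} shows that $\D_1 h_3^2$ detects the fourfold Toda bracket $\langle \eta^2, \theta_4, \eta^2, \theta_4 \rangle$. Since $\tau$ is a unit detector, $\tau \D_1 h_3^2$ detects (a $\tau$-multiple of) the same bracket. I would let $\alpha$ be an element of $\pi_{76,40}$ detected by $\tau \D_1 h_3^2$, so that $\alpha$ lies in $\tau \langle \eta^2, \theta_4, \eta^2, \theta_4 \rangle$, and then compute $\eta \alpha$ by shuffling the $\eta$ into the fourfold bracket.

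First I would write $\eta \alpha \in \eta \cdot \tau \langle \eta^2, \theta_4, \eta^2, \theta_4 \rangle$ and use the shuffle relation from Theorem \ref{thm:Toda-4fold}(5), namely $\eta \langle \eta^2, \theta_4, \eta^2, \theta_4 \rangle = \langle \eta \cdot \eta^2, \theta_4, \eta^2 \rangle \theta_4$ appropriately interpreted, or alternatively move $\eta$ across using part (3) to rewrite the leading term. The cleaner route is to observe that $\eta \cdot \eta^2 = \eta^3$, and to use a shuffle that reduces the fourfold bracket with an $\eta$ factor to a product of a lower bracket with $\theta_4$. The goal is to land in a product $\langle \cdots \rangle \theta_4$ where the inner factor is controlled: I expect the relevant subbracket to involve $\langle \eta^3, \theta_4, \eta^2 \rangle$ or $\langle \theta_4, \eta^2, \theta_4 \rangle$, the latter of which equals zero by Lemma \ref{lem:theta4,eta^2,theta4}. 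If $\eta \alpha$ is forced to be a multiple of $\theta_4$ times something that vanishes, then $\eta \alpha = 0$, so there can be no hidden $\eta$ extension hitting $h_2^2 A'$ (provided $h_2^2 A'$ is in the correct degree and is not hit for trivial reasons).

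The main obstacle will be handling the indeterminacies of the fourfold Toda bracket and confirming that the detecting element $h_2^2 A'$ really would have to detect $\eta \alpha$ if the extension occurred. I would need to check that $h_2^2 A'$ is the \emph{unique} possible target in its degree and filtration, and that no crossing-value subtleties intervene; this is a degree-counting argument against the Adams $E_\infty$-chart. I would also verify, using Theorem \ref{thm:Toda-4fold}, that the shuffle I apply is legitimate—in particular that at least one threefold subbracket is strictly zero so the fourfold bracket is well-defined, which is guaranteed by Lemmas \ref{lem:eta^2,theta4,eta^2} and \ref{lem:theta4,eta^2,theta4}. The remaining delicate point is ensuring that after shuffling, the product $\langle \theta_4, \eta^2, \theta_4 \rangle \cdot (\text{something})$ or the analogous expression genuinely vanishes in $\pi_{*,*}$ and is not merely detected in higher filtration; here comparison to the homotopy of $C\tau$ or to $\mmf$ may be needed to rule out a nonzero value hiding in higher Adams filtration.
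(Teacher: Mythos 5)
Your proposal does not match the paper's argument, and as written it has a genuine gap. The paper's proof is a one-line structural observation: Table \ref{tab:nu-extn} (via Lemma \ref{lem:nu-h2^2A'}) shows that $h_2^2 A'$ supports a hidden $\nu$ extension to $h_1 h_3(\D e_1 + C_0)$; since $\eta\nu = 0$, an element that supports a $\nu$ multiplication cannot be an $\eta$-multiple, so $h_2^2 A'$ cannot be the target of any hidden $\eta$ extension. No Toda bracket manipulation is needed.

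The specific problems with your route are the following. First, the shuffle you lean on is not legitimate in the form you state it: $\eta \langle \eta^2, \theta_4, \eta^2, \theta_4 \rangle = \langle \eta, \eta^2, \theta_4, \eta^2 \rangle \theta_4$ requires the bracket $\langle \eta, \eta^2, \theta_4, \eta^2 \rangle$ to be defined, but $\eta \cdot \eta^2 = \eta^3 = 4\nu \neq 0$, so it is not. The weaker containment $\eta \langle \eta^2, \theta_4, \eta^2, \theta_4 \rangle \subseteq \langle \eta^3, \theta_4, \eta^2, \theta_4 \rangle$ only shifts the problem: you must verify that bracket is defined, compute it together with its indeterminacy, and the vanishing of the subbracket $\langle \theta_4, \eta^2, \theta_4 \rangle$ from Lemma \ref{lem:theta4,eta^2,theta4} does not by itself force the fourfold bracket to vanish. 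Second, even if you established $\eta\alpha = 0$ for one element $\alpha$ of the Toda bracket, this does not rule out the hidden extension: by Definition \ref{defn:hidden}, the extension exists if \emph{some} element of the coset $\{\tau \D_1 h_3^2\}$ has $\eta$-multiple detected by $h_2^2 A'$, and elements of that coset differing from $\alpha$ in filtration $\geq 7$ can perfectly well have $\eta$-multiples detected in filtration $8$, which is exactly where $h_2^2 A'$ lives. You flag both of these as "obstacles" but neither is resolved, and resolving them is the entire content of the lemma. The $\eta\nu = 0$ argument sidesteps all of this by attacking the target rather than the source.
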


\begin{proof}
Table \ref{tab:nu-extn} shows that $h_2^2 A'$
supports a hidden $\nu$ extension, so it cannot be the
target of a hidden $\eta$ extension.
\end{proof}

\begin{lemma}
\label{lem:eta-th1Q3}
\revdeg{68, 6, 36}
There is no hidden $\eta$ extension on $\tau h_1 Q_3$.
\end{lemma}

\begin{proof}
Table \ref{tab:nu-extn} shows that $\tau h_1 Q_3$
is the target of a hidden $\nu$ extension.
Therefore, it cannot be the source of a hidden $\eta$
extension.
\end{proof}

\begin{lemma}
\label{lem:eta-h3A'}
\revdeg{68, 7, 36}
There is a hidden $\eta$ extension from $h_3 A'$ to
$h_3 (\D e_1 + C_0)$.
\end{lemma}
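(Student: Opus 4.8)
The plan is to establish the hidden $\eta$ extension from $h_3 A'$ to $h_3(\D e_1 + C_0)$ by exploiting the Toda bracket description of the element detected by $h_3 A'$, together with the multiplicative structure already in place. Table \ref{tab:Toda} shows that $h_3 A'$ detects the Toda bracket $\langle \sigma, \kappa, \tau \eta \theta_{4.5} \rangle$ (this is the same bracket used in Lemma \ref{lem:2-h3A'}). First I would try to compute $\eta$ times a representative of this bracket by shuffling the $\eta$ inside. The natural move is
\[
\eta \langle \sigma, \kappa, \tau \eta \theta_{4.5} \rangle \subseteq
\langle \eta, \sigma, \kappa \rangle \tau \eta \theta_{4.5}
\quad\text{or}\quad
\sigma \langle \kappa, \tau \eta \theta_{4.5}, \eta \rangle,
\]
depending on which subbracket is defined; I would determine which shuffle is legitimate by checking the relevant products vanish, and then evaluate the resulting expression using known brackets and extensions from Tables \ref{tab:Toda}, \ref{tab:eta-extn}, and \ref{tab:misc-extn}.

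The key computational step is to show that the shuffled expression is detected precisely by $h_3(\D e_1 + C_0)$, which has the correct stem and weight to be the target of a hidden $\eta$ extension on $h_3 A'$. I expect the cleanest route is to note that there is already a hidden $\eta$ extension from $\tau h_1 H_1$ to $h_3 Q_2$ (used in Lemma \ref{lem:2-th1H1}) and a hidden $2$ extension from $\tau h_1 H_1$ to $\tau h_1(\D e_1 + C_0)$ (Lemma \ref{lem:2-th1H1}(1)), so the relations among $\D e_1 + C_0$, $H_1$, and $A'$ in this degree constrain the possibilities. I would cross-check against the homotopy of $C\tau$: comparison along inclusion of the bottom cell into $C\tau$ and projection to the top cell should either exhibit the extension directly as the image of a genuine $\eta$ extension in $\pi_{*,*}C\tau$, or rule out all other candidate targets in the appropriate stem, leaving $h_3(\D e_1 + C_0)$ as the only possibility.

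The main obstacle will be controlling indeterminacy and filtration jumps. Because $\tau \eta \theta_{4.5}$ sits in high Adams filtration, the bracket $\langle \sigma, \kappa, \tau\eta\theta_{4.5}\rangle$ may have substantial indeterminacy, and the Moss Convergence Theorem \ref{thm:Moss} only guarantees that $h_3 A'$ detects \emph{some} element of it; I must verify that the particular element whose $\eta$-multiple I compute is the one detected by $h_3 A'$, and that the $\eta$-multiple is not pushed into still higher filtration by a competing permanent cycle. The likely resolution is to pin down the target by a $C\tau$ comparison argument analogous to those in Lemmas \ref{lem:d4-tX2} and \ref{lem:d5-tp1+h0^2h3h6}, where an extension in the sphere is forced by a corresponding extension in $C\tau$ together with the behavior of the bottom-cell inclusion and top-cell projection maps recorded in Tables \ref{tab:hid-incl} and \ref{tab:hid-proj}. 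If the direct Toda-bracket shuffle proves too lossy, I would fall back entirely on this $C\tau$-naturality approach, using that $h_3 A'$ and $h_3(\D e_1 + C_0)$ are $h_3$-multiples of $A'$ and $\D e_1 + C_0$ respectively, and multiplying a known lower-stem $\eta$ extension by $\sigma$.
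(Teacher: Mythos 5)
Your proposal assembles the right general toolkit but both of your concrete routes break down, and the step that actually carries the paper's proof is missing. The primary route—shuffling $\eta$ into $\langle \sigma, \kappa, \tau \eta \theta_{4.5} \rangle$—fails at the outset: the shuffle $\eta \langle \sigma, \kappa, \tau \eta \theta_{4.5} \rangle = \langle \eta, \sigma, \kappa \rangle \tau \eta \theta_{4.5}$ requires the subbracket $\langle \eta, \sigma, \kappa \rangle$ to be defined, but $\eta \sigma \neq 0$. (This is exactly why the paper's Lemma \ref{lem:nu-h3A'} shuffles with $\nu$ instead, where $\nu\sigma = 0$ makes the bracket legitimate.) Your other fallback, ``multiplying a known lower-stem $\eta$ extension by $\sigma$,'' is also closed off: $A'$ is not a permanent cycle ($d_5(A') = \tau M h_1 d_0$, Table \ref{tab:Adams-d5}), so there is no $\eta$ extension on $A'$ in the $E_\infty$-page to multiply by $\sigma$. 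And in any case a shuffle identity by itself only rewrites the product; it does not show the product is nonzero and detected in the specific filtration of $h_3(\D e_1 + C_0)$.

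The $C\tau$ comparison you mention as a last resort is in fact the paper's first and only structural input, but the real content of the lemma is a disambiguation you never name. Comparison to $C\tau$ shows that $h_3 A'$ supports a hidden $\eta$ extension whose target is either $h_3(\D e_1 + C_0)$ or $\tau h_2^2 C' + h_3(\D e_1 + C_0)$ --- two elements in the same stem and filtration that $C\tau$ alone cannot distinguish. Saying ``rule out all other candidate targets'' is not enough; you need a device that separates these two. The paper's device is that Table \ref{tab:nu-extn} records a hidden $\nu$ extension supported by $\tau h_2^2 C' + h_3(\D e_1 + C_0)$, and a class supporting a hidden $\nu$ extension cannot be the target of a hidden $\eta$ extension because $\eta\nu = 0$. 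Without identifying the competing candidate and supplying this (or an equivalent) argument, the proof is not complete.
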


\begin{proof}
Comparison to $C\tau$ shows that there is a hidden
$\eta$ extension from $h_3 A'$ to either
$\tau h_2^2 C' + h_3(\D e_1 + C_0)$ or
$h_3 (\D e_1 + C_0)$.
Table \ref{tab:nu-extn} shows that
$\tau h_2^2 C' + h_3(\D e_1 + C_0)$ supports a hidden
$\nu$ extension.  Therefore, it cannot be the target of a
hidden $\eta$ extension.
\end{proof}

\begin{lemma}
\label{lem:eta-h2^2h6}
\revdeg{69, 3, 36}
There is a hidden $\eta$ extension from $h_2^2 h_6$ to
$\tau h_0 h_2 Q_3$.
\end{lemma}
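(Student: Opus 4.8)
The plan is to realize the class $\{h_2^2 h_6\}$ as a Toda bracket and then combine a shuffle with the product identity proved in Lemma \ref{lem:d4-h1c3}. First note the degrees: $h_2^2 h_6$ lies in the $69$-stem with weight $36$, so $\eta \cdot \{h_2^2 h_6\}$ lies in the $70$-stem with weight $37$, which is exactly the degree of $\tau h_0 h_2 Q_3$. Since $h_1 h_2 = 0$, the product $h_1 \cdot h_2^2 h_6$ already vanishes in the $E_2$-page, so any $\eta$ extension from $h_2^2 h_6$ is automatically hidden.

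The first step is to show that $h_2^2 h_6$ is a permanent cycle detecting the Toda bracket $\langle \nu^2, 2, \theta_5 \rangle$. Because $h_0 h_2^2 = 0$ and $d_2(h_6) = h_0 h_5^2$, the same mechanism that yields $\langle h_1, h_0, h_3^2 \rangle = h_1 h_4$ in the $E_3$-page shows that the Massey product $\langle h_2^2, h_0, h_5^2 \rangle$ equals $h_2^2 h_6$ in the $E_3$-page. After checking that there are no crossing differentials for the products $h_0 h_2^2 = 0$ and $h_0 h_5^2 = 0$ in the $E_3$-page, the Moss Convergence Theorem \ref{thm:Moss} then shows that $h_2^2 h_6$ detects $\langle \nu^2, 2, \theta_5 \rangle$, where $h_2^2$, $h_0$, and $h_5^2$ detect $\nu^2$, $2$, and $\theta_5$, and both $2\nu^2$ and $2\theta_5$ vanish.

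Next I would shuffle. By Theorem \ref{thm:Toda-3fold},
\[
\eta \langle \nu^2, 2, \theta_5 \rangle = \langle \eta, \nu^2, 2 \rangle \theta_5 ,
\]
and one computes that $\langle \eta, \nu^2, 2 \rangle$ contains $\epsilon + \eta\sigma$: the Massey product $\langle h_1, h_2^2, h_0 \rangle$ equals $c_0$, which detects $\epsilon$, while $\eta\sigma$ lies in the indeterminacy $\eta \cdot \pi_{7,4}$. Hence $\eta \cdot \{h_2^2 h_6\}$ contains $(\epsilon + \eta\sigma)\theta_5$. The proof of Lemma \ref{lem:d4-h1c3} provides an element $\alpha$ of $\pi_{67,36}$, detected by $h_0 Q_3 + h_0 n_1$, with $(\eta\sigma + \epsilon)\theta_5 = \tau \nu \alpha$. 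Since $h_2 \cdot (h_0 Q_3 + h_0 n_1) = h_0 h_2 Q_3$ in the $E_\infty$-page (using that $h_0 h_2 n_1$ vanishes there), the class $\nu\alpha$ is detected by $h_0 h_2 Q_3$, so $\tau \nu \alpha$ is detected by $\tau h_0 h_2 Q_3$. This exhibits the desired hidden $\eta$ extension.

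The hard part will be the middle step: evaluating $\langle \eta, \nu^2, 2 \rangle$ precisely and controlling the indeterminacy of $\langle \eta, \nu^2, 2 \rangle \theta_5$, so that the specific class $(\epsilon + \eta\sigma)\theta_5$ is genuinely obtained and is genuinely detected by $\tau h_0 h_2 Q_3$ rather than being annihilated or detected in still higher filtration. I would resolve this by combining the identification $\nu^2 \theta_5 = \{\tau h_1 Q_3\}$ from Table \ref{tab:nu-extn} with the explicit equality $\tau \nu \alpha = (\eta\sigma + \epsilon)\theta_5$, and by ruling out the competing lower-filtration detecting classes (such as $c_0 h_5^2$) through comparison to $C\tau$, to $\tmf$, or to $\mmf$, finally verifying condition (3) of Definition \ref{defn:hidden} for the filtration of the target.
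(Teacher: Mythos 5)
Your proposal is circular. The key input you invoke --- that there is an element $\alpha$ of $\pi_{67,36}$ detected by $h_0 Q_3 + h_0 n_1$ with $\tau\nu\alpha = (\eta\sigma+\epsilon)\theta_5$, so that $(\eta\sigma+\epsilon)\theta_5$ is detected by $\tau h_0 h_2 Q_3$ --- is Lemma \ref{lem:compound-(epsilon+etasigma)-theta5} (used in the proof of Lemma \ref{lem:d4-h1c3}). That lemma is proved from Lemma \ref{lem:nubar-h5^2}, whose proof begins by \emph{assuming} the hidden $\eta$ extension from $h_2^2 h_6$ to $\tau h_0 h_2 Q_3$, i.e., the statement you are trying to prove. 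In the paper the logical order is the reverse of yours: the hidden $\eta$ extension is established first, and only then does one deduce that the multiple of $\eta$ detected by $\tau h_0 h_2 Q_3$ is $(\epsilon+\eta\sigma)\theta_5$. Without that input, your shuffle only tells you that $\eta\beta$ lies in $\{\epsilon\theta_5,\ (\epsilon+\eta\sigma)\theta_5\}$ for $\beta \in \langle \nu^2, 2, \theta_5\rangle$, and you have no independent way to show that $(\epsilon+\eta\sigma)\theta_5$ is nonzero and detected precisely in filtration $7$ rather than vanishing or landing elsewhere. A secondary problem: your plan to ``rule out'' the lower-filtration class $h_5^2 c_0 = h_1 p'$ by comparison to $C\tau$, $\tmf$, or $\mmf$ cannot succeed, because this extension genuinely is a crossing extension --- some elements detected by $h_2^2 h_6$ (those differing by $\sigma\theta_5 \in \{p'\}$, which lies in the indeterminacy of your bracket) do have $\eta$-multiples detected by $h_1 p'$; the issue is resolved by adjusting the choice within the indeterminacy, not by a comparison map.

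The paper's actual argument runs through the target rather than the source. Using the Massey product $h_0 h_2 = \langle h_1, h_0, h_1\rangle$, one computes $\langle \tau h_1 Q_3, h_0, h_1\rangle = \{\tau h_0 h_2 Q_3,\ \tau h_0 h_2 Q_3 + \tau h_1 h_3 H_1\}$; since $\nu^2\theta_5$ is detected by $\tau h_1 Q_3$ (the hidden $\nu$ extension on $h_2 h_5^2$), the Moss Convergence Theorem shows the Toda bracket $\langle \nu^2\theta_5, 2, \eta\rangle$ is detected by one of these two elements. The shuffle $\langle \nu^2\theta_5, 2, \eta\rangle \supseteq \nu^2\langle\theta_5,2,\eta\rangle = \nu\theta_5\langle 2,\eta,\nu\rangle = 0$ (as $\pi_{5,3}=0$) shows the bracket equals its indeterminacy, which consists of multiples of $\eta$; hence $\tau h_0 h_2 Q_3$ or $\tau h_0 h_2 Q_3 + \tau h_1 h_3 H_1$ detects a multiple of $\eta$, and the only possible source is $h_2^2 h_6$. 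Comparison to $C\tau$ then eliminates the target $\tau h_0 h_2 Q_3 + \tau h_1 h_3 H_1$. If you want to salvage your route, you would need to replace the circular citation with this kind of independent identification of where $\langle \nu^2\theta_5, 2, \eta\rangle$ is detected.
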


\begin{proof}
Table \ref{tab:Massey} gives the Massey product
$h_0 h_2 = \langle h_1, h_0, h_1 \rangle$.
Therefore,
\[
\langle \tau h_1 Q_3, h_0, h_1 \rangle = 
\{ \tau h_0 h_2 Q_3, \tau h_0 h_2 Q_3 + \tau h_1 h_3 H_1 \}.
\]
Table \ref{tab:nu-extn} shows that
there is a hidden $\nu$ extension from
$h_2 h_5^2$ to $\tau h_1 Q_3$, so
$\nu^2 \theta_5$ is detected by $\tau h_1 Q_3$.
Therefore,
the Toda bracket $\langle \nu^2 \theta_5, 2, \eta \rangle$
is detected by $\tau h_0 h_2 Q_3$ or by 
$\tau h_0 h_2 Q_3 + \tau h_1 h_3 H_1$.

Now $\langle \nu^2 \theta_5, 2, \eta \rangle$
contains $\nu^2 \langle \theta_5, 2, \eta \rangle$. This 
expression 
equals $\nu \theta_5 \langle 2, \eta, \nu \rangle$, which
equals zero because $\langle 2, \eta, \nu \rangle$ is contained
in $\pi_{5,3} = 0$.

We now know that $\langle \nu^2 \theta_5, 2, \eta \rangle$
equals its own determinacy, so
$\tau h_0 h_2 Q_3$ or $\tau h_0 h_2 Q_3 + \tau h_1 h_3 H_1$
detects a multiple of $\eta$.
The only possibility is that there is a hidden $\eta$
extension on $h_2^2 h_6$.

The target of this extension cannot be
$\tau h_0 h_2 Q_3 + \tau h_1 h_3 H_1$ by comparison 
to $C\tau$.
\end{proof}

\begin{lemma}
\label{lem:eta-h0^3h3h6}
\revdeg{70, 5, 36}
There is no hidden $\eta$ extension on $h_0^3 h_3 h_6$.
\end{lemma}

\begin{proof}
There are several possible targets for a hidden
$\eta$ extension on $h_0^3 h_3 h_6$.
The element $\tau \D^2 h_2 g$ is ruled out
because it supports an $h_2$ extension.
The element $\D^2 h_4 c_0$ is ruled out by comparison to $C\tau$.
The elements $\tau h_3^2 Q_2$ and $\tau d_0 Q_2$ are ruled out
because Table \ref{tab:eta-extn}
shows that they are targets of hidden $\eta$ extensions from
$\tau^2 h_1 h_3 H_1$ and $\tau^2 h_1 D'_3$ respectively.

The only remaining possibility is $\tau^2 l_1$.  This case is
more complicated.

Table \ref{tab:Toda} shows that $h_0^3 h_3 h_6$ detects the Toda bracket
$\langle 8\sigma, 2, \theta_5 \rangle$.
Now shuffle to obtain
\[
\eta \langle 8\sigma, 2, \theta_5 \rangle =
\langle \eta, 8 \sigma, 2 \rangle \theta_5.
\]
Table \ref{tab:Toda} shows that
$\langle \eta, 8 \sigma, 2 \rangle$ contains $\mu_9$
and has indeterminacy generated by
$\tau \eta^2 \sigma$ and $\tau \eta \epsilon$.
Thus the expression
$\langle \eta, 8 \sigma, 2 \rangle \theta_5$ contains
at most four elements.

The product $\mu_9 \theta_5$ is detected in filtration at least $8$,
so it is not detected by $\tau^2 l_1$.
The product $(\mu_9 + \tau \eta^2 \sigma) \theta_5$
is detected by $\tau h_1^2 p'$ because
Table \ref{tab:misc-extn}
shows that there is a hidden $\sigma$ extension
from $h_5^2$ to $p'$.
The product
$(\mu_9 + \tau \eta \epsilon) \theta_5$ is also 
detected by $\tau h_1^2 p' = \tau h_1 h_5^2 c_0$.
Finally, the product
$(\mu_9 + +\tau \eta^2 \sigma + \tau \eta \epsilon) \theta_5$
equals $(\mu_9 + \tau \nu^3) \theta_5$, which also
must be detected in filtration at least $8$. 
\end{proof}

\begin{lemma}
\label{lem:eta-h2Q3}
\revdeg{70, 6, 38}
There is no hidden $\eta$ extension on $h_2 Q_3$.
\end{lemma}

\begin{proof}
There cannot be a hidden $\eta$ extension on $\tau h_2 Q_3$
because it is a multiple of $h_2$.  Therefore, the possible
targets for an $\eta$ extension on $h_2 Q_3$ must be annihilated
by $\tau$.

The element $h_1^3 h_3 H_1$ cannot be the target because
Table \ref{tab:tau-extn} shows that it supports a hidden
$\tau$ extension.
The element $\tau M h_2^2 g$ cannot be the target because
Table \ref{tab:nu-extn} shows that it supports a hidden
$\nu$ extension to $M h_1 d_0^2$.
\end{proof}

\begin{lemma}
\label{lem:eta-th1h3H1}
\revdeg{70, 7, 37}
There is a hidden $\eta$ extension from $\tau h_1 h_3 H_1$ to
$h_3^2 Q_2$.
\end{lemma}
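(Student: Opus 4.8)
The plan is to deduce this extension from the already-established hidden $\eta$ extension on $\tau h_1 H_1$ by multiplying by $\sigma$, in exact analogy with the third part of Lemma \ref{lem:2-th1H1} (where the hidden $2$ extension on $\tau h_1 h_3 H_1$ was obtained from the one on $\tau h_1 H_1$ by multiplication by $h_3$). First I would recall from Table \ref{tab:eta-extn} that there is a hidden $\eta$ extension from $\tau h_1 H_1$ to $h_3 Q_2$, and choose an element $\alpha$ detected by $\tau h_1 H_1$ such that $\eta \alpha$ is detected by $h_3 Q_2$.

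Next I would multiply by $\sigma$. Since $h_3$ detects $\sigma$ and the relation $h_3 \cdot \tau h_1 H_1 = \tau h_1 h_3 H_1$ holds in $\Ext$, with $\tau h_1 h_3 H_1$ nonzero in the $E_\infty$-page, the product $\sigma \alpha$ is detected by $\tau h_1 h_3 H_1$. Then $\eta \sigma \alpha = \sigma(\eta \alpha)$, and because $h_3 \cdot h_3 Q_2 = h_3^2 Q_2$ is likewise nonzero in the $E_\infty$-page, $\eta \sigma \alpha$ is detected by $h_3^2 Q_2$. This exhibits the desired hidden $\eta$ extension, provided it is genuinely hidden, i.e., provided $h_1 \cdot \tau h_1 h_3 H_1 = \tau h_1^2 h_3 H_1$ vanishes in the $E_\infty$-page.

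The main obstacle, and really the only point requiring care, is verifying the three chart-level facts underlying the argument: that $\tau h_1 h_3 H_1$ and $h_3^2 Q_2$ are both nonzero in the $E_\infty$-page (so that the two products do not drop into higher filtration or vanish, which would break the detection statements), and that $\tau h_1^2 h_3 H_1$ is zero in the $E_\infty$-page (so that the extension is hidden rather than visible). Each of these is a direct inspection of the Adams $E_\infty$-page in \cite{Isaksen14a}, combined with the multiplicative relations already recorded. I do not anticipate needing any Toda bracket shuffle or comparison to $C\tau$ beyond what already underlies the hidden $\eta$ extension on $\tau h_1 H_1$ itself.
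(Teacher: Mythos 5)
Your proposal is correct and matches the paper's argument, which reads in its entirety: recall the hidden $\eta$ extension from $\tau h_1 H_1$ to $h_3 Q_2$ from Table \ref{tab:eta-extn}, then multiply by $h_3$. Your expanded version simply makes explicit the chart-level checks (nonvanishing of $\tau h_1 h_3 H_1$ and $h_3^2 Q_2$ on the $E_\infty$-page, and vanishing of $\tau h_1^2 h_3 H_1$ so the extension is genuinely hidden) that the paper leaves implicit.
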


\begin{proof}
Table \ref{tab:eta-extn} shows that there is a hidden $\eta$ extension
from $\tau h_1 H_1$ to $h_3 Q_2$.  Now multiply by $h_3$.
\end{proof}

\begin{lemma}
\label{lem:eta-h1h3(De1+C0)}
\mbox{}
\begin{enumerate}
\item
\revdeg{70, 10, 38}
There is no hidden $\eta$ extension on $h_1 h_3 (\D e_1 + C_0)$.
\item
\revdeg{70, 10, 38}
There is no hidden $\eta$ extension on 
$\tau h_2 C'' + h_1 h_3 (\D e_1 + C_0)$.
\end{enumerate}
\end{lemma}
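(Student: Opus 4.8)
The plan is to rule out each of the finitely many candidate targets in the $71$-stem (of the appropriate weight and of Adams filtration strictly greater than that of $h_1^2 h_3(\D e_1 + C_0)$) for a hidden $\eta$ extension on each of the two elements. I would organize the two parts around the two different behaviors of these elements under inclusion of the bottom cell into $C\tau$ recorded in Lemma \ref{lem:t-d1e1}. Namely, Lemma \ref{lem:t-d1e1}(1) shows that $\tau h_2 C'' + h_1 h_3(\D e_1 + C_0)$ maps to $\ol{h_1^4 c_0 Q_2}$, while the complementary element $h_1 h_3(\D e_1 + C_0)$ lies in the kernel of inclusion of the bottom cell and instead detects $\tau\{d_1 e_1\}$, since Lemma \ref{lem:t-d1e1}(2) gives a hidden $\tau$ extension from $d_1 e_1$ to $h_1 h_3(\D e_1 + C_0)$.

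For part (1), I would use the realization of the relevant homotopy class as an $\eta$-multiple. By Lemma \ref{lem:eta-h3A'} there is a hidden $\eta$ extension from $h_3 A'$ to $h_3(\D e_1 + C_0)$, so for $\alpha \in \{h_3 A'\}$ the class $\eta\alpha$ is detected by $h_3(\D e_1 + C_0)$, and hence $\eta^2\alpha$ is detected by $h_1 h_3(\D e_1 + C_0)$. A hidden $\eta$ extension on $h_1 h_3(\D e_1 + C_0)$ would then compute $\eta^3\alpha$; multiplying by $\tau$ and invoking the relation $\tau\eta^3 = 4\nu$ reduces this to $4\nu\alpha$. Since $h_3 A'$ supports neither a hidden $2$ extension (Lemma \ref{lem:2-h3A'}) nor a hidden $\nu$ extension (Lemma \ref{lem:nu-h3A'}), the class $\nu\alpha$ lies in sufficiently high filtration that $4\nu\alpha = 0$, so $\tau\eta^3\alpha = 0$. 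This forces the value of any hidden $\eta$ extension to be $\tau$-torsion, and I would eliminate the surviving $\tau$-torsion candidates by comparison to $C\tau$ and to $\mmf$.

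For part (2), I would argue by naturality along inclusion of the bottom cell: the image $\ol{h_1^4 c_0 Q_2}$ is a high power of $h_1$ times $c_0 Q_2$ in the Adams $E_\infty$-page for $C\tau$, and $h_1\cdot\ol{h_1^4 c_0 Q_2}$ carries no hidden $\eta$ extension in $C\tau$; any hidden $\eta$ extension on $\tau h_2 C'' + h_1 h_3(\D e_1 + C_0)$ would therefore have image in the kernel of inclusion of the bottom cell, pinning its target to a $\tau$-multiple, which can then be ruled out together with the analysis in part (1). The main obstacle I anticipate is the bookkeeping around which precise $E_\infty$ element detects $\eta^2\{h_3 A'\}$ (equivalently, confirming that $h_1 h_3(\D e_1 + C_0)$ is nonzero in $E_\infty$ and is the correct not-hidden value), and, relatedly, disposing of the highest-filtration candidate target in the $71$-stem. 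For that last case I expect to need the comparison to $\mmf$ rather than a purely formal argument, since the multiplicative and $C\tau$ constraints alone appear to leave one ambiguous possibility.
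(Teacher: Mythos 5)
Your proposal does not close the argument. The entire content of this lemma is the elimination of the candidate targets, and in the $71$-stem there is exactly one: $\tau M h_2^2 g$. You never identify it, and both of your reduction steps fail to exclude it. In part (1) you show $\tau \eta^3 \alpha = 4\nu\alpha = 0$ for $\alpha$ detected by $h_3 A'$, which only tells you that the putative target must detect a class annihilated by $\tau$; but $\tau M h_2^2 g$ is a $\tau$-multiple of the surviving class $M h_2^2 g$, and nothing in your argument shows that the elements it detects fail to be $\tau$-torsion. In part (2) your $C\tau$ naturality argument pins the target down to a $\tau$-multiple --- which again describes $\tau M h_2^2 g$ exactly, so it excludes nothing. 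The promissory note ``eliminate the surviving candidates by comparison to $C\tau$ and to $\mmf$'' is precisely where the proof has to happen, and it is not clear those comparisons suffice; the paper does not use them here.

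The actual mechanism is different and much shorter: Table \ref{tab:nu-extn} records a hidden $\nu$ extension from $\tau M h_2^2 g$ to $M h_1 d_0^2$ (coming, via the relation $h_2^3 = h_1^2 h_3$, from the hidden $\tau$ extension from $M h_1^2 h_3 g$ to $M h_1 d_0^2$). An element that supports a hidden $\nu$ extension cannot be the target of a hidden $\eta$ extension, since $\eta\nu = 0$ and condition (3) of Definition \ref{defn:hidden} would be violated. That disposes of the unique candidate for both sources simultaneously. Your setup via Lemma \ref{lem:eta-h3A'} and the relation $\tau\eta^3 = 4\nu$ is a reasonable instinct, but as written it leaves the decisive case open.
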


\begin{proof}
The element $\tau M h_2^2 g$ is the 
only possible target for such hidden $\eta$ extensions.
However, Table \ref{tab:nu-extn} shows that there is a hidden
$\nu$ extension from $\tau M h_2^2 g$ to $M h_1 d_0^2$.
\end{proof}

\begin{lemma}
\label{lem:eta-th1^2p'}
\revdeg{71, 6, 37}
There is no hidden $\eta$ extension on $\tau h_1^2 p'$.
\end{lemma}

\begin{proof}
The element $\tau h_1^2 p'$ detects 
$\tau \eta^2 \sigma \theta_5$ because Table \ref{tab:misc-extn}
shows that there is a hidden $\sigma$ extension from
$h_5^2$ to $p'$.
Then $\tau \eta^3 \sigma \theta_5$ is zero since
$\tau \eta^3 \sigma$ is zero.
\end{proof}

\begin{lemma}
\label{lem:eta-h1^2h3H1}
\revdeg{71, 8, 39}
There is no hidden $\eta$ extension on $h_2^3 H_1$.
\end{lemma}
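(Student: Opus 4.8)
The plan is to observe that $h_2^3 H_1$ is a multiple of $h_2$ in the $E_\infty$-page, hence detects a multiple of $\nu$, so that the relation $\eta \nu = 0$ forces the absence of a hidden $\eta$ extension. This is exactly the one-line principle already used for $\tau h_2 Q_3$ in Lemma \ref{lem:eta-h2Q3}. I would note first that $h_1 \cdot h_2^3 H_1 = 0$ already in the $E_2$-page (since $h_1 h_2 = 0$ there), so a hidden $\eta$ extension is genuinely the only thing at issue.

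The key steps would be as follows. Write $h_2^3 H_1 = h_2 \cdot h_2^2 H_1$. From the $E_\infty$-data I would confirm that the cofactor $h_2^2 H_1$ survives and detects a homotopy class $\omega$; then $h_2^3 H_1$ detects $\nu \omega$, and $\eta \cdot \nu \omega = (\eta \nu) \omega = 0$. To finish I would invoke condition (3) of Definition \ref{defn:hidden}: every element of $\{ h_2^3 H_1 \}$ has the form $\nu \omega + \xi$ with $\xi$ detected in strictly higher filtration, and $\eta(\nu \omega + \xi) = \eta \xi$. Hence any nonzero value of $\eta$ on this coset comes entirely from the higher-filtration summand $\xi$, so by condition (3) it is recorded as an extension on the higher source of $\xi$ rather than on $h_2^3 H_1$ itself.

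The main obstacle is not a delicate calculation but the bookkeeping forced by the higher-filtration class $\tau M h_2^2 g$ lying in the same stem, namely the target of the hidden $2$ extension of Lemma \ref{lem:2-h1^2h3H1}. I would need to check that whatever $\eta$-behavior $\tau M h_2^2 g$ carries is attributed to it and not mis-read as a hidden $\eta$ extension on $h_2^3 H_1$, which is precisely what condition (3) guarantees; I would also want to verify that $h_2^2 H_1$ really does survive to $E_\infty$, so that $h_2^3 H_1$ genuinely detects a multiple of $\nu$. Once the divisibility by $h_2$ and this filtration bookkeeping are settled, the conclusion is immediate from $\eta \nu = 0$.
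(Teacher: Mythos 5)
Your argument hinges on the claim that the cofactor $h_2^2 H_1$ survives to the $E_\infty$-page, so that $h_2^3 H_1 = h_2 \cdot h_2^2 H_1$ detects a multiple of $\nu$. That claim is false: Lemma \ref{lem:d6-h2^2H1} shows that $d_6(h_2^2 H_1) = M c_0 d_0$ (and $d_7(\tau h_2^2 H_1) = M P d_0$). So $h_2^2 H_1$ is dead, $h_2^3 H_1$ is \emph{not} an $h_2$-multiple on the $E_\infty$-page, and the class it detects need not be divisible by $\nu$. This is exactly the point you flagged as ``to be verified,'' and the verification fails; the analogy with $\tau h_2 Q_3$ in Lemma \ref{lem:eta-h2Q3} breaks down because there the cofactor $\tau Q_3 + \tau n_1$ genuinely survives (Lemma \ref{lem:d6-tQ3+tn1}), whereas here it does not. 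Your bookkeeping about condition (3) and the higher-filtration class $\tau M h_2^2 g$ is fine as far as it goes, but it is moot once the divisibility premise is gone.

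The standard repair when a product $h_2 \cdot x$ survives while $x$ dies is precisely what the paper does: the differential $d_6(h_2^2 H_1) = M c_0 d_0$ feeds into the Moss Convergence Theorem \ref{thm:Moss} to show that $h_2^3 H_1$ detects the threefold Toda bracket $\langle \nu, \epsilon, \kappa \theta_{4.5} \rangle$ (using that $M d_0$ detects $\kappa\theta_{4.5}$, Table \ref{tab:misc-extn}). One then shuffles
\[
\eta \langle \nu, \epsilon, \kappa \theta_{4.5} \rangle =
\langle \eta, \nu, \epsilon \rangle \kappa \theta_{4.5},
\]
and the right-hand side vanishes because $\langle \eta, \nu, \epsilon \rangle \subseteq \pi_{13,8} = 0$. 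So the conclusion is correct, but it requires replacing your divisibility-by-$\nu$ argument with a bracket-and-shuffle argument; the two are not interchangeable here.
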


\begin{proof}
Table \ref{tab:misc-extn} shows that $M d_0$ detects the product
$\kappa \theta_{4.5}$.  Then
Table \ref{tab:Toda} shows that $h_2^3 H_1$ detects the
Toda bracket $\langle \nu, \epsilon, \kappa \theta_{4.5} \rangle$.
Now shuffle to obtain
\[
\eta \langle \nu, \epsilon, \kappa \theta_{4.5} \rangle =
\langle \eta, \nu, \epsilon \rangle \kappa \theta_{4.5},
\]
which is zero because $\langle \eta, \nu, \epsilon \rangle$
is contained in $\pi_{13,8} = 0$.
\end{proof}

\begin{lemma}
\label{lem:eta-th1h6c0}
\revdeg{72, 5, 37}
There is a hidden $\eta$ extension from $\tau h_1 h_6 c_0$ 
to $\tau^2 h_2^2 Q_3$.
\end{lemma}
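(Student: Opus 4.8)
The plan is to realize $\tau h_1 h_6 c_0$ as an $\eta$-multiple and then convert the desired $\eta$-extension into a product of homotopy classes via a Toda bracket shuffle, so that the target can be identified through the known extensions involving $Q_3$. First I would record, as in the proof of Lemma \ref{lem:2-h6c0}, that $\{h_6 c_0\}$ detects the Toda bracket $\langle \epsilon, 2, \theta_5 \rangle$ (Table \ref{tab:Toda}). Since $\tau h_1 h_6 c_0 = h_1 \cdot \tau h_6 c_0$, the element $\{\tau h_1 h_6 c_0\}$ detects $\eta \beta$ for some $\beta \in \langle \epsilon, 2, \theta_5 \rangle$, the factor of $\tau$ being forced by the motivic weight. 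Consequently $\eta \{\tau h_1 h_6 c_0\}$ equals $\eta^2 \beta$, which lies in $\eta^2 \langle \epsilon, 2, \theta_5 \rangle$.

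Next I would shuffle. Because $2 \epsilon = 0$ and $\eta^2 \epsilon = \eta^3 \sigma = 4 \nu \sigma = 0$, the bracket $\langle \eta^2, \epsilon, 2 \rangle$ is defined, and by Theorem \ref{thm:Toda-3fold}(5)
\[
\eta^2 \langle \epsilon, 2, \theta_5 \rangle = \langle \eta^2, \epsilon, 2 \rangle \theta_5.
\]
It is useful to cross-check this against Toda's relation $\eta \epsilon = \eta^2 \sigma + \nu^3$ \cite{Toda62}, which gives the containment $\eta^2 \langle \epsilon, 2, \theta_5 \rangle \subseteq \langle \eta \epsilon, \eta, 2 \rangle \theta_5 = \langle \eta^2 \sigma + \nu^3, \eta, 2 \rangle \theta_5$ and exhibits the $\nu$-divisibility of the answer. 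Either way, $\eta \{\tau h_1 h_6 c_0\}$ is computed as a bracket in $\pi_{11,*}$ multiplied by $\theta_5$. Here the key bridge to the target is that Table \ref{tab:nu-extn} provides a hidden $\nu$ extension from $h_2 h_5^2$ to $\tau h_1 Q_3$, so that $\nu^2 \theta_5$ is detected by $\tau h_1 Q_3$; this is what links the shuffled expression to the $Q_3$-family detecting $\tau^2 h_2^2 Q_3$, which by Lemma \ref{lem:2-h2^2Q3} detects $\tau \nu^2 \{\tau Q_3 + \tau n_1\}$.

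Finally, having shown that $\eta \{\tau h_1 h_6 c_0\}$ is nonzero and lands in a $\nu^2$-divisible coset of the appropriate degree, I would pin down the detecting element by eliminating every competing $E_\infty$-class in $\pi_{73,*}$ of the correct weight by comparison to the Adams spectral sequence for $C\tau$ and to $\tmf$ (the same style used in Lemmas \ref{lem:eta-h2^2h6} and \ref{lem:2-h1h3h6}), leaving $\tau^2 h_2^2 Q_3$ as the only possibility and invoking the Moss Convergence Theorem \ref{thm:Moss} to legitimize passing between the bracket and its detecting class.

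The hard part will be the middle step: computing the relevant threefold bracket in $\pi_{11,*}$, controlling its indeterminacy, and determining which $E_\infty$-class actually detects the product with $\theta_5$. Because several elements share the internal degree of $\tau^2 h_2^2 Q_3$ and the motivic weight bookkeeping must be tracked carefully, the decisive effort is in separating $\tau^2 h_2^2 Q_3$ from the other candidates rather than in producing a nonzero value for the extension.
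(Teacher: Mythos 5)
Your opening moves run parallel to the paper's: the paper also writes the class as $\tau \eta_6 \epsilon$ with $\eta_6 \in \langle \theta_5, 2, \eta \rangle$ and shuffles so that the desired $\eta$-multiple becomes $\theta_5$ times a threefold bracket in the $11$-stem, namely $\theta_5 \langle 2, \eta, \tau \eta \epsilon \rangle$. But the step that actually identifies the target is missing from your proposal, and the bridge you offer in its place does not work. Table \ref{tab:Toda} shows that $\langle 2, \eta, \tau \eta \epsilon \rangle$ contains $\zeta_{11}$, detected by $P h_2$, and the whole point is the $E_\infty$-page relation $\tau^2 h_2^2 Q_3 = h_5^2 \cdot P h_2$: the product $\theta_5 \zeta_{11}$ is then detected by $\tau^2 h_2^2 Q_3$ as an ordinary (not hidden) product. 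Your substitute bridge --- the hidden $\nu$ extension from $h_2 h_5^2$ to $\tau h_1 Q_3$, so that $\nu^2 \theta_5$ is detected by $\tau h_1 Q_3$ --- is a non sequitur. The element $\tau^2 h_2^2 Q_3$ detects $\tau \nu^2 \{\tau Q_3 + \tau n_1\}$ (Lemma \ref{lem:2-h2^2Q3}), a $\nu^2$-multiple of a class in the $67$-stem, not anything built from $\nu^2 \theta_5$; note $h_2 \cdot \tau h_1 Q_3 = 0$, and the paper shows in the proof of Lemma \ref{lem:2-h1h3h6} that $\nu^3 \theta_5 = 0$, so chasing $\nu$-divisibility of $\theta_5$-multiples leads to zero, not to the claimed target.

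Your fallback --- eliminate all competing detecting classes by comparison to $C\tau$ and $\tmf$ --- cannot close this gap, and you concede that this is where the real work lies. The source $\tau h_1 h_6 c_0$ is a $\tau$-multiple, so inclusion of the bottom cell into $C\tau$ kills it and that comparison gives nothing; and the genuine competitor is $h_0 h_4 D_2$, which is invisible to $\tmf$. That competitor is unavoidable because the hidden $\tau$ extension from $h_1^2 h_6 c_0$ to $h_0 h_4 D_2$ (Table \ref{tab:tau-extn}) both forces the existence of \emph{some} hidden $\eta$ extension on $\tau h_1 h_6 c_0$ and, being a crossing extension past the $\tau$ extension on $\tau h_2^2 Q_3$, leaves exactly the two candidates $\tau^2 h_2^2 Q_3$ and $h_0 h_4 D_2$. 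Distinguishing them requires the explicit computation of $\theta_5 \zeta_{11}$ via $h_5^2 \cdot P h_2$; without that relation your argument establishes neither that the extension is nonzero nor which of the two candidates is its value.
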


\begin{proof}
The hidden $\tau$ extension from $h_1^2 h_6 c_0$ to
$h_0 d_0 D_2$ implies that $\tau h_1 h_6 c_0$ must support a hidden
$\eta$ extension.  However, this hidden $\tau$ extension crosses
the $\tau$ extension from $\tau h_2^2 Q_3$ to $\tau^2 h_2^2 Q_3$.
Therefore, the target of the hidden $\eta$ extension is either
$\tau^2 h_2^2 Q_3$ or $h_0 d_0 D_2$.

The element $\tau h_1 h_6 c_0$ detects the product 
$\tau \eta_6 \epsilon$, so we want to compute 
$\tau \eta \eta_6 \epsilon$.
Table \ref{tab:Toda} shows that 
$\eta_6$ belongs to $\langle \theta_5, 2, \eta \rangle$.
Shuffle to obtain
\[
\tau \eta \eta_6 \epsilon = 
\langle \theta_5, 2, \eta \rangle \tau \eta \epsilon =
\theta_5 \langle 2, \eta, \tau \eta \epsilon \rangle.
\]
Table \ref{tab:Toda} shows that
$\langle 2, \eta, \tau \eta \epsilon \rangle$
contains $\zeta_{11}$.
Finally, $\theta_5 \zeta_{11}$ is detected by
$\tau^2 h_2^2 Q_3 = h_5^2 \cdot P h_2$.
\end{proof}

\begin{lemma}
\label{lem:eta-h1^3p'}
\revdeg{72, 7, 39}
There is no hidden $\eta$ extension on $h_1^3 p'$.
\end{lemma}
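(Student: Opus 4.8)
The plan is to follow the pattern of Lemma \ref{lem:eta-th1^2p'} by first identifying the homotopy class detected by $h_1^3 p'$ and then analyzing what a hidden $\eta$ extension would force. Since Table \ref{tab:misc-extn} records a hidden $\sigma$ extension from $h_5^2$ to $p'$, the element $p'$ detects $\sigma \theta_5$, and using the relation $h_1^2 p' = h_1 h_5^2 c_0$ we get $h_1^3 p' = h_1^2 h_5^2 c_0$, which detects $\eta^3 \sigma \theta_5$. The first thing I would check is whether $h_1^4 p' = h_1^3 h_5^2 c_0$ is non-zero in the $E_\infty$-page: if so, then condition (1) of Definition \ref{defn:hidden} fails and there is immediately no hidden $\eta$ extension. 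Otherwise, a hidden $\eta$ extension on $h_1^3 p'$ would have to detect $\eta^4 \sigma \theta_5$ in strictly higher Adams filtration, and the whole problem reduces to showing that $\eta^4 \sigma \theta_5$ is zero, or at least not detected by any admissible target in $\pi_{73,40}$.

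The key difference from the classical situation is that I \emph{cannot} simply invoke $\eta^4 = 0$: motivically $\eta$ is not nilpotent, and indeed $\eta^4 \sigma$ is non-zero by the analysis of the $h_1$-local (i.e.\ $\eta$-inverted) sphere \cite{GI15}. The vanishing must therefore genuinely exploit the factor $\theta_5$. First I would rewrite the product using Toda's relation $\eta^2 \sigma + \nu^3 = \eta \epsilon$ \cite{Toda62}: multiplying by $\eta \theta_5$ and using $\eta \nu = 0$ gives $\eta^3 \sigma \theta_5 = \eta^2 \epsilon \theta_5$, and hence $\eta^4 \sigma \theta_5 = \eta^3 \epsilon \theta_5$. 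Second, since $\tau \eta^3 \sigma = 0$, as already used in Lemma \ref{lem:eta-th1^2p'}, the class $\eta^4 \sigma \theta_5$ is annihilated by $\tau$, so any element of the $E_\infty$-page that detects it must itself be $\tau$-torsion. This restricts the candidate targets considerably.

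The remaining step is to enumerate the finitely many $\tau$-torsion classes in the $73$-stem of weight $40$ lying in Adams filtration strictly above that of $h_1^4 p'$, and to rule out each one as a possible value of $\eta^4 \sigma \theta_5$. I expect this elimination to be the main obstacle, since the motivic non-nilpotence of $\eta$ removes the trivial classical argument and forces a case-by-case analysis. I would carry it out with the comparison techniques used throughout this section: excluding candidates by comparison to the Adams spectral sequences for $C\tau$, for $\tmf$, and for $\mmf$, and by invoking the structural facts already recorded in Tables \ref{tab:tau-extn}, \ref{tab:2-extn}, and \ref{tab:eta-extn} (for instance, that a candidate is already known to be hit by a differential or to be the target of another hidden extension). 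Once every candidate is excluded, $\eta^4 \sigma \theta_5$ must vanish, and therefore $h_1^3 p'$ supports no hidden $\eta$ extension.
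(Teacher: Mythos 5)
Your overall strategy is sound, and your observation in the second paragraph is in fact the engine of the paper's proof: since $\tau\eta^3\sigma=0$, any element of $\{h_1^3 p'\}$ is annihilated by $\tau$ (equivalently, $h_1^3 p'$ supports no hidden $\tau$ extension), so the target of a putative hidden $\eta$ extension would have to detect a $\tau$-torsion class. You are also right that the motivic non-nilpotence of $\eta$ blocks the naive classical argument, so the problem really does reduce to a finite elimination in $(s,w)=(73,40)$. The detour through Toda's relation ($\eta^4\sigma\theta_5=\eta^3\epsilon\theta_5$) is harmless but buys you nothing.

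The gap is that you never execute the elimination; you only promise to. That step is not a formality, but it is also not the ``main obstacle'' you fear: after the routine exclusions (comparison to $C\tau$, $\tmf$, $\mmf$, and incompatibility with other recorded extensions), exactly one candidate target survives, namely $\tau h_2^2 C''$ in degree $(73,11,40)$, and your own criterion disposes of it immediately. Table \ref{tab:tau-extn} records a hidden $\tau$ extension from $\tau h_2^2 C''$ to $\D^2 h_1^2 h_4 c_0$, so the classes detected by $\tau h_2^2 C''$ are not $\tau$-torsion; since $h_1^3 p'$ supports no hidden $\tau$ extension, it cannot support a hidden $\eta$ extension with value $\tau h_2^2 C''$. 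This two-sentence argument is the paper's entire proof. To make your write-up a proof rather than a plan, you must (i) actually list the elements of the $E_\infty$-page in $(73,40)$ in filtration above that of $h_1^4 p'$, (ii) discharge all but $\tau h_2^2 C''$ by the standard comparisons, and (iii) cite the hidden $\tau$ extension on $\tau h_2^2 C''$ to finish. As written, step (i)--(ii) is missing entirely, so the argument does not yet close.
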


\begin{proof}
The element $h_1^3 p'$ does not support a hidden $\tau$ extension,
while Table \ref{tab:tau-extn} shows that
there is a hidden $\tau$ extension from $\tau h_2^2 C''$ to
$\D^2 h_1^2 h_4 c_0$.  Therefore, there cannot be a hidden
$\eta$ extension from $h_1^3 p'$ to $\tau h_2^2 C''$.
\end{proof}

\begin{lemma}
\label{lem:eta-h0d0D2}
\revdeg{72, 11, 38}
There is a hidden $\eta$ extension from $h_0 d_0 D_2$ to
$\tau M d_0^2$.
\end{lemma}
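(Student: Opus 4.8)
The plan is to obtain this extension by multiplying a simpler hidden $\eta$ extension in the $58$-stem by $\kappa$. First I would record what the relevant classes detect. By Table \ref{tab:misc-extn}, the element $M d_0$ detects $\kappa \theta_{4.5}$; since $\tau M d_0^2 = d_0 \cdot \tau M d_0$ is nonzero in the $E_\infty$-page, the class $\tau M d_0$ detects $\tau \kappa \theta_{4.5}$ and $\tau M d_0^2$ detects $\tau \kappa^2 \theta_{4.5}$. Writing $\beta$ for an element of the $58$-stem detected by $h_0 D_2$, the product $\kappa \beta$ is detected by $d_0 \cdot h_0 D_2 = h_0 d_0 D_2$. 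Thus the claimed hidden $\eta$ extension is precisely the assertion that $\eta \cdot \kappa\beta = \kappa \cdot (\eta\beta)$ is detected by $\tau M d_0^2$, i.e. equals $\tau\kappa^2\theta_{4.5}$ modulo higher Adams filtration.

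The key step is the base extension in the $58$-stem: that $h_0 D_2$ supports a hidden $\eta$ extension to $\tau M d_0$, equivalently $\eta\beta = \tau\kappa\theta_{4.5}$. Because this occurs below dimension $62$, it is available from the lower-range analysis of hidden $\eta$ extensions (Table \ref{tab:eta-extn} and \cite{Isaksen14c}); if a self-contained derivation is preferred, I would establish it by comparison to the Adams spectral sequence for $C\tau$ together with the identification of $M d_0$ with $\kappa\theta_{4.5}$ from Table \ref{tab:misc-extn}. Granting this, multiplication by $\kappa$ (detected by $d_0$) carries the source $h_0 D_2$ to $h_0 d_0 D_2$ and the target $\tau M d_0$ to $\tau M d_0^2$, and since $\eta\cdot(\kappa\beta)=\kappa\cdot(\eta\beta)$ is detected by $\tau M d_0^2$, the extension follows.

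Finally I would verify the bookkeeping needed to promote the $\kappa$-multiple of an extension to an honest hidden extension. One needs that $\kappa\beta$ is genuinely detected by $h_0 d_0 D_2$ rather than by a class of higher filtration, and that $\kappa \cdot \{\tau M d_0\}$ is genuinely detected by $\tau M d_0^2$; both are read off from the multiplicative structure of $\Ext$. One also needs condition (3) of Definition \ref{defn:hidden-value}: that no class of filtration strictly greater than that of $h_0 d_0 D_2$ carries an $\eta$ extension onto $\tau M d_0^2$, a finite inspection of the $73$-stem. The main obstacle is the base extension $\eta\beta = \tau\kappa\theta_{4.5}$: since both $h_0 D_2$ and $\tau M d_0$ sit below classes of higher filtration, the delicate point is to show that the hidden $\eta$ value is exactly $\tau M d_0$ and not zero or a class of still higher filtration, and this is where the comparison to $C\tau$ and the Toda-bracket description of $M d_0$ carry the weight.
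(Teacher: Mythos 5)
Your strategy is genuinely different from the paper's, but it has a gap that I do not think can be repaired in the form you propose. Everything rests on your ``base extension'': a hidden $\eta$ extension from $h_0 D_2$ to $\tau M d_0$ in the $58$-stem, i.e.\ the claim that $\tau\kappa\theta_{4.5}$ is an $\eta$-multiple of a class detected by $h_0 D_2$. You assert that this is ``available from the lower-range analysis,'' but it is not: Table \ref{tab:eta-extn} (which, together with Table \ref{tab:eta-extn-possible}, is intended to account for all hidden $\eta$ extensions through the $90$-stem, including the low-stem ones imported from \cite{Isaksen14c}) contains no extension with source in the $58$-stem. Your fallback plan --- ``comparison to the Adams spectral sequence for $C\tau$'' --- does not obviously work either, because $h_0 h_1 = 0$ means the putative extension would be hidden in $C\tau$ as well, so there is no $h_1$-multiplication on the $E_\infty$-page of $C\tau$ to pull back. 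So the entire difficulty of the lemma has been pushed into an unproved (and, given its absence from the tables, most likely false) statement one stem lower. Note also that even if $\eta\{h_0 D_2\}$ fails to be detected by $\tau M d_0$, the lemma could still be true via an element of the coset $\{h_0 d_0 D_2\}$ that is not of the form $\kappa\beta$; your reduction only goes one way.

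For contrast, the paper's argument avoids the $58$-stem entirely. It uses that $h_0 d_0 D_2$ detects the Toda bracket $\langle \tau\kappabar\theta_{4.5}, 2\nu, \nu\rangle$ (Table \ref{tab:Toda}), shuffles to get
\[
\langle \tau\kappabar\theta_{4.5}, 2\nu, \nu\rangle \eta
= \tau\kappabar\theta_{4.5}\,\langle 2\nu, \nu, \eta\rangle \ni \tau\kappabar\theta_{4.5}\,\epsilon,
\]
and then invokes the hidden $\epsilon$ extension from $\tau M g$ to $M d_0^2$ (Table \ref{tab:misc-extn}, Lemma \ref{lem:epsilon-tMg}) applied to $\kappabar\theta_{4.5}\in\{\tau M g\}$ to see that this product is detected by $\tau M d_0^2$. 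If you want a multiplicative shortcut in the spirit of your proposal, the available structure runs through $\kappabar\theta_{4.5}$ and $\epsilon$, not through $\kappa$ and a hypothetical class in $\pi_{58}$. The secondary bookkeeping you flag (that $d_0\cdot h_0 D_2$ and $d_0\cdot \tau M d_0$ survive without jumping filtration, and condition (3) of Definition \ref{defn:hidden-value}) is fine, but it does not rescue the missing base case.
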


\begin{proof}
Table \ref{tab:Toda} shows that $h_0 d_0 D_2$ detects the
Toda bracket $\langle \tau \kappabar \theta_{4.5}, 2 \nu, \nu \rangle$.
Now shuffle to obtain
\[
\langle \tau \kappabar \theta_{4.5}, 2 \nu, \nu \rangle \eta =
\tau \kappabar \theta_{4.5} \langle 2 \nu, \nu, \eta \rangle.
\]
Table \ref{tab:Toda} shows that 
the Toda bracket $\langle 2 \nu, \nu, \eta \rangle$
contains $\epsilon$.
Finally, $\tau \kappabar \theta_{4.5} \epsilon$
is detected by $\tau M d_0^2$ because
Table \ref{tab:misc-extn} shows that there 
is a hidden $\epsilon$ extension from
$\tau M g$ to $M d_0^2$.
\end{proof}

\begin{lemma}
\label{lem:eta-h0h3d2}
\revdeg{75, 6, 40}
There is a hidden $\eta$ extension from $h_0 h_3 d_2$ 
to $\tau d_1 g_2$.
\end{lemma}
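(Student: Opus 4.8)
The plan is to compute $\eta \cdot \{h_0 h_3 d_2\}$ directly, because the standard pullback from $C\tau$ cannot produce this extension: the target $\tau d_1 g_2$ is divisible by $\tau$, so it maps to zero under inclusion of the bottom cell into $C\tau$, and $\eta \cdot \{h_0 h_3 d_2\}$ correspondingly maps to zero in $\pi_{*,*} C\tau$. First I would confirm that $h_0 h_3 d_2$ is a nonzero permanent cycle. Although $d_4(h_0 d_2) = X_3$ by Lemma~\ref{lem:d4-h0d2}, the product $h_3 X_3$ is zero in the $E_4$-page, so $h_0 h_3 d_2$ survives; moreover it is not detected as $2 \sigma$ times a class carried by $d_2$, since that product lives in strictly higher filtration (as $h_0 d_2$ itself does not survive). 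This is precisely what leaves room for a nonzero $\eta$-multiple.

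The core of the argument is a Toda bracket computation. Using the differential $d_4(h_0 d_2) = X_3$ and the Moss Convergence Theorem~\ref{thm:Moss}, I would realize a class detected by $h_0 h_3 d_2$ as an element of a threefold Toda bracket with $\sigma$ (detected by $h_3$) as one entry and the classes entering the differential on $h_0 d_2$ as the others. Shuffling $\eta$ through this bracket via Theorem~\ref{thm:Toda-3fold}, and using Corollary~\ref{cor:2-symmetric} to evaluate the symmetric subbrackets in $2$ that arise, should rewrite $\eta \cdot \{h_0 h_3 d_2\}$ as $\tau$ times a known product. The $\Ext$ relation $h_3^2 d_1 = h_1^2 g_2$ is the combinatorial bridge between source and target: it yields $h_1^2 \cdot \tau d_1 g_2 = \tau h_3^2 d_1^2$, so the higher $\eta$-multiples of the conjectured extension are pinned to explicit nonzero monomials, giving consistency checks that single out $\tau d_1 g_2$ among the possible targets. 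As an independent route, if $d_1 g_2$ has nonzero image in $\pi_{*,*} \mmf$, then the completely understood $\eta$-action for $\mmf$, together with Theorem~\ref{thm:unit-mmf} and naturality of the unit map, would detect the extension.

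Having shown $\eta \cdot \{h_0 h_3 d_2\}$ is nonzero and $\tau$-divisible, I would finish by pinning down the target. By degree and weight it lies in the $76$-stem, where the only detecting classes are $\tau d_1 g_2$ and possible classes of strictly higher Adams filtration. Comparison to $\tmf$ and $\mmf$, together with the top-cell projection from $C\tau$ recorded in Theorem~\ref{thm:hid-incl-proj} and the long exact sequence~(\ref{eq:LES-Ctau}), would eliminate the higher-filtration candidates, leaving $\tau d_1 g_2$.

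The hard part is the middle step. Because the target is $\tau$-divisible, neither a $C\tau$-pullback nor a direct product relation is available, so everything rests on correctly identifying the Toda bracket carried by $h_0 h_3 d_2$, verifying the crossing-differential hypotheses of the Moss Convergence Theorem for the relevant products, and controlling the bracket's indeterminacy. The delicate point is to confirm that the shuffled expression is genuinely nonzero and of Adams filtration exactly that of $\tau d_1 g_2$, rather than being absorbed into higher filtration.
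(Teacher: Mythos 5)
Your preliminary observations (the target is $\tau$-divisible so no $C\tau$ pullback is available, and $h_0 h_3 d_2$ must be checked to survive) are fine, but the central step of your argument does not exist as described, and this is a genuine gap. You propose to realize a class detected by $h_0 h_3 d_2$ as a threefold Toda bracket via the Moss Convergence Theorem applied to $d_4(h_0 d_2) = X_3$. Moss's theorem produces a bracket $\langle \alpha, \beta, \gamma \rangle$ detected by $x \cdot c$ when the product $ab$ of detecting classes is killed by a differential $d_r(x) = ab$; to run this with $x = h_0 d_2$ you would need $X_3$ to factor as a product of two classes detecting the first two bracket entries. But $X_3$ is an indecomposable multiplicative generator (it appears as such in Table \ref{tab:Adams-d2}), so no such bracket is set up by this differential, and you never specify one. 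Your fallback via $\mmf$ also fails: $d_1$ and $g_2$ die in $\Ext_{A(2)}$, so $d_1 g_2$ has zero image under the unit map and the $\eta$-action on $\pi_{*,*}\mmf$ sees nothing.

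The paper's proof works from the target rather than the source, which is the idea you are missing. The element $\tau d_1 g_2$ detects the product $\tau \kappa_1 \kappabar_2$. Table \ref{tab:Toda} gives $\kappa_1 = \langle \eta, \sigma^2, \eta, \sigma^2 \rangle$, so one shuffles
\[
\langle \eta, \sigma^2, \eta, \sigma^2 \rangle \tau \kappabar_2 =
\eta \langle \sigma^2, \eta, \sigma^2, \tau \kappabar_2 \rangle,
\]
after checking that the subbracket $\langle \eta, \sigma^2, \tau \kappabar_2 \rangle$ is defined and contains zero (the latter by comparison to $\tmf$, since all nonzero elements of $\pi_{60,32}$ are detected by $\tmf$). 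This exhibits $\tau \kappa_1 \kappabar_2$ as a multiple of $\eta$, and $h_0 h_3 d_2$ is then the only possible source of the resulting hidden $\eta$ extension. If you want to salvage your outline, you would need to replace your unspecified bracket for $\{h_0 h_3 d_2\}$ with this fourfold-bracket identity for the target.
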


\begin{proof}
Table \ref{tab:Toda} shows that the Toda bracket
$\langle \eta, \sigma^2, \eta, \sigma^2 \rangle$
equals $\kappa_1$.
We would like to consider the shuffle
\[
\langle \eta, \sigma^2, \eta, \sigma^2 \rangle \tau \kappabar_2 =
\eta \langle \sigma^2, \eta, \sigma^2, \tau \kappabar_2 \rangle,
\]
but we must show that the Toda bracket
$\langle \eta, \sigma^2, \tau \kappabar_2 \rangle$ is well-defined and
contains zero.
It is well-defined because $\sigma^2 \kappabar_2$ is detected by
$h_3^2 g_2$ in $\pi_{58,32}$, and there are no $\tau$ extensions
on this group.
The bracket contains zero by comparison to $\tmf$, since
all non-zero elements of $\pi_{60,32}$ are detected
by $\tmf$.

We have now shown that $\tau \kappa_1 \kappabar_2$ is divisible
by $\eta$.  The only possibility is that 
there is a hidden $\eta$ extension from $h_0 h_3 d_2$ to
$\tau d_1 g_2$.
\end{proof}

\begin{lemma}
\label{lem:eta-tm1}
\mbox{}
\begin{enumerate}
\item
\revdeg{77, 3, 40}
There is no hidden $\eta$ extension on $h_3^2 h_6$.
\item
\revdeg{77, 7, 41}
There is no hidden $\eta$ extension on $\tau m_1$.
\end{enumerate}
\end{lemma}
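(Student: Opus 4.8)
The plan is to treat both elements by the standard dichotomy for hidden $\eta$ extensions in the $77$-stem: first verify that the naive product $h_1\cdot(\text{source})$ is already zero in $E_\infty$, so that any $\eta$ extension must be genuinely hidden, and then enumerate the finitely many classes in the $78$-stem that lie in strictly higher Adams filtration and could serve as a target, eliminating each. Most candidates should fall to comparison with the Adams spectral sequences for $C\tau$, $\tmf$, and $\mmf$, or to the observation that a target can neither support nor receive a $2$ extension since $2\eta = 0$; the survivors will be killed by Toda bracket shuffles.

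For part (1), I would first record that $h_3^2 h_6$ detects the Toda bracket $\langle \sigma^2, 2, \theta_5 \rangle$, exactly as flagged in Remark \ref{rem:2-<alpha,2,theta5>}. This uses the same mechanism as Lemma \ref{lem:d3-Dh2^2h6}: since $2\sigma^2 = 0$ and $2\theta_5 = 0$, the bracket is defined, and the Massey product $\langle h_3^2, h_0, h_5^2 \rangle$ in the $E_3$-page (well-defined because $h_0 h_3^2 = d_2(h_4)$ and $h_0 h_5^2 = d_2(h_6)$ both vanish there) equals $h_3^2 h_6$, so the Moss Convergence Theorem \ref{thm:Moss} applies. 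I would then shuffle:
\[
\eta \langle \sigma^2, 2, \theta_5 \rangle = \langle \eta, \sigma^2, 2 \rangle \theta_5.
\]
Table \ref{tab:Toda} shows $\langle \eta, \sigma^2, 2 \rangle$ contains $\eta_4$ and equals $\eta_4$ or $\eta_4 + \eta \rho_{15}$, so the product reduces to controlling $\eta_4 \theta_5$ together with the indeterminacy contribution $\eta \rho_{15} \theta_5$. Borrowing the analysis of $\eta_4 \theta_5$ from the proof of Lemma \ref{lem:2-h1h4h6} (where the only possible detecting classes are $\tau M \D h_1^2 h_3$ and $\tau h_1^2 x_{76,6}$), I would check, using the weight grading and condition (3) of Definition \ref{defn:hidden-value}, that this product is absorbed into the indeterminacy of the shuffle rather than yielding a strictly-higher detecting class for $\eta\{h_3^2 h_6\}$. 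Remaining chart candidates for a target are then removed by comparison to $C\tau$ and $\mmf$, and by noting that classes such as $\tau d_1 g_2$ already receive a hidden $\eta$ extension from $h_0 h_3 d_2$ (Lemma \ref{lem:eta-h0h3d2}) and so are unavailable here.

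For part (2), the differential $d_3(x_1) = \tau h_1 m_1$ of Lemma \ref{lem:d3-x1} is the key input: it shows $\tau h_1 m_1 = 0$ in $E_\infty$, so the naive $\eta$-multiple of $\{\tau m_1\}$ vanishes, and any $\eta$ extension is hidden, with target in strictly higher filtration in the $78$-stem. I would enumerate those higher-filtration classes and eliminate them. Comparison to $\tmf$ and $\mmf$ removes most; the established hidden $\tau$ extension from $h_1 m_1$ to $M \D h_1^2 h_3$ recorded in Table \ref{tab:tau-extn} (already used in Lemma \ref{lem:2-Ph6c0}) pins down the behavior of the $m_1$-family and shows the surviving candidate is not available as a fresh $\eta$ target, while the $78$-stem classes that already appear as values of hidden $\eta$ extensions in Table \ref{tab:eta-extn} are excluded on those grounds.

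I expect the main obstacle to be part (1), namely distinguishing a genuine hidden extension from an indeterminacy artifact: the shuffle produces $\eta_4 \theta_5$, which need not be zero, so the delicate step is to verify — through the weight grading and condition (3) of Definition \ref{defn:hidden-value} — that no representative of $\{h_3^2 h_6\}$ has $\eta$-multiple detected in strictly higher filtration, i.e. that $\eta_4 \theta_5$ lies in the indeterminacy arising from $\eta \rho_{15} \theta_5$ and from the detection coset of $h_3^2 h_6$. This is precisely the kind of subtlety that the parallel computation of $\eta_4 \theta_5$ in Lemma \ref{lem:2-h1h4h6} already exhibits.
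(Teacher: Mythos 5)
Your general framework is the right one, and you even state the decisive principle in your preamble (a class supporting a $2$ extension cannot be the target of a hidden $\eta$ extension, since $2\eta=0$), but you never apply it to the classes that actually matter, and that is where the gap lies. After the routine eliminations, the only surviving candidate targets are $\tau e_0 A'$ in $(78,10,41)$ for $h_3^2 h_6$ and $e_0 A'$ in $(78,10,42)$ for $\tau m_1$. The paper's entire proof is one sentence: Table \ref{tab:2-extn} (via Lemma \ref{lem:2-e0A'}) shows that $e_0 A'$ and $\tau e_0 A'$ support hidden $2$ extensions to $M \D h_1^2 h_3$ and $\tau M \D h_1^2 h_3$, so neither can detect a multiple of $\eta$. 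Your part (2) never confronts $e_0 A'$ at all; the tool you invoke there, the hidden $\tau$ extension from $h_1 m_1$ to $M \D h_1^2 h_3$, concerns a different element and says nothing about whether $e_0 A'$ can detect $\eta\{\tau m_1\}$. As written, part (2) does not close.

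Part (1) is salvageable but circuitous. The shuffle $\eta\langle \sigma^2,2,\theta_5\rangle = \langle \eta,\sigma^2,2\rangle\theta_5$ reduces you to $\eta_4\theta_5$ plus indeterminacy, and the analysis borrowed from Lemma \ref{lem:2-h1h4h6} does rule out $\tau e_0 A'$ as a detector of $\eta_4\theta_5$ -- but only because that lemma itself quotes the hidden $2$ extension on $e_0 A'$, which is the fact you should be citing directly; the remaining detectors $\tau M\D h_1^2 h_3$ and $\tau h_1^2 x_{76,6}$ are $h_1$-multiples of classes in filtration strictly above $3$, so condition (3) of Definition \ref{defn:hidden-value} disposes of them without any Toda bracket. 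Also note that $\tau d_1 g_2$ lies in the $76$-stem, so it is not a candidate target here and Lemma \ref{lem:eta-h0h3d2} is irrelevant. In short: drop the bracket computation, identify $e_0 A'$ and $\tau e_0 A'$ as the only fresh candidates in the correct weights, and kill both with the hidden $2$ extensions of Lemma \ref{lem:2-e0A'}.
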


\begin{proof}
Table \ref{tab:2-extn} shows that $e_0 A'$ and $\tau e_0 A'$ support hidden $2$ extensions, so they cannot be the targets of hidden
$\eta$ extensions.
\end{proof}

\begin{lemma}
\label{lem:eta-h0x77,7}
\revdeg{77, 8, 40}
There is no hidden $\eta$ extension on $h_0 x_{77,7}$.
\end{lemma}

\begin{proof}
Table \ref{tab:2-extn} shows that $h_0 x_{77,7}$ is the target
of a hidden $2$ extension.
\end{proof}

\begin{lemma}
\label{lem:eta-h1h6d0}
\revdeg{78, 6, 41}
There is no hidden $\eta$ extension on $h_1 h_6 d_0$.
\end{lemma}

\begin{proof}
Table \ref{tab:Toda} shows that $h_1 h_6$ detects the Toda bracket
$\langle \theta_5, 2, \eta \rangle$, so
$h_1 h_6 d_0$ detects
$\langle \theta_5, 2, \eta \rangle \kappa$.
Now shuffle to obtain
\[
\langle \theta_5, 2, \eta \rangle \eta \kappa = 
\theta_5 \langle 2, \eta, \eta \kappa \rangle.
\]
Table \ref{tab:Toda} shows that the Toda bracket
$\langle 2, \eta, \eta \kappa \rangle$ equals $\nu \kappa$.
Thus we need to compute the product
$\nu \kappa \theta_5$.
Lemma \ref{lem:sigma^2+kappa-h5^2} shows that this
product equals 
\rev{either $\nu \sigma^2 \theta_5$ or
$\nu (\sigma^2 \theta_5 + \tau^2 \kappa_1 \kappabar_2)$.
These expressions equal $0$ and $\tau^2 \nu \kappa_1 \kappabar_2$
respectively since $\nu \sigma = 0$.

It remains to show that $\tau^2 \nu \kappa_1 \kappabar_2$ is zero.
The proof of Lemma \ref{lem:eta-h0h3d2} shows that
$\tau \kappa_1 \kappabar_2$ is divisible by $\eta$.
Therefore, $\tau^2 \nu \kappa_1 \kappabar_2$ is zero
since $\eta \nu = 0$.
}
\end{proof}

\begin{lemma}
\label{lem:eta-th1^2x76,6}
\revdeg{78, 8, 41}
There is no hidden $\eta$ extension on $\tau h_1^2 x_{76,6}$.
\end{lemma}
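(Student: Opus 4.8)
The plan is to prove the statement by identifying the homotopy class detected by $\tau h_1^2 x_{76,6}$ and showing directly that $\eta$ annihilates it, which forces every candidate hidden value to vanish. First I would set up the detecting class exactly as in Lemma \ref{lem:2,eta,th1^2x76,6}: let $\alpha$ be an element of $\pi_{77,41}$ detected by $h_1 x_{76,6}$. A degree count (stem $78$, and the weight of $\tau \eta \alpha$ matching that of $\tau h_1^2 x_{76,6}$) shows that $\tau \eta \alpha$ is detected by $\tau h_1^2 x_{76,6}$, up to higher filtration; this is the same identification used implicitly in the proof of Lemma \ref{lem:2-h1h4h6}. I would also note at the outset that the non-hidden target $\tau h_1^3 x_{76,6}$ is zero in the $E_\infty$-page, since Lemma \ref{lem:d4-h0e2} gives $d_4(h_0 e_2) = \tau h_1^3 x_{76,6}$; thus condition (1) of Definition \ref{defn:hidden} holds and a hidden $\eta$ extension is a priori possible, so it genuinely must be excluded.

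Next I would compute the $\eta$-multiple of the detecting class. This is precisely the computation already carried out inside the proof of Lemma \ref{lem:2,eta,th1^2x76,6}: using Corollary \ref{cor:2-symmetric} and the shuffle
\[
\eta \cdot \tau \eta \alpha = \tau \eta^2 \alpha = \langle 2, \eta, 2 \rangle \alpha = 2 \langle \eta, 2, \alpha \rangle,
\]
together with the fact (Table \ref{tab:Toda}) that $\langle \eta, 2, \alpha \rangle$ is detected by $h_0 h_2 x_{76,6}$, which supports no hidden $2$ extension by Lemma \ref{lem:2-h0h2x76,6}, one obtains $\tau \eta^2 \alpha = 0$. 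Hence $\eta$ kills the class $\tau \eta \alpha$ detected by $\tau h_1^2 x_{76,6}$, and there is no hidden $\eta$ extension arising from this representative.

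The remaining, and main, obstacle is the bookkeeping for the other elements of the coset $\{\tau h_1^2 x_{76,6}\}$. An arbitrary representative has the form $\tau \eta \alpha + \delta$, where $\delta$ is detected in Adams filtration strictly greater than $8$ in the $78$-stem of the relevant weight, and then $\eta(\tau \eta \alpha + \delta) = \eta \delta$. To complete the argument I would enumerate the $E_\infty$-classes in that stem and weight above filtration $8$ and check, by direct inspection of the charts and by comparison to $C\tau$, that no $\eta \delta$ provides a class satisfying conditions (2) and (3) of Definition \ref{defn:hidden}. I expect this final check to be routine but to depend on the explicit chart data rather than on any further conceptual input: all of the genuine content lives in the vanishing $\tau \eta^2 \alpha = 0$ established above via Lemmas \ref{lem:2,eta,th1^2x76,6} and \ref{lem:2-h0h2x76,6}.
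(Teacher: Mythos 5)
Your core argument is exactly the paper's proof: take $\alpha\in\pi_{77,41}$ detected by $h_1 x_{76,6}$, note that $\tau h_1^2 x_{76,6}$ detects $\tau\eta\alpha$, and kill $\tau\eta^2\alpha$ via the shuffle $\tau\eta^2\alpha=\langle 2,\eta,2\rangle\alpha=2\langle\eta,2,\alpha\rangle$ together with the absence of $2$ extensions on $h_0 h_2 x_{76,6}$ (the paper phrases this as the absence of $2$ extensions in $\pi_{79,42}$ in sufficiently high filtration, which also covers the possibility that the bracket is detected above $h_0 h_2 x_{76,6}$). The one thing you have miscalibrated is the step you call the ``main obstacle'': the bookkeeping over the coset $\{\tau h_1^2 x_{76,6}\}$ is not needed at all. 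If $\beta=\tau\eta\alpha+\delta$ with $\delta$ detected in filtration strictly greater than $8$, then $\eta\beta=\eta\delta$, and any nonzero value of $\eta\delta$ is, by condition (3) of Definition \ref{defn:hidden}, attributed to the higher-filtration class detecting $\delta$ rather than to $\tau h_1^2 x_{76,6}$; so exhibiting a single representative annihilated by $\eta$ already rules out a hidden $\eta$ extension on $\tau h_1^2 x_{76,6}$. In other words, the proof is complete once $\tau\eta^2\alpha=0$ is established, and the chart enumeration you defer would be vacuous.
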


\begin{proof}
Let $\alpha$ be an element of $\pi_{77,41}$ that is detected by
$h_1 x_{76,6}$.
Then $\tau h_1^2 x_{76,6}$ detects $\tau \eta \alpha$.
Now consider the shuffle
\[
\tau \eta^2 \alpha = \langle 2, \eta, 2 \rangle \alpha =
2 \langle \eta, 2, \alpha \rangle.
\]
Note that $2 \alpha$ is zero because there are no $2$ extensions
in $\pi_{77,41}$, so the second bracket is well-defined.

Finally, 
$2 \langle \eta, 2, \alpha \rangle$ must be zero because there are 
no $2$ extensions in $\pi_{79,42}$ in sufficiently high filtration.
\end{proof}

\rev{
\begin{lemma}
\label{lem:eta-h0^6h4h6}
\revdeg{78, 8, 40}
There is a hidden $\eta$ extension from
$h_0^6 h_4 h_6$ to $\tau \D B_6$.
\end{lemma}

\begin{proof}
In the homotopy of $C\tau$, there is a hidden $\eta$
extension from $h_0^6 h_4 h_6$ to $\D^2 n$.  Therefore,
$h_0^6 h_4 h_6$ must support a hidden $\eta$ extension
whose target lies in Adams filtration 13 or less.
However, $\D^2 n$ is not the target because it supports an
$h_2$ extension.  The only remaining possible
target is $\tau \D B_6$.
\end{proof}

}

\rev{
\begin{lemma}
\label{lem:eta-e0A'}
\revdeg{78, 10, 42}
There is a hidden $\eta$ extension from $e_0 A'$ to $\tau M e_0^2$.
\end{lemma}

\begin{proof}
The classical relation
$g C' = e_0 G_0$ implies the $\C$-motivic relation
$\tau g \cdot C' = e_0 \cdot \tau G_0$ modulo the possible
error term $\D j_1$.  The
error term does not appear because of $h_1^2$ extensions.

Table \ref{tab:Massey} shows that
$\tau G_0$ equals $\langle A', h_1, h_2 \rangle$.
Therefore, we have
\[
\tau g C' = e_0 \langle A', h_1, h_2 \rangle =
\langle e_0 A', h_1, h_2 \rangle.
\]
The second equality holds because there is no indeterminacy by 
inspection.

Let $\alpha$ be an element of $\pi_{78,44}$ that is detected by
$e_0 A'$.  If the product $\eta \alpha$ were zero, then the
Moss Convergence Theorem would imply that
$\tau g C'$ is a permanent cycle that detects the 
Toda bracket
$\langle \alpha, \eta, \nu \rangle$.
However, $\tau g C'$ supports a $d_4$ differential and does not survive.

We now know that $e_0 A'$ supports a hidden $\eta$ extension.
After ruling out $\tau^2 \D h_1 e_0^2 g$ by comparison to $\mmf$,
the only remaininng possible target is $\tau M e_0^2$.
\end{proof}
}

\begin{lemma}
\label{lem:eta-h2h4h6}
\revdeg{81, 3, 42}
If $h_2 h_4 h_6$ supports a hidden $\eta$ extension, then its
target is not $\tau h_2^2 x_{76,6}$.
\end{lemma}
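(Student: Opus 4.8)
The plan is to rule out $\tau h_2^2 x_{76,6}$ as the target of \emph{any} hidden $\eta$ extension, by showing that it detects a nonzero multiple of $\nu$ on which $\nu$ still acts nontrivially. Since $\eta\nu = 0$, such a class cannot be an $\eta$-multiple, and in particular cannot be the value of a hidden $\eta$ extension on $h_2 h_4 h_6$. This mirrors the strategy of Lemmas \ref{lem:eta-th1Q3} and \ref{lem:eta-tD1h3^2}, where targets that support hidden $\nu$ extensions are excluded as $\eta$-targets. Note that the analogous source-side argument is unavailable: because $d_2(h_4) = h_0 h_3^2$, the element $h_4 h_6$ is not a permanent cycle, so $h_2 h_4 h_6$ is not visibly a $\nu$-multiple and the class it detects cannot be shown directly to be $\eta$-annihilated.

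First I would identify the homotopy class detected by $\tau h_2^2 x_{76,6}$. Because this element is divisible by $h_2$ in the Adams $E_\infty$-page, writing $\tau h_2^2 x_{76,6} = h_2 \cdot \tau h_2 x_{76,6}$, it detects a class of the form $\nu\gamma$, where $\gamma$ is detected by $\tau h_2 x_{76,6}$; one first checks by inspection that $\tau h_2 x_{76,6}$ survives, since there are no possible differentials in its degree. The main step is then to show that $\tau h_2^2 x_{76,6}$ supports a hidden $\nu$ extension, i.e.\ that $\nu^2\gamma \neq 0$. The natural candidate target is $\tau h_1^2 h_3 x_{76,6} = \tau h_2^3 x_{76,6}$, using the $\Ext$ relation $h_2^3 = h_1^2 h_3$: if this product is nonzero in $E_\infty$, the $\nu$ extension is not even hidden, and otherwise I would either read it off Table \ref{tab:nu-extn} or establish it by a Moss Convergence Theorem \ref{thm:Moss} and Toda bracket shuffle argument, as in the surrounding lemmas. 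With this in hand, the class $\nu\gamma$ detected by $\tau h_2^2 x_{76,6}$ satisfies $\nu \cdot \nu\gamma \neq 0$, whereas any $\eta$-multiple $\eta\beta$ satisfies $\nu\eta\beta = 0$; hence $\nu\gamma$ is not an $\eta$-multiple, so $h_2 h_4 h_6$ cannot support a hidden $\eta$ extension with target $\tau h_2^2 x_{76,6}$.

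The hard part will be the step establishing the hidden $\nu$ extension on $\tau h_2^2 x_{76,6}$, together with the closely related issue of controlling higher-filtration error terms. A priori the coset $\{\tau h_2^2 x_{76,6}\}$ could contain both the $\nu$-multiple $\nu\gamma$ and an $\eta$-multiple differing from it by a class of strictly higher Adams filtration, in which case the clean dichotomy above would fail. I would resolve this by verifying that there is no element of the Adams $E_\infty$-page in the relevant stem and weight, in filtration higher than that of $\tau h_2^2 x_{76,6}$, capable of supporting such a correction, eliminating the remaining candidates by comparison to $C\tau$ and to $\mmf$ exactly as in the neighboring hidden-extension lemmas.
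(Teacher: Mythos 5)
Your proposal is correct and uses essentially the same mechanism as the paper: the paper's proof is the one-line observation that $\tau h_2^2 x_{76,6}$ supports a hidden $\nu$ extension (read off from Table \ref{tab:nu-extn}, where the extension on $h_2^2 x_{76,6}$ to $P h_1 x_{76,6}$ is established by comparison to $C\tau$), and since $\eta\nu = 0$ a class supporting a nontrivial $\nu$ multiplication cannot be the target of a hidden $\eta$ extension. Your additional steps (exhibiting the class as a $\nu$-multiple and controlling higher-filtration corrections) are not needed once the hidden $\nu$ extension is taken from the table, but they do not introduce any error.
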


\begin{proof}
Table \ref{tab:nu-extn} shows that
$\tau h_2^2 x_{76,6}$ supports a hidden $\nu$ extension, so
it cannot be the target of a hidden $\eta$ extension.
\end{proof}

\rev{
\begin{lemma}
\label{lem:eta-h1^3h4h6}
\revdeg{81, 5, 43}
There is no hidden $\eta$ extension on $h_1^3 h_4 h_6$.
\end{lemma}

\begin{proof}
The element $\tau h_1^3 h_4 h_6$ is a multiple of $h_0$, so
it cannot support a hidden $\eta$ extension.
This eliminates all possible targets except for $\tau (\D e_1 + C_0) g$.

However, 
$\tau (\D e_1 + C_0) g$ supports a hidden $\tau$ extension.
As in the previous paragraph, this eliminates
$\tau (\D e_1 + C_0) g$ as a possible target.
\end{proof}
}

\begin{lemma}
\label{lem:eta-h3^2n1}
\revdeg{81, 7, 44}
There is no hidden $\eta$ extension on $h_3^2 n_1$.
\end{lemma}

\begin{proof}
The element $\tau h_3^2 n_1 = h_3^2 (\tau Q_3 + \tau n_1)$
detects $\sigma^2 \{\tau Q_3 + \tau n_1\}$.  Then
$\eta \sigma^2 \{\tau Q_3 + \tau n_1\}$ is zero because
$\eta \sigma^2$ is zero.
\end{proof}

\begin{lemma}
\label{lem:eta-D^2p}
\revdeg{81, 12, 42}
There is no hidden $\eta$ extension on $\D^2 p$.
\end{lemma}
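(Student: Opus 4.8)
The plan is to pin down the homotopy class detected by $\D^2 p$ and show directly that $\eta$ annihilates it. Since $p$ detects $\nu \theta_4$, I first expect $\D^2 p$ to detect a multiple of $\nu$. Concretely, I would look for a hidden $\nu$ extension recorded in Table \ref{tab:nu-extn} whose target is $\D^2 p$ (the natural candidate source sits three stems below, in a degree where an element such as $\D^2 h_4^2$ could survive and play the role that $h_4^2$ plays for $p$). If $\D^2 p$ is the target of a hidden $\nu$ extension, then it detects $\nu \gamma$ for some $\gamma$, and the argument closes exactly as in Lemma \ref{lem:eta-th1Q3}: every element of $\{\D^2 p\}$ differs from $\nu \gamma$ by a class of strictly higher Adams filtration, so because $\eta \nu = 0$ any nonzero $\eta$-multiple would be detected strictly above $\D^2 p$, and condition (3) of Definition \ref{defn:hidden} then forbids a hidden $\eta$ extension whose source is $\D^2 p$ itself.

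Before invoking that, I would enumerate the candidate targets in the $82$-stem (in the appropriate weight) lying in Adams filtration strictly above $\D^2 p$, reading them off the Adams $E_\infty$-chart. I expect most to be eliminated by the routine mechanisms already used throughout this section: any candidate that supports a multiplication by $2$ is excluded because $2 \eta = 0$; any candidate that itself supports a hidden $\nu$ extension cannot be the target of a hidden $\eta$ extension; and the remaining candidates should be killed by comparison to $C\tau$, $\tmf$, or $\mmf$, exactly as in the surrounding lemmas. This reduces the problem either to the single $\nu$-multiple observation above, or to a short finite case check.

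The main obstacle is the bookkeeping at the level of homotopy rather than at the level of the $E_\infty$-page: I must verify that the class actually detected by $\D^2 p$ is the $\nu$-multiple $\nu \gamma$ (and not some element that could still support $\eta$), and track condition (3) of Definition \ref{defn:hidden} carefully in the presence of higher-filtration classes in the $82$-stem. If the clean ``$\D^2 p$ is a $\nu$-multiple'' route is unavailable, for instance if the natural source of the $\nu$ extension does not survive, then I would instead compute $\eta \{\D^2 p\}$ through a Toda bracket shuffle, using that $p$ detects $\nu \theta_4$ to rewrite the product and move the $\eta$ past the $\nu$, so that the relation $\eta \nu = 0$ again forces the extension to vanish. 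Establishing that the relevant bracket is defined, and that no crossing differential obstructs the application of the Moss Convergence Theorem \ref{thm:Moss}, is where I expect the real work to lie.
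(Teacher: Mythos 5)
Your primary route is exactly the paper's argument: Table \ref{tab:nu-extn} records a hidden $\nu$ extension with target $\D^2 p$ (from $h_0^6 h_4 h_6$ in the $78$-stem, not the $\D^2 h_4^2$ you guessed, but the fact you need is there), so $\D^2 p$ detects a $\nu$-multiple and cannot support a hidden $\eta$ extension since $\eta\nu = 0$. The fallback case analysis and Toda bracket shuffles you sketch are unnecessary; the one-line observation suffices.
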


\begin{proof}
Table \ref{tab:nu-extn} shows that
$\D^2 p$ is the target of a hidden $\nu$ extension, so it cannot
be the source of an $\eta$ extension.
\end{proof}

\begin{lemma}
\label{lem:eta-h6c1}
\revdeg{82, 4, 43}
There is no hidden $\eta$ extension on $h_6 c_1$.
\end{lemma}

\begin{proof}
Table \ref{tab:Toda} shows that $h_6 c_1$ detects the Toda
bracket $\langle \sigmabar, 2, \theta_5 \rangle$.  By inspection,
all possible indeterminacy is in higher Adams filtration,
so $h_6 c_1$ detects every element of the Toda bracket.

Shuffle to obtain
\[
\eta \langle \sigmabar, 2, \theta_5 \rangle =
\langle \eta, \sigmabar, 2 \rangle \theta_5.
\]
The Toda bracket $\langle \eta, \sigmabar, 2 \rangle$
is detected in filtration at least $5$ since
the Massey product $\langle h_1, c_1, h_0 \rangle$
is zero.
Therefore, the Toda bracket equals $\{0, \eta \kappabar\}$.

We now know that $\eta \langle \sigmabar, 2, \theta_5 \rangle$
contains zero, and therefore $h_6 c_1$ does not support
a hidden $\eta$ extension.
\end{proof}

\begin{lemma}
\label{lem:eta-h0h6g}
\revdeg{83, 6, 44}
There is no hidden $\eta$ extension on $h_0 h_6 g$.
\end{lemma}

\begin{proof}
Table \ref{tab:Toda} shows that $h_0 h_6 g$ detects the Toda bracket
$\langle \nu, \eta, \eta_6 \kappa \rangle$.
Shuffle to obtain
\[
\eta \langle \nu, \eta, \eta_6 \kappa \rangle =
\langle \eta, \nu, \eta \rangle \eta_6 \kappa.
\]
Table \ref{tab:Toda} shows that $\langle \eta, \nu, \eta \rangle$
equals $\nu^2$.
Finally, 
\[
\nu^2 \eta_6 \kappa = \nu^2 \kappa \langle \eta, 2, \theta_5 \rangle =
\nu \theta_5 \kappa \langle \nu, \eta, 2 \rangle,
\]
which equals zero because $\langle \nu, \eta, 2 \rangle$ is contained
in $\pi_{5,3} = 0$.
\end{proof}

\begin{lemma}
\label{lem:eta-h2h4Q3}
\revdeg{85, 7, 46}
There is no hidden $\eta$ extension on $h_2 h_4 Q_3$.
\end{lemma}

\begin{proof}
We must eliminate $\tau h_2 g C'$ as a possible target.
One might hope to use the homotopy of $C\tau$ in order to do this,
but the homotopy of $C\tau$ has an $\eta$ extension in the
relevant degree that could possibly detect a hidden extension
from $h_2 h_4 Q_3$ to $\tau h_2 g C'$.

If there were a hidden $\eta$ extension from $h_2 h_4 Q_3$
to $\tau h_2 g C'$, then the hidden $\tau$ extension
from $\tau h_2 g C'$ to $\D^2 h_2^2 d_1$ would imply
that there is a hidden $\eta$ extension from
$\tau h_2 h_4 Q_3$ to $\D^2 h_2^2 d_1$.
However, $\tau h_2 h_4 Q_3$ detects the product
$\nu_4 \{\tau Q_3 + \tau n_1\}$, and $\eta \nu_4$ is zero.
Therefore,
$\tau h_2 h_4 Q_3$ cannot support a hidden $\eta$ extension.
\end{proof}

\begin{lemma}
\label{lem:eta-h1h6c0d0}
\mbox{}
\begin{enumerate}
\item
\revdeg{86, 9, 46}
There is no hidden $\eta$ extension on $h_1 h_6 c_0 d_0$.
\item
\revdeg{86, 10, 45}
There is no hidden $\eta$ extension on $P h_1 h_6 d_0$.
\end{enumerate}
\end{lemma}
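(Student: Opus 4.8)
The plan is to prove each statement by exhibiting, for each of the two $E_\infty$-elements, a homotopy class it detects that is annihilated by $\eta$; by Definition \ref{defn:hidden} this precludes a hidden $\eta$ extension to any nonzero target. Both $h_1 h_6 c_0 d_0$ and $P h_1 h_6 d_0$ lie in the $86$-stem, so any hidden $\eta$ extension would land in the $87$-stem in Adams filtration strictly greater than the non-hidden slot (filtration $11$ and $12$ respectively). I would first note that $h_1^2 h_6 c_0 d_0$ and $P h_1^2 h_6 d_0$ vanish on the $E_\infty$-page, as read off the charts in \cite{Isaksen14a}; were either nonzero, the corresponding $\eta$ extension would not be hidden and there would be nothing to prove. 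The remaining task is to show that the relevant $\eta$-multiple is zero (or is detected only in the non-hidden slot), and to dispose of the finitely many higher-filtration candidate targets in the $87$-stem.

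For part (1) I would factor $h_1 h_6 c_0 d_0 = c_0 \cdot h_1 h_6 d_0$, using that $c_0$ detects $\epsilon$ and $h_1 h_6 d_0$ detects $\eta_6 \kappa$, where $\eta_6$ is detected by $h_1 h_6$ and $\kappa$ by $d_0$. The proof of Lemma \ref{lem:eta-h1h6d0} shows $\eta \cdot \eta_6 \kappa = \nu \sigma^2 \theta_5 = 0$. Consequently any class detected by $h_1 h_6 c_0 d_0$ is of the form $\epsilon \cdot \eta_6\kappa$ modulo higher filtration, and $\eta \cdot \epsilon \eta_6 \kappa = \epsilon \cdot \eta \eta_6 \kappa = 0$. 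Thus $h_1 h_6 c_0 d_0$ detects an $\eta$-torsion class and supports no hidden $\eta$ extension. Any competing target in still higher filtration is eliminated by observing that the value of a hidden $\eta$ extension must be $\eta$-divisible, hence annihilated by both $2$ and $\nu$; Tables \ref{tab:2-extn} and \ref{tab:nu-extn} then rule out the candidates that support hidden $2$ or $\nu$ extensions, and comparison to $C\tau$, $\tmf$, and $\mmf$ eliminates the rest.

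For part (2) I would factor $P h_1 h_6 d_0 = \mu_9 \cdot h_6 d_0$, where $P h_1$ detects $\mu_9$ and, by Lemma \ref{lem:kappa,2,theta5}, $h_6 d_0$ detects $\langle \kappa, 2, \theta_5 \rangle$ (consistently with Lemma \ref{lem:2-Ph1h6}, which gives that $P h_1 h_6$ detects $\langle \mu_9, 2, \theta_5 \rangle$). Shuffling gives $\eta \langle \kappa, 2, \theta_5 \rangle \subseteq \langle \eta \kappa, 2, \theta_5 \rangle$, which Lemma \ref{lem:kappa,2,theta5} identifies as the class $\eta_6 \kappa$ detected by $h_1 h_6 d_0$, so that $\eta \{ P h_1 h_6 d_0 \} = \mu_9 \eta_6 \kappa$. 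The main obstacle is to show that $\mu_9 \eta_6 \kappa$ vanishes, or at least is detected only in the non-hidden slot $P h_1^2 h_6 d_0$; the difficulty is that $\mu_9$ supports $\eta$ (that is, $\eta \mu_9 \neq 0$), which blocks the naive shuffle past $\mu_9$. I would instead rewrite $\eta_6 \in \langle \eta, 2, \theta_5 \rangle$ and shuffle the interior $2$ against $\theta_5$, exploiting $2 \theta_5 = 0$ via Corollary \ref{cor:2-symmetric} together with the $\theta_5$-relations of Lemma \ref{lem:sigma^2+kappa-h5^2}, forcing the surviving contributions into filtrations where comparison to $\tmf$, $\mmf$, and $C\tau$ admits no detecting element. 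The same comparisons, together with the observation that a candidate supporting a hidden $2$ or $\nu$ extension cannot be the value of a hidden $\eta$ extension, clear the remaining higher-filtration candidates in the $87$-stem.
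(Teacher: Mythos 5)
Your part (1) is a correct, if roundabout, argument: writing $h_1 h_6 c_0 d_0 = c_0 \cdot h_1 h_6 d_0$ and using the shuffle from the proof of Lemma \ref{lem:eta-h1h6d0} (which shows $\eta \cdot \eta_6 \kappa = \nu \kappa \theta_5 = \nu \sigma^2 \theta_5 = 0$ with no indeterminacy, since $\langle 2, \eta, \eta\kappa \rangle$ is the single element $\nu\kappa$) does exhibit an $\eta$-torsion class detected by $h_1 h_6 c_0 d_0$, which suffices.

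Part (2), however, has a genuine gap. You correctly identify that the whole argument reduces to showing $\eta \mu_9 \{h_6 d_0\} = 0$ (equivalently, that $\mu_9 \eta_6 \kappa$ vanishes or lands only in the non-hidden slot), and you correctly note that the naive shuffle is blocked because $\eta \mu_9 \neq 0$. But you then do not prove this vanishing: the proposed remedy --- ``shuffle the interior $2$ against $\theta_5$ \ldots forcing the surviving contributions into filtrations where comparison to $\tmf$, $\mmf$, and $C\tau$ admits no detecting element'' --- is not an argument. Neither Corollary \ref{cor:2-symmetric} nor Lemma \ref{lem:sigma^2+kappa-h5^2} obviously bears on the product $\eta\mu_9 \cdot \langle \kappa, 2, \theta_5 \rangle$, and no candidate targets are actually enumerated or eliminated. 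As written, part (2) is a plan, not a proof.

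You also missed the much shorter route the paper takes, which disposes of both parts at once: Table \ref{tab:2-extn} records hidden $2$ extensions from $h_0 h_2 h_6 g$ to $h_1 h_6 c_0 d_0$ and from $\tau h_0 h_2 h_6 g$ to $P h_1 h_6 d_0$. Each element therefore detects a multiple of $2$, and since $2\eta = 0$, such a class is annihilated by $\eta$; by Definition \ref{defn:hidden} neither element can be the source of a hidden $\eta$ extension. I recommend replacing your argument with this one, or at minimum using it to close part (2).
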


\begin{proof}
Table \ref{tab:2-extn} shows that $h_1 h_6 c_0 d_0$ and
$P h_1 h_6 d_0$ are targets of hidden $2$ extensions, 
so they cannot be the sources of hidden $\eta$ extensions.
\end{proof}

\begin{lemma}
\label{lem:eta-h0^3h6i}
\revdeg{86, 11, 44}
There is a hidden $\eta$ extension from $h_0^3 h_6 i$ to
$\tau^2 \D^2 c_1 g$.
\end{lemma}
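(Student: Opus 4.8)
The plan is to realize $h_0^3 h_6 i$ as the detecting element of a Toda bracket of the form $\langle \alpha, 2, \theta_5 \rangle$ and then to shuffle $\eta$ across this bracket, following the template used for the other hidden extensions on $h_6$-multiples (compare Lemmas \ref{lem:kappa,2,theta5}, \ref{lem:2-h6c0}, and \ref{lem:2-h1h3h6}). Concretely, the Adams differential $d_2(h_6) = h_0 h_5^2$ means that in the $E_3$-page the product $h_0^3 h_6 i = (h_0^3 i) \cdot h_6$ represents the Massey product $\langle h_0^3 i, h_0, h_5^2 \rangle$. Letting $\alpha$ be the class in $\pi_{23,\ast}$ detected by $h_0^3 i$, the Moss Convergence Theorem \ref{thm:Moss} should then show that $h_0^3 h_6 i$ detects $\langle \alpha, 2, \theta_5 \rangle$, where $h_5^2$ detects $\theta_5$ and $h_0$ detects $2$. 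Before applying Moss I would verify the side conditions: that $h_0^4 i$ does not survive (so that $2\alpha = 0$ and the Massey product is defined), that $2\theta_5 = 0$ (already known), and that there are no crossing differentials for the relevant products in the $E_3$-page.

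With this identification in hand, I would compute $\eta$ times the bracket by the shuffle of Theorem \ref{thm:Toda-3fold}(5),
\[
\eta \langle \alpha, 2, \theta_5 \rangle = \langle \eta, \alpha, 2 \rangle \theta_5,
\]
which is valid once one checks $\eta\alpha = 0$ (note that $h_1 \cdot h_0^3 i = 0$ in $\Ext$, so $\eta\alpha$ lies in strictly higher filtration and must be shown to vanish). The right-hand side lives in $\pi_{87,\ast}$, matching the target degree. To pin down where this product is detected, I would use comparison to $C\tau$: since $\tau^2 \D^2 c_1 g$ is a $\tau$-multiple, it maps to zero under inclusion of the bottom cell, so the $C\tau$ comparison can only show that $\eta \{ h_0^3 h_6 i\}$ is divisible by $\tau$, i.e.\ is detected in high Adams filtration. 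Combining the nonvanishing of $\langle \eta, \alpha, 2 \rangle \theta_5$ with this filtration constraint, and eliminating the remaining candidates in the relevant degree by comparison to $\tmf$ and $\mmf$, the only possible detecting element should be $\tau^2 \D^2 c_1 g$.

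The main obstacle will be the evaluation of $\langle \eta, \alpha, 2 \rangle \theta_5$: one must first identify the bracket $\langle \eta, \alpha, 2 \rangle$ in $\pi_{25,\ast}$ (for instance by rewriting it as $\langle 2, \alpha, \eta \rangle$ and using the symmetric-bracket machinery behind Corollary \ref{cor:2-symmetric}), and then show that its product with $\theta_5$ is genuinely nonzero rather than zero or absorbed into higher filtration. If this direct computation proves intractable, the fallback is a pure elimination argument: show by comparison to $C\tau$ that $h_0^3 h_6 i$ must support \emph{some} hidden $\eta$ extension, and then rule out every possible target other than $\tau^2 \D^2 c_1 g$ using the multiplicative structure together with comparisons to $\tmf$ and $\mmf$, in the style of the surrounding lemmas.
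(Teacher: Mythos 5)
Your main route breaks down at the very first step: the Toda bracket $\langle \alpha, 2, \theta_5 \rangle$ with $\alpha$ detected by $h_0^3 i$ is not defined. The $2$-primary $v_1$-periodic part of $\pi_{23}$ is $\Z/16$, generated by $\rho_{23}$ and detected by the $h_0$-tower on $h_0^2 i + \tau P h_1 d_0$; thus $h_0^3 i$ detects $2\rho_{23}$, and $h_0^4 i$ not only survives but detects $4\rho_{23} \neq 0$. Every element of the coset $\{h_0^3 i\}$ is $2\rho_{23}$ plus higher filtration, and no higher-filtration correction can kill $4\rho_{23}$ (that would require a hidden $2$ extension landing in the same filtration as $h_0^4 i$), so $2\alpha \neq 0$ for every choice of $\alpha$. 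Your side condition ``$h_0^4 i$ does not survive'' is exactly the thing that fails. The fallback you sketch (show by comparison to $C\tau$ that the source supports \emph{some} hidden $\eta$ extension, then eliminate targets) is not carried out, and it is not clear that the $C\tau$ comparison alone forces an $\eta$ extension on $h_0^3 h_6 i$; as you yourself observe, the intended target is a $\tau$-multiple and hence invisible to inclusion of the bottom cell.

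The paper's proof works from the target rather than the source, and uses a different pair of differentials. The Adams differential $d_2(\D^3 h_3^2) = \D^2 h_0^3 x$ exhibits $\tau^2 \D^2 c_1 g = h_1 \cdot \D^3 h_3^2$ as detecting the Toda bracket $\langle \eta, 2, \{\D^2 h_0^2 x\} \rangle$; the later differential $d_5(h_0^2 h_6 i) = \D^2 h_0^2 x$ shows that $0$ lies in the coset $\{\D^2 h_0^2 x\}$, so this bracket is $\langle \eta, 2, 0 \rangle$ and consists of multiples of $\eta$. Hence $\tau^2 \D^2 c_1 g$ detects a multiple of $\eta$, and $h_0^3 h_6 i$ is the only possible source. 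If you want to salvage a source-side argument, you would need a genuinely different bracket description of $h_0^3 h_6 i$; the $\langle -, 2, \theta_5 \rangle$ template that works for $h_6 c_0$, $h_6 d_0$, etc.\ is unavailable here precisely because the image-of-$J$ class $2\rho_{23}$ is not $2$-torsion.
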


\begin{proof}
The Adams differential $d_2(\D^3 h_3^2) = \D^2 h_0^3 x$
implies that
$\tau^2 \D^2 c_1 g = h_1 \cdot \D^3 h_3^2$ detects the Toda bracket
$\langle \eta, 2, \{\D^2 h_0^2 x\} \rangle$.
However, the later Adams differential $d_5(h_0^2 h_6 i) = \D^2 h_0^2 x$
implies that $0$ belongs to $\{\D^2 h_0^2 x\}$.
Therefore,
$\tau^2 \D^2 c_1 g$ detects $\langle \eta, 2, 0 \rangle$,
so $\tau^2 \D^2 c_1 g$ detects a multiple of $\eta$.
The only possibility is that there is a hidden $\eta$ extension
from $h_0^3 h_6 i$ to $\tau^2 \D^2 c_1 g$.
\end{proof}

\begin{lemma}
\label{lem:eta-B6d1}
\revdeg{87, 11, 48}
There is no hidden $\eta$ extension on $B_6 d_1$.
\end{lemma}

\begin{proof}
Table \ref{tab:2-extn} shows that $B_6 d_1$ is the target of a 
hidden $2$ extension, so it cannot be the source of a hidden
$\eta$ extension.
\end{proof}

\begin{lemma}
\label{lem:eta-h1^2h4h6c0}
\revdeg{88, 7, 47}
There is no hidden $\eta$ extension on $h_1^2 h_4 h_6 c_0$.
\end{lemma}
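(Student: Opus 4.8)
The plan is to exploit the heavy $h_1$-divisibility of the source. First I would record what $h_1^2 h_4 h_6 c_0$ detects: by Lemma~\ref{lem:2-h4h6c0} (via Table~\ref{tab:misc-extn}) the element $h_4 h_6 c_0$ detects $\sigma \{h_1 h_4 h_6\}$, so multiplying by the honest factor $h_1^2$ shows that, modulo higher filtration, $h_1^2 h_4 h_6 c_0$ detects $\eta^2 \sigma \gamma$ for some $\gamma$ detected by $h_1 h_4 h_6$. After checking from the charts that $h_1^3 h_4 h_6 c_0 = 0$ in $E_\infty$ (so that there is genuinely a \emph{hidden} extension to decide), the question becomes whether $\eta \cdot \eta^2 \sigma \gamma = \eta^3 \sigma \gamma$ is nonzero, and if so whether it is detected strictly above filtration $8$.

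The key step is a single multiplication by $\tau$. Using the relation $\tau \eta^3 = 4 \nu$ together with $\nu \sigma = 0$, I compute
\[
\tau \cdot \eta^3 \sigma \gamma = 4 \nu \sigma \gamma = 0 ,
\]
since $\nu \sigma \gamma = \nu \cdot \sigma\{h_1 h_4 h_6\} = 0$. Hence $\eta^3 \sigma \gamma$ is annihilated by $\tau$. Consequently, any hidden $\eta$ extension on $h_1^2 h_4 h_6 c_0$ would have to land in a $\tau$-torsion class of $\pi_{89,48}$ lying in Adams filtration at least $9$. This drastically restricts the possible targets, because $\tau$-torsion classes are exactly the image of projection from $C\tau$ to the top cell in the long exact sequence~(\ref{eq:LES-Ctau}).

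To finish, I would enumerate the $\tau$-torsion classes in $\pi_{89,48}$ of sufficiently high filtration by running the relevant portion of~(\ref{eq:LES-Ctau}), i.e.\ by inspecting which elements of $\pi_{90,49} C\tau$ map nontrivially to the top cell, and eliminate each candidate by comparison to the homotopy of $C\tau$ (no corresponding $\eta$ extension occurs there) or by noting it already supports a $2$ or $\nu$ extension. I expect the main obstacle to be twofold: controlling possible higher-filtration corrections in the leading-term identification $\{h_1^2 h_4 h_6 c_0\} = \eta^2 \sigma \gamma$ so that the shuffle argument is valid for \emph{every} representative, and confirming that the short list of $\tau$-torsion candidates in the $89$-stem really does contain no admissible target. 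If instead there are no $\tau$-torsion classes of high enough filtration at all, the lemma follows immediately from the displayed vanishing, which would be the cleanest outcome.
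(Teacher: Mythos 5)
Your reduction to a $\tau$-torsion statement is correct as far as it goes: $h_4 h_6 c_0$ does detect $\sigma\{h_1 h_4 h_6\}$ (Table \ref{tab:misc-extn}), the motivic relation $\tau\eta^3 = 4\nu$ together with $\nu\sigma = 0$ does give $\tau\,\eta^3\sigma\gamma = 0$, and your worry about higher-filtration representatives is in fact harmless, since condition (3) of Definition \ref{defn:hidden} automatically attributes any extension coming from a correction term $\delta$ of higher filtration to $\delta$ rather than to $h_1^2 h_4 h_6 c_0$. The genuine gap is that the argument stops exactly where the work begins: you never enumerate the $\tau$-torsion classes of $\pi_{89,48}$ in Adams filtration at least $9$, nor eliminate them. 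Everything you have proved is only a constraint on the possible target, and the lemma is not established until that list is shown to be empty of admissible targets. As written, the proof is a plan, not a proof.

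It is also worth knowing that the paper disposes of this in one line by a standard move you did not consider: Table \ref{tab:nu-extn} records a hidden $\nu$ extension from $h_2 h_6 c_1$ to $h_1^2 h_4 h_6 c_0$ (established by comparison to $C\tau$), so $h_1^2 h_4 h_6 c_0$ detects an element of the form $\nu\alpha$; since $\eta\nu = 0$, the target of a hidden $\nu$ extension can never support a hidden $\eta$ extension. This "target of a $\nu$ extension, hence no $\eta$ extension" argument recurs throughout Chapter \ref{ch:hidden} and completely bypasses the stemwise enumeration your approach would require. If you want to salvage your route, the missing content is precisely the inspection of the $89$-stem chart in weight $48$; but the $\nu$-extension argument is both shorter and free of that dependence.
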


\begin{proof}
Table \ref{tab:nu-extn} shows that $h_1^2 h_6 h_6 c_0$
is the target of a hidden $\nu$ extension, so it cannot support
a hidden $\eta$ extension.
\end{proof}

\begin{lemma}
\label{lem:eta-D^2h1f1}
\revdeg{89, 13, 47}
There is a hidden $\eta$ extension from $\D^2 h_1 f_1$ to
the element $\tau \D^2 h_2 c_1 g$.
\end{lemma}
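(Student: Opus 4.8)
The plan is to realize this extension through the long exact sequence (\ref{eq:LES-Ctau}) relating $\pi_{*,*}$ to $\pi_{*,*}C\tau$, since the target $\tau \D^2 h_2 c_1 g$ is a $\tau$-multiple and is therefore exactly the kind of class that the map to $C\tau$ controls. First I would record that $\D^2 h_1 f_1$ equals $h_1 \cdot \D^2 f_1$, and that by Remark \ref{rem:d6-D^2f1} the element $\D^2 f_1$ (or $\D^2 f_1 + \tau^2 \D g_2 g$) survives to $E_\infty$; let $\beta$ be a homotopy class it detects. The assertion is then equivalent to the statement that $\eta^2 \beta$ is non-zero and detected by $\tau \D^2 h_2 c_1 g$. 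Before anything else I would check that the extension is genuinely hidden, i.e.\ that $\D^2 h_1^2 f_1$ is zero on the $E_\infty$-page (either already zero in $\Ext$ or killed by one of the differentials of Chapter \ref{ch:Adams}); this guarantees that $\tau \D^2 h_2 c_1 g$, which lies in strictly higher Adams filtration, is the only class that can detect $\eta^2\beta$.

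The main engine I would use is comparison to $C\tau$. Inclusion of the bottom cell and projection to the top cell convert $h_1$-multiplications in the Adams spectral sequence for $C\tau$, where they are largely read off from the machine-generated algebraic Novikov data, into statements about $\eta$-multiplication in the sphere. Concretely, I expect to exhibit an $h_1$-extension in the homotopy of $C\tau$ whose image under projection to the top cell is precisely the desired $\eta$-extension; the factor of $\tau$ in the target would then be produced by combining this with a hidden $\tau$-extension landing on $\tau \D^2 h_2 c_1 g$, taken from Table \ref{tab:tau-extn}, exactly as in the ``$C\tau$ then $\tau$'' arguments used throughout the proof of Theorem \ref{thm:eta-extn}. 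If a direct $C\tau$-comparison is unavailable, the fallback is a Moss Convergence Theorem \ref{thm:Moss} argument: one identifies a Massey product of the form $\langle h_1, \dots \rangle$, or a $g$-operator bracket via Proposition \ref{prop:g-Massey} since the target is $g$-divisible, equal to $\tau \D^2 h_2 c_1 g$, and shows the associated Toda bracket is forced to contain a non-zero multiple of $\eta$.

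Once existence is established, I would pin down the target by elimination. The candidate detecting classes in this bidegree that are not multiples of $\eta$ can be ruled out by comparison to $\tmf$ and to $\mmf$: the $g$-divisible classes are precisely those controlled by the thoroughly understood spectral sequence for $\mmf$, and, since $\D^2 f_1$ is expected to map to zero in $\mmf$, any alternative target detected non-trivially there cannot detect $\eta^2 \beta$. The crux, and the step I expect to require the most care, is the $\tau$-bookkeeping: matching the $\tau$-free world of $C\tau$ with the genuine $\tau$-multiple $\tau \D^2 h_2 c_1 g$ in the sphere, and verifying that no crossing extension and no class in even higher filtration spoils the identification of the target. This is where I would lean most heavily on the explicit values of inclusion of the bottom cell and projection to the top cell tabulated for this range.
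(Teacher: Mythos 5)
Your proposal is a strategy outline rather than a proof, and its central step is left as a placeholder: you say you ``expect to exhibit an $h_1$-extension in the homotopy of $C\tau$ whose image under projection to the top cell is precisely the desired $\eta$-extension,'' but you never produce it, and for this particular extension that route is problematic. The target $\tau \D^2 h_2 c_1 g$ is an honest (not hidden) $\tau$-multiple of the surviving class $\D^2 h_2 c_1 g$, so it is not the target of a hidden $\tau$ extension from Table \ref{tab:tau-extn}; the standard two-step ``pull back an $h_1$-extension from $C\tau$, then compose with a hidden $\tau$ extension'' therefore does not apply in the form you describe, and $\tau$-divisible classes die under inclusion of the bottom cell into $C\tau$, so the extension cannot be read off there directly. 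Table \ref{tab:eta-extn} moreover flags this extension as a crossing extension, which is exactly the situation in which naive comparison arguments tend to fail. Your reformulation in terms of $\eta^2\beta$ for $\beta$ detected by $\D^2 f_1$ also quietly assumes the relevant class in $\{\D^2 h_1 f_1\}$ is an $\eta$-multiple, which is not needed and slightly misstates what must be shown.

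The paper's actual argument is short and runs in the opposite direction: it shows the \emph{target} is forced to be an $\eta$-multiple. One observes that $\tau \D^2 h_2 c_1 g$ detects the product $\nu^2 \{\D^2 t\}$, that $\nu^2$ equals the Toda bracket $\langle \eta, \nu, \eta \rangle$ (Table \ref{tab:Toda}), and then shuffles
\[
\langle \eta, \nu, \eta \rangle \{\D^2 t\} = \eta \langle \nu, \eta, \{\D^2 t\} \rangle,
\]
so $\tau \D^2 h_2 c_1 g$ detects a multiple of $\eta$; the source is then identified as $\D^2 h_1 f_1$ because it is the only possibility. Your fallback paragraph gestures toward a Moss/Toda-bracket argument, but it is aimed at realizing the target as a bracket $\langle h_1, \dots\rangle$ rather than at the key observation that the target is $\nu^2$ times something and that $\nu^2$ itself is a symmetric bracket with $\eta$ on the outside. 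Without that (or some equally concrete substitute), the proposal does not close.
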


\begin{proof}
The element $\tau \D^2 h_2 c_1 g$ detects the product
$\nu^2 \{\D^2 t\}$.
Table \ref{tab:Toda} shows that $\nu^2$ equals the Toda bracket
$\langle \eta, \nu, \eta \rangle$.
Shuffle to obtain
\[
\langle \eta, \nu, \eta \rangle \{\D^2 t\} =
\eta \langle \nu, \eta, \{\D^2 t\} \rangle.
\]
This shows that $\tau \D^2 h_2 c_1 g$ is the target of a hidden
$\eta$ extension.  The only possible source for this extension
is $\D^2 h_1 f_1$.
\end{proof}

\section{Hidden $\nu$ extensions}
\label{sctn:nu-extn}

\begin{thm}
\label{thm:nu-extn}
Tables \ref{tab:nu-extn} \rev{and \ref{tab:nu-extn-null}} list some hidden extensions by $\nu$.
\end{thm}

\begin{proof}
Many of the hidden extensions follow by comparison to 
$C\tau$.  For example, there is a hidden $\nu$ extension
from $ h_0^2 g$ to $h_1 c_0 d_0$ in the Adams spectral
sequence for $C\tau$.
Pulling back along inclusion of the bottom cell into $C\tau$,
there must also be a hidden $\nu$ extension from $h_0^2 g$
to $h_1 c_0 d_0$ in the Adams spectral sequence for the sphere.
This type of argument is indicated by the notation
$C\tau$ in the fourth column of Table \ref{tab:eta-extn}.

Next, Table \ref{tab:tau-extn} shows a hidden $\tau$
extension from $h_1 c_0 d_0$ to $P h_1 d_0$.
Therefore, there is also a hidden $\nu$ extension from
$\tau h_0^2 g$ to $P h_1 d_0$.
This type of argument is indicated by the notation $\tau$
in the fourth column of Table \ref{tab:eta-extn}.

Some extensions can be resolved by comparison to
$\tmf$ or to $\mmf$.
For example,
Table \ref{tab:unit-mmf} shows that 
the classical unit map $S \map \tmf$
takes $\{\D h_1 h_3\}$ in $\pi_{32}$ to a non-zero element
$\alpha$ of $\pi_{32} \tmf$ 
such that $\nu \alpha = \eta \kappa \kappabar$
in $\pi_{35} \tmf$.
Therefore, there must be a hidden $\nu$ extension
from $\D h_1 h_3$ to $\tau h_1 e_0^2$.

The proofs of several of the extensions in Table \ref{tab:nu-extn}
rely on analogous extensions in $\mmf$.  
Extensions in $\mmf$ have not been rigorously analyzed \cite{Isaksen18}.
However, the specific extensions from $\mmf$ that we need
are easily deduced from extensions in $\tmf$, together
with the multiplicative structure.
For example, there is a hidden $\nu$ extension in $\tmf$
from $\D h_1$ to $\tau d_0^2$.
Therefore, there is a hidden $\nu$ extension in $\mmf$
from $\D h_1 g$ to $\tau d_0^2 g$, and also a hidden
$\nu$ extension from $\tau \D h_1 g$ to $\tau^2 d_0 e_0^2$
in the homotopy groups of the sphere spectrum.
Note that $\mmf$ really is required here, since $d_0^2 g$ equals
zero in the homotopy of $\tmf$.

Many cases require more complicated arguments.  In stems up to
approximately dimension 62, see 
\cite{Isaksen14c}*{Section 4.2.4 and Tables 31--32} and
\cite{WangXu18}.
The higher-dimensional cases are handled in the following lemmas.
\end{proof}

\begin{remark}
\label{rem:nu-extn-notes}
The last column of Table \ref{tab:nu-extn} indicates which
$\nu$ extensions are crossing, as well as which extensions
have indeterminacy in the sense of Section \ref{subsctn:hid-indet}.
\end{remark}

\begin{remark}
\label{rem:nu-h2h5d0}
The hidden $\nu$ extension from $h_2 h_5 d_0$ to $\tau g n$
is proved in \cite{WangXu18}, which relies on the
``$\R P^\infty$-method" to establish a hidden
$\sigma$ extension from $\tau h_3 d_1$ to $\D h_2 c_1$
and a hidden $\eta$ extension from $\tau h_1 g_2$ to $\D h_2 c_1$.
We now have easier proofs for these $\eta$ and $\sigma$ extensions,
using the hidden $\tau$ extension from $h_1^2 g_2$ to $\D h_2 c_1$
given in Table \ref{tab:tau-extn}, as well as the relation
$h_3^2 d_1 = h_1^2 g_2$.
\end{remark}

\begin{remark}
\label{rem:nu-t(De1+C0)g}
If $M \D h_1^2 d_0$ is not hit by a differential, then there is a 
hidden $\tau$ extension from $\tau M h_0 g^2$ from $M \D h_1^2 d_0$.
This implies that there must be a hidden
$\nu$ extension from $\tau (\D e_1 + C_0) g$ to $M \D h_1^2 d_0$.
\end{remark}

\begin{thm}
\label{thm:nu-extn-possible}
Table \ref{tab:nu-extn-possible}
lists all unknown hidden $\nu$ extensions, through
the $90$-stem. 
\end{thm}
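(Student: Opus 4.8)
The plan is to enumerate, through the $90$-stem, every pair of classes $(b,c)$ in the Adams $E_\infty$-page that is a candidate for a hidden $\nu$ extension: that is, $b$ and $c$ lie in bidegrees $(s,f,w)$ and $(s+3,f',w+2)$ with $f' > f$, and $h_2 b = 0$ in $E_\infty$. The extensions that genuinely occur have already been recorded in Table \ref{tab:nu-extn} and established in Theorem \ref{thm:nu-extn}, so the present task is to show that each remaining candidate either can be eliminated or appears in Table \ref{tab:nu-extn-possible}, and that no candidate is overlooked. First I would dispatch the large majority of candidates by naturality. Comparison along the inclusion of the bottom cell and the projection to the top cell of $C\tau$, using the long exact sequence (\ref{eq:LES-Ctau}) together with Tables \ref{tab:hid-incl} and \ref{tab:hid-proj}, rules out any candidate whose target would force a $\nu$ extension in the homotopy of $C\tau$ that does not exist. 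Comparison along the unit maps to $\tmf$ and $\mmf$, via Table \ref{tab:unit-mmf}, eliminates further candidates whose images would be inconsistent with the completely known $\nu$-multiplications in those spectra.

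Next I would remove candidates using the surrounding multiplicative structure. A candidate target that supports a nonzero multiplication by $\eta$ can be discarded, since $\eta \nu = 0$ forces $\nu b$ to be annihilated by $\eta$; similarly, a target already known to be the value of a hidden $2$, $\eta$, or $\nu$ extension recorded in Tables \ref{tab:2-extn}, \ref{tab:eta-extn}, and \ref{tab:nu-extn} is frequently incompatible with a second such extension. Hidden $\tau$ extensions from Table \ref{tab:tau-extn} are then used to propagate known $\nu$ extensions upward in $\tau$-divisibility, which both produces new extensions and excludes targets. The remaining candidates are attacked individually with Toda bracket techniques: the Moss Convergence Theorem \ref{thm:Moss}, the symmetric bracket results Theorem \ref{thm:Toda-symmetric} and Corollary \ref{cor:2-symmetric}, and the shuffle relations of Theorems \ref{thm:Toda-3fold} and \ref{thm:Toda-4fold}, typically by realizing the source of a candidate extension as a Toda bracket and shuffling $\nu$ across it. These case-by-case arguments are collected in the lemmas following the statement.

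The principal obstacle is twofold. First, several candidate $\nu$ extensions are entangled with the Adams differentials left undetermined in Theorems \ref{thm:Adams-d2}--\ref{thm:Adams-higher}: whether a given extension occurs depends on which of two elements survives, so the best possible result is a conditional statement, as already seen in Remark \ref{rem:nu-t(De1+C0)g}. Isolating exactly which conditional extensions are forced, and verifying via (\ref{eq:LES-Ctau}) that they are equivalent to the corresponding differential uncertainties, is the delicate combinatorial step. Second, the genuinely unknown extensions in Table \ref{tab:nu-extn-possible} require a negative verification: one must check that none of the available tools ($C\tau$, $\tmf$, $\mmf$, the multiplicative constraints, and the bracket shuffles) resolves them, which is where the completeness half of the claim is most at risk. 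I expect the bulk of the work, and the main danger of an omission, to lie in this exhaustive check that every unresolved candidate has in fact been listed.
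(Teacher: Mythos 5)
Your plan matches the paper's proof: the paper eliminates most candidates by comparison to $C\tau$, $\tmf$, and $\mmf$, then by multiplicative constraints (e.g.\ a target supporting an $\eta$ multiplication is excluded since $\eta\nu=0$), and handles the remaining cases in individual lemmas using hidden $\tau$ extensions and Toda bracket shuffles, with the conditional cases tied to unresolved differentials exactly as you describe. This is essentially the same approach, correctly identified.
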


\begin{proof}
Many possible extensions can be eliminated by
comparison to $C\tau$, to $\tmf$, or to $\mmf$.
For example,
there cannot be a hidden $\nu$ extension from $h_0 h_2 h_4$
to $\tau h_1 g$ by comparison to $C\tau$.

Other possibilities are eliminated by consideration of other
parts of the multiplicative structure.  For example,
there cannot be a hidden $\nu$ extension whose target supports
a multiplication by $\eta$, since $\eta \nu$ equals zero.

Many cases are eliminated by more complicated arguments.
These are handled in the following lemmas.
\end{proof}

\begin{remark}
\label{rem:h2-th1p1}
Comparison to synthetic homotopy eliminates several
possible hidden $\nu$ extensions, including:
\begin{enumerate}
\item
from $\tau h_1 p_1$ to $\tau x_{74,8}$.
\item
from $\D^2 p$ to $\tau M \D h_1 d_0$.
\end{enumerate}
See \cite{BIX} for more details.
\end{remark}

\begin{remark}
\label{rem:h0h2h4h6}
If $M \D h_1^2 d_0$ is not hit by a differential, then
$M \D h_1 d_0$ supports an $h_1$ extension, and
there cannot be a hidden $\nu$ extension from
$h_0 h_2 h_4 h_6$ to $M \D h_1 d_0$.
\end{remark}

\begin{lemma}
\label{lem:nu-De1+C0}
\revdeg{62, 8, 33}
There is a hidden $\nu$ extension from $\D e_1 + C_0$ to
$\tau M h_0 g$.
\end{lemma}

\begin{proof}
Table \ref{tab:Toda} shows that
$2 \kappabar$ is contained in 
$\tau \langle \nu, \eta, \eta \kappa \rangle$.
Shuffle to obtain that
\[
\nu \langle \eta, \eta \kappa, \tau \theta_{4.5} \rangle =
\langle \nu, \eta, \eta \kappa \rangle \tau \theta_{4.5},
\]
so $2 \kappabar \theta_{4.5}$ is divisible by $\nu$.

Table \ref{tab:misc-extn} shows that 
$\tau M g$ detects $\kappabar \theta_{4.5}$, so
$\tau M h_0 g$ detects 
$2 \kappabar \theta_{4.5}$.
Now we know that there is a hidden $\nu$ extension whose target is
$\tau M h_0 g$, and the only possible source is $\D e_1 + C_0$.
\end{proof}

\begin{remark}
\label{rem:eta,etakappa,ttheta4.5}
One consequence of the proof of Lemma \ref{lem:nu-De1+C0}
is that $\D e_1 + C_0$ detects the Toda bracket
$\langle \eta, \eta \kappa, \tau \theta_{4.5} \rangle$.
\end{remark}

\begin{lemma}
\label{lem:nu-th1H1}
\revdeg{63, 6, 33}
There is a hidden $\nu$ extension from $\tau h_1 H_1$ to
$\tau^2 M h_1 g$.
\end{lemma}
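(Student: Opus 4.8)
The plan is to fix an element $\alpha\in\pi_{63,33}$ detected by $\tau h_1 H_1$ and to prove that $\nu\alpha$ is nonzero and detected by $\tau^2 M h_1 g$. First I would check on the $E_\infty$-page that $h_2\cdot\tau h_1 H_1=0$, so that the asserted $\nu$ extension is necessarily hidden, and I would assemble the two companion extensions that feed the argument: the hidden $2$ extension from $\tau h_1 H_1$ to $\tau h_1(\D e_1+C_0)$ of Lemma \ref{lem:2-h1H1}, and the hidden $\nu$ extension from $\D e_1+C_0$ to $\tau M h_0 g$ of Lemma \ref{lem:nu-De1+C0}. Using the factorization $\tau h_1(\D e_1+C_0)=h_1\cdot\tau(\D e_1+C_0)$, the first of these says that $2\alpha=\eta\gamma$ for some $\gamma\in\pi_{62,32}$ detected by $\tau(\D e_1+C_0)$; writing $\gamma=\tau\beta$ with $\beta$ detected by $\D e_1+C_0$ and multiplying the second relation by $\tau$ gives $\nu\gamma=2\tau\kappabar\theta_{4.5}$, detected by $\tau^2 M h_0 g$.

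The core of the argument is to convert these two relations into a statement about $\nu\alpha$ by a bracket shuffle. Because $2\alpha=\eta\gamma$, the products $2\nu\alpha$ and $\eta\nu\alpha$ both vanish, so $\nu\alpha$ is annihilated by $2$ and by $\eta$, which is exactly the behaviour expected of a class detected by $\tau^2 M h_1 g$. To exhibit $\nu\alpha$ itself I would feed $\alpha$ and $\gamma$ into the matric Toda bracket associated to the relation $[\,2\ \ \eta\,]\cdot[\alpha\ \ {-}\gamma]^{T}=0$, and shuffle $\nu$ through it, using Corollary \ref{cor:2-symmetric} (which gives $\langle 2,\eta,2\rangle=\tau\eta^2$) together with the identity $M h_1=\langle h_1,h_0^3,g_2\rangle$ defining the Mahowald operator. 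Applying the Moss Convergence Theorem \ref{thm:Moss} to the resulting Massey product should identify $\tau^2 M h_1 g$ as the detecting element, using that $M h_1$ detects $\eta\theta_{4.5}$ by Table \ref{tab:eta-extn} and that $g$ detects $\kappabar$.

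The main obstacle is twofold: establishing that $\nu\alpha$ is genuinely nonzero (the relations above only force $2\nu\alpha=\eta\nu\alpha=0$), and pinning the target to $\tau^2 M h_1 g$ rather than to some class of strictly higher Adams filtration. For the nonvanishing I would compare with the homotopy of $C\tau$: the corresponding bracket, or the image of $\nu\alpha$ under inclusion of the bottom cell, should be visibly nonzero there, forcing $\nu\alpha\neq 0$ by naturality. For the precise target I would eliminate the remaining higher-filtration possibilities in $\pi_{66,35}$ by comparison to $C\tau$ and to $\tmf$, leaving $\tau^2 M h_1 g$ as the only option. As a consistency check, this is exactly the input required in Lemma \ref{lem:d6-h2^2H1}, where the conclusion that $\tau^2 M h_1 g$ is a $\nu$-target is used to rule out a hidden $\eta$ extension out of it; so the adjacent computations corroborate the claimed extension.
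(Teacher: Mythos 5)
There is a genuine gap at the heart of your argument: the bracket construction that is supposed to produce $\nu\alpha$ does not exist. A matric Toda bracket of the form $\bigl\langle \nu, [\,2\ \ \eta\,], [\,\alpha\ \ \gamma\,]^{T}\bigr\rangle$ requires $\nu\cdot[\,2\ \ \eta\,]=[\,2\nu\ \ \eta\nu\,]$ to vanish, and $2\nu\neq 0$; so you cannot shuffle $\nu$ through the relation $2\alpha=\eta\gamma$. What your two input extensions actually give is only $2\nu\alpha=\nu\eta\gamma=0$, i.e.\ that $\nu\alpha$ is $2$-torsion --- they carry no information identifying $\nu\alpha$ with a class detected by $\tau^2 M h_1 g$ (note that $\tau^2 M h_0 g$ and $\tau^2 M h_1 g$ are the $h_0$- and $h_1$-multiples of $\tau^2 M g$, and nothing in your setup converts the former into the latter). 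The subsequent appeals to $\langle 2,\eta,2\rangle=\tau\eta^2$ and to $M h_1=\langle h_1,h_0^3,g_2\rangle$ are not attached to any well-formed bracket, so the Moss Convergence Theorem has nothing to converge. Your fallback for nonvanishing --- comparison to $C\tau$ --- also fails: the target $\tau^2 M h_1 g$ is a $\tau$-multiple, hence maps to zero under inclusion of the bottom cell, so this extension is invisible in $\pi_{*,*}C\tau$. Indeed Table \ref{tab:nu-extn} records this as a \emph{crossing} extension, and Lemma \ref{lem:nu-Dj1+tgC'} warns that not every $\alpha$ detected by $\tau h_1 H_1$ has $\nu\alpha$ detected by $\tau^2 M h_1 g$; any argument that starts from an arbitrary such $\alpha$ and tries to compute $\nu\alpha$ head-on must contend with this.

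The successful route runs in the opposite direction: one shows that $\tau^2 M h_1 g$ is the target of \emph{some} $\nu$ extension, and then observes that $\tau h_1 H_1$ is the only possible source. Concretely, since $\kappabar\in\langle\kappa,2,\eta,\nu\rangle$ (Table \ref{tab:Toda}) and $M h_1$ detects $\eta\theta_{4.5}$, the element $\tau^2 M h_1 g$ detects
\[
\tau\eta\theta_{4.5}\,\langle\kappa,2,\eta,\nu\rangle
=\langle\tau\eta\theta_{4.5},\kappa,2,\eta\rangle\,\nu,
\]
where the fourfold bracket on the right is well-defined because $\langle\kappa,2,\eta\rangle$ contains zero (Lemma \ref{lem:kappa,2,eta}) and $\langle\tau\eta\theta_{4.5},\kappa,2\rangle=0$ as $\pi_{61,32}=0$. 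This exhibits $\tau^2 M h_1 g$ as detecting a multiple of $\nu$, and the only element that can support the extension is $\tau h_1 H_1$. Your draft correctly senses that $\kappabar$, $M h_1$, and $\theta_{4.5}$ are the relevant players, but without the fourfold shuffle above there is no mechanism to realize $\tau^2 M h_1 g$ as a $\nu$-multiple.
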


\begin{proof}
Lemma \ref{lem:kappa,2,eta} shows that 
the bracket $\langle \kappa, 2, \eta \rangle$
contains zero with indeterminacy generated by $\eta \rho_{15}$.
The bracket $\langle \tau \eta \theta_{4.5}, \kappa, 2 \rangle$
equals zero since $\pi_{61,32}$ is zero.
Therefore, the Toda bracket 
$\langle \tau \eta \theta_{4.5}, \kappa, 2, \eta \rangle$ is well-defined.

Table \ref{tab:Toda} shows that $\tau g $ detects 
$\langle \kappa, 2, \eta, \nu \rangle$.
Therefore, $\tau^2 M h_1 g$ detects
\[
\tau \eta \theta_{4.5} \langle \kappa, 2, \eta, \nu \rangle =
\langle \tau \eta \theta_{4.5}, \kappa, 2, \eta \rangle \nu.
\]
This shows that
$\tau^2 M h_1 g$ is the target of a $\nu$ extension,
and the only possible source is $\tau h_1 H_1$.
\end{proof}

\begin{remark}
\label{rem:tetatheta4.5,kappa,2,eta}
The proof of Lemma \ref{lem:nu-th1H1} shows that
$\tau h_1 H_1$ detects 
the Toda bracket
$\langle \tau \eta \theta_{4.5}, \kappa, 2, \eta \rangle$.
\end{remark}

\begin{lemma}
\label{lem:nu-h1h6}
\revdeg{64, 2, 33}
There is no hidden $\nu$ extension on $h_1 h_6$.
\end{lemma}

\begin{proof}
Table \ref{tab:Toda} shows that $h_1 h_6$ detects the Toda bracket
$\langle \eta, 2, \theta_5 \rangle$.  Shuffle to obtain
\[
\nu \langle \eta, 2, \theta_5 \rangle =
\langle \nu, \eta, 2 \rangle \theta_5 = 0,
\]
since $\langle \nu, \eta, 2 \rangle$ is contained in $\pi_{5,3} = 0$.
\end{proof}

\begin{lemma}
\label{lem:nu-h3Q2}
\revdeg{64, 8, 34}
There is no hidden $\nu$ extension on $h_3 Q_2$.
\end{lemma}

\begin{proof}
Table \ref{tab:eta-extn} shows that 
$\tau^2 \D h_2^2 e_0 g$ supports a hidden $\eta$ extension.
Therefore, it cannot be the target of a $\nu$ extension.
\end{proof}

\begin{lemma}
\label{lem:nu-h2^2A'}
\revdeg{67, 8, 36}
There is a hidden $\nu$ extension from
the element $h_2^2 A'$ to $h_1 h_3 (\D e_1 + C_0)$.
\end{lemma}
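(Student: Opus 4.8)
The plan is to produce this extension indirectly, since a direct comparison to $C\tau$ along the inclusion of the bottom cell is unavailable here: Lemma \ref{lem:t-d1e1} exhibits a hidden $\tau$ extension from $d_1 e_1$ to $h_1 h_3 (\D e_1 + C_0)$, so the class detected by $h_1 h_3 (\D e_1 + C_0)$ is $\tau$-divisible and therefore maps to zero under inclusion of the bottom cell into $C\tau$. Consequently I expect the argument to run through the $\eta$ extension structure already established for $h_3 A'$ together with Toda's relation. As a first consistency check, note that the extension is genuinely hidden: since $h_2^3 A' = h_1^2 h_3 A'$ in $\Ext$ and the hidden $\eta$ extension of Lemma \ref{lem:eta-h3A'} forces $h_1 \cdot h_3 A' = 0$ in the $E_\infty$-page, we get $h_2^3 A' = 0$ in $E_\infty$, which is condition (1) of Definition \ref{defn:hidden} for a hidden $\nu$ extension on $h_2^2 A'$.

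First I would record the consequence of Lemma \ref{lem:eta-h3A'} that $\eta$ times a class detected by $h_3 A'$ is detected by $h_3 (\D e_1 + C_0)$. Multiplying once more by $\eta$ is an honest, non-hidden $h_1$-multiplication, because $h_1 \cdot h_3 (\D e_1 + C_0) = h_1 h_3 (\D e_1 + C_0)$ is nonzero in the $E_\infty$-page, precisely since it is the target of the hidden $\tau$ extension of Lemma \ref{lem:t-d1e1}. Hence $\eta^2$ times a class detected by $h_3 A'$ is detected by $h_1 h_3 (\D e_1 + C_0)$. Next I would transfer this information from $\eta^2 \sigma$ to $\nu^3$ using Toda's relation $\eta^2 \sigma + \nu^3 = \eta \epsilon$ \cite{Toda62}. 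Letting $\alpha$ be a class detected by $A'$, so that $\sigma \alpha$ is detected by $h_3 A'$ and $\nu^2 \alpha$ is detected by $h_2^2 A'$, the previous paragraph shows $\eta^2 \sigma \alpha$ is detected by $h_1 h_3 (\D e_1 + C_0)$, while the relation gives $\nu \cdot (\nu^2 \alpha) = \nu^3 \alpha = \eta^2 \sigma \alpha + \eta \epsilon \alpha$. Provided the correction term $\eta \epsilon \alpha$ vanishes or is detected in strictly higher Adams filtration, it follows that $\nu$ times the class detected by $h_2^2 A'$ is detected by $h_1 h_3 (\D e_1 + C_0)$, which is exactly the asserted hidden $\nu$ extension.

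The main obstacle will be the bookkeeping around $A'$. I must verify that $h_3 A'$ and $h_2^2 A'$ genuinely detect $\sigma \alpha$ and $\nu^2 \alpha$ with no intervening filtration jump, and I must control the term $\eta \epsilon \alpha$, which lives in the same bidegree $(70,38)$ as the target; the natural detecting element is $h_1 c_0 A'$, so I would confirm by inspection of the $\Ext$ data that $\eta \epsilon \alpha$ makes no contribution in this filtration. A secondary concern is whether $A'$ survives the Adams spectral sequence at all, since a differential supported on $A'$ whose value is annihilated by $h_3$ could in principle leave $h_3 A'$ and $h_2^2 A'$ as permanent cycles without giving a usable class $\alpha$. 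If that difficulty arises, I would replace the elementwise computation by a matric Toda bracket argument built on the relation $h_2^3 = h_1^2 h_3$, or argue instead through projection from $C\tau$ to the top cell, where the target is no longer obstructed by $\tau$-divisibility.
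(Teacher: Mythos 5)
Your main line of argument has a fatal gap: the class $\alpha$ detected by $A'$ does not exist. Table \ref{tab:Adams-d5} records the differential $d_5(A') = \tau M h_1 d_0$ (this is \cite{WangXu17}*{Theorem 12.1}), so $A'$ does not survive to the $E_\infty$-page; indeed the paper uses $\pi_{61,32} = 0$ repeatedly (e.g.\ in Lemmas \ref{lem:theta4,eta^2,theta4} and \ref{lem:2-th1D3'}). You flagged this as a ``secondary concern,'' but it is exactly the situation that occurs: $h_3 A'$ and $h_2^2 A'$ are permanent cycles (the former detects the Toda bracket $\langle \sigma, \kappa, \tau\eta\theta_{4.5}\rangle$ per Table \ref{tab:Adams-perm}, not a product $\sigma\alpha$), so the identity $\nu^3\alpha = \eta^2\sigma\alpha + \eta\epsilon\alpha$ has no element to be applied to, and the entire second paragraph of your argument collapses. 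Your proposed fallbacks are only gestures: the matric bracket ``built on $h_2^3 = h_1^2 h_3$'' is not the right relation, and projection from $C\tau$ to the top cell is not developed.

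The paper's actual proof realizes your underlying instinct --- trading $\eta^2\sigma$ (or rather $\eta\sigma+\epsilon = \langle\nu,\eta,\nu\rangle$) against $\nu^2$ --- but routes around the death of $A'$ by working one filtration up. It uses that $\D e_1 + C_0$ detects $\langle \eta, \eta\kappa, \tau\theta_{4.5}\rangle$ and that $h_2 A'$ detects $\langle \nu, \eta\kappa, \tau\theta_{4.5}\rangle$, then shuffles the matric bracket
\[
\left\langle
\left[\begin{array}{cc}\eta\sigma+\epsilon & \nu^2\end{array}\right],
\left[\begin{array}{c}\eta \\ \nu\end{array}\right],
\eta\kappa\right\rangle \tau\theta_{4.5}
\]
to conclude $(\eta\sigma+\epsilon)\alpha + \nu^2\beta = 0$ for suitable $\alpha$ detected by $\D e_1+C_0$ and $\beta$ detected by $h_2 A'$; since $(\eta\sigma+\epsilon)\alpha$ is detected by $h_1 h_3(\D e_1 + C_0)$, the class $\nu\cdot(\nu\beta)$ is too, which is the asserted hidden extension. (It also needs a comparison to $C\tau$ to exclude the alternative target $\tau h_2 C'' + h_1 h_3(\D e_1+C_0)$, a step your write-up does not address.) To repair your proof you would need to replace the nonexistent $\alpha \in \{A'\}$ by these Toda bracket descriptions, which is essentially reconstructing the paper's argument.
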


\begin{proof}
By comparison to $C\tau$, 
There cannot be a hidden $\nu$ extension from
$h_2^2 A'$ to $\tau h_2 C'' + h_1 h_3 (\D e_1 + C_0)$

Table \ref{tab:Toda} shows that
$\D e_1 + C_0$ detects the Toda bracket
$\langle \eta, \eta \kappa, \tau \theta_{4.5} \rangle$,
and $h_2 A'$ detects the Toda bracket
$\langle \nu, \eta, \tau \kappa \theta_{4.5} \rangle$.
Note that $h_2 A'$ also detects
$\langle \nu, \eta \kappa, \tau \theta_{4.5} \rangle$.

Now shuffle to obtain
\[
(\eta \sigma + \epsilon) \langle \eta, \eta \kappa, 
\tau \theta_{4.5} \rangle +
\nu^2 \langle \nu, \eta \kappa, \tau \theta_{4.5} \rangle =
\left\langle
\left[
\begin{array}{cc}
\eta \sigma + \epsilon & \nu^2
\end{array}
\right],
\left[
\begin{array}{c}
\eta \\
\nu 
\end{array}
\right],
\eta \kappa \right\rangle \tau \theta_{4.5}.
\]
The matric Toda bracket
$\displaystyle
\left\langle
\left[
\begin{array}{cc}
\eta \sigma + \epsilon & \nu^2
\end{array}
\right],
\left[
\begin{array}{c}
\eta \\
\nu 
\end{array}
\right],
\eta \kappa \right\rangle$
must equal $\{ 0, \nu^2 \sigmabar \}$,
since $\nu^2 \sigmabar = \{ h_1^2 h_4 c_0 \}$
is the only non-zero element of $\pi_{25,15}$,
and that element belongs to the indeterminacy because it is a
multiple of $\nu^2$.

Next observe that $\tau \nu^2 \sigmabar \theta_{4.5}$ is zero
because all possible values of
$\sigmabar \theta_{4.5}$ are multiples of $\eta$.
This shows that
\[
(\eta \sigma + \epsilon) \alpha + \nu^2 \beta = 0,
\]
for some $\alpha$ and $\beta$ detected by $\D e_1 + C_0$
and $h_2 A'$ respectively.
The product
$(\eta \sigma + \epsilon) \alpha$ is detected by
$h_1 h_3 (\D e_1 + C_0)$, so there must be a hidden $\nu$
extension from $h_2^2 A'$ to $h_1 h_3 (\D e_1 + C_0)$.
\end{proof}

\begin{lemma}
\label{lem:nu-h3A'}
\revdeg{68, 7, 36}
There is no hidden $\nu$ extension on $h_3 A'$.
\end{lemma}

\begin{proof}
Table \ref{tab:Toda} shows that $h_3 A'$ detects the Toda bracket
$\langle \sigma, \kappa, \tau \eta \theta_{4.5} \rangle$.
Now shuffle to obtain
\[
\nu \langle \sigma, \kappa, \tau \eta \theta_{4.5} \rangle = 
\langle \nu, \sigma, \kappa \rangle \tau \eta \theta_{4.5} =
\langle \eta, \nu, \sigma \rangle \tau \kappa \theta_{4.5}.
\]
The Toda bracket
$\langle \eta, \nu, \sigma \rangle$ is zero because
it is contained in $\pi_{12,7} = 0$.
\end{proof}

\begin{lemma}
\label{lem:nu-p'}
\revdeg{69, 4, 36}
There is no hidden $\nu$ extension on $p'$.
\end{lemma}

\begin{proof}
Table \ref{tab:misc-extn} shows that 
$p'$ detects the product $\sigma \theta_5$.
Therefore, it cannot support a hidden $\nu$ extension.
\end{proof}

\begin{lemma}
\label{lem:nu-h2^2C'}
\revdeg{69, 9, 38}
There is a hidden $\nu$ extension from $h_2^2 C'$ to $\tau^2 d_1 g^2$.
\end{lemma}

\begin{proof}
Let $\alpha$ be an element of $\pi_{63,33}$ that is detected by
$\tau X_2 + \tau C'$.
Table \ref{tab:misc-extn} shows that 
$\epsilon \alpha$ is detected by $d_0 Q_2$, so
$\eta \epsilon \alpha$ is detected by $\tau^3 d_1 g^2$.
On the other hand,
$\eta \sigma \alpha$ is zero by comparison to $C\tau$.

Now consider the relation $\eta^2 \sigma + \nu^3 = \eta \epsilon$.
This shows that $\nu^3 \alpha$ is detected by $\tau^3 d_1 g^2$.
Since $\nu^2 \alpha$ is detected by $\tau h_2^2 C'$,
there must be a hidden $\nu$ extension from
$h_2^2 C'$ to $\tau^2 d_1 g^2$.
\end{proof}

\begin{lemma}
\label{lem:nu-th1D3'}
\revdeg{70, 9, 37}
There is a hidden $\nu$ extension from $\tau h_1 D'_3$ to
$\tau M d_0^2$.
\end{lemma}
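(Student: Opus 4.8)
The plan is to exploit the description of $\{\tau h_1 D'_3\}$ as a Toda bracket and to shuffle the multiplication by $\nu$ inside. By Lemma \ref{lem:eta,nu,ttheta4.5kappabar}, the element $\tau h_1 D'_3$ detects $\langle \eta, \nu, \tau \theta_{4.5} \kappabar \rangle$, so I would choose an element $\gamma$ of this Toda bracket that is detected by $\tau h_1 D'_3$. First I would apply the shuffle relation of Theorem \ref{thm:Toda-3fold} to obtain
\[
\nu \langle \eta, \nu, \tau \theta_{4.5} \kappabar \rangle =
\langle \nu, \eta, \nu \rangle \, \tau \theta_{4.5} \kappabar,
\]
both brackets being defined since $\eta \nu = 0$. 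The inner bracket $\langle \nu, \eta, \nu \rangle$ equals $\eta \sigma + \epsilon$ with no indeterminacy, since $\pi_{5,3} = 0$; this is precisely the bracket computed in the worked example following the Moss Convergence Theorem \ref{thm:Moss}. Hence $\nu \gamma$ lies in the coset $(\eta \sigma + \epsilon)\, \tau \theta_{4.5} \kappabar$ modulo indeterminacy, and it remains to identify the detecting element of this product.

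Second, I would split the product into its two summands. For the $\epsilon$ summand, Lemma \ref{lem:nu-De1+C0} (via Table \ref{tab:misc-extn}) gives that $\tau M g$ detects $\kappabar \theta_{4.5}$, and Lemma \ref{lem:eta-h0d0D2} gives a hidden $\epsilon$ extension from $\tau M g$ to $M d_0^2$. Therefore $\epsilon \cdot \kappabar \theta_{4.5}$ is detected by $M d_0^2$, so that $\tau \epsilon \kappabar \theta_{4.5}$ is detected by $\tau M d_0^2$. This is exactly the claimed target.

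Third, and this is the step I expect to be the main obstacle, I must show that the $\eta \sigma$ summand $\tau \eta \sigma \kappabar \theta_{4.5}$ does not disturb this conclusion, i.e. that it is zero or is detected in Adams filtration strictly greater than that of $\tau M d_0^2$. The element $\eta \sigma \kappabar$ is detected by $h_1 h_3 g$ (which is nonzero, as it supports the hidden $\tau$ extension $h_1 h_3 g \map d_0^2$), so the leading term of $\eta \sigma \kappabar \cdot \theta_{4.5}$ would be $h_1 h_3 g \cdot h_3^2 h_5 = h_1 h_3^3 h_5 g$; using the Ext relations $h_3^3 = h_2^2 h_4$ and $h_4 h_5 = 0$, this product vanishes, so there is no contribution in that filtration. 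I would then rule out the remaining higher-filtration candidates by a finite inspection of the relevant degree, checking in particular that $\tau M d_0^2$ cannot detect the $\sigma$-multiple $\tau \eta \sigma \kappabar \theta_{4.5}$, and appealing to comparison with $\tmf$ or $\mmf$ to eliminate any stray classes. Once the $\eta \sigma$ term is shown to be negligible, $\nu \gamma$ is detected by $\tau M d_0^2$, which establishes the hidden $\nu$ extension; since $\tau h_1 D'_3$ is the only possible source in this degree, the lemma follows.
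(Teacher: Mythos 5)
Your overall strategy is the same as the paper's: express $\{\tau h_1 D'_3\}$ as the Toda bracket $\langle \eta, \nu, \tau \kappabar \theta_{4.5} \rangle$, shuffle $\nu$ across to get $\langle \nu, \eta, \nu \rangle \tau \kappabar \theta_{4.5} = (\eta\sigma + \epsilon)\tau\kappabar\theta_{4.5}$, and identify the $\epsilon$ summand as detected by $\tau M d_0^2$ via the hidden $\epsilon$ extension from $\tau M g$ to $M d_0^2$ together with the fact that $\tau M g$ detects $\kappabar\theta_{4.5}$. (Your citations for these last two facts are misattributed --- they come from Table \ref{tab:misc-extn}, i.e.\ Lemmas \ref{lem:kappabar-h3^2h5} and \ref{lem:epsilon-tMg}, not from Lemmas \ref{lem:nu-De1+C0} and \ref{lem:eta-h0d0D2} --- but the facts themselves are in the paper.)

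The genuine problem is your third step. You assert that $\eta\sigma\kappabar$ is detected by $h_1 h_3 g$ and then try to control the product $\eta\sigma\kappabar\cdot\theta_{4.5}$ by a filtration-by-filtration analysis that you do not finish (``I would then rule out the remaining higher-filtration candidates by a finite inspection\dots''). This is both incorrect and unnecessary. It is incorrect because the product $\sigma\kappabar$ is zero in $\pi_{27}$ --- note that $h_3 g$ supports the Adams differential $d_2(h_3 g) = h_0 h_2^2 g$, so it cannot detect $\sigma\kappabar$, and the $2$-primary $v_1$-torsion in $\pi_{27}$ is trivial --- hence $\eta\sigma\kappabar = 0$ and in particular is \emph{not} detected by $h_1 h_3 g$ (that class detects $\epsilon\kappabar = \kappa^2$ up to the hidden $\tau$ extension to $d_0^2$). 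It is unnecessary because once you know $\sigma\kappabar = 0$, the entire summand $\eta\sigma\cdot\tau\kappabar\theta_{4.5}$ vanishes identically, with no leading-term computation or higher-filtration inspection required; this single observation is exactly how the paper disposes of that term. Replacing your step three with the one-line fact $\sigma\kappabar = 0$ completes the argument.
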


\begin{proof}
Table \ref{tab:Toda} shows that $\tau h_1 D'_3$ detects the 
Toda bracket $\langle \eta, \nu, \tau \kappabar \theta_{4.5} \rangle$.
Now shuffle to obtain
\[
\nu \langle \eta, \nu, \tau \kappabar \theta_{4.5} \rangle = 
\langle \nu, \eta, \nu \rangle \tau \kappabar \theta_{4.5}.
\]
The bracket $\langle \nu, \eta, \nu \rangle$
equals $\eta \sigma + \epsilon$ \cite{Toda62}.

Now we must compute 
$(\eta \sigma + \epsilon) \tau \kappabar \theta_{4.5}$.
The product $\sigma \kappabar$ is zero, and
Table \ref{tab:misc-extn} shows that
$\epsilon \kappabar \theta_{4.5}$ is detected by $M d_0^2$.
These two observations imply that
$(\eta \sigma + \epsilon) \tau \kappabar \theta_{4.5}$
is detected by $\tau M d_0^2$.
\end{proof}

\begin{lemma}
\label{lem:nu-h6c0}
\revdeg{71, 4, 37}
There is no hidden $\nu$ extension on $h_6 c_0$.
\end{lemma}

\begin{proof}
Table \ref{tab:Toda} shows that $h_6 c_0$ detects the Toda bracket
$\langle \epsilon, 2, \theta_5 \rangle$.  Now shuffle to obtain
\[
\nu \langle \epsilon, 2, \theta_5 \rangle = 
\langle \nu, \epsilon, 2 \rangle \theta_5.
\]
Finally, the Toda bracket $\langle \nu, \epsilon, 2 \rangle$ is 
zero because it is contained in $\pi_{12,7} = 0$.
\end{proof}

\begin{lemma}
\label{lem:nu-h2^2C''}
\revdeg{73, 11, 41} 
There is a hidden $\nu$ extension from $h_2^2 C''$ to $\tau g^2 t$.
\end{lemma}

\begin{proof}
Let $\alpha$ be an element of $\pi_{53,30}$ that is detected by $i_1$.
Table \ref{tab:nu-extn} shows $g t$ detects $\nu \alpha$.
Therefore $\tau g^2 t$ detects $\nu \kappa \alpha$,
so $\tau g^2 t$ must be the target of a hidden $\nu$ extension.
The element $h_2^2 C''$ is the only
possible source for this extension.
\end{proof}

\begin{lemma}
\label{lem:nu-Mh1h3g}
\revdeg{73, 12, 41}
There is no hidden $\nu$ extension on $M h_1 h_3 g$.
\end{lemma}

\begin{proof}
If there were a hidden $\nu$ extension from $M h_1 h_3 g$ to
$\tau g^2 t$, then there would also be a hidden $\nu$ extension
with target $\tau^2 g^2 t$.  But there is no possible source
for such an extension.
\end{proof}

\begin{lemma}
\label{lem:nu-h0h3d2}
\revdeg{75, 6, 40}
If there is a hidden $\nu$ extension on $h_0 h_3 d_2$, then
its target is $M \D h_1^2 h_3$.
\end{lemma}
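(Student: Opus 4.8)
The plan is to treat this as a pure target-identification problem. Because the statement is conditional, I do not need to decide whether $h_0 h_3 d_2$ actually supports a hidden $\nu$ extension; I only need to show that $M \D h_1^2 h_3$ is the unique combinatorially possible target. First I would pin down the degree. The class $h_0 h_3 d_2$ lies in the $75$-stem in Adams filtration $6$, and $h_2 \cdot h_0 h_3 d_2 = 0$ already in $\Ext$ (since $h_2 h_3 = 0$), so a hidden $\nu$ extension on it must have its target in the $78$-stem, in the motivic weight of $\nu \cdot \{h_0 h_3 d_2\}$, lying in Adams filtration at least $8$. Reading off the Adams $E_\infty$-chart of \cite{Isaksen14a} in this stem and weight, I would enumerate the finitely many classes of sufficiently high filtration; $M \D h_1^2 h_3$ is one of them, and the task reduces to eliminating each of the others.

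The elimination would proceed by the standard menu of tools used throughout the proof of Theorem \ref{thm:nu-extn-possible}. Any candidate target $b$ for which $\eta b$ is nonzero in $E_\infty$, or which is itself the target of a hidden $\eta$ extension recorded in Table \ref{tab:eta-extn}, is excluded at once, since $\eta \nu = 0$ forces the target of a $\nu$ extension to be annihilated by $\eta$. Candidates that are sources or targets of hidden $2$ extensions (Table \ref{tab:2-extn}) are constrained similarly. Several further candidates should fall to comparison with $C\tau$: if inclusion of the bottom cell carries $h_0 h_3 d_2$ to a class that supports no matching $\nu$ extension in the homotopy of $C\tau$, the corresponding target is ruled out. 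The candidates that survive these purely multiplicative arguments I would then eliminate by comparison to $\tmf$ and $\mmf$, exactly as elsewhere in Section \ref{sctn:nu-extn}: a class detected nontrivially in $\mmf$ cannot receive a $\nu$ extension whose source maps to zero there.

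The main obstacle I anticipate is the last competing class sitting in the same filtration as, or just below, $M \D h_1^2 h_3$, which the blunt instruments above will not separate. For that case I would fall back on a Toda-bracket computation, using the information already extracted from $h_0 h_3 d_2$ in Lemma \ref{lem:eta-h0h3d2}, namely that $\{h_0 h_3 d_2\}$ detects a class whose $\eta$-multiple is $\tau \kappa_1 \kappabar_2$ (detected by $\tau d_1 g_2$). Shuffling $\nu \cdot \{h_0 h_3 d_2\}$ through an appropriate bracket and applying the Moss Convergence Theorem \ref{thm:Moss} should show that any nonzero $\nu$-multiple is detected precisely by $M \D h_1^2 h_3$ rather than by the competitor. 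I expect this final step is also where the argument stops short of proving that the extension is actually nonzero, which is exactly why the lemma can only be stated in conditional form.
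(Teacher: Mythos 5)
Your proposal is a plan rather than a proof: at no point do you identify which classes in the $78$-stem actually compete with $M \D h_1^2 h_3$, and the step you designate as the decisive one --- ``shuffling $\nu \cdot \{h_0 h_3 d_2\}$ through an appropriate bracket and applying the Moss Convergence Theorem should show\dots'' --- is left entirely unexecuted, with no bracket named and no computation indicated. That is precisely the step that would carry the content of the lemma, so as written the argument has a gap exactly where it matters. There is also a small logical slip in your elimination menu: a class that is the \emph{target} of a hidden $\eta$ extension is not thereby excluded from detecting a $\nu$-multiple (it is classes that \emph{support} $\eta$ multiplications, hidden or not, that are excluded by $\eta\nu = 0$).

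The paper's argument is far shorter and uses a tool you list in your menu but never deploy. The only candidate target other than $M \D h_1^2 h_3$ is $e_0 A'$, and it is eliminated by $2$-extension compatibility: Table \ref{tab:2-extn} (via Lemma \ref{lem:2-e0A'}) records a hidden $2$ extension from $e_0 A'$ to $M \D h_1^2 h_3$, whereas $h_0 h_3 d_2$ supports no hidden $2$ extension. If $\nu\alpha$ were detected by $e_0 A'$ for some $\alpha$ detected by $h_0 h_3 d_2$, then $2\nu\alpha$ would be nonzero while $\nu \cdot 2\alpha$ would vanish, a contradiction. No Toda bracket or Moss convergence argument is needed. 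If you want to salvage your approach, replace the speculative final paragraph with this concrete two-line comparison of $2$ extensions on source and target.
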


\begin{proof}
The only other possible target is $e_0 A'$.
However, Table \ref{tab:2-extn} shows that $e_0 A'$ supports
a hidden $2$ extension, while $h_0 h_3 d_2$ does not.
\end{proof}

\begin{lemma}
\label{lem:nu-td1g2}
\revdeg{76, 8, 41}
There is no hidden $\nu$ extension on $\tau d_1 g_2$.
\end{lemma}

\begin{proof}
Table \ref{tab:eta-extn} shows that $\tau d_1 g_2$ is the target
of a hidden $\eta$ extension.  Therefore, it cannot be the source
of a hidden $\nu$ extension.
\end{proof}

\begin{lemma}
\label{lem:nu-h0h4A}
\revdeg{76, 8, 40}
There is no hidden $\nu$ extension on $h_0 h_4 A$.
\end{lemma}

\begin{proof}
\rev{
Table \ref{tab:misc-extn} shows that
$h_0 h_4 A$ detects either $\sigma^2 \theta_5$
or $\sigma^2 \theta_5 + \tau^2 \kappa_1 \kappabar_2$.
As in the proof of Lemma \ref{lem:eta-h1h6d0}, both possibilities
are annihilated by $\nu$.
}
\end{proof}

\begin{lemma}
\label{lem:nu-h3^2h6}
\revdeg{77, 3, 40}
There is a hidden $\nu$ extension from $h_3^2 h_6$ to $\tau h_1 x_1$.
\end{lemma}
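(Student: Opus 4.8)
The plan is to identify the homotopy class detected by $h_3^2 h_6$ as a Toda bracket and then to compute its product with $\nu$ by shuffling. First I would record the two Adams differentials $d_2(h_4) = h_0 h_3^2$ and $d_2(h_6) = h_0 h_5^2$. Since $h_4 h_5^2 = 0$ (because $h_4 h_5 = 0$), these yield, in the $E_3$-page, the Massey product identity $\langle h_3^2, h_0, h_5^2 \rangle = h_4 h_5^2 + h_3^2 h_6 = h_3^2 h_6$. Both $h_0 h_3^2$ and $h_0 h_5^2$ vanish in the $E_3$-page, so the bracket is defined, and one checks by inspection that there are no crossing differentials for these two products. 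The Toda bracket $\langle \sigma^2, 2, \theta_5 \rangle$ is defined because $2 \sigma^2 = 0$ and $2 \theta_5 = 0$ (the latter by \cite{Xu16}), so the Moss Convergence Theorem \ref{thm:Moss} shows that $h_3^2 h_6$ detects $\langle \sigma^2, 2, \theta_5 \rangle$.

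Next I would compute $\nu \langle \sigma^2, 2, \theta_5 \rangle$. The available shuffle is $\nu \langle \sigma^2, 2, \theta_5 \rangle = \langle \nu, \sigma^2, 2 \rangle \theta_5$; the alternative $\sigma^2 \langle 2, \theta_5, \nu \rangle$ is not an option, since $\nu \theta_5$ is non-zero and detected by $h_2 h_5^2$, so that bracket is undefined. To evaluate the inner bracket I would again use $d_2(h_4) = h_0 h_3^2$: since $h_2 h_3^2 = 0$, the Massey product $\langle h_2, h_3^2, h_0 \rangle$ equals $h_2 h_4$ in the $E_3$-page, and the Moss Convergence Theorem \ref{thm:Moss} identifies $\langle \nu, \sigma^2, 2 \rangle$ as a class detected by $h_2 h_4$. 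Thus $\nu \{ h_3^2 h_6 \}$ is detected by the product $\{ h_2 h_4 \} \theta_5$, and since $h_2 h_4 \cdot h_5^2 = 0$ this is a hidden product, consistent with the fact that $h_2 \cdot h_3^2 h_6 = 0$ already on the $E_2$-page.

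The hard part will be to show that $\{ h_2 h_4 \} \theta_5$ is non-zero and is detected precisely by $\tau h_1 x_1$. Because $h_2 h_4 h_5^2 = 0$ in filtration $4$, this product is detected in Adams filtration at least $5$, so I would enumerate the candidate detecting elements in the $80$-stem of the correct weight in filtration at least $5$ and eliminate all but $\tau h_1 x_1$, using the Adams differentials of Chapter \ref{ch:Adams}, the permanent cycles of Table \ref{tab:Adams-perm}, and the fact that any non-$\tau$-divisible survivor would have to map non-trivially under inclusion of the bottom cell into $C\tau$, where $h_2 \cdot h_3^2 h_6 = 0$. It is worth noting that the target $\tau h_1 x_1$ is a $\tau$-multiple (equal to $h_0^2 e_2$ on the $E_2$-page), so this extension cannot be obtained by pulling back a $\nu$ extension from $C\tau$; this is exactly why the Toda bracket computation is forced rather than optional. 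For the non-vanishing I would exploit the relation $h_0^2 e_2 = \tau h_1 x_1$ together with $d_2(e_2) = h_0 x_1$, or compare with $\mmf$, to certify that the bracket product does not drop to zero in higher filtration. Separating the genuine value $\tau h_1 x_1$ from the possibility that the hidden product vanishes is the main obstacle, and it is where the bulk of the careful bookkeeping will be required.
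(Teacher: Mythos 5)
Your setup is fine as far as it goes: $h_3^2 h_6$ does detect $\langle \sigma^2, 2, \theta_5 \rangle$ (this is exactly the entry in Table \ref{tab:Toda}), the shuffle $\nu \langle \sigma^2, 2, \theta_5 \rangle = \langle \nu, \sigma^2, 2 \rangle \theta_5$ is legitimate, and $\langle \nu, \sigma^2, 2 \rangle$ contains $\nu_4$ detected by $h_2 h_4$. But at that point you have only reduced the lemma to computing the product $\nu_4 \theta_5$, and you have not computed it; you yourself flag this as ``the main obstacle.'' That obstacle is the entire content of the lemma, and the tools you propose for it do not obviously suffice. Enumerating candidate detecting elements in filtration $\geq 5$ cannot by itself rule out that the product is zero; comparison to $C\tau$ is useless here (as you note, the target is a $\tau$-multiple); and comparison to $\mmf$ does not see $\tau h_1 x_1$. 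The relation $h_0^2 e_2 = \tau h_1 x_1$ and the differential $d_2(e_2) = h_0 x_1$ are facts about the $E_2$-page and do not certify that a hidden product in homotopy is non-zero. So there is a genuine gap: the non-vanishing and precise identification of $\nu_4\theta_5$ is asserted as a to-do rather than proved.

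The paper closes exactly this gap by shuffling differently: instead of $\langle \nu, \sigma^2, 2 \rangle \theta_5$, it writes
\[
\langle \theta_5, 2, \sigma^2 \rangle \nu = \langle \theta_5, 2\sigma, \sigma \rangle \nu = \theta_5 \langle 2\sigma, \sigma, \nu \rangle \subseteq \langle 2\sigma, \sigma\theta_5, \nu \rangle,
\]
which absorbs $\theta_5$ into the middle slot of a threefold bracket. Since $\sigma\theta_5$ is detected by $p'$ (the hidden $\sigma$ extension on $h_5^2$ from Table \ref{tab:misc-extn}), the Moss Convergence Theorem converts the problem into the algebraic Massey product $\langle h_0 h_3, p', h_2 \rangle$, which is computed from Lemma \ref{lem:h3,p',h2} to contain $h_0^2 e_2 = \tau h_1 x_1$ with indeterminacy generated by $h_0 h_6 e_0$; the error term is then discarded because $h_0 h_6 e_0 = h_2 h_6 d_0$ is an $h_2$-multiple, so one may adjust the choice of $\alpha$ by an element of higher filtration. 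If you want to salvage your route, you would need an independent computation of $\nu_4\theta_5$, which is not easier than the bracket manipulation the paper performs; the better fix is to adopt the paper's shuffle so that the product lands inside a Massey product you can actually evaluate.
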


\begin{proof}
Table \ref{tab:Toda} shows that
$h_3^2 h_6$ detects
$\langle \theta_5, 2, \sigma^2 \rangle$.
Let $\alpha$ be an element of $\pi_{77,40}$ that is contained
in this Toda bracket.
Then $\nu \alpha$ is an element of
\[
\langle \theta_5, 2, \sigma^2 \rangle \nu =
\langle \theta_5, 2 \sigma, \sigma \rangle \nu =
\theta_5 \langle 2 \sigma, \sigma, \nu \rangle \subseteq
\langle 2 \sigma, \sigma \theta_5, \nu \rangle.
\]

Table \ref{tab:misc-extn} shows that $p'$ detects 
$\sigma \theta_5$.
Therefore, the Toda bracket
$\langle 2 \sigma, \sigma \theta_5, \nu \rangle$
is detected by an element of the Massey product
$\langle h_0 h_3, p', h_2 \rangle$.
Table \ref{tab:Massey} shows that $h_0 e_2$ equals the
Massey product $\langle h_3, p', h_2\rangle$. 
By inspection of indeterminacy,
the Massey product $\langle h_0 h_3, p', h_2 \rangle$
contains $h_0^2 e_2 = \tau h_1 x_1$ with indeterminacy generated by
$h_0 h_6 e_0$.

We have now shown that 
$\nu \alpha$ is detected by either
$\tau h_1 x_1$ or $\tau h_1 x_1 + h_0 h_6 e_0$.
But $h_0 h_6 e_0 = h_2 h_6 d_0$ is a multiple of $h_2$,
so we may add an element in higher Adams filtration to $\alpha$,
if necessary, to conclude that $\nu \alpha$ is detected by
$\tau h_1 x_1$. 
\end{proof}

\begin{lemma}
\label{lem:nu-h0^7h4h6}
\revdeg{78, 9, 40}
There is a hidden $\nu$ extension from the element $h_0^7 h_4 h_6$ to
$\tau \D^2 h_1 d_1$.
\end{lemma}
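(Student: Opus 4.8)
The plan is to realize $h_0^7 h_4 h_6$ as a detecting element of a Toda bracket built from $\theta_5$ and $2$, and then to compute the effect of multiplication by $\nu$ by shuffling. Because of the Adams differential $d_2(h_6) = h_0 h_5^2$, the element $h_0^7 h_4 h_6$ equals the Massey product $\langle h_0^7 h_4, h_5^2, h_0 \rangle$ in the $E_3$-page, exactly as in Lemma \ref{lem:d3-Dh2^2h6} and the neighboring Lemma \ref{lem:nu-h3^2h6}. After checking that $h_0^7 h_4 \cdot h_5^2$ vanishes in $E_3$ (so that the bracket is defined and there are no crossing differentials), the Moss Convergence Theorem \ref{thm:Moss} identifies $h_0^7 h_4 h_6$ as a detecting element of a Toda bracket $\langle \alpha, \theta_5, 2 \rangle$, where $\alpha$ is a multiple of $\theta_4$ detected by $h_0^7 h_4$ and $2 \alpha = 0$.

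First I would pin down the target. Lemma \ref{lem:perm-D^2d1} shows that $\D^2 d_1$ is a permanent cycle, and $h_1 \cdot \D^2 d_1 = \D^2 h_1 d_1$ is a non-hidden $h_1$-multiplication already in the $E_2$-page, so $\tau \D^2 h_1 d_1$ detects $\tau \eta \{\D^2 d_1\}$. Thus the claim is equivalent to showing that $\nu$ times the class detected by $h_0^7 h_4 h_6$ is a nonzero multiple of $\tau \eta$, detected by $\tau \D^2 h_1 d_1$.

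The heart of the argument is then a shuffle computation. Multiplying $\langle \alpha, \theta_5, 2 \rangle$ by $\nu$ and applying the relations of Theorem \ref{thm:Toda-3fold}, together with the fact that $\alpha$ is itself a multiple of $2$, I would arrange for a symmetric subbracket of the form $\langle 2, -, 2 \rangle$ to appear; Corollary \ref{cor:2-symmetric} evaluates such a bracket as $\tau \eta (-)$, which is the mechanism that produces the required factor of $\tau \eta$. Carrying this out should express $\nu \cdot \{h_0^7 h_4 h_6\}$ as $\tau \eta$ times a class detected by $\D^2 d_1$, namely $\tau \D^2 h_1 d_1$. I would then eliminate the possibility that the product is zero or is detected in strictly higher Adams filtration by comparison with the Adams spectral sequences for $C\tau$ and for $\tmf$ or $\mmf$, as in the surrounding lemmas; since $\D^2 d_1$ and its $\tau$-multiples are controlled there, only $\tau \D^2 h_1 d_1$ remains.

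The main obstacle I anticipate is the shuffle itself. Executing it requires verifying that each intermediate Toda bracket is defined, which means establishing the vanishing of several products such as $\nu \alpha$, $\theta_5 \alpha$, and $2 \alpha$, and confirming that the symmetric bracket $\langle 2, -, 2 \rangle$ emerges with the correct outer factors so that Corollary \ref{cor:2-symmetric} applies cleanly. Controlling the indeterminacy of the nested brackets, and matching the detecting element precisely to $\tau \D^2 h_1 d_1$ rather than to a sum with other classes of the same degree, is where the real work lies; I expect to need the product structure recorded in the Massey product table and in Table \ref{tab:misc-extn} to settle these ambiguities.
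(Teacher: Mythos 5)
There is a genuine gap here, and the machinery you set up does not close it. Granting the Moss setup, the shuffle you can actually perform is
\[
\nu \langle \alpha, \theta_5, 2 \rangle = \langle \nu, \alpha, \theta_5 \rangle \cdot 2,
\]
so the problem reduces to identifying the bracket $\langle \nu, \alpha, \theta_5 \rangle$ in the 81-stem and then seeing that multiplying its detecting element by $h_0$ lands on $\tau \D^2 h_1 d_1$. That identification is exactly the content of the hidden $\nu$ extension from $h_0^6 h_4 h_6$ to $\D^2 p$, which is the hard input; your sketch never produces it. The proposed appeal to Corollary \ref{cor:2-symmetric} cannot manufacture it either: to conclude that $\nu\{h_0^7 h_4 h_6\}$ equals $\tau\eta$ times a class detected by $\D^2 d_1$, you would need to exhibit $\nu\{h_0^7 h_4 h_6\}$ as an element of a symmetric bracket $\langle 2, \beta, 2 \rangle$ with $\beta$ detected by $\D^2 d_1$, and nothing in your argument relates $\D^2 d_1$ (or $\D^2 p$) to $\nu$, $\rho_{15}$, and $\theta_5$. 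The containment $\langle 2 \cdot 8\rho_{15}, \theta_5, 2 \rangle \subseteq \langle 2, 8\rho_{15}\theta_5, 2 \rangle$ concerns the 78-stem class itself, not its $\nu$-multiple, so it is aimed at the wrong degree. Several definedness conditions are also left unverified: that $h_0^7 h_4 \cdot h_5^2$ vanishes in the $E_3$-page with no crossing differentials, and that $16\rho_{15}\theta_5 = 0$ (the condition you actually need is $\alpha\theta_5 = 0$, not $2\alpha = 0$).

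The paper's proof is a one-line reduction that avoids all of this: Table \ref{tab:nu-extn} already records a hidden $\nu$ extension from $h_0^6 h_4 h_6$ to $\D^2 p$ (established by comparison to $C\tau$), and multiplying by $2$ — i.e., by $h_0$, using the $E_2$-page relation $h_0 \cdot \D^2 p = \tau \D^2 h_1 d_1$ — immediately yields the hidden $\nu$ extension from $h_0^7 h_4 h_6$ to $\tau \D^2 h_1 d_1$. If you want a bracket-theoretic proof, you would first have to re-derive the extension on $h_0^6 h_4 h_6$, at which point the lemma follows by the paper's divisibility argument anyway.
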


\begin{proof}
Table \ref{tab:nu-extn} shows that there is a hidden
$\nu$ extension from $h_0^6 h_4 h_6$ to $\D^2 p$.
Therefore, there is also a hidden $\nu$ extension from
$h_0^7 h_4 h_6$ to $h_0 \cdot \D^2 p = \tau \D^2 h_1 d_1$.
\end{proof}

\begin{lemma}
\label{lem:nu-e0A'}
\revdeg{78, 10, 42}
There is no hidden $\nu$ extension on $e_0 A'$.
\end{lemma}

\begin{proof}
A possible hidden $\nu$ extension from $e_0 A'$ to $\D h_1^2 B_6$
would be detected by $C\tau$, but we have to be careful with the
analysis of the homotopy of $C\tau$ because of the $h_2$ extension
from $\ol{\D h_1 d_1 g}$ to $\D h_1^2 B_6$ in the Adams
$E_\infty$-page for $C\tau$.

Let $\alpha$ be an element of $\pi_{75,40} C\tau$ that is detected
by $\ol{h_3 C'}$.  Then $\nu \alpha$ is detected by
$e_0 A'$, and $\nu \alpha$ maps to zero under projection to the 
top cell because $h_3 C'$ does not support a $\nu$ extension
in the homotopy of the sphere.

Therefore, $\nu \alpha$ lies in the image of 
$e_0 A'$ under inclusion of the bottom
cell.  Since $\nu^2 \alpha$ is zero, $e_0 A'$ cannot support
a hidden $\nu$ extension to $\D h_1^2 B_6$.
\end{proof}

\begin{lemma}
\label{lem:nu-h1h4h6}
\revdeg{79, 3, 41}
There is no hidden $\nu$ extension on $h_1 h_4 h_6$.
\end{lemma}

\begin{proof}
Table \ref{tab:Toda} shows that $h_1 h_4 h_6$
detects the Toda bracket $\langle \eta_4, 2, \theta_5 \rangle$.
Shuffle to obtain
\[
\nu \langle \eta_4, 2, \theta_5 \rangle =
\langle \nu, \eta_4, 2 \rangle \theta_5.
\]
Finally, $\langle \nu, \eta_4, 2 \rangle$ must contain zero
in $\pi_{20,11}$ because $\tmf$ detects every element of
$\pi_{20,11}$.
\end{proof}

\begin{lemma}
\label{lem:nu-h3^2n1}
\revdeg{81, 7, 44}
There is no hidden $\nu$ extension on $h_3^2 n_1$.
\end{lemma}

\begin{proof}
The element $h_2 g D_3$ cannot be the target of a hidden
$\nu$ extension by comparison to $C\tau$.

The element $\tau h_3^2 n_1 = h_3 \cdot h_3 (\tau Q_3 + \tau n_1)$
detects a multiple of $\sigma$, so it cannot support a hidden
$\nu$ extension.  This rules out $h_2 g A'$ as a possible target.
\end{proof}

\begin{lemma}
\label{lem:nu-te1g2}
\revdeg{82, 8, 44}
There is no hidden $\nu$ extension on $\tau e_1 g_2$.
\end{lemma}

\begin{proof}
After eliminating other possibilities by comparison to
$\tmf$, comparison to $\mmf$, and by inspection of $h_1$
multiplications, the only possible target for a hidden
$\nu$ extension is $P h_1 x_{76,6}$.  

Let $\alpha$ be an element of $\pi_{82,45}$ that is detected
by $e_1 g_2$.  Then $\nu \alpha$ is detected by
$h_2 e_1 g_2 = h_1^3 h_4 Q_3$.
Choose an element $\beta$ of $\pi_{83,45}$ that is detected
by $h_1 h_4 Q_3$ such that $\tau \beta$ is zero.
Then $\eta^2 \beta$ is also detected by $h_1^3 h_4 Q_3$.
However, $\nu \alpha + \eta^2 \beta$ is not necessarily zero; 
it could be detected in Adams filtration at least $13$.
In any case,
$\tau \nu \alpha$ equals $\tau \eta^2 \beta = 0$ modulo
filtration $13$.
In particular, $\tau \nu \alpha$ cannot be detected by
$P h_1 x_{76,6}$ in filtration $11$.
\end{proof}

\begin{lemma}
\label{lem:nu-Ph0h2h6}
\revdeg{82, 11, 42}
There is no hidden $\nu$ extension on $P^2 h_0 h_2 h_6$.
\end{lemma}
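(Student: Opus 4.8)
The element $P^2 h_0 h_2 h_6$ is divisible by $h_0 h_2$ in $\Ext$, and $h_0 h_2$ detects $2\nu$. The plan is to exploit this divisibility to show that $P^2 h_0 h_2 h_6$ detects a multiple of $2\nu$. Concretely, I would first establish that $P^2 h_6$ is a permanent cycle, following the same comparison-to-$C\tau$ argument used for $P h_0 h_2 h_6$ in Lemma \ref{lem:perm-Ph0h2h6}. Writing $\gamma$ for a homotopy class detected by $P^2 h_6$, the multiplicativity of the detection then shows that the lowest-filtration element of $\{P^2 h_0 h_2 h_6\}$ is $2\nu\gamma$, since $P^2 h_0 h_2 h_6 = h_0 h_2 \cdot P^2 h_6$ is nonzero in the $E_\infty$-page.

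Now $\nu \cdot 2\nu\gamma = 2\nu^2 \gamma$. Since $\nu^2$ generates $\pi_{6,4} = \Z/2$, we have $2\nu^2 = 0$, and hence $2\nu^2\gamma = (2\nu^2)\gamma = 0$. Thus the lowest-filtration representative of $\{P^2 h_0 h_2 h_6\}$ is annihilated by $\nu$. Any nonzero $\nu$-multiple of a higher-filtration representative would be detected by a class of Adams filtration strictly greater than that of $P^2 h_0 h_2 h_6$, and condition (3) of Definition \ref{defn:hidden} then attributes such a $\nu$ extension to that higher class rather than to $P^2 h_0 h_2 h_6$. Therefore $P^2 h_0 h_2 h_6$ supports no hidden $\nu$ extension.

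The main obstacle is the first step: verifying that $P^2 h_0 h_2 h_6$ genuinely detects a multiple of $2\nu$, which requires knowing that $P^2 h_6$ (or, equivalently for this purpose, one of $P^2 h_0 h_6$ and $P^2 h_2 h_6$) survives to the $E_\infty$-page and that the relevant product remains nonzero there. I expect this to follow from the machine data together with the $C\tau$ comparison, exactly as in the treatment of the lower periodicity class in Lemma \ref{lem:perm-Ph0h2h6}. It is worth noting that the Toda-bracket method used for the other members of the $h_6$-family (Lemmas \ref{lem:nu-h1h6}, \ref{lem:nu-h6c0}, and \ref{lem:nu-h1h4h6}), which writes the class as $\langle -, 2, \theta_5 \rangle$ via the differential $d_2(h_6) = h_0 h_5^2$ and shuffles $\nu$ past it, is unavailable here: the relevant $19$-stem class $\{P^2 h_0 h_2\}$ equals $2\zeta$ for a generator $\zeta$ of the image of $J$ (isomorphic to $\Z/8$ at the prime $2$), so $2 \cdot \{P^2 h_0 h_2\} \neq 0$ and the bracket $\langle \{P^2 h_0 h_2\}, 2, \theta_5 \rangle$ is not defined.
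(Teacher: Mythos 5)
There is a genuine gap, and it sits exactly where you flagged "the main obstacle": the element $P^2 h_6$ does not exist in $\Ext$. The Adams periodicity operator $P$ is the Massey product $\langle -, h_3, h_0^4 \rangle$ (or a variant), so $P h_6$ would require $h_0^4 h_6 = 0$; but the $h_0$-tower on $h_6$ is very long (the tables record nonzero classes $h_0^7 h_6$, $h_0^{18} h_6$, $h_0^{22} h_6$, $h_0^{25} h_6$), so the bracket is undefined and there is no class $P^2 h_6$ in the $79$-stem. The names $P^2 h_2 h_6$ and $P^2 h_0 h_2 h_6$ denote the products $P^2 h_2 \cdot h_6$ and $h_0 \cdot P^2 h_2 \cdot h_6$, with the periodicity applied to the $19$-stem factor, not to $h_6$. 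Consequently the factorization $P^2 h_0 h_2 h_6 = h_0 h_2 \cdot P^2 h_6$, and with it the claim that the class detects an element of the form $2\nu\gamma$, is not available; no amount of machine data or $C\tau$ comparison will produce the missing $\gamma$. (Your closing remark that $\langle \{P^2 h_0 h_2\}, 2, \theta_5 \rangle$ is undefined because $\{P^2 h_0 h_2\} = 2\zeta_{19}$ is correct, but it does not repair the main step.)

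The factorization that does exist, $P^2 h_0 h_2 h_6 = h_0 \cdot (P^2 h_2 h_6)$, only shows that the class detects $2\alpha$ for some $\alpha$ detected by $P^2 h_2 h_6$ --- a multiple of $2$, not of $2\nu$. Then $\nu \cdot 2\alpha = 2 \cdot \nu\alpha$, and Table \ref{tab:nu-extn} records a hidden $\nu$ extension from $P^2 h_2 h_6$ to $\D^2 h_0 x$, so $\nu\alpha$ is detected by $\D^2 h_0 x$ and $2\nu\alpha$ must be detected in strictly higher Adams filtration (note $\D^2 h_0^2 x$ is killed by $d_5(h_0^2 h_6 i)$, so it is not available as a target). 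This is precisely the paper's argument, and it does not close on its own: one must still rule out the remaining higher-filtration candidates in the $85$-stem, which the paper does by comparison to $\tmf$. The extra factor of $\nu$ in your version is exactly what would have let you conclude $\nu \cdot (2\nu\gamma) = (2\nu^2)\gamma = 0$ with no further input, and that factor is the part of the argument that is not there. The surrounding logic --- that once the lowest-filtration representative is annihilated by $\nu$, condition (3) of the definition of hidden extension attributes any surviving $\nu$-multiple to a higher-filtration class --- is fine, but it is applied to a representative that has not been shown to exist.
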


\begin{proof}
Table \ref{tab:nu-extn} shows that there is a hidden
$\nu$ extension from $P^2 h_2 h_6$ to $\D^2 h_0 x$.
The target of a hidden $\nu$ extension
on $P^2 h_0 h_2 h_6$ must have Adams filtration greater than
the filtration of $\D^2 h_0 x$.
The only possibilities are ruled out by comparison to $\tmf$.
\end{proof}

\begin{lemma}
\label{lem:nu-(De1+C0)g}
\revdeg{82, 12, 45}
There is a hidden $\nu$ extension
from $(\D e_1 + C_0) g$ to $\tau M h_0 g^2$.
\end{lemma}

\begin{proof}
Let $\alpha$ be an element of $\pi_{62,33}$ that is detected
by $\D e_1 + C_0$.
Table \ref{tab:Toda} shows that
$(\D e_1 + C_0) g$ detects $\langle \alpha, \eta^3, \eta_4 \rangle$.
Then
\[
\nu \langle \alpha, \eta^3, \eta_4 \rangle =
\langle \nu \alpha, \eta^3, \eta_4 \rangle
\]
by inspection of indeterminacies.
Table \ref{tab:nu-extn} shows that $\tau M h_0 g$ detects $\nu \alpha$.
The Toda bracket
$\langle \nu \alpha, \eta^3, \eta_4 \rangle$ is detected by
the Massey product
\[
\langle \tau M h_0 g, h_1^3, h_1 h_4 \rangle =
\langle \tau M h_0 g, h_1^4, h_4 \rangle = 
M h_0 g \langle \tau, h_1^4, h_4 \rangle =
\tau M h_0 g^2.
\]
\end{proof}

\begin{lemma}
\label{lem:nu-h2c1A'}
\revdeg{83, 10, 45}
There is no hidden $\nu$ extension on $h_2 c_1 A'$.
\end{lemma}

\begin{proof}
Table \ref{tab:Toda} shows that 
$\tau h_2 c_1 A'$ detects 
$\langle \tau \theta_{4.5} \kappa, \eta, \nu \rangle \tau \sigmabar$.
Shuffle to obtain
\[
\langle \tau \theta_{4.5} \kappa, \eta, \nu \rangle \tau \sigmabar \nu =
\tau \theta_{4.5} \kappa \langle \eta, \nu, \tau \nu \sigmabar \rangle.
\]
The Toda bracket $\langle \eta, \nu, \tau \nu \sigmabar \rangle$
is zero because $\pi_{27, 15}$ contains only a $v_1$-periodic element
detected by $P^3 h_1^3$.

We now know that $\tau h_2 c_1 A'$ does not support a hidden
$\nu$ extension.  In particular, there cannot be a hidden
$\nu$ extension from $\tau h_2 c_1 A'$ to $M \D h_0^2 e_0$.
The hidden $\tau$ extension from $\tau^2 M h_1 g^2$ to 
$M \D h_0^2 e_0$ implies that there cannot be a hidden
$\nu$ extension from $h_2 c_1 A'$ to $\tau^2 M h_1 g^2$.

Additional cases are ruled out by comparison to $C\tau$ and
to $\mmf$.
\end{proof}

\begin{lemma}
\label{lem:nu-Dj1+tgC'}
\mbox{}
\begin{enumerate}
\item
\revdeg{83, 11, 45}
There is a hidden $\nu$ extension from $\D j_1 + \tau g C'$ to
$\tau^2 M h_1 g^2$.
\item
\revdeg{83, 11, 44}
There is a hidden $\nu$ extension from $\tau^2 g C'$ to $M \D h_0^2 e_0$.
\end{enumerate}
\end{lemma}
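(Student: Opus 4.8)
The plan is to establish (1) by a Toda bracket computation built on the operator $g$, and then to deduce (2) from (1) by multiplying by $\tau$. First I would record that $\D j_1 + \tau g C'$ is a permanent cycle. Lemma~\ref{lem:d4-Dj1} gives $d_4(\D j_1) = \tau M h_0 e_0 g$, while Table~\ref{tab:Adams-d4} gives $d_4(C') = M h_2 d_0$, so that $d_4(\tau g C') = \tau M h_2 d_0 g$. The relation $h_0 e_0 = h_2 d_0$ in $\Ext$ makes these two $d_4$ values coincide, so $\D j_1 + \tau g C'$ is a $d_4$-cycle; after checking that no higher differential can support on it, it survives to $E_\infty$. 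In other words, $\D j_1$ is exactly the correction term that promotes the non-surviving class $\tau g C'$ to a permanent cycle.

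For (1), I would use the operator $g = \langle -, h_1^4, h_4 \rangle$ (Proposition~\ref{prop:g-Massey}), which by the Moss Convergence Theorem~\ref{thm:Moss} corresponds to the Toda bracket operator $\langle -, \eta^3, \eta_4 \rangle$, exactly as in the proof of Lemma~\ref{lem:nu-(De1+C0)g}. The permanent cycle $\D j_1 + \tau g C'$ detects the Toda bracket $\langle \{\tau C'\}, \eta^3, \eta_4 \rangle$: the naive representative $\tau g C'$ of the Massey product $\langle \tau C', h_1^4, h_4 \rangle$ is not a permanent cycle, and the $d_4$-cancellation above supplies the surviving representative. Now shuffle, using Theorem~\ref{thm:Toda-3fold}(3):
\[
\nu \langle \{\tau C'\}, \eta^3, \eta_4 \rangle \subseteq
\langle \nu \{\tau C'\}, \eta^3, \eta_4 \rangle .
\]
Since $\nu \{\tau C'\}$ is detected by $\tau h_2 C'$, Moss's theorem together with the operator $g$ identifies the right-hand bracket with the $g$-multiple of $\tau h_2 C'$. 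But $h_2 \cdot \tau g C'$ vanishes in the $E_\infty$-page, so the bracket must be detected in strictly higher Adams filtration; the surviving representative is $\tau^2 M h_1 g^2$. After eliminating the competing targets in this degree by comparison to $\mmf$, this proves the hidden $\nu$ extension from $\D j_1 + \tau g C'$ to $\tau^2 M h_1 g^2$.

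For (2), let $\gamma$ be an element of $\{\D j_1 + \tau g C'\}$ with $\nu \gamma$ detected by $\tau^2 M h_1 g^2$, as in (1). Table~\ref{tab:tau-extn} records a hidden $\tau$ extension from $\tau^2 M h_1 g^2$ to $M \D h_0^2 e_0$, so $\tau(\nu \gamma) = \nu(\tau \gamma)$ is detected by $M \D h_0^2 e_0$, and in particular is nonzero. Because $\D j_1$ supports a $d_4$ differential, $\tau \D j_1$ vanishes in $E_\infty$, so $\tau \gamma$ is detected by $\tau^2 g C'$; since $\nu(\tau \gamma) \neq 0$, the class $\tau \gamma$ is nonzero and $\tau^2 g C'$ survives. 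This exhibits the hidden $\nu$ extension from $\tau^2 g C'$ to $M \D h_0^2 e_0$, completing (2).

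The hard part will be the identification of the target in (1) as precisely $\tau^2 M h_1 g^2$. Because the obvious Massey/Toda representative $h_2 \cdot \tau g C'$ dies in $E_\infty$, one cannot read the answer off the $g$-multiplication directly; instead one must control the higher-filtration correction through a careful application of the Moss Convergence Theorem (verifying that there are no crossing differentials for the relevant products $\tau h_2 C' \cdot h_1^4$ and $h_1^4 h_4$) and then exclude all other surviving classes of the same stem and weight by comparison to $\tmf$ and $\mmf$. The remaining steps---the survival of the source, the shuffles, and the $\tau$-multiplication argument for (2)---are routine bookkeeping once this identification is in hand.
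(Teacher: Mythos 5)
Your overall architecture (prove one of the two extensions and transfer it to the other via the hidden $\tau$ extension from $\tau^2 M h_1 g^2$ to $M \D h_0^2 e_0$) is sound, and the transfer step itself matches the mechanism the paper uses — though the paper runs it in the opposite direction, proving (2) first and deducing (1). The problem is that your part (1), which carries all the weight, is not actually established.

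First, the claim that $\D j_1 + \tau g C'$ detects $\langle \{\tau C'\}, \eta^3, \eta_4 \rangle$ is unjustified. The surviving class in $(63,7,33)$ is $\tau X_2 + \tau C'$, not $\tau C'$, so the bracket is not even well-posed as written. More seriously, Proposition \ref{prop:g-Massey} only identifies $g$-type Massey products through $p_*$ to $\Ext_{A(2)}$, and $p_*(C')$ vanishes there, so it gives no information (compare Remark \ref{rem:h1^3h4,h1,tgn}, where exactly this failure is pointed out for $\tau g n$). The Moss Convergence Theorem applied to $\langle \tau C', h_1^4, h_4 \rangle$ in the $E_2$-page would point at $\tau g C'$, which is \emph{not} a permanent cycle; to conclude that the permanent cycle $\D j_1 + \tau g C'$ is the detecting element you would need to verify the crossing-differential hypotheses and show that $\D j_1$ lies in the indeterminacy of the Massey product, neither of which you do.

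Second, your pivotal assertion that $h_2 \cdot \tau g C'$ vanishes in the $E_\infty$-page contradicts the paper's data: $\tau h_2 g C'$ is a nonzero permanent cycle in $(86,12,47)$ — it supports a hidden $\tau$ extension to $\D^2 h_2^2 d_1$ (Table \ref{tab:tau-extn}) and is the subject of Lemma \ref{lem:2-th2gC'}. So the step ``the bracket must be detected in strictly higher filtration'' does not follow as stated. Finally, even granting that, the shuffle you use is only an inclusion $\nu\langle -,-,-\rangle \subseteq \langle \nu(-),-,-\rangle$, so knowing that \emph{some} element of the larger bracket is detected by $\tau^2 M h_1 g^2$ does not pin down $\nu\gamma$; and the promised elimination of lower-filtration targets and of the possibility that the bracket contains zero — which you yourself identify as the hard part — is never carried out. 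The paper avoids all of this by a different route: it takes $\alpha$ detected by $\tau h_1 H_1$ with $\nu\alpha$ detected by $\tau^2 M h_1 g$ (Lemma \ref{lem:nu-th1H1}), identifies $\tau^2 M h_1 g$ with $\tau\eta\kappabar\theta_{4.5}$ up to higher filtration, multiplies by $\kappabar$, uses the hidden $\tau$ extension to conclude that $M \D h_0^2 e_0$ is the target of a hidden $\nu$ extension, and then isolates $\tau^2 g C'$ as the only possible source using Lemma \ref{lem:nu-h2c1A'}.
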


\begin{proof}
Table \ref{tab:nu-extn} shows that there exists an element
$\alpha$ in $\pi_{63,33}$ detected by $\tau h_1 H_1$ such that
$\nu$ is detected by $\tau^2 M h_1 g$.
(Beware that there is a crossing extension here, so not every
element detected by $\tau h_1 H_1$ has the desired property.)
Table \ref{tab:misc-extn} shows that 
$\tau^2 M h_1 g$ also detects $\tau \theta_{4.5} \eta \kappabar$.
However, $\nu \alpha$ does not necessarily equal
$\tau \theta_{4.5} \eta \kappabar$ because the difference could
be detected in higher filtration by $\D^2 h_1^3 h_4$.
In any case,
$\nu \kappabar \alpha$ equals $\tau \theta_{4.5} \eta \kappabar^2$.

The product $\theta_{4.5} \eta \kappabar^2$
is detected by $\tau^2 M h_1 g^2$.
The hidden $\tau$ extension from $\tau^2 M h_1 g^2$ to
$M \D h_0^2 e_0$ then implies that
$\nu \kappabar \alpha = \tau \theta_{4.5} \eta \kappabar^2$
is detected by $M \D h_0^2 e_0$.

We now know that $M \D h_0^2 e_0$ is the target of a hidden
$\nu$ extension.
The only possible source is $\tau^2 g C'$. (Lemma \ref{lem:nu-h2c1A'}
eliminates another possible source.)
This establishes the second extension.
The first extension follows from onsideration of $\tau$ extensions.
\end{proof}

\begin{remark}
\label{rem:nu-Dj1+tgC'}
The proof of Lemma \ref{lem:nu-Dj1+tgC'} shows that
$\nu \kappabar \alpha$ is detected by $M \D h_0^2 e_0$,
where $\alpha$ is detected by $\tau h_1 H_1$.
Note that $\kappabar \alpha$ is detected by $\tau^2 h_1 H_1 g =
\tau h_2 c_1 A'$.  But this does not show that 
$\tau h_2 c_1 A'$ supports a hidden $\nu$ extension.  Rather,
it shows that the source of the hidden $\nu$ extension is
either $\tau h_2 c_1 A'$, or a non-zero element in higher filtration.
\end{remark}

\begin{lemma}
\label{lem:nu-h2^2h4h6}
\revdeg{84, 4, 44}
There is no hidden $\nu$ extension on $h_2^2 h_4 h_6$.
\end{lemma}

\begin{proof}
Table \ref{tab:Toda} shows that $h_2^2 h_4 h_6$ detects the 
Toda bracket $\langle \nu \nu_4, 2, \theta_5 \rangle$.
Shuffle to obtain
\[
\nu \langle \nu \nu_4, 2, \theta_5 \rangle = 
\langle \nu, \nu \nu_4, 2 \rangle \theta_5.
\]
The Toda bracket $\langle \nu, \nu \nu_4, 2 \rangle$
is zero because $\pi_{25,14}$ consists only of a $v_1$-periodic
element detected by $P^2 h_1 c_0$.
\end{proof}

\rev{
\begin{lemma}
\label{lem:nu-tx85,6+h0^3c3}
\revdeg{85, 6, 44}
If $\tau x_{85,6} + h_0^3 c_3$ survives, then
it supports a hidden $\nu$ extension to
$h_1 x_{87,7} + \tau^2 g_2^2$.
\end{lemma}

\begin{proof}
By comparison to $C\tau$, there must be a hidden $\nu$ extension
whose target is either $h_1 x_{87,7}$ or $h_1 x_{87,7} + \tau^2 g_2^2$.

Table \ref{tab:nu-extn} shows that there is a hidden
$\nu$ extension from $\tau^2 h_2 h_4 Q_3$ to $\tau^2 h_0 g_2^2$.
This implies that the target of the $\nu$ extension on
$\tau x_{85,6} + h_0^3 c_3$ must be $h_1 x_{87,7} + \tau^2 g_2^2$.
\end{proof}
}

\begin{lemma}
\label{lem:nu-P^2h6c0}
\revdeg{87, 12, 45}
There is no hidden $\nu$ extension on $P^2 h_6 c_0$.
\end{lemma}

\begin{proof}
Table \ref{tab:misc-extn} shows that $P^2 h_6 c_0$ 
detects the product $\rho_{23} \eta_6$, and
$\nu \rho_{23} \eta_6$ is zero.
\end{proof}

\begin{lemma}
\label{lem:nu-h2^2gA'}
\revdeg{87, 12, 48}
There is a hidden $\nu$ extension from
$h_2^2 g A'$ to $\D h_1^2 g_2 g$.
\end{lemma}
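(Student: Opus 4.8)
The plan is to deduce this extension from the already-known hidden $\nu$ extension on $h_2^2 A'$ by multiplying by a class detected by $g$, after first rewriting the target. The key preliminary observation is the multiplicative identity $\D h_1^2 g_2 g = h_1 h_3 (\D e_1 + C_0) g$. This follows immediately from the relation $\D h_1 g_2 g = h_3 (\D e_1 + C_0) g$ recorded in the proof of Lemma \ref{lem:d4-th2B5g}, by multiplying through by $h_1$. Thus the asserted target $\D h_1^2 g_2 g$ is the same element as $h_1 h_3 (\D e_1 + C_0) g$, which is the $g$-multiple of the target of the extension in Lemma \ref{lem:nu-h2^2A'}.

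Next I would invoke Lemma \ref{lem:nu-h2^2A'} in the precise form produced by its proof: there are homotopy classes $\alpha$ and $\beta$, detected by $\D e_1 + C_0$ and $h_2 A'$ respectively, with $(\eta \sigma + \epsilon)\alpha = \nu^2 \beta$, where $\nu \beta$ is detected by $h_2^2 A'$ and $\nu^2 \beta = (\eta\sigma + \epsilon)\alpha$ is detected by $h_1 h_3(\D e_1 + C_0)$. I would then fix a homotopy class $\gamma$ in $\pi_{20,12}$ detected by $g$; here $g$ is a permanent cycle (of weight $12$) and $\tau \gamma = \kappabar$. A degree count confirms the weights match: $h_2^2 A'$ lies in weight $36$, so $h_2^2 g A'$ lies in weight $48$, forcing multiplication by the weight-$12$ class $g$ rather than by $\kappabar = \{\tau g\}$. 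Multiplying the relation by $\gamma$, the class $\nu \beta \gamma$ is detected by $h_2^2 g A'$, while $\nu \cdot (\nu \beta \gamma) = (\eta \sigma + \epsilon)\alpha \gamma$ is detected by $h_1 h_3 (\D e_1 + C_0) g = \D h_1^2 g_2 g$. This is exactly the claimed hidden $\nu$ extension.

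The hard part will be justifying that this multiplication by $\gamma$ is \emph{clean}: that $\nu\beta\gamma$ is genuinely detected by $h_2^2 g A'$ and that $(\eta\sigma + \epsilon)\alpha\gamma$ is genuinely detected by $\D h_1^2 g_2 g$, with neither product vanishing nor being pushed into strictly higher Adams filtration. I would address this by first checking that $\D h_1^2 g_2 g$ is nonzero on the $E_\infty$-page, and then verifying that in the relevant tridegrees $(87,48)$ and $(90,50)$ there are no classes of higher Adams filtration that could contribute a correction term or absorb the product. Should such a filtration jump threaten the argument, the fallback is to run the same multiplication inside the homotopy of $C\tau$, where multiplication by $g$ is literal ring multiplication on the Adams--Novikov $E_2$-page, and then transport the conclusion back along the inclusion of the bottom cell and projection to the top cell as in the other lemmas of this section.
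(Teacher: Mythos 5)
Your argument breaks at its central step: there is no homotopy class $\gamma$ in $\pi_{20,12}$ detected by $g$, because $g$ is not an element of $\Ext_\C$ at all. In the $\C$-motivic setting the May differential $d_4(b_{21}^2) = h_1^4 h_4$ kills the weight-$12$ class in the $20$-stem; only $\tau g$ (weight $11$, detecting $\kappabar$) survives, which is why the paper lists $\tau g$ as the multiplicative generator and treats the symbol $g$ purely as the Massey product operator $\langle -, h_1^4, h_4 \rangle$ (so $h_2^2 g A'$ is an indecomposable-style name, not literally $h_2^2 A'$ times an element $g$). Your own weight count exposes the problem: matching weight $48$ forces a weight-$12$ multiplier, and no such class exists. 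The only available substitute is $\kappabar = \{\tau g\}$, and multiplying your relation from Lemma \ref{lem:nu-h2^2A'} by it lands in $\tau h_2^2 g A'$ at weight $47$, which does not yield the weight-$48$ statement. The fallback to $C\tau$ does not repair this, since the question is precisely about a filtration-jumping product in the homotopy of the sphere, and in any case the $C\tau$ comparison by itself leaves an ambiguity (see below). The preliminary identification $\D h_1^2 g_2 g = h_1 h_3 (\D e_1 + C_0) g$ is fine, but the multiplication you build on it cannot be carried out.

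For comparison, the paper's proof goes the other way around: comparison to $C\tau$ already shows that $h_2^2 g A'$ supports a hidden $\nu$ extension, but only determines the target up to the error term $\tau h_2 g C''$ (target $\D h_1^2 g_2 g$ or $\D h_1^2 g_2 g + \tau h_2 g C''$). One then chooses $\alpha$ detected by $h_2 g A'$ with $\eta \alpha = 0$, so that $h_2^2 g A'$ detects $\nu\alpha$, and shuffles $\nu^2 \alpha = \langle \eta, \nu, \eta \rangle \alpha = \eta \langle \nu, \eta, \alpha \rangle$ to conclude that $\nu^2\alpha$ is divisible by $\eta$; this forces the target to be the $h_1$-divisible option $\D h_1^2 g_2 g$. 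If you want to salvage your idea, you would need to replace ``multiplication by $g$'' with a Moss/Massey-product argument for the operator $\langle -, h_1^4, h_4 \rangle$, which is a substantially different (and more delicate) route.
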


\begin{proof}
Comparison to $C\tau$ shows that $h_2^2 g A'$ supports a hidden
$\nu$ extension whose target is either $\D h_1^2 g_2 g$
or $\D h_1^2 g_2 g + \tau h_2 g C''$.

Let $\alpha$ be an element of $\pi_{84,46}$ that is detected
by $h_2 g A'$.  Since $h_2 g A'$ does not support a hidden
$\eta$ extension, we may choose $\alpha$ such that $\eta \alpha$ is
zero.  Note that $h_2^2 g A'$ detects $\nu \alpha$.

Shuffle to obtain
\[
\nu^2 \alpha = \langle \eta, \nu, \eta \rangle \alpha =
\eta \langle \nu, \eta, \alpha \rangle.
\]
This shows that $\nu^2 \alpha$ must be divisible by $\eta$.
Consequently, the hidden $\nu$ extension on 
$h_2^2 g A'$ must have target $\D h_1^2 g_2 g$.
\end{proof}

\begin{remark}
\label{rem:nu,eta,h2gA'}
The proof of Lemma \ref{lem:nu-h2^2gA'} shows that
$\D h_1 g_2 g$ detects the Toda bracket
$\langle \nu, \eta, \{h_2 g A'\} \rangle$.
\end{remark}

\section{Miscellaneous hidden extensions}
\label{sctn:misc-extn}

\rev{
\begin{thm}
\label{thm:misc-extn}
Tables \ref{tab:misc-extn} \rev{and \ref{tab:misc-extn-null}} list some miscellaneous hidden extensions.
\end{thm}

\begin{proof}
Similarly to Theorems \ref{thm:2-extn}, \ref{thm:eta-extn}, and \ref{thm:nu-extn}, some of the extensions follow by comparison to $C\tau$ or to $\tmf$.
The more difficult cases are handled in the following lemmas.
\end{proof}
}

\rev{
Based on the corrected statement of Lemma~\ref{lem:theta4,2,sigma^2+kappa} (\cite{Xu16}*{Theorem 2.1}) and Lemma~\ref{lem:etakappabar2,2sigma,sigma}, the third author presents the proof of the following Lemma~\ref{lem:theta4-h4^2} (\cite{Xu16}*{Theorem 1.2}), fixing a gap in its original proof. Note that as in the original proof, it only uses classical knowledge back then up to the 60-stem.

\begin{lemma}
\label{lem:theta4-h4^2}
\revdeg{30, 2}
Classically, there is no hidden $\theta_4$ extension on $h_4^2$.
In other words $\theta_4^2$ is zero in $\pi_{60}$.
\end{lemma}

\begin{proof}
We have
$$\theta_4^2 = \theta_4 \langle 2, \sigma^2 + \kappa, 2\sigma, \sigma \rangle \subseteq \langle \langle \theta_4, 2, \sigma^2 + \kappa \rangle, 2\sigma, \sigma \rangle.$$
Using \cite{Xu16}*{Theorem~2.2} and Lemma \ref{lem:theta4,2,sigma^2+kappa}, the last expression is contained in the union of 
$$\langle 0, 2\sigma, \sigma \rangle, \ \langle \eta\kappabar_2, 2\sigma, \sigma \rangle, \ \langle \rho_{15}\theta_4, 2\sigma, \sigma \rangle.$$
By \cite{Xu16}*{Lemmas~2.3 and 2.4} and Lemma~\ref{lem:etakappabar2,2sigma,sigma}, all three brackets contain a single element 0. Therefore, $\theta_4^2 = 0$.
\end{proof}
}

\begin{lemma}
\label{lem:epsilon-h3^2h5}
\mbox{}
\begin{enumerate}
\item
\revdeg{45, 3, 24}
There is a hidden $\epsilon$ extension from $h_3^2 h_5$ to $M c_0$.
\item
\revdeg{45, 3, 23}
There is a hidden $\epsilon$ extension from $\tau h_3^2 h_5$ to $M P$.
\end{enumerate}
\end{lemma}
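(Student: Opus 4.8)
The plan is to establish the two $\epsilon$ extensions by exhibiting a Toda bracket that $M c_0$ (respectively $M P$) must detect, and then using the shuffle formula to show this forces divisibility by $\epsilon$. First I would recall from the Mahowald operator machinery of Section~\ref{subsctn:Mahowald-operator}, specifically Example~\ref{ex:Mh1} and Table~\ref{tab:Massey}, that $M c_0$ is the Massey product $\langle c_0, h_0^3, g_2 \rangle$ and $M P$ is $\langle P d_0, h_0^3, g_2 \rangle$ or a closely related bracket. The idea is that $h_3^2 h_5$ detects $\sigma^2 \theta_5$ (see Table~\ref{tab:misc-extn}), and $\epsilon \cdot \sigma^2 \theta_5$ should be expressible via a shuffle that lands in the image of the $M$-operator.

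The key computational step will be to analyze the product $\epsilon \cdot \{h_3^2 h_5\}$. I would use that $\epsilon = \langle \nu, \eta, \nu \rangle$-type brackets (or the defining relation $\langle 2\nu, \nu, \eta \rangle \ni \epsilon$ from Table~\ref{tab:Toda}) together with a shuffle against $\sigma^2 \theta_5$. More promisingly, since $M c_0$ sits in the appropriate stem and filtration, I expect to compute $\langle \epsilon, 8, \kappabar_2 \rangle$ or a similar bracket whose Massey-product analogue in $\Ext$ is $\langle c_0, h_0^3, g_2 \rangle = M c_0$, and to invoke the Moss Convergence Theorem~\ref{thm:Moss} to conclude that $M c_0$ detects it. The $\epsilon$ extension then follows by identifying the source of the bracket with an element detected by $h_3^2 h_5$; here I would lean on the fact that $\sigma^2 \theta_5$ is detected by $h_0 h_4 A$ in higher filtration (Table~\ref{tab:misc-extn}), so the relevant product $\epsilon \sigma^2 \theta_5$ or a compatible combination must be detected by $M c_0$.

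For the second part, I would deduce the $\tau h_3^2 h_5$ extension from the first using the hidden $\tau$ extension from $M c_0$ to $M P$ established in Lemma~\ref{lem:d3-t^2D1h1^2} (which shows there is a hidden $\tau$ extension from $M c_0$ to $M P$). Concretely, if $\epsilon \cdot \{h_3^2 h_5\}$ is detected by $M c_0$, then applying $\tau$ and using that hidden $\tau$ extension gives that $\epsilon \cdot \{\tau h_3^2 h_5\} = \tau \cdot (\epsilon \{h_3^2 h_5\})$ is detected by $M P$. This reduces the second statement to the first plus an already-established $\tau$ extension, so the second part should be essentially formal once the first is in hand.

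The main obstacle will be verifying that the target of the extension is genuinely $M c_0$ and not some element in higher Adams filtration, and dually that the relevant product $\epsilon \{h_3^2 h_5\}$ is actually nonzero rather than being killed. I would need to rule out alternative detecting elements by inspection of the $E_\infty$-chart in the relevant stem, and possibly by comparison to $\mmf$ or to the homotopy of $C\tau$ to eliminate competitors. The delicate point is controlling indeterminacy in the Toda bracket and ensuring the Moss Convergence Theorem applies, i.e.\ checking the crossing-differential hypothesis for the products defining $\langle c_0, h_0^3, g_2 \rangle$; if there are crossing differentials I would instead argue directly that $M c_0$ cannot be hit and must be the target by a counting argument in the long exact sequence~(\ref{eq:LES-Ctau}).
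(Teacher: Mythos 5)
Your treatment of part (2) is fine and matches the paper: once the first extension is in hand, the hidden $\tau$ extension from $M c_0$ to $M P$ (Table \ref{tab:tau-extn}) gives the second for free. But part (1) has a genuine gap, starting with a misidentification of the source: $h_3^2 h_5$ lives in the 45-stem and detects $\theta_{4.5}$, not $\sigma^2 \theta_5$ (which lies in the 76-stem and is detected by $h_0 h_4 A$). Your entire bracket strategy is built around the wrong homotopy class. Moreover, even granting that $M c_0 = \langle c_0, h_0^3, g_2 \rangle$ detects the Toda bracket $\langle \epsilon, 8, \kappabar_2 \rangle$ via Moss convergence, this does not by itself exhibit $M c_0$ as the target of an $\epsilon$ extension: an element of $\langle \epsilon, 8, \kappabar_2 \rangle$ is not manifestly $\epsilon$ times anything, and the shuffles available (e.g.\ $\langle \epsilon, 8, \kappabar_2 \rangle \delta = \epsilon \langle 8, \kappabar_2, \delta \rangle$) do not produce $\epsilon \cdot \theta_{4.5}$.

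The paper's argument is much more direct and rests on an input you never invoke: by the definition of $\theta_{4.5}$ (Table \ref{tab:notation}) and the hidden $\eta$ extension from $h_3^2 h_5$ to $M h_1$ (Table \ref{tab:eta-extn}), the element $M h_1$ detects $\eta \theta_{4.5}$. Multiplying by $c_0$, the nonzero element $M h_1 c_0$ detects $\eta \epsilon \theta_{4.5}$, which forces $M c_0$ to detect $\epsilon \theta_{4.5}$. There is then a second subtlety you do not address at all: this only shows $M c_0$ is the \emph{target} of a hidden $\epsilon$ extension whose source is either $h_3^2 h_5$ or the higher-filtration element $h_5 d_0$ (condition (3) of Definition \ref{defn:hidden} demands the source of maximal filtration), and one must separately rule out $h_5 d_0$; this is exactly the content of Lemma \ref{lem:epsilon-h5d0}, which shows $\epsilon \alpha = 0$ for some $\alpha$ detected by $h_5 d_0$. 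Without that step the lemma as stated is not proved — indeed the paper remarks that the corresponding argument in \cite{Isaksen14c} only pinned the source down to one of the two candidates.
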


\begin{proof}
Table \ref{tab:eta-extn} shows that
$M h_1$ detects the product $\eta \theta_{4.5}$.
Then $M h_1 c_0$ detects $\eta \epsilon \theta_{4.5}$.
This implies that $M c_0$ detects $\epsilon \theta_{4.5}$.

This only shows that $M c_0$ is the target of a hidden
$\epsilon$ extension, whose source could be $h_3^2 h_5$
or $h_5 d_0$.  However, Lemma \ref{lem:epsilon-h5d0}
rules out the latter case.
This establishes the first hidden extension.

Table \ref{tab:tau-extn} shows that there is a hidden
$\tau$ extension from $M c_0$ to $M P$.  Then the first
hidden extension implies the second one.
\end{proof}

\begin{remark}
We claimed in \cite{Isaksen14c}*{Table 33} that there is a
hidden $\epsilon$ extension from $h_3^2 h_5$ to $M c_0$.
However, the argument given in \cite{Isaksen14c}*{Lemma 4.108}
only implies that $M c_0$ is the target of a hidden extension
from either $h_3^2 h_5$ or $h_5 d_0$.
\end{remark}

\begin{lemma}
\label{lem:kappa-h3^2h5}
\revdeg{45, 3, 24}
There is a hidden $\kappa$ extension from $h_3^2 h_5$ to $M d_0$.
\end{lemma}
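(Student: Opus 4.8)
The plan is to follow the proof of Lemma~\ref{lem:epsilon-h3^2h5} almost verbatim, replacing the class $\epsilon$ (detected by $c_0$) with $\kappa$ (detected by $d_0$). The starting point is the hidden $\eta$ extension recorded in Table~\ref{tab:eta-extn}, namely that $M h_1$ detects the product $\eta \theta_{4.5}$. Multiplying this by $\kappa$, I expect that $M h_1 d_0 = h_1 \cdot M d_0$ detects $\eta \kappa \theta_{4.5}$, provided $M h_1 d_0$ is non-zero on the $E_\infty$-page and $\eta \kappa \theta_{4.5}$ is non-zero; both are routine inspections of the chart in the relevant degree (stem $60$).

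Given this, the identity $M h_1 d_0 = h_1 \cdot M d_0$ together with $\eta \kappa \theta_{4.5} = \eta \cdot (\kappa \theta_{4.5})$ shows that $\kappa \theta_{4.5}$ is detected by $M d_0$, since the honest $h_1$ extension on $\{M d_0\}$ produces the class detected by $M h_1 d_0$. Because $\theta_{4.5}$ is detected by $h_3^2 h_5$ and the product $d_0 \cdot h_3^2 h_5$ vanishes in $E_\infty$ (it would lie in strictly lower filtration than $M d_0$), this exhibits a hidden $\kappa$ extension with target $M d_0$. The resulting statement that $M d_0$ detects $\kappa \theta_{4.5}$ is exactly the fact recorded in Table~\ref{tab:misc-extn} and invoked in Lemmas~\ref{lem:d4-th2B5g} and~\ref{lem:hit-MPDh1d0}, so the conclusion is consistent with its uses elsewhere.

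The main obstacle, precisely as in the $\epsilon$ case, is that the argument above only shows that $M d_0$ is the target of \emph{some} hidden $\kappa$ extension: passing through the $\eta$-multiple pins down the class $\{M d_0\}$ only up to $\eta$-torsion, so a priori the source could be $h_3^2 h_5$ or another $45$-stem generator such as $h_5 d_0$. To finish, I would rule out $h_5 d_0$ as a source, in analogy with Lemma~\ref{lem:epsilon-h5d0}; concretely, I would verify that $\kappa$-multiplication on the class detected by $h_5 d_0$ is not hidden (i.e.\ that $d_0 \cdot h_5 d_0 = h_5 d_0^2$ survives, or is otherwise accounted for), so that $h_5 d_0$ cannot support a hidden $\kappa$ extension to $M d_0$. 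This disambiguation of the source is the only genuinely non-formal step; the remainder is a direct transcription of the preceding lemma.
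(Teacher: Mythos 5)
The first half of your argument coincides with the paper's: from the hidden $\eta$ extension $M h_1 \in \{\eta\theta_{4.5}\}$ you get that $M h_1 d_0 = h_1\cdot M d_0$ detects $\eta\kappa\theta_{4.5}$, hence $M d_0$ detects $\kappa\theta_{4.5}$, and the only possible sources for the resulting hidden $\kappa$ extension are $h_3^2 h_5$ and $h_5 d_0$. That part is fine.

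The gap is in your plan for ruling out $h_5 d_0$. You propose to check that ``$\kappa$-multiplication on the class detected by $h_5 d_0$ is not hidden, i.e.\ that $h_5 d_0^2$ survives.'' But the product $d_0\cdot h_5 d_0$ vanishes on the $E_\infty$-page --- that is precisely why $h_5 d_0$ is a candidate source for a \emph{hidden} extension in the first place --- so this check cannot succeed, and it tells you nothing about whether $\kappa\alpha$ is detected by $M d_0$ in higher filtration for $\alpha\in\{h_5 d_0\}$. The paper's actual disambiguation is not an analogue of Lemma \ref{lem:epsilon-h5d0} but a consequence of it: take the specific $\alpha\in\{h_5 d_0\}$ with $\epsilon\alpha=0$ produced there, so $\epsilon\kappabar\alpha=0$; since $\epsilon\kappabar=\kappa^2$ (Table \ref{tab:misc-extn}), one gets $\kappa^2\alpha=0$; but if $\kappa\alpha$ were detected by $M d_0$ then $\kappa^2\alpha$ would be detected by the non-zero class $M d_0^2$, a contradiction. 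Without some argument of this kind (squaring $\kappa$ and using a known vanishing), your proof does not pin down $h_3^2 h_5$ as the source.
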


\begin{proof}
Table \ref{tab:eta-extn} shows that
$M h_1$ detects the product $\eta \theta_{4.5}$.
Then $M h_1 d_0$ detects the product $\eta \kappa \theta_{4.5}$,
so $M d_0$ must detect the product $\kappa \theta_{4.5}$.
This shows that $M d_0$ is the target of a hidden $\kappa$ extension
whose source is either $h_3^2 h_5$ or $h_5 d_0$.

We showed in Lemma \ref{lem:epsilon-h5d0}
that $\epsilon \alpha$ is zero for some element 
$\alpha$ of $\pi_{45,24}$ that is detected by $h_5 d_0$.
Then $\epsilon \kappabar \alpha$ is also zero.
Table \ref{tab:misc-extn} shows that
$\epsilon \kappabar$ equals $\kappa^2$.
Therefore,
$\kappa^2 \alpha$ is zero.
If $\kappa \alpha$ were detected by $M d_0$, 
then $\kappa^2 \alpha$ would be detected by $M d_0^2$.
It follows that there is no hidden $\kappa$ extension
from $h_5 d_0$ to $M d_0$.
\end{proof}

\begin{remark}
We showed in \cite{Isaksen14c}*{Table 33} that there is a 
hidden $\kappa$ extension from either $h_3^2 h_5$ or $h_5 d_0$
to $M d_0$. Lemma \ref{lem:kappa-h3^2h5} settles this uncertainty.
\end{remark}

\begin{lemma}
\label{lem:kappabar-h3^2h5}
\revdeg{45, 3, 24}
There is a hidden $\kappabar$ extension from $h_3^2 h_5$ to
$\tau M g$.
\end{lemma}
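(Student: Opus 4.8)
The plan is to establish this hidden $\kappabar$ extension from $h_3^2 h_5$ to $\tau M g$ by the same strategy used in Lemmas \ref{lem:epsilon-h3^2h5} and \ref{lem:kappa-h3^2h5}, namely by exploiting the fact that $M h_1$ detects $\eta \theta_{4.5}$ (Table \ref{tab:eta-extn}) and comparing products with $\kappabar$. First I would observe that $M h_1 \cdot \tau g$, i.e. $\tau M h_1 g$, detects the product $\eta \theta_{4.5} \kappabar$. Provided this product is genuinely detected by $\tau M h_1 g$ (and not sent into higher filtration), it follows that $\tau M g$ detects $\theta_{4.5} \kappabar$, since multiplying the putative $\kappabar$ extension $h_3^2 h_5 \mapsto \tau M g$ by $\eta$ should recover the $\eta$ extension $\tau M g \mapsto \tau M h_1 g$. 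This shows that $\tau M g$ is the target of a hidden $\kappabar$ extension whose source is a permanent cycle detecting $\theta_{4.5}$.

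The key remaining issue, exactly as in the two preceding lemmas, is that the source of this $\kappabar$ extension could a priori be either $h_3^2 h_5$ or $h_5 d_0$, the two classes in the relevant stem that detect candidate elements. To rule out $h_5 d_0$, I would invoke Lemma \ref{lem:epsilon-h5d0}, which exhibits an element $\alpha \in \pi_{45,24}$ detected by $h_5 d_0$ with $\epsilon \alpha = 0$. The plan is then to leverage the relation $\epsilon \kappabar = \kappa^2$ from Table \ref{tab:misc-extn}: the element $\kappabar \alpha$, if detected by $\tau M g$, would have $\epsilon \kappabar \alpha = \kappa^2 \alpha$ detected by $\tau M g \cdot$ (detecting element of $\epsilon$), but $\kappa^2 \alpha = \epsilon \kappabar \alpha = \kappabar \cdot \epsilon \alpha = 0$. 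Tracking the filtration of $\kappa^2 \alpha$ against the detecting element one would obtain for $\kappabar \alpha$ forces a contradiction, showing there is no hidden $\kappabar$ extension from $h_5 d_0$ to $\tau M g$. Hence the source must be $h_3^2 h_5$.

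The main obstacle I anticipate is the same filtration-bookkeeping subtlety that appears throughout this section: verifying that the various products ($\eta \theta_{4.5} \kappabar$ detected by $\tau M h_1 g$, and $\kappa^2 \alpha$ in the $h_5 d_0$ argument) are detected by precisely the claimed elements rather than being pushed into strictly higher Adams filtration by crossing extensions or hidden $\tau$ contributions. In particular I would need to check by inspection of the $E_\infty$-page that there are no competing elements in higher filtration in the relevant stem and weight that could absorb $\eta \theta_{4.5} \kappabar$, so that the conclusion ``$\tau M g$ detects $\theta_{4.5}\kappabar$'' is clean. Since $\theta_{4.5}\kappabar$ lies well within the range already analyzed and the detecting elements $\tau M g$ and $\tau M h_1 g$ are isolated in their degrees, I expect this verification to go through by direct inspection, making the argument a routine parallel to Lemma \ref{lem:kappa-h3^2h5}.
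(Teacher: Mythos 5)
Your proposal follows the paper's proof essentially step for step: first use the fact that $M h_1$ detects $\eta \theta_{4.5}$ to conclude that $\tau M h_1 g$ detects $\eta \kappabar \theta_{4.5}$ and hence that $\tau M g$ detects $\kappabar \theta_{4.5}$, and then rule out $h_5 d_0$ as the source by invoking the element $\alpha$ of Lemma \ref{lem:epsilon-h5d0} with $\epsilon \alpha = 0$. The one place where your write-up is imprecise is the mechanism for the contradiction in the second step: the product of $\tau M g$ with $c_0$ is zero on the $E_\infty$-page, so ``$\epsilon \kappabar \alpha$ detected by $\tau M g \cdot c_0$'' gives nothing. What you actually need is the \emph{hidden} $\epsilon$ extension from $\tau M g$ to $M d_0^2$ (Lemma \ref{lem:epsilon-tMg}, recorded in Table \ref{tab:misc-extn}); with that, if $\kappabar \alpha$ were detected by $\tau M g$ then $\epsilon \kappabar \alpha$ would be detected by $M d_0^2$ and in particular nonzero, contradicting $\epsilon \kappabar \alpha = \kappabar (\epsilon \alpha) = 0$. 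Your detour through the relation $\epsilon \kappabar = \kappa^2$ is harmless but unnecessary here; it is the ingredient for the $\kappa$ extension in Lemma \ref{lem:kappa-h3^2h5}, not for this one.
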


\begin{proof}
Table \ref{tab:eta-extn} shows that
$M h_1$ detects the product $\eta \theta_{4.5}$.
Then $\tau M h_1 g$ detects the product $\eta \kappabar \theta_{4.5}$,
so $\tau M g$ must detect the product $\kappabar \theta_{4.5}$.
This shows that $\tau M g$ is the target of a hidden 
$\kappabar$ extension
whose source is either $h_3^2 h_5$ or $h_5 d_0$.

We showed in Lemma \ref{lem:epsilon-h5d0}
that $\epsilon \alpha$ is zero for some element 
$\alpha$ of $\pi_{45,24}$ that is detected by $h_5 d_0$.
If $\kappabar \alpha$ were detected by $\tau M g$,
then $\epsilon \kappabar \alpha$ would be detected by
$M d_0^2$ because Table \ref{tab:misc-extn} shows that
there is a hidden $\epsilon$ extension from $\tau M g$
to $M d_0^2$.
Therefore, there is no hidden $\kappabar$ extension
from $h_5 d_0$ to $\tau M g$.
\end{proof}

\begin{lemma}
\label{lem:Dh1h3-h3^2h5}
\revdeg{45, 3, 24}
There is a hidden $\{\D h_1 h_3\}$ extension from
$h_3^2 h_5$ to $M \D h_1 h_3$.
\end{lemma}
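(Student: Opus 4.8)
The plan is to follow the template established in Lemmas \ref{lem:epsilon-h3^2h5}, \ref{lem:kappa-h3^2h5}, and \ref{lem:kappabar-h3^2h5}, all of which extract an extension out of $h_3^2 h_5$ by recognizing a product as divisible by $\eta$. The starting point is the hidden $\eta$ extension of Table \ref{tab:eta-extn}, which says that $M h_1$ detects $\eta \theta_{4.5}$, where $\theta_{4.5}$ is the homotopy class detected by $h_3^2 h_5$.

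First I would multiply this relation by $\{\D h_1 h_3\}$. Since $M h_1 \cdot \D h_1 h_3$ equals $M \D h_1^2 h_3 = h_1 \cdot M \D h_1 h_3$ in $\Ext$, the product $\eta \{\D h_1 h_3\} \theta_{4.5}$ is detected by $M \D h_1^2 h_3$, provided this element is nonzero in the $E_\infty$-page. In particular the product is nonzero, so $\{\D h_1 h_3\} \theta_{4.5}$ is itself nonzero; and because its $\eta$-multiple is detected by the genuine $h_1$-multiple $h_1 \cdot M \D h_1 h_3$, the class $\{\D h_1 h_3\} \theta_{4.5}$ must be detected by $M \D h_1 h_3$ itself. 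This last inference requires an inspection of the chart to confirm that $\D h_1 h_3 \cdot h_3^2 h_5$ vanishes in $E_\infty$, so that no lower-filtration detecting class is available and $M \D h_1 h_3$ is the unique element of the correct stem and weight whose $h_1$-multiple is $M \D h_1^2 h_3$. This establishes that $M \D h_1 h_3$ is the target of a hidden $\{\D h_1 h_3\}$ extension whose source is either $h_3^2 h_5$ or $h_5 d_0$, since both generate classes in $\pi_{45,24}$.

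The main obstacle, exactly as in Lemmas \ref{lem:kappa-h3^2h5} and \ref{lem:kappabar-h3^2h5}, is to eliminate $h_5 d_0$, which carries higher Adams filtration and would otherwise be the recorded source under condition (3) of Definition \ref{defn:hidden}. Here I would invoke Lemma \ref{lem:epsilon-h5d0}, which provides an element $\alpha$ in $\pi_{45,24}$ detected by $h_5 d_0$ with $\epsilon \alpha = 0$. By commutativity, $\epsilon \cdot \{\D h_1 h_3\} \alpha = \{\D h_1 h_3\}(\epsilon \alpha) = 0$. On the other hand, I expect to show that $M \D h_1 h_3$ supports a hidden $\epsilon$ extension to a nonzero target, the natural candidate being $M \D h_1 h_3 c_0$ traced from the hidden $\epsilon$ extension $h_3^2 h_5 \to M c_0$ of Lemma \ref{lem:epsilon-h3^2h5} after multiplication by $\{\D h_1 h_3\}$. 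Were $\{\D h_1 h_3\} \alpha$ detected by $M \D h_1 h_3$, its $\epsilon$-multiple would then be nonzero, contradicting the vanishing just derived. Thus the extension from $h_5 d_0$ does not occur, and the recorded source is $h_3^2 h_5$.

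The delicate point in this final step is verifying the auxiliary hidden $\epsilon$ extension on $M \D h_1 h_3$ and confirming that its target is genuinely nonzero in $E_\infty$; this is where I anticipate needing careful bookkeeping with the $\Ext$ chart and the multiplicative data, rather than a purely formal argument. Should that hidden $\epsilon$ extension prove awkward to pin down directly, an alternative is to multiply by $\kappabar$ and use the relation $\epsilon \kappabar = \kappa^2$ from Table \ref{tab:misc-extn} together with $\epsilon \alpha = 0$ to force $\kappa^2 \alpha = 0$, mimicking the contradiction in Lemma \ref{lem:kappa-h3^2h5} once a suitable extension on $M \D h_1 h_3$ has been identified.
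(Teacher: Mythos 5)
Your first step coincides with the paper's: multiplying the hidden $\eta$ extension from $h_3^2 h_5$ to $M h_1$ by $\{\D h_1 h_3\}$ shows that $\eta \{\D h_1 h_3\} \theta_{4.5}$ is detected by $M \D h_1^2 h_3 = h_1 \cdot M \D h_1 h_3$, so $M \D h_1 h_3$ is the target of a hidden $\{\D h_1 h_3\}$ extension whose source is either $h_3^2 h_5$ or $h_5 d_0$. The gap is in how you eliminate $h_5 d_0$. Your plan hinges on an auxiliary hidden $\epsilon$ extension on $M \D h_1 h_3$ with a target known to be nonzero on the $E_\infty$-page, and no such extension is available unconditionally. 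The product $c_0 \cdot M \D h_1 h_3$ vanishes in $\Ext$ (there is no element ``$M \D h_1 h_3 c_0$'' in filtration $15$ doing this job); the genuine candidate target is $M \D h_1^2 d_0 = \D h_1 d_0 \cdot M h_1$ in filtration $16$, and whether that class survives is one of the unresolved uncertainties of this range: it is a possible value of $d_{10}(h_2 h_6 g + h_1^2 f_2)$ and of $d_9(h_1 x_{85,6})$ (Table \ref{tab:Adams-unknown}). Accordingly, Lemma \ref{lem:epsilon-MDh1h3} asserts the $\epsilon$ extension on $M \D h_1 h_3$ only under the hypothesis that $M \D h_1^2 d_0$ is nonzero at $E_\infty$; and its proof proceeds by showing that $\epsilon \{\D h_1 h_3\} \theta_{4.5}$ is detected by $M \D h_1^2 d_0$ and \emph{deducing} that $\{\D h_1 h_3\} \theta_{4.5}$ is detected by $M \D h_1 h_3$ --- i.e., it derives the present lemma along the way rather than furnishing an independent input to it. So in the branch where $M \D h_1^2 d_0$ dies your contradiction evaporates, and in the other branch the argument runs backwards. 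The $\kappabar$-fallback has the same defect, since it still presupposes ``a suitable extension on $M \D h_1 h_3$'' that the paper cannot supply unconditionally.

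The paper closes this hole by a different, unconditional mechanism (Lemma \ref{lem:Dh1h3-h5d0}): $h_5 d_0$ detects the Toda bracket $\langle \kappa, \theta_4, 2 \rangle$, and after checking indeterminacies $\{\D h_1 h_3\} \langle \kappa, \theta_4, 2 \rangle = \langle \{\D h_1 h_3\} \kappa, \theta_4, 2 \rangle$. Since $\{\D h_1 h_3\} \kappa$ is detected by $\tau d_0 l + \D c_0 d_0$ in filtration $11$, the Moss Convergence Theorem applied with $d_2(h_5) = h_0 h_4^2$ forces this bracket into Adams filtration at least $13$; the unique candidate detecting element $\tau^5 e_0 g^3$ is excluded by comparison to $\mmf$, so the bracket contains zero and $h_5 d_0$ supports no hidden $\{\D h_1 h_3\}$ extension. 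Some argument of this kind, independent of the fate of $M \D h_1^2 d_0$, is what your proof is missing.
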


\begin{proof}
Table \ref{tab:eta-extn} shows that $M h_1$ detects the product
$\eta \theta_{4.5}$.  Therefore, the element
$M \D h_1^2 h_3$ detects $\eta \{\D h_1 h_3\} \theta_{4.5}$.
This shows that $M \D h_1 h_3$ is the target of a hidden
$\{\D h_1 h_3\}$ extension.
Lemma \ref{lem:Dh1h3-h5d0} rules out $h_5 d_0$ as a possible source.
The only remaining possible source is $h_3^2 h_5$.
\end{proof}

\begin{lemma}
\label{lem:theta4.5-h3^2h5}
\revdeg{45, 3, 24}
There is a hidden $\theta_{4.5}$ extension from
$h_3^2 h_5$ to $M^2$.
\end{lemma}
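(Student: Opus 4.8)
The plan is to prove the statement by identifying the target: I will show that $M^2$ detects the product $\theta_{4.5}^2$, where $\theta_{4.5}\in\pi_{45,24}$ is the class detected by $h_3^2 h_5$. First I would check that the extension is genuinely hidden, i.e. condition (1) of Definition \ref{defn:hidden}. In $\Ext$ we have $h_3^3 = h_2^2 h_4$ together with $h_3 h_4 = 0$, so $(h_3^2 h_5)^2 = h_3^4 h_5^2 = 0$. Hence $\theta_{4.5}^2$, if nonzero, is detected strictly above filtration $6$, and the claim is that it is detected by $M^2$ in filtration $12$.

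The main line follows the template of the companion Lemmas \ref{lem:epsilon-h3^2h5}, \ref{lem:kappa-h3^2h5}, \ref{lem:kappabar-h3^2h5}, and \ref{lem:Dh1h3-h3^2h5}, now taken with the multiplier $\theta_{4.5}$ itself in place of $\epsilon$, $\kappa$, $\kappabar$, and $\{\D h_1 h_3\}$. Recall from Table \ref{tab:eta-extn} that $M h_1$ detects $\eta\theta_{4.5}$. I would then invoke Example \ref{ex:M^2h1}, which gives $M^2 h_1 = \langle M h_1, h_0, h_0^2 g_2 \rangle$, and apply the Moss Convergence Theorem \ref{thm:Moss}: since $M h_1$, $h_0$, and $h_0^2 g_2$ detect $\eta\theta_{4.5}$, $2$, and $\sigma^2\theta_4$ respectively (as in the proof of Lemma \ref{lem:perm-M^2}), and there are no crossing differentials for the relevant products, $M^2 h_1$ detects the Toda bracket $\langle \eta\theta_{4.5}, 2, \sigma^2\theta_4 \rangle$. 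The key step is to identify this bracket with $\eta\theta_{4.5}^2$; I would attempt this by a shuffle, exploiting the relation between $\sigma^2\theta_4$ and $\theta_{4.5}$ and the value of $\langle \theta_{4.5}, 2, \sigma^2\theta_4 \rangle$, to conclude that $M^2 h_1$ detects $\eta\theta_{4.5}^2$. Dividing by $h_1$, i.e. using $M^2 h_1 = h_1\cdot M^2$ and the fact that $M^2 h_1$ is nonzero in $E_\infty$ by Lemma \ref{lem:perm-M^2}, then shows $M^2$ detects $\theta_{4.5}^2$, which is condition (2).

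Finally, I would rule out the only other candidate source $h_5 d_0$ exactly as in the companion lemmas: the argument of Lemma \ref{lem:epsilon-h5d0} and its analogues shows that the relevant products with an $h_5 d_0$-class vanish, so there is no hidden $\theta_{4.5}$ extension from $h_5 d_0$ to $M^2$; this also secures the maximality condition (3) of Definition \ref{defn:hidden}. If the direct bracket identification proves too delicate, I would instead try to pull the extension back from the Adams spectral sequence for $C\tau$, where $M^2$ survives and the corresponding self-product is algebraically visible, confirming by comparison to $\mmf$ that $M^2$ is neither hit by a differential nor the target of a hidden $\tau$ extension.

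The hard part will be the middle step: showing that $\theta_{4.5}^2$ is nonzero and detected precisely by $M^2$, rather than vanishing or being detected in some intermediate filtration. The naive Ext-level Mahowald operator $\langle h_3^2 h_5, h_0^3, g_2 \rangle$ would sit in filtration $9$, yet the product is claimed in filtration $12$, so one cannot simply transcribe the earlier template (in which $M$ acts as hidden multiplication by $\theta_{4.5}$ one filtration-block up). This filtration jump is exactly what forces the more delicate Moss and Massey-product analysis above, and establishing that $\langle \eta\theta_{4.5}, 2, \sigma^2\theta_4 \rangle$ equals $\eta\theta_{4.5}^2$ (in particular that it does not contain zero) is where the real work lies.
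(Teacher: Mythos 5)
Your skeleton matches the paper's at the top level: both routes go through Lemma \ref{lem:perm-M^2}, the Massey product $M^2 h_1 = \langle M h_1, h_0, h_0^2 g_2 \rangle$, the conclusion that $M^2 h_1$ detects $\eta \theta_{4.5}^2$, and division by $h_1$. But the step you defer is the entire content of the lemma, and your stated plan for it points in the wrong direction. You propose to show that $\langle \eta \theta_{4.5}, 2, \sigma^2 \theta_4 \rangle$ equals $\eta \theta_{4.5}^2$, ``in particular that it does not contain zero.'' In fact the bracket \emph{does} contain zero: it contains $\theta_4 \langle \eta \theta_{4.5}, 2, \sigma^2 \rangle$, and the inner bracket lies in $\pi_{61,33}$, which vanishes --- this is precisely how the proof of Lemma \ref{lem:perm-M^2} concludes that $M^2 h_1$ detects an element of the indeterminacy, i.e.\ a (nonzero, by comparison to $C\tau$) multiple of $\eta \theta_{4.5}$. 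No shuffle will compute the bracket ``on the nose'' as $\eta\theta_{4.5}^2$; the correct continuation is an enumeration of which multiple of $\eta\theta_{4.5}$ can be detected by $M^2 h_1$, and the only candidates in low enough filtration are $\eta \theta_{4.5}^2$ and $\eta \theta_{4.5} \{h_5 d_0\}$.

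The elimination of the second candidate is also not the one you sketch. The $\epsilon$-vanishing argument of Lemma \ref{lem:epsilon-h5d0}, which powers Lemmas \ref{lem:kappa-h3^2h5} and \ref{lem:kappabar-h3^2h5}, says nothing about $\theta_{4.5}\{h_5 d_0\}$. Instead one uses that $\eta \{h_5 d_0\}$ equals $\eta_4 \theta_4$ (Table \ref{tab:misc-extn}), so that $\eta \theta_{4.5} \{h_5 d_0\} = \eta_4 \theta_4 \theta_{4.5}$, and this vanishes because $\eta_4 \theta_{4.5}$ lies in $\pi_{61,33}$, whose only possible nonzero element is detected by $\mmf$ and hence cannot be this product. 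Without this step your argument cannot distinguish a hidden $\theta_{4.5}$ extension on $h_3^2 h_5$ from one on $h_5 d_0$, so conditions (2) and (3) of Definition \ref{defn:hidden} are not established. (Your fallback via $C\tau$ also fails: the square $(h_3^2 h_5)^2$ is already zero in the Adams $E_2$-page for $C\tau$, so no algebraic product there witnesses the extension, and a six-filtration jump is exactly the kind of extension that comparison to $C\tau$ does not see.)
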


\begin{proof}
The proof of Lemma \ref{lem:perm-M^2} shows that
$M^2 h_1$ detects a multiple of $\eta \theta_{4.5}$.
Therefore, it detects either $\eta \theta_{4.5}^2$
or $\eta \theta_{4.5} \{h_5 d_0\}$.

Now $h_1 h_5 d_0$ detects $\eta \{h_5 d_0\}$, which also
detects $\eta_4 \theta_4$ by Table \ref{tab:misc-extn}.
In fact, the proof of \cite{Isaksen14c}*{Lemma 4.112} shows that
these two products are equal.
Then $\eta \theta_{4.5} \{h_5 d_0\}$ equals
$\eta_4 \theta_4 \theta_{4.5}$.
Next,
$\eta_4 \theta_{4.5}$ lies in $\pi_{61,33}$.
The only non-zero element of $\pi_{61,33}$ is detected by
$\mmf$, so the product $\eta_4 \theta_{4.5}$ must be zero.

We have now shown that $M^2 h_1$ detects $\eta \theta_{4.5}^2$.
This implies that $M^2$ detects $\theta_{4.5}^2$.
\end{proof}

\begin{lemma}
\label{lem:epsilon-h5d0}
\revdeg{45, 5, 24}
There is no hidden $\epsilon$ extension on $h_5 d_0$.
\end{lemma}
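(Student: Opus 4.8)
The plan is to first pin down the only possible target of such an extension and then rule it out by exhibiting a representative that $\epsilon$ annihilates. The source $h_5 d_0$ lies in stem $45$, weight $24$, and Adams filtration $5$, so a hidden $\epsilon$ extension on it would have a value in stem $53$, weight $29$, and filtration strictly greater than $\mathrm{filt}(c_0) + 5 = 8$, since $c_0 \cdot h_5 d_0 = 0$ on the $E_\infty$-page. By inspection of the $E_\infty$-chart in stem $53$, the only element of weight $29$ in filtration at least $9$ is $M c_0$. (The element $MP$ also lies in this stem, but Lemma \ref{lem:d3-t^2D1h1^2} exhibits it as $\tau M c_0$, so it has weight $28$ and is excluded.) Thus it suffices to produce an element $\alpha$ of $\pi_{45,24}$ detected by $h_5 d_0$ with $\epsilon \alpha = 0$; this simultaneously supplies the fact invoked in Lemmas \ref{lem:epsilon-h3^2h5}, \ref{lem:kappa-h3^2h5}, \ref{lem:kappabar-h3^2h5}, and \ref{lem:Dh1h3-h3^2h5}.

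First I would arrange that $\nu \alpha = 0$ for a suitable choice of $\alpha$. The product $\nu \alpha$ would a priori be detected by $h_2 h_5 d_0$; however, Lemma \ref{lem:d3-h2h5} gives $d_3(h_2 h_5) = \tau h_1 d_1$, so $h_2 h_5 d_0$ is not a permanent cycle and $\nu \alpha$ must be detected in strictly higher filtration. After enumerating the possible detecting elements in stem $48$, weight $27$ and absorbing them by modifying $\alpha$ by terms of higher Adams filtration, I expect to force $\nu \alpha = 0$. With this in hand, the relation $\langle \nu, \eta, \nu \rangle = \eta \sigma + \epsilon$ and the shuffle $\langle \nu, \eta, \nu \rangle \alpha = \nu \langle \eta, \nu, \alpha \rangle$ (valid because $\eta \nu = 0$ and $\nu \alpha = 0$) give
\[
\epsilon \alpha = \nu \langle \eta, \nu, \alpha \rangle + \eta \sigma \alpha .
\]
It then remains to show that each term on the right is zero, or at least is not detected by $M c_0$.

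The main obstacle is this last step. For the term $\eta \sigma \alpha$, I would note that $\sigma \alpha$ is detected by $h_3 h_5 d_0$, so $\eta \sigma \alpha$ is detected by $h_1 h_3 h_5 d_0 = 0$ (since $h_1 h_3 = 0$); hence $\eta \sigma \alpha$ is either zero or of filtration too high to matter, and I would aim to rule out detection by $M c_0$, ideally by showing outright that $\sigma \alpha = 0$. For the $\nu$-divisible term $\nu \langle \eta, \nu, \alpha \rangle$, the goal is to verify that $M c_0$ is not a hidden $\nu$-multiple in this degree, so that a nonzero $\nu$-divisible class cannot be detected there; alternatively one can attempt to show that the inner bracket $\langle \eta, \nu, \alpha \rangle$ lands in a group forcing $\nu \langle \eta, \nu, \alpha \rangle = 0$. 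Carefully controlling these two contributions, together with the indeterminacy introduced when modifying $\alpha$ by higher-filtration terms, is where the real work lies; a fallback is to argue instead by comparison with the homotopy of $C\tau$, checking directly that $(h_5 d_0, M c_0)$ supports no $c_0$-extension there and transporting this along the inclusion of the bottom cell.
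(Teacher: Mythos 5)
Your strategy collapses at the very first step, where you try to arrange $\nu\alpha=0$. You argue that because $d_3(h_2h_5)=\tau h_1 d_1$ (Lemma \ref{lem:d3-h2h5}), the element $h_2h_5d_0$ is not a permanent cycle and hence $\nu\alpha$ must be detected in higher filtration. That inference is wrong: the Leibniz rule gives $d_3(h_2h_5\cdot d_0)=\tau h_1 d_1 d_0$, which vanishes, and in fact $h_2h_5d_0$ is a nonzero element of the $E_\infty$-page in degree $(48,6,26)$ --- it appears in Table \ref{tab:nu-extn} as the \emph{source} of a hidden $\nu$ extension to $\tau g n$. Consequently $\nu\alpha$ is detected by $h_2h_5d_0$ and is nonzero for \emph{every} $\alpha$ detected by $h_5d_0$; no modification of $\alpha$ by higher-filtration terms can make $\nu\alpha=0$. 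The shuffle $\langle\nu,\eta,\nu\rangle\alpha=\nu\langle\eta,\nu,\alpha\rangle$ is therefore never available, and the decomposition $\epsilon\alpha=\nu\langle\eta,\nu,\alpha\rangle+\eta\sigma\alpha$ on which the whole argument rests cannot be formed. A secondary issue: your enumeration of possible targets is incomplete, since $\D h_1 d_0^2$ in degree $(53,13,29)$ is also a nonzero permanent cycle of weight $29$ and filtration greater than $8$ (it is the target of the hidden $\tau$ extension from $\D h_1^2 h_3 g$), so it would also have to be excluded, e.g.\ by comparison to $\tmf$.

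The paper's proof takes an entirely different route that avoids multiplying $\alpha$ by $\nu$ at all. It uses that $h_5d_0$ detects the Toda bracket $\langle 2,\theta_4,\kappa\rangle$ (Table \ref{tab:Toda}) and shuffles to get $\epsilon\langle 2,\theta_4,\kappa\rangle=\langle\epsilon,2,\theta_4\rangle\kappa$. The bracket $\langle\epsilon,2,\theta_4\rangle$ is detected by $h_5c_0$ and agrees, modulo $\sigma$-multiples, with an element of $\langle\eta_5,\nu,2\nu\rangle$; since $\sigma\kappa=0$, the product with $\kappa$ reduces via another shuffle to $\eta_5\langle\nu,2\nu,\kappa\rangle=\eta_5\cdot\eta\kappabar$, which vanishes by comparison to $\tmf$. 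If you want to salvage your approach you would need a completely different mechanism for controlling $\epsilon\alpha$ that does not require $\nu\alpha=0$; the bracket-manipulation route of the paper is essentially forced by the fact that $h_5d_0$ supports nontrivial $\nu$ and $\sigma$ multiplications.
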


\begin{proof}
Table \ref{tab:Toda} shows that
$h_5 d_0$ detects the Toda bracket $\langle 2, \theta_4, \kappa \rangle$.
Now shuffle to obtain
\[
\epsilon \langle 2, \theta_4, \kappa \rangle =
\langle \epsilon, 2, \theta_4 \rangle \kappa.
\]
Table \ref{tab:Toda} shows that
$h_5 c_0$ detects the Toda bracket
$\langle \epsilon, 2, \theta_4 \rangle$, and there is no indeterminacy.
Let $\alpha$ in $\pi_{39,21}$ be the unique element of this Toda bracket.
We wish to compute $\alpha \kappa$.

Table \ref{tab:Toda} shows that
$h_5 c_0$ also detects the Toda bracket 
$\langle \eta_5, \nu, 2 \nu \rangle$, 
with indeterminacy generated by $\sigma \eta_5$.
Let $\beta$ in $\pi_{39,21}$ be an element of
this Toda bracket.
Then $\alpha$ and $\beta$ are equal, modulo
$\sigma \eta_5$ and modulo elements in higher filtration.
Both $\tau h_3 d_1$ and $\tau^2 c_1 g$ detect multiples of
$\sigma$.
Also, the difference between $\alpha$ and $\beta$ cannot be
detected by $\D h_1 d_0$ by comparison to $\tmf$.

This implies that $\alpha$ equals $\beta + \sigma \gamma$
for some element $\gamma$ in $\pi_{31,17}$.
Then
\[
\alpha \kappa = (\beta + \sigma \gamma) \kappa = \beta \kappa
\]
because $\sigma \kappa$ is zero.

Now shuffle to obtain
\[
\beta \kappa = \langle \eta_5, \nu, 2 \nu \rangle \kappa =
\eta_5 \langle \nu, 2\nu, \kappa \rangle.
\]
Table \ref{tab:Toda} shows that
$\langle \nu, 2\nu, \kappa \rangle$
contains $\eta \kappabar$, and its indeterminacy is generated by
$\nu \nu_4$.
We now need to compute $\eta_5 \eta \kappabar$.

The product $\eta_5 \kappabar$ is detected by
$\tau h_1 h_3 g_2 = \tau h_1 h_5 g$, so
$\eta_5 \kappabar$ equals $\tau \eta \sigma \kappabar_2$,
modulo elements of higher filtration.
But these elements of higher filtration are either annihilated
by $\eta$ or detected by $\tmf$, so 
$\eta_5 \eta \kappabar$ equals
$\tau \eta^2 \sigma \kappabar_2$.
By comparison to $\tmf$, this latter expression must be zero.
\end{proof}

\begin{lemma}
\label{lem:Dh1h3-h5d0}
\revdeg{45, 5, 24}
There is no hidden $\{\D h_1 h_3\}$ extension on 
$h_5 d_0$.
\end{lemma}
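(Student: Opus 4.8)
The plan is to rule out the only combinatorially possible target and then argue by contradiction using an $\epsilon$-multiplication, exactly in the style of the companion Lemmas \ref{lem:kappa-h3^2h5} and \ref{lem:kappabar-h3^2h5}. The source $h_5 d_0$ lies in the $45$-stem and $\{\D h_1 h_3\}$ lies in the $32$-stem, so any hidden $\{\D h_1 h_3\}$ extension on $h_5 d_0$ must land in the $77$-stem in Adams filtration greater than $11$. The analysis in Lemma \ref{lem:Dh1h3-h3^2h5} already identifies $M \D h_1 h_3$ as the relevant target; I would note that every other element in the appropriate stem and weight of sufficiently high filtration is eliminated by inspection. Thus it suffices to show that $\{\D h_1 h_3\} \alpha$ is not detected by $M \D h_1 h_3$ for $\alpha$ in $\{h_5 d_0\}$.

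First I would fix the element $\alpha$ of $\pi_{45,24}$ detected by $h_5 d_0$ that is produced in Lemma \ref{lem:epsilon-h5d0}, whose crucial property is that $\epsilon \alpha = 0$. Suppose for contradiction that $\{\D h_1 h_3\} \alpha$ is detected by $M \D h_1 h_3$. Multiplying by $\epsilon$, and assuming the product $M \D h_1 h_3 c_0$ is a nonzero permanent cycle in the $E_\infty$-page, I would conclude that $\epsilon \{\D h_1 h_3\} \alpha$ is detected by $M \D h_1 h_3 c_0$, hence is nonzero. On the other hand, $\epsilon \{\D h_1 h_3\} \alpha = \{\D h_1 h_3\} \epsilon \alpha = 0$, a contradiction. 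As in the companion lemmas, I would also observe that replacing $\alpha$ by another element of $\{h_5 d_0\}$ differing in higher Adams filtration cannot affect the conclusion, since such a change raises the filtration of $\{\D h_1 h_3\} \alpha$ past that of $M \D h_1 h_3$.

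The main obstacle is precisely the input invoked above: that $M \D h_1 h_3 c_0$ is nonzero in $E_\infty$. I would establish this with the Mahowald operator method of Section \ref{subsctn:Mahowald-operator}. Since $p_* \colon \Ext_\C \map \Ext_B$ is a ring map and Proposition \ref{prop:M-Massey} gives $p_*(M \D h_1 h_3) = (e_0 v_3^2 + h_1^3 v_3^3)\, p_*(\D h_1 h_3)$, one has $p_*(M \D h_1 h_3\, c_0) = (e_0 v_3^2 + h_1^3 v_3^3)\, p_*(\D h_1 h_3 c_0)$ in $\Ext_B = \M_2[v_3] \otimes_{\M_2} \Ext_{A(2)}$. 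A direct computation in $\Ext_{A(2)}$ shows this image is nonzero, so $M \D h_1 h_3 c_0$ is nonzero in $\Ext$; I would then check by comparison to $C\tau$ (or directly from the charts) that it is not hit by any Adams differential. This nonvanishing is the genuine $\epsilon$ extension on $M \D h_1 h_3$ that parallels the extension detecting $\eta \epsilon \theta_{4.5}$ by $M h_1 c_0$ in Lemma \ref{lem:epsilon-h3^2h5}; confirming it is the one computational point of the argument, and the step I expect to require the most care.
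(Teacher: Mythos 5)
There is a genuine gap, and it sits exactly where you predicted: the claim that $M \D h_1 h_3 \cdot c_0$ is a nonzero permanent cycle in the $E_\infty$-page. This contradicts the paper's own data. Table \ref{tab:misc-extn} and Lemma \ref{lem:epsilon-MDh1h3} record the $\epsilon$ extension on $M \D h_1 h_3$ as a \emph{hidden} extension, with (conditional) target $M \D h_1^2 d_0$ in degree $(85,16,46)$. By Definition \ref{defn:hidden}, a hidden extension requires the honest product to vanish in the $E_\infty$-page; so $c_0 \cdot M \D h_1 h_3$, which would live in filtration $15$, is zero there. Consequently the equation $\epsilon \{\D h_1 h_3\} \alpha = \{\D h_1 h_3\}\, \epsilon \alpha = 0$ produces no contradiction with $\{\D h_1 h_3\}\alpha$ being detected by $M \D h_1 h_3$: the product with $\epsilon$ is expected to drop into filtration at least $16$ or vanish regardless. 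Your proposed Mahowald-operator verification cannot rescue this, since even a nonvanishing image in $\Ext_B$ would only concern the $E_2$-page, not survival to $E_\infty$. A repaired version using the hidden target $M \D h_1^2 d_0$ instead would be conditional on an unresolved differential (that class is a possible target of $d_9$ or $d_{10}$, per Theorem \ref{thm:Adams-higher}) and on the hidden $\epsilon$ extension applying to the particular element $\{\D h_1 h_3\}\alpha$, neither of which you can assume. Note the contrast with the companion Lemmas \ref{lem:kappa-h3^2h5} and \ref{lem:kappabar-h3^2h5}, where the analogous products $M d_0 \cdot d_0 = M d_0^2$ and the hidden $\epsilon$ extension from $\tau M g$ to $M d_0^2$ are unconditionally nonzero; that is what makes the $\epsilon$-multiplication trick work there and fail here.

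For comparison, the paper avoids this entirely by exploiting that $h_5 d_0$ detects the Toda bracket $\langle \kappa, \theta_4, 2 \rangle$: after shuffling to $\langle \{\D h_1 h_3\}\kappa, \theta_4, 2 \rangle$ and applying the Moss Convergence Theorem with $d_2(h_5) = h_0 h_4^2$, the product $\{\D h_1 h_3\}\alpha$ is forced into Adams filtration at least $13$, strictly above $M \D h_1 h_3$ in filtration $12$; the single remaining candidate $\tau^5 e_0 g^3$ is then excluded by comparison to $\mmf$. If you want to keep an argument in the spirit of your proposal, you would need a different multiplier $\beta$ for which $\beta \alpha = 0$ is known \emph{and} the honest product of the detecting class of $\beta$ with $M \D h_1 h_3$ survives nontrivially to $E_\infty$; $\epsilon$ is not such a multiplier.
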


\begin{proof}
Table \ref{tab:Toda} shows that $h_5 d_0$ detects the Toda bracket
$\langle \kappa, \theta_4, 2 \rangle$.  By inspection of 
indeterminacies, we have
\[
\{\D h_1 h_3\} \langle \kappa, \theta_4, 2 \rangle =
\langle \{\D h_1 h_3\} \kappa, \theta_4, 2 \rangle.
\]
Table \ref{tab:misc-extn} shows that
$\tau d_0 l + \D c_0 d_0$ detects the product
$\{\D h_1 h_3 \} \kappa$.
Now apply the Moss Convergence Theorem \ref{thm:Moss}
with the Adams differential $d_2(h_5) = h_0 h_4^2$
to determine that the Toda bracket
$\langle \{\D h_1 h_3\} \kappa, \theta_4, 2 \rangle$ is 
detected in Adams filtration at least $13$.

The only element in sufficiently high filtration is $\tau^5 e_0 g^3$,
but comparison to $\mmf$ rules this out.
Thus the Toda bracket
$\langle \{\D h_1 h_3\} \kappa, \theta_4, 2 \rangle$ 
contains zero.
\end{proof}

\begin{lemma}
\label{lem:rho15-h5^2}
\revdeg{62, 2, 32}
There is a hidden $\rho_{15}$ extension from $h_5^2$ to
either $h_0 x_{77,7}$ or $\tau^2 m_1$.
\end{lemma}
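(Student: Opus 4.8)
The plan is to realize $\rho_{15}\theta_5$ as the value of a Toda bracket and then locate it in the Adams $E_\infty$-page. First I would record the filtration bookkeeping: $\rho_{15}$ is detected by $h_0^3 h_4$ and $\theta_5$ by $h_5^2$, and the product $h_0^3 h_4 \cdot h_5^2$ already vanishes on the $E_3$-page, since $h_0 h_5^2 = d_2(h_6)$. Thus $\rho_{15}\theta_5$, if non-zero, is a genuine hidden extension whose detecting element lies in $\pi_{77,40}$ in Adams filtration strictly greater than $4 + 2 = 6$. The next step is to enumerate the surviving classes of $\pi_{77,40}$ in filtration at least $7$. The element $x_{77,7}$ does not survive, since $d_2(x_{77,7}) = \tau M h_1 h_4^2$ by Lemma \ref{lem:d2-x77,7}, whereas $h_0 x_{77,7}$ does survive because $d_2(h_0 x_{77,7}) = \tau M h_0 h_1 h_4^2 = 0$. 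After eliminating the remaining high-filtration classes by comparison to $C\tau$, $\tmf$, and $\mmf$, together with the constraints imposed by the multiplicative structure, the only candidate targets left are $h_0 x_{77,7}$ and $\tau^2 m_1$. This reduces the lemma to showing that $\rho_{15}\theta_5$ is non-zero.

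For non-triviality I would use a Toda bracket shuffle. Using a bracket description of $\rho_{15}$ of the form $\rho_{15} \in \langle \sigma, 16, 2\sigma \rangle$ (the $k=0$ instance of the family recorded in Table \ref{tab:Toda}, whose $k=1$ case is $\rho_{23} = \langle \sigma, 16, 2\rho_{15}\rangle$), the shuffle of Theorem \ref{thm:Toda-3fold}(5) gives
\[
\rho_{15}\theta_5 = \langle \sigma, 16, 2\sigma \rangle \theta_5 = \sigma \langle 16, 2\sigma, \theta_5 \rangle,
\]
where the inner bracket is defined because $32\sigma = 0$ and $2\sigma\theta_5 = 0$ (the latter since $2\theta_5 = 0$). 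I would then evaluate $\langle 16, 2\sigma, \theta_5\rangle$ through the Moss Convergence Theorem \ref{thm:Moss}, using the Massey product $\langle h_0^4, h_0 h_3, h_5^2\rangle$ in the $E_3$-page; this bracket is defined there because $h_0^5 h_3 = 0$ and $h_0 h_3 \cdot h_5^2 = 0$ in the $E_3$-page (as $h_0 h_5^2 = d_2(h_6)$). Its Massey degree is $(77-7, 7, 36)$, so multiplying the resulting permanent cycle by $h_3$ lands in $\pi_{77,40}$ in filtration $8$, exactly the filtration of $h_0 x_{77,7}$. Identifying this $h_3$-multiple, up to the indeterminacy of the bracket, with $h_0 x_{77,7}$ or with $\tau^2 m_1$ then completes the argument.

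The main obstacle is the concrete evaluation of $\langle 16, 2\sigma, \theta_5\rangle$ and the verification that the \emph{specific} element $\rho_{15}\theta_5$, rather than merely the coset $\sigma\langle 16, 2\sigma, \theta_5\rangle$, avoids zero. Two points require care. First, one must check that there are no crossing differentials for the products $h_0^5 h_3 = 0$ and $h_0 h_3 \cdot h_5^2 = 0$ in the $E_3$-page, so that Moss applies and the bracket is genuinely detected by $\langle h_0^4, h_0 h_3, h_5^2\rangle$ and not in some higher filtration beyond both candidates. Second, the indeterminacies of the Massey product and of the Toda bracket are precisely what prevent one from distinguishing $h_0 x_{77,7}$ from $\tau^2 m_1$; this is why the conclusion is necessarily disjunctive, and I would not expect to sharpen it here without the additional input (the hidden $2$-extension on $h_0 x_{77,7}$ and the hidden $\tau$-extension on $h_1 m_1$) that is exploited downstream in Lemma \ref{lem:2-Ph6c0}.
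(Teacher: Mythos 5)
Your overall strategy --- realize $\rho_{15}\theta_5$ via a Toda bracket shuffle, bound its Adams filtration, and then eliminate candidates --- is the same as the paper's, and your elimination of the filtration-$7$ classes is in the right spirit. But the heart of the lemma is the \emph{non-vanishing} of $\rho_{15}\theta_5$, and your route to it does not close. The paper shuffles to $\langle \theta_5, 2, 8\sigma\rangle\sigma$, uses that $\langle \theta_5, 2, 8\sigma\rangle$ is detected by $h_0^3 h_3 h_6$ (via $d_2(h_6)=h_0h_5^2$), and then invokes a concrete piece of $C\tau$ data: there is a $\sigma$ extension from $h_0^3 h_3 h_6$ to $h_0 x_{77,7}$ in the homotopy of $C\tau$. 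That extension is what forces the product to be non-zero and detected in filtration at most $8$. Your version shuffles instead to $\sigma\langle 16, 2\sigma, \theta_5\rangle$ and proposes to evaluate the inner bracket by Moss from $\langle h_0^4, h_0 h_3, h_5^2\rangle$ in the $E_3$-page --- but you never show this Massey product is non-zero. If it contains zero (note $h_0^5 h_3=0$ already in $E_2$, so the bracket is represented by $h_0^4\cdot h_3 h_6$ up to indeterminacy, and there is no guarantee this class is non-zero in $E_3$), Moss only tells you the Toda bracket is detected in filtration $\geq 8$, and multiplying by $\sigma$ then gives filtration $\geq 9$, which establishes nothing. Without a substitute for the $C\tau$ $\sigma$-extension, the non-triviality claim is unsupported.

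There is also an internal inconsistency that signals the filtration bookkeeping cannot work as you describe. You assert the product lands ``in filtration $8$, exactly the filtration of $h_0 x_{77,7}$''; but $\tau^2 m_1$ sits in filtration $7$, so if your claim were actually established it would exclude $\tau^2 m_1$ and prove more than the lemma --- which the paper cannot do. The correct state of affairs is only an upper bound (filtration $\leq 8$, from the $C\tau$ extension) and a lower bound (filtration $\geq 7$, since the detecting classes of the factors multiply to zero), which is precisely why both candidates survive. Two smaller points: $\rho_{15}\in\langle\sigma,16,2\sigma\rangle$ is not what Table \ref{tab:Toda} records (it records $\langle 8, 2\sigma, \sigma\rangle$, and the paper passes to $\langle 2, 8\sigma,\sigma\rangle$ by sliding), so your bracket would need its own justification; and $d_2(h_0x_{77,7})=0$ does not by itself show that $h_0 x_{77,7}$ survives to the $E_\infty$-page.
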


\begin{proof}
Table \ref{tab:Toda} shows that the Toda bracket
$\langle 8, 2 \sigma, \sigma \rangle$ contains
$\rho_{15}$.
Then $\rho_{15}$ is also contained in 
$\langle 2, 8 \sigma, \sigma \rangle$, although the indeterminacy
increases.

Now shuffle to obtain
\[
\rho_{15} \theta_5 = \theta_5 \langle 2, 8\sigma, \sigma \rangle =
\langle \theta_5, 2, 8 \sigma \rangle \sigma.
\]
Table \ref{tab:Toda} shows that $h_0^3 h_3 h_6$
detects $\langle \theta_5, 2, 8\sigma \rangle$.
Also, there is a $\sigma$ extension from $h_0^3 h_3 h_6$
to $h_0 x_{77,7}$ in the homotopy of $C\tau$.

This implies that $\rho_{15} \theta_5$ is non-zero in
$\pi_{77,40}$, and that it is detected in filtration at most $8$.
Moreover, it is detected in filtration at least $7$, since
$\rho_{15}$ and $\theta_5$ are detected in filtrations $4$ and $2$
respectively.

There are several elements
in filtration $7$ that could detect $\rho_{15} \theta_5$.
The element $x_{77,7}$ (if it survives to the $E_\infty$-page)
is ruled out by comparison to $C\tau$.
The element $\tau h_1 x_{76,6}$ is ruled out because
$\eta \rho_{15} \theta_5$ is detected in filtration at least $10$,
since $\eta \rho_{15}$ is detected in filtration $7$.

The only remaining possibilities are $h_0 x_{77,7}$
and $\tau^2 m_1$.
\end{proof}

\begin{lemma}
\label{lem:rho15-h1h6}
\mbox{}
\begin{enumerate}
\item
\revdeg{64, 2, 33}
There is a hidden $\rho_{15}$ extension from
$h_1 h_6$ to $P h_6 c_0$.
\item
\revdeg{64, 2, 33}
There is a hidden $\rho_{23}$ extension from
$h_1 h_6$ to $P^2 h_6 c_0$.
\end{enumerate}
\end{lemma}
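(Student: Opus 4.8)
The plan is to realize both extensions through the Toda bracket $\langle \eta, 2, \theta_5 \rangle$ detected by $h_1 h_6$ (Table \ref{tab:Toda}), exactly paralleling the argument of Lemma \ref{lem:perm-P^3h6c0}. First I would show that $P h_6 c_0$ detects the Toda bracket $\langle \eta \rho_{15}, 2, \theta_5 \rangle$. The elements $P c_0$, $h_0$, and $h_5^2$ are permanent cycles detecting $\eta \rho_{15}$, $2$, and $\theta_5$ respectively; here I use that $P c_0$ detects $\eta \rho_{15}$, as in Lemma \ref{lem:kappa,2,eta}. The bracket is defined because $2 \theta_5 = 0$ and $2 \eta \rho_{15} = 0$ (since $2 \eta = 0$). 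Using the Adams differential $d_2(h_6) = h_0 h_5^2$, the Massey product $\langle P c_0, h_0, h_5^2 \rangle$ equals $P h_6 c_0$ in the $E_3$-page, just as $\langle h_1, h_0, h_3^2 \rangle = h_1 h_4$ via $d_2(h_4) = h_0 h_3^2$. After checking by inspection that there are no crossing differentials for the products $P c_0 \cdot h_0 = 0$ and $h_0 h_5^2 = 0$ in the $E_3$-page, the Moss Convergence Theorem \ref{thm:Moss} shows that $P h_6 c_0$ detects $\langle \eta \rho_{15}, 2, \theta_5 \rangle$.

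Next I would identify this bracket with the desired product. Since $\eta_6$ lies in $\langle \eta, 2, \theta_5 \rangle$, the shuffle in Theorem \ref{thm:Toda-3fold}(3) gives
\[
\rho_{15} \eta_6 \in \rho_{15} \langle \eta, 2, \theta_5 \rangle \subseteq \langle \rho_{15} \eta, 2, \theta_5 \rangle = \langle \eta \rho_{15}, 2, \theta_5 \rangle.
\]
Combined with the previous step, $\rho_{15} \eta_6$ is detected by $P h_6 c_0$. Because the primary product $h_0^3 h_4 \cdot h_1 h_6 = h_0^3 h_1 h_4 h_6$ vanishes (as $h_0 h_1 = 0$ in $\Ext$), this exhibits a genuinely hidden $\rho_{15}$ extension from $h_1 h_6$ to $P h_6 c_0$. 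The second part is then word-for-word identical, using that $P^2 c_0$ detects $\eta \rho_{23}$, that $\langle P^2 c_0, h_0, h_5^2 \rangle = P^2 h_6 c_0$ in the $E_3$-page, and hence that $P^2 h_6 c_0$ detects $\langle \eta \rho_{23}, 2, \theta_5 \rangle \ni \rho_{23} \eta_6$.

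The main obstacle is the indeterminacy bookkeeping in passing from ``$P h_6 c_0$ detects \emph{some} element of the bracket'' to ``$P h_6 c_0$ detects $\rho_{15} \eta_6$''. One must verify by inspection of the Adams $E_\infty$-page that the indeterminacy of $\langle \eta \rho_{15}, 2, \theta_5 \rangle$ (the sum of the products of $\eta \rho_{15}$ with $\pi_{63,*}$ and of $\pi_{17,*}$ with $\theta_5$) contains no element detected in Adams filtration strictly below that of $P h_6 c_0$, and similarly for $\langle \eta \rho_{23}, 2, \theta_5 \rangle$ relative to $P^2 h_6 c_0$. If the indeterminacy lies in filtration at least that of the target, then the difference between $\rho_{15} \eta_6$ and the element supplied by the Moss Convergence Theorem has higher filtration, so $\rho_{15} \eta_6$ is indeed detected by $P h_6 c_0$. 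This is precisely the point that fails for the analogous $\rho_{31}$ statement, as recorded in Remark \ref{rem:perm-P^3h6c0}, so the heart of the proof is the verification that the indeterminacy is harmless for $\rho_{15}$ and $\rho_{23}$ but not for $\rho_{31}$.
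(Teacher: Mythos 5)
Your proof is correct and follows essentially the same route as the paper: shuffle $\rho_{15}\langle \eta, 2, \theta_5\rangle$ into $\langle \eta\rho_{15}, 2, \theta_5\rangle$, compute that bracket via the Moss Convergence Theorem using $d_2(h_6)=h_0 h_5^2$ and the fact that $Pc_0$ detects $\eta\rho_{15}$, and check that the indeterminacy lies in Adams filtration above that of $Ph_6c_0$ (respectively $P^2h_6c_0$). The paper's proof is just a terser version of the same argument, including the same observation that the indeterminacy check is the point that fails for $\rho_{31}$.
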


\begin{proof}
Table \ref{tab:Toda} shows that
$h_1 h_6$ detects $\langle \eta, 2, \theta_5 \rangle$.
Then
\[
\rho_{15} \langle \eta, 2, \theta_5 \rangle \subseteq
\langle \eta \rho_{15}, 2, \theta_5 \rangle.
\]
The last bracket is detected by $P h_6 c_0$ because
$d_2(h_6) = h_0 h_5^2$ and because
$P c_0$ detects $\eta \rho_{15}$.
Also, its indeterminacy is in Adams filtration greater than $8$.
This establishes the first hidden extension.

The proof for the second extension is essentially the same,
using that 
$P^2 c_0$ detects $\eta \rho_{23}$ and that
the indeterminacy of
$\langle \eta \rho_{23}, 2, \theta_5 \rangle$
is in Adams filtration greater than $12$.
\end{proof}

\begin{lemma}
\label{lem:epsilon-tMg}
\revdeg{65, 10, 35}
There is a hidden $\epsilon$ extension from $\tau M g$ to
$M d_0^2$.
\end{lemma}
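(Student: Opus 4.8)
The plan is to leverage the product structure around $\theta_{4.5}$, exactly as in the proofs of Lemmas~\ref{lem:kappa-h3^2h5} and \ref{lem:kappabar-h3^2h5}. First I would recall that, by the argument of Lemma~\ref{lem:kappabar-h3^2h5}, the element $\tau M g$ detects the product $\kappabar \theta_{4.5}$; this is forced by the fact that $M h_1$ detects $\eta \theta_{4.5}$ (Table~\ref{tab:eta-extn}) together with the relation $\tau M h_1 g = h_1 \cdot \tau M g$. Thus, to compute the hidden $\epsilon$ extension on $\tau M g$, it suffices to evaluate $\epsilon \cdot \kappabar \theta_{4.5}$.

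The key algebraic input is the relation $\epsilon \kappabar = \kappa^2$ recorded in Table~\ref{tab:misc-extn}. Multiplying, I obtain
\[
\epsilon \cdot \kappabar \theta_{4.5} = \kappa^2 \theta_{4.5}.
\]
Next I would identify the detecting element of $\kappa^2 \theta_{4.5}$. Lemma~\ref{lem:kappa-h3^2h5} shows that $M d_0$ detects $\kappa \theta_{4.5}$, and its proof also establishes that $M d_0^2$ is non-zero in the $E_\infty$-page, with $\kappa \cdot \{M d_0\}$ detected by $M d_0^2$ (a not hidden $\kappa$ extension, since $d_0 \cdot M d_0 = M d_0^2$ already in $E_\infty$). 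Therefore $\kappa^2 \theta_{4.5} = \kappa \cdot \kappa \theta_{4.5}$ is detected by $M d_0^2$.

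Combining these steps, $\epsilon \cdot \{\tau M g\}$ contains $\kappa^2 \theta_{4.5}$, which is detected by $M d_0^2$. Since $M d_0^2$ lies in Adams filtration $14$, strictly above the filtration $13$ in which a non-hidden $\epsilon$ extension on $\tau M g$ (filtration $10$) would land, this is genuinely a hidden extension from $\tau M g$ to $M d_0^2$, as claimed.

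The step I expect to be the main obstacle is confirming that the homotopy product $\kappa^2 \theta_{4.5}$ is detected \emph{exactly} by $M d_0^2$, rather than collapsing into still higher filtration or vanishing; this requires knowing that $M d_0^2$ survives and that no higher-filtration crossing occurs, both of which are supplied by the proof of Lemma~\ref{lem:kappa-h3^2h5}. One should also verify condition (1) of Definition~\ref{defn:hidden}, namely that $c_0 \cdot \tau M g = 0$ in the $E_\infty$-page, which follows by inspection of the chart in the relevant degree.
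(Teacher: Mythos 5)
Your argument is correct, but it is not the paper's argument. The paper works one weight higher and then descends: it uses the $E_2$-page relation $c_0 \cdot h_1^2 X_2 = M h_1 h_3 g$ (verified via $C\tau$) together with the two hidden $\tau$ extensions of Table \ref{tab:tau-extn}, from $h_1^2 X_2$ to $\tau M g$ and from $M h_1 h_3 g$ to $M d_0^2$; in other words, the $\epsilon$ extension is \emph{not} hidden before multiplying by $\tau$, and the hidden $\tau$ extensions transport it to the stated one. You instead run everything through the $\theta_{4.5}$ product structure: $\tau M g$ detects $\kappabar \theta_{4.5}$, $\epsilon \kappabar = \kappa^2$, and $M d_0$ detects $\kappa \theta_{4.5}$, so $\epsilon \cdot \kappabar\theta_{4.5} = \kappa^2\theta_{4.5}$ is detected by $d_0 \cdot M d_0 = M d_0^2$ (which survives, e.g.\ because it is the target of a hidden $\tau$ extension). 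This is the same style of argument the paper uses elsewhere (e.g.\ ``$\tau^2 M d_0^2$ detects $\tau^2\kappa^2\theta_{4.5}$'' in Lemma \ref{lem:d5-h4D2}), so it is sound. One point deserves emphasis: the \emph{full} statement of Lemma \ref{lem:kappabar-h3^2h5} cannot be invoked here, because the second paragraph of its proof (ruling out $h_5 d_0$ as the source of the $\kappabar$ extension) itself cites the hidden $\epsilon$ extension you are trying to prove; circularity is avoided only because the detection statement ``$\tau M g$ detects $\kappabar\theta_{4.5}$'' is established in the first paragraph of that proof, independently of the present lemma. You do cite exactly that underlying argument ($M h_1$ detects $\eta\theta_{4.5}$ plus $\tau M h_1 g = h_1\cdot \tau M g \neq 0$), so your proof stands, but the dependence should be stated at that level of granularity. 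The trade-off between the two routes: the paper's proof needs only primary $\Ext$ data and the $\tau$-extension table, while yours needs the (already established) $\theta_{4.5}$-multiplication facts but makes the answer conceptually transparent as the relation $\epsilon\kappabar = \kappa^2$ multiplied by $\theta_{4.5}$.
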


\begin{proof}
First, we have the relation $c_0 \cdot h_1^2 X_2 = M h_1 h_3 g$
in the Adams $E_2$-page, which is detected in the homotopy of
$C\tau$.
Table \ref{tab:tau-extn} shows that there are hidden $\tau$
extensions from $h_1^2 X_2$ and $M h_1 h_3 g$
to $\tau M g$ and $M d_0^2$ respectively.
\end{proof}

\begin{lemma}
\label{lem:epsilon-MDh1h3}
\revdeg{77, 12, 41}
If $M \D h_1^2 d_0$ is non-zero in the $E_\infty$-page, then
there is a hidden $\epsilon$ extension from $M \D h_1 h_3$
to $M \D h_1^2 d_0$.
\end{lemma}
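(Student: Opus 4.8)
The plan is to ``$M$-linearize'' an $\epsilon$ extension in the $40$-stem, exactly as the preceding miscellaneous lemmas propagate extensions off of $\theta_{4.5}$. First I would record the two detecting facts that frame the extension. By Lemma \ref{lem:Dh1h3-h3^2h5}, the element $M \D h_1 h_3$ detects the product $\{\D h_1 h_3\} \theta_{4.5}$, since $h_3^2 h_5$ detects $\theta_{4.5}$. On the target side, Table \ref{tab:eta-extn} shows that $M h_1$ detects $\eta \theta_{4.5}$; because $\langle h_1, h_0^3, g_2 \rangle \cdot \D h_1 d_0 \subseteq \langle \D h_1^2 d_0, h_0^3, g_2 \rangle$, we have $M h_1 \cdot \D h_1 d_0 = M \D h_1^2 d_0$, and it follows that $M \D h_1^2 d_0$ detects $\eta \theta_{4.5} \{\D h_1 d_0\} = \eta \{\D h_1 d_0\} \theta_{4.5}$. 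The hypothesis that $M \D h_1^2 d_0$ is nonzero in the $E_\infty$-page guarantees that this product is genuinely detected there; I would also invoke Theorem \ref{thm:tau-extn}(3) (see Remark \ref{rem:2-tMh0g^2}), which produces the hidden $\tau$ extension from $\tau M h_0 g^2$ to $M \D h_1^2 d_0$ precisely under this hypothesis, to confirm that $M \D h_1^2 d_0$ survives.

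Granting these, the computation collapses to a single base case: a hidden $\epsilon$ extension from $\D h_1 h_3$ to $\D h_1^2 d_0$, equivalently the identity $\epsilon \{\D h_1 h_3\} = \eta \{\D h_1 d_0\}$ in the $40$-stem, both sides detected by $\D h_1^2 d_0$. Assuming this, I would finish by multiplying through by $\theta_{4.5}$:
\[
\epsilon \cdot \{M \D h_1 h_3\} = \epsilon \{\D h_1 h_3\} \theta_{4.5} = \eta \{\D h_1 d_0\} \theta_{4.5} = \eta \theta_{4.5} \{\D h_1 d_0\},
\]
and the last expression is detected by $M \D h_1^2 d_0$ by the first paragraph. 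This exhibits the desired hidden $\epsilon$ extension from $M \D h_1 h_3$ to $M \D h_1^2 d_0$.

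The base case is the main obstacle, and I would establish it by comparison to $\tmf$ (or $\mmf$). The class $\{\D h_1 h_3\}$ maps nontrivially into $\pi_{32} \tmf$ --- this is already used in the proof of the hidden $\nu$ extension from $\D h_1 h_3$ to $\tau h_1 e_0^2$ in Theorem \ref{thm:nu-extn} --- and I would compute that $\epsilon$ times its image is the nonzero element of $\pi_{40} \tmf$ detected by the $\tmf$-analogue of $\D h_1^2 d_0$. Pulling back along the unit map $S^{0,0} \map \tmf$ then forces $\epsilon \{\D h_1 h_3\}$ to be detected by $\D h_1^2 d_0$ in the sphere, once I check that $\epsilon \cdot \D h_1 h_3 = 0$ in the $E_\infty$-page (so the extension really is hidden) and that no intermediate filtration can detect the product.

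A secondary consistency check, useful for excluding stray higher-filtration contributions, comes from Toda's relation $\eta \epsilon = \eta^2 \sigma + \nu^3$. Since $\nu \{\D h_1 h_3\} = \eta \kappa \kappabar$ gives $\nu^3 \{\D h_1 h_3\} = \eta^2 \nu \kappa \kappabar = 0$, and $\eta^2 \sigma \{\D h_1 h_3\}$ vanishes by inspection, one obtains $\eta \epsilon \{\D h_1 h_3\} = 0$. This confirms that $\epsilon \{\D h_1 h_3\}$ is $\eta$-torsion, hence compatible with detection by $\D h_1^2 d_0$ (whose $\eta$-multiple $\D h_1^3 d_0$ is zero in $E_\infty$), and rules out its being detected by a class supporting an $\eta$ extension. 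The only genuinely delicate point is verifying $\epsilon \cdot \D h_1 h_3 = 0$ on the $E_\infty$-page and that the $\tmf$-image lands exactly on $\D h_1^2 d_0$; both are finite inspections in the charts.
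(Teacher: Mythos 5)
Your proposal is correct and follows essentially the same route as the paper: the published proof also rests on the identity $\epsilon\{\D h_1 h_3\} = \eta\{\D h_1 d_0\}$ (both detected by $\D h_1^2 d_0$ with nothing in higher Adams filtration; this is precisely the Table \ref{tab:misc-extn} entry, established there by comparison to $\tmf$ exactly as you propose for your base case), combined with the observation that $M h_1$ detects $\eta\theta_{4.5}$, so that $M \D h_1^2 d_0 = M h_1 \cdot \D h_1 d_0$ detects $\eta\{\D h_1 d_0\}\theta_{4.5} = \epsilon\{\D h_1 h_3\}\theta_{4.5}$. The only cosmetic difference is that you import Lemma \ref{lem:Dh1h3-h3^2h5} up front to identify the source, whereas the paper instead deduces at the end that $\{\D h_1 h_3\}\theta_{4.5}$ is nonzero and can only be detected by $M \D h_1 h_3$.
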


\begin{proof}
Table \ref{tab:eta-extn} shows that $M h_1$ detects the 
product $\eta \theta_{4.5}$.
Since $M \D h_1^2 d_0$ equals $\D h_1 d_0 \cdot M h_1$, it detects
$\eta \{\D h_1 d_0\} \theta_{4.5}$.

Table \ref{tab:misc-extn} shows that 
$\eta \{\D h_1 d_0\}$ equals 
$\epsilon \{\D h_1 h_3\}$, since they are both
detected by $\D h_1^2 d_0$ and there are no elements in higher
Adams filtration.
Therefore, the product 
$\epsilon \{\D h_1 h_3\} \theta_{4.5}$ is detected
by $M \D h_1^2 d_0$.
In particular, $\{\D h_1 h_3\} \theta_{4.5}$ is non-zero, and
it can only be detected by $M \D h_1 h_3$.
\end{proof}

\section{Additional relations}

\begin{lemma}
\label{lem:nubar-h5^2}
The product $(\eta \sigma + \epsilon) \theta_5$ is detected by
$\tau h_0 h_2 Q_3$.
\end{lemma}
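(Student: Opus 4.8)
The plan is to realize the product as a Toda bracket and evaluate it with the Moss Convergence Theorem \ref{thm:Moss}. Toda's relation \cite{Toda62} gives $\eta\sigma + \epsilon = \langle \nu, \eta, \nu \rangle$. Since $\eta\nu = 0$, the threefold bracket $\langle \nu, \eta, \nu\theta_5 \rangle$ is defined (the product that must vanish is $\eta\cdot\nu\theta_5 = 0$, not $\nu\theta_5$), and the standard slide containment for Toda brackets yields
$$(\eta\sigma + \epsilon)\theta_5 = \langle \nu, \eta, \nu \rangle \theta_5 \subseteq \langle \nu, \eta, \nu\theta_5 \rangle.$$
Thus it suffices to evaluate the right-hand bracket and to check that $(\eta\sigma+\epsilon)\theta_5$ is nonzero.

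First I would apply the Moss Convergence Theorem \ref{thm:Moss}, using that $\nu$, $\eta$, and $\nu\theta_5$ are detected by $h_2$, $h_1$, and $h_2 h_5^2$ respectively, with $h_1 h_2 = 0$ and $h_1 \cdot h_2 h_5^2 = 0$ in the $E_2$-page. The governing Massey product is $\langle h_2, h_1, h_2 h_5^2 \rangle$, whose naive value $\langle h_2, h_1, h_2\rangle h_5^2 = h_1 h_3 h_5^2$ vanishes in $E_\infty$; this is exactly the content of the hidden $\sigma$ extension from $h_5^2$ to $p'$ recorded in Table \ref{tab:misc-extn}, namely that $h_3 h_5^2 = 0$ in $E_\infty$. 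Consequently the bracket is carried in strictly higher Adams filtration, and the only available targets in the correct degree are $\tau h_0 h_2 Q_3$ and $\tau h_1 h_3 H_1$.

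To pin down the value and show it is nonzero, I would multiply by $\eta$ and compare with a hidden $\nu$ extension. Toda's relation $\eta^2\sigma + \nu^3 = \eta\epsilon$ gives $\eta(\eta\sigma+\epsilon) = \nu^3$, so $\eta\cdot(\eta\sigma+\epsilon)\theta_5 = \nu^3\theta_5 = \nu\cdot\nu^2\theta_5$; by the hidden $\nu$ extension from $h_2 h_5^2$ to $\tau h_1 Q_3$ (Table \ref{tab:nu-extn}), the element $\nu^2\theta_5$ is detected by $\tau h_1 Q_3$. Tracking $\nu^3\theta_5$ shows it is nonzero, so $(\eta\sigma+\epsilon)\theta_5$ is nonzero and its detecting class has nonzero $\eta$-multiple equal to $\nu^3\theta_5$. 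This matches the hidden $\eta$ extension from $h_2^2 h_6$ to $\tau h_0 h_2 Q_3$ established in Lemma \ref{lem:eta-h2^2h6}, and together with the Massey product $\tau h_0 h_2 Q_3 \in \langle \tau h_1 Q_3, h_0, h_1\rangle$ used there, it identifies the detector as $\tau h_0 h_2 Q_3$.

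The main obstacle is precisely this identification: both the Moss estimate and the $\eta$-multiple computation only locate the product within a band of Adams filtrations, so I must separate $\tau h_0 h_2 Q_3$ from the error term $\tau h_1 h_3 H_1$ and from any still higher filtration. I expect to eliminate $\tau h_1 h_3 H_1$ by comparison to $C\tau$, reusing the argument already present in Lemma \ref{lem:eta-h2^2h6}, and to close off higher filtration by the consistency of the $\eta$-multiple with the hidden $\nu$ extension to $\tau h_1 Q_3$.
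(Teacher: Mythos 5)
Your identification step rests on a false computation: you claim that $\nu^3 \theta_5$ is nonzero and use this to force the detecting class of $(\eta\sigma+\epsilon)\theta_5$ to be $\tau h_0 h_2 Q_3$. In fact $\nu^3\theta_5 = 0$. The hidden $\nu$ extension from $h_2 h_5^2$ to $\tau h_1 Q_3$ does show that $\nu^2\theta_5$ is detected by $\tau h_1 Q_3$, but $h_2 \cdot \tau h_1 Q_3 = \tau h_1 h_2 Q_3 = 0$ and $\tau h_1 Q_3$ supports no hidden $\nu$ extension; the paper carries out exactly this analysis in the proof of Lemma \ref{lem:2-h1h3h6} (handling the possible higher-filtration corrections $h_3 A'$ and $P h_2 h_5 j$) and concludes $\nu^3\theta_5 = 0$. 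Note that this vanishing is also consistent with the lemma you are trying to prove, since $h_1 \cdot \tau h_0 h_2 Q_3 = 0$. So multiplying by $\eta$ yields no information: it cannot distinguish $\tau h_0 h_2 Q_3$ from $\tau h_1 h_3 H_1$, from the several other elements of $\pi_{70,37}$ in higher Adams filtration (e.g.\ $\tau h_1 D_3'$, $\D^2 h_2 c_1$, $\tau h_1 h_3(\D e_1 + C_0)$, which you do not list), or even from zero. With the $\eta$-multiplication argument gone, all that remains of your proof is the containment $(\eta\sigma+\epsilon)\theta_5 \in \langle\nu,\eta,\nu\theta_5\rangle$ together with a Moss filtration bound, which does not even establish that the product is nonzero.

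The paper's proof runs in the opposite direction and avoids this trap. It starts from the hidden $\eta$ extension from $h_2^2 h_6$ to $\tau h_0 h_2 Q_3$ (Lemma \ref{lem:eta-h2^2h6}), so that $\tau h_0 h_2 Q_3$ is known in advance to detect $\eta\alpha$ for some $\alpha$ detected by $h_2^2 h_6$. It then identifies $h_2^2 h_6$ as detecting $\langle \nu^2, 2, \theta_5\rangle$, carefully adjusts the choice of $\beta$ in that bracket by the indeterminacy element $\sigma\theta_5 \in \{p'\}$ so that $\alpha - \beta$ lies in filtration at least $9$ and hence $\eta\beta$ is still detected by $\tau h_0 h_2 Q_3$, and finally shuffles $\eta\langle\nu^2,2,\theta_5\rangle = \langle\eta,\nu^2,2\rangle\theta_5 = \{\epsilon,\ \epsilon+\eta\sigma\}\theta_5$, ruling out $\epsilon\theta_5$ because it is detected by $h_1 p'$. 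If you want to keep your bracket $\langle\nu,\eta,\nu\theta_5\rangle$ as the organizing device, you would still need an independent mechanism to show the product is nonzero and to pin down its filtration; the $\eta$-multiple cannot supply it.
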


\begin{proof}
Table \ref{tab:Toda} indicates a hidden
$\eta$ extension from $h_2^2 h_6$ to $\tau h_0 h_2 Q_3$.
Therefore, there exists an element $\alpha$ in $\pi_{69,36}$
such that $\tau h_0 h_2 Q_3$ detects $\eta \alpha$.
(Beware of the crossing extension from $p'$ to $h_1 p'$.
This means that it is possible to choose such an $\alpha$, but not
any element detected by $h_2^2 h_6$ will suffice.)

Table \ref{tab:Toda} shows that $h_2^2 h_6$ detects the
Toda bracket $\langle \nu^2, 2, \theta_5 \rangle$.
Let $\beta$ be an element of this Toda bracket.
Since $\alpha$ and $\beta$ are both detected by $h_2^2 h_6$, 
the difference $\alpha - \beta$ is
detected in Adams filtration at least $4$.

Table \ref{tab:misc-extn} shows that
$p'$ detects $\sigma \theta_5$, which belongs to the 
indeterminacy of $\langle \nu^2, 2, \theta_5 \rangle$.
Therefore, we may choose $\beta$ such that
the difference $\alpha - \beta$
is detected in filtration at least 9.
Since $\eta \alpha$ is detected by $\tau h_0 h_2 Q_3$ in filtration 7,
it follows that $\eta \beta$ is also detected by $\tau h_0 h_2 Q_3$.

We now have an element $\beta$ contained in
$\langle \nu^2, 2, \theta_5 \rangle$ such that
$\eta \beta$ is detected by $\tau h_0 h_2 Q_3$.
Now consider the shuffle
\[
\eta \langle \nu^2, 2, \theta_5 \rangle =
\langle \eta, \nu^2, 2 \rangle \theta_5.
\]
Table \ref{tab:Toda} shows that the last bracket equals
$\{ \epsilon, \epsilon + \eta \sigma \}$.
Therefore, either
$\epsilon \theta_5$ or
$(\epsilon + \eta \sigma) \theta_5$ is detected by
$\tau h_0 h_2 Q_3$.
But $\epsilon \theta_5$ is detected by $h_1 p' = h_5^2 c_0$.
\end{proof}

\begin{lemma}
\label{lem:compound-(epsilon+etasigma)-theta5}
There exists an element $\alpha$ of $\pi_{67,36}$ that is 
detected by $h_0 Q_3 + h_0 n_1$ such that
$\tau \nu \alpha$ equals $(\eta \sigma + \epsilon) \theta_5$.
\end{lemma}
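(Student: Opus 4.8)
The plan is to identify the element $\alpha$ as a lift of $\sigma\theta_5$-type data through a Toda bracket, exploiting the hidden extensions already established for $\theta_5$ in the 77-stem. First I would recall from Lemma \ref{lem:nubar-h5^2} that the product $(\eta\sigma + \epsilon)\theta_5$ is detected by $\tau h_0 h_2 Q_3$; this pins down precisely the target filtration we must hit. Since $\tau h_0 h_2 Q_3$ has weight matching $\nu$ times a class in the appropriate degree, the natural strategy is to realize $(\eta\sigma+\epsilon)\theta_5$ as $\tau \nu \alpha$ for a suitable $\alpha$ detected by $h_0 Q_3 + h_0 n_1$, since $h_2 \cdot (h_0 Q_3 + h_0 n_1)$ lands on $h_0 h_2 Q_3$ (the $h_0 n_1$ term dying or being absorbed in the product).

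The key steps, in order, are as follows. First I would verify the multiplicative relation $h_2 (h_0 Q_3 + h_0 n_1) = h_0 h_2 Q_3$ in the Adams $E_\infty$-page, so that for any $\alpha$ detected by $h_0 Q_3 + h_0 n_1$, the product $\nu\alpha$ is detected by $h_0 h_2 Q_3$ (provided it is non-zero). Second, I would use Lemma \ref{lem:nubar-h5^2} to note that $(\eta\sigma + \epsilon)\theta_5$ is a specific non-zero element detected by $\tau h_0 h_2 Q_3$, hence lies in the image of multiplication by $\tau$ applied to something detected by $h_0 h_2 Q_3$. Third, and most delicately, I would need to arrange the choice of $\alpha$ within the coset $\{h_0 Q_3 + h_0 n_1\}$ so that $\tau\nu\alpha$ equals exactly $(\eta\sigma+\epsilon)\theta_5$ rather than differing from it by an element in higher Adams filtration. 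This requires understanding the possible correction terms: the difference between $\tau\nu\alpha$ and $(\eta\sigma+\epsilon)\theta_5$, if both are detected by $\tau h_0 h_2 Q_3$, must be detected in strictly higher filtration, and I would enumerate those possibilities and show that they can be absorbed by adjusting $\alpha$ by elements in higher filtration of $\pi_{67,36}$.

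The hard part will be the third step: controlling the indeterminacy so that the equality $\tau\nu\alpha = (\eta\sigma+\epsilon)\theta_5$ holds on the nose, not merely up to higher filtration. The crossing-extension phenomenon flagged in the proof of Lemma \ref{lem:nubar-h5^2} (the crossing from $p'$ to $h_1 p'$) warns that not every element detected by a given class behaves uniformly under multiplication, so I would expect that only certain representatives $\alpha$ in $\{h_0 Q_3 + h_0 n_1\}$ work. The resolution should parallel the argument in Lemma \ref{lem:nubar-h5^2}: by adjusting $\alpha$ by suitable elements of higher Adams filtration in $\pi_{67,36}$, and checking that the available correction terms exactly span the discrepancy, I can force the precise equality. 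This is exactly the kind of compound argument that the proof of Lemma \ref{lem:d4-h1c3} invokes as its key input, so I anticipate the cleanest route is to cross-reference that lemma's construction directly, using the relation between $\tau\nu\alpha$ and $(\eta\sigma+\epsilon)\theta_5$ there to supply the required representative.
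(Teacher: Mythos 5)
Your overall strategy coincides with the paper's: invoke Lemma \ref{lem:nubar-h5^2} to see that $\tau h_0 h_2 Q_3$ detects both $(\eta\sigma+\epsilon)\theta_5$ and $\tau\nu\alpha$, then argue that the difference $\beta = \tau\nu\alpha - (\eta\sigma+\epsilon)\theta_5$, which is detected in strictly higher Adams filtration, must vanish. The gap is in your third step. You assert that the higher-filtration correction terms ``can be absorbed by adjusting $\alpha$ by elements in higher filtration of $\pi_{67,36}$'' and that the available corrections ``exactly span the discrepancy.'' That is true for only two of the four elements that could detect $\beta$: $\tau h_1 h_3(\D e_1 + C_0)$ and $\D^2 h_2 c_1$ are targets of hidden $\nu$ extensions (Table \ref{tab:nu-extn}), so those components of $\beta$ can indeed be removed by re-choosing $\alpha$ within its coset. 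The other two candidates are not in the image of multiplication by $\nu$ and cannot be absorbed this way; each needs a separate argument. The paper eliminates $\tau h_1 D'_3$ by noting that $\eta^2\beta = 0$ while $\eta^2$ of any element detected by $\tau h_1 D'_3$ is nonzero (it is detected by $\tau^3 d_1 g^2$ per Table \ref{tab:eta-extn}), and it eliminates $\tau \D^2 h_1^2 g + \tau^3 \D h_2^2 g^2$ by comparison to $\tmf$. Without these two additional arguments your step three does not close, and the blanket claim that adjusting $\alpha$ suffices is false.

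A second, smaller problem: your proposed shortcut of cross-referencing the construction in Lemma \ref{lem:d4-h1c3} is circular. The proof of that lemma opens by citing the present lemma as its input, so it cannot be used to establish it.
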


\begin{proof}
Lemma \ref{lem:nubar-h5^2} shows that
$\tau h_0 h_2 Q_3$ detects $(\epsilon + \eta \sigma) \theta_5$.
The element $\tau h_0 h_2 Q_3$ also detects $\tau \nu \alpha$.
Let $\beta$ be the difference
$\tau \nu \alpha - (\epsilon + \eta \sigma) \theta_5$,
which is detected in higher Adams filtration.
We will show that $\beta$ must equal zero.

First, $\tau h_1 D'_3$ cannot detect $\beta$ because
$\eta^2 \beta$ is zero, while 
Table \ref{tab:eta-extn} shows that $\tau^3 d_1 g^2$ detects
$\eta^2 \{\tau h_1 D'_3\}$.
Second, Table \ref{tab:nu-extn} shows that $\tau h_1 h_3(\D e_1 + C_0)$
is the target of a hidden $\nu$ extension.
Therefore, we may alter the choice of $\alpha$
to ensure that $\beta$ is not detected by $\tau h_1 h_3(\D e_1 + C_0)$.
Third, $\D^2 h_2 c_1$ is also the target of a $\nu$ extension.
Therefore, we may alter the choice of $\alpha$
to ensure that $\beta$ is not detected by $\D^2 h_2 c_1$.
Finally, comparison to $\tmf$ implies that
$\beta$ is not detected by 
$\tau \D^2 h_1^2 g + \tau^3 \D h_2^2 g^2$. 
\end{proof}

\rev{
\begin{lemma}
\label{lem:sigma^2+kappa-h5^2}
The product $(\sigma^2 + \kappa) \theta_5$ is
zero, or it is equal to $\tau^2 \kappa_1 \kappabar_2$ detected
by $\tau^2 d_1 g_2$.
\end{lemma}

\begin{proof}
The product
$(\sigma^2 + \kappa) \theta_5$ maps to zero under
inclusion of the bottom cell of $C\tau$.
Therefore, $(\sigma^2 + \kappa) \theta_5$ is divisible
by $\tau$.  The only two possibilities are $0$ and
$\tau^2 \kappa_1 \kappabar_2$.
\end{proof}
}

\begin{lemma}
\label{lem:eta-sigma-k1}
$\eta \sigma \{k_1\} + \nu \{d_1 e_1\} = 0$ in
$\pi_{73,41}$.
\end{lemma}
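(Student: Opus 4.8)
The plan is to reduce the lemma to a single multiplicative identity in $\Ext$ together with a filtration count in $\pi_{73,41}$. First I would record the relevant degrees, reading them off the charts and Table \ref{tab:Adams-d2}: $\eta\sigma$ is detected by $h_1 h_3$ in degree $(8,2,5)$ and $\nu$ by $h_2$ in degree $(3,1,2)$, while $k_1$ and $d_1 e_1$ sit in degrees $(65,7,36)$ and $(70,8,39)$ respectively. Consequently both $\eta\sigma\{k_1\}$ and $\nu\{d_1 e_1\}$ lie in $\pi_{73,41}$, and their natural detecting classes $h_1 h_3 k_1$ and $h_2 d_1 e_1$ occupy exactly the same tridegree $(73,9,41)$. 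This degree coincidence is the structural reason the two products can be compared at all, and it is what suggests the whole relation is governed by a single class in filtration $9$.

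The heart of the argument is the $\Ext$ relation $h_1 h_3 k_1 = h_2 d_1 e_1$. I expect this identity to be extractable directly from the machine-computed multiplicative structure of $\Ext$ recorded in \cite{Wang19}; failing that, one can derive it from the May spectral sequence via the May Convergence Theorem (Remark \ref{rem:May-convergence}), or by comparison to $\Ext_{A(2)}$ using the map $p_*$ of Section \ref{sctn:mmf}. Granting the relation, and using that $k_1$ and $d_1 e_1$ are permanent cycles while $h_1$, $h_2$, $h_3$ detect $\eta$, $\nu$, $\sigma$, the common class $h_1 h_3 k_1 = h_2 d_1 e_1$ is the leading term of both $\eta\sigma\{k_1\}$ and $\nu\{d_1 e_1\}$ whenever it is nonzero on the $E_\infty$-page. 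Working at the prime $2$, the sum $\eta\sigma\{k_1\} + \nu\{d_1 e_1\}$ is then detected by $h_1 h_3 k_1 + h_2 d_1 e_1 = 0$, so it must lie in Adams filtration strictly greater than $9$.

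The main obstacle, and the only genuinely delicate step, is showing that this higher-filtration remainder vanishes. I would enumerate the elements of $\pi_{73,41}$ detected in filtration at least $10$ and eliminate each possibility, using comparison to the Adams spectral sequences for $C\tau$, $\tmf$, and $\mmf$ exactly as in the neighboring $\eta$- and $\nu$-extension lemmas of this chapter. Two complications must be handled along the way. First, if the common class $h_1 h_3 k_1$ happens to be hit by a differential (hence zero on $E_\infty$), the argument goes through unchanged, since then both products already begin above filtration $9$ and the same inspection of $\pi_{73,41}$ applies. Second, because $\{k_1\}$ and $\{d_1 e_1\}$ are cosets, the statement is really an equality of cosets, so I would track the target indeterminacy explicitly — it is generated by $\eta\sigma$ (resp.\ $\nu$) times elements of higher filtration — and pin it down by the same comparison arguments. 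This indeterminacy bookkeeping, rather than the core identity, is where I expect the real work to lie.
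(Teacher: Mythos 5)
Your proposal matches the paper's argument: the paper likewise starts from the relation $h_1 h_3 k_1 + h_2 d_1 e_1 = 0$ in the Adams $E_\infty$-page and then rules out the possible higher-filtration detecting elements ($h_2^2 C''$ and $M h_1 h_3 g$ by comparison to $C\tau$, and $\D h_1 d_0 e_0^2$ by comparison to $\mmf$). The only difference is that the paper treats the final elimination as a short inspection rather than the "real work," but the strategy is the same.
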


\begin{proof}
We have the relation $h_1 h_3 k_1 + h_2 d_1 e_1 = 0$
in the Adams $E_\infty$-page, but
$\eta \sigma \{k_1\} + \nu\{d_1 e_1\}$ could possibly be 
detected in higher Adams filtration.
However, it cannot be detected by
$h_2^2 C''$ or $M h_1 h_3 g$ by comparison to $C\tau$.
Also, it cannot be detected by $\D h_1 d_0 e_0^2$ by comparison
to $\mmf$. 
\end{proof}

\chapter{Tables}
\label{ch:table}

Table \ref{tab:notation} gives some notation for elements in
$\pi_{*,*}$.  The fourth column gives partial information that
reduces the indeterminacies in the definitions, but does not
completely specify a unique element in all cases.
See Section \ref{sctn:notation} for further discussion.

Table \ref{tab:unit-mmf} gives hidden values of the unit map
$\pi_{*,*} \map \pi_{*,*} \mmf$.
The elements in the third column belong to the
Adams $E_\infty$-page for $\mmf$ \cite{Isaksen09} \cite{Isaksen18}.
See Section \ref{sctn:mmf} for further discussion.

Table \ref{tab:Massey} lists information about some Massey products.
The fifth column indicates the proof.  When a differential appears
in this column, it indicates the May differential that can be used
with the May Convergence Theorem (see Remark \ref{rem:May-convergence})
to compute the bracket.  The sixth column
shows where each specific Massey product is used in the manuscript.
See Chapter \ref{ch:Massey} for more discussion.

Table \ref{tab:Adams-d2} lists all of the multiplicative generators 
of the Adams $E_2$-page through the 95-stem.  The third column indicates
the value of the $d_2$ differential, if it is non-zero.
A blank entry in the third column indicates that the $d_2$ differential
is zero.
The fourth column indicates the proof. A blank entry in the fourth column
indicates that there are no possible values for the differential.
The fifth column gives alternative names for the element, as used 
in \cite{Bruner97}, \cite{Isaksen14c}, or \cite{Tangora70a}.
See Sections \ref{sctn:notation} and
\ref{sctn:Adams-d2} for further discussion.

Table \ref{tab:Adams-perm} lists some elements in the 
Adams spectral sequence that are known to be permanent cycles.
The third column indicates the proof.  When a Toda bracket
appears in the third column, the Moss Convergence Theorem
\ref{thm:Moss} applied to that Toda bracket implies that the
element is a permanent cycle (see Table \ref{tab:Toda} for
more information).  When a product appears in the third column,
the element must survive to detect that product.

Table \ref{tab:Adams-d3} lists the multiplicative generators 
of the Adams $E_3$-page through the 95-stem
whose $d_3$ differentials are non-zero, or whose $d_3$ differentials
are zero for non-obvious reasons.
See Section \ref{sctn:Adams-d3} for further discussion.

Table \ref{tab:Adams-d4} lists the multiplicative generators 
of the Adams $E_4$-page through the 95-stem
whose $d_4$ differentials are non-zero, or whose $d_4$ differentials
are zero for non-obvious reasons.
See Section \ref{sctn:Adams-d4} for further discussion.

Table \ref{tab:Adams-d5} lists the multiplicative generators 
of the Adams $E_5$-page through the 95-stem
whose $d_5$ differentials are non-zero, or whose $d_5$ differentials
are zero for non-obvious reasons.
See Section \ref{sctn:Adams-d5} for further discussion.

Table \ref{tab:Adams-higher} lists the multiplicative generators 
of the Adams $E_r$-page, for $r \geq 6$, through the 90-stem
whose $d_r$ differentials are non-zero, or whose $d_r$ differentials
are zero for non-obvious reasons.
See Section \ref{sctn:Adams-higher} for further discussion.

Table \ref{tab:Toda} lists information about some Toda brackets.
\rev{
Whenever possible, we use Greek letter names to refer to specific homotopy
elements.  An expression of the form $\{ x\}$ means that the Toda
bracket computation applies to any homotopy element detected by
the element $x$ of the Adams $E_\infty$-page.  An expression
of the form $[ x ]$ means that the Toda bracket computation
applies to at least one homotopy element that is detected by $x$.
}
The third column of Table \ref{tab:Toda} gives an element of the
Adams $E_\infty$-page that detects an element of the Toda bracket.
The fourth column of Table \ref{tab:Toda} gives partial
information about indeterminacies, again by giving detecting elements
of the Adams $E_\infty$-page.  We have not completely analyzed the
indeterminacies of some brackets when the details are inconsequential
for our purposes; this is indicated by a blank entry in the fourth 
column.  The fifth column indicates the proof of 
the Toda bracket, and the sixth column shows where each specific
Toda bracket is used in the manuscript.
See Chapter \ref{ch:Toda} for further discussion.

Tables \ref{tab:hid-incl} and \ref{tab:hid-proj} gives
hidden values of the inclusion $\pi_{*,*} \map \pi_{*,*} C\tau$ of the
bottom cell, and of the projection $\pi_{*,*} C\tau \map \pi_{*-1,*+1}$
to the top cell.
See Section \ref{sctn:tau-extn} for further discussion.

Table \ref{tab:tau-extn} lists hidden $\tau$ extensions in the
$E_\infty$-page of the $\C$-motivic Adams spectral sequence.
See Section \ref{sctn:tau-extn} for further discussion.

Tables \ref{tab:2-extn}, \ref{tab:eta-extn}, and \ref{tab:nu-extn}
list hidden extensions by $2$, $\eta$, and $\nu$.  
The fourth column indicates the proof of each extension.
The fifth column gives additional information about each extension,
including whether it is a crossing extension and whether it
has indeterminacy in the sense of Section \ref{subsctn:hid-indet}.
See Sections \ref{sctn:2-extn}, \ref{sctn:eta-extn}, and \ref{sctn:nu-extn}
for further discussion.

Tables \ref{tab:2-extn-possible}, \ref{tab:eta-extn-possible}, 
and \ref{tab:nu-extn-possible} list possible hidden extensions
by $2$, $\eta$, and $\nu$ that we have not yet resolved.

Finally, Table \ref{tab:misc-extn} gives some various hidden
extensions by elements other than $2$, $\eta$, and $\nu$.
See Section \ref{sctn:misc-extn} for further discussion.

\newpage


}


\backmatter

\bibliographystyle{amsalpha}
\begin{bibdiv}
\begin{biblist}

\bib{Adams60}{article}{
   author={Adams, J. F.},
   title={On the non-existence of elements of Hopf invariant one},
   journal={Ann. of Math. (2)},
   volume={72},
   date={1960},
   pages={20--104},
   issn={0003-486X},
   review={\MR{0141119 (25 \#4530)}},
}

\bib{BJM84}{article}{
   author={Barratt, M. G.},
   author={Jones, J. D. S.},
   author={Mahowald, M. E.},
   title={Relations amongst Toda brackets and the Kervaire invariant in
   dimension $62$},
   journal={J. London Math. Soc. (2)},
   volume={30},
   date={1984},
   number={3},
   pages={533--550},
   issn={0024-6107},
   review={\MR{810962 (87g:55025)}},
   doi={10.1112/jlms/s2-30.3.533},
}

\bib{BMT70}{article}{
   author={Barratt, M. G.},
   author={Mahowald, M. E.},
   author={Tangora, M. C.},
   title={Some differentials in the Adams spectral sequence. II},
   journal={Topology},
   volume={9},
   date={1970},
   pages={309--316},
   issn={0040-9383},
   review={\MR{0266215 (42 \#1122)}},
}

\bib{BF21}{article}{
	author={Beauvais-Feisthauer, Joey},
	title={Automated differential computation in the Adams spectral sequence},
	status={in preparation},
}

\bib{BBBCX}{article}{
   author={Beaudry, Agn\`es},
   author={Behrens, Mark},
   author={Bhattacharya, Prasit},
   author={Culver, Dominic},
   author={Xu, Zhouli},
   title={The telescope conjecture at height 2 and the tmf resolution},
   journal={J. Topol.},
   volume={14},
   date={2021},
   number={4},
   pages={1243--1320},
   issn={1753-8416},
   review={\MR{4332490}},
   doi={10.1112/topo.12208},
}

\bib{Behrens}{article}{
   author={Behrens, M.},
   author={Hill, M.},
   author={Hopkins, M. J.},
   author={Mahowald, M.},
   title={Detecting exotic spheres in low dimensions using coker $J$},
   journal={J. Lond. Math. Soc. (2)},
   volume={101},
   date={2020},
   number={3},
   pages={1173-1218},
   issn={0024-6107},
   review={\MR{4111938}},
   doi={10.1112/jlms.12301},
}

\bib{BMQ21}{article}{
	author={Behrens, M.},
	author={Mahowald, M.},
	author={Quigley, J.D.},
	title={The 2-primary Hurewicz image of tmf},
	status={to appear in Geometry and Topology},
	eprint={arXiv:2011.08956v4},
}

\bib{BK21}{article}{
	author={Belmont, Eva},
	author={Kong, Hana Jia},
	title={A Toda bracket convergence theorem for multiplicative spectral sequences},
	status={preprint},
	eprint={arXiv:2112.08689},
}

\bib{Bruner84}{article}{
   author={Bruner, Robert},
   title={A new differential in the Adams spectral sequence},
   journal={Topology},
   volume={23},
   date={1984},
   number={3},
   pages={271--276},
   issn={0040-9383},
   review={\MR{770563 (86c:55016)}},
   doi={10.1016/0040-9383(84)90010-7},
}

\bib{Bruner97}{article}{
   author={Bruner, Robert R.},
   title={The cohomology of the mod 2 Steenrod algebra: A computer calculation},
   journal={Wayne State University Research Report},
   volume={37},
   date={1997},
}

\bib{Bruner04}{article}{
   author={Bruner, Robert R.},
   title={Squaring operations in $\mathrm{Ext}_{\mathcal{A}}(F_2, F_2)$},
   status={preprint},
   date={2004},
}

\bib{BR20}{article}{
	author={Bruner, Robert R.},
	author={Rognes, John},
	title={The cohomology of the mod 2 Steenrod algebra [Data set]},
	doi={10.11582/2021.00077},
	journal={Norstore (2021)},
	date={{}},
}

\bib{BR21}{book}{
   author={Bruner, Robert R.},
   author={Rognes, John},
   title={The Adams spectral sequence for topological modular forms},
   series={Mathematical Surveys and Monographs},
   volume={253},
   publisher={American Mathematical Society, Providence, RI},
   date={[2021] \copyright 2021},
   pages={xix+690},
   isbn={978-1-4704-5674-0},
   review={\MR{4284897}},
   doi={10.1090/surv/253},
}

\bib{Burklund21}{article}{
   author={Burklund, Robert},
   title={An extension in the Adams spectral sequence in dimension 54},
   journal={Bull. Lond. Math. Soc.},
   volume={53},
   date={2021},
   number={2},
   pages={404--407},
   issn={0024-6093},
   review={\MR{4239183}},
   doi={10.1112/blms.12428},
}

\bib{BIX}{article}{
	author={Burklund, Robert},
	author={Isaksen, Daniel C.},
	author={Xu, Zhouli},
	title={Classical stable homotopy groups of spheres via $\F_2$-synthetic methods},
	status={in preparation},
}

\bib{Chua21}{article}{
	author={Chua, Dexter},
	title={Adams differentials via the secondary Steenrod algebra},
	status={preprint},
	eprint={arXiv:2105.07628},
}

\bib{Cohen68}{article}{
   author={Cohen, Joel M.},
   title={The decomposition of stable homotopy},
   journal={Ann. of Math. (2)},
   volume={87},
   date={1968},
   pages={305--320},
   issn={0003-486X},
   review={\MR{231377}},
   doi={10.2307/1970586},
}

\bib{Gheorghe18}{article}{
   author={Gheorghe, Bogdan},
   title={The motivic cofiber of $\tau$},
   journal={Doc. Math.},
   volume={23},
   date={2018},
   pages={1077--1127},
   issn={1431-0635},
   review={\MR{3874951}},
}

\bib{GIKR18}{article}{
   author={Gheorghe, Bogdan},
   author={Isaksen, Daniel C.},
   author={Krause, Achim},
   author={Ricka, Nicolas},
   title={$\Bbb{C}$-motivic modular forms},
   journal={J. Eur. Math. Soc. (JEMS)},
   volume={24},
   date={2022},
   number={10},
   pages={3597--3628},
   issn={1435-9855},
   review={\MR{4432907}},
   doi={10.4171/jems/1171},
}

\bib{GWX18}{article}{
   author={Gheorghe, Bogdan},
   author={Wang, Guozhen},
   author={Xu, Zhouli},
   title={The special fiber of the motivic deformation of the stable
   homotopy category is algebraic},
   journal={Acta Math.},
   volume={226},
   date={2021},
   number={2},
   pages={319--407},
   issn={0001-5962},
   review={\MR{4281382}},
   doi={10.4310/acta.2021.v226.n2.a2},
}

\bib{GI15}{article}{
   author={Guillou, Bertrand J.},
   author={Isaksen, Daniel C.},
   title={The $\eta$-local motivic sphere},
   journal={J. Pure Appl. Algebra},
   volume={219},
   date={2015},
   number={10},
   pages={4728--4756},
   issn={0022-4049},
   review={\MR{3346515}},
   doi={10.1016/j.jpaa.2015.03.004},
}

\bib{Hatcher}{article}{
	author={Hatcher, Allen},
	title={Pictures of stable homotopy groups of spheres},
	date={},
	eprint={pi.math.cornell.edu/~hatcher/stemfigs/stems.pdf},
}

\bib{HHR}{article}{
   author={Hill, M. A.},
   author={Hopkins, M. J.},
   author={Ravenel, D. C.},
   title={On the nonexistence of elements of Kervaire invariant one},
   journal={Ann. of Math. (2)},
   volume={184},
   date={2016},
   number={1},
   pages={1--262},
   issn={0003-486X},
   review={\MR{3505179}},
   doi={10.4007/annals.2016.184.1.1},
}

\bib{Hirsch55}{article}{
   author={Hirsch, Guy},
   title={Quelques propri\'{e}t\'{e}s des produits de Steenrod},
   language={French},
   journal={C. R. Acad. Sci. Paris},
   volume={241},
   date={1955},
   pages={923--925},
   issn={0001-4036},
   review={\MR{73182}},
}

\bib{Hovey}{article}{
   author={Hovey, Mark},
   title={Homotopy theory of comodules over a Hopf algebroid},
   conference={
      title={Homotopy theory: relations with algebraic geometry, group
      cohomology, and algebraic $K$-theory},
   },
   book={
      series={Contemp. Math.},
      volume={346},
      publisher={Amer. Math. Soc., Providence, RI},
   },
   date={2004},
   pages={261--304},
   review={\MR{2066503}},
   doi={10.1090/conm/346/06291},
}

\bib{HKO11}{article}{
   author={Hu, Po},
   author={Kriz, Igor},
   author={Ormsby, Kyle},
   title={Remarks on motivic homotopy theory over algebraically closed
   fields},
   journal={J. K-Theory},
   volume={7},
   date={2011},
   number={1},
   pages={55--89},
   issn={1865-2433},
   review={\MR{2774158}},
   doi={10.1017/is010001012jkt098},
}

\bib{Isaksen09}{article}{
   author={Isaksen, Daniel C.},
   title={The cohomology of motivic $A(2)$},
   journal={Homology Homotopy Appl.},
   volume={11},
   date={2009},
   number={2},
   pages={251--274},
   issn={1532-0073},
   review={\MR{2591921 (2011c:55034)}},
}

\bib{Isaksen14b}{article}{
   author={Isaksen, Daniel C.},
   title={When is a fourfold Massey product defined?},
   journal={Proc. Amer. Math. Soc.},
   volume={143},
   date={2015},
   number={5},
   pages={2235--2239},
   issn={0002-9939},
   review={\MR{3314129}},
   doi={10.1090/S0002-9939-2014-12387-2},
}

\bib{Isaksen14c}{article}{
   author={Isaksen, Daniel C.},
   title={Stable stems},
   journal={Mem. Amer. Math. Soc.},
   volume={262},
   date={2019},
   number={1269},
   pages={viii+159},
   issn={0065-9266},
   isbn={978-1-4704-3788-6},
   isbn={978-1-4704-5511-8},
   review={\MR{4046815}},
   doi={10.1090/memo/1269},
}

\bib{Isaksen18}{article}{
	author={Isaksen, Daniel C.},
	title={The homotopy of $\C$-motivic modular forms},
	journal={Zenodo},
	doi={10.5281/zenodo.6547197},
}

\bib{Isaksen19}{article}{
	author={Isaksen, Daniel C.},
	title={The Mahowald operator in the cohomology of the Steenrod algebra},
	status={to appear},
	journal={Tbilisi Math.\ J.},
	date={{}},
}

\bib{Isaksen14a}{article}{
   author={Isaksen, Daniel C.},
   author={Wang, Guozhen},
   author={Xu, Zhouli},
   title={Classical and $\C$-motivic Adams charts},
   date={2022},
   doi={doi.org/10.5281/zenodo.6987156},
}


\bib{IWX19}{article}{
	author={Isaksen, Daniel C.},
	author={Wang, Guozhen},
	author={Xu, Zhouli},
	title={Classical algebraic Novikov charts and $\C$-motivic Adams charts for the cofiber of $\tau$},
	date={2022},
	doi={doi.org/10.5281/zenodo.6987226},
}

\bib{IX15}{article}{
   author={Isaksen, Daniel C.},
   author={Xu, Zhouli},
   title={Motivic stable homotopy and the stable 51 and 52 stems},
   journal={Topology Appl.},
   volume={190},
   date={2015},
   pages={31--34},
   issn={0166-8641},
   review={\MR{3349503}},
   doi={10.1016/j.topol.2015.04.008},
}

\bib{KervaireMilnor}{article}{
   author={Kervaire, Michel A.},
   author={Milnor, John W.},
   title={Groups of homotopy spheres. I},
   journal={Ann. of Math. (2)},
   volume={77},
   date={1963},
   pages={504--537},
   issn={0003-486X},
   review={\MR{148075}},
   doi={10.2307/1970128},
}

\bib{Kochman78}{article}{
   author={Kochman, Stanley O.},
   title={A chain functor for bordism},
   journal={Trans. Amer. Math. Soc.},
   volume={239},
   date={1978},
   pages={167--196},
   issn={0002-9947},
   review={\MR{488031}},
   doi={10.2307/1997852},
}

\bib{Kochman90}{book}{
   author={Kochman, Stanley O.},
   title={Stable homotopy groups of spheres},
   series={Lecture Notes in Mathematics},
   volume={1423},
   note={A computer-assisted approach},
   publisher={Springer-Verlag},
   place={Berlin},
   date={1990},
   pages={viii+330},
   isbn={3-540-52468-1},
   review={\MR{1052407 (91j:55016)}},
}

\bib{Kochman96}{book}{
   author={Kochman, S. O.},
   title={Bordism, stable homotopy and Adams spectral sequences},
   series={Fields Institute Monographs},
   volume={7},
   publisher={American Mathematical Society, Providence, RI},
   date={1996},
   pages={xiv+272},
   isbn={0-8218-0600-9},
   review={\MR{1407034}},
   doi={10.1090/fim/007},
}

\bib{KM93}{article}{
   author={Kochman, Stanley O.},
   author={Mahowald, Mark E.},
   title={On the computation of stable stems},
   conference={
      title={The \v Cech centennial},
      address={Boston, MA},
      date={1993},
   },
   book={
      series={Contemp. Math.},
      volume={181},
      publisher={Amer. Math. Soc.},
      place={Providence, RI},
   },
   date={1995},
   pages={299--316},
   review={\MR{1320997 (96j:55018)}},
   doi={10.1090/conm/181/02039},
}

\bib{Krause18}{thesis}{
	author={Krause, Achim},
	title={Periodicity in motivic homotopy theory and over $BP_* BP$},
	type={Ph.D. thesis},
	date={2018},
	organization={Universit{\"a}t Bonn},
}

\bib{Lin01}{article}{
   author={Lin, Wen-Hsiung},
   title={A proof of the strong Kervaire invariant in dimension 62},
   conference={
      title={First International Congress of Chinese Mathematicians},
      address={Beijing},
      date={1998},
   },
   book={
      series={AMS/IP Stud. Adv. Math.},
      volume={20},
      publisher={Amer. Math. Soc., Providence, RI},
   },
   date={2001},
   pages={351--358},
   review={\MR{1830191}},
}

\bib{MT67}{article}{
   author={Mahowald, Mark},
   author={Tangora, Martin},
   title={Some differentials in the Adams spectral sequence},
   journal={Topology},
   volume={6},
   date={1967},
   pages={349--369},
   issn={0040-9383},
   review={\MR{0214072 (35 \#4924)}},
}

\bib{May69}{article}{
   author={May, J. Peter},
   title={Matric Massey products},
   journal={J. Algebra},
   volume={12},
   date={1969},
   pages={533--568},
   issn={0021-8693},
   review={\MR{0238929 (39 \#289)}},
}

\bib{May70}{article}{
   author={May, J. Peter},
   title={A general algebraic approach to Steenrod operations},
   conference={
      title={The Steenrod Algebra and its Applications (Proc. Conf. to
      Celebrate N. E. Steenrod's Sixtieth Birthday, Battelle Memorial Inst.,
      Columbus, Ohio, 1970)},
   },
   book={
      series={Lecture Notes in Mathematics, Vol. 168},
      publisher={Springer, Berlin},
   },
   date={1970},
   pages={153--231},
   review={\MR{0281196}},
}

\bib{Miller75}{book}{
   author={Miller, Haynes Robert},
   title={SOME ALGEBRAIC ASPECTS OF THE ADAMS-NOVIKOV SPECTRAL SEQUENCE},
   note={Thesis (Ph.D.)--Princeton University},
   publisher={ProQuest LLC, Ann Arbor, MI},
   date={1975},
   pages={103},
   review={\MR{2625232}},
}

\bib{Milnor}{article}{
   author={Milnor, John},
   title={Differential topology forty-six years later},
   journal={Notices Amer. Math. Soc.},
   volume={58},
   date={2011},
   number={6},
   pages={804--809},
   issn={0002-9920},
   review={\MR{2839925}},
}

\bib{Moss70}{article}{
   author={Moss, R. Michael F.},
   title={Secondary compositions and the Adams spectral sequence},
   journal={Math. Z.},
   volume={115},
   date={1970},
   pages={283--310},
   issn={0025-5874},
   review={\MR{0266216 (42 \#1123)}},
}

\bib{Novikov67}{article}{
   author={Novikov, S. P.},
   title={Methods of algebraic topology from the point of view of cobordism
   theory},
   language={Russian},
   journal={Izv. Akad. Nauk SSSR Ser. Mat.},
   volume={31},
   date={1967},
   pages={855--951},
   issn={0373-2436},
   review={\MR{0221509}},
}

\bib{Pstragowski18}{article}{
	author={Pstragowski, Piotr},
	title={Synthetic spectra and the cellular motivic category},
	eprint={arXiv:1803.01804},
	date={2018},
	status={preprint},
}

\bib{Ravenel86}{book}{
   author={Ravenel, Douglas C.},
   title={Complex cobordism and stable homotopy groups of spheres},
   series={Pure and Applied Mathematics},
   volume={121},
   publisher={Academic Press, Inc., Orlando, FL},
   date={1986},
   pages={xx+413},
   isbn={0-12-583430-6},
   isbn={0-12-583431-4},
   review={\MR{860042 (87j:55003)}},
}

\bib{Tangora70a}{article}{
   author={Tangora, Martin C.},
   title={On the cohomology of the Steenrod algebra},
   journal={Math. Z.},
   volume={116},
   date={1970},
   pages={18--64},
   issn={0025-5874},
   review={\MR{0266205 (42 \#1112)}},
}

\bib{Toda62}{book}{
   author={Toda, Hirosi},
   title={Composition methods in homotopy groups of spheres},
   series={Annals of Mathematics Studies, No. 49},
   publisher={Princeton University Press},
   place={Princeton, N.J.},
   date={1962},
   pages={v+193},
   review={\MR{0143217 (26 \#777)}},
}

\bib{tmf14}{collection}{
   title={Topological modular forms},
   series={Mathematical Surveys and Monographs},
   volume={201},
   note={Edited by Christopher L. Douglas, John Francis, Andr\'e G.
   Henriques and Michael A. Hill},
   publisher={American Mathematical Society, Providence, RI},
   date={2014},
   pages={xxxii+318},
   isbn={978-1-4704-1884-7},
   review={\MR{3223024}},
}

\bib{Voevodsky03b}{article}{
   author={Voevodsky, Vladimir},
   title={Motivic cohomology with ${\bf Z}/2$-coefficients},
   journal={Publ. Math. Inst. Hautes \'Etudes Sci.},
   number={98},
   date={2003},
   pages={59--104},
   issn={0073-8301},
   review={\MR{2031199 (2005b:14038b)}},
   doi={10.1007/s10240-003-0010-6},
}

\bib{Voevodsky10}{article}{
   author={Voevodsky, Vladimir},
   title={Motivic Eilenberg-Maclane spaces},
   journal={Publ. Math. Inst. Hautes \'Etudes Sci.},
   number={112},
   date={2010},
   pages={1--99},
   issn={0073-8301},
   review={\MR{2737977 (2012f:14041)}},
   doi={10.1007/s10240-010-0024-9},
}

\bib{Wang19}{article}{
   author={Wang, Guozhen},
   title={\emph{\texttt{github.com/pouiyter/morestablestems}}},
}

\bib{Wang20}{article}{
   author={Wang, Guozhen},
   title={Computations of the Adams-Novikov $E_2$-term},
   journal={Chinese Ann. Math. Ser. B},
   volume={42},
   date={2021},
   number={4},
   pages={551--560},
   issn={0252-9599},
   review={\MR{4289191}},
   doi={10.1007/s11401-021-0277-2},
}

\bib{WangXu17}{article}{
   author={Wang, Guozhen},
   author={Xu, Zhouli},
   title={The triviality of the 61-stem in the stable homotopy groups of
   spheres},
   journal={Ann. of Math. (2)},
   volume={186},
   date={2017},
   number={2},
   pages={501--580},
   issn={0003-486X},
   review={\MR{3702672}},
   doi={10.4007/annals.2017.186.2.3},
}

\bib{WangXu18}{article}{
   author={Wang, Guozhen},
   author={Xu, Zhouli},
   title={Some extensions in the Adams spectral sequence and the 51--stem},
   journal={Algebr. Geom. Topol.},
   volume={18},
   date={2018},
   number={7},
   pages={3887--3906},
   issn={1472-2747},
   review={\MR{3892234}},
   doi={10.2140/agt.2018.18.3887},
}

\bib{Xu16}{article}{
   author={Xu, Zhouli},
   title={The strong Kervaire invariant problem in dimension 62},
   journal={Geom. Topol.},
   volume={20},
   date={2016},
   number={3},
   pages={1611--1624},
   issn={1465-3060},
   review={\MR{3523064}},
   doi={10.2140/gt.2016.20.1611},
}

\end{biblist}
\end{bibdiv}



\end{document}